\newtheorem{theorem}{Theorem}
\numberwithin{theorem}{section}
\newtheorem{corollary}[theorem]{Corollary}
\newtheorem{lemma}[theorem]{Lemma}
\newtheorem{proposition}[theorem]{Proposition}
\newtheorem{claim}[theorem]{Claim}
\newtheorem*{claim*}{Claim}
\theoremstyle{definition}
\newtheorem{defin}[theorem]{Definition}
\newtheorem{fact}[theorem]{Fact}
\titleformat{\section}[hang]{\scshape\large\bfseries\filcenter}{\S\thesection}{4pt}{}
\titleformat{\subsection}[hang]{\scshape\bfseries}{\thesubsection.}{4pt}{}
\newcommand\id{\mathbbm{1}}	
\newcommand{\tss}[1]{\textsuperscript{#1}}
\newcommand{\on}[1]{
	\operatorname{#1}
}
\newcommand{\dhor}{\mathsf{D}_{\mathsf{hor}}}
\newcommand{\dver}{\mathsf{D}_{\mathsf{ver}}}
\newcommand{\uc}[1]{
    \|#1\|_{\mathbb{T}}
}
\newcommand{\lr}[1]{
    \langle #1\rangle
}
\newcommand{\blr}[1]{
    \Big\langle #1\Big\rangle
}
\newcommand{\tn}[1]{ 
	\|#1\|_{\mathbb{T}}
}
\newcommand\range{\on{\# Im}}
\newcommand{\tdt}{\times\cdots\times}
\newcommand{\tightoverset}[2]{
  \mathop{#2}\limits^{\vbox to -.5ex{\kern-1.15ex\hbox{$#1$}\vss}}}
\newcommand\blfootnote[1]{%
  \begingroup
  \renewcommand\thefootnote{}\footnote{#1}%
  \addtocounter{footnote}{-1}%
  \endgroup
}
\renewenvironment{thebibliography}[1]
{
  \begin{oldthebibliography}{#1}
    \setlength{\itemsep}{0em  plus 0.3ex}
    \setlength{\parskip}{0em}
}
{
  \end{oldthebibliography}
}
\newcommand\ssk[1]{
	\substack{#1}
}
\newcommand\ex{\mathop{\mathbb{E}}}
\newcommand{\exx}{
  \mathop{
    \mathchoice{\vcenter{\hbox{\larger[4]$\mathbb{E}$}}}
               {\kern0pt\mathbb{E}}
               {\kern0pt\mathbb{E}}
               {\kern0pt\mathbb{E}}
  }\displaylimits
}
\newcommand*\bcdot{\mathpalette\bigcdot@{0.5}}
\newcommand*\bigcdot@[2]{\mathbin{\vcenter{\hbox{\scalebox{#2}{$\m@th#1\bullet$}}}}}
\def\blfootnote{\gdef\@thefnmark{}\@footnotetext}
\newcommand{\blc}{\bm{\mathsf{C}}}
\newcommand{\bsc}{\bm{\mathsf{c}}}
\newcommand{\upd}[1]{\overset{\bcdot}{#1}}
\begin{document}

\begin{center}\Large\noindent{\bfseries{\scshape General inverse theory for the $\mathsf{U}^4$ norm}}\\[24pt]\normalsize\noindent{\scshape Luka Mili\'cevi\'c\tss{\dag}}
\end{center}
\blfootnote{\noindent\dag\ Mathematical Institute of the Serbian Academy of Sciences and Arts\\\phantom{\dag\ }Email: luka.milicevic@turing.mi.sanu.ac.rs}

\footnotesize
\begin{changemargin}{1in}{1in}
\centerline{\sc{\textbf{Abstract}}}
\phantom{a}\hspace{12pt}~In this paper, we develop a quantitative inverse theory for the Gowers uniformity norm $\|\bcdot\|_{\mathsf{U}^4}$ in general finite abelian groups. The norm $\|\bcdot\|_{\mathsf{U}^4}$ has played an important role in the inverse theory of uniformity norms as, in the works in this area, the proofs of the inverse theorem in that case typically provided an overall strategy that could be generalized to the higher order case. We identify a new type of obstructions to uniformity, which we call almost-cubic polynomials. An almost-cubic polynomial $q$ on a Bohr set $B(\Gamma, \rho_0)$ is a function such that, for each $\rho \leq \on{min}\{\rho_0, 1/8\}$, we have
\[\|\Delta_{a,b,c,d} q(x)\|_{\mathbb{T}} \leq  2^{10} \rho\]
for all $x, a,b,c,d \in B(\Gamma, \rho)$.\\[6pt]
\phantom{a}\hspace{12pt}~Let $f : G \to \mathbb{D}$ be a function with $\|f\|_{\mathsf{U}^4} \geq c$. We prove quasipolynomial inverse theorems: namely that\\
\phantom{a}\hspace{18pt}$\bullet$ when $(|G|, 6) = 1$, there exists an almost-cubic $q : B(\Gamma, \rho)$ for $|\Gamma| \leq \log^{O(1)} c^{-1}$ and $\rho \geq \exp(-\log^{O(1)} c^{-1})$, and an element $t \in G$ such that 
$$\Big|\sum_{x \in G} \id_{B}(x) f(x + t) \on{e}(q(x))\Big| \geq \exp(-\log^{O(1)} c^{-1})|G|,$$
\phantom{a}\hspace{18pt}$\bullet$ when $G = (\mathbb{Z}/2^d\mathbb{Z})^n$, there exists a cubic polynomial $q : G \to \mathbb{T}$ such that
$$\Big|\sum_{x \in G} f(x)\on{e}(q(x))\Big| \geq \exp(-\log^{O_d(1)} c^{-1})|G|.$$
Almost-cubic polynomials are rather rigid and we exhibit a strong connection with generalized polynomials in the case of cyclic groups, as well as with polynomials in the classical sense in the case of finite vector spaces. Thus, the theory in this paper gives a unified treatment of the inverse theorems of Green, Tao and Ziegler and of Bergelson, Tao and Ziegler for the $\mathsf{U}^4$ norm, which were proved in the above-mentioned cases of ambient groups. Simultaneously, we answer a question of Jamneshan, Shalom and Tao concerning the inverse theory in groups of bounded torsion.\\
\phantom{a}\hspace{12pt}~The central result from which the inverse theorems follow is a structural result for Freiman bihomomorphisms, which are an approximate variant of bilinear maps, in general finite abelian groups. In our proof, we generalize methods of our previous work in the case of finite vector spaces, relying on the algebraic regularity method and the abstract Balog-Szemer\'edi-Gowers theorem, and introduce novel ideas concerning extensions of Freiman bihomomorphisms. In the problem of extension of Freiman bihomomorphisms, genuinely new phenomena appear in general finite abelian groups, which are not present in the finite vector space case.\\[12pt]
\end{changemargin}
\normalsize

\section{Introduction}

\hspace{17pt}Let us begin by recalling the definition of the uniformity norms, introduced by Gowers~\cite{Gow4AP, GowerskAP} in in his quantitative proof of Szemer\'edi's theorem on arithmetic progressions.

\begin{defin}
    Let $f : G \to \mathbb{C}$ be a function on a finite abelian group $G$. The \textit{discrete multiplicative derivative} with \textit{shift} $a$ is the operator that maps $f$ to the function $\partial_a f$, given by the formula $\partial_a f(x) = f(x + a)\overline{f(x)}$. With this notation, the uniformity norm $\|f\|_{\mathsf{U}^k}$ is defined as 
    \[\Big(|G|^{-k - 1}\sum_{x, a_1, \dots, a_k \in G} \partial_{a_1} \dots \partial_{a_k} f(x)\Big)^{2^{-k}}.\]
\end{defin}

It is a well-known fact that $\|\cdot\|_{\mathsf{U}^k}$ is a norm for $k \geq 2$.\\

These norms quantify the amount of algebraic structure present in functions on abelian groups. A key insight of Gowers, which is crucial for the proof of Szemer\'edi's theorem, is that if one considers a set $A \subseteq G$ of density $\frac{|A|}{|G|} = \delta$, if the value $\|\id_A - \delta\|_{\mathsf{U}^{k}}$ is sufficiently small, then the number of arithmetic progressions of length $k + 1$ inside $A$ is approximately the same as in the case when $A$ is a randomly chosen subset of $G$ of density $\delta$, that is $\delta^{k + 1}|G|^2$. To complete the proof of Szemer\'edi's theorem, one needs to answer the inverse problem for uniformity norms which is to describe functions $f : G \to \mathbb{D} = \{z \in \mathbb{C} : |z| \leq 1\}$ with large uniformity norm. This question has become one of the central problems of additive combinatorics and is the starting point of higher order Fourier analysis.\\

In his work, Gowers provided a partial answer to the inverse problem, which was sufficient to complete the proof of Szemer\'edi's theorem. Namely, in the case of the cyclic group $\mathbb{Z}/N\mathbb{Z}$, he showed that, for any function $f : \mathbb{Z}/N\mathbb{Z} \to \mathbb{D}$ with $\|f\|_{\mathsf{U}^k} \geq c$, there exist arithmetic progressions $P_1, \dots, P_m$ of lengths at least $N^{\delta_{k,c}}$, where $\delta_{k,c} \in (0,1)$ is a constant depending on $k$ and $c$ only, which partition  the group $\mathbb{Z}/N\mathbb{Z}$, and there are polynomials $p_1, \dots, p_m$ of degree $k - 1$, such that $f$ locally correlates with phases of these polynomials, meaning that
\[\sum_{i \in [m]}\Big|\sum_{x \in P_i} f(x) \on{e}(p_i(x))\Big| \geq \Omega_c(N).\]
This description is incomplete in the sense that not all such piece-wise phase polynomials have large uniformity norm.\\

To obtain a complete description, for a given group $G$, order of the norm $k$ and norm bound $c$, we want to find a family of functions $\mathcal{F} = \mathcal{F}_{G, k ,c}$, whose elements we refer to as the \textit{obstructions to uniformity}, such that every $f : G \to \mathbb{D}$ with $\|f\|_{\mathsf{U}^k} \geq c$ correlates \textit{globally} with a function $g \in \mathcal{F}$, namely $|\ex_{x \in G} f(x) \overline{g(x)}| \geq c'$. Here $c'$ is a parameter that depends on $c$ and $k$, that we shall refer to as the \textit{correlation bound} in the rest of the introduction, and $\ex_{x \in G}$ is the standard shorthand for the average $\frac{1}{|G|}\sum_{x \in G}$. The family of obstructions should be in a certain sense minimal, the obstruction functions are expected to have a polynomial-like structure and their description should allow a reasonably efficient exploitation of such structure. Thus, an important part of the answer is to describe the family of obstructions and the answers to the inverse question typically depends on the ambient group. Since the introduction of uniformity norms, a significant body of work concerning their inverse theory has been produced and we next outline some of the key results.\\

\noindent\textbf{Brief history.} The first non-trivial case is the $\mathsf{U}^3(G)$ norm, for which the inverse theorem was proved by Green and Tao~\cite{GreenTaoU3}, in the case of groups $G$ of odd order, and by Samorodnitsky~\cite{SamorU3} for the case of $\mathbb{F}_2^{n}$. More recently, Jamneshan and Tao~\cite{JamTao} developed a unified theory for this norm by giving a proof that works in all finite abelian groups.\\
\indent For $k \geq 4$, the inverse theory for uniformity norms $\mathsf{U}^k$ is considerably harder. In particular, the norm $\mathsf{U}^4$ is in some sense key as its proof typically provides a strategy for the proof of the inverse theorem in the higher order cases as well. For instance, unlike the $\mathsf{U}^3$ case, the known results treat separately finite vector spaces and cyclic groups, (or, slightly more generally, low rank groups and bounded torsion groups), and, up to this paper, there have been no inverse theorems that unify the strong inverse theorems in those special cases in general abelian groups, even in a qualitative sense.\\
\indent For general $k$, the inverse theorem for $\mathsf{U}^k(\mathbb{F}_p^n)$ norm, in the so-called case of the high characteristic where $k \leq p$, where the obstruction functions can be taken to be polynomial phases, is a major result of Bergelson, Tao and Ziegler~\cite{BTZ, TaoZieglerCorr}. Tao and Ziegler later extended that result~\cite{TaoZiegler}, to also include the low characteristic case ($k > p$). In contrast to the high characteristic case, in the latter situation a more general family of obstructions is required, known as the \textit{non-classical polynomials}. Such functions arise precisely as the solution of the extremal problem of determining $g : \mathbb{F}_p^n \to \mathbb{D}$ such that $\|g\|_{\mathsf{U}^k} = 1$. At the opposite end of the spectrum, when the ambient group is cyclic, the inverse theorem for the $\mathsf{U}^k(\mathbb{Z}/N\mathbb{Z})$ norm is an remarkable result of Green, Tao and Ziegler~\cite{GTZU4, GTZ}, which is a key part of the programme of Green and Tao for obtaining asymptotic estimates for the counts of linear configurations in primes~\cite{GTprimes1, GTprimes2}. For cyclic groups, one can take nilsequences as the family of obstructions.\\

\indent The above-mentioned results only provided qualitative bounds on the correlation with the obstruction functions. For example, the proof of Bergelson, Tao and Ziegler, in which the ambient group is a finite vector space, relied on ergodic theory. Furthermore, in the extension to the low-characteristic case, Tao and Ziegler use the multidimensional Szemer\'edi theorem. In the case of cyclic groups, the proof of Green, Tao and Ziegler involves regularization procedures and they note that their proof leads to Ackermannian bounds, even if the ultrafilters language was avoided\footnote{According to Leng, Sah and Sawhney~\cite{LengSahSawhney}, it might be the case that with more care, the work of Green, Tao and Ziegler already gives quantitative bounds, involving $O(k^2)$ exponentials for $\mathsf{U}^k$ norm.}.\\
\indent Before discussing some quantitative results, let us note that the theory of nilspaces, originating in papers by Szegedy~\cite{Szeg} and Camarena and Szegedy~\cite{CamSzeg}, provides another approach to the inverse theory of uniformity norms. The nilspace theory is a rich subject, with further developments by Candela~\cite{CandelaNotes1, CandelaNotes2}, Candela and Szegedy~\cite{CandelaSzegedy1, CandelaSzegedy2} and Gutman, Manners and Varj\'u~\cite{GMV1, GMV2, GMV3}. More recently, using this theory, Candela, Gonz\'alez-S\'anchez and Szegedy~\cite{nilspacesCharp} gave a new proof the Tao-Ziegler inverse theorem, and made progress in the case of the groups of bounded torsion~\cite{nilspacesBoundedTorsion} and general abelian groups~\cite{nilspacesGeneralAbelian}. However, the nilspace theory approach is decisively infinitary as well and therefore only qualitative.\\

\noindent\textbf{Quantitative inverse theorems.} In the light of numerous applications in arithmetic combinatorics, it is of major value to obtain inverse theorems with good bounds on the corelation with the obstruction functions. The question of bounds appears in Green's list of open problems~\cite{Green100}, and is mentioned as one the biggest problems in additive combinatorics.\\
\indent The inverse theorems for $\mathsf{U}^3(G)$ norms of Green and Tao, and of Samorodnitsky, mentioned above, were already quantitative. The bounds were roughly a single exponential (more precisely, $c' \geq \exp(-c^{-O(1)})$). Sanders~\cite{Sanders} improved their bounds to quasipolynomial (bounds of the shape $c' \geq \exp(-\log^{O(1)}(2c^{-1}))$). Almost-periodicity result of Croot and Sisask~\cite{CrootSisaskPaper} played an important role in that result. Finally, Gowers, Green, Manners and Tao~\cite{Marton1, Marton2} obtained the polynomial bounds ($c' \geq c^{O(1)}$) in the setting of finite vector spaces as a conseqeunce of their resolution of Marton's conjecture.\\
\indent When it comes to higher norms, in the case finite vector spaces, first quantitative bounds were obtained by Gowers and the author~\cite{U4paper} for the case of $\mathsf{U}^4$ for $p \geq 5$, and were of the form $c' \geq \exp^{(3)}(\log^{O(1)}(2c^{-1}))$, so approximately doubly exponential. Kim, Li and Tidor~\cite{KimLiTidor}, and independently Lovett, improved the bounds by a single exponential by noting that a step of the proof in~\cite{U4paper}, where an inefficient use of the Inclusion-Exclusion principle led to exponential dependence, can be improved. A quantitative version of the inverse theorem for $\mathsf{U}^k$ norms in the high characteristic was proved by Gowers and the author~\cite{FMulti}, with a bound involving a bounded number of exponentials, depending on $k$ only. There is also progress in the low characteristic~\cite{LukaU56, Tidor}.\\
\indent Finally, the author found a new proof of the inverse theorem for the $\mathsf{U}^4$ norm with a quasipolynomial corelation bound.\\
\indent When the ambient group is cyclic, we have breakthroughs of Manners, who proved doubly exponential bounds in the inverse theorem~\cite{MannersUk} for all $k$, of Leng~\cite{LengNil2}, who proved quasipolynomial bounds in the case of $\mathsf{U}^4$ norm, followed by a work of Leng, Sah and Sawhney~\cite{LengSahSawhney}, building upon the work of Green, Tao and Ziegler, and using Leng's improved equidistribution theory for nilsequences~\cite{LengNil1}, who proved quasipolynomial bounds for all orders.\\

\noindent\textbf{New results.} As it is follows from the previous discussion, there is still no inverse theory for uniformity norms for general abelian groups which simultaneously generalizes the best known descriptions in cyclic groups and finite vector spaces case. In this paper, we develop a general quantitative inverse theory for finite abelian groups in the case of $\mathsf{U}^4$ norm. To do so, we also define a new class of obstruction functions, which we call \textit{almost-cubic polynomials}: these are functions $\phi : B = B(\Gamma, \rho_0) \to \mathbb{T}$, where $B(\Gamma, \rho_0)$ is a Bohr set, with the property that, for each $\rho \leq \min\{\rho_0,1/8\}$, we have
\[\|\Delta_{a,b,c,d} q(x)\|_{\mathbb{T}} \leq  2^{10} \rho\]
for all $x, a,b,c,d \in B(\Gamma, \rho)$.\\

Let us state our two main inverse results. The first one concerns the finite abelian groups of order coprime to 6.

\begin{theorem}[General inverse $\mathsf{U}^4$ theorem]\label{genu4introduction}
     Let $G$ be a finite abelian group of order coprime to 6. Let $f : G \to \mathbb{D}$ be such that $\|f\|_{\mathsf{U}^4} \geq c$. Then there exists a Bohr set $B$ of codimension $(2 \log c^{-1})^{O(1)}$ and radius $\exp(-(2 \log c^{-1})^{O(1)})$, an element $t \in G$ and an almost-cubic polynomial $\phi : B  \to \mathbb{T}$ such that 
    \[\Big|\exx_{x \in G} \id_{B}(x) f(x + t) \on{e}(q(x))\Big| \geq \exp(-(2 \log c^{-1})^{O(1)}).\]
\end{theorem}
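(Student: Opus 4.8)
The plan is to run the standard ``derivative descent'' for the $\mathsf{U}^4$ norm --- reducing to the $\mathsf{U}^3$ inverse theorem and then to a bihomomorphism problem --- but carried out entirely in the language of Bohr sets in a general finite abelian group. The starting point is the identity $\|f\|_{\mathsf{U}^4}^{16}=\ex_{a\in G}\|\partial_a f\|_{\mathsf{U}^3}^{8}$: combined with $\|f\|_{\mathsf{U}^4}\ge c$ and a popularity argument, it yields a set $A\subseteq G$ with $|A|\ge c^{O(1)}|G|$ on which $\|\partial_a f\|_{\mathsf{U}^3}\ge c^{O(1)}$. Applying the quantitative $\mathsf{U}^3$ inverse theorem for general finite abelian groups (in its Bohr-set form, with quasipolynomial bounds) to each $\partial_a f$ with $a\in A$ produces, for every such $a$, a Bohr set, a shift $s_a$, and a locally quadratic phase $\beta_a$ for which $\partial_a f(\,\cdot\,+s_a)$ correlates with $\on{e}(\beta_a)$ on that Bohr set, with correlation at least $\exp(-\log^{O(1)}c^{-1})$. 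A regularisation step --- pigeonholing over the boundedly many relevant pieces of Bohr-set data and then intersecting and refining --- lets us assume all these Bohr sets are contained in one common Bohr set $B$ of codimension $\log^{O(1)}c^{-1}$ and radius $\exp(-\log^{O(1)}c^{-1})$, while $a$ still ranges over a dense subset $A'\subseteq G$.

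The next step extracts an approximately bilinear object. Local quadraticity of $\beta_a$ makes its additive second difference $b(a;u,v):=\beta_a(x+u+v)-\beta_a(x+u)-\beta_a(x+v)+\beta_a(x)$ nearly independent of $x$ and, for fixed $a$, an approximate symmetric bihomomorphism in $(u,v)$ on a sub-Bohr set of $B$; equivalently it records a map $\Phi$ sending $(a,u)$ to the local character $v\mapsto b(a;u,v)$. Feeding $\|f\|_{\mathsf{U}^4}\ge c$ back in --- through a further Cauchy--Schwarz argument that exploits additive structure among the shifts $a$, together with the commutativity $\partial_a\partial_{a'}f=\partial_{a'}\partial_a f$ --- forces the compatibility relations making $b(a;u,v)$ approximately trilinear and symmetric in $(a,u,v)$, i.e.\ $\Phi$ is a Freiman bihomomorphism. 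Now I invoke the central structural theorem for Freiman bihomomorphisms: on a sub-Bohr set $B'$ of still-controlled codimension and radius it upgrades $\Phi$ to a genuine bihomomorphism, hence --- using the symmetry --- furnishes a genuine symmetric trilinear form $T:B'\times B'\times B'\to\mathbb{T}$ agreeing with $b$ up to a small error.

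It remains to integrate. Since $|G|$ is coprime to $6$, division by $2$ and by $3$ is available, and one builds a cubic antiderivative $q$ on a sub-Bohr set $B''$ whose third difference recovers $T$ (so its fourth difference is small), via the usual ``$q(x)\approx \tfrac16 T(x,x,x)$''-type formula made precise on Bohr sets; the errors incurred are linear in the radius, which is exactly why $q$ meets the bound $\|\Delta_{a,b,c,d}q(x)\|_{\mathbb{T}}\le 2^{10}\rho$ from the definition of an almost-cubic polynomial. By construction $\partial_a f$ correlates with $\on{e}(\Delta_a q)$ for a dense set of $a$, and one further, elementary descent step --- removing the last derivative, at the level of the $\mathsf{U}^2$ norm, by Cauchy--Schwarz followed by Fourier analysis on $B''$ --- produces a shift $t$ for which $f(\,\cdot\,+t)$ correlates with $\on{e}(q)$ on $B''$, with correlation $\exp(-\log^{O(1)}c^{-1})$, as required.

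The main obstacle is the structural theorem for Freiman bihomomorphisms itself, and within it the extension problem: extending a Freiman bihomomorphism defined only on a Bohr set to a genuine bilinear map exhibits phenomena absent in the finite vector space case (where one argues directly with symmetric matrices), so the real weight of the proof lies there --- handled via the algebraic regularity method and the abstract Balog--Szemer\'edi--Gowers theorem --- rather than in the descent sketched above. A secondary difficulty is the Bohr-set bookkeeping: the repeated passages to sub-Bohr sets, the pigeonholing that makes the data uniform in $a$, and the integration step must all be arranged so as to keep the codimension quasipolynomial and the radius quasi-exponentially small throughout.
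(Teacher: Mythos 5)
Your overall architecture — descend to derivatives, extract an approximately bilinear object, pass through a Freiman bihomomorphism structure theorem, do a symmetry argument, integrate to an almost-cubic, and finish by removing the last derivative — matches the paper's architecture, and you correctly identify the bihomomorphism structure theorem as the real weight of the proof. But there are two concrete gaps in the way you have set it up.

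First, the entry point. You apply the $\mathsf{U}^3$ inverse theorem to each $\partial_a f$, obtaining locally quadratic phases $\beta_a$ supported on Bohr sets $B_a$ that vary with $a$, and then assert that ``pigeonholing over the boundedly many relevant pieces of Bohr-set data'' lets you place all of them inside a common Bohr set $B$. That does not work: the frequency sets defining the $B_a$ vary over $\hat{G}$, and there is no finite menu to pigeonhole over, nor any obvious refinement producing a common $B$ without either losing density in $a$ or blowing up the codimension. This is exactly the obstruction the paper sidesteps by going down two levels instead of one: from $\|f\|_{\mathsf{U}^4}^{16} = \ex_{a,b}\|\partial_{a,b}f\|_{\mathsf{U}^2}^4$ one gets, for a dense set $A\subseteq G\times G$, a genuine character $\phi(a,b)\in\hat{G}$ (a global object, no Bohr set attached), and a Cauchy--Schwarz computation then shows $\phi$ respects many horizontal and vertical additive quadruples, so that Theorem~\ref{approxFreimanHom} upgrades $\phi$ to a Freiman bihomomorphism directly. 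The passage from the $\beta_a$ to a Freiman bihomomorphism in your version (``feeding $\|f\|_{\mathsf{U}^4}\ge c$ back in ... forces compatibility relations'') is precisely where the hard work of reconciling the varying Bohr-set domains would have to happen, and you have not given an argument for it.

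Second, the final step. You claim that removing the last derivative is ``elementary ... at the level of the $\mathsf{U}^2$ norm.'' It is not. After subtracting the cubic phase $\on{e}(q)$, what one knows is that $\partial_a\bigl(f\,\overline{\on{e}(q)}\bigr)$ correlates with a \emph{linear} phase in $x$ whose frequency still depends on $a$ in an uncontrolled (merely approximately linear) way; this is a $\mathsf{U}^3$-type condition on $f\,\overline{\on{e}(q)}$, not a $\mathsf{U}^2$ one. Handling the residual linear phases requires a further symmetry argument, which is exactly what the $\mathsf{U}^3$ inverse theorem encapsulates. The paper accordingly finishes by showing $\|\tilde f\|_{\mathsf{U}^3}\ge \exp(-\log^{O(1)}(2c^{-1}))$ for $\tilde f(t) = \id_{B'}(t)f(t+x)\on{e}(q(t))$ and then invoking Theorem~\ref{greentaou3inversethm} (Green--Tao/Sanders). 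You need that appeal; Cauchy--Schwarz plus Fourier analysis on a Bohr set will not close the argument.

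A smaller remark: your integration step, building $q$ via a ``$q(x)\approx\frac16 T(x,x,x)$'' formula, is in the right spirit, but the paper does not literally divide by $6$ when forming $q$; it sets $q(x)=\psi(x,x,x)$ and works with $6\psi$ throughout, using the invertibility of $6$ only to relate $6\psi$ to the symmetrization $\sum_{\pi\in\on{Sym}_3}\psi\circ\pi$ and to pass the symmetric trilinear form through the equidistribution theorem for almost-trilinear forms (Theorem~\ref{equidistributionalmosttrilinearmaps}). This detail matters because $\psi$ is only almost-trilinear on a Bohr set, so the naive division-by-$6$ antiderivative is not well defined without that preparation.
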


Given that the inverse theory for uniformity norms stems from a quantitative proof of Szemer\'edi's theorem, let us remark that it is plausible that novel variants with bounds matching those of Leng, Sah and Sawhney~\cite{LengSahSawhneykAP} would follow from our work, for example, proving that sets of density $\frac{1}{k}$ inside $(\mathbb{Z}/k\mathbb{Z})^n$ for an integer $k$ contain an arithmetic progression of length 5. Such a pairing of an ambient group and density is interesting, as Szemer\'edi's theorem in such a case cannot be trivially deduced from the cases of vector spaces or cyclic groups.\\

The second inverse theorem concerns groups whose exponent is a power of two.\\

\begin{theorem}[Inverse $\mathsf{U}^4$ theorem in $(\mathbb{Z}/2^d\mathbb{Z})^n$ with polynomial obstructions]\label{abelian2groupscaseintroduction}
    Fix $d \in \mathbb{N}$ and let $G = (\mathbb{Z}/2^d\mathbb{Z})^n$. Suppose that $f : G \to \mathbb{D}$ satisfies $\|f\|_{\mathsf{U}^4} \geq c$. Then there exists a cubic polynomial $q : G \to \mathbb{T}$ such that 
    \[\Big|\exx_{x} f(x) \on{e}(q(x))\Big| \geq \exp(-\log^{O_d(1)}(2c^{-1})).\]
\end{theorem}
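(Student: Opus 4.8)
The group $G=(\mathbb{Z}/2^d\mathbb{Z})^n$ has order divisible by $2$, so Theorem~\ref{genu4introduction} does not apply directly; instead the plan is to rerun the general machinery in this vector-space-like setting, where the only obstructed arithmetic is division by $2$ and this is exactly what forces honest (though possibly non-classical) cubic polynomials rather than the almost-cubic obstructions of Theorem~\ref{genu4introduction}. First I would carry out the standard reduction to lower order. Since $\|f\|_{\mathsf{U}^4}^{16}=\exx_h\|\partial_h f\|_{\mathsf{U}^3}^8\geq c^{16}$, there is a set $H_0\subseteq G$ with $|H_0|\geq\tfrac12 c^{16}|G|$ on which $\|\partial_h f\|_{\mathsf{U}^3}\geq\tfrac12 c^2$. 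Applying the quantitative (quasipolynomial) inverse theorem for the $\mathsf{U}^3$ norm in $(\mathbb{Z}/2^d\mathbb{Z})^n$ — which in this setting produces a \emph{global} quadratic phase (possibly after passing to a bounded-index subgroup, to be extended at the end) — gives, for each $h\in H_0$, a degree $\leq 2$ function $\psi_h:G\to\mathbb{T}$ with $|\exx_x \partial_h f(x)\on{e}(-\psi_h(x))|\geq\exp(-\log^{O_d(1)}(2c^{-1}))$. Let $\beta_h$ denote the associated symmetric biadditive form $\Delta_{\bcdot,\bcdot}\psi_h$ (independent of the base point since $\psi_h$ has degree $\leq 2$).

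The second step is to recognise $(h,y,z)\mapsto\beta_h(y,z)$ as (a dense piece of) a Freiman bihomomorphism. Expanding $\partial_{h_1}\partial_{h_2}f$ in two ways and applying Gowers--Cauchy--Schwarz shows that for $\gg\exp(-\log^{O_d(1)}(2c^{-1}))$ of the additive quadruples $h_1+h_2=h_3+h_4$ with $h_i\in H_0$ one has $\beta_{h_1}+\beta_{h_2}=\beta_{h_3}+\beta_{h_4}$, and similarly that $\beta$ is additive, in the Freiman sense, in each of the slots $y$ and $z$; hence $\beta$ is a Freiman bihomomorphism in all three variables on a dense set. At this point I would invoke the structural theorem for Freiman bihomomorphisms that is the central result of the paper: it upgrades $\beta$, after passing to a bounded-complexity sub-structure and up to a bounded-complexity error, to a genuine globally defined trilinear form $T:G\times G\times G\to\mathbb{T}$ with $T(h,\cdot,\cdot)=\beta_h$ for $h$ in a dense set. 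For this theorem I treat that result — and the algebraic regularity method and Balog--Szemer\'edi--Gowers arguments behind it — as a black box; what matters is that it loses only quasipolynomially in the parameters.

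The third step is symmetrization and integration. Because the ``$h$-slot'' and the ``$y,z$-slots'' of $\beta$ all arise from additive derivatives of the single function $f$, one obtains approximate symmetry relations $T(h,y,z)\approx T(y,h,z)\approx T(z,y,h)$ on dense sets; a cohomological averaging correction of the type used to make approximate polynomials exact then replaces $T$ by an \emph{exactly} symmetric trilinear form of comparable complexity. Integrating, a symmetric trilinear form on $(\mathbb{Z}/2^d\mathbb{Z})^n$ is the third additive derivative $\Delta_{\bcdot,\bcdot,\bcdot}$ of a function $q$ of degree $\leq 3$; the only subtlety is the division by $3!=6$, where the factor $3$ is invertible modulo $2^d$ but the factor $2$ is not, which is precisely where a bounded-degree non-classical cubic, rather than a literal cube of a linear form, is forced (and is permitted by the statement, since ``cubic polynomial'' here means a map $G\to\mathbb{T}$ with $\Delta_{a,b,c,d}q\equiv 0$). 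Finally, having produced a cubic $q:G\to\mathbb{T}$ with $\Delta_h q$ agreeing with $\psi_h$ up to lower-order terms for $h$ in a dense set, a last Cauchy--Schwarz/$\mathsf{U}^2$ argument transfers the correlation of each $\partial_h f$ with $\on{e}(\psi_h)$ into a correlation of $f$ itself with $\on{e}(q)$, after adjusting $q$ by a quadratic and a linear part; this yields $|\exx_x f(x)\on{e}(q(x))|\geq\exp(-\log^{O_d(1)}(2c^{-1}))$, and extending $q$ from the subgroup to $G$ if necessary costs only a further bounded-complexity factor.

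The main obstacle is the interaction of the symmetrization/integration step with the $2$-torsion: one must check that the trilinear object extracted from the Freiman-bihomomorphism structure theorem can be made \emph{exactly} symmetric without destroying the quasipolynomial bounds, and then that the division-by-$2$ obstruction to integrating it is mild enough to be absorbed into a degree $\leq 3$ non-classical polynomial rather than forcing a genuinely more complicated obstruction — this is exactly the ``low characteristic'' phenomenon, and it is what separates the conclusion here (honest cubic polynomials on all of $G$) from the almost-cubic-on-a-Bohr-set conclusion of Theorem~\ref{genu4introduction}. A secondary but pervasive difficulty is bookkeeping: each Gowers--Cauchy--Schwarz, Balog--Szemer\'edi--Gowers and algebraic-regularity step must be shown to cost only $\log^{O_d(1)}(2c^{-1})$ in the exponent so that the final bound remains quasipolynomial.
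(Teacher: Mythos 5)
Your proposed route breaks down at the integration step, and this is exactly where the paper invests its main effort in the $2$-torsion case. You assert that ``a symmetric trilinear form on $(\mathbb{Z}/2^d\mathbb{Z})^n$ is the third additive derivative of a function $q$ of degree $\leq 3$; the only subtlety is the division by $3!=6$.'' This is false, and it is the crux of the low-characteristic difficulty. The correct criterion appears in the paper as Theorem~\ref{trilinearintegration}: a trilinear map $\phi:G^3\to\mathbb{T}$ is integrable, i.e.\ there exists a cubic $q$ with $\Delta_{a,b,c}q(x)=\phi(a,b,c)$, if and only if $\phi$ is symmetric \emph{and additionally} the map $(a,b)\mapsto 2^{d-1}\phi(a,a,b)$ is a symmetric bilinear form. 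The second condition is not automatic from symmetry (already for $d=1$ this is the Tao--Ziegler low-characteristic obstruction), and it is not absorbed by allowing non-classical cubics: the classification in Appendix~A makes clear that even the most general degree-$\leq 3$ maps $G\to\mathbb{T}$ cannot have arbitrary symmetric trilinear forms as their third derivatives. So after you have produced a symmetric trilinear form correlating with $\partial_{a,b,c}f$, you still owe a proof that it can be modified to satisfy this extra constraint without destroying the correlation. In the paper this is precisely the content of Lemma~\ref{correlationforsymmetrytorsionLemma}, which is singled out in the introduction as the reason the $(\mathbb{Z}/2^d\mathbb{Z})^n$ case needs separate treatment; its proof is a nontrivial induction on $d$ using a complexity-decrement rearrangement, the directional norm $\mathsf{U}(G,2^{d-1}\cdot G)$ and Theorem~\ref{u2inversesbgp}, and the $d=1$ result of Tidor as a base case. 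Your proposal does not contain any argument that would supply this condition, so the claim that the division-by-$2$ obstruction ``is mild enough to be absorbed'' is unsupported and the proof as written does not close.

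A secondary, smaller divergence: you reach the Freiman bihomomorphism by first applying an inverse $\mathsf{U}^3$ theorem to each $\partial_h f$ and then arguing that the resulting quadratics cohere. This is the Green--Tao--Ziegler-style route; it can be made to work but requires an extra stability-of-quadratics/BSG step, since the quadratic phases are not canonically determined. The paper instead goes directly to the Freiman bihomomorphism via large Fourier coefficients of $\partial_{a,b}f$ and the Gowers--Cauchy--Schwarz manipulations, which avoids the coherence issue. This difference is a matter of exposition and efficiency; it is not a gap. The gap is in the integration step described above.
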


Let us remark that in~\cite{JamneshanShalomTao} (Question 1.9), Jamneshan, Shalom and Tao asked whether one can take polynomials of degree $k-1$ as obstructions for the $\mathsf{U}^k$ norm in the case of bounded torsion groups. In particular, these two theorems essentially resolve their question for the $\mathsf{U}^4$ norm. With some more work, one could obtain a fully unified answer, but, for the reasons of emphasizing the additional difficulties with the symmetry argument in the case of low characteristic, we opted for this way of presenting the inverse results. Theorem~\ref{abelian2groupscaseintroduction} is arguably the hardest case of that question for the given norm.\\[12pt]

The key result that implies the inverse theorems above is the structural theorem for Freiman bihomomorphism in finite abelian groups, whose definition we now recall. Throughout the paper, $G_1, G_2$ and $H$ are arbitrary finite abelian groups. A map  $\phi : A \to H$ from a subset $A$ of $G_1 \times G_2$ to $H$ is a \textit{Freiman bihomomorphism}, if it respects all directional additive quadruples, namely if $\phi(x_1, y) + \phi(x_2, y) = \phi(x_3, y) + \phi(x_4,y)$ whenever $x_1 + x_2 = x_3 + x_4$ and all four points $(x_i, y)$ belong to $A$, and similarly when roles of $x$ and $y$ are swapped.\\
To state the structural result, we introduce another notion, of an $E$-bihomomorphism. Namely, if, for some subset $E \subseteq H$, instead we have $\phi(x_1, y) + \phi(x_2, y) - \phi(x_3, y) - \phi(x_4,y) \in E$ for additive quadruples above (and analogously when $x$ and $y$ interchange the roles), we say that $\phi$ is an $E$-bihomomorphism. Finally, we say that $E$ has \textit{rank at most $r$} if, for some elements $a_1, \dots, a_r \in H$, every element of $E$ can be written as $\sum_{i \in [r]} \varepsilon_i a_i$ where $\varepsilon \in \{0,1\}^r$. Note that such a map is a Freiman bihomomorphism when $E = \{0\}$.\\
Our principal result is that Freiman bihomomorphisms on dense domains are essentially restrictions of $E$-bihomomorphisms defined on products of Bohr sets, for small sets $E$.

\begin{theorem}\label{maininversetheorem}
    Suppose that $\phi : A \to H$ is a Freiman bihomomorphism on a set $A \subseteq G_1 \times G_2$ of size $c |G_1||G_2|$. Then there exist a set $E$ of rank $(2\log c^{-1})^{O(1)}$, Bohr sets $B_1 \subseteq G_1, B_2 \subseteq G_2$ of codimension $(2\log c^{-1})^{O(1)}$ and radius $\exp(-(2\log c^{-1})^{O(1)})$, elements $s \in G_1,t\in G_2$ and an $E$-bihomomorphism $\Phi : B_1 \times B_2 \to H$ such that $\Phi(x,y) = \phi(x + s,y + t)$ holds for at least $\exp(-(2\log c^{-1})^{O(1)})|G_1|G_2|$ points $(x,y) \in B_1 \times B_2$.
\end{theorem}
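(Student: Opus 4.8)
\textbf{Overview.} The plan is to follow the strategy pioneered in the finite vector space case (the author's earlier work on the $\mathsf{U}^4$ inverse theorem), which reduces the structural analysis of a Freiman bihomomorphism to an algebraic regularity argument combined with an extension procedure, but now to redo this in a way that is insensitive to the ambient group. The key conceptual device is that a Freiman bihomomorphism $\phi : A \to H$ with $|A| \geq c|G_1||G_2|$ behaves, on a large piece of its domain, like a genuine bilinear map; the goal is to "integrate" the two directional additive-quadruple conditions into a global bilinear-type object, paying only a small-rank error set $E$ and shrinking the domain to a product of Bohr sets.

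\textbf{Step 1: Passing to a structured core via Balog--Szemer\'edi--Gowers.} First I would apply the abstract Balog--Szemer\'edi--Gowers theorem to extract, from the dense domain $A$, a large combinatorially structured subset — concretely, Bohr sets $B_1 \subseteq G_1$, $B_2 \subseteq G_2$ (after translating by suitable $s, t$) of codimension and inverse radius quasipolynomial in $c^{-1}$, on a positive-density portion of whose product $\phi$ is defined and still respects almost all directional additive quadruples. This is where the quasipolynomial bounds enter: one uses a Bogolyubov-type / Croot--Sisask almost-periodicity input to convert ``many additive quadruples'' into ``agrees with a Freiman bihomomorphism on a Bohr product'' with the stated dependence. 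The directional bihomomorphism property is Fourier-analytically stable, so after this step one has, for generic fixed $y \in B_2$, that $x \mapsto \phi(x,y)$ is a genuine (Freiman) homomorphism on a large subset of $B_1$, and symmetrically.

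\textbf{Step 2: Algebraic regularity and fibrewise linearization.} Next I would invoke the algebraic regularity method to understand the family of slices $\{\phi(\bcdot, y)\}_y$ simultaneously. For each good $y$, Freiman's theorem (or a direct argument using that $B_1$ is a Bohr set) upgrades the fibrewise homomorphism to an honest affine map $x \mapsto L_y(x) + c_y$ on a sub-Bohr set, where $L_y$ is a homomorphism into $H$; the regularity lemma is used to control how $L_y$ and $c_y$ vary with $y$, showing that $y \mapsto L_y$ is itself ``bihomomorphism-like'' in $y$. Iterating in the other coordinate and bookkeeping the finitely many homomorphisms that appear produces a bounded-rank set $E$ capturing all the discrepancies between the various affine pieces.

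\textbf{Step 3: Extension of the Freiman bihomomorphism --- the main obstacle.} The hard part will be Step 3: assembling the fibrewise affine data into a single $E$-bihomomorphism $\Phi$ defined on the \emph{entire} product $B_1 \times B_2$ (not just a dense subset), agreeing with $\phi$ on a quasipolynomially large set. In the finite vector space case one can extend linear maps on subspaces freely; in a general finite abelian group this fails, and, as the abstract warns, ``genuinely new phenomena appear.'' My plan here is to first extend in one variable at a time: given the partial bihomomorphism on a dense subset of $B_1 \times B_2$, fix the $B_2$-direction and extend each fibre $\phi(\bcdot, y)$ from its dense domain to all of $B_1$ as an $E'$-homomorphism, using that Bohr sets have a controlled ``subspace-like'' structure (a bounded-complexity coset progression) together with a cohomological/averaging argument to choose the extensions coherently in $y$; then repeat in the $B_2$-direction. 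The obstructions to coherent extension — which live in a first cohomology group of the relevant coset-progression structure — are exactly what forces a nontrivial error set $E$, and the rank bound on $E$ comes from bounding the complexity of these obstruction classes, which in turn is controlled by the codimension of the Bohr sets, hence quasipolynomial. Tracking that all losses (BSG, regularity, extension) compose to a single quasipolynomial bound, and verifying the final $\Phi$ genuinely satisfies the $E$-bihomomorphism identity on the full product, completes the argument.
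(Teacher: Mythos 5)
Your proposal captures the general shape of the argument (reduce to fibrewise affine data, structure it via Balog--Szemer\'edi--Gowers and algebraic regularity, then extend), but it misses several of the central technical ideas, and the most critical step is described too vaguely to constitute a plan.

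First, the paper's argument does not go directly to a product of Bohr sets as you propose in Step~1. Instead, it passes from $\phi$ to a \emph{system} of Freiman-linear maps $\phi_x : B_x \to H$ indexed by $x \in X \subseteq G_1$, with a quantitative notion of respectedness (the linear combination $\phi_{x_1} - \phi_{x_2} + \phi_{x_3} - \phi_{x_4}$ has small image). Crucially, partway through, the paper performs what it explicitly calls a ``crucial change of perspective'': it replaces this quantitative notion with an \emph{exact} one --- an additive quadruple is \emph{Bohr-respected} if the combination vanishes identically on $\bigcap_i B_{x_i}$. This apparent weakening (accepting vanishing only on a small intersection) is actually a strengthening when combined with a bilinear Bogolyubov-type argument that forces the domains $B_x$ to vary Freiman-linearly in $x$, and is what eventually produces a genuine Freiman-bilinear map on a \emph{bilinear Bohr variety}. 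Your proposal never mentions bilinear Bohr varieties, yet they are the intermediate objects that carry the whole proof; the $E$-bihomomorphism on a product of Bohr sets is only obtained at the very end.

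Second, on the extension step: you correctly flag it as the main obstacle, and the vague phrase ``cohomological/averaging argument to choose the extensions coherently in $y$'' is pointing at the right difficulty, but it is not a plan. The actual mechanism is the \emph{naive extension} idea: extend a Freiman-linear map from a Bohr set $B$ to the subgroup $\langle B \rangle$ by choosing decompositions $x = y_1 + \dots + y_m$ with $y_i \in B$; the ambiguity of this choice produces an error set $E$ of small rank (controlled by a dissociated-set count inside a lattice). The hard part is to make the errors arising in different columns of the bilinear Bohr variety agree. The paper does this by constructing an auxiliary bilinear Bohr variety inside a much larger product $\mathcal{D} = \{(y_{[r]},z_{[r]}) \in G_2^{2r} : \sum u_i y_i = \sum u_i z_i\}$, viewing the column-wise errors as values of a Freiman-bilinear map on this auxiliary variety, and using the (original) bilinear Bogolyubov theorem plus algebraic regularity to show this error map takes few values and hence that a single error set can be chosen uniformly across columns; then a lattice-regularization step converts columns to genuine subgroups; and only then does a cocycle-type identity carry out the extension to a product. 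These concrete constructions --- the auxiliary group $\mathcal{D}$, few-valued Freiman-bilinear maps, the quasirandomness of simultaneously several bilinear Bohr varieties via a joint product variety --- are genuinely new phenomena in the general abelian setting, and they are exactly the content that your ``cohomological obstruction'' framing abstracts away. Without them, there is no argument.

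Finally, a smaller inaccuracy: the Croot--Sisask / almost-periodicity input does not enter through the first BSG application but through Sanders's Bogolyubov--Ruzsa lemma and its robust version, which are applied later to upgrade dense index sets to coset progressions.

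So, in summary: the top-level outline is on the right track, but the proposal misses (a) the two-stage notion of respectedness and the change of perspective to Bohr-respectedness, (b) the role of bilinear Bohr varieties as the intermediate domains, and (c) the concrete mechanism behind the extension (naive extensions, the auxiliary variety in a larger group, subgroup-in-columns regularization). As written, Step~3 in particular cannot be executed.
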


(In the case of groups of bounded torsion, we may replace the $E$-bihomomorphism by Freiman bihomomorphism, as an example, see Proposition~\ref{maininversetheoremspectorsion}.)\\

Unlike the case of finite vector spaces, when the structural result implies that $\phi$ can be related to a global bilinear map, we require a more general notion of an $E$-bihomomorphism in the case of general finite abelian groups. We shall discuss this in more detail in the proof overview. Before that, let us turn our attention to almost-cubic polynomials.\\

\subsection{Obstruction functions}

Let us firstly note that almost-cubic polynomials arising in the proof of Theorem~\ref{genu4introduction} reduce to usual polynomials in the case of bounded torsion groups. Let $q: B \to \mathbb{T}$ be an almost-cubic polynomial. If $G$ has exponent $m$, in the proof of Theorem~\ref{genu4introduction}, $q$ takes values in $\frac{\mathbb{Z}}{m} \pmod{1}$. Then we have $\|\Delta_{a,b,c,d}q(x)\|_{\mathbb{T}} \leq \varepsilon$ on $B_{\varepsilon}$. However, the restriction on values of $q$ implies that $\Delta_{a,b,c,d}q(x)$ vanishes. Moreover $B_\varepsilon$ contains a dense subgroup $U$ of $G$, and in such groups it is possible find a further dense subgroup $U' \leq U$ from which $q$ can be extended to a global polynomial (see Proposition~\ref{directsummandstorsion}).\\

We could in principle prove a unified theorem for all abelian groups, but that would obfuscate the details of the symmetry argument in the case of groups whose exponent is a power of two, where additional difficulties arise even compared to the case of low characteristic (see Proposition~\ref{correlationforsymmetrytorsionLemma}).\\

When it comes to the case of cyclic groups, we prove that almost-trilinear forms, which are a multilinear variant of almost-cubic polynomials and arise by taking triple discrete additive derivate, can be approximated on lower-order sets by generalized trilinear polynomials. This is the content of Theorem~\ref{almosttrilineartogenpolys} in Appendix~\ref{cyclicgroupsappendix}.\\

Finally, it is plausible that, with appropriate constructions of nilmanifolds from our obstruction functions, one can deduce prove the general inverse conjecture of Jamneshan and Tao~\cite{JamTao}, but, given the length of this paper, we opt not to pursue that here.\\

However, rather than solely focusing on the precise form of the final obstruction functions, one of the key takeaways of our work is that there are three natural classes of functions, which are all roughly equivalent:
\begin{itemize}
    \item \textit{almost-trilinear forms}, which are functions $\phi : B \times B \times B \to \mathbb{T}$ for a Bohr set $B = B(\Gamma, \rho_0) \subseteq G$, such that $\|\phi(x_1 + x_2, y, z) - \phi(x_1, y,z) - \phi(x_2, y, z)\|_{\mathbb{T}} \leq O(\rho)$ when $x_1, x_2, y, z \in B(\Gamma, \rho)$ for $\rho \leq \rho_0$, and similarly for other variables (and in our paper almost-cubic polynomials arise as $x \mapsto \phi(x, x, x)$), 
    \item $E$-bilinear maps $\Phi: B \times B\to \hat{G}$, for a Bohr set $B$,
    \item Freiman-bilinear maps $\Phi : V \to \hat{G}$, for a bilinear Bohr variety $V$.
\end{itemize}

Passing between these classes varies in difficulty. Given an almost-trilinear form $\phi$, it is not too difficult to get an $E$-bilinear maps $\Phi: B' \times B'\to \hat{G}$ related to $\phi$. That is the matter of studying almost-linear maps on Bohr sets and obtaining a relatively precise structure of large Fourier coefficients of Bohr sets. The reverse direction is essentially trivial, simply define $\phi(x,y,z) = \Phi(x,y)(z)$ on the cube of the Bohr set $B' \cap B(E, \rho)$.\\
Getting from $E$-bilinear maps $\Phi: B' \times B'\to \hat{G}$ to a Freiman-bilinear map on a bilinear Bohr variety can be carried out using the arguments\footnote{This rough implication most likely has a more direct proof, but this was the shortest way to phrase it.} in Sections~\ref{fbihomsmallranksection}--\ref{exttovarsection} and the bilinear Bogolyubov argument, and the reverse direction is the content of Section~\ref{passingtobihomonprodsection}. 
   
\subsection{Proof overview and paper organization}

Before discussing the organization of the paper, let us briefly describe two important ingredients originating in previous works.\\

In our proof of the quasipolynomial inverse theorem in finite vector spaces~\cite{newU4}, an abstract version of the Balog-Szemer\'edi-Gowers theorem was introduced. In a nutshell, we consider a dense set $A$ of an abelian group $G$ and a sequence of sets $\mathcal{Q}_1 \subseteq \mathcal{Q}_2 \subseteq \dots \subseteq \mathcal{Q}_{36}$ of additive quadruples in $A$, which satisfy three conditions: refer to a \textit{largeness}, meaning that $\mathcal{Q}_1$ is sufficiently dense, \textit{symmetry}, meaning that all $\mathcal{Q}_i$ are preserved under certain permutations of elements in additive quadruples and \textit{weak-transitivity}, meaning that if an additive quadruple $(a_1, a_2, a_3, a_4)$ in $A$ satisfies $(a_1, a_2, b, b') \in \mathcal{Q}_i$ and $(b, b', a_3, a_4) \in \mathcal{Q}_j$ for a few choices of $(b, b')$, then $(a_1, a_2, a_3, a_4) \in \mathcal{Q}_{i+j}$. If all those conditions hold, then we may find a large subset $A'$ of $A$ with the property that each (not necessarily additive) $k$-tuple in $A'$ can be related to a large number of $3k$-tuples in $A$ using additive quadruples in $\mathcal{Q}_{36}$ as `bridges'. The flexibility in the definition of sets of additive quadruples is crucial, we shall touch upon that in the description of the argument. In Section~\ref{absbsgsection}, we prove a slightly more general statement (Theorem~\ref{absg}) which applies to approximate subgroups as ambient sets in place of groups.\\
\indent Secondly, in that proof of the inverse theorem, (bilinear) algebraic regularity method\footnote{Higher order algebraic regularity method was used in~\cite{FMulti}, but it is more complicated to describe. The key idea behind it is to use effective equidistribution theory of multilinear forms~\cite{BhowLov, GreenTaoPolys, Janzer2, LukaRank, MoshZhu} in place of the traditional (hypergraph) regularity lemmas.} played a major role, which we now briefly describe (see~\cite{newU4} for a longer discussion). Recall that Szemerédi regularity lemma~\cite{SzemAP, SzemReg} partitions the vertex set of a given graph into a bounded number of parts such that most pairs of parts induce quasirandom bipartite graphs. Together with the counting lemma, this lemma is a basis of the (combinatorial) regularity method. However, as it is well-known, this method is inherently very inefficient~\cite{TimRegConstruction}.\\
\indent In the arithmetic setting, we study graphs with strong algebraic structure resembling a bilinear variety and due to this structure, an efficient algebraic regularity lemma (Theorem~\ref{algreglemmaintro}) is available. The  algebraic regularity method was generalized to general finite abelian groups in~\cite{generalBilBog}. The statement there is slightly imprecise, but it does not affect the main result of that paper. We revisit and rectify the statement in Section~\ref{algreggensection}.\\
\indent We now proceed with a proof of overview and explanation of organization of the paper.\\

The paper is divided into three chapters. The first one is of preliminary nature, and concerns the tools needed in the paper, which we divide into three sections. Section~\ref{sectiontools} concerns what we refer to as 'approximate linear algebra'. In that section, we gather various facts and key objects that will be used frequently in the paper. Thus, we discuss in detail Bohr sets and coset progressions, Fourier analysis on Bohr sets, quantitative lattice theory and various maps on Bohr sets and coset progressions, such as Freiman homomorphisms, $E$-homomorphisms and almost-linear maps. Next, we have Section~\ref{algreggensection} in which algebraic regularity method for general abelian groups is revisited, and the chapter ends with a Section~\ref{absbsgsection} on the abstract Balog-Szemer\'edi-Gowers theorem.\\

The second chapter is devoted to the proof of our main result, Theorem~\ref{maininversetheorem}, 
which we now briefly overview.\\
\indent The proof starts by following a strategy of our previous proof in the case of finite vector spaces~\cite{newU4}. In that paper, very briefly put, we started with a Freiman bihomomorphism $\phi : A \to H$ on a dense set $A \subseteq G_1 \times G_2$. The proof begins with a change of perspective where we pass to a system of linear maps $\phi_x: G_2 \to H$, indexed by a set $X \subseteq G_1$, which is approximately linear in the sense that
\begin{equation}
    \on{rank} \Big(\phi_{x_1} - \phi_{x_2} + \phi_{x_3} - \phi_{x_4}\Big) \leq O(1),\label{firstconditionrespFpn}
\end{equation}
for many additive quadruples $(x_1, x_2, x_3, x_4)$ in $X$. Abstract Balog-Szemer\'edi-Gowers theorem is then applied for the first time to pass to a subset $X' \subseteq X$ in which all additive quadruples are respected in the sense of~\eqref{firstconditionrespFpn}. Robust Bogolyubov-Ruzsa theorem then allows us to replace $X'$ by a subspace $V$.\\
\indent After that, we make a crucial change of viewpoint, and consider partially defined linear maps $\phi: U_x \to H$, where $U_x \leq G_2$ is a low-codimensional subspace, rather than full space $G_2$. On its own, that would be only be a loss in structure, but at the same time, we change the notion of respectedness to 
\begin{equation}
   (\forall y \in U_{x_1} \cap U_{x_2} \cap U_{x_3} \cap U_{x_4})\,\, \Big(\phi_{x_1} - \phi_{x_2} + \phi_{x_3} - \phi_{x_4}\Big)(y) = 0,\label{secondconditionrespFpn}
\end{equation}
which is stronger than the previous one. We may pass to a system of partially defined linear maps $\phi_x : U_x \to H$, for all $x \in V$, where a vast majority of additive quadruples is respected in the second sense. A variant of bilinear Bogolyubov argument is then applied to ensure that $U_x$ depend linearly on $x$, at the cost of passing to a subset of indices of $V$, and we may only assume that a positive proportion of additive quadruples are respected in the sense of~\eqref{secondconditionrespFpn}. Another application of the abstract Balog-Szemer\'edi-Gowers theorem, this time combined with algebraic regularity method, allows us to pass to a set of indices in which all additive quadruples are respected in the sense of~\eqref{secondconditionrespFpn}.\\
\indent Viewing the system of partially linear maps as a single map defined on a subset of $G_1 \times G_2$, we get a Freiman bihomomorphism on a dense set of columns of a bilinear variety. We note that, in each of the remaining steps, algebraic regularity method is applied. Robust Bogolyubov-Ruzsa theorem allows us to pass to a Freiman bihomomorphism defined on a full bilinear variety. Finally, we using some older extension results, we get a global bilinear map.\\
\indent Let us now describe how the strategy above is modified in the present paper.\\

\noindent\textbf{Main differences to finite vector spaces.} As it is clear from the proof above, it relies heavily on linear algebra (in particular, linear maps between subspaces play an important role), abstract Balog-Szemer\'edi-Gowers theorem and algebraic regularity method, so the starting point is obtain their generalizations to arbitrary abelian groups. Abstract Balog-Szemer\'edi-Gowers theorem in its original form in~\cite{newU4} already has a proof that works in general abelian groups, but the more general context requires a slight generalization. Algebraic regularity method was previously generalized in~\cite{generalBilBog}. Finally, in place of linear maps $\phi_x: U_x \to H$, we have Freiman-linear maps on Bohr sets. This is a natural generalization, as such maps arise in the structure theorem for approximate homomorphisms (see Proposition~\ref{cosettobohrset} and Theorem~\ref{approxFreimanHom}).\\
\indent However, the most problematic part of the proof, which requires novel ideas, is the final extension step, which we now describe in more detail.\\

\noindent\textbf{Extension issues.} The question of extending maps is the most difficult obstacle and obtaining a product structure in Theorem~\ref{maininversetheorem} is essential in order to complete the symmetry argument and the deduction of the inverse theorems for the $\mathsf{U}^4$ norm (Theorems~\ref{genu4introduction} and~\ref{abelian2groupscaseintroduction}). In the finite vector spaces, we can always extend a linear map from a subspace to the full space, but this is no longer true in general abelian groups. Namely, if we are given a symmetric proper progression $[-L_1, L_1]\cdot a_1 + \dots + [-L_d, L_d] \cdot a_d$ of rank $d\geq2$, the Freiman-linear maps are defined by the values at $a_1, \dots, a_d$. Elements $a_1, \dots, a_d$ typically satisfy some non-trivial linear combination in the group, which prevents extensions. Furthermore, even if we have a Freiman-linear map on a subgroup (i.e. a homomorphism), it might not extend to the whole group.\\
\indent The way we overcome this issue is by \textbf{ignoring exactness}, which we insisted on, and \textbf{allowing small and controlled errors} in our maps. More precisely, let us go back to example of a Freiman-linear map $\phi$ on a dense progression $C = [-L_1, L_1] \cdot a_1 + \dots + [-L_d, L_d] \cdot a_d$ we considered previously. Let us assume that $[-kL_1, kL_1] \cdot a_1 + \dots + [-kL_d, kL_d] \cdot a_d= G$, which we expect to happen for a small $k$. We \textbf{naively} extend $\phi$; for each $x \in G$, we take arbitrary $y_1, \dots, y_k \in C$ such that $x = y_1 + \dots + y_k$ and put $\tilde{\phi}(x) = \sum_{i \in [k]} \phi(y_i)$. Of course, such a procedure will not give a homomorphism on the group, but let us investigate how badly the homomorphism property fails. While $\tilde{\phi}$ depends on the choice the tuple adding up to $x$, the possible values of $\tilde{\phi}$ are closely related. To understand the relationship between possible values, we need to address the question of value of 
\[\phi(y_1) + \dots + \phi(y_k) - \phi(z_1) - \dots - \phi(z_k)\]
in the case when $y_1 + \dots + y_k = z_1 + \dots + z_k$. Using the coordinates on the progression, we end up with 
\[\lambda_1 \phi(a_1) + \dots + \lambda_d \phi(a_d)\]
where $\lambda_i \in [-2kL_i, 2kL_i]$ and $\sum_{i \in [d]} \lambda_i a_i = 0$. A simple covering argument shows that there are only $O(1)$ such linear combinations. The question of possible values of $\Delta_{a,b}\tilde{\phi}$ is answered in the same spirit, where the only difference being that it concerns additive $4d$-tuples. This motivates the notion of an $E$-homomorphism.

\begin{defin}
    For a set $E \subseteq H$, a function $\phi : A \to H$ is an $E$-homomorphism if $\phi(x_1) + \phi(x_2) - \phi(x_3) - \phi(x_4) \in E$ for all $x_1 + x_2 = x_3 + x_4$. It is \textit{$E$-linear} if the additive quadruples condition is replace by additive triples.
\end{defin}

Hence, we may extend Freiman-linear maps on Bohr sets to $E$-linear maps defined on the whole group.\\

Using this idea, we may relate a Freiman bihomomorphism defined on a bilinear Bohr variety to an $E$-bihomomorphism on a product of Bohr sets. That means that we need to control many naive extensions simultaneously, which relies on algebraic regularity method, as well as the bilinear Bogolyubov argument in its original form, which is different from the one in the proof overview above, and which we now recall.\\

\noindent\textbf{Bilinear Bogolyubov argument -- original form.} Recall that the classical Bogolyubov argument~\cite{bogoriginal} states that $2A - 2A$ contains a Bohr set of large radius and small codimension, when $A$  is a dense subset of the cyclic group. This fact played an important role in Ruzsa's new proof~\cite{ruzsafreimanproof} of Freiman's theorem. A bilinear generalization of this fact was obtained in~\cite{BienvenuLe, BilinearBog} for the case of finite vector spaces, with a quantitative improvement in~\cite{HosseiniLovett}, which was subsequently generalized to finite abelian groups~\cite{generalBilBog}. We now describe the latter, general version. We write $\dhor A$ for the \emph{horizontal difference set} of $A \subset G_1\times G_2$ which is defined as $\dhor A = \{(x_1 - x_2, y) \colon (x_1, y), (x_2, y) \in A\}$. Similarly, we write $\dver A$ for the \emph{vertical difference set} of $A$ which is defined as $\dver A = \{(x, y_1 - y_2) \colon (x, y_1), (x, y_2) \in A\}$. In other words, horizontal difference set is obtained by taking the difference set inside each row and the vertical difference set is obtained by taking the difference set inside each column. Iterated directional difference sets are obtained by the obvious compositions, as $\dhor$ and $\dver$ are simply maps from the power-set $\mathcal{P}(G \times G)$ to itself.

\begin{theorem}[Bilinear Bogolyubov argument in finite abelian groups]\label{bogruzsabilinearintro}Let $G$ and $H$ be finite abelian groups and let $A \subseteq G \times H$ be a set of density $\delta$. Then there exist a positive quantity $\rho \geq \exp\Big(-\log^{O(1)} (10 \delta^{-1})\Big)$, sets $\Gamma \subseteq \hat{G},  \Psi \subseteq \hat{H}$ of size at most $\log^{O(1)} (10\delta^{-1})$ and Freiman-linear maps $L_1, \dots,$ $L_r \colon B(\Psi; \rho) \to \hat{G}$ for some positive integer $r \leq \log^{O(1)} (10 \delta^{-1})$ such that the bilinear Bohr variety
\[\Big\{(x,y) \in B(\Gamma; \rho) \times B(\Psi; \rho) \colon x \in B(L_1(y), \dots, L_r(y); \rho) \Big\}\]
is contained inside $\dhor\dver \dver \dhor \dver \dhor \dhor A$.\end{theorem}

If we refer to a bilinear Bohr variety implicitly, we say that it has \textit{codimension at most $d$ and radius $\rho$} if $|\Psi|, r \leq d$.\\

\noindent\textbf{Proof overview.} We may now give a proof overview for Theorem~\ref{maininversetheorem}. Let $\varphi: A \to H$ be a Freiman bihomomorphism on a dense set $A \subseteq G_1 \times G_2$.

\begin{itemize}
    \item[\textbf{Step 1.}] In Section~\ref{fbihomsmallranksection}, we pass from $\varphi$, which is a function of two variables, to a system of Freiman-linear maps $\phi_x : B_x \to H$, defined on Bohr sets $B_x$ of low codimension and large radius, indexed by $x \in X \subseteq G_1$ for a dense set $X$, such that many additive quadruples in $X$ are respected in the sense that 
    \[\phi_{x_1} - \phi_{x_2} + \phi_{x_3} - \phi_{x_4}\]
    takes a bounded number of values on the intersection of domains $\cap_{i \in [4]} B_{x_i}$, which is a variant of~\eqref{firstconditionrespFpn} working in general finite abelian groups.
    \item[\textbf{Step 2.}] In Section~\ref{almostalladditive16tuplesABSG1}, we use abstract Balog-Szemer\'edi-Gowers theorem to pass to a subset $X' \subseteq X$ inside which almost all additive 16-tuples are respected in the above sense.
    \item[\textbf{Step 3.}] In Section~\ref{firstBRstepSection}, we use robust Bogolyubov-Ruzsa theorem to replace indexing set $X'$ by a symmetric proper coset progression $C$ of small rank and large density.
    \item[\textbf{Step 4.}] In Section~\ref{obtmanybohrrespsection}, we make a crucial change of perspective and pass to the second notion of respectedness. Namely, similarly to~\eqref{secondconditionrespFpn}, we say that an additive quadruple $(x_1, \dots, x_4)$ is \textit{Bohr-respected} if 
    \[(\forall y \in B_{x_1} \cap B_{x_2} \cap B_{x_3} \cap B_{x_4})\,\, \Big(\phi_{x_1} - \phi_{x_2} + \phi_{x_3} - \phi_{x_4}\Big)(y) = 0.\]
    In this step, we use a dependent random choice argument to redefine domains of the maps $\phi_x$ so that a vast majority of $2k$-tuples, for some $k = O(1)$, are Bohr-respected, without affecting the indexing set $C$.
    \item[\textbf{Step 5.}] In Section~\ref{bilbogsection}, we use a variant of the bilinear Bogolyubov argument to ensure that $B_x$ exhibits linear behaviour in $x$. Namely, we find some Freiman-linear maps $\Theta_1, \dots, \Theta_r : C \to \hat{G}_2$ such that $B_x$ can be replaced by $B(\Theta_1(x), \dots, \Theta_r(x); \rho)$. This step comes at the cost of losing the structure in the indexing set, and once again it becomes merely a dense subset $X \subseteq C$, with a positive proportion of additive quadruples Bohr-respected (for the new choice of Bohr sets).
    \item[\textbf{Step 6.}] In Section~\ref{bilbohrmanybohrrespsection}, we use the abstract Balog-Szemer\'edi-Gowers theorem another time to pass to a subset in which all additive quadruples are Bohr-respected. The flexibility of the abstract Balog-Szemer\'edi-Gowers theorem is essential, as the sets of additive quadruples that we consider will be those that are Bohr-respected after the domains are intersected with smaller and smaller fixed Bohr set. In order to show that conditions of the abstract Balog-Szemer\'edi-Gowers theorem are satisfied, we need to use the algebraic regularity method.
    \item[\textbf{Step 7.}] In Section~\ref{exttovarsection}, we return to our initial viewpoint of functions in two variables. Thus, we now have a Freiman bihomomorphism defined on a dense set of columns of a bilinear Bohr variety. Using algebraic regularity method and the robust Bogolyubov-Ruzsa theorem, we may extend the map to the full bilinear Bohr variety.
    \item[\textbf{Step 8.}] In Section~\ref{passingtobihomonprodsection}, we carry out the novel extension argument, based on the naive extensions idea, where we pass to an $E$-bihomomorphism on a product of Bohr sets. This argument consists of three sub-tasks. Recall that, so far, we have a Freiman-bilinear map $\phi : V \to H$ on some bilinear Bohr variety $V$. Our strategy to extending $\phi$ is to extend $\phi$ naively in the vertical direction. However, it is not obvious that naive extensions in different columns are related.\\
    \phantom{a}\hspace{12pt} Firstly, in Subsection~\ref{controllingerrorsubsec}, we show that this is the case, namely that, after shrinking the Bohr variety in the domain of $\phi$ somewhat, the error sets arising in different columns can be assumed to be the same. This part of the argument relies on the bilinear Bogolyubov argument in its original form (Theorem~\ref{bogruzsabilinearintro}) and a construction of an auxiliary bilinear Bohr variety in groups larger than $G_1$ and $G_2$. However, due to requirement of passing to further bilinear Bohr variety, we cannot yet assume that columns are the full group $G_2$.\\
    \phantom{a}\hspace{12pt} Secondly, in Subsection~\ref{subgroupsincolumnssubsec}, we show that naive extensions in columns lead to a new map, whose domain is another bilinear Bohr variety, but whose columns are dense subgroups of $G_2$ intersected with some fixed Bohr set independent of the column index $x$. This part of the argument is based on an efficient regularization procedure using a quantitative lattice theory (see Lemma~\ref{nestedLattices} and Theorem~\ref{quantLattice}), and similar in spirit to the proof of the algebraic regularity lemma. \\
    \phantom{a}\hspace{12pt} Finally, in Subsection~\ref{extendingtoproductsubsec}, we carry out the final extension to the product of Bohr sets. Having subgroups for the columns of the bilinear Bohr variety is crucial, and the proof that the final extension is well-defined and an $E$-bihomomorphism uses approximate versions of certain cocycle identities (see Claim~\ref{cocycleEqnBil}) previously used in extensions of multilinear maps from multilinear varieties.\\
    \phantom{a}\hspace{12pt} We remark that the algebraic regularity method is heavily used in all three steps above.
\end{itemize}
Finally, we put everything together in Section~\ref{putevtogsection}.\\

Final chapter concerns applications. It comprises Section~\ref{sectionequidistributiontheory}, in which we develop equidistribution theory of almost trilinear maps, relying on the bilinear Bogolyubov argument, followed by Section~\ref{generalabelianinverseunifsection}, in which Theorem~\ref{genu4introduction} is proved, and Section~\ref{abelian2groupssection}, devoted to Theorem~\ref{abelian2groupscaseintroduction}. Once the required equidistribution theory is in place, the deduction of the first inverse theorem involves standard arguments and a symmetry argument of Green and Tao used in a mostly classical fashion. For the second inverse theorem, things are more subtle due to low characteristic and are further complicated by nesting of subgroups of lower exponents, leading to somewhat convoluted description of derivatives of cubics (Theorem~\ref{trilinearintegration}). This results in a more complicated symmetry argument (Proposition~\ref{correlationforsymmetrytorsionLemma}).\\

\vspace{\baselineskip}

\noindent\textbf{Acknowledgements.} This research was supported by the Ministry of Science, Technological Development and Innovation of the Republic of Serbia through the Mathematical Institute of the Serbian Academy of Sciences and Arts, and by the Science Fund of the Republic of Serbia, Grant No.\ 11143, \textit{Approximate Algebraic Structures of Higher Order: Theory, Quantitative Aspects and Applications} - A-PLUS.\\

\pagebreak

\begin{center}\noindent{\large\bfseries{\scshape Chapter 1: Preliminaries and Tools}}
\end{center}\normalsize

In this chapter, we gather the important tools and the frequently used auxiliary facts in the paper. Firstly, we begin with 'approximate linear algebra', in which we generalize various linear-algebraic concepts to general abelian groups. The key objects are Bohr sets and coset progressions, which serve us as approximate versions of subspaces, as well as Freiman and E-homomorphisms, that generalize linear maps. That section is a combination of classical results and theory of additive combinatorics, some important more contemporary results of additive combinatorics, some earlier work concerning general abelian groups, and new material as well. Secondly, we treat the algebraic regularity method, which was previously introduced in the context of the finite abelian groups in~\cite{generalBilBog}. Finally, we prove an abstract Balog-Szemer\'edi-Gowers theorem inside sets of small doubling. The last theorem originates in the~\cite{newU4} and was one of the key new ingredients in the proof of quasipolynomial bounds for the $\mathsf{U}^4(\mathbb{F}_p^n)$ norm.\\

\noindent\textbf{General notation.} We denote the unit disk as $\mathbb{D} = \{z \in \mathbb{C}\colon |z| \leq 1\}$. We use the standard expectation notation $\ex_{x \in X}$ as shorthand for the average $\frac{1}{|X|} \sum_{x \in X}$, and we simply write $\ex_x$ when no confusion is caused. Instead of denoting a sequence of length $m$ by $(x_1, \dots, x_m)$, we write $x_{[m]}$, and for $I\subset[m]$ we write $x_I$ for the subsequence with indices in $I$. This applies to products as well: $G_{[k]}$ stands for $\prod_{i \in [k]} G_i$ and $G_I = \prod_{i \in I} G_i$. Given a set $A \subseteq G_1 \times G_2$, we write $A_{x \bcdot}$ for the \textit{column} indexed by $x$, which is $\{y \in G_2: (x,y) \in A\}$, and similarly, we write $A_{\bcdot y} = \{x \in G_1 : (x,y) \in A\}$ for the \textit{row} indexed by $y$.\\
In certain parts of the paper, we use the use non-standard asymptotic notation and write $\blc$ (and $\bsc$) as a placeholder for a positive constant whose value is not important. For example,
\[(\forall x, y > 1) \hspace{3pt} \Big(x \geq \blc y^{\blc}\hspace{3pt}\implies\hspace{3pt}x \geq 100 y^2 \log y\Big)\]
is a shorthand for 
\[(\exists C_1, C_2 > 0)(\forall x, y > 1) \hspace{3pt} \Big(x \geq C_1 y^{C_2}\implies x \geq 100 y^2 \log y\Big).\]
We write $\blc$ and $\bsc$ to indicate that we think of a sufficiently large and sufficiently small positive constants respectively, though they are formally logically equivalent.

\section{Approximate linear algebra}\label{sectiontools}

In this section we set up the tools required for working with approximate subgroups in arbitrary abelian groups. As it is well-known, Bohr sets and coset progressions are a particularly useful class of such objects. Furthermore, we cover approximate versions of linear maps, articulated as Freiman and $E$-homomorphisms. We begin with a subsection containing the main definitions, most of which are standard. Five more subsections follow, on the topics of Bohr sets theory, coset progressions, variants of Freiman's theorem, lattice theory and, finally, Freiman and $E$-homomorphisms.

\subsection{Basic definitions}

\noindent\textbf{Characters and Bohr sets.} Let $G$ be a finite abelian group. We shall rely heavily on Fourier analysis on finite abelian groups. We view the dual group in the following, explicit way. Namely, due to the fundamental theorem for finitely generated abelian groups, we may assume that $G$ is of the form $G = \mathbb{Z}/q_1 \mathbb{Z}\, \oplus\, \mathbb{Z}/q_2 \mathbb{Z} \,\oplus\, \dots \,\oplus\,\mathbb{Z}/q_d \mathbb{Z}$ for some natural numbers $q_1, \dots, q_d$ such that $q_1 | q_2 | \dots | q_k$. The dual group of $G$, denoted by $\hat{G}$, is the groups of \textit{characters} of $G$, which are homomorphisms from $G$ to the circle group $\mathbb{T} = \mathbb{R}/\mathbb{Z}$. Addition of characters induces group structure on $\hat{G}$. For our purposes, $\hat{G}$ can be given an explicit form as follows. The dual group $\hat{G}$ has the structure $\mathbb{Z}/q_1 \mathbb{Z}\, \oplus\, \mathbb{Z}/q_2 \mathbb{Z} \,\oplus\, \dots \,\oplus\,\mathbb{Z}/q_d \mathbb{Z}$ as well. To see this, notice that for each $\chi \in \hat{G}$, there exist unique elements $\chi_i \in \mathbb{Z}/q_i\mathbb{Z}$ for each $i \in [d]$ such that 
\[\chi(x) = \sum_{i \in [d]} \frac{|\chi_i x_i|_{q_i}}{q_i} + \mathbb{Z},\]
where $|\cdot|_q \colon \mathbb{Z}/ q\mathbb{Z} \to \{0, 1, \dots, q-1\} \subseteq \mathbb{Z}$ is a map which maps each residue to the unique integer among $\{0,1, \dots, q-1\}$ that projects to it inside $\mathbb{Z}/ q\mathbb{Z}$.\\
\indent Write $\on{e}(t) = \exp(2 \pi i t)$ for $t \in \mathbb{R}$. For $x \in \mathbb{R}/\mathbb{Z}$ let $\tn{x}$ be the element $d \in [0, 1/2]$ such that $x \in \{-d,d\} + \mathbb{Z}$, i.e.\ the distance from 0. We may now define Bohr sets.

\begin{defin}[Bohr sets]
    Given a set of characters $\Gamma \subseteq \hat{G}$ and a map $\rho : \Gamma \to [0, 1)$ we define the \emph{Bohr set} with \emph{frequency set} $\Gamma$ and \emph{radius function} $\rho$ as $B(\Gamma; \rho) = \{x \in G \colon (\forall \chi \in \Gamma) \tn{\chi(x)} \leq \rho(\chi)\}$. Frequently, the function $\rho$ will be constant, in which case we simply refer to $\rho$ as the \textit{radius}. We refer to the size of $\Gamma$ as the \textit{codimension} of the Bohr set. 
\end{defin}

Most of the time we will consider Bohr sets of the constant radius, as $B(\Gamma; \rho)$ contains $B(\Gamma; \mu)$, where $\mu = \min_{\gamma \in \Gamma} \rho(\gamma)$, and the properties we are interested in can typically, with some rare exceptions, be deduced for $B(\Gamma; \rho)$ from the more general case of $B(\Gamma; \mu)$.\\

\noindent \textbf{Coset progressions.}  Coset progressions were introduced by Green and Ruzsa in their generalization of Freiman's theorem to general abelian groups~\cite{greenRuzsaFreiman}, and, as their theorem confirms, these objects are a correct generalization of (cosets of) subgroups from an additive-combinatorial perspective.\\

\begin{defin}[Coset progressions]
    A \emph{coset progression} in an abelian group $G$ is a set $C$ of the form $L_1 + \dots + L_r + H$, where $L_i$ are arithmetic progressions and $H \leq G$ is a subgroup. We say that $L_1 + \dots + L_r + H$ is a \emph{canonical form} of $C$. The number $r$ is the \emph{rank} of the coset progression. We say that a coset progression $C$ is \emph{proper} if $|C| = |L_1| \cdots |L_r||H|$, (thus all sums of $(r + 1)$-tuples are distinct), and we say that $C$ is \emph{symmetric} if all $L_i$ are symmetric, i.e. $L_i = -L_i$. We refer to sizes of arithmetic progressions $L_1, \dots, L_r$ as the \textit{lengths} of $C$ and to $H$ as the \textit{subgroup} of $C$.
\end{defin}

We shall make use of approximate variants of homomorphisms. While the notion of Freiman homomorphism is standard, we shall also need \textit{$E$-homomorphisms} defined below. The latter are maps where the error on additive quadruples is allowed ($E$ stands for the \textit{error set}), but tightly controlled. Such maps will be crucial in the later stages of the paper. 

\begin{defin}[Freiman and $E$-homomorphisms]\label{FEhommDefin}
    Let $G$ and $H$ be abelian groups, and let $A \subseteq G$. A map $\phi : A \to H$ is a \textit{Freiman homomorphism} if $\phi(a) + \phi(b) = \phi(c) + \phi(d)$ holds for all $a, b, c, d \in A$ such that $a + b = c + d$. If $0 \in A$ and $\phi(0) = 0$, we say that $\phi$ is Freiman-linear.\\
    \indent If $E \subseteq H$, we say that $\phi$ is an \emph{$E$-homomorphism} if $\phi(a) + \phi(b) - \phi(c) - \phi(d) \in E$ holds for all $a,b, c,d \in A$ such that $a + b = c + d$.
\end{defin}

Thus, a map being a Freiman homomorphism is the same as being a $\{0\}$-homomorphism. In order to get good bounds, we need to be efficient in describing error sets. With this in mind, we say that a set $E$ has \textit{rank at most $r$} if $E = \langle S\rangle_{\{-1,0,1\}}$ for a set $S$ of size $r$.

\subsection{Bohr sets theory}

Let us recall the most basic fact concerning Bohr sets. Firstly, we have easy estimates on the sizes of Bohr sets. Later, we shall derive much stronger results.

\begin{lemma}[Lemma 4.20 in~\cite{TaoVuBook}]\label{basicbohrsizel}For $\Gamma \subseteq \hat{G}$ and $\rho : \Gamma \to (0,1)$ we have
\[|B(\Gamma; \rho)| \geq \Big( \prod_{\gamma \in \Gamma} \rho(\gamma) \Big) |G|,\]
and
\[|B(\Gamma; 2\rho)| \leq 4^{|\Gamma|}|B(\Gamma; \rho)|.\]
\end{lemma}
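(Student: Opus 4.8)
\textbf{Proof proposal for Lemma~\ref{basicbohrsizel}.}

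The plan is to prove both inequalities by a pigeonhole/covering argument on the torus. For the lower bound, I would consider the map $\psi : G \to \mathbb{T}^{\Gamma}$ defined by $\psi(x) = (\chi(x))_{\chi \in \Gamma}$, which is a group homomorphism. Partition the torus $\mathbb{T}^{\Gamma}$ into boxes: for each coordinate $\chi$, cover $\mathbb{T}$ by $\lceil 1/\rho(\chi)\rceil$ intervals of length at most $\rho(\chi)$ (actually, to make the argument clean, cover $\mathbb{T} = \mathbb{R}/\mathbb{Z}$ by the $N_\chi = \lfloor 1/\rho(\chi)\rfloor$ half-open intervals $[j/N_\chi, (j+1)/N_\chi)$, each of length $1/N_\chi \le \rho(\chi)$, noting $N_\chi \ge 1$ since $\rho(\chi) < 1$; if $\rho(\chi) < $ something forces $N_\chi$ small we may need $\rho(\chi)>0$, which is assumed). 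Taking products gives a partition of $\mathbb{T}^{\Gamma}$ into $\prod_{\chi} N_\chi$ boxes. By pigeonhole, some box $Q$ satisfies $|\psi^{-1}(Q)| \geq |G| / \prod_\chi N_\chi \geq \big(\prod_\chi \rho(\chi)\big)|G|$. Now pick any $x_0 \in \psi^{-1}(Q)$; then for every $x \in \psi^{-1}(Q)$ we have $\psi(x - x_0) = \psi(x) - \psi(x_0)$ lies in $Q - Q$, so each coordinate has $\tn{\chi(x - x_0)} \le 1/N_\chi \le \rho(\chi)$, i.e. $x - x_0 \in B(\Gamma;\rho)$. Hence $|B(\Gamma;\rho)| \geq |\psi^{-1}(Q)| \geq \big(\prod_\chi \rho(\chi)\big)|G|$, translation being injective.

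For the doubling bound $|B(\Gamma; 2\rho)| \leq 4^{|\Gamma|}|B(\Gamma;\rho)|$, I would use a standard covering argument: take a maximal subset $S \subseteq B(\Gamma; 2\rho)$ such that the translates $\{s + B(\Gamma; \tfrac{\rho}{2})\}_{s \in S}$ are pairwise disjoint — wait, the cleaner route is to observe that if $S \subseteq B(\Gamma;2\rho)$ is maximal subject to the $s + B(\Gamma;\rho/2)$ being pairwise disjoint, then $B(\Gamma;2\rho) \subseteq S + B(\Gamma;\rho)$ (maximality plus the triangle inequality $\tn{\chi(a+b)} \le \tn{\chi(a)} + \tn{\chi(b)}$), while $\bigcup_{s\in S}(s + B(\Gamma;\rho/2)) \subseteq B(\Gamma; 2\rho + \rho/2) \subseteq B(\Gamma; 3\rho)$, so $|S| \le |B(\Gamma;3\rho)| / |B(\Gamma;\rho/2)|$. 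This needs comparing $|B(\Gamma;3\rho)|$ and $|B(\Gamma;\rho/2)|$, which is a factor larger than what we want. The genuinely clean argument is instead: any $B(\Gamma; \rho)$ is symmetric and "convex" in the sense that $B(\Gamma;\rho) + B(\Gamma;\rho) \subseteq B(\Gamma; 2\rho)$, and conversely one shows $B(\Gamma; 2\rho)$ is covered by at most $2^{|\Gamma|}$ translates of $B(\Gamma;\rho)$ by a volume-packing count on the torus: the image $\psi(B(\Gamma;2\rho))$ sits in a box of sidelengths $4\rho(\chi)$ (in each $\mathbb{T}$-coordinate, the set $\{t : \tn t \le 2\rho(\chi)\}$ is an interval of length $4\rho(\chi)$), and one covers that box by translates of boxes of sidelength $\rho(\chi)$ — roughly $4$ per coordinate — then pulls back. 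Then $B(\Gamma; 2\rho) \subseteq \bigcup_i (g_i + B(\Gamma;\rho))$ for $\le 4^{|\Gamma|}$ elements $g_i$, each intersecting $B(\Gamma; 2\rho)$, and a standard argument (replace each $g_i$ by a point of $g_i + B(\Gamma;\rho)$ lying in $B(\Gamma;2\rho)$, enlarging the radius once more, or more carefully just take the final $4$ per coordinate to absorb this) gives the bound. I would just cite that $\{t \in \mathbb{T} : \tn t \le 2\rho(\chi)\}$ is an interval of length $4\rho(\chi)$ coverable by $\lceil 4\rho(\chi)/\rho(\chi)\rceil = 4$ sub-intervals each contained in a set of the form $\{t : \tn{t - a} \le \rho(\chi)\}$, take the product over $\chi$, pull back through $\psi$, and then pass from the covering by cosets of $B(\Gamma;\rho)$ to a count of $|B(\Gamma;2\rho)|$ by selecting one representative per occupied coset and noting each coset of $B(\Gamma;\rho)$ meeting $B(\Gamma;2\rho)$ contributes at most $|B(\Gamma;\rho)|$ points.

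The main obstacle here is purely bookkeeping: getting the constant in the second inequality exactly $4^{|\Gamma|}$ rather than something like $5^{|\Gamma|}$ or $6^{|\Gamma|}$, since the naive covering of an interval of length $4\rho$ by translates of intervals of length $\rho$ that are themselves sub-intervals of $\rho$-balls, together with the translation-to-ball slack, can leak an extra factor per coordinate. The fix is to be slightly more careful: cover the length-$4\rho(\chi)$ interval $\{\tn t \le 2\rho(\chi)\}$ by the $4$ intervals $\{t : \tn{t - j\rho(\chi)} \le \rho(\chi)\}$ for $j \in \{-2,-1,0,1\}$ — wait, $\tn{\cdot}$ being the distance to $0$, the ball $\{\tn{t-a}\le \rho(\chi)\}$ has length $2\rho(\chi)$, so $4$ of them of total length $8\rho(\chi)$ comfortably cover the length-$4\rho(\chi)$ target with overlaps — and then note that the resulting cover of $B(\Gamma;2\rho)$ by $\le 4^{|\Gamma|}$ cosets of $B(\Gamma;\rho)$ already gives $|B(\Gamma;2\rho)| \le 4^{|\Gamma|}|B(\Gamma;\rho)|$ directly, since each such coset has size $\le |B(\Gamma;\rho)|$. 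Since this is cited as Lemma 4.20 of~\cite{TaoVuBook}, I would present it concisely along these lines and refer the reader there for the routine constant-chasing.
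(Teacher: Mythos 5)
Your covering argument for the doubling inequality is essentially right, and it is the argument the paper itself alludes to (the paper gives no proof: it cites \cite{TaoVuBook}, Lemma 4.20, and merely remarks that the covering proof of the doubling bound extends from a constant radius to a radius function). The one point to tighten is the one you flag: covering $\{t:\tn t\le 2\rho(\chi)\}$, an arc of length $4\rho(\chi)$, by sub-intervals "contained in a $\rho(\chi)$-ball" permits sub-intervals of length up to $2\rho(\chi)$, and then two preimages of a common product sub-box differ only by an element of $B(\Gamma;2\rho)$, not $B(\Gamma;\rho)$. The correct tiling uses $4$ sub-intervals of length $\rho(\chi)$ (half the ball diameter); then for each of the $4^{|\Gamma|}$ product sub-boxes, fixing one preimage $x_0$ and subtracting shows the whole preimage is contained in $x_0 + B(\Gamma;\rho)$, and summing finishes.

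Your pigeonhole argument for the lower bound, however, contains a genuine gap, not just bookkeeping. With $N_\chi = \lfloor 1/\rho(\chi)\rfloor$ you have $1/N_\chi \ge \rho(\chi)$, not $\le$: take $\rho(\chi)=0.4$, so $N_\chi=2$ and $1/N_\chi=0.5>0.4$. Thus two elements of the same box may satisfy $\tn{\chi(x-x_0)}$ as large as $1/N_\chi > \rho(\chi)$, landing outside $B(\Gamma;\rho)$. If you instead take $N_\chi=\lceil 1/\rho(\chi)\rceil$, the containment is restored but the count $|G|/\prod_\chi N_\chi$ can be strictly smaller than $\bigl(\prod_\chi\rho(\chi)\bigr)|G|$. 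A fixed partition cannot simultaneously have box side at most $\rho(\chi)$ and box count at most $1/\rho(\chi)$ unless each $1/\rho(\chi)$ is an integer. Two standard repairs recover the sharp constant. (i) Continuous pigeonhole: for $\theta\in\mathbb{T}^\Gamma$ uniform, $\mathbb{E}_\theta\bigl|\{x\in G:\ \forall\chi,\ \tn{\chi(x)-\theta_\chi}<\rho(\chi)/2\}\bigr| = |G|\prod_\chi\rho(\chi)$ (each constraint has probability $\rho(\chi)$, by Fubini), so some $\theta$ achieves at least this, and any two points of the corresponding fibre differ by an element of $B(\Gamma;\rho)$ by the triangle inequality. (ii) Fejér kernel, which is what \cite{TaoVuBook} actually does: set $g_\chi(t)=\max(1-\tn t/\rho(\chi),\,0)$, which is supported on $\{\tn t\le\rho(\chi)\}$, has mean $\rho(\chi)$, and has nonnegative Fourier coefficients since it is a normalized autoconvolution of an interval indicator; then $F(x)=\prod_\chi g_\chi(\chi(x))\le \id_{B(\Gamma;\rho)}(x)$ and $\ex_x F(x) = \sum_{\sum_\chi n_\chi\chi=0}\prod_\chi \hat{g}_\chi(n_\chi)\ge \prod_\chi \hat{g}_\chi(0)=\prod_\chi\rho(\chi)$, since every summand is nonnegative.
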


The second inequality is stated in~\cite{TaoVuBook} only for the Bohr sets of constant radius, but the same covering proof works in the more general case.\\

Next, we show that the difference set of a very dense subset of a Bohr set contains a slightly shrunk version of that Bohr set.

\begin{lemma}\label{almostfullBohr} Let $A \subset B(\Gamma; \rho)$ be a set of size at least $(1 - 4^{-k-1})|B(\Gamma; \rho)|$ where $k$ is the codimension of the Bohr set. Then $A - A \supseteq B(\Gamma; \rho/2)$.\end{lemma}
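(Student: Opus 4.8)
The statement to prove is: if $A \subseteq B(\Gamma;\rho)$ with $|A| \geq (1 - 4^{-k-1})|B(\Gamma;\rho)|$ where $k = |\Gamma|$, then $A - A \supseteq B(\Gamma;\rho/2)$.

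My plan is the standard "popularity / pigeonhole on translates" argument. Fix any $x \in B(\Gamma;\rho/2)$; I want to produce $a, a' \in A$ with $a - a' = x$, equivalently to show $A \cap (A + x) \neq \emptyset$. The key geometric observation is that the translate $B(\Gamma;\rho/2) + x$ is contained in $B(\Gamma;\rho)$: indeed for $y \in B(\Gamma;\rho/2)$ and any $\chi \in \Gamma$ we have $\|\chi(y+x)\|_{\mathbb{T}} \leq \|\chi(y)\|_{\mathbb{T}} + \|\chi(x)\|_{\mathbb{T}} \leq \rho/2 + \rho/2 = \rho$ by the triangle inequality on $\mathbb{T}$. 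Hence both $B(\Gamma;\rho/2)$ and its translate $B(\Gamma;\rho/2) + x$ sit inside $B(\Gamma;\rho)$.

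Now the counting. Both $B(\Gamma;\rho/2)$ and $B(\Gamma;\rho/2)+x$ have the same size, call it $m$, and by Lemma~\ref{basicbohrsizel} (the doubling estimate $|B(\Gamma;\rho)| \leq 4^{k}|B(\Gamma;\rho/2)|$, applied with $\rho/2$ in place of $\rho$) we have $m \geq 4^{-k}|B(\Gamma;\rho)|$. The complement $B(\Gamma;\rho) \setminus A$ has size at most $4^{-k-1}|B(\Gamma;\rho)|$. If $A \cap (A + x) = \emptyset$, then for each point of $B(\Gamma;\rho/2)+x$, that point fails to lie in $A$ or its preimage under $-x$ fails to lie in $A$; more precisely the sets $B(\Gamma;\rho/2)+x$ misses $A$ at every point of $(B(\Gamma;\rho/2)+x) \setminus A$, and the translate $B(\Gamma;\rho/2)$ misses $A$ at every point of $B(\Gamma;\rho/2)\setminus A = ((B(\Gamma;\rho/2)+x)\setminus A') - x$ where $A' = (A+x)\cap(B(\Gamma;\rho/2)+x)$. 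The cleanest way: $|(B(\Gamma;\rho/2)+x) \cap A| + |B(\Gamma;\rho/2) \cap (A - x)| = |(B(\Gamma;\rho/2)+x)\cap A| + |(B(\Gamma;\rho/2)+x)\cap(A+x)|$, and if these two subsets of $B(\Gamma;\rho/2)+x$ were disjoint their union would have size $\leq |B(\Gamma;\rho/2)+x| = m$; but each has size $\geq m - |B(\Gamma;\rho)\setminus A| \geq m - 4^{-k-1}|B(\Gamma;\rho)| \geq m - \tfrac14 m$ (using $4^{-k-1}|B(\Gamma;\rho)| \leq 4^{-1} \cdot 4^{-k}|B(\Gamma;\rho)| \cdot \tfrac{4^k|B(\Gamma;\rho/2)|}{|B(\Gamma;\rho)|}$... more simply $4^{-k-1}|B(\Gamma;\rho)| \leq 4^{-1} m$ is exactly what the doubling bound $|B(\Gamma;\rho)| \leq 4^k m$ gives). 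So their total is $\geq \tfrac32 m > m$, forcing a common point — a contradiction. That common point $z$ lies in $A$ and in $A + x$, so $z = a = a' + x$ with $a, a' \in A$, i.e. $x = a - a' \in A - A$.

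I do not expect a real obstacle here; the only things to be careful about are (i) invoking the doubling lemma with the right radius so the exponent is $4^k$ and not something larger, and (ii) getting the inequality chain to close — the margin is exactly tight ($\tfrac14 + \tfrac14 < 1$ after doubling the complement bound), which is presumably why the hypothesis has the specific constant $4^{-k-1}$. I would write the counting step as: two subsets of a set of size $m$, each of size $> m/2$, must intersect, after checking $m - 4^{-k-1}|B(\Gamma;\rho)| > m/2 \iff 4^{-k-1}|B(\Gamma;\rho)| < m/2 \iff |B(\Gamma;\rho)| < 2 \cdot 4^k m$, which follows from $|B(\Gamma;\rho)| \leq 4^k m$ strictly or with room to spare.
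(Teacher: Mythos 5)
Your proof is correct and follows essentially the same route as the paper: the same triangle-inequality observation that the translate $B(\Gamma;\rho/2) + x$ stays inside $B(\Gamma;\rho)$, the same invocation of the doubling estimate $|B(\Gamma;\rho)| \leq 4^k|B(\Gamma;\rho/2)|$ from Lemma~\ref{basicbohrsizel}, and the same inclusion-exclusion pigeonhole on two subsets of size at least $\tfrac34 m$ inside a set of size $m$. The paper intersects with $B(\Gamma;\rho/2)$ rather than with its translate and writes $A - x$ where you write $A + x$, but those are cosmetic differences; the counting step and the crucial factor of $\tfrac14$ coming from $4^{-k-1} \cdot 4^k$ are identical.
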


\begin{proof}
    By the previous lemma, $|B(\Gamma; \rho/2)| \geq 4^{-k}|B(\Gamma; \rho)|$. Let $x \in B(\Gamma; \rho/2)$ be arbitrary. Then, $|(A - x) \cap B(\Gamma; \rho/2)| = |A \cap (x + B(\Gamma; \rho/2))| \geq \frac{3}{4}|B(\Gamma; \rho/2)|$. In particular, the same holds for $x = 0$, so we have
    \begin{align*}|A\cap (A - x) \cap B(\Gamma; \rho/2)| = & |B(\Gamma; \rho/2)| - |B(\Gamma; \rho/2) \setminus (A \cup A - x)|\\
    \geq &|B(\Gamma; \rho/2)| - |B(\Gamma; \rho/2) \setminus A| - |B(\Gamma; \rho/2) \setminus (A - x)| \geq \frac{1}{2}|B(\Gamma; \rho/2)|.\end{align*}
    In particular, $A\cap (A - x) \not= \emptyset$, so $x \in A - A$.
\end{proof}

Next elementary result tells us that characters efficiently separate group elements.

\begin{lemma}\label{charseparation}
    Let $S \subseteq G$ be a set of size $k$. Then there exists a Bohr set $B = B(\Gamma, 1/10)$ with $|\Gamma| \leq O(\log k)$ such that all translates $s + B$ for $s \in S$ are disjoint. In other words, homomorphism $\chi : G \to \mathbb{T}^d$, given by $\{\chi_1, \dots, \chi_d\} = \Gamma$, $\frac{1}{10}$-separates the points of $S$.
\end{lemma}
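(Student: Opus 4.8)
\textbf{Proof plan for Lemma~\ref{charseparation}.}

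The plan is to find, for each pair of distinct elements $s, s' \in S$, a single character that ``sees'' the difference $s - s'$, i.e.\ a character $\chi$ with $\tn{\chi(s - s')}$ bounded away from $0$. Since $S$ has size $k$, there are at most $\binom{k}{2} \leq k^2$ nonzero differences $s - s' \in D := (S - S) \setminus \{0\}$ to separate. For a fixed nonzero $g \in G$, a uniformly random character $\chi \in \hat G$ satisfies $\chi(g) = 0$ with probability exactly $1/\mathrm{ord}(g) \leq 1/2$, so $\mathbb{P}[\tn{\chi(g)} \geq 1/\mathrm{ord}(g)] \geq 1/2$; in particular $\mathbb{P}[\tn{\chi(g)} > 0] \geq 1/2$. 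This is not quite enough, as I want $\tn{\chi(g)}$ bounded below by an absolute constant rather than just positive, so I would instead observe that for any nonzero $g$ there is \emph{some} character $\chi$ with $\tn{\chi(g)} \geq 1/3$ (indeed, the characters $\chi(g)$ as $\chi$ ranges over $\hat G$ fill out the subgroup $\frac{1}{\mathrm{ord}(g)}\mathbb{Z}/\mathbb{Z}$ of $\mathbb{T}$, which always contains a point at distance $\geq 1/3$ from $0$ unless $\mathrm{ord}(g) = 1$), and then a random character has $\tn{\chi(g)} \geq 1/3$ with probability at least $1/\mathrm{ord}(g) \geq \dots$ — this last bound degrades with the order, so a cleaner route is: a random $\chi$ has $\tn{\chi(g)} \geq 1/10$ with probability bounded below by an absolute constant $\beta > 0$. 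To see this, split into the case $\mathrm{ord}(g) \leq 5$, where the specific nonzero multiples of $1/\mathrm{ord}(g)$ can be checked to have at least one (in fact a constant fraction) at distance $\geq 1/10$, and the case $\mathrm{ord}(g) \geq 6$, where the fraction of elements of $\frac{1}{\mathrm{ord}(g)}\mathbb{Z}/\mathbb{Z}$ at distance $\geq 1/10$ from $0$ is at least, say, $3/4$.

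Granting that a random $\chi$ separates a fixed nonzero $g$ at scale $1/10$ with probability $\geq \beta$, I would take $\Gamma$ to be a union of $t$ independent uniformly random characters, where $t = O(\log k)$ is chosen so that $(1 - \beta)^t < k^{-2}$; concretely $t = \lceil 2\log k / \log(1/(1-\beta)) \rceil = O(\log k)$. By a union bound over the at most $k^2$ elements $g \in D$, with positive probability every $g \in D$ satisfies $\tn{\chi(g)} \geq 1/10$ for at least one $\chi \in \Gamma$. Fix such a $\Gamma$. Then for distinct $s, s' \in S$, writing $g = s - s' \in D$, there is $\chi \in \Gamma$ with $\tn{\chi(s) - \chi(s')} = \tn{\chi(g)} \geq 1/10$. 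If $(s + B(\Gamma; 1/10)) \cap (s' + B(\Gamma; 1/10))$ were nonempty, there would be $x$ with $s + x = s' + x'$ for some $x, x' \in B(\Gamma; 1/10)$, whence $g = s - s' = x' - x$ and $\tn{\chi(g)} \leq \tn{\chi(x')} + \tn{\chi(x)} \leq 1/10 + 1/10 = 1/5$; but this does not immediately contradict $\tn{\chi(g)} \geq 1/10$. To fix the constants I would instead run the argument with separation threshold $1/4$ (so a random character separates at scale $1/4$ with some absolute probability $\beta' > 0$, again by the order-split analysis) and the Bohr set of radius $1/10$: then $\tn{\chi(g)} \geq 1/4 > 1/5 \geq \tn{\chi(x)} + \tn{\chi(x')}$, a contradiction, so the translates $s + B(\Gamma; 1/10)$ are pairwise disjoint. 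The final sentence of the statement is just a reformulation: the homomorphism $\chi = (\chi_1, \dots, \chi_d) : G \to \mathbb{T}^d$ with $\Gamma = \{\chi_1, \dots, \chi_d\}$ then satisfies $\max_i \tn{\chi_i(s - s')} \geq 1/10$ for all distinct $s, s' \in S$, i.e.\ it $\tfrac{1}{10}$-separates $S$.

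The main obstacle is purely a bookkeeping matter: getting the probabilistic lower bound ``$\mathbb{P}_\chi[\tn{\chi(g)} \geq \text{const}] \geq \beta$'' uniform over all nonzero $g \in G$, since elements of small order contribute only finitely many values in $\mathbb{T}$ and one must check by hand that at least a constant fraction of them is far from $0$ at the chosen scale — the choices of $1/4$ and $1/10$ above are tuned so that both the small-order check and the triangle-inequality step in the disjointness argument go through. Everything else (the union bound, the choice $t = O(\log k)$, the translation into the separating-homomorphism language) is routine.
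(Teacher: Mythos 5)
Your proposal is correct and the core idea is the same as the paper's: for a fixed nonzero $g$, a uniformly random character $\chi$ satisfies $\tn{\chi(g)} \geq \mathrm{const}$ with probability bounded below by an absolute constant (the paper uses threshold $1/5$ and probability $\geq 1/2$, which one checks by hand for small orders), and then one combines characters to separate all $O(k^2)$ nonzero differences in $S - S$. The implementation differs slightly: you draw $t = O(\log k)$ characters independently at once and apply a union bound to make the failure probability below $k^{-2}$, whereas the paper proceeds iteratively, maintaining the set $P$ of still-unseparated pairs and using linearity of expectation to argue that each new random character can be chosen to halve $|P|$, terminating after $\log_2 \binom{k}{2}$ steps. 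Both routes give the same $O(\log k)$ bound. One thing you do better than the paper: you noticed that with separation threshold $\geq 1/5$ and Bohr radius $1/10$, the triangle inequality only gives $\tn{\chi(s-s')} \leq 1/5$, which does not strictly contradict $\geq 1/5$ (and indeed for $g$ of order $5$ the probability of a strict inequality drops below $1/2$). Your choice of threshold $1/4$ cleanly avoids this boundary case; the paper's statement as written is a hair too tight and would need a slightly smaller radius or a strict threshold to be airtight.
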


\begin{proof}
    Let $\chi \in \hat{G}$ be a character chosen uniformly at random. Observe that for any $s \not= t$ in $G$ we have $\mathbb{P}(\uc{\chi(s) - \chi(t)} \geq 1/5) = \mathbb{P}(\uc{\chi(s - t)} \geq 1/5) \geq 1/2$. We iteratively find $\Gamma$ by setting initially $P = S^{(2)}$ to be the set of all pairs in $S$ and perform $\log_2 \binom{ |S|}{2}$ steps, in each of which we take a new character $\chi$ uniformly at random and those pairs in $P$ separated by $\chi$ are removed. By linearity of expectation, we can decrease the size of $P$ by a factor of 2 each time, so the procedure terminates in the claimed number of steps. 
\end{proof}

\noindent\textbf{Weak regularity.} When working with Bohr sets, it is frequently necessary to pass to well-behaved radius. Typically, this is done by finding the so-called regular Bohr sets. However, for the purposes of this paper, we use a simpler notion, which we refer to as the weak regularity. We say that a Bohr set $B(\Gamma, \rho)$ is $(\eta, \varepsilon)$-\textit{weakly regular} if 
\[|B(\Gamma, \rho + \eta) \setminus B(\Gamma, \rho)| \leq \varepsilon |G|.\]
As we shall see soon, we require a Bohr set to be weakly-regular in order to obtain a useful approximation to its Fourier coefficients.\\

Our first result on weak regularity shows that Bohr set can be made weakly regular, without affecting it as a set.

\begin{lemma}[Weak regularity I]\label{bohrwreg1}
    Let $\Gamma \subseteq \hat{G}$ of size $d$ be given and let $\rho : \Gamma \to [0,1]$ be the radius function. Let $\varepsilon > 0$ and let $\eta \leq \frac{\varepsilon}{8d}$. Then there exists $\rho' : \Gamma \to [0,1]$, with $\rho'(\chi) \in [\rho(\chi) - \eta, \rho(\chi) + \eta]$ such that $B(\Gamma, \rho') = B(\Gamma, \rho)$ and
    \[|B(\Gamma, \rho' + \eta) \setminus B(\Gamma, \rho' - \eta)| \leq \varepsilon |G|.\]
    \indent Moreover, if $\chi \in \Gamma$ has order at least $4d \varepsilon^{-1}$, then we may take $\rho'(\chi) = \rho(\chi)$.
\end{lemma}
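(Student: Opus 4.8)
The plan is to choose $\rho'$ one frequency at a time. First I would record the following reduction: if for each $\chi \in \Gamma$ I can pick $\rho'(\chi) \in [\rho(\chi)-\eta, \rho(\chi)+\eta]$ such that
\begin{itemize}
\item[(a)] $\{x \in G : \tn{\chi(x)} \le \rho'(\chi)\} = \{x \in G : \tn{\chi(x)} \le \rho(\chi)\}$, and
\item[(b)] the single-frequency annulus $S_\chi := \{x \in G : \rho'(\chi)-\eta < \tn{\chi(x)} \le \rho'(\chi)+\eta\}$ satisfies $|S_\chi| \le \tfrac{\varepsilon}{d}|G|$,
\end{itemize}
then we are done. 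Indeed, (a) intersected over $\chi \in \Gamma$ gives $B(\Gamma,\rho') = B(\Gamma,\rho)$; and if $x \in B(\Gamma,\rho'+\eta) \setminus B(\Gamma,\rho'-\eta)$ then some $\chi$ witnesses $\tn{\chi(x)} > \rho'(\chi)-\eta$ while $\tn{\chi(x)} \le \rho'(\chi)+\eta$ still holds, so $x \in S_\chi$; hence $B(\Gamma,\rho'+\eta)\setminus B(\Gamma,\rho'-\eta) \subseteq \bigcup_{\chi \in \Gamma} S_\chi$ and summing (b) over the $d$ frequencies yields the claimed bound $\varepsilon|G|$.

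Next I would exploit the structure of the value set of $\tn{\chi(\cdot)}$. If $\chi$ has order $n$ in $\hat G$, then $\chi(G)$ is cyclic of order $n$, each fibre of $\chi$ has size $|G|/n$, and $\tn{\chi(\cdot)}$ takes values in $\{j/n : 0 \le j \le n/2\}$, each value attained at most $2|G|/n$ times. Split on the size of $n$. If $n \ge 4d\varepsilon^{-1}$, just set $\rho'(\chi) = \rho(\chi)$ (so (a) is automatic): the half-open interval $(\rho(\chi)-\eta,\rho(\chi)+\eta]$ has length $2\eta$, hence contains at most $2\eta n + 1$ attained values, so $|S_\chi| \le (2\eta n + 1)\tfrac{2|G|}{n} = 4\eta|G| + \tfrac{2|G|}{n} \le \tfrac{\varepsilon}{2d}|G| + \tfrac{\varepsilon}{2d}|G|$ using $\eta \le \tfrac{\varepsilon}{8d}$ and $n \ge 4d\varepsilon^{-1}$. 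This also proves the ``moreover'' clause, since characters of order at least $4d\varepsilon^{-1}$ fall into this case with $\rho'(\chi)=\rho(\chi)$.

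For the complementary range $n < 4d\varepsilon^{-1}$ the value set is sparse: the gap $1/n$ between consecutive attained values satisfies $1/n > \tfrac{\varepsilon}{4d} \ge 2\eta$. Letting $w_- \le \rho(\chi) < w_+$ be the consecutive attained values straddling $\rho(\chi)$ (reading $w_+$ as $1$ when $\rho(\chi)$ exceeds every attained value), the ``safe'' sub-interval $[w_-+\eta, w_+-\eta)$ is nonempty, and a short endpoint comparison — using $w_- \le \rho(\chi) < w_+$ together with $w_+ - w_- > 2\eta$ — shows it meets $[\rho(\chi)-\eta,\rho(\chi)+\eta]$. Pick $\rho'(\chi)$ in that intersection: then $\rho'(\chi) \in [w_-, w_+)$ gives (a), while $(\rho'(\chi)-\eta, \rho'(\chi)+\eta] \subseteq (w_-, w_+)$ contains no attained value, so $S_\chi = \emptyset$ and (b) holds trivially. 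The step I expect to require the most care is exactly this small-order case and its boundary configurations — when $\rho(\chi)$ sits right next to $w_-$, or to $w_+$, or near $1/2$ so that a naive choice could push $\rho'(\chi)$ outside $[0,1]$ — but these are all resolved by the same bookkeeping, e.g.\ taking $\rho'(\chi) = \max(\rho(\chi)-\eta,\, w_-+\eta)$ near the top of the range. Everything else is elementary counting.
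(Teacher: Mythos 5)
Your proof is correct and follows essentially the same approach as the paper: fix $\rho'$ one frequency at a time, split frequencies according to the order of $\chi$ (equivalently $|\operatorname{Im}\chi|$), make the single-frequency annulus empty in the small-order case, and bound it directly in the large-order case where the values of $\tn{\chi(\cdot)}$ equidistribute. The only notable difference is cosmetic: for large order, the paper closes by contradiction (packing $2\eta$-separated translates into $\mathbb{T}$) where you give the direct count $(2\eta n + 1)\cdot\frac{2|G|}{n}$, which is a little cleaner; and in the small-order case you allow $\rho'(\chi)$ anywhere in $[\rho(\chi)-\eta,\rho(\chi)+\eta]\cap[w_-+\eta,w_+-\eta)$ rather than the paper's discrete choice of $\rho(\chi)$ or $\rho(\chi)\pm\eta$ — this is actually a small improvement, since the intersection is always nonempty (the constraint interval is centered at $\rho(\chi)\in[w_-,w_+)$ with radius $\eta$, while the safe zone is $[w_-,w_+)$ shrunk by $\eta$ on each side), whereas the paper's three-way choice requires a bit more care when $w_+-w_-$ is close to $2\eta$. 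The boundary configurations you flag (e.g.\ $\rho(\chi)$ near $w_\pm$ or above the largest attained value) do all resolve as you indicate.
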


\noindent\textbf{Remark.} Natural choice of $\varepsilon$ is comparable to the density of $B$ in $G$, which is about $\rho^d$. Notice that we might modify the radius function, but the Bohr set remains unchanged.

\begin{proof}
    Let $\Gamma = \{\chi_1, \dots, \chi_d\}$. Set $K = 4d \varepsilon^{-1}$ so $\eta \leq \frac{1}{2K}$. Let $I \subseteq [d]$ be the set of indices $i$ such that $|\on{Im} \chi_i| \leq K$. For each such $i$, since $\on{Im} \chi_i$ is a cyclic group of size $s_i \leq K$, we have an interval $[a_i, b_i)$ between rationals with denominator $s_i$, containing $\rho(\chi_i)$, such that $\|\chi_i(x)\|_{\mathbb{T}} < b_i$ implies $\|\chi_i(x)\|_{\mathbb{T}} \leq a_i$. We define $\rho'(\chi_i) = \rho(\chi_i) + \eta$ if $\rho(\chi_i) \leq a_i + \eta$, $\rho'(\chi_i) = \rho(\chi_i) - \eta$ if $\rho(\chi_i) \geq b_i - \eta$ and $\rho'(\chi_i) = \rho(\chi_i)$ otherwise. For $i \notin I$, simply set $\rho'(\chi_i) = \rho(\chi_i)$. Write also $\rho'_i = \rho'(\chi_i)$ to simplify the notation.\\
    
    To complete the proof, it suffices to have
    \begin{equation}
        \Big|\chi_i^{-1}\Big((\rho'_i - \eta, \rho'_i + \eta]\Big)\Big| \leq \frac{\varepsilon}{2d}|G|\label{weakregstepeqindiviudalineq}
    \end{equation} for each $i \in [d]$. If all these inequalities hold, then
    \begin{align*}|B(\Gamma, \rho' + \eta) \setminus B(\Gamma, \rho' - \eta)| \leq &\sum_{i \in [d]} |B(\Gamma \setminus \{\chi_i\}, \rho'|_{\Gamma \setminus \{\chi_i\}} + \eta) \cap \{x \in G : |\chi_i(x)| \in (\rho'_i - \eta, \rho'_i + \eta]\}|\\
    \leq & \sum_{i \in [d]} |\{x \in G : |\chi_i(x)| \in (\rho'_i - \eta, \rho'_i + \eta]\}|
\\
\leq & \varepsilon |G|.\end{align*}

    Suppose that inequality~\eqref{weakregstepeqindiviudalineq} fails for some $i \in [d]$. If it happens that $|\on{Im} \chi_i| > K$, since $\on{Im} \chi_i$ is a cyclic subgroup of $\mathbb{T}$ and $2\eta K < 1$, we get at least $K$ $2\eta$-separated elements $t_1, \dots, t_{K}$ in $\on{Im} \chi_i$ and thus $|G| \geq\sum_{j \in [K]} |\chi_i^{-1}(t_j - \eta, t_i + \eta)|$, which is a contradiction. Hence, if the inequality above fails, it follows that $|\on{Im} \chi_i| \leq K$. However, by the choice of $\rho'$, we in fact have $\chi_i^{-1}\Big((\rho_i' - \eta, \rho_i' + \eta]\Big) = \emptyset$, so the proof is complete.
\end{proof}

If we insist on having a constant radius, then we may no longer guarantee that the Bohr set itself is unchanged, but we can still get an akin result. The proof is very similar.

\begin{lemma}[Weak regularity II]\label{bohrwreg2}
    Let $\Gamma \subseteq \hat{G}$ of size $d$ be given and let $\rho> 0$ be the radius. Let $\varepsilon > 0$ and let $\eta \leq \Big(\frac{\varepsilon}{8d}\Big)^2$. Then there exists $\rho' \in [\rho - \eta, \rho + \eta]$ such that
    \[|B(\Gamma, \rho' + \eta) \setminus B(\Gamma, \rho' - \eta)| \leq \varepsilon |G|.\]
    \indent Moreover, if image of $\chi \in \Gamma$ has at least $4d \varepsilon^{-1}$ elements in $\mathbb{T}$, then we may take $\rho'(\chi) = \rho(\chi)$.
\end{lemma}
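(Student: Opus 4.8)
The plan is to transcribe the proof of Lemma~\ref{bohrwreg1}, splitting $\Gamma=\{\chi_1,\dots,\chi_d\}$ according to the size of the image, with threshold $K=4d\varepsilon^{-1}$; the one genuinely new feature is that, since $\rho'$ must now be a single scalar rather than a function of the frequency, we cannot adjust the radius separately around each small-image character and must instead find one $\rho'$ lying in a common ``gap''. This is where the square in the hypothesis on $\eta$ is used. With $K=4d\varepsilon^{-1}$ we have $\varepsilon/(8d)=1/(2K)$, hence $4\eta\le K^{-2}$; and writing
\[ T=\bigl\{\, j/s\in[0,\tfrac12] : j\in\mathbb{Z},\ 1\le s\le K \,\bigr\}, \]
every value $\|\chi(x)\|_{\mathbb{T}}$ with $x\in G$ and $|\on{Im}\chi|\le K$ lies in $T$, while any two distinct elements of $T$ differ by at least $K^{-2}$, since $|j/s-j'/s'|=|js'-j's|/(ss')\ge 1/(ss')\ge K^{-2}$ whenever $j/s\ne j'/s'$ with $s,s'\le K$.

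Exactly as in Lemma~\ref{bohrwreg1}, for every $\rho'$ we have
\[ B(\Gamma,\rho'+\eta)\setminus B(\Gamma,\rho'-\eta)\subseteq\bigcup_{i\in[d]}\bigl\{x\in G : \|\chi_i(x)\|_{\mathbb{T}}\in(\rho'-\eta,\rho'+\eta]\bigr\}, \]
so it suffices to choose $\rho'\in[\rho-\eta,\rho+\eta]$ making each set on the right small. If $s_i:=|\on{Im}\chi_i|>K$, then, since $\on{Im}\chi_i$ is the subgroup of $\mathbb{T}$ of order $s_i$ and $\chi_i$ is a surjection onto it with all fibres of size $|G|/s_i$, the window $(\rho'-\eta,\rho'+\eta]$ contains at most $2(2\eta s_i+1)$ of the values $\|\chi_i(x)\|_{\mathbb{T}}$, each arising from $|G|/s_i$ values of $x$, so the $i$-th set has size at most $(4\eta+2/s_i)|G|\le(K^{-2}+2K^{-1})|G|\le 3K^{-1}|G|=\tfrac{3\varepsilon}{4d}|G|$, for every $\rho'$. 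If $s_i\le K$, then all values of $\|\chi_i(x)\|_{\mathbb{T}}$ lie in $T$, so the $i$-th set is empty as soon as $(\rho'-\eta,\rho'+\eta]\cap T=\emptyset$.

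It remains to find $\rho'\in[\rho-\eta,\rho+\eta]$ with $(\rho'-\eta,\rho'+\eta]\cap T=\emptyset$, and here I would test the three candidates $\rho-\eta,\rho,\rho+\eta$, for which the window $(\rho'-\eta,\rho'+\eta]$ equals $(\rho-2\eta,\rho]$, $(\rho-\eta,\rho+\eta]$, or $(\rho,\rho+2\eta]$ respectively. All three lie inside $[\rho-2\eta,\rho+2\eta]$, an interval of length $4\eta\le K^{-2}$, which by $K^{-2}$-separation meets $T$ in at most two points (and in two only if they are its endpoints $\rho\pm2\eta$). A short inspection of how such a sparse set can intersect the three windows shows at least one of them is disjoint from $T$; fixing the corresponding $\rho'$ and summing the bounds above gives $|B(\Gamma,\rho'+\eta)\setminus B(\Gamma,\rho'-\eta)|\le d\cdot\tfrac{3\varepsilon}{4d}|G|<\varepsilon|G|$. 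The ``moreover'' clause is the degenerate case where no frequency has image of size $\le K$, so that $T$-avoidance is vacuous and $\rho'=\rho$ is admissible. The only mildly delicate point is this last separation-and-case-check — everything else is a direct copy of Lemma~\ref{bohrwreg1} — and the reason the exponent on $\eta$ jumps from $1$ to $2$ is precisely that we now need $4\eta$ below the global spacing $K^{-2}$ of $T$, not merely below one gap of length $\ge K^{-1}$.
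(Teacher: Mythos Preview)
Your proof is correct and follows essentially the same approach as the paper: both split $\Gamma$ at the threshold $K=4d\varepsilon^{-1}$, handle large-image characters by a direct size estimate, and for small-image characters exploit the fact that rationals with denominator at most $K$ are $K^{-2}$-separated to locate a single $\rho'$ whose window $(\rho'-\eta,\rho'+\eta]$ misses all of them. The only cosmetic difference is that the paper constructs the common gap $[a,b)$ explicitly (with $a=\max_i a_i$, $b=\min_i b_i$) and defines $\rho'$ by a case split on where $\rho$ sits in $[a,b)$, whereas you run the same separation argument as a pigeonhole among the three candidates $\rho,\rho\pm\eta$ --- indeed your windows for $\rho-\eta$ and $\rho+\eta$ are disjoint with union of length $4\eta\le K^{-2}$, so one of them already suffices.
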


\begin{proof}
    Let $\Gamma = \{\chi_1, \dots, \chi_d\}$. Set $K = 4d \varepsilon^{-1}$. By assumptions $\eta \leq \frac{1}{4K^2}$. Let $I \subseteq [d]$ be the set of indices $i$ such that $|\on{Im} \chi_i| \leq K$. For each such $i$, since $\on{Im} \chi_i$ is a cyclic group of size $s_i \leq K$, we have an interval $[a_i, b_i)$ between rationals with denominator $s_i$, containing $\rho$, such that $\|\chi_i(x)\|_{\mathbb{T}} < b_i$ implies $\|\chi_i(x)\|_{\mathbb{T}} \leq a_i$. We set $a = \max_{i \in I} a_i$ and $b = \min_{i \in I} b_i$. We define $\rho'$ as follows
    \[\rho' = \begin{cases} 
        a + \eta,\text{ if } \rho < a + \eta\\
        b - \eta,\text{ if } \rho > b - \eta\\
        \rho,\text{ otherwise}.
    \end{cases}\]

    Note that $b \geq a + \frac{1}{K^2}$, as $a$ and $b$ are distinct rationals with denominators among $s_1, \dots, s_d$.\\
    
    As in the previous proof, it suffices to have
    \begin{equation}
        \Big|\chi_i^{-1}\Big((\rho' - \eta, \rho' + \eta]\Big)\Big| \leq \frac{\varepsilon}{2d}|G|\label{weakregstepeq}
    \end{equation} for each $i \in [d]$. If all these inequalities hold, then
    \begin{align*}|B(\Gamma, \rho' + \eta) \setminus B(\Gamma, \rho' - \eta)| \leq &\sum_{i \in [d]} |B(\Gamma \setminus \{\chi_i\}, \rho' + \eta) \cap \{x \in G : |\chi_i(x)| \in (\rho' - \eta, \rho' + \eta]\}|\\
    \leq & \sum_{i \in [d]} |\{x \in G : |\chi_i(x)| \in (\rho' - \eta, \rho' + \eta]\}|
\\
\leq & \varepsilon |G|.\end{align*}

    Suppose that inequality~\eqref{weakregstepeq} fails for some $i \in [d]$. If it happens that $|\on{Im} \chi_i| > K$, since $\on{Im} \chi_i$ is a cyclic subgroup of $\mathbb{T}$, we get at least $K$ $2\eta$-separated elements $t_1, \dots, t_{2K}$ in $\on{Im} \chi_i$ and thus $|G| \geq\sum_{j \in [K]} |\chi_i^{-1}(t_j - \eta, t_i + \eta)|$, which is a contradiction. Hence, if the inequality above fails, it follows that $|\on{Im} \chi_i| \leq K$. However, by the choice of $\rho'$, we in fact have $\chi_i^{-1}\Big((\rho' - \eta, \rho' + \eta]\Big) = \emptyset$, so the proof is complete.
\end{proof}

\noindent\textbf{Large spectrum of Bohr sets.} We now turn to the issue of the size of Bohr sets in more detail. In the paper, we shall frequently require more precise information than provided by Lemma~\ref{basicbohrsizel}. To that end, we derive an approximation for the values of the Fourier transform of the indicator function of a Bohr set. The value for the zero character will give the approximation on the size of the Bohr set.\\

For $\rho, \eta > 0$, let us introduce an auxiliary bump function $\mathsf{b}_{\rho, \eta} : \mathbb{T} \to [0,1]$, given by 
\begin{equation}\mathsf{b}_{\rho, \eta}(x) = \begin{cases}
1,&\text{ when }x\in [-\rho, \rho]\\
\frac{\rho + \eta - \uc{x}}{\eta}, &\text{ when }x\in [-\rho -\eta, -\rho] \cup [\rho, \rho + \eta]\\
0, & \text{ when }x\notin [-\rho-\eta, \rho+\eta]
\end{cases},\label{bumpdefinition}\end{equation}
which is equal to the convolution $\eta^{-1} I \ast J(x)$, where $I$ and $J$ are the indicator functions of the intervals $[-\rho -\eta/2, \rho + \eta/2]$ and $[-\eta/2, \eta/2]$. The Fourier transform of the bump function is very well-behaved.

\begin{fact}[Claims 24 and 25 in \cite{generalBilBog}]\label{bumpfunctionsfact}
    For any $\xi \in \mathbb{Z}\setminus \{0\}$, we have $|\widehat{\mathsf{b}_{\rho, \eta}}(\xi)| \leq \eta^{-1}/|\xi|^2$. Furthermore, if $S \subseteq \mathbb{Z}$ is any set containing the interval $[-L, L]$ then 
    \[\Big|\mathsf{b}_{\rho, \eta}(x) - \sum_{\xi \in S} \widehat{\mathsf{b}_{\rho, \eta}}(\xi) \on{e}(\xi x)\Big| \leq 2\eta^{-1}/L\]
    holds for all $x \in \mathbb{T}$.
\end{fact}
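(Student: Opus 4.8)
The plan is to exploit the convolution structure $\mathsf{b}_{\rho,\eta} = \eta^{-1}\, I \ast J$ recorded in the statement, where $I$ and $J$ are the indicator functions of the intervals $[-\rho-\eta/2,\rho+\eta/2]$ and $[-\eta/2,\eta/2]$. By the convolution theorem on $\mathbb{T}$ we have $\widehat{\mathsf{b}_{\rho,\eta}}(\xi) = \eta^{-1}\hat I(\xi)\hat J(\xi)$. For the first claim I would compute the Fourier coefficient of an interval indicator directly: for $[-\ell,\ell]$ and $\xi \neq 0$,
$$\widehat{\id_{[-\ell,\ell]}}(\xi) = \int_{-\ell}^{\ell}\on{e}(-\xi t)\,dt = \frac{\sin(2\pi\xi\ell)}{\pi\xi},$$
so $|\hat I(\xi)| \leq \tfrac{1}{\pi|\xi|}$ and likewise $|\hat J(\xi)| \leq \min\{\eta,\tfrac{1}{\pi|\xi|}\}$. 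Feeding the second of these two bounds into the product gives $|\widehat{\mathsf{b}_{\rho,\eta}}(\xi)| \leq \eta^{-1}\cdot\tfrac{1}{\pi|\xi|}\cdot\tfrac{1}{\pi|\xi|} = \tfrac{\eta^{-1}}{\pi^2|\xi|^2} \leq \eta^{-1}/|\xi|^2$, which is the desired estimate.

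For the second claim, I would first observe that $\mathsf{b}_{\rho,\eta}$ is continuous (it is piecewise linear), and that by the bound just proved, together with the trivial estimate $\widehat{\mathsf{b}_{\rho,\eta}}(0) \leq 2\rho+\eta$, its Fourier coefficients are absolutely summable; hence the Fourier series of $\mathsf{b}_{\rho,\eta}$ converges uniformly, and its sum, being continuous, must equal $\mathsf{b}_{\rho,\eta}$ pointwise. Therefore, for any $x \in \mathbb{T}$,
$$\mathsf{b}_{\rho,\eta}(x) - \sum_{\xi \in S}\widehat{\mathsf{b}_{\rho,\eta}}(\xi)\on{e}(\xi x) = \sum_{\xi \notin S}\widehat{\mathsf{b}_{\rho,\eta}}(\xi)\on{e}(\xi x).$$
Since $S$ contains $[-L,L]$, every $\xi \notin S$ has $|\xi| \geq L+1$, so by the triangle inequality and the first part the right-hand side is bounded in absolute value by $\sum_{|\xi|>L}\eta^{-1}/|\xi|^2 \leq 2\eta^{-1}\sum_{n>L}n^{-2} \leq 2\eta^{-1}\int_{L}^{\infty}t^{-2}\,dt = 2\eta^{-1}/L$, as claimed.

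I expect no genuine obstacle here: the entire argument is elementary Fourier analysis on the circle. The only points requiring a modicum of care are checking that the explicit constant $\pi^{-2}$ produced by the sinc bound is safely below the claimed constant $1$, and justifying that absolute summability of the coefficients upgrades $L^2$-convergence of the Fourier series to genuine pointwise (indeed uniform) convergence, so that the displayed formula for the truncation error holds for every $x \in \mathbb{T}$. Since this is precisely Claims 24 and 25 of~\cite{generalBilBog}, one may alternatively simply cite that reference; the sketch above is given for completeness.
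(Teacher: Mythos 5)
Your proof is correct and is exactly the standard argument one would give, exploiting the convolution decomposition $\mathsf{b}_{\rho,\eta}=\eta^{-1} I\ast J$ that the paper itself records when defining the bump function. The paper does not reprove the fact but cites Claims 24 and 25 of the reference, and your sketch reproduces the intended derivation (including the correct $\pi^{-2}$ slack in the first bound and the integral comparison $\sum_{n>L}n^{-2}\le L^{-1}$ in the second).
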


We may now state the result on approximations of the Fourier coefficients of Bohr sets. We use a slightly different notation for Bohr sets this time, namely for $\gamma_1, \dots, \gamma_r \in \hat{G}$ and $\rho_1, \dots, \rho_r > 0$ we write $B(\gamma_{[r]}, \rho_{[r]})$ for the set of all $x$ such that $\tn{\gamma_i(x)} \leq \rho_i$. This is closely related to the functional notation, but the one used here is more convenient as we shall be interested in vanishing linear combinations of characters.

\begin{proposition}[Spectrum of Bohr sets]\label{bohrsizeLargeFC} There exists an absolute constant $\mathsf{C}_{\on{spec}}$ with the following property. Let $r\in \mathbb{N}$ and let $\eta, \varepsilon > 0$. Then, there exist a positive integer $K \leq \mathsf{C}_{\on{spec}} r \varepsilon^{-1} \eta^{-1}$ for which the following holds.\\
\indent Suppose that $\gamma_1, \dots, \gamma_r \in \hat{G}$ and $\rho_1, \dots, \rho_r > 0$ satisfy 
\begin{equation}|B(\gamma_{[r]}; (\rho_i + \eta)_{i \in [r]}) \setminus B(\gamma_{[r]}; \rho_{[r]})| \leq \varepsilon |G|/2.\label{weakregforbohrfcapprox}\end{equation}
Then for any $\tau \in \hat{G}$ and any $L_1, \dots, L_r \geq K$
\[\Big|\widehat{\id_{B(\gamma_{[r]}; \rho_{[r]})}}(\tau) - \sum_{\ssk{\lambda_1 \in [-L_1, L_1], \dots, \\\lambda_r \in [-L_r, L_r]}} \id\Big(\sum_{i \in [r]} \lambda_i \gamma_i = \tau\Big) \prod_{i \in [r]} \widehat{\mathsf{b}_{\rho_i, \eta}}(\lambda_i)\Big| \leq \varepsilon.\]
\end{proposition}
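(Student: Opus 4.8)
The plan is to write $\id_{B(\gamma_{[r]};\rho_{[r]})}$ as a product over $i\in[r]$ of the indicator of $\tn{\gamma_i(x)}\le\rho_i$, replace each such sharp indicator by the bump function $\mathsf{b}_{\rho_i,\eta}(\gamma_i(x))$ up to a small $L^1$-error controlled by the weak-regularity hypothesis~\eqref{weakregforbohrfcapprox}, then expand each $\mathsf{b}_{\rho_i,\eta}$ into its (truncated) Fourier series using Fact~\ref{bumpfunctionsfact}, and finally take the Fourier coefficient at $\tau$, collecting the contributions into sums over vanishing linear combinations $\sum_i\lambda_i\gamma_i=\tau$. The constant $\mathsf{C}_{\on{spec}}$ and the integer $K$ will come out of making the truncation errors in the bump expansions total at most $\varepsilon/2$ or so, with the weak-regularity error absorbing the other half.

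First I would set $g(x)=\prod_{i\in[r]}\mathsf{b}_{\rho_i,\eta}(\gamma_i(x))$ and observe that $g$ and $\id_{B}$ agree on $B(\gamma_{[r]};\rho_{[r]})$ (where both equal $1$), vanish outside $B(\gamma_{[r]};(\rho_i+\eta)_i)$, and lie in $[0,1]$ in between; hence $\|g-\id_B\|_{\ell^1(G)}\le |B(\gamma_{[r]};(\rho_i+\eta)_i)\setminus B(\gamma_{[r]};\rho_{[r]})|\le \varepsilon|G|/2$, so $|\hat g(\tau)-\widehat{\id_B}(\tau)|\le\varepsilon/2$ for every $\tau$. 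Next, for the main term, using Fact~\ref{bumpfunctionsfact} with $S=[-L_i,L_i]$ I would write $\mathsf{b}_{\rho_i,\eta}(t)=\sum_{|\lambda_i|\le L_i}\widehat{\mathsf{b}_{\rho_i,\eta}}(\lambda_i)\on{e}(\lambda_i t)+r_i(t)$ with $\|r_i\|_\infty\le 2\eta^{-1}/L_i$. Substituting $t=\gamma_i(x)$ and multiplying out the $r$ factors, the telescoping/hybrid estimate gives that $g(x)$ differs from $\sum_{\lambda\in\prod_i[-L_i,L_i]}\big(\prod_i\widehat{\mathsf{b}_{\rho_i,\eta}}(\lambda_i)\big)\on{e}\big(\sum_i\lambda_i\gamma_i(x)\big)$ by at most $\sum_{i\in[r]}(2\eta^{-1}/L_i)\cdot(\text{product of }\ell^\infty\text{ norms of the other terms})$; since $\|\mathsf{b}\|_\infty\le1$ and the partial Fourier sums are bounded by $\|\mathsf{b}\|_\infty+\|r_i\|_\infty\le 2$, this pointwise error is at most $2^r r\cdot 2\eta^{-1}/\min_iL_i$. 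Taking the Fourier coefficient at $\tau$ of that explicit exponential sum produces exactly $\sum_{\lambda}\id(\sum_i\lambda_i\gamma_i=\tau)\prod_i\widehat{\mathsf{b}_{\rho_i,\eta}}(\lambda_i)$, and combining with the $\|g-\id_B\|_{\ell^1}$ bound gives the claim once $\min_iL_i\ge K\ge \mathsf{C}_{\on{spec}}\,r\,\varepsilon^{-1}\eta^{-1}$ (with the $2^r$ absorbed — I should double-check whether the intended bound is genuinely $O(r\varepsilon^{-1}\eta^{-1})$ or whether the hybrid argument can be arranged to avoid the $2^r$, e.g. by expanding one factor at a time against the already-truncated product and using that each truncated bump is nonnegative and bounded by its own sup, keeping all intermediate products bounded by $1$ rather than $2^r$).

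The step I expect to be the main obstacle is exactly this last bookkeeping point: getting the product-of-errors from the $r$-fold expansion of $\prod_i\mathsf{b}_{\rho_i,\eta}(\gamma_i(x))$ down to a clean $O(r\eta^{-1}/L)$ bound without an exponential-in-$r$ loss. The clean way is a hybrid argument: write $\prod_{i}\mathsf{b}_i - \prod_i(\text{trunc}_i) = \sum_{j}\big(\prod_{i<j}\mathsf{b}_i\big)\,(\mathsf{b}_j-\text{trunc}_j)\,\big(\prod_{i>j}\text{trunc}_i\big)$, and bound each summand by $\|\mathsf{b}_j-\text{trunc}_j\|_\infty$ times $\prod_{i<j}\|\mathsf{b}_i\|_\infty\le1$ times $\prod_{i>j}\|\text{trunc}_i\|_\infty$. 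The only mild subtlety is that a truncated Fourier series of a nonnegative bump need not be nonnegative, so $\|\text{trunc}_i\|_\infty\le\|\mathsf{b}_i\|_\infty+\|r_i\|_\infty\le 1+2\eta^{-1}/L_i\le 2$; choosing $L_i\ge K$ large enough that $2\eta^{-1}/L_i\le 1/r$ makes every such factor at most $1+1/r\le e$, so the product over the (at most $r$) truncated factors is at most $e$, and the total error is $\le e\,\sum_j 2\eta^{-1}/L_j\le 2e\,r\eta^{-1}/\min_jL_j$, which is $\le\varepsilon/2$ once $K\ge 4e\,r\,\varepsilon^{-1}\eta^{-1}$. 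Everything else — the $\ell^1$ comparison of $g$ with $\id_B$, taking Fourier coefficients of a finite exponential sum, and identifying the main term — is routine, so I would keep those parts brief.
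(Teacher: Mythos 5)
Your proposal is correct and follows essentially the same route as the paper: compare $\id_B$ with $\prod_i\mathsf{b}_{\rho_i,\eta}(\gamma_i(\cdot))$ in $\ell^1$ via the weak-regularity hypothesis, then replace each bump by its truncated Fourier series via Fact~\ref{bumpfunctionsfact} and control the product error with the same telescoping/hybrid sum and the bound $\|s_i\|_\infty\le 1+2\eta^{-1}/K\le 1+1/r$, arriving at $K=4e\,r\,\varepsilon^{-1}\eta^{-1}$. The worry you raised about a $2^r$ loss is exactly the pitfall the paper avoids by the same $(1+1/r)^r\le e$ bookkeeping, so no gap remains.
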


\begin{proof}
    Write $\mathsf{b}_i$ for the bump function $\mathsf{b}_{\rho_i, \eta}$. From the definition of Bohr sets, the product
    \[\mathsf{b}_1(\gamma_1(x))\cdots \mathsf{b}_r(\gamma_r(x))\]
    is 1 when $x \in B(\gamma_{[r]}; \rho_{[r]})$ and vanishes when $ x \notin B(\gamma_{[r]}; (\rho_i + \eta)_{i \in [r]})$. From the assumption~\eqref{weakregforbohrfcapprox} we obtain
    \[\Big|\widehat{\id_{B(\gamma_{[r]}; \rho_{[r]})}}(\tau) - \exx_{x \in G} \mathsf{b}_1(\gamma_1(x))\cdots \mathsf{b}_r(\gamma_r(x)) \on{e}(-\tau(x))\Big| \leq \frac{|B(\gamma_{[r]}; (\rho_i + \eta)_{i \in [r]}) \setminus B(\gamma_{[r]}; \rho_{[r]})| }{|G|} \leq \varepsilon/2.\]

    We estimate $\ex_{x \in G} \mathsf{b}_1(\gamma_1(x))\cdots \mathsf{b}_r(\gamma_r(x)) \on{e}(-\tau(x))$ using Fourier analysis on the unit circle $\mathbb{T}$. Let $K = 4 e \eta^{-1} \varepsilon^{-1} r$, which is chosen so that $\frac{2\eta^{-1}}{K} \leq \frac{\varepsilon}{2 e r}$. For each $i \in [r]$, let $s_i(x) = \sum_{\xi \in [-L_i, L_i]} \widehat{\mathsf{b}_i}(\xi) \on{e}(\xi x)$. By Fact~\ref{bumpfunctionsfact} we have $\|\mathsf{b}_i - s_i\|_{L^\infty} \leq 2\eta^{-1}/{L_i} \leq 2\eta^{-1}/K$. In particular, $\|s_i\|_{L^\infty} \leq 1 + 2\eta^{-1}/K$. Hence, by the triangle inequality
    \begin{align*}\Big|\exx_{x \in G} & \mathsf{b}_1(\gamma_1(x))\cdots \mathsf{b}_r(\gamma_r(x)) \on{e}(-\tau(x)) - \exx_{x \in G} s_1(\gamma_1(x))\cdots s_r(\gamma_r(x)) \on{e}(-\tau(x))\Big|\\
    = & \Big|\sum_{i \in [r]} \exx_{x \in G} \mathsf{b}_1(\gamma_1(x))\cdots \mathsf{b}_{i - 1}(\gamma_{i-1}(x))\Big(\mathsf{b}_i(\gamma_i(x)) - s_i(\gamma_i(x))\Big)\Big)s_{i + 1}(\gamma_{i + 1}(x))\cdots s_r(\gamma_r(x)) \on{e}(-\tau(x))\Big|\\
    \leq &  \sum_{i \in [r]}  \|\mathsf{b}_1 \|_{L^\infty} \cdots \|\mathsf{b}_{i - 1}\|_{L^\infty} \|\mathsf{b}_i  - s_i \|_{L^\infty} \|s_{i + 1}\|_{L^\infty} \cdots \|s_r \|_{L^\infty}\\
    \leq & \sum_{i \in [r]} \frac{2\eta^{-1}}{K}\Big(1 + \frac{2\eta^{-1}}{K}\Big)^{r - i} \leq r \frac{\varepsilon}{2e r} (1 + 1/r)^r \leq \varepsilon/2.\end{align*}

    Hence, we conclude that 
    \[\Big|\widehat{\id_{B(\gamma_{[r]}; \rho_{[r]})}}(\tau) - \exx_{x \in G} s_1(\gamma_1(x))\cdots s_r(\gamma_r(x)) \on{e}(-\tau(x))\Big| \leq \varepsilon,\]
    and it remains to rewrite the approximant in the desired form. Expanding the definition of functions $s_i$, we have
    \begin{align*}&\exx_{x \in G} s_1(\gamma_1(x))\cdots s_r(\gamma_r(x)) \on{e}(-\tau(x)) = \exx_{x \in G} \prod_{i \in [r]} \Big(\sum_{\xi_i \in [-L_i, L_i]} \widehat{\mathsf{b}_i}(\xi_i) \on{e}(\xi_i \gamma_i(x))\Big) \on{e}(-\tau(x))\\
    &\hspace{2cm}=\sum_{\ssk{\xi_1 \in [-L_1, L_1], \dots,\\\xi_r \in [-L_r, L_r]}} \Big(\prod_{i \in [r]} \widehat{\mathsf{b}_i}(\xi_i)\Big) \exx_{x \in G} \on{e}\Big(\sum_{i \in [r]} \xi_i \gamma_i(x)-\tau(x)\Big)\\
    &\hspace{2cm}=\sum_{\ssk{\xi_1 \in [-L_1, L_1], \dots,\\\xi_r \in [-L_r, L_r]}} \Big(\prod_{i \in [r]} \widehat{\mathsf{b}_i}(\xi_i)\Big) \id\Big(\sum_{i \in [r]} \xi_i \gamma_i = \tau\Big).\qedhere\end{align*}
\end{proof}

Later in the paper, we shall need to look at the subgroups generated by Bohr sets. As expected, such subgroups can be described using characters defining the Bohr sets.\\

\begin{proposition}\label{subgroupgeneratedbybohrset}
    Let $B = B(\Gamma, \rho)$ be a Bohr set of constant radius $\rho$ and let $G' \leq G$ be the subgroup generated by $B$. Then there are characters $\chi_1, \dots, \chi_r \in \langle \Gamma \rangle_K$, where $r \leq O(|\Gamma| \log \rho^{-1})$ and $K \leq (\rho/2)^{-O(d)}$, such that $|\on{Im} \chi_i| \leq \rho^{-d}$ and $B' = \cap_{i \in [r]} \ker \chi_i$.
\end{proposition}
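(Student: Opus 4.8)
\textbf{Proof plan for Proposition~\ref{subgroupgeneratedbybohrset}.}

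The plan is to realise $G'$ as an intersection of kernels of characters, where the characters are ``rounded'' versions of the $\chi \in \Gamma$, and then bound how many such rounded characters are needed. First I would set up the rounding. For each $\chi \in \Gamma$, the image $\on{Im}\chi$ is a cyclic subgroup of $\mathbb{T}$; writing $m_\chi = |\on{Im}\chi|$, the condition $\tn{\chi(x)} \leq \rho$ is equivalent to $\chi(x) \in \{0, \pm 1/m_\chi, \dots, \pm \lfloor \rho m_\chi\rfloor/m_\chi\}$. Thus $x \in B$ iff, for every $\chi \in \Gamma$, the integer $m_\chi \chi(x) \pmod{m_\chi}$ lies in the interval $[-\lfloor \rho m_\chi\rfloor, \lfloor \rho m_\chi\rfloor]$. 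The subgroup $G' = \langle B\rangle$ is then the set of all elements expressible as bounded-length sums of elements of $B$; using the basic size bound (Lemma~\ref{basicbohrsizel}) $|B| \geq \rho^d |G|$, a standard covering/Freiman-type argument shows $G = \bigcup_{j \leq k} (jB - jB)$ for some $k \leq \rho^{-O(d)}$, and more to the point $G'$ is a subgroup of index at most $\rho^{-d}$ (since cosets of $G'$ are separated by $B$ being contained in one coset-neighbourhood), so $G/G'$ is a bounded group.

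Next I would identify $G'$ via characters. Since $G'$ is a subgroup, $G' = \bigcap \{\ker\psi : \psi \in \hat G,\ \psi|_{G'} = 0\}$, i.e. $G' = \bigcap_{\psi \in (G/G')^\wedge} \ker\psi$, and $(G/G')^\wedge$ has size $[G:G'] \leq \rho^{-d}$, so finitely many characters suffice — but I need them to lie in $\langle\Gamma\rangle_K$ with small image. The key point is that $B \subseteq \ker\psi$ forces $\psi$ to be ``almost constant'' on each $\chi$-direction: concretely, a character $\psi$ vanishes on $B$ iff it vanishes on the subgroup generated by $B$, and I claim every such $\psi$ is (a multiple of) an integer combination $\sum_{\chi\in\Gamma} \lambda_\chi \chi$ with each $|\lambda_\chi|$ controlled. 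To see this: an element $x$ with $\tn{\chi(x)}$ tiny for all $\chi\in\Gamma$ lies in $B$, and iterating, the only obstructions to membership are the genuine linear relations among the $\chi$'s; so the annihilator of $G'$ in $\hat G$ is generated by such bounded combinations. Quantitatively, running Proposition~\ref{bohrsizeLargeFC} (or the covering argument behind it) identifies a set of at most $O(|\Gamma|\log\rho^{-1})$ characters of the form $\sum\lambda_i\gamma_i$ with $|\lambda_i| \leq K = (\rho/2)^{-O(d)}$ whose common kernel is exactly $G'$; the bound $|\on{Im}\chi_i| \leq \rho^{-d}$ follows since $\chi_i$ factors through $G/G''$ where $G''$ is the intermediate group cut out so far, which has order $\leq \rho^{-d}$ at each stage. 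The number $O(|\Gamma|\log\rho^{-1})$ arises because each new character cuts the index of the remaining subgroup down by a definite factor, and the total index is at most $\rho^{-d}$ while each $\chi$ has image at most $\rho^{-d}$, so $O(d\log\rho^{-1}/\log 2)$ steps suffice, i.e. $r \leq O(|\Gamma|\log\rho^{-1})$ after absorbing $d = |\Gamma|$.

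The main obstacle I expect is making the passage from ``$\psi$ vanishes on $B$'' to ``$\psi$ is a bounded integer combination of the $\gamma_i$'' fully quantitative with the claimed bound $K \leq (\rho/2)^{-O(d)}$ on the coefficients: one has to argue that relations $\sum\lambda_i\gamma_i = \tau$ with $\tau$ annihilating $G'$ can always be taken with $|\lambda_i|$ not much larger than $\rho^{-1}$ times the orders involved, which is essentially a geometry-of-numbers statement about the lattice of relations among $\gamma_1,\dots,\gamma_d$ — exactly the sort of statement the quantitative lattice theory referenced in the introduction (Lemma~\ref{nestedLattices}, Theorem~\ref{quantLattice}) is designed to handle. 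I would feed the relation lattice into that machinery to extract a short basis and thereby control $K$, while the character count $r$ and the image bound come from the iterated index-reduction described above.
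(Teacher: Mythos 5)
Your overall plan matches the paper's: realize $G'$ as the common kernel of a small number of characters (each with small image, since they factor through $G/G'$ whose order is at most $\rho^{-d}$ by the basic size estimate), and then show each such character lies in $\langle\Gamma\rangle_K$. Both proofs get $r \leq O(|\Gamma|\log\rho^{-1})$ from $|G/G'| \leq \rho^{-d}$, although the paper does this in one shot — it invokes the classification theorem to produce an injective homomorphism $G/G' \hookrightarrow \mathbb{T}^m$, so the $m$ coordinate maps $\chi_1,\dots,\chi_m$ already have $G' = \bigcap_i\ker\chi_i$ and $m \leq O(\log[G:G'])$ — rather than your iterated ``index-reduction'' description. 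These give the same count, and your variant is fine.

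Where you go astray is in diagnosing the ``main obstacle.'' The passage from ``$\psi$ vanishes on $G'$'' to ``$\psi \in \langle\Gamma\rangle_{[-K,K]}$'' needs no geometry-of-numbers input and no appeal to Lemma~\ref{nestedLattices} or Theorem~\ref{quantLattice}. The single observation that closes the argument is this: if $\psi$ vanishes on the Bohr set $B' = B(\Gamma,\rho/2) \subseteq G'$, then $\widehat{\id_{B'}}(\psi) = \exx_{x}\id_{B'}(x)\on{e}(-\psi(x)) = |B'|/|G| \geq (\rho/2)^d$, i.e.\ $\psi$ is a \emph{large} Fourier coefficient of $\id_{B'}$. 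After making $B'$ weakly regular via Lemma~\ref{bohrwreg2}, Proposition~\ref{bohrsizeLargeFC} then says exactly that any $\tau\in\hat G$ with $|\widehat{\id_{B'}}(\tau)|$ exceeding the approximation error must lie in $\langle\Gamma\rangle_{[-K,K]}$, with $K$ controlled by the regularity parameters — which is precisely the coefficient bound $K \leq (\rho/2)^{-O(d)}$ you want. You cite the right proposition, but you seem to read it as a covering or structural statement that ``identifies a set of characters,'' rather than as a large-spectrum statement applied to the $\chi_i$ you already have in hand. With that reading corrected, the lattice detour is unnecessary, and your sketch closes.
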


\begin{proof}
    Let $d = |\Gamma|$. By classification of finite abelian groups, there exists an injective homomorphism $\chi : G/ G' \to \mathbb{T}^m$, where $m$ is the rank of $G/G'$ and hence $m \leq O(\log (|G|/ |G'|)).$ Note that each component $\chi_i$ extends to a character on $G$ which vanishes on $G'$ and in fact $G' = \cap_{i \in [m]} \ker \chi_i$. Clearly, $|\on{Im} \chi_i| \leq |G|/|G'| \leq \rho^{-d}$, by Lemma~\ref{basicbohrsizel}. It remains to prove that each $\chi_i$ is a linear combination of elements in $\Gamma$.\\

    Our goal is apply Proposition~\ref{bohrsizeLargeFC}. However, we need to find a weakly regular Bohr set first. To that end, write $\varepsilon = \frac{1}{4} 2^{-d}\rho^{d}$ and set $\eta = \frac{\varepsilon^2}{2^8d^2}$. Apply Lemma~\ref{bohrwreg2} to the Bohr set $B(\Gamma, \rho/2)$ with $\eta$. Hence, there exists $\rho' \in [\rho/2 - \eta, \rho/2 + \eta]$ such that $|B(\Gamma, \rho' + \eta) \setminus B(\Gamma, \rho')| \leq \frac{\varepsilon}{2} |G|$.\\
    \indent Write $B' = B(\Gamma, \rho/2)$ and take any $\chi_i$. In particular, $\chi_i$ vanishes on $B'$ so $|\widehat{\id_{B'}}(\chi_i)| = |B'| / |G| \geq 2^{-d}\rho^{d} = 4\varepsilon$. Since $B'$ is weakly regular, Proposition~\ref{bohrsizeLargeFC} gives us a quantity $K \leq O(d \varepsilon^{-1} \eta^{-1})$ such that, if $\chi_i \notin \langle \Gamma\rangle_{[-K, K]}$, then $|\widehat{\id_{B'}}(\chi_i)| \leq \varepsilon$. But $|\widehat{\id_{B'}}(\chi_i)|  \geq 4 \varepsilon$, which implies $\chi_i \in \langle \Gamma\rangle_{[-K, K]}$,as desired.
\end{proof}

We now recall a result from~\cite{generalBilBog}, which is a generalization of the fact that the dual of the sum of two subspaces of a vector space is given by the intersection of their duals. For a set $\Gamma = \{\gamma_1, \dots, \gamma_k\}$ of elements of an abelian group and a set of integers $S$, we write $\langle \Gamma \rangle_S$ for the set of all linear combinations $\lambda_1\gamma_1 + \dots + \lambda_k \gamma_k$, where $\lambda_1, \dots, \lambda_k \in S$. The radii of the Bohr sets in the next theorem are constant.

\begin{theorem}[Theorem 27 in~\cite{generalBilBog}]\label{bohrSum}Let $\Gamma_1, \Gamma_2 \subset \hat{G}$ and $\rho_1, \rho_2 \in (0,1)$. Then there is a positive integer $R \leq (2\rho_1^{-1})^{O(|\Gamma_1|)} + (2\rho_2^{-1})^{O(|\Gamma_2|)}$ such that
\[B(\langle \Gamma_1 \rangle_{[-R, R]} \cap \langle \Gamma_2 \rangle_{[-R, R]}, 1/4) \subseteq B(\Gamma_1, \rho_1) + B(\Gamma_2, \rho_2).\]
\end{theorem}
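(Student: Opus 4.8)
The plan is to prove that $B(\langle \Gamma_1 \rangle_{[-R,R]} \cap \langle \Gamma_2 \rangle_{[-R,R]}, 1/4)$ is contained in $B(\Gamma_1, \rho_1) + B(\Gamma_2, \rho_2)$ by a Fourier-analytic/density argument: it suffices to show that for each $x$ in the left-hand Bohr set, the number of representations $x = y_1 + y_2$ with $y_1 \in B(\Gamma_1, \rho_1/2)$, $y_2 \in B(\Gamma_2, \rho_2/2)$ (say) is positive, and this is most naturally done by estimating the convolution $\id_{B(\Gamma_1, \rho_1/2)} \ast \id_{B(\Gamma_2, \rho_2/2)}(x)$ from below. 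Expanding via Fourier inversion, this convolution equals $\sum_{\tau \in \hat G} \widehat{\id_{B_1}}(\tau)\widehat{\id_{B_2}}(-\tau)\on{e}(\tau(x))$ where $B_i$ are the shrunk Bohr sets. The main term comes from $\tau = 0$ and is $|B_1||B_2|/|G|^2 \geq (\rho_1/2)^{|\Gamma_1|}(\rho_2/2)^{|\Gamma_2|}$ by Lemma~\ref{basicbohrsizel}; the task is to show the contribution of $\tau \neq 0$ is strictly smaller in absolute value.

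The key point is that $\widehat{\id_{B_i}}(\tau)$ is only large when $\tau$ is essentially a bounded linear combination of characters in $\Gamma_i$. I would make this precise using Proposition~\ref{bohrsizeLargeFC}: after replacing $\rho_1/2, \rho_2/2$ by weakly regular radii via Lemma~\ref{bohrwreg2} (choosing $\varepsilon$ a small multiple of the product of the Bohr set densities and $\eta$ accordingly), the proposition gives an integer $K \leq (\rho_1/2)^{-O(|\Gamma_1|)} + (\rho_2/2)^{-O(|\Gamma_2|)}$ such that whenever $\tau \notin \langle \Gamma_i \rangle_{[-K,K]}$ we have $|\widehat{\id_{B_i}}(\tau)| \leq \varepsilon$. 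Setting $R$ to be (a suitable constant multiple of) this $K$, we split the Fourier sum: if $\tau \notin \langle \Gamma_1 \rangle_{[-R,R]}$ then $|\widehat{\id_{B_1}}(\tau)|$ is tiny, and similarly for $\Gamma_2$; the surviving terms have $\tau \in \langle \Gamma_1 \rangle_{[-R,R]} \cap \langle \Gamma_2 \rangle_{[-R,R]} =: \Lambda$. For such $\tau$, since $x \in B(\Lambda, 1/4)$ we have $\|\tau(x)\|_{\mathbb{T}} \leq 1/4$, so $\on{Re}\,\on{e}(\tau(x)) \geq \cos(\pi/2) = 0$ — in fact I would arrange a genuinely positive lower bound, e.g. by using a smoothed indicator (replacing $\id_{B_i}$ by a nonnegative bump-type function supported on the Bohr set, à la the function $\mathsf{b}_{\rho,\eta}$ composed with the characters) so that all Fourier coefficients with frequency in $\Lambda$ have nonnegative real part after multiplication by $\on{e}(\tau(x))$. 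Then the real part of the whole smoothed convolution is at least (main term) minus (error from $\tau \notin \Lambda$), which is positive once $\varepsilon$ is chosen small enough relative to $(\rho_1/2)^{|\Gamma_1|}(\rho_2/2)^{|\Gamma_2|}$, using the crude bound that the number of error terms is at most $|G|$ and each is at most $\varepsilon$.

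To control the number of error terms more carefully (so that $|G| \cdot \varepsilon$ is not a problem) one can instead use the $\ell^1$ bound on the Fourier coefficients of a bump function from Fact~\ref{bumpfunctionsfact}, or simply note $\sum_\tau |\widehat{\id_{B_i}}(\tau)|^2 = |B_i|/|G|$ by Parseval and use Cauchy–Schwarz to bound the off-diagonal sum by $(\sum_{\tau \notin \Lambda}|\widehat{\id_{B_1}}(\tau)|^2)^{1/2}(\sum_\tau|\widehat{\id_{B_2}}(\tau)|^2)^{1/2} \cdot \sup_{\tau \notin \Lambda}(\cdots)$ — but the cleanest route is the smoothed version where the bump's Fourier tail is genuinely summable. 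The main obstacle I anticipate is the bookkeeping of constants: one must choose $\varepsilon, \eta$ and the final radius so that the weak-regularity hypothesis~\eqref{weakregforbohrfcapprox} of Proposition~\ref{bohrsizeLargeFC} holds, so that the resulting $K$ (hence $R$) has the claimed form $(2\rho_1^{-1})^{O(|\Gamma_1|)} + (2\rho_2^{-1})^{O(|\Gamma_2|)}$, and so that the error beats the main term; getting all three to be simultaneously satisfiable with the stated bound on $R$ is where the real work lies, though it is routine. A subtlety worth checking is that the radius $1/4$ on the left is comfortably less than the quarter-period needed for positivity of $\on{Re}\,\on{e}(\tau(x))$, which it is, and that one does not need weak regularity of the left-hand Bohr set at all — only of the two shrunk Bohr sets $B_1, B_2$ whose indicators (or bumps) are being convolved.
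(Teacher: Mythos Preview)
The paper does not give a proof of this theorem: it is simply quoted as Theorem~27 of \cite{generalBilBog}, so there is nothing to compare your attempt against in the present paper.

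That said, your Fourier-analytic sketch is the standard route and is essentially correct. One point deserves care: for the ``surviving'' frequencies $\tau \in \Lambda$ you want the summand $\widehat{f_1}(\tau)\widehat{f_2}(-\tau)\,\on{e}(\tau(x))$ to have nonnegative real part, and for this it is not enough that the Bohr-set indicators are symmetric (their Fourier coefficients are real but can be negative), nor does a single bump $\mathsf{b}_{\rho,\eta}$ have nonnegative Fourier transform. The clean fix is to replace each $\id_{B(\Gamma_i,\rho_i/2)}$ by the self-convolution $\id_{B(\Gamma_i,\rho_i/2)}\ast \id_{B(\Gamma_i,\rho_i/2)}$, which is nonnegative, supported in $B(\Gamma_i,\rho_i)$, and has $\widehat{(\cdot)}(\tau) = \big(\widehat{\id_{B_i}}(\tau)\big)^2 \geq 0$; then every $\tau\in\Lambda$ contributes a nonnegative real part because $\|\tau(x)\|_{\mathbb T}\leq 1/4$. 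The tail $\tau\notin\Lambda$ is then handled exactly as you suggest, via Proposition~\ref{bohrsizeLargeFC} after weak-regularising with Lemma~\ref{bohrwreg2}, together with Parseval to make the error summable. With this tweak your plan goes through and produces $R$ of the stated shape.
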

\vspace{\baselineskip}

As a special case of the theorem, we conclude that $B(\langle \Gamma \rangle_{[-R, R]} , 1/4) \subseteq B(\Gamma, \rho)$, allowing us to replace radius by $1/4$ at the expense of using the span of the frequency set.

\subsection{Coset progressions}

In this subsection, we derive several results concerning coset-progressions. Let $C = [-L_1, L_1] \cdot v_1 + \dots + [-L_d, L_d] \cdot v_d + H$ be a proper symmetric coset progression of rank $d$ in its canonical form. We say that $C'$ is a \textit{shrinking} if $C$ if $C' = [-L'_1, L'_1] \cdot v_1 + \dots + [-L'_d, L'_d] \cdot v_d + H$ for some $L_i' \leq L_i$. Moreover, by $\varepsilon \cdot C$ we denote the choice of $C'$ where $L'_i = \lfloor \varepsilon L_i \rfloor$.\\

One way to obtain coset progressions is through Bohr sets. This is standard fact, see Theorem 4.22 in~\cite{TaoVuBook}. In this paper, we shall need a slightly stronger fact that a Bohr sets of different radii can be intertwined with shrinkings of the same coset progression, so we postpone this result to the subsection on lattice theory, where it more naturally belongs. We now quote the related fact from~\cite{generalBilBog} that coset progressions contain large Bohr sets.

\begin{proposition}[Proposition 28 in~\cite{generalBilBog}]\label{cosettobohrset}Let $C$ be a symmetric coset progression of rank $r$ and density $\alpha$ inside $G$. Then there is a positive integer $R \leq (r \log (\alpha^{-1}))^{O(1)}$ and characters $\chi_1, \dots, \chi_R \in \hat{G}$ such that $B(\chi_1, \dots, \chi_R; 1/(4R)) \subseteq C$.\end{proposition}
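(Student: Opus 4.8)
Proof proposal for Proposition~\ref{cosettobohrset} (coset progressions contain large Bohr sets).

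The plan is to reduce to a Fourier-analytic argument on the subgroup generated by the progression. Let $C = [-L_1, L_1]\cdot v_1 + \dots + [-L_r, L_r]\cdot v_r + H$ be a symmetric proper coset progression of density $\alpha$ in $G$. First I would pass to the subgroup $G' = \langle C \rangle$ generated by $C$; since $C$ is symmetric and contains $0$, one has $G' = \bigcup_{k} kC$, and because $C$ is proper, $|C| = |H|\prod_i (2L_i+1)$, so the density of $C$ inside $G'$ is at least $\alpha$ as well. Working inside $G'$ loses nothing: a Bohr set of $G'$ of the form $B(\chi_1,\dots,\chi_R; 1/(4R))$ with $\chi_i \in \hat{G'}$ extends (after lifting the $\chi_i$ to characters of $G$, which is possible since $G'$ is a direct summand-like subgroup, or simply by composing with a retraction up to index) to a Bohr set of $G$ contained in $C$. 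So assume $G = \langle C\rangle$.

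Next, the standard route: apply Plünnecke--Ruzsa / the Bogolyubov argument to the dense set $C$. Since $C$ has density $\alpha$ and small doubling (being a coset progression, $|C + C| \leq 2^r |C|$, so $|2C - 2C| \leq \alpha^{-O(1)}|C|$ by Plünnecke, hence density $\geq \alpha^{O(1)}$), the classical Bogolyubov lemma over a general finite abelian group gives that $2C - 2C \supseteq B(\Lambda; 1/4)$ for a set $\Lambda \subseteq \hat G$ of size $|\Lambda| \leq \log^{O(1)}(\alpha^{-1})$. But this produces a Bohr set inside $2C-2C$, not inside $C$ itself, so one more idea is needed. The cleanest fix is to instead run Bogolyubov on a \emph{dilate}: note that $C$ is (up to constants) comparable to $4 \cdot (\tfrac14 C)$ in the sense that $\tfrac14 C$ is still proper with comparable density, and $4 \cdot (\tfrac14 C) - \text{(rearrangement)} \subseteq$ something inside $C$. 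More robustly, use the known implication that a symmetric coset progression $C$ of rank $r$ and density $\alpha$ satisfies: for a suitable shrinking $C' = \varepsilon_0 \cdot C$ with $\varepsilon_0 \geq (r\log\alpha^{-1})^{-O(1)}$, we have $C' + C' - C' - C' \subseteq C$; then apply Bogolyubov to $C'$ (still dense and low-rank) to get $B(\Lambda; 1/4) \subseteq 2C' - 2C' \subseteq C$. Setting $R = |\Lambda|$ and replacing $1/4$ by $1/(4R)$ (which only shrinks the Bohr set) gives the claim, with $R \leq (r\log\alpha^{-1})^{O(1)}$.

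The main obstacle is getting the Bohr set inside $C$ itself rather than inside a sumset $2C-2C$, while keeping the rank of the frequency set polylogarithmic in $\alpha^{-1}$ and polynomial in $r$. This is precisely why one must first shrink $C$ to $C' = \varepsilon_0 \cdot C$: the elementary geometry of proper symmetric progressions shows that a bounded-overhead dilate of a shrinking sits inside the original, i.e.\ $C' + C' - C' - C'$ (which has the form $[-4\varepsilon_0 L_i, 4\varepsilon_0 L_i]\cdot v_i + H$ summed up) is contained in $C$ as long as $4\varepsilon_0 \leq 1$ — here properness is exactly what guarantees that no unexpected collapsing or wraparound occurs, so the coordinate-wise containment is genuine. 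The remaining pieces — Plünnecke--Ruzsa to control the doubling of $C'$, the general-group Bogolyubov lemma to extract the Bohr set, and lifting characters from $\langle C\rangle$ back to $G$ — are all standard, so the argument is short once the shrinking trick is in place. I would also double-check the bookkeeping so that the final $R$ genuinely comes out as $(r\log\alpha^{-1})^{O(1)}$: the $\varepsilon_0$-shrinking costs a polynomial factor in $r$ and $\log\alpha^{-1}$ in the density, which feeds into Bogolyubov only through a logarithm, so the bound survives.
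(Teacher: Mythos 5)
The high-level plan — shrink $C$ to $C'=\tfrac14\cdot C$ so that $C'+C'-C'-C'\subseteq C$, then apply a Bogolyubov-type lemma to the dense set $C'$ to land a Bohr set inside $2C'-2C'\subseteq C$ — is the right idea, and I cannot compare it directly against \cite{generalBilBog} since that proof is not reproduced here. However, there are two genuine gaps. First, the opening reduction to $G'=\langle C\rangle$ is wrong as written: if you lift $\chi_i\in\hat{G'}$ to $\tilde\chi_i\in\hat{G}$ (by Lemma~\ref{charextnlemma}, or by composing with a retraction when one exists), the Bohr set $B_G(\tilde\chi_1,\dots,\tilde\chi_R;1/(4R))$ is \emph{not} contained in $G'$ — elements of $G\setminus G'$ can satisfy all the constraints, so the containment in $C$ fails. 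One would need additional characters cutting out $G'$, and the image of a character vanishing on $G'$ can have order as large as $|G/G'|\approx\alpha^{-1}$, forcing a radius $\lesssim\alpha$, which is far smaller than $1/(4R)$. Fortunately this reduction is also unnecessary: run Bogolyubov on $C'$ as a subset of $G$ directly, and the resulting Bohr set $B(\Lambda;\rho)$ is automatically contained in $2C'-2C'\subseteq C\subseteq\langle C\rangle$. Just delete the reduction.

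Second, the appeal to ``the classical Bogolyubov lemma'' with $|\Lambda|\leq\log^{O(1)}(\alpha^{-1})$ is mislabeled and needs care. The classical Bogolyubov/Chang argument gives $|\Lambda|=O(\alpha^{-2})$; the polylogarithmic codimension requires the Sanders / Croot--\L aba--Sisask refinement, which is genuinely needed here, and its radius is not $1/4$ but only $\Omega(1/\log^{O(1)}\alpha^{-1})$ — you then have to check that $\rho\geq 1/(4|\Lambda|)$, which does hold because $|\Lambda|$ is a higher power of $\log\alpha^{-1}$ than $\rho^{-1}$ is, but this is a nontrivial bookkeeping point rather than a gimme. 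Relatedly, the line ``$|C+C|\leq 2^r|C|$, so $|2C-2C|\leq\alpha^{-O(1)}|C|$'' is false: the doubling of a rank-$r$ progression is $\sim 16^r$, and $r$ is not bounded by any function of $\alpha$. Your argument does not actually use this bound, but it should be cut. Also, a small clarification: properness is not what makes $C'+C'-C'-C'\subseteq C$ true (that is a purely formal coordinate-wise statement about coefficient bounds); properness is what you need to lower-bound $|C'|$ in terms of $|C|$, and since the statement of the proposition does not assume properness you should first pass to a proper sub-progression of comparable density (a standard refinement lemma), which only perturbs $\log\alpha^{-1}$ by $O(r)$ and is absorbed in the final bound.
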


We say that $A \subseteq B$ is a \emph{Freiman-subgroup} of $B$ if whenever $a, b \in A$ and $a - b \in B$ then $a - b \in A$. Another lemma from~\cite{generalBilBog} shows that Freiman-subgroups of a coset progression $C$ are closely related to $C$.

\begin{lemma}[Lemma 8 in~\cite{generalBilBog}]\label{progsbgp}Let $G$ be a finite abelian group. Let $C = [-N_1, N_1] \cdot v_1 + \dots + [-N_d, N_d] \cdot v_d + H$ be a proper symmetric coset progression of rank $d$ in its canonical form. Let $A \subseteq C$ be a Freiman-subgroup of size $|A| \geq \alpha |C|$ in $G$. Then $A$ contains $[-M_1, M_1] \cdot \ell_1 v_1 + \dots + [-M_d, M_d] \cdot \ell_d v_d + H'$ for some positive integers $\ell_1, \dots, \ell_d \leq 20\alpha^{-1}$, $M_i = \lfloor N_i/\ell_i\rfloor$ and a subgroup $H' \leq H$ of size $|H'| \geq \alpha |H|$.\end{lemma}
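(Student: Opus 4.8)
The plan is to use properness to coordinatize $C$ and to invoke the Freiman-subgroup hypothesis only for differences that are \emph{a priori} known to lie back in $C$. Write $C = [-N_1,N_1]\cdot v_1 + \dots + [-N_d,N_d]\cdot v_d + H$; by properness every $c\in C$ has a unique representation $c=\lambda_1 v_1+\dots+\lambda_d v_d+h$ with $\lambda_i\in[-N_i,N_i]\cap\mathbb{Z}$ and $h\in H$. Since $0\in C$ and $C=-C$, applying the Freiman-subgroup property to $a-a$ and then to $0-a$ shows that $0\in A$ and $A=-A$.

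\textbf{The subgroup $H'$.} Set $H':=A\cap H$. If $h,h'\in H'$ then $h-h'\in H\subseteq C$, so $h-h'\in A$ and hence $h-h'\in H'$; thus $H'$ is a genuine subgroup of $H$. To bound its size, for $\vec\lambda\in\prod_i([-N_i,N_i]\cap\mathbb{Z})$ put $S_{\vec\lambda}=\{h\in H:\sum_i\lambda_i v_i+h\in A\}$, so $|A|=\sum_{\vec\lambda}|S_{\vec\lambda}|$. For $h,h'\in S_{\vec\lambda}$ the difference of the two corresponding points of $A$ is $h-h'\in H\subseteq C$, hence lies in $A$, so $h-h'\in S_{\vec 0}=H'$; therefore each nonempty $S_{\vec\lambda}$ sits in a single coset of $H'$ and $|S_{\vec\lambda}|\le|H'|$. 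Consequently $|A|\le\big(\prod_i(2N_i+1)\big)|H'|$, and combined with $|A|\ge\alpha|C|=\alpha\big(\prod_i(2N_i+1)\big)|H|$ this yields $|H'|\ge\alpha|H|$.

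\textbf{Finding the step sizes.} Fix $i$; by symmetry take $i=1$. For $\vec c\in\prod_{j\ge2}([-N_j,N_j]\cap\mathbb{Z})$ and $h\in H$ let $\Sigma_{\vec c,h}=\{k\in[-N_1,N_1]\cap\mathbb{Z}:kv_1+\sum_{j\ge2}c_j v_j+h\in A\}$. Summing over all pairs $(\vec c,h)$ recovers $|A|\ge\alpha(2N_1+1)\prod_{j\ge2}(2N_j+1)|H|$, and there are exactly $\prod_{j\ge2}(2N_j+1)|H|$ such pairs, so by averaging some pair gives $|\Sigma|\ge\alpha(2N_1+1)$, where $\Sigma:=\Sigma_{\vec c,h}$. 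The key point is that for $u,u'\in\Sigma$ with $|u-u'|\le N_1$ the difference of the two corresponding points of $A$ equals $(u-u')v_1$, which is literally a term of the progression and hence lies in $C$; so $(u-u')v_1\in A$. Let $\ell_1$ be the least positive element of $\Sigma-\Sigma$. Then consecutive elements of $\Sigma$ differ by at least $\ell_1$, so $|\Sigma|-1\le(\max\Sigma-\min\Sigma)/\ell_1\le 2N_1/\ell_1$; together with $|\Sigma|\ge\alpha(2N_1+1)$ this forces $\ell_1\le 20\alpha^{-1}$, and when $\ell_1\le N_1$ the previous sentence gives $\ell_1 v_1\in A$. (If $|\Sigma|\le1$ or $\ell_1>N_1$ one checks $N_1<\alpha^{-1}$, and we simply take $\ell_1=N_1+1\le 20\alpha^{-1}$ so that $M_1:=\lfloor N_1/\ell_1\rfloor=0$ and the first coordinate contributes nothing.)

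\textbf{Assembling the sub-progression and the main difficulty.} Now $\ell_i v_i\in A$ for each $i$ with $M_i:=\lfloor N_i/\ell_i\rfloor\ge1$, and $H'\subseteq A$. Any point $\sum_i k_i\ell_i v_i+h'$ with $|k_i|\le M_i$ and $h'\in H'$ is built up inside $A$ by iterating the Freiman-subgroup property: using $A=-A$, one gets $k\ell_1 v_1=\ell_1 v_1-\big(-(k-1)\ell_1 v_1\big)\in A$ by induction on $k$ (the left-hand side lies in $C$ since $|k\ell_1|\le M_1\ell_1\le N_1$); then $k_1\ell_1 v_1+k_2\ell_2 v_2\in A$ as a difference of two elements of $A$ that lies in $C$ (being a point of the progression part of $C$), and so on through all coordinates, finally subtracting $-h'$. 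Hence $[-M_1,M_1]\cdot\ell_1 v_1+\dots+[-M_d,M_d]\cdot\ell_d v_d+H'\subseteq A$, as required. The one delicate point throughout is that the Freiman-subgroup hypothesis is a statement about the ambient group $G$, while all the combinatorics is carried out in the coordinate model provided by properness; the two are reconciled only for differences that are guaranteed in advance to land in $C$. This is precisely why the pigeonhole step must isolate a single column $\Sigma$—so that differences within it are pure multiples of $v_1$, automatically in $C$—rather than, say, projecting $A$ onto its first coordinate, where the analogous difference-closure would fail because the auxiliary coordinates of the two points need not match.
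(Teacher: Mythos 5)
Your proof is correct, and since the paper cites this as Lemma~8 of [generalBilBog] without reproducing a proof, there is no in-paper argument to compare against; your argument is the natural one for this statement. The three moves — taking $H'=A\cap H$ and bounding its index by noting each fibre $S_{\vec\lambda}$ lies in one $H'$-coset, pigeonholing to a dense one-variable column $\Sigma$ so that differences are pure multiples of $v_i$ and hence automatically in $C$, and then assembling the sub-progression inside $A$ by iterating the Freiman-subgroup property on differences already known to lie in $C$ — are exactly what one expects here. You were right to flag the central pitfall (the Freiman-subgroup hypothesis only bites when the difference is known beforehand to land in $C$, which is why you pigeonhole on a single column rather than projecting), and your case analysis handling $|\Sigma|\le 1$ and $\ell_1>N_1$ via $N_1<\alpha^{-1}$ and the fallback $\ell_1=N_1+1$, $M_1=0$ correctly closes the degenerate cases; the resulting bound $\ell_i\le 2\alpha^{-1}$ is comfortably inside the claimed $20\alpha^{-1}$.
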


At some point, we shall need a tiling result about coset progressions which generalizes the fact that a group can be partitioned into cosets of a given subgroup. Here is an easy lemma that achieves that.

\begin{lemma}\label{tilingLemma}
    Let $H_0 \leq G$ and $C = a + [0, N_1 -1 ] \cdot v_1 + \dots + [0, N_d - 1] \cdot v_d + H_0$ be a proper coset progression. Let $\eta > 0$ and $S = [0, N'_1 - 1] \cdot \ell_1 v_1 + \dots + [0, N'_d - 1] \cdot \ell_d v_d + H_0$ be another coset progression.\\
    If for each $i \in [d]$ we have $\ell_i N'_i \leq \frac{\eta}{d}N_i$ or $N'_i = 1$, then there exists a set of translates $T \subseteq G$, which is a progression of rank $d$, such that $S + T \subseteq C$, $|S+ T| = | S||T|$ and $|S + T| \geq (1 - \eta) |C|$.
\end{lemma}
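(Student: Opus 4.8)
The idea is to tile most of $C$ by translates of $S$, working one coordinate direction at a time, accepting a small loss in each direction. We may assume $0<\eta<1$, the conclusion being vacuous otherwise (if $\eta>d$ take $T=\emptyset$; if $1\le\eta\le d$ take $T=\{a\}$, noting that the hypothesis with $\eta\le d$ forces $\ell_iN'_i\le N_i$ in every direction so that $a+S\subseteq C$). Write the multiples of $v_i$ occurring in $S$ as $A_i=\{0,\ell_i,2\ell_i,\dots,(N'_i-1)\ell_i\}$, so $S=\sum_{i\in[d]}A_iv_i+H_0$ and $C=a+\sum_{i\in[d]}[0,N_i-1]v_i+H_0$. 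Since $C$ is proper, the parametrisation $(j_1,\dots,j_d,h)\mapsto a+\sum_i j_iv_i+h$ is a bijection from $[0,N_1-1]\times\cdots\times[0,N_d-1]\times H_0$ onto $C$; hence the whole statement reduces to choosing, for each $i$, a finite set $T_i\subseteq\mathbb{Z}_{\ge0}$ with $0\in T_i$ such that: (i) $A_i+T_i\subseteq[0,N_i-1]$; (ii) $A_i+T_i$ is a direct sum, i.e.\ its $|A_i|\,|T_i|$ expressions are distinct; and (iii) $|A_i+T_i|\ge(1-\eta/d)N_i$. Indeed, putting $T=a+\sum_iT_iv_i$, conditions (i)--(ii) together with properness of $C$ give $S+T\subseteq C$, $|T|=\prod_i|T_i|$, and $|S+T|=|H_0|\prod_i|A_i+T_i|=|H_0|\prod_i|A_i|\,|T_i|=|S|\,|T|$; while (iii) gives $|S+T|/|C|=\prod_i|A_i+T_i|/N_i\ge(1-\eta/d)^d\ge1-\eta$ by Bernoulli's inequality.

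\textbf{Construction of $T_i$.} If $N'_i=1$ then $A_i=\{0\}$ and we take $T_i=[0,N_i-1]$. If $N'_i\ge2$ we invoke the hypothesis $\ell_iN'_i\le\tfrac{\eta}{d}N_i$: set $M_i=\lfloor N_i/(\ell_iN'_i)\rfloor\ge1$ and
\[T_i=\{\, r+\ell_iN'_i\,s \;:\; 0\le r\le\ell_i-1,\ 0\le s\le M_i-1\,\}.\]
By the division algorithm $A_i+[0,\ell_i-1]=[0,\ell_iN'_i-1]$ with all sums distinct, and then $[0,\ell_iN'_i-1]+\ell_iN'_i[0,M_i-1]=[0,\ell_iN'_iM_i-1]$, again with all sums distinct; hence $A_i+T_i=[0,\ell_iN'_iM_i-1]$ is a direct sum contained in $[0,N_i-1]$, of size $\ell_iN'_iM_i\ge N_i-\ell_iN'_i\ge(1-\eta/d)N_i$. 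Thus (i)--(iii) hold. Note $T_i$ is a single arithmetic progression when $\ell_i=1$ (it is $N'_i[0,M_i-1]$) and a progression of rank $2$ otherwise, so $T$ is a progression of rank at most $2d$ — and of rank $d$ whenever every direction has $\ell_i=1$ or $N'_i=1$.

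\textbf{Main point.} There is no genuine obstacle, but one subtlety deserves comment: in a ``sparse'' direction, where $S$ advances in steps $\ell_i\ge2$, a single arithmetic progression cannot tile an interval much longer than $\ell_i$ — its sumset with $A_i$ has density at most $1/\ell_i$ in any interval containing it — so the tiling set there must be split into two scales, the short block $[0,\ell_i-1]$ filling the residues modulo $\ell_i$ and the coarse block $\ell_iN'_i[0,M_i-1]$ stacking the resulting solid intervals. The remaining work is just bookkeeping: verifying directness of $A_i+T_i$, checking containment in $[0,N_i-1]$, and combining the per-direction loss $1-\eta/d$ into $(1-\eta/d)^d\ge1-\eta$, which is exactly why the hypothesis is phrased per direction with the factor $1/d$.
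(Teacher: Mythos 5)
Your construction of $T$ is identical to the paper's: a two-scale block $[0,\ell_i-1]+\ell_iN'_i\cdot[0,M_i-1]$ in each direction with $N'_i\ge 2$ and the full range $[0,N_i-1]$ in the trivial directions, so the proof is essentially the same, differing only in that you estimate the density by multiplying per-coordinate losses and applying Bernoulli, whereas the paper bounds $|C\setminus(S+T)|$ directly by a union bound over the $i\in I$; both are equally elementary and give the same $(1-\eta)|C|$ from the hypothesis $\ell_iN'_i\le\tfrac{\eta}{d}N_i$. You are also right to flag that the resulting $T$ is a progression of rank up to $2d$ rather than $d$ as the lemma states (each coordinate with $\ell_i>1$ contributes two arithmetic progressions, and a single one cannot do the job there); the paper's own construction has exactly the same feature, and since the rank of $T$ is never used in the lemma's applications, this looks like a harmless imprecision in the statement rather than a gap in either proof.
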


We use the sum notation in the case of sets as well: $\sum_{i \in I} A_i$ denotes the set of all sums $\sum_{i \in I} a_i$ with $a_i \in A_i$.

\begin{proof}
    Let $I$ be the set of all $i \in [d]$ such that $N'_i > 1$ and set $T_0 = a + \sum_{i \in I} [0, \ell_i - 1] \cdot v_i$. Note that when $i \notin I$, we have $N'_i = 1$ so $v_i$ does not affect the definition of $S$, so we may assume $\ell_i = 1$ in that case. Thus $T_0 + S$ equals $a + [0, \ell_1 N'_1 -1] \cdot v_1 + \dots + [0, \ell_d N'_d -1] \cdot v_d + H_0$ (note that $ \ell_i N'_i -1 = 0$ when $i \notin I$). Set $k_i = \lfloor N_i / (\ell_i N'_i)\rfloor$ for $i \in I$ and $T = T_0 + \sum_{i \in I} \{0, \ell_i N'_i, \dots, (k_i - 1) \ell_i N'_i\} \cdot v_i + \sum_{i \notin I} \{0, 1, \dots, N_i - 1\}\cdot v_i$. It is clear that $S + T \subseteq C$ and $|S+ T| = | S||T|$. On the other hand, $C \setminus (S + T)$ consists of linear combinations $a + \sum_{i \in [d]} \lambda_i v_i + h$ where $\lambda_i \geq k_i \ell_i N'_i$ for some $i \in I$ and $h \in H_0$. The total number of such linear combinations is at most
    \[|H_0| \sum_{i \in I} (N_i -k_i \ell_i N'_i ) \prod_{j \not= i} N_j \leq \sum_{i \in I} \frac{N_i -k_i \ell_i N'_i }{N_i} |C| \leq \sum_{i \in I} \frac{\ell_i N'_i }{N_i} |C| \leq \eta |C|.\qedhere\]
\end{proof}

A simple corollary allows us to pass to symmetric coset progressions whose iterated sumset lies in a translate of the given one.

\begin{corollary}\label{shrunksymmcpCor}
    Let $C$ be a proper coset progression of rank $d$ and let $X \subseteq C$ be a set of size $|X| \geq \alpha |C|$. Let $k$ be a positive integer. Then there exist a symmetric proper progression $S$ or rank $d$ and size $|S| \geq (\alpha/100 kd)^d |C|$ and an element $t \in G$ such that $|(t + S) \cap X| \geq \frac{\alpha}{2} |X|$ and $t + k S \subseteq C$.
\end{corollary}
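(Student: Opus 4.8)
Write $C$ in canonical form $C = a + [0,N_1-1]\cdot v_1 + \dots + [0,N_d-1]\cdot v_d + H_0$ with $H_0 \le G$, properness meaning all representations are distinct. The plan is to take for $S$ a symmetric shrinking of $C$ whose $i$-th halflength is a small multiple of $\alpha N_i/(kd)$, tile most of a slightly shrunk translate of $C$ by disjoint translates of $S$ via Lemma~\ref{tilingLemma}, and pigeonhole to find a translate in which $X$ is still dense. The one point requiring care is that the pigeonholed translate $t$ must simultaneously satisfy $t + kS \subseteq C$, so the tiling must be carried out not inside $C$ but inside the ``$k$-core'' $D = \{u \in G : u + kS \subseteq C\}$.

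\textbf{Construction of $S$.} For each $i$ put $M_i = \lfloor \alpha N_i / (100kd)\rfloor$ and let $S = [-M_1,M_1]\cdot v_1 + \dots + [-M_d,M_d]\cdot v_d + H_0$. Since $2kM_i + 1 \le \alpha N_i/(50d) + 1 \le N_i$, properness of $C$ forces $S$ and $kS = [-kM_1,kM_1]\cdot v_1 + \dots + [-kM_d,kM_d]\cdot v_d + H_0$ to be proper; moreover $|S| = \prod_i(2M_i+1)\,|H_0| \ge (\alpha/100kd)^d|C|$ (directions with $M_i = 0$, which occur exactly when $N_i < 100kd/\alpha$, cause no loss, since then $\alpha N_i/(100kd) < 1$). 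From the computation $u + kS \subseteq C$ $\iff$ the $i$-th coordinate of $u$ lies in $[kM_i,\,N_i-1-kM_i]$, the set $D$ is a translate of a proper coset progression with subgroup $H_0$ and lengths $N_i - 2kM_i \ge (1 - \alpha/(50d))N_i$, hence $|C \setminus D| \le (\alpha/50)|C| \le |X|/50$ and in particular $|X \cap D| \ge (1 - 1/50)|X|$.

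\textbf{Tiling and pigeonhole.} Apply Lemma~\ref{tilingLemma} inside $D$ with $\eta = \alpha/4$, taking the small progression to be the translate $S' = (\sum_i M_i v_i) + S = [0,2M_1]\cdot v_1 + \dots + [0,2M_d]\cdot v_d + H_0$: for each $i$ either $M_i = 0$, so $N'_i = 1$, or $2M_i+1 \le \alpha N_i/(50kd) + 1 \le (\eta/d)(N_i - 2kM_i)$ by a routine choice of the constant $100$, so the hypothesis holds. This gives a rank-$d$ progression $T$ of translates with $S' + T \subseteq D$, $|S'+T| = |S'||T|$ and $|S'+T| \ge (1-\eta)|D|$. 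As the tiles $\{\tau + S' : \tau \in T\}$ are pairwise disjoint and inside $D$, and the untiled part of $D$ has size $\le \eta|D| \le (\alpha/4)|C| \le |X|/4$, we get $\sum_{\tau \in T}|X \cap (\tau+S')| = |X \cap (S'+T)| \ge |X| - |X|/50 - |X|/4 \ge |X|/2$. Pigeonhole gives $\tau \in T$ with $|X \cap (\tau+S')| \ge |X|/(2|T|)$, and since $|S|\,|T| = |S'|\,|T| = |S'+T| \le |D| \le |C|$ we have $|T| \le |C|/|S|$, whence $|X \cap (\tau+S')| \ge |X|\,|S|/(2|C|) \ge (\alpha/2)|S|$, giving the claimed estimate. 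Finally set $t = \tau + \sum_i M_i v_i$, so $t + S = \tau + S'$; then $|(t+S)\cap X| \ge (\alpha/2)|S|$, and since $0 \in S$ and $\tau + S' \subseteq D$ we have $t \in D$, i.e. $t + kS \subseteq C$. The main obstacle, as noted, is bookkeeping the two simultaneous shrinkings: one tiles inside $D = \{u : u + kS \subseteq C\}$ rather than inside $C$ so the selected translate is automatically $k$-shrinkable, and one must check $X$ sheds only a controlled fraction of its mass in passing from $C$ to $D$ and then to the tiled region, which dictates the choice of halflengths proportional to $\alpha/(kd)$.
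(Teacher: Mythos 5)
Your proof is correct and follows essentially the same route as the paper: shrink $C$ to a small symmetric $S$ with half-lengths proportional to $\alpha N_i/(kd)$, pass to the $k$-core $D = \{u : u + kS \subseteq C\}$ (the paper calls this $C'$ and describes it explicitly by its lengths), tile $D$ by translates of $S$ via Lemma~\ref{tilingLemma}, and pigeonhole. Worth noting: the bound you obtain, $|(t+S)\cap X| \ge \frac{\alpha}{2}|S|$, is the correct one; the statement's $\frac{\alpha}{2}|X|$ (and the paper's own closing line $\frac{\alpha}{2}|C|$) are typos, as both would generically exceed $|t+S| = |S|$, and the form $\frac{\alpha}{2}|S|$ is what is actually needed and used in the application to Proposition~\ref{freimanbihomomStep1passtolineaersystem}.
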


\begin{proof}
    Let $a + [0, N_1 -1 ] \cdot v_1 + \dots + [0, N_d - 1] \cdot v_d + H_0$ be the canonical form of $C$. For each $i \in [d]$, define $N'_i = \frac{\alpha}{100 kd} N_i$ when $N_i \geq 200 kd \alpha^{-1}$ and $N'_i = 0$ otherwise. Define $S = [-N'_1, N'_1] \cdot v_1 + \dots + [-N'_d, N'_d] \cdot v_d + H_0$, which is symmetric and proper.\\   
    \indent Let $C' = a + [kN'_1, N_1 - 1 - kN'_1] \cdot v_1 + \dots + [kN'_d, N_d - 1 - kN'_d] \cdot v_d + H_0$, which is a slightly shrunk version of $C$ with the property that $C' + k S \subseteq C$. Note that $|C \setminus C'| \leq \sum_{i \in [d]} 2kN'_i \prod_{j \in [d] \setminus \{i\}} N_j |H_0| \leq 2k \sum_{i \in [d]} \frac{N'_i}{N_i} |C| \leq \frac{\alpha}{4} |C|$, so we get $|X \cap C'| \geq \frac{3\alpha}{4} |C|$.\\
    Apply Lemma~\ref{tilingLemma} to find a set of translates $T \subseteq G$ such that $S + T \subseteq C'$, $|S+ T| = | S||T|$ and $|S + T| \geq (1 - \alpha/4) |C'|$. Thus, there exists a translate $t + S \subseteq C'$ such that $|X \cap (t + S)| \geq \frac{\alpha}{2} |C|$, as desired.
\end{proof}

Next three facts are further elementary properties of coset progressions appearing in~\cite{generalBilBog}.

\begin{proposition}[Corollary 13 in~\cite{generalBilBog}]Let $G$ be a finite abelian group with a subgroup $H$. Then there exists a coset progression $C$ of rank at most $\log_2 |H|$ and size at least $|H|^{-2}|G|$ such that $|C + H| = |C| |H|$.\end{proposition}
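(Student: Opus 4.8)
The statement to prove is: for a finite abelian group $G$ with subgroup $H$, there is a coset progression $C$ of rank at most $\log_2|H|$ and size at least $|H|^{-2}|G|$ with $|C+H| = |C||H|$. The plan is to reduce to the quotient group $G/H$ and there apply a pigeonhole argument on a generating set of bounded size, in the spirit of the elementary "every finite abelian group contains a large proper progression transverse to a given subgroup" fact.

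First I would pass to $\bar G = G/H$ with quotient map $\pi$. Pick generators $g_1, \dots, g_m$ of $\bar G$ with $m \le \log_2 |\bar G| \le \log_2(|G|/|H|)$; one obtains such a generating set by repeatedly adjoining an element outside the subgroup generated so far, each step at least doubling that subgroup. Let $N_i$ be the order of $g_i$ in $\bar G/\langle g_1,\dots,g_{i-1}\rangle$-type decomposition, or more simply, work with the surjection $\mathbb{Z}^m \to \bar G$ and let $Q = \prod_i |\bar G|$ serve as a crude common bound. Consider the box $P = [0,L_1-1]\cdot g_1 + \dots + [0,L_m-1]\cdot g_m$ inside $\bar G$ where the $L_i$ are chosen with $\prod L_i \le |\bar G|$ as large as possible; then by a standard averaging/pigeonhole argument (Plünnecke–Ruzsa-free, just counting fibers of the addition map from the box to $\bar G$) there is a sub-box on which $\pi$ restricted is injective and whose image has size at least $\sqrt{|\bar G|}$ — more carefully, one shows that for a suitable choice $\prod L_i$ with each $L_i$ roughly the $i$-th "elementary divisor", the box is proper in $\bar G$ and has size $\ge \sqrt{|\bar G|} \ge |\bar G|^{1/2}$.

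Then I would lift: choose any representatives $v_i \in G$ of $g_i$ and set $C = [0,L_1-1]\cdot v_1 + \dots + [0,L_m-1]\cdot v_m + H$. Since the box is proper in $\bar G = G/H$, distinct lattice points $\sum \lambda_i g_i$ are distinct in $\bar G$, hence the cosets $\sum \lambda_i v_i + H$ are distinct, which gives $|C| = \big(\prod_i L_i\big)|H|$ and moreover $|C + H| = |C|$ since $C$ is already $H$-invariant; wait — one wants $|C+H| = |C||H|$, so in fact I should take $C' = [0,L_1-1]\cdot v_1 + \dots + [0,L_m-1]\cdot v_m$ (no $H$), which is proper as a progression precisely because its image in $G/H$ is a proper progression, and then $|C'+H| = |C'||H| = \big(\prod_i L_i\big)|H| \ge |\bar G|^{1/2}|H| = (|G|/|H|)^{1/2}|H| \ge |H|^{-2}|G|$ for the relevant regime — here I'd double-check: $(|G|/|H|)^{1/2}|H| \ge |G|/|H|^2$ iff $|G|^{1/2}|H|^{3/2} \ge |G|$ iff $|H|^3 \ge |G|$, which need not hold. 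So the bound $|\bar G|^{1/2}$ is not enough; I actually need the box to capture essentially all of $\bar G$ up to a factor $|\bar G|^{o(1)}$, or to phrase the target differently. The cleaner route: bound the size loss at each of the $m$ coordinates by a factor of $2$, giving $\prod L_i \ge |\bar G| \cdot 2^{-m} \ge |\bar G| \cdot |\bar G|^{-1} $... that's trivial; instead $\prod L_i \ge |\bar G|/2^m$ and since $2^m \le |\bar G|$ we only get $\prod L_i \ge 1$.

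The right argument: decompose $\bar G \cong \mathbb{Z}/d_1 \oplus \cdots \oplus \mathbb{Z}/d_m$ with $m \le \log_2|\bar G|$, take $v_i$ lifting the standard generators, and set $C' = \{0,\dots,d_1-1\}\cdot v_1 + \cdots + \{0,\dots,d_m-1\}\cdot v_m$. This is automatically proper modulo $H$ (its image is all of $\bar G$, with each element hit exactly once), so $|C'| = \prod d_i = |\bar G| = |G|/|H|$ and $|C'+H| = |C'||H| = |G|$; but then the "size at least $|H|^{-2}|G|$" is enormously satisfied, and the rank is $m \le \log_2|\bar G| \le \log_2|G|$, not $\le \log_2|H|$. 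So the genuine content — and the main obstacle — is matching the stated rank bound $\log_2|H|$ rather than $\log_2|G|$; this forces the construction to live inside $H$ (or a translate), not the quotient: one instead fixes a coset structure so that $C \subseteq$ (union of few cosets of $H$), picks the progression's steps $v_i \in H$ generating $H$ with $m \le \log_2|H|$ steps via the doubling argument, takes $C = \{0,\dots,d_1-1\}v_1+\cdots$ giving $|C| = |H|$ inside $H$, and then enlarges by choosing coset representatives — but to get $|C| \ge |H|^{-2}|G|$ with $C$ of rank $\le \log_2|H|$ and $|C+H|=|C||H|$, one must have $C$ essentially transverse to $H$, contradicting $v_i \in H$. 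I suspect the intended proof instead builds $C$ in $G$ of rank $\le \log_2|H|$ by a pigeonhole/Bogolyubov-type argument applied to $H$ itself, so the hard part is exactly reconciling these; I would look up Corollary 13 of the cited reference and reproduce its argument, flagging that the key step is the pigeonhole selection of the progression dimensions so that both the rank bound (in terms of $|H|$) and the density bound ($|H|^{-2}|G|$) hold simultaneously, with properness relative to $H$ following from the transversality built into that selection.
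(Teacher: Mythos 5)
You correctly isolate the obstruction: lifting a proper progression from $G/H$ gives transversality to $H$, but the rank you obtain is controlled by $\log_2|G/H|$, which is not what is claimed and is incomparable with $\log_2|H|$ in general. You also correctly rule out taking the progression steps inside $H$. But you then stop, whereas the statement does have a short direct proof, and the missing idea is quite specific: filter $G$ (not $G/H$) by its own cyclic factors and observe that $H$ can ``jump'' at only $\log_2|H|$ of them; the remaining directions each contribute an entire cyclic factor transverse to $H$ and can therefore be packaged into the \emph{subgroup} part of the coset progression, at no cost in rank.

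Concretely, write $G = \bigoplus_{i=1}^d\langle e_i\rangle$ with $\langle e_i\rangle\cong\mathbb{Z}/q_i\mathbb{Z}$, set $G_i=\langle e_1,\dots,e_i\rangle$, $H_i = H\cap G_i$, and $m_i = |H_i/H_{i-1}|$; since $H_i/H_{i-1}$ embeds in $G_i/G_{i-1}\cong\mathbb{Z}/q_i\mathbb{Z}$, $m_i$ divides $q_i$. Telescoping gives $\prod_i m_i=|H|$, so $I=\{i: m_i>1\}$ satisfies $|I|\le\log_2|H|$. With $N_i=q_i/m_i$, take
\[
C = \sum_{i\in I}\{0,\dots,N_i-1\}\cdot e_i + \bigoplus_{i\notin I}\langle e_i\rangle,
\]
a coset progression of rank $|I|\le\log_2|H|$. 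Parameterize $C$ by coefficient vectors $(\lambda_i)$ with $\lambda_i\in[0,N_i-1]$ for $i\in I$ and $\lambda_i\in[0,q_i-1]$ otherwise, so in every case $|\lambda_i-\mu_i|<N_i$. If two such elements lay in the same $H$-coset, taking the largest $j$ with $\lambda_j\ne\mu_j$ and reducing modulo $G_{j-1}$ would place $(\lambda_j-\mu_j)(e_j+G_{j-1})$ in the image of $H_j/H_{j-1}$, the unique subgroup of $\mathbb{Z}/q_j\mathbb{Z}$ of order $m_j$, i.e.\ the multiples of $N_j$, which is impossible. Hence $C$ meets each coset of $H$ at most once, so $|C+H|=|C||H|$, and $|C|=\prod_i N_i=|G|/|H|\ge|H|^{-2}|G|$.
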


\begin{theorem}[Partial projectivity in abelian groups, Theorem 9 in~\cite{generalBilBog}]\label{partialProjThm}Let $G$ and $H$ be finite abelian groups. Let $K \leq H$ be a subgroup. Suppose that $\phi \colon G \to H/K$ is a homomorphism. Let $s \geq 2$. Then there exist a proper coset progression $C \subseteq G$ of rank at most $\log_2 |K|$ and size $|C| \geq s^{- \log_2 |K|}|G|$ and a Freiman $s$-homomorphism $\psi \colon C \to H$ such that for all $x \in C$ we have $\phi(x) = \psi(x) + K$.\end{theorem}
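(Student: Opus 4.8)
The plan is to recast the statement as a lifting problem for a short exact sequence of finite abelian groups, and then solve that problem directly via the structure theory of such groups.

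First I would pass to the pullback group $\Gamma = \{(x,h)\in G\times H : \phi(x)=h+K\}\le G\times H$. The first projection $p\colon\Gamma\to G$ is onto (because $H\to H/K$ is) with kernel $\bar K:=\{0\}\times K\cong K$, so $0\to\bar K\to\Gamma\xrightarrow{p}G\to 0$ is exact, and the second projection $\pi_H\colon\Gamma\to H$ restricts to an isomorphism of $\bar K$ onto $K$. The observation that organizes the proof is that the data $(C,\psi)$ we seek is the same thing as a single proper coset progression $D\subseteq\Gamma$ with $p|_D$ injective and $(sD-sD)\cap\bar K=\{0\}$, where $sD$ denotes the $s$-fold sumset $D+\cdots+D$ and $sD-sD$ its difference set; given such a $D$ one sets $C=p(D)$ and $\psi=\pi_H\circ(p|_D)^{-1}$. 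Indeed $\psi(x)+K=\phi(x)$ by the definition of $\Gamma$; if $\sum_{j=1}^{s}x_j=\sum_{j=1}^{s}y_j$ with all $x_j,y_j\in C$, then $\sum_{j}(p|_D)^{-1}(x_j)-\sum_{j}(p|_D)^{-1}(y_j)$ lies in $\ker p=\bar K$ and in $sD-sD$, hence is $0$, which forces $\sum_{j}\psi(x_j)=\sum_{j}\psi(y_j)$, so $\psi$ is a Freiman $s$-homomorphism; and since also $(D-D)\cap\bar K=\{0\}$ (take $s-1$ of the summands equal), the homomorphic image $C=p(D)$ is a proper coset progression of the same rank and with the same progression lengths as $D$.

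Next I would construct $D$ from a stacked-basis decomposition of the pair $\bar K\le\Gamma$. By the structure theory of finite abelian groups there are a direct decomposition $\Gamma=\bigoplus_i\langle e_i\rangle$ into cyclic groups, with $n_i:=\operatorname{ord}(e_i)$, and divisors $m_i\mid n_i$ such that $\bar K=\bigoplus_i\langle m_ie_i\rangle=\bigoplus_i(\bar K\cap\langle e_i\rangle)$. Put $L_i:=\lceil m_i/s\rceil$ and $H_0:=\bigoplus_{i\,:\,m_i=n_i}\langle e_i\rangle$, and take
\[D\;=\;\sum_{i\,:\,1<m_i<n_i}[0,L_i)\cdot e_i\;+\;H_0\]
(directions with $m_i=1$ lie inside $\bar K$ and are dropped; directions with $m_i=n_i$ meet $\bar K$ trivially and make up the subgroup part). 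Then $D$ is a proper coset progression of rank $r:=\#\{i:1<m_i<n_i\}$; since each such direction contributes a factor $n_i/m_i\ge 2$ to $|\bar K|=|K|$, we get $r\le\log_2|K|$. Because the decomposition is direct, $|G|=|\Gamma|/|\bar K|=\prod_i m_i$ whereas $|D|=\prod_{i:1<m_i<n_i}L_i\cdot\prod_{i:m_i=n_i}m_i$, so
\[\frac{|D|}{|G|}\;=\;\prod_{i\,:\,1<m_i<n_i}\frac{L_i}{m_i}\;\ge\;s^{-r}\;\ge\;s^{-\log_2|K|},\]
the point being that $L_i=\lceil m_i/s\rceil\ge m_i/s$, so each progression direction costs only a factor $1/s$ (and small directions, $m_i\le s$, cost nothing beyond $L_i=1$). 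Finally the intersection condition is checked one coordinate at a time along the direct sum: in a direction with $1<m_i<n_i$, an element of $sD-sD$ has $e_i$-coordinate congruent mod $n_i$ to an integer of absolute value $\le s(L_i-1)<m_i$; since $n_i$ is a multiple of $m_i$ and $n_i\ge 2m_i$, no nonzero such residue lies in $\langle m_ie_i\rangle$; and in a direction with $m_i=n_i$ the group $\bar K$ has no component. Hence $(sD-sD)\cap\bar K=\{0\}$, in particular $p|_D$ is injective, and $C=p(D)$, $\psi=\pi_H\circ(p|_D)^{-1}$ are as required.

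The argument is short; the place where care is essential --- and the main obstacle --- is obtaining both invariants \emph{sharply at once}: rank at most $\log_2|K|$, \emph{and} density at least $s^{-\log_2|K|}$. This forces the use of the sharp structural input, namely the stacked-basis identity $\bar K=\bigoplus_i(\bar K\cap\langle e_i\rangle)$, which furnishes an \emph{exact} coset-progression complement of $\bar K$ in $\Gamma$ of rank at most $\log_2|K|$ and density $1$, followed by shrinking each progression direction by exactly $\lceil\,\cdot\,/s\rceil$. A lossier substitute --- for instance the Green--Ruzsa-type fact that a subgroup $\bar K$ sits inside a coset progression of density merely $\ge|\bar K|^{-2}$, or a one-cyclic-factor-at-a-time induction that re-shrinks previously constructed directions at each step --- introduces spurious factors of $|K|$ or of $s^{r}$ into the final bound, and so misses the stated constants; steering around this is really the content of the proof.
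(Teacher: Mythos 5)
The reduction to the pullback $\Gamma=\{(x,h):\phi(x)=h+K\}$ and the observation that $(C,\psi)$ is the same thing as a proper coset progression $D\subseteq\Gamma$ with $(sD-sD)\cap\bar K=\{0\}$ is clean and correct, and the rank/density bookkeeping would indeed deliver the stated bounds \emph{if} the decomposition you invoke existed. It does not: the ``stacked basis'' fact $\bar K=\bigoplus_i(\bar K\cap\langle e_i\rangle)$ for some cyclic decomposition $\Gamma=\bigoplus_i\langle e_i\rangle$ is true for free abelian groups and their submodules, but is false for subgroups of finite abelian groups. Take $\Gamma=\mathbb{Z}/p^3\oplus\mathbb{Z}/p$ with generators $a,b$ and $\bar K=\langle pa+b\rangle\cong\mathbb{Z}/p^2$. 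Every cyclic decomposition of $\Gamma$ is of the shape $C_1\oplus C_2$ with $|C_1|=p^3$, $|C_2|=p$. If $\bar K=(\bar K\cap C_1)\oplus(\bar K\cap C_2)$ then, since $\bar K$ is cyclic of order $p^2$ while $|C_2|=p$, the first factor must be all of $\bar K$, forcing $\bar K\subseteq C_1$. But $pa+b$ lies in no cyclic subgroup of order $p^3$: if $pa+b\in\langle g\rangle$ with $g=\alpha a+\beta b$ of order $p^3$ then $pa+b$ generates $\langle pg\rangle=\langle p\alpha a\rangle$, so $pa+b=mp\alpha a$ for some $m$, which forces $b=0$, a contradiction. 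This pair $(\Gamma,\bar K)$ genuinely occurs as a pullback (take $H=\Gamma$, $K=\bar K$, $G=\Gamma/\bar K$, $\phi=\mathrm{id}$), so your construction of $D$ is not available in this case, and the proof does not go through.

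The theorem itself is of course still true --- in the example above $D=[0,\lceil p^2/s\rceil)\cdot a$ already meets the required rank and density bounds, even though it does not arise from any compatible cyclic splitting of $\bar K$. What \emph{is} true is the prime-order special case: a subgroup of prime order does sit inside a cyclic direct summand (your unipotent change-of-basis argument proves exactly this). So a correct proof is likely to induct on $|K|$, peeling off one prime-order quotient of $\bar K$ at a time and invoking the prime-order splitting at each step; but then one must lift a coset progression from $\Gamma/\langle k\rangle$ back to $\Gamma$ while keeping the rank increment at $1$, the density loss at a factor $1/s$, and the condition $(sD-sD)\cap\bar K=\{0\}$ intact across the lift. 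That lifting step is where the genuine work resides, and it is exactly what the false structural claim was being used to sidestep.
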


\begin{proposition}[Corollary 12 in~\cite{generalBilBog}]\label{almostinjectivitycor}Let $G$ and $H$ be finite abelian groups and let $C \subseteq G$ be a proper coset progression with canonical form $C = P + K$ where $P = v_0 + [0, N_1 - 1] \cdot v_1 + \dots + [0, N_r - 1] \cdot v_r$ and $K \leq G$. Let $\phi \colon C \to H$ be a Freiman homomorphism such that $|\phi(C)| \geq \alpha |C|$. Then there exist a further proper coset progression $D \subseteq K$ of rank $\log_2 \alpha^{-1}$ and size $\alpha^2 |K|$, a positive integer $m \leq (8r \alpha^{-1})^r$ and a partition $P = P_1 \cup \dots \cup P_m$ into proper progressions such that $\phi$ is injective on each proper coset progression of the form $k + P_i + D$ for $i \in [m]$ and $k \in K$.\end{proposition}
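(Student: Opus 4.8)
The plan is to exploit the product structure $C = P + K$ and reduce to the fact that, up to a translate, $\phi$ restricted to the subgroup $K$ is an honest group homomorphism. First I would fix a base point $p_0 \in P$ and set $\theta(k) = \phi(p_0 + k) - \phi(p_0)$ for $k \in K$. Since $K$ is a genuine subgroup and $\phi$ is Freiman on $C \supseteq P + K$, applying the Freiman condition to the quadruple $(p_0 + k) + (p_0 + k') = p_0 + (p_0 + k + k')$ (all four points lie in $C$) shows $\theta \colon K \to H$ is a group homomorphism, and testing $(p + k) + p_0 = p + (p_0 + k)$ shows $\phi(p + k) = \phi(p) + \theta(k)$ for all $p \in P,\, k \in K$, independently of $p_0$. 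Writing $K_0 = \ker\theta$, we then have $\phi(C) = \bigcup_{p \in P}(\phi(p) + \theta(K))$, a union of at most $|P|$ cosets of $\theta(K)$, so $\alpha|P||K| \le |\phi(C)| \le |P|\,|\theta(K)| = |P||K|/|K_0|$ and hence $|K_0| \le \alpha^{-1}$. Applying the preceding proposition (Corollary~13 of~\cite{generalBilBog}) to $K_0 \le K$ produces a proper coset progression $D \subseteq K$ of rank $\le \log_2|K_0| \le \log_2\alpha^{-1}$ and size $\ge |K_0|^{-2}|K| \ge \alpha^2|K|$ with $|D + K_0| = |D||K_0|$, equivalently $(D - D) \cap K_0 = \{0\}$.

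Next I would partition $P$. Let $\pi \colon H \to H/\theta(K)$ be the quotient and $\psi = \pi \circ \phi|_P$, which is a Freiman homomorphism; counting cosets as above gives $|\psi(P)| = |\phi(C)|/|\theta(K)| \ge \alpha|P||K_0| \ge \alpha|P|$. After discarding trivial directions, a telescoping argument (induction on $\sum_i a_i$ using the quadruples $p(\mathbf a) + v_0 = p(\mathbf a - \mathbf e_i) + (v_0 + v_i)$, where $p(\mathbf a) = v_0 + \sum_i a_i v_i$) shows $\psi$ is affine on the box: $\psi(v_0 + \sum_i a_i v_i) = \psi(v_0) + \sum_i a_i w_i$ for fixed $w_i \in H/\theta(K)$. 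Consider the collision lattice $\Lambda = \{\mathbf e \in \mathbb{Z}^r \colon \sum_i e_i w_i = 0\}$: any nonzero $\mathbf e \in \Lambda$ makes $\mathbf a \mapsto \sum_i a_i w_i$ invariant under $\mathbf a \mapsto \mathbf a + \mathbf e$, so $|\psi(P)|$ is at most the number of cosets of $\mathbb{Z}\mathbf e$ meeting $\prod_i[0, N_i - 1]$, which (by counting the minimal element of each such coset) is at most $\big(\sum_i |e_i|/N_i\big)\prod_i N_i$; comparing with $|\psi(P)| \ge \alpha\prod_i N_i$ forces $\sum_i |e_i|/N_i \ge \alpha$ for every nonzero $\mathbf e \in \Lambda$. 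Consequently, tiling $P$ by translates of the sub-box with side lengths $q_i = \lceil \alpha N_i/r\rceil \le N_i$ yields $m \le \prod_i\lceil N_i/q_i\rceil \le (2r\alpha^{-1})^r \le (8r\alpha^{-1})^r$ proper sub-progressions $P_1, \dots, P_m$, and the difference set of each is contained in $\prod_i[-(q_i - 1), q_i - 1]$, which meets $\Lambda$ only in $0$; hence $\psi$ is injective on each $P_i$.

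Finally, I would assemble the pieces. For $k \in K$ and $i \in [m]$, the set $k + P_i + D$ is built from a sub-box $P_i$ of the proper progression $P$ and the proper coset progression $D \subseteq K$, so, $C = P + K$ being proper, these combine into a proper coset progression of rank $\le r + \log_2\alpha^{-1}$ contained in $C$. If $\phi(k + p + d) = \phi(k + p' + d')$ with $p, p' \in P_i$ and $d, d' \in D$, then since $p, p' \in P$ and $k + d, k + d' \in K$ the decomposition from the first paragraph gives $\phi(p) + \theta(k) + \theta(d) = \phi(p') + \theta(k) + \theta(d')$, whence $\phi(p) - \phi(p') = \theta(d' - d) \in \theta(K)$, so $\psi(p) = \psi(p')$ and thus $p = p'$; then $\theta(d' - d) = 0$, i.e.\ $d' - d \in K_0 \cap (D - D) = \{0\}$, so $d = d'$. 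Hence $\phi$ is injective on each $k + P_i + D$, as required. The main obstacle I anticipate is the partition step, and within it the short‑vector estimate on the collision lattice $\Lambda$ — this is the only place the largeness hypothesis $|\phi(C)| \ge \alpha|C|$ is genuinely used — together with verifying that the resulting boxes may be taken proper; the other conceptual point is the initial reduction to the honest homomorphism $\theta$ on $K$, after which the rest is bookkeeping.
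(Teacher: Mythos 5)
Your proof is correct, and the structure is the natural one: decompose $\phi$ on $C = P + K$ via the Freiman property into a genuine homomorphism $\theta$ on the subgroup $K$ and an affine map on the box $P$, bound $|\ker\theta| \leq \alpha^{-1}$, invoke Corollary~13 of \cite{generalBilBog} to produce $D$ with $(D-D)\cap\ker\theta = \{0\}$, and control the collision lattice of the affine quotient map $\psi = \pi\circ\phi|_P$ via the short-vector estimate $\sum_i |e_i|/N_i \geq \alpha$. The paper cites this statement as Corollary~12 of \cite{generalBilBog} without reproducing a proof, so a line-by-line comparison is not possible, but your route is almost certainly the one taken there.

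Two small points worth tightening in a final write-up. First, when tiling $P$ by sub-boxes of side lengths $q_i = \lceil \alpha N_i/r\rceil$, if $q_i \nmid N_i$ you should allow the last tile in each direction to be shorter; this does not hurt the argument since the difference set of each piece is still contained in $\prod_i[-(q_i-1), q_i-1]$, and $q_i - 1 < \alpha N_i/r$ strictly gives $\sum_i |e_i|/N_i < \alpha$ for any nonzero $\mathbf e$ in that box, contradicting the short-vector bound. Second, properness of $D$ (and hence of each $k + P_i + D$) is needed in the statement but is not explicit in the quoted form of Corollary~13; one should either check that the construction in \cite{generalBilBog} already yields a proper $D$, or pass to a proper sub-coset-progression at negligible cost. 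Neither point affects the injectivity argument, which is airtight as you have it: from $\phi(k+p+d)=\phi(k+p'+d')$ one gets $\phi(p)-\phi(p')\in\theta(K)$, hence $\psi(p)=\psi(p')$, hence $p=p'$ by the collision-lattice bound, and then $d'-d\in K_0\cap(D-D)=\{0\}$.
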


We now prove a few further properties of Freiman homomorphisms on coset progressions. Firstly, we show that non-quasirandom Freiman homomorphisms to the unit circle are almost constant. Secondly, we show that, more generally, such maps come from characters. Recall the standard estimate on the exponential sum.

\begin{fact}\label{stdexpsumest} For all $\alpha \in \mathbb{R}$
    \[\Big|\Big(\sum_{k = 1}^{N} \on{e}(k \alpha)\Big)\Big| \leq \min\Big\{N, \frac{1}{\|\alpha\|_{\mathbb{T}}}\Big\}.\]
\end{fact}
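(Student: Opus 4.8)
The plan is to prove the two bounds separately and take the minimum. The bound by $N$ is immediate: the sum is a sum of $N$ complex numbers each of modulus $1$, so the triangle inequality gives $|\sum_{k=1}^N \on{e}(k\alpha)| \leq N$. This requires no hypothesis on $\alpha$.

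For the bound by $\|\alpha\|_{\mathbb{T}}^{-1}$, first dispose of the case $\alpha \in \mathbb{Z}$, where $\|\alpha\|_{\mathbb{T}} = 0$ and the right-hand side is interpreted as $+\infty$, so there is nothing to prove. Assuming $\alpha \notin \mathbb{Z}$, so that $\on{e}(\alpha) \neq 1$, I would sum the geometric series:
\[\sum_{k=1}^N \on{e}(k\alpha) = \on{e}(\alpha)\,\frac{\on{e}(N\alpha) - 1}{\on{e}(\alpha) - 1},\]
whence $\big|\sum_{k=1}^N \on{e}(k\alpha)\big| \leq \frac{|\on{e}(N\alpha)-1|}{|\on{e}(\alpha)-1|} \leq \frac{2}{|\on{e}(\alpha)-1|}$, using $|\on{e}(N\alpha)-1| \leq 2$ in the numerator.

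It then remains to lower-bound $|\on{e}(\alpha) - 1|$ in terms of $\|\alpha\|_{\mathbb{T}}$. Writing $|\on{e}(\alpha) - 1| = |e^{2\pi i\alpha} - 1| = 2|\sin(\pi\alpha)| = 2\sin(\pi\|\alpha\|_{\mathbb{T}})$, since $\|\alpha\|_{\mathbb{T}} \in [0,1/2]$, I would invoke the elementary inequality $\sin(\pi t) \geq 2t$ for $t \in [0,1/2]$ (Jordan's inequality: $\sin x \geq \tfrac{2}{\pi}x$ on $[0,\pi/2]$ applied at $x = \pi t$). This yields $|\on{e}(\alpha) - 1| \geq 4\|\alpha\|_{\mathbb{T}}$, hence
\[\Big|\sum_{k=1}^N \on{e}(k\alpha)\Big| \leq \frac{2}{4\|\alpha\|_{\mathbb{T}}} \leq \frac{1}{\|\alpha\|_{\mathbb{T}}},\]
completing the proof. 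There is no real obstacle here — the statement is entirely classical; the only points needing any care are the degenerate case $\alpha \in \mathbb{Z}$ and citing the correct elementary sine bound, and one in fact obtains the sharper constant $\tfrac{1}{2}\|\alpha\|_{\mathbb{T}}^{-1}$, which is more than enough.
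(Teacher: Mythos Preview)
Your proof is correct and entirely standard. The paper states this as a \emph{Fact} without proof, so there is no argument to compare against; your geometric-series computation together with the bound $|\on{e}(\alpha)-1| = 2\sin(\pi\|\alpha\|_{\mathbb{T}}) \geq 4\|\alpha\|_{\mathbb{T}}$ (which the paper records separately as Lemma~\ref{elemExp}) is exactly the classical derivation.
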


\begin{lemma}[Biased forms on coset progressions]\label{biasedformsprog}
    Let $\phi : P + H \to \mathbb{T}$ be a Freiman homomorphism on a symmetric proper coset progression of rank $r$. Suppose that $\Big|\sum_{x \in P + H}(x) \on{e}(\phi(x)) \Big| \geq \delta |P + H|$. Then there exists a $P'$ is a shrinking of $P$ of size $|P'| \geq \delta \, (10^{5}r)^{-r}|P|$ with the following property.\\
    \indent Let $\varepsilon > 0$ be given. Then $\|\phi(x) - \phi(0)\|_{\mathbb{T}} < \varepsilon$ holds for $x \in \varepsilon \cdot P' + H$, where $\varepsilon \cdot P'$ is further shrinking by a factor of $\varepsilon$.
\end{lemma}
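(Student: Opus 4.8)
Write $C = P + H$ with $P = v_0 + [0,N_1-1]\cdot v_1 + \dots + [0,N_r - 1]\cdot v_r$ in canonical form (we may translate so that $P$ starts at $0$, i.e. reduce to $\phi(0)$ comparisons). The hypothesis is that $\big|\sum_{x \in P + H} \on{e}(\phi(x))\big| \geq \delta |P+H|$. Since $\phi$ is a Freiman homomorphism on $P + H$, the map $x \mapsto \on{e}(\phi(x))$ behaves multiplicatively along additive quadruples, so $\on{e}(\phi(x)) = \on{e}(\phi(0))\prod_i \on{e}(\lambda_i \theta_i) \cdot \chi_H(h)$ for $x = v_0 + \sum_i \lambda_i v_i + h$, where $\theta_i = \phi(v_0 + v_i) - \phi(v_0) \in \mathbb{T}$ (the "increment" in the $i$-th direction) and $\chi_H$ is a character of $H$. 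The main point is that $\phi$ restricted to $P+H$ is genuinely forced to be (close to) a linear character $x \mapsto \phi(0) + \sum_i \lambda_i \theta_i + \chi_H(h)$; this is the standard "Freiman homomorphism on a progression is an affine character" fact, and the only subtlety is that properness of $C$ is needed so the coordinates $(\lambda_1,\dots,\lambda_r,h)$ of $x$ are well-defined.

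**Extracting bias in each direction.** Plugging the multiplicative expression into the exponential sum and using that $H$ is a genuine subgroup (so $\sum_{h \in H}\chi_H(h)$ is either $|H|$ or $0$), the bias forces $\chi_H$ to be trivial, and then
\[
\delta \leq \frac{1}{\prod_i N_i}\Big| \sum_{\lambda_1 = 0}^{N_1 - 1}\cdots \sum_{\lambda_r = 0}^{N_r - 1} \prod_{i} \on{e}(\lambda_i \theta_i) \Big| = \prod_{i \in [r]} \frac{1}{N_i}\Big|\sum_{\lambda_i = 0}^{N_i - 1} \on{e}(\lambda_i \theta_i)\Big|.
\]
By Fact~\ref{stdexpsumest}, $\frac{1}{N_i}\big|\sum_{\lambda_i=0}^{N_i-1}\on{e}(\lambda_i\theta_i)\big| \leq \min\{1, \frac{1}{N_i \|\theta_i\|_\mathbb{T}}\}$, so from $\delta \leq \prod_i \min\{1, (N_i\|\theta_i\|_\mathbb{T})^{-1}\}$ we conclude $N_i \|\theta_i\|_{\mathbb{T}} \leq \delta^{-1}$ for each $i$, i.e. $\|\theta_i\|_{\mathbb{T}} \leq \delta^{-1}/N_i$ for every $i$. (One has to be a little careful about the case $N_i=1$, where the direction is vacuous, and about the factor $(10^5 r)^{-r}$ — this will come from being slightly wasteful: choosing $P'$ with lengths $N_i' = \lfloor \delta (10^5 r)^{-r} N_i \rfloor$, say, which still gives $|P'| \geq \delta(10^5 r)^{-r}|P|$ up to rounding, and more importantly ensures the increments stay controlled after shrinking.) Actually the cleanest route: set $P'$ to be the shrinking with $N_i' = \lceil \delta(10^5 r)^{-r}N_i\rceil$ wherever $N_i$ is large enough, $N_i'=1$ otherwise.

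**Controlling $\phi$ on the shrunk progression.** Now take $x \in \varepsilon \cdot P' + H$, so $x = v_0 + \sum_i \lambda_i v_i + h$ with $|\lambda_i| \leq \varepsilon N_i'$ and $h \in H$. Since $\phi$ agrees with the affine character on $P + H$ and $x, 0 \in P+H$, we get $\phi(x) - \phi(0) = \sum_i \lambda_i \theta_i$ (the $H$-part contributed nothing since $\chi_H$ is trivial), hence by the triangle inequality for $\|\cdot\|_{\mathbb{T}}$,
\[
\|\phi(x) - \phi(0)\|_{\mathbb{T}} \leq \sum_{i \in [r]} |\lambda_i|\,\|\theta_i\|_{\mathbb{T}} \leq \sum_{i \in [r]} \varepsilon N_i' \cdot \frac{\delta^{-1}}{N_i} \leq \sum_{i \in [r]} \varepsilon \cdot \frac{\delta(10^5 r)^{-r} N_i}{N_i}\cdot \delta^{-1} = \varepsilon \cdot r \cdot (10^5 r)^{-r} < \varepsilon,
\]
as required. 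The constant $(10^5 r)^{-r}$ (rather than something sharper) is chosen precisely so this last sum is comfortably below $\varepsilon$ with room to spare for rounding errors in the floors/ceilings and the $N_i = 1$ edge cases.

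**Where the difficulty lies.** The genuinely delicate step is the first one — showing rigorously that a Freiman homomorphism on a \emph{proper} coset progression $P+H$ is exactly an affine character $v_0 + \sum\lambda_i v_i + h \mapsto \phi(0) + \sum \lambda_i\theta_i + \chi_H(h)$, and that properness is exactly what makes the coordinates unambiguous so this is well-defined. This is bookkeeping rather than a deep obstacle, but it's the part where one must be careful: one uses additive quadruples of the form $(v_0 + (\lambda_i+1)v_i + \cdots) + (v_0 + (\mu_i - 1)v_i + \cdots) = (v_0 + \lambda_i v_i + \cdots) + (v_0 + \mu_i v_i + \cdots)$ to telescope $\phi$ along each coordinate axis, and the homomorphism property across $H$ to pull out $\chi_H$; properness guarantees all points involved stay inside $P+H$ and are genuinely distinct. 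Everything after that (the exponential-sum estimate and the final triangle inequality) is routine.
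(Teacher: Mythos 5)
The overall structure of your argument — reduce to a Freiman-linear map, factor the exponential sum across the $H$-part and the $r$ progression directions, use the standard exponential-sum estimate to bound each direction's increment — is the same as the paper's. But there is a genuine error in the choice of $P'$, and it causes your claimed size bound to fail.

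You set $N_i' \approx \delta(10^5 r)^{-r}N_i$ \emph{in every direction $i$}. This gives
\[
|P'| = \prod_{i\in[r]} N_i' \approx \big(\delta(10^5 r)^{-r}\big)^r \prod_i N_i = \delta^r (10^5 r)^{-r^2}|P|,
\]
which for $r\geq 2$ (and $\delta < 1$) is far below the required $\delta(10^5 r)^{-r}|P|$; your claim that this choice gives ``$|P'|\geq\delta(10^5 r)^{-r}|P|$ up to rounding'' is incorrect by a factor of roughly $\delta^{r-1}$. A symptom of the over-shrinking is visible in your final display, where you get $\|\phi(x)-\phi(0)\|_{\mathbb T} \leq \varepsilon r (10^5 r)^{-r}$, which is vastly smaller than the needed $\varepsilon$ — a signal that you have thrown away far more of $P$ than necessary.

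The fix is to shrink each direction only as much as \emph{that direction's} bias demands. Passing to the symmetric Freiman-linear form with $\alpha_i = \phi(a_i)$, take $L_i' = \min\big\{\lfloor 1/(1000\, r\,\|\alpha_i\|_{\mathbb T})\rfloor,\; L_i\big\}$: directions where $\|\alpha_i\|_{\mathbb T}$ is already small are not shrunk at all. The $\varepsilon$-control still holds, since on $\varepsilon\cdot P'$ one has $|\lambda_i|\leq\varepsilon L_i' \leq \varepsilon/(1000 r\|\alpha_i\|_{\mathbb T})$, so $|\lambda_i|\,\|\alpha_i\|_{\mathbb T} \leq \varepsilon/(1000 r)$ and the sum over $i$ is at most $\varepsilon/1000 < \varepsilon$. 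The size bound now follows from the \emph{multiplicative} form of the exponential-sum inequality: you have
\[
\prod_{i\in[r]}\min\Big\{1,\;\frac{1}{(2L_i+1)\,\|\alpha_i\|_{\mathbb T}}\Big\} \;\geq\; \delta,
\]
and the ratio $L_i'/L_i$ in each shrunk direction is, up to a factor $(1000 r)^{-1}$, precisely the $i$-th term of this product — so the product of the ratios is $\gtrsim \delta\,(1000 r)^{-r}$, not $\delta^r$. Your per-direction bound $\|\theta_i\|_{\mathbb T}\leq\delta^{-1}/N_i$ is correct but is obtained by bounding each factor in the product by $\delta$ separately; this throws away the fact that the factors multiply to at least $\delta$ rather than each individually being only at least $\delta$, and that is exactly the information needed to get the size lower bound with a single power of $\delta$.
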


\begin{proof}
    Without loss of generality, $\phi$ is Freiman-linear and $P = [-L_1, L_1] \cdot a_1 + \dots + [-L_r, L_r] \cdot a_r$. Then, for $\alpha_i = \phi(a_i)$, we have
    \[\Big|\Big(\sum_{x \in H} \on{e}(\phi(x))\Big)\prod_{i \in [r]} \Big(\sum_{k = -L_i}^{L_i} \on{e}(k \alpha_i)\Big)\Big| \geq \delta |H| \prod_{i \in [r]} (2L_i + 1).\]
    By Fact~\ref{stdexpsumest}, $\prod_{i \in [r]} \max\{\|\alpha_i\|_{\mathbb{T}}, 1/(2L_i+1)\} \leq \delta^{-1} 10^r / |P|$ and $\phi = 0$ on $H$. Set $L'_i = \on{min}\Big\{\Big\lfloor\frac{1}{1000 r \|\alpha_i\|_{\mathbb{T}}}\Big\rfloor, L_i\Big\}$ and $P' = [-L'_1, L'_1] \cdot a_1 + \dots + [-L'_r, L'_r] \cdot a_r$. It is easy to check that $P'$ has the desired property.
\end{proof}

We now prove approximation for Freiman homomorphisms on coset progressions.

\begin{lemma}\label{homapproxcosetp}
    Let $P + H$ be a symmetric proper coset progression of rank $r$ and density $c$. Let $\psi : P + H \to \mathbb{T}^d$ be a Freiman homomorphism. Then there exists a $P'$ is a shrinking of $P$ of size $|P'| \geq c^2(10^5r)^{-r}|P|$ with the following property.\\
    \indent Let $\varepsilon > 0$. Then there exists a homomorphism $\tilde{\psi} : G \to \mathbb{T}^d$ and an element $u \in \mathbb{T}^d$ such that $\|\psi(x) - \tilde{\psi(x)} - u\|_\infty < \varepsilon$ holds for all $x \in \varepsilon \cdot P' + H$.
\end{lemma}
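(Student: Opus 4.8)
The statement is a routine upgrade of Lemma~\ref{biasedformsprog} from a biased $\mathbb{T}$-valued form to an arbitrary $\mathbb{T}^d$-valued Freiman homomorphism, so the plan is to reduce to the biased case coordinate by coordinate and then assemble a genuine homomorphism on $G$. First I would normalize so that $\psi$ is Freiman-linear, writing $P = [-L_1,L_1]\cdot a_1 + \dots + [-L_r,L_r]\cdot a_r$ and setting $\alpha_i = \psi(a_i) \in \mathbb{T}^d$. The key point is that on $P + H$ the map $\psi$ is determined by the values $\alpha_1, \dots, \alpha_r$ and $\psi|_H$, which is itself a homomorphism $H \to \mathbb{T}^d$.

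The main substance is to control each $\alpha_i$. For this I would apply Lemma~\ref{biasedformsprog} (or just redo its exponential-sum computation) to each coordinate $\psi_j$, $j \in [d]$: since $P+H$ has density $c$, a counting/pigeonhole argument (either directly, or by noting that the constant function $1$ trivially biases $\psi_j$ against some character, or more carefully via a second-moment argument over translates) shows that $\|\alpha_{i,j}\|_{\mathbb{T}}$ is either small or has a denominator that is not too large. Concretely, the exponential sum bound of Fact~\ref{stdexpsumest} applied to $\sum_{x \in P+H}\on{e}(\psi_j(x))$, which has modulus at least (some power of $c$ times) $|P+H|$ after restricting to a suitable subprogression — this is where the $c^2(10^5 r)^{-r}$ loss enters, exactly as in Lemma~\ref{biasedformsprog} — forces $\prod_i \max\{\|\alpha_{i,j}\|_{\mathbb{T}}, 1/(2L_i+1)\}$ to be bounded below, hence each individual $\|\alpha_{i,j}\|_{\mathbb{T}}$ is either at most $1/(2L_i'+1)$ for the shrunk lengths $L_i' = \lfloor \varepsilon L_i' \rfloor$ or forces $a_i$ to have small order in the relevant quotient. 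The cleanest route: for each $i$, either $\|\alpha_i\|_\infty$ is already tiny, or the cyclic group generated by $a_i$ is small, and in the latter case $\psi(a_i)$ is a rational point which extends to a character of $G$ restricted to $\langle a_i\rangle$.

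Once each $\alpha_i$ is controlled, I would build $\tilde\psi : G \to \mathbb{T}^d$ as follows. Decompose the relevant behaviour into a "rational part", coming from those $a_i$ (and from $\psi|_H$) where $\psi$ behaves like an honest character on the cyclic pieces, and a "small part", the remaining contribution of size $O(\varepsilon)$ on the shrunk progression $\varepsilon \cdot P' + H$. The rational part together with $\psi|_H$ defines a homomorphism on the subgroup $\langle a_1,\dots,a_r\rangle + H$ (after possibly passing to a finite-index subgroup to kill ambiguity from non-trivial relations among the $a_i$ — here one uses that there are only $O(1)$ short relations, by the covering argument mentioned in the paper's overview, so this costs only a bounded factor), and any homomorphism on a subgroup of a finite abelian group $G$ extends to all of $G$ since $\mathbb{T}^d$ is divisible (injective). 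Set $\tilde\psi$ to be this extension and let $u = \psi(0) = 0$ (or absorb a global constant $u$ if one does not normalize). On $\varepsilon \cdot P' + H$, the difference $\psi(x) - \tilde\psi(x) - u$ is a sum of at most $\sum_i L_i'$-many copies of the small parts $\alpha_i - (\text{rational approx of }\alpha_i)$, each bounded by $\|\alpha_i\|_\infty$, and by the choice $L_i' \leq 1/(1000 r \|\alpha_i\|_{\mathbb{T}})$ (as in Lemma~\ref{biasedformsprog}) this telescopes to something $< \varepsilon$ in $\ell^\infty$.

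\textbf{Main obstacle.} The genuinely delicate point is the divisibility/extension step combined with the non-trivial relations among $a_1, \dots, a_r$: $\psi$ is only Freiman-linear on $P+H$, so the "obvious" formula $\sum_i \lambda_i \alpha_i + \psi(h)$ for $\tilde\psi$ need not be well-defined on the subgroup they generate, because a relation $\sum_i \lambda_i a_i \in H$ in $G$ need not be respected by the $\alpha_i$. Resolving this requires isolating the coset progression's proper structure (so that inside $C$ there are no relations), extracting the finitely many "boundary" relations that create the discrepancy, and showing these discrepancies are themselves either $O(\varepsilon)$-small or live in a controlled finite subgroup that can be quotiented out before extending via injectivity of $\mathbb{T}^d$. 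This is exactly the kind of "naive extension with controlled error" philosophy flagged in the introduction, applied here in the simplest (linear, one-variable) case.
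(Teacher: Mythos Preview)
Your approach has a real gap, and the paper's route is both simpler and avoids the obstacle you identify. The key issue is your claim that $\Big|\sum_{x \in P+H}\on{e}(\psi_j(x))\Big|$ is large (``at least some power of $c$ times $|P+H|$ after restricting to a suitable subprogression''). This is simply false in general: take $G = \mathbb{Z}/N\mathbb{Z}$, $P+H = G$, and $\psi(x) = x/N$ --- a perfectly good Freiman homomorphism with zero bias. So Lemma~\ref{biasedformsprog} does not apply to $\psi_j$ directly, and your exponential-sum control on the $\alpha_{i,j}$ never gets off the ground. You then compound this by trying to build $\tilde\psi$ by hand from the $\alpha_i$, which forces you into the extension problem you correctly flag as delicate; your sketch for resolving it (quotienting a ``controlled finite subgroup'', handling ``boundary relations'') is not a proof and is not obviously salvageable with the stated bounds.

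The paper sidesteps all of this with one idea you are missing: use the $\mathsf{U}^2$ inverse theorem to produce the global character before invoking Lemma~\ref{biasedformsprog}. Concretely (after reducing to $d=1$), set $f(x) = \id_{P+H}(x)\,\on{e}(\psi(x))$; since $\psi$ is a Freiman homomorphism the phases cancel in $\|f\|_{\mathsf{U}^2}^4 = \exx_{x,a,b}\partial_{a,b}\id_{P+H}(x) \geq c^4$, so there is a genuine character $\tilde\psi : G \to \mathbb{T}$ with $\big|\exx_x \id_{P+H}(x)\,\on{e}(\psi(x)-\tilde\psi(x))\big| \geq c^2$. Now $\psi - \tilde\psi$ is a \emph{biased} Freiman homomorphism on $P+H$, and Lemma~\ref{biasedformsprog} applies to it with $\delta = c^2$, giving exactly the shrinking $P'$ and the $\varepsilon$-smallness on $\varepsilon\cdot P' + H$. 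The global homomorphism $\tilde\psi$ comes for free from Fourier analysis --- no extension argument is needed at all.
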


\begin{proof}
    It suffices to prove the claim for the case $d = 1$. Consider an auxiliary function $f : G \to \mathbb{C}$ given by $f(x) = \id_{P + H}(x) \on{e}(\psi(x))$. Then, since $\psi$ is a Freiman homomorphism
    \[\|f\|_{\mathsf{U}^2}^4 = \exx_{x, a, b} \partial_{a,b}\id_{P + H}(x) \geq c^4.\]
    By the $\mathsf{U}^2$-inverse theorem, we have a homomorphism $\tilde{\psi} :G \to \mathbb{T}$ such that
    \[\Big|\exx_x \id_{P+ H}(x) \on{e}(\psi(x) - \tilde{\psi}(x))\Big| \geq c^2.\]
    We are done by Lemma~\ref{biasedformsprog}.
\end{proof}

\subsection{Variants of Freiman's theorem}

In this paper, we make heavy use of the Bogolyubov-Rusza lemma due to Sanders~\cite{Sanders}.

\begin{theorem}[Sanders's Bogolyubov-Rusza lemma, Theorem 1.1 in~\cite{Sanders}]\label{bogRuzsa1}Suppose that $G$ is a finite abelian group and that $A, B \subseteq G$ are subsets such that $|A + B| \leq K \min \{|A|, |B|\}$. Then there exists a symmetric proper coset progression $C \subseteq A - A + B - B$ of rank at most $\log^{O(1)} (K)$ and size $|C| \geq \exp\Big(-\log^{O(1)}(K)\Big)|A+B|$.\end{theorem}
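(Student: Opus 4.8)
The plan is to follow Sanders's route: combine the Croot--Sisask almost-periodicity lemma with a bootstrapping iteration that keeps the rank polylogarithmic in $K$, and finish with a geometry-of-numbers step converting a Bohr set into a coset progression. First I would invoke the Pl\"unnecke--Ruzsa inequalities: the hypothesis $|A+B|\le K\min\{|A|,|B|\}$ gives, after passing to large subsets if necessary, $|A-A|\le K^{O(1)}|A|$, $|B-B|\le K^{O(1)}|B|$ and $|A|,|B|\ge K^{-O(1)}|A+B|$. Since $A-A+B-B$ is exactly the support of the convolution $1_A*1_{-A}*1_B*1_{-B}$ --- the natural four-term Bogolyubov object for the pair $(A,B)$ --- it suffices to produce a symmetric proper coset progression of rank $\log^{O(1)}K$ and size $\exp(-\log^{O(1)}K)|A|$ inside that support.

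\textbf{Croot--Sisask almost periods.} Let $\mu_B$ be the uniform probability measure on $B$ and $F:=1_A*\mu_B$. For an integer $p\ge2$ and $\epsilon\in(0,1)$, the Croot--Sisask lemma~\cite{CrootSisaskPaper} produces a symmetric set $T\ni0$ with $|T|\ge\exp(-O(\epsilon^{-2}p\log K))|A|$ such that $\|\tau_tF-F\|_{L^p(\mu_A)}\le\epsilon$ for every $t\in T$; the triangle inequality then makes the $k$-fold sumset $kT$ a set of $(k\epsilon)$-almost periods of $F$, and one may arrange $T\subseteq A-A$. Choosing $\epsilon$ a small absolute constant and $p$ moderate, a pointwise-versus-$L^p$ pigeonhole argument shows that a bounded sumset $kT$ --- a symmetric set of doubling $O(1)$ --- is contained in $A-A+B-B$. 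This locates a large, low-doubling piece of the target, but with no control on its rank.

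\textbf{The polylogarithmic iteration (the crux).} Since $T$ itself has doubling $K^{O(1)}$, the engine of the proof now alternates a physical-space move (a further application of Croot--Sisask) with a Fourier-space move: Chang's theorem confines the large spectrum of $1_T$ to the span of $\log^{O(1)}K$ characters, and restricting to the Bohr set they cut out yields a cleaner object on which a suitable potential --- measuring how far the current set is from being a Bohr set --- has strictly decreased. After $\log^{O(1)}K$ rounds this potential reaches zero, leaving an honest Bohr set $B(\Gamma,\rho)\subseteq A-A+B-B$ with $|\Gamma|\le\log^{O(1)}K$, $\rho\ge\log^{-O(1)}K$, and density $\ge\exp(-\log^{O(1)}K)$ relative to $|A+B|$. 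I expect this step to be the main obstacle: the delicate part is the quantitative bookkeeping that keeps the codimension, the radius and the density all within $\exp(\pm\log^{O(1)}K)$ throughout the iteration, which is exactly where polylogarithmic --- rather than $K^{O(1)}$ --- rank is won over the classical Bogolyubov--Chang argument.

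\textbf{From a Bohr set to a coset progression.} To conclude, I would use the standard fact that a Bohr set of codimension $d$ and radius $\rho$ contains a proper symmetric coset progression of rank $\le d$ and size $\ge(\rho/d)^{O(d)}$ times its own: writing $G$ as a sum of cyclic groups, pull the defining box back to a symmetric convex body in a suitable lattice associated with $\Gamma$ and apply Minkowski's second theorem to extract a generalized arithmetic progression, absorbing the torsion into the subgroup factor (this is Theorem 4.22 of~\cite{TaoVuBook}, reproved in the lattice-theory subsection of this paper). Composing with the previous step yields a symmetric proper coset progression $C\subseteq A-A+B-B$ of rank $\log^{O(1)}K$ and size $\exp(-\log^{O(1)}K)|A+B|$, as required.
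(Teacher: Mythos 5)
The paper does not prove this theorem; it is quoted verbatim as Theorem~1.1 from Sanders's paper~\cite{Sanders}, so there is no internal proof to compare against. Your sketch does capture the architecture of Sanders's argument --- Croot--Sisask almost-periodicity, an iteration combining physical- and Fourier-space steps to keep the rank polylogarithmic, and a geometry-of-numbers conversion of a Bohr set into a proper symmetric coset progression --- and you are right to flag the iteration as the crux. However, a few points deserve attention. First, your parenthetical claim that $kT$ is ``a symmetric set of doubling $O(1)$'' is inaccurate: from the Croot--Sisask bounds you quote, the doubling of $kT$ can only be controlled at the level $\exp(\log^{O(1)}K)$, not an absolute constant, and keeping this quantity from blowing up through the iteration is exactly where Sanders's careful choices of $p$ and $\epsilon$ matter. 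Second, the ``potential strictly decreasing to zero'' description of the iteration is vaguer than Sanders's actual mechanism, which increases the relative density of the working set inside a shrinking Bohr set by a definite multiplicative factor per step; since the density cannot exceed one, the iteration terminates after polylogarithmically many rounds. As written, your sketch would not reconstruct the theorem without filling in these quantitative details, but it is a fair high-level account of why the result holds, and it is an appropriate citation target --- exactly how the present paper treats it.
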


The next variant is the robust Bogolyubov-Rusza lemma of Schoen and Sisask~\cite{SchSis}. The formulation below follows from their work and appears as Theorem 15 in~\cite{generalBilBog}.

\begin{theorem}[Robust Bogolyubov-Rusza lemma]\label{robustBogRuzsa}Let $G$ be a finite abelian group. Let $A \subset G$ be a set such that $|A + A| \leq K |A|$. Then there exists a symmetric proper coset progression $C$ of rank at most $\log^{O(1)} (K)$ and size at least $|C| \geq \exp(-\log^{O(1)} (K))|A|$ such that for each $x \in C$ there are at least $\frac{1}{64K}|A|^3$ quadruples $(a_1, a_2, a_3, a_4) \in A^4$ such that $x = a_1 + a_2 - a_3 - a_4$.\end{theorem}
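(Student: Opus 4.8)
The statement is a coset-progression repackaging of the robust Bogolyubov--Ruzsa lemma of Schoen and Sisask~\cite{SchSis}, and the plan is to reduce it to a statement about $L^2$-almost-periods of the representation function of $A$ and then invoke their argument; equivalently, to upgrade the purely set-theoretic conclusion of Sanders's lemma (Theorem~\ref{bogRuzsa1}) to a uniform lower bound for a convolution. First I would record the counting identity: writing $r_{A+A}(s)$ for the number of pairs $(a,a') \in A^2$ with $a+a' = s$, the number of quadruples $(a_1,a_2,a_3,a_4) \in A^4$ with $a_1 + a_2 - a_3 - a_4 = x$ equals $\sum_s r_{A+A}(s)\, r_{A+A}(s-x) = E(A) - \tfrac12 \sum_s \big(r_{A+A}(s) - r_{A+A}(s-x)\big)^2$, where $E(A) = \sum_s r_{A+A}(s)^2$ is the additive energy. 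By Cauchy--Schwarz, $E(A) \ge |A|^4/|A+A| \ge |A|^3/K$, so to prove the theorem it suffices to produce a symmetric proper coset progression $C$ of rank $\log^{O(1)}K$ and size $\exp(-\log^{O(1)}K)|A|$ such that
\[\sum_s \big(r_{A+A}(s) - r_{A+A}(s-x)\big)^2 \ \le\ \tfrac{63}{32}\,E(A) \qquad \text{for all } x \in C,\]
since then each $x \in C$ has at least $E(A) - \tfrac12\cdot\tfrac{63}{32}E(A) = \tfrac{1}{64}E(A) \ge \tfrac{1}{64K}|A|^3$ such quadruples.

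The displayed inequality says exactly that every element of $C$ is a good $L^2$-almost-period of the function $r_{A+A} = \id_A * \id_A$, which is the type of conclusion produced by the Croot--Sisask almost-periodicity method~\cite{CrootSisaskPaper}. Applying it (after the standard Pl\"unnecke--Ruzsa bounds $|nA - mA| \le K^{O(1)}|A|$, and with a case distinction according to the size of $E(A)$ in order to keep the error parameter an absolute constant rather than something comparable to $K^{-1/2}$) yields a symmetric set $T \ni 0$ of almost-periods with $|T| \ge \exp(-\log^{O(1)}K)|A|$ and $|T+T| \le \exp(\log^{O(1)}K)|T|$. Since a sum of four almost-periods is again an almost-period of controlled quality (by the triangle inequality and the isometry property of translation), feeding $T$ into Theorem~\ref{bogRuzsa1} with both input sets equal to $T$ produces a symmetric proper coset progression $C \subseteq T - T + T - T$ of rank $\log^{O(1)}(\exp(\log^{O(1)}K)) = \log^{O(1)}K$ and size $\exp(-\log^{O(1)}K)|T| = \exp(-\log^{O(1)}K)|A|$, every element of which is a good almost-period of $r_{A+A}$; after possibly replacing $C$ by a shrinking of itself by an absolute constant factor (which only improves the almost-period quality and costs a further $\exp(-\log^{O(1)}K)$ in density), this gives the displayed bound. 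The quantitative almost-periodicity step is that of Schoen--Sisask, and in the coset-progression packaging used here it is Theorem 15 of~\cite{generalBilBog}.

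The one genuinely non-routine point is robustness. Theorem~\ref{bogRuzsa1} only asserts the containment $C \subseteq 2A - 2A$, i.e.\ that every $x \in C$ has \emph{at least one} representation $a_1 + a_2 - a_3 - a_4 = x$; converting this into a uniform count of $\Omega(|A|^3/K)$ representations is strictly stronger and forces one to open up the almost-periodicity machinery rather than use the Bogolyubov--Ruzsa lemma as a black box. The associated numerology is also delicate: $E(A)$ can be as small as $|A|^3/K$, so one cannot afford a loss of more than a constant factor when passing from $L^2$-almost-periodicity of $\id_A$ to that of $\id_A * \id_A$, which is what dictates the case distinction mentioned above. Everything else — properness and symmetry of $C$, the passage from the almost-period set $T$ to a coset progression via Theorem~\ref{bogRuzsa1}, and the Pl\"unnecke--Ruzsa sumset estimates — is standard and already contained in the cited results.
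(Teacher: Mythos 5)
The paper offers no proof of this theorem: it cites Schoen and Sisask~\cite{SchSis} and Theorem~15 of~\cite{generalBilBog} directly, so there is no internal argument to compare your sketch against. That said, your reduction is the correct one, and it matches the route those sources take: the identity $\sum_s r_{A+A}(s)\,r_{A+A}(s-x) = E(A) - \tfrac12\sum_s\big(r_{A+A}(s)-r_{A+A}(s-x)\big)^2$ together with $E(A)\ge|A|^3/K$ correctly reduces the uniform lower bound on representations to showing that every $x$ in the coset progression is a decent $L^2$-almost-period of $r_{A+A}=\id_A*\id_A$, and you rightly flag that this robustness is strictly stronger than the set-theoretic containment $C\subseteq 2A-2A$ that Sanders's Theorem~\ref{bogRuzsa1} provides.

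The one place where your write-up is too casual is the parenthetical about a ``case distinction according to the size of $E(A)$.'' In the regime $E(A)\asymp|A|^3/K$, the Croot--Sisask $L^2$-almost-periodicity bound has the shape $\|\tau_x r - r\|_2\le\varepsilon|A|^{3/2}$ for a set of almost-periods of density $\exp(-O(\varepsilon^{-2}\log K))$ in $A$; to make the right-hand side at most a constant times $\sqrt{E(A)}\asymp|A|^{3/2}/\sqrt{K}$ one would need $\varepsilon\lesssim K^{-1/2}$, which destroys the density bound (it becomes $\exp(-K\log^{O(1)}K)$, not $\exp(-\log^{O(1)}K)$). A simple dichotomy on $E(A)$ does not repair this: the low-energy case is precisely where the naive bound fails, and there is no complementary easy argument. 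What Schoen and Sisask actually do to overcome this (passing to a dense subset of $A$ whose sumset with $A$ has nearly uniform representation function, and interpolating between $L^1$ and $L^\infty$ control of $r$) is more than a case split, and it is the genuine content of their ``robust'' version. Since you ultimately attribute that step to their paper rather than claiming to reprove it, the citation carries the weight and your proposal stands as a correct reduction; just be aware that the parenthetical misrepresents the difficulty of the almost-periodicity step, and a referee pressing on that point would expect you to say ``this is exactly Theorem~15 of~\cite{generalBilBog} / the main theorem of~\cite{SchSis}'' rather than offering an incorrect shortcut.
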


The next corollary follows from the observation that coset progressions of small rank have small doubling constant. It appears as Corollary 16 in~\cite{generalBilBog}. 

\begin{corollary}\label{robustBogRuzsaCP}Let $G$ be a finite abelian group and let $C$ be a coset progression of rank $d$. Suppose that $A \subseteq C$ has size $|A| \geq \alpha |C|$. Then there exists a symmetric proper coset progression $C'$ of rank at most $(d\log (\alpha^{-1}))^{O(1)}$ and size at least $|C'| \geq  \exp\Big(-(d\log (\alpha^{-1}))^{O(1)}\Big) |C|$ such that for each $x \in C'$ there are at least $\frac{\alpha}{2^{d + 6}}$ quadruples $(a_1, a_2, a_3, a_4) \in A^4$ such that $x = a_1 + a_2 - a_3 - a_4$.\end{corollary}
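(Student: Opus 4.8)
The plan is to deduce this directly from the robust Bogolyubov--Ruzsa lemma (Theorem~\ref{robustBogRuzsa}); the only input needed beyond that theorem is the observation, already flagged above, that a coset progression of small rank has small doubling.

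First I would record this doubling bound. Writing $C = L_1 + \dots + L_d + H$ in canonical form with $L_i = a_i + \{0, q_i, \dots, (N_i-1)q_i\}$, one splits each doubled progression $L_i + L_i$ into its lower half $\{2a_i, \dots, 2a_i + (N_i-1)q_i\}$ and its upper half $\{2a_i + (N_i-1)q_i, \dots, 2a_i + 2(N_i-1)q_i\}$, both of which are translates of $L_i - a_i$. Applying this coordinatewise exhibits $C + C$ as a union of $2^d$ translates of $C - \sum_i a_i$, so that $|C + C| \le 2^d |C|$. Since $A \subseteq C$ and $|A| \ge \alpha |C|$, this gives
\[
|A + A| \le |C + C| \le 2^d |C| \le 2^d \alpha^{-1} |A|,
\]
so $A$ has doubling constant at most $K := 2^d \alpha^{-1}$.

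Next I would apply Theorem~\ref{robustBogRuzsa} to $A$ with this value of $K$. It produces a symmetric proper coset progression $C'$ of rank at most $\log^{O(1)}(K)$ and size $|C'| \ge \exp(-\log^{O(1)}(K))|A|$ such that every $x \in C'$ has at least $\frac{1}{64K}|A|^3$ representations $x = a_1 + a_2 - a_3 - a_4$ with $a_i \in A$. It then remains to unwind the parameters: assuming, as we may, $d \ge 1$ and $\alpha^{-1} \ge 2$, we have $\log K = d\log 2 + \log \alpha^{-1} = O(d \log \alpha^{-1})$, hence $\log^{O(1)}(K) \le (d \log \alpha^{-1})^{O(1)}$, which bounds the rank of $C'$ as claimed; combining $|C'| \ge \exp(-\log^{O(1)}(K))|A|$ with $|A| \ge \alpha|C|$ gives $|C'| \ge \exp(-(d\log\alpha^{-1})^{O(1)})|C|$; and $\frac{1}{64K}|A|^3 = \frac{\alpha}{2^{d+6}}|A|^3$, which is the asserted number of quadruples.

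I do not expect a genuine obstacle here: the corollary is essentially the substitution $K = 2^d\alpha^{-1}$ into Theorem~\ref{robustBogRuzsa}. The only point that takes a moment is the doubling estimate $|C+C| \le 2^d|C|$ for a coset progression not assumed to be proper, which the coordinatewise halving above settles cleanly.
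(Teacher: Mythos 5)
Your proof is correct and matches the paper's intended derivation, which the paper only sketches in one line (``follows from the observation that coset progressions of small rank have small doubling constant'') with a citation to~\cite{generalBilBog}; your coordinatewise halving of $L_i + L_i$ is the standard way to get $|C+C|\le 2^d|C|$ and everything else is direct substitution of $K=2^d\alpha^{-1}$ into Theorem~\ref{robustBogRuzsa}. One small point: your final count is $\frac{\alpha}{2^{d+6}}|A|^3$, whereas the corollary as printed omits the factor $|A|^3$---that is a typo in the statement, and your version is the correct one.
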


Another closely related result that we need is about approximate homomorphisms between finite abelian groups. The result below is Theorem 17 from~\cite{generalBilBog}. Result in this spirit was first proved by Gowers~\cite{Gow4AP} in the case of cyclic groups (with weaker bounds).

\begin{theorem}\label{approxFreimanHom}Let $G$ and $H$ be finite abelian groups, let $A \subset G$ be a subset and let $\phi \colon A \to H$ be a function. Suppose that the number of additive quadruples respected by $\phi$ is at least $\delta |G|^3$. Then there exist a proper coset progression $Q \subseteq G$ of rank at most $\log^{O(1)} (\delta^{-1})$ and a Freiman homomorphism $\Phi \colon Q \to H$ such that $\Phi(x) = \phi(x)$ holds for at least $\exp(-\log^{O(1)} (\delta^{-1})) |G|$ elements $x \in A \cap Q$.\end{theorem}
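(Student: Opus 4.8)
\medskip
\noindent\textbf{Proof proposal.} The plan is to pass to the graph of $\phi$ and run a Balog--Szemer\'edi--Gowers argument followed by Freiman's theorem. Put $\Gamma = \{(a,\phi(a)) : a \in A\} \subseteq G \times H$, and write $\pi_G,\pi_H$ for the two coordinate projections. A quadruple $(a_1,a_2,a_3,a_4) \in A^4$ with $a_1 + a_2 = a_3 + a_4$ is respected by $\phi$ exactly when $(a_1,\phi(a_1)) + (a_2,\phi(a_2)) = (a_3,\phi(a_3)) + (a_4,\phi(a_4))$, so the additive energy of $\Gamma$ equals the number of respected quadruples and is thus at least $\delta |G|^3$. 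Since it is also at most $|\Gamma|^3 = |A|^3$, we get $|A| \geq \delta^{1/3}|G|$, and therefore the energy of $\Gamma$ is at least $\delta|\Gamma|^3$ as well. The Balog--Szemer\'edi--Gowers theorem (in an efficient form, with parameter $K = \delta^{-O(1)}$) then gives $\Gamma' \subseteq \Gamma$ with $|\Gamma'| \geq \delta^{O(1)}|\Gamma| \geq \delta^{O(1)}|G|$ and $|\Gamma' - \Gamma'| \leq \delta^{-O(1)}|\Gamma'|$. Crucially $\Gamma'$ is still a graph --- of $\phi$ restricted to $A' := \pi_G(\Gamma')$, with $|A'| = |\Gamma'|$ --- and projecting shows that $A'$ itself has doubling at most $\delta^{-O(1)}$.

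\medskip
Next I would apply Freiman's theorem in the strong (Green--Ruzsa) form that follows from Sanders's Bogolyubov--Ruzsa lemma (Theorem~\ref{bogRuzsa1}) together with Pl\"unnecke--Ruzsa estimates and Ruzsa's covering lemma: applied to $\Gamma'$, it yields a \emph{proper} coset progression $\mathcal{Q} \subseteq G \times H$ of rank $\log^{O(1)}(\delta^{-1})$ with $\Gamma' \subseteq \mathcal{Q}$ and $|\mathcal{Q}| \leq \exp(\log^{O(1)}(\delta^{-1}))|\Gamma'|$. The ambient progression $Q$ in the statement will be a proper sub-progression of $\pi_G(\mathcal{Q})$; it has rank $\log^{O(1)}(\delta^{-1})$, size at least $\exp(-\log^{O(1)}(\delta^{-1}))|G|$, and contains all but a negligible part of $A'$, so $A' \cap Q$ has size at least $\exp(-\log^{O(1)}(\delta^{-1}))|G|$ --- this is the set on which agreement will hold.

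\medskip
It remains to manufacture an honest Freiman homomorphism $\Phi : Q \to H$ agreeing with $\phi$ on a large part of $A' \cap Q$. The naive definition $\Phi = \pi_H \circ (\pi_G|_{\mathcal{Q}})^{-1}$ is multi-valued, since the $G$-projection of a proper coset progression in $G \times H$ need not be proper; the obstruction is the ``vertical error set'' $E := \{h \in H : (0,h) \in 2\Gamma' - 2\Gamma'\}$. But $E$ is small and structured: because $\Gamma' \subseteq \mathcal{Q}$ is a graph we have $|\pi_G(\mathcal{Q})| \geq |A'| \geq \exp(-\log^{O(1)}(\delta^{-1}))|\mathcal{Q}|$, so an averaging argument over the fibres of $\pi_G$ bounds $|E|$ by $\exp(\log^{O(1)}(\delta^{-1}))$; the same argument applied to $4\Gamma' - 4\Gamma'$ shows $E$ has doubling $\exp(\log^{O(1)}(\delta^{-1}))$, hence lies in a coset progression of rank $\log^{O(1)}(\delta^{-1})$. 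Thus $\phi|_{A'}$ is an $E$-homomorphism; quotienting by $\langle E\rangle$ makes it an honest Freiman homomorphism into $H/\langle E\rangle$, which one lifts to a Freiman homomorphism $\Phi$ on a further proper sub-progression of $Q$ by a partial-projectivity argument in the spirit of Theorem~\ref{partialProjThm}, and one then resolves the bounded ambiguity of the lift by pigeonhole so as to keep agreement with $\phi$ on an $\exp(-\log^{O(1)}(\delta^{-1}))$-fraction of points.

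\medskip
The main obstacle is precisely this last step. Everything up to and including Freiman's theorem is off-the-shelf with the stated quasipolynomial bounds, whereas converting an approximate (bounded-error) homomorphism on a dense subset of a coset progression into an \emph{exact} Freiman homomorphism on a coset progression, while simultaneously controlling the rank of the error set and the density loss, is where the real work lies: it amounts to tracking the vertical error set through the projection and rectification steps with the same covering-argument bookkeeping that underlies the ``naive extension'' of Freiman-linear maps used elsewhere in the paper. (An alternative to the quotienting step is to pass directly to a large sub-coset-progression of $\mathcal{Q}$ on which $\pi_G$, and $\pi_G$ on its square, is injective --- a covering argument bounds the number of vertical relations by $\exp(\log^{O(1)}(\delta^{-1}))$ --- and then take $\Phi = \pi_H \circ \pi_G^{-1}$, after which the Freiman-homomorphism property and the fact that $\Phi$ meets $\Gamma'$ in many points are routine.)
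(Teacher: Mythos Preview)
The paper does not give its own proof of this theorem --- it is quoted from \cite{generalBilBog} (their Theorem~17). Your overall architecture (pass to the graph $\Gamma\subseteq G\times H$, apply Balog--Szemer\'edi--Gowers, then Green--Ruzsa/Sanders Freiman in $G\times H$, then project back to $G$) is the standard one and matches what that reference does.

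There is, however, a genuine gap in your primary approach to the projection step. You correctly bound $|E|$ --- in fact $|E|\le\delta^{-O(1)}$, since $(\{0\}\times E)+\Gamma'$ embeds injectively into $3\Gamma'-2\Gamma'$ (injectivity uses precisely that $\Gamma'$ is a graph) and Pl\"unnecke finishes. But $|\langle E\rangle|$ is \emph{not} controlled: take $E=\{0,1\}\subseteq\mathbb{Z}/N\mathbb{Z}$. So quotienting by $\langle E\rangle$ can destroy all information, and Theorem~\ref{partialProjThm} with $K=\langle E\rangle$ yields no useful rank or density bounds. There is a second, independent gap: even after quotienting, $\bar\phi$ is a Freiman homomorphism only on the \emph{subset} $A'$, not on any coset progression, and Theorem~\ref{partialProjThm} takes as input a genuine homomorphism on all of $G$, so it cannot be invoked to lift $\bar\phi$. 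Your parenthetical ``alternative'' is the correct repair and is essentially the argument in \cite{generalBilBog}: apply Proposition~\ref{almostinjectivitycor} to the Freiman homomorphism $\pi_G$ on $\mathcal{Q}$ (with $\alpha=|\pi_G(\mathcal{Q})|/|\mathcal{Q}|\ge |A'|/|\mathcal{Q}|\ge\exp(-\log^{O(1)}(\delta^{-1}))$) to pass to a sub-coset-progression $\mathcal{Q}'$ on which $\pi_G$ is injective even on $\mathcal{Q}'+\mathcal{Q}'$, set $Q=\pi_G(\mathcal{Q}')$ and $\Phi=\pi_H\circ(\pi_G|_{\mathcal{Q}'})^{-1}$, and recover the required number of agreement points by pigeonhole over the pieces.
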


Unlike the previous results, which concerned approximate subgroups, we need an further consequence of the Sanders's Bogolyubov-Ruzsa theorem that allows us to efficiently generate actual subgroups. For a set $A$, we write $[L] \cdot A = \{\ell a: \ell \in [L], a \in A\}$.

\begin{proposition}\label{efficientGroupGeneration}
    Let $A \subseteq G$ be a symmetric set of density $\alpha$. Then $r ([L] \cdot A)$ (which is a sum of $r$ copies of $[L]\cdot A$) is a subgroup for some $r \leq O(\log^{O(1)} \alpha^{-1})$ and $L \leq \exp(r)$. 
\end{proposition}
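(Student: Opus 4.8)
The plan is to iteratively generate a subgroup from $A$ using Sanders's Bogolyubov-Ruzsa theorem, the key point being that each application produces a coset progression of bounded rank and comparable density, and coset progressions of bounded rank have bounded doubling, so the process stabilizes after boundedly many steps. First I would note that since $A$ is symmetric and of density $\alpha$, the iterated sumsets $kA$ form an increasing sequence of symmetric sets; I want to show this sequence stabilizes to a subgroup quickly, and then relate that subgroup to $r([L]\cdot A)$ for controlled $r, L$. The cleanest route is: apply Theorem~\ref{bogRuzsa1} (or the version for $A = B$) to get a symmetric proper coset progression $C \subseteq 2A - 2A = 4A$ (using symmetry) of rank $\log^{O(1)}(\alpha^{-1})$ and density $\exp(-\log^{O(1)}(\alpha^{-1}))$ inside $G$. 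By Proposition~\ref{cosettobohrset}, $C$ contains a Bohr set $B(\chi_1,\dots,\chi_R; 1/(4R))$ with $R \leq \log^{O(1)}(\alpha^{-1})$.

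Next I would use that a Bohr set, under dilation, generates its ambient subgroup efficiently: by Theorem~\ref{bohrSum} applied repeatedly (or its stated corollary $B(\langle\Gamma\rangle_{[-R',R']}, 1/4) \subseteq B(\Gamma,\rho)$ in the reverse reading), taking integer multiples $[L]\cdot B(\Gamma;\rho)$ for $L$ of size $\exp(O(R))$ lands inside $B(\Gamma; 1/4)$-type sets whose further sumsets generate $\cap_i \ker\chi_i$ — or more simply, one shows directly that $O(1)$-fold sums of $[L]\cdot B(\Gamma;\rho)$ cover the subgroup $\langle B(\Gamma;\rho)\rangle$ once $L \gtrsim \rho^{-1}$, since for each character $\chi_i$ the set $\{[L]\cdot (\text{a single generator of }\ker\text{-complement})\}$ wraps around $\operatorname{Im}\chi_i$. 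Then, since $[L]\cdot B \subseteq [L]\cdot C \subseteq [L]\cdot(4A) \subseteq [4L]\cdot A$ (using $0 \in A$ after translating, or symmetry to absorb signs), we get that a bounded number $r'$ of copies of $[4L]\cdot A$ contains the subgroup $G' = \langle B\rangle$. Finally, since $G'$ is a genuine subgroup and $A \cap (A+g) \neq \emptyset$-type arguments (or just $A \subseteq r'([4L]\cdot A) \subseteq G' + (\text{stuff})$ — more carefully, $A$ lies in a single coset structure) force $r''([L]\cdot A)$ to equal a subgroup once it contains $G'$ and is itself contained in the subgroup generated by $A$; a clean way is: $r([L]\cdot A)$ is increasing in $r$, contains $0$, is symmetric, and once it contains the subgroup $\langle A \rangle$ it must equal $\langle A\rangle$ and hence be stable.

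The main obstacle I anticipate is bookkeeping the passage from "a Bohr set dilates to generate its subgroup" to the clean statement about $[L]\cdot A$: one must track that the subgroup generated by the Bohr set $B$ inside $C$ is actually all of $\langle A\rangle$ (not something smaller), which should follow because $A - A + A - A \supseteq C \supseteq B$ and hence $\langle B\rangle \subseteq \langle A\rangle$, while conversely $A \subseteq G$ and density considerations plus $C$ having density $\exp(-\log^{O(1)}\alpha^{-1})$ inside the relevant coset of $\langle A \rangle$ force $\langle B\rangle$ to be comparably dense, hence (being a subgroup) equal to $\langle A\rangle$ after finitely many further sumsets — or one simply works throughout inside $\langle A\rangle$ and observes $\langle A \rangle / \langle B\rangle$ has bounded size, so a bounded number of additional copies of $[L]\cdot A$ fills in the finitely many remaining cosets. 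I would keep $r$ and $\log L$ both polylogarithmic in $\alpha^{-1}$ by never iterating the Bogolyubov-Ruzsa step more than once, pushing all the "doubling up to a subgroup" work into the explicit Bohr-set dilation, where the bounds are clean powers of $\rho^{-1} = \exp(\log^{O(1)}\alpha^{-1})$.
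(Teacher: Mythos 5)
Your overall architecture matches the paper's proof (one application of Bogolyubov-Ruzsa, generate a subgroup from the structured piece, then fill in the remaining cosets), but you introduce an unnecessary detour through Bohr sets, and one of your key intermediate claims is wrong. The paper works directly with the coset progression $P + K \subseteq 4A$ coming out of Theorem~\ref{bogRuzsa1}: it first observes the covering inclusion $m(P + K) \subseteq (d+1)([m] \cdot P) + K$ (a bare-hands argument using the generators of $P$), and then shows $m(P+K)$ becomes a subgroup for $m$ polylogarithmic by iterating Lemma~\ref{verysmalldoublingsbugroup} --- if $|2^{k+1}(P+K)| \le 1.01|2^k(P+K)|$ the set is a subgroup, and this must occur after $O(\log L)$ doublings by volume considerations. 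Your route passes through $C$'s inner Bohr set via Proposition~\ref{cosettobohrset} and then needs a statement to the effect that iterated sums of $[L]\cdot B$ cover $\langle B\rangle$; this is the content of the paper's Corollary~\ref{subgroupgenerationiteratedsumsbohr}, which appears \emph{later} and is itself proved by passing the Bohr set back to a coset progression (Proposition~\ref{cpsinbohrsets}) and running exactly the same doubling argument. So the Bohr-set leg is a detour that just re-derives what Lemma~\ref{verysmalldoublingsbugroup} gives you directly on the coset progression.

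The concrete gaps: (i) your claim that ``$O(1)$-fold sums of $[L]\cdot B(\Gamma;\rho)$ cover the subgroup $\langle B(\Gamma;\rho)\rangle$ once $L\gtrsim\rho^{-1}$'' is false as stated; the characters in $\Gamma$ are coupled, and the single-character wrap-around heuristic does not control them simultaneously. The correct count of summands, as in Corollary~\ref{subgroupgenerationiteratedsumsbohr}, is $\Theta(d^2\log d + d\log\rho^{-1})$, not $O(1)$ --- still within the $\log^{O(1)}\alpha^{-1}$ budget, so this is a wrong intermediate bound rather than a fatal break, but it is a genuine error of quantification. (ii) The containment $[L]\cdot(4A) \subseteq [4L]\cdot A$ is wrong: $\ell(a_1+a_2+a_3+a_4) = \ell a_1+\ell a_2+\ell a_3+\ell a_4 \in 4([L]\cdot A)$, which lands in a $4$-fold sumset, not a dilated coefficient set. (iii) The phrase ``$\langle A\rangle/\langle B\rangle$ has bounded size'' is not correct; its order can be as large as $\exp(\log^{O(1)}\alpha^{-1})$. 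You then need the analogue of Lemma~\ref{smallGroupsGeneration} --- that a group of size $M$ generated by a set $S$ satisfies $r'([0,M-1]\cdot S) = K$ for some $r'\le\log_2 M$ --- to fill in the cosets with only $\log^{O(1)}\alpha^{-1}$ further summands, and you should state that explicitly rather than appeal to ``bounded''.
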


In other words, elements of the subgroup generated by $A$ can be expressed in the form $\lambda_1 a_1 + \dots + \lambda_r a_r$ for $a_1, \dots, a_r \in A$, where $r$ is quite small and $\lambda_1, \dots, \lambda_r$ are reasonably small. The example of $[0, \alpha N]$ inside the cyclic group $\mathbb{Z}/N\mathbb{Z}$ shows that somewhat large coefficients in the representation above are necessary.\\

Before proving the proposition, we prove a couple of special cases. The first one is the classical fact that a sets of very small doubling is very dense inside a subgroup.

\begin{lemma}\label{verysmalldoublingsbugroup}
        Let $X \subseteq G$ be a set such that $|X - X| \leq 1.01 |X|$. Then $X - X$ is a group.
\end{lemma}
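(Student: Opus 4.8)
The plan is to show that $X - X$ is closed under addition; together with the fact that $X-X$ is symmetric and contains $0$ (assuming $X \neq \emptyset$), this makes it a subgroup. So let $d_1, d_2 \in X - X$ be arbitrary, and I want to show $d_1 + d_2 \in X - X$. Write $H_0 = X - X$ and let $N = |X|$, so by hypothesis $|H_0| \le 1.01 N$. First I would record the standard consequence of small doubling that for each $d \in H_0$, the translate $(X + d) \cap X$ is large: indeed $X + d \subseteq X - X + X$ wait — more directly, writing $d = x - x'$ with $x, x' \in X$, we have $X + d = X + x - x'$; since $X + x - x' \subseteq (X - X) - X + X$... the cleaner route is: for $d \in X - X$, $|X \cap (X + d)| \ge 2|X| - |X \cup (X+d)|$, and $X \cup (X+d) \subseteq (X-X) + $ (some translate), so this has size at most $|H_0| + $ a bit. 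Let me instead argue: $X \cup (X + d) \subseteq X' $ where $X'$ is contained in a translate of $H_0 + H_0$? That loses too much. The right statement is that $X$ and $X+d$ both lie in a single translate of $H_0$ when $d \in H_0$: fix $x_0 \in X$; then $X \subseteq x_0 + H_0$ and $X + d \subseteq x_0 + H_0 + d \subseteq x_0 + H_0 + H_0$, which is not tight.

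Here is the clean approach. Fix a base point $x_0 \in X$ and note $X - x_0 \subseteq H_0$ and $X - x_0$ is a set of size $N$ inside $H_0$, which has size $\le 1.01 N$; so $X - x_0$ occupies all but at most $0.01 N$ of $H_0$. Therefore, for any translate $H_0 + t$ with $t \in H_0$, the set $(X - x_0) \cap (H_0 + t)$... but $H_0 + t = H_0$ only if $t$ makes it so, which we don't know. Let me reconsider. Take $d_1 = a - b$, $d_2 = c - e$ with $a,b,c,e \in X$. I want $d_1 + d_2 = a - b + c - e \in X - X$, i.e. I want to find $u, v \in X$ with $u - v = a + c - b - e$. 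Consider the sets $X + d_1$ and $X - d_2 = X + e - c$. Since $d_1, d_2 \in H_0$, both $X+d_1$ and $X - d_2$ lie inside $X + H_0 + H_0$; this is too weak, so instead use: $X$ has density $\ge 1/1.01 > 0.99$ inside $X - x_0 + x_0$...

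The genuinely correct and standard argument: since $|X - X| \le 1.01|X|$, the set $X$ has doubling at most $1.01$ in the difference sense, hence (by Plünnecke–Ruzsa, or here more elementarily) $X - X + X - X$ is not much bigger than $X$; but actually the cleanest is to show $|X \cap (X + d)| > 0.98 N$ for every $d \in X - X$. To see this: for $d = x - x' \in X - X$, the map $y \mapsto y + d$ sends $X$ into $X - x' + x \subseteq (X - X) + x = H_0 + x$; and it sends $X'$... hmm. Let me just assert the key step as a lemma to be verified: \emph{for every $d \in X - X$, $|X \cap (X+d)| \ge N - 2|X-X| + 2N$}? That is negative. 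I think the honest statement is $|X \cap (X+d)| \ge 2N - |X \cup (X+d)|$ and $X \cup (X+d) \subseteq (X \cup (X+d))$, with both $X$ and $X + d$ contained in $X + H_0$ giving $|X \cup (X+d)| \le |X + H_0|$; and $|X + H_0| = |X + X - X| \le $ (doubling) $\le (1.01)^{O(1)} N$, which is still too big.

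I will therefore structure the proof as follows and flag the crux. \textbf{Step 1:} reduce to proving closure of $H_0 = X - X$ under addition. \textbf{Step 2:} establish the inequality $|X \cap (X + d)| \ge (1 - \varepsilon) N$ for all $d \in H_0$ with a suitable small $\varepsilon$ — this is the heart, and the right tool is that $H_0$ itself, being a difference set of small doubling, satisfies $|H_0 + X| \le 1.01 N$ as well? No: the clean fact is that if $|X - X| \le K|X|$ with $K$ very close to $1$, then $X$ is contained in a coset of $H := \langle X - X\rangle$ and $|H| \le K|X|$... which is essentially what we are proving. \textbf{Step 3:} given Step 2, for $d_1, d_2 \in H_0$, the sets $X \cap (X + d_1)$ and $(X - d_2) \cap X$ each have size $> N/2$ inside $X$, so they intersect; picking $z$ in the intersection gives $z \in X$, $z - d_1 \in X$, $z + d_2 \in X$, whence $d_1 + d_2 = (z + d_2) - (z - d_1) \in X - X$.

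\textbf{The main obstacle} is Step 2. I expect the correct clean derivation to be: fix $x_0 \in X$; then $Y := X - x_0 \subseteq H_0$ and $|Y| = N \ge |H_0|/1.01$, so $|H_0 \setminus Y| \le 0.01 N$. For $d \in H_0$, consider that $Y + d \subseteq H_0 + d$; but $H_0 + d$ need not equal $H_0$. However, if we knew $H_0$ were already a group we'd be done — circular. The escape is to first prove $H_0$ is \emph{closed under negation and contains $0$} (trivial) and then run a bootstrapping: the above intersection argument shows $d_1 + d_2 \in X - X = H_0$ \emph{provided} $|X \cap (X+d_i)| > N/2$, and that lower bound itself only needs $|X \cup (X + d_i)| < 3N/2$, i.e. $|X + H_0| < 3N/2$; and $X + H_0 = X + X - X$, whose size is at most $1.01^3 N < 3N/2$ by the Ruzsa triangle inequality / Plünnecke, since $X$ has doubling $\le 1.01$ in both sum and difference form (using $|X+X| \le |X - X| \cdot |X - X|/|X| \le 1.01^2 N$ by Ruzsa, and then $|X + X - X| \le 1.01^4 N < 3N/2$). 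So Step 2 follows from elementary Ruzsa inequalities with room to spare, and no circularity. I would write it in exactly this order: (i) Ruzsa triangle inequalities to bound $|X + X - X| \le 1.01^{4} N < \tfrac32 N$; (ii) deduce $|X \cap (X + d)| > \tfrac12 N$ for $d \in X - X$; (iii) the two-set intersection argument to conclude $X - X$ is closed under addition; (iv) note $0 \in X - X$ and $-(X-X) = X - X$, so it is a subgroup.
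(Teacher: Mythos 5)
Your final four-step plan is correct, and Step 3 (the intersection argument) is essentially the paper's: take two elements $d_1, d_2 \in X-X$, note that the two ``fiber'' sets $\{y \in X : y - d_1 \in X\}$ and $\{y \in X : y - d_2 \in X\}$ each occupy most of $X$, pick $w$ in their intersection, and read off $d_1 - d_2 = (w - d_2) - (w - d_1) \in X-X$; closure under differences together with symmetry and $0 \in X-X$ finishes.

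Where you diverge from the paper is Step 2, the popularity bound. You get $|X \cap (X+d)| > |X|/2$ by bounding $|X+X-X| < \tfrac{3}{2}|X|$, which you attribute to ``Ruzsa''. Be careful: the Ruzsa triangle inequality $|A-C| \le |A-B||B-C|/|B|$ bounds a difference set by two other difference sets; it does \emph{not}, on its own, bound $|X+X|$ or $|X+X-X|$ in terms of $|X-X|$. What you are actually invoking is Pl\"unnecke--Ruzsa (with $A = X$, $B = -X$, giving $|mX - nX| \le K^{m+n}|X|$ from $|X-X| \le K|X|$), a much heavier tool than the statement warrants. The paper instead notices that for $d = x - y$ with $x,y \in X$, both $x - X$ and $y - X$ are subsets of $X - X$ of size exactly $|X|$; inclusion--exclusion inside $X-X$ then gives $|(x-X) \cap (y-X)| \ge 2|X| - |X-X| \ge 0.99|X|$, and this intersection is in bijection with $\{a \in X : a - d \in X\} = X \cap (X+d)$. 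That is a one-line elementary estimate, yields a better constant ($0.99$ rather than $1/2$), and needs nothing beyond the hypothesis. You were close to it during your exploration (``$X - x_0 \subseteq H_0$ has density $\ge 1/1.01$''), but got stuck because you translated by a fixed $x_0$ independent of $d$; the trick is to translate $X$ and $X+d$ by the two witnesses $x, y$ of $d = x-y$, which places both inside the single set $-(X-X) = X-X$.

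So: your proof is valid but overengineered; the paper's is the elementary version of your Step 2. Both (and the paper itself) implicitly assume $X \ne \emptyset$.
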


    \begin{proof} Let $d \in X - X$ be an arbitrary difference. Then $d = x - y$ for some $x,y \in X$. Note that $x - X, y - X \subseteq X - X$ are two sets of size $|X|$ so we have
    $|x - X \cap y - X| \geq 0.99 |X|$. Hence, there are at least $0.99 |X|$ elements $z \in G$ such that $x - z, y- z \in X$. The difference of each such pair is again $x - y = d$, so each difference in $X - X$ has at least $0.99|G|$ representations.\\ 
    \indent Take any $d_1, d_2 \in X - X$. Let $Y$ be the set of all $y \in X$ such that $y - d_1 \in X$, and let $Z$ be the set of all $z \in X$ such that $z - d_2 \in X$. By the work above, $Y$ and $Z$ are two subsets of $X$ of size at least $0.99 |G|$. Hence, $Y \cap Z$ is non-empty, so take any $w$ inside it. Then $w - d_1, w - d_2 \in X$, so we have $d_1 - d_2 = (w - d_2) - (w - d_1) \in X - X$, showing that $X - X$ is a finite set closed under taking differences, making it a group.
    \end{proof}

The second easy lemma concerns the case of very small ambient group.

\begin{lemma}\label{smallGroupsGeneration}
    Let $K$ be a finite abelian group of size at most $M$ with a generating set $A$. Then $r' ([0, M-1] \cdot A) = K$ for some $r' \leq \log_2 M$.
\end{lemma}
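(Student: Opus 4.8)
The plan is to combine two elementary observations about finite abelian groups. First, since every element $a$ of $K$ has order dividing $|K|\le M$, the list of multiples $0\cdot a, 1\cdot a,\dots,(M-1)a$ already runs through the whole cyclic subgroup $\langle a\rangle$; in particular $[0,M-1]\cdot a=\langle a\rangle$ and $0\in[0,M-1]\cdot a$. Consequently $[0,M-1]\cdot A$ contains $\langle a\rangle$ for every $a\in A$, and the $r'$-fold sumset $r'([0,M-1]\cdot A)$ contains the sum $\langle a_1\rangle+\dots+\langle a_{r'}\rangle$ for any choice of $a_1,\dots,a_{r'}\in A$. Second, a finite abelian group of order at most $M$ is generated by at most $\log_2 M$ of its elements, and this remains true if we are forced to choose the generators from a prescribed generating set $A$.

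To make the second point precise I would run the standard greedy selection: start with the trivial subgroup, and as long as the subgroup $\langle a_1,\dots,a_j\rangle$ generated so far is not all of $K$, pick $a_{j+1}\in A$ lying outside it — such an element must exist, for otherwise $A\subseteq\langle a_1,\dots,a_j\rangle$ would force $K=\langle A\rangle\subseteq\langle a_1,\dots,a_j\rangle$. Since $\langle a_1,\dots,a_{j+1}\rangle$ is a group properly containing $\langle a_1,\dots,a_j\rangle$, Lagrange's theorem gives $|\langle a_1,\dots,a_{j+1}\rangle|\ge 2|\langle a_1,\dots,a_j\rangle|$, hence $|\langle a_1,\dots,a_j\rangle|\ge 2^{j}$ at every stage. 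As the sizes are bounded by $M$, the process must terminate, say after $r'$ steps with $\langle a_1,\dots,a_{r'}\rangle=K$, and then $2^{r'}\le M$, i.e.\ $r'\le\log_2 M$.

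Finally I would assemble the conclusion: $K=\langle a_1,\dots,a_{r'}\rangle=\langle a_1\rangle+\dots+\langle a_{r'}\rangle=[0,M-1]\cdot a_1+\dots+[0,M-1]\cdot a_{r'}\subseteq r'([0,M-1]\cdot A)$, while the reverse inclusion $r'([0,M-1]\cdot A)\subseteq K$ is immediate because each $\ell a$ with $a\in A\subseteq K$ lies in $K$. Hence equality holds with $r'\le\log_2 M$. I do not expect a genuine obstacle here; the only things to watch are trivial edge cases (when $K$ is trivial, or $A$ consists only of $0$, take $r'=0$ and read the empty sumset as $\{0\}$), and phrasing the greedy step so that the existence of an element of $A$ outside the current subgroup is explicitly derived from the hypothesis that $A$ generates $K$.
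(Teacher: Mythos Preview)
Your proof is correct and follows essentially the same approach as the paper: greedily select $a_1,\dots,a_{r'}\in A$ so that each new element lies outside the subgroup generated so far, use Lagrange to get $r'\le\log_2 M$, and then use that every element of $K$ has order at most $M$ to write $K$ as a sum of the $r'$ sets $[0,M-1]\cdot a_i$. Your write-up is slightly more explicit about the reverse inclusion and edge cases, but the argument is the same.
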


\begin{proof}
    Let $a_1, \dots, a_s$ be the elements of $A$. By reordering and removing the if necessary, we may assume that $a_i \notin \langle a_1, \dots, a_{i-1}\rangle$ for each $i \in [s]$ and $\langle a_1, \dots, a_s\rangle = K$. In particular, this means that $|\langle a_1, \dots, a_i\rangle| \geq 2^i$ for each $i \in [s]$ and so $s \leq \log_2 M$. Moreover, since $|K|\leq M$, every element has order at most $M$. Thus, every element in $K$ can be written as a linear combination $\lambda_1 a_1 + \dots + \lambda_s a_s$ with $\lambda_i \in [0, M - 1]$, completing the proof.
\end{proof}

We may now prove Proposition~\ref{efficientGroupGeneration}.

\begin{proof}
    We apply Theorem~\ref{bogRuzsa1} to find a proper symmetric coset progression $P + K$ inside $2A - 2A$, of rank $d \leq \log^{O(1)}(\alpha^{-1})$ and size $|P + K| \geq \exp\Big(-\log^{O(1)}(\alpha^{-1})\Big)|A|$. By symmetry of $A$, we have $2A - 2A = 4A$. Let $L = \Big\lceil\frac{|G|}{|P + K|}\Big\rceil$. Let us first show that $2d([L] \cdot (P + K))$ is a subgroup.\\

    Write $P = P_1 + \dots + P_d$, for some symmetric arithmetic progressions $P_i = [-\ell_i, \ell_i] \cdot a_i$ (so $a_i$ is a step of $P_i$) such that $|P + K| = |K| \prod_{i \in [d]}|P_i|$. Observe firstly that $m (P + K) \subseteq (d + 1) ([m] \cdot P) + K$ for each $m \geq 1$. Indeed, if $x_1, \dots, x_m \in P$ and $k_1, \dots, k_m \in K$ then we may write each $x_i = \sum_{j \in [d]} y_{ij}$ for $y_{ij} \in P_j$. In particular $x_1 + \dots + x_m = \lambda_1 a_1 + \dots + \lambda_d a_d$ for integers $\lambda_i$ with $|\lambda_i| \leq m \ell_i$. For each $j \in [d]$, take $\mu_j \in [-\ell_j, \ell_j]$ such that $|\lambda_j - m \mu_j| < m$. Write $\nu_j = \lambda_j - m \mu_j$. Then
    \[x_1 + \dots + x_m = m(\mu_1 a_1 + \dots + \mu_d a_d) + \nu_1 a_1 + \dots + \nu_d a_d,\]
    so \[(x_1 + k_1) + \dots + (x_m + k_m) = (x_1 + \dots + x_m) + (k_1 + \dots + k_m) \in (d + 1)([m] \cdot Q) + K,\]
    showing that $m (P + K) \subseteq (d + 1) ([m] \cdot P) + K$.\\

    Therefore, it suffices to show that $m (P + K)$ is a subgroup for a reasonably small $m$. Let us consider $m = 2^k$. Note that by symmetry of $P + K$ we have $2^{k + 1}(P + K) = 2^k(P + K) - 2^k (P + K)$. If it happens that $|2^{k + 1}(P + K)| \leq 1.01 |2^{k}(P + K)|$,  we are done by Lemma~\ref{verysmalldoublingsbugroup}. Clearly, this must happen for some $k \leq \log_{1.01} L$. Hence, for some $m \leq 2^{\log_{1.01} L}$ we have that $m(P + K)$ is a subgroup and thus $(d + 1) ([m] \cdot (P + K))$ contains a subgroup of size at least $|P + K|$. Going back to the set $A$, we have that $4(d + 1) ([m] \cdot A)$ contains a subgroup $H \leq G$ of density at least $L^{-1}$.\\

    Let $G'$ be the subgroup generated by $A$. Hence, the projection of $A$ in $G/H$ generates $G' / H$, and so by Lemma~\ref{smallGroupsGeneration} $r' ([L] \cdot (A + H)/H) = K$ for some $r' \leq \log_2 L$. But this means that $G' = r' ([L] \cdot (A + H)) \subseteq (r'(4d + 5))([Lm] \cdot A) \subseteq G'$, as desired.  
    \end{proof}

\subsection{Lattice theory}

It is a standard fact that lattices have $\mathbb{Z}$-basis. The volume of the parallelepiped spanned by a $\mathbb{Z}$-basis of a lattice $\Lambda$ of full rank is the \textit{covolume} of $\Lambda$, denoted $\on{cov} \Lambda$, and is independent of the choice of a $\mathbb{Z}$-basis as it equals the absolute value of the determinant of the basis.\\

In this paper, we also need a result about images of homomorphisms $\chi : G \to \mathbb{T}^r$. Phrased in terms of Bohr sets, the next results determines which generalized Bohr sets, which are allowed to be centered about non-zero values, are non-empty. However, the result is essentially about lattices.

\begin{proposition}
    Let $\chi : G \to \mathbb{T}^r$ be a homomorphism. Let $\Lambda = \{\lambda \in \mathbb{Z}^r : \lambda \cdot \chi = 0\}$ be the lattice of vanishing linear combinations of $\chi$. Then $\on{Im} \chi = \{v \in \mathbb{T}^r : (\forall \lambda \in \Lambda) \lambda \cdot v = 0\}$.
\end{proposition}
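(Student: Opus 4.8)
The plan is to prove both inclusions, the easy one being $\on{Im}\chi \subseteq \{v \in \mathbb{T}^r : (\forall \lambda \in \Lambda)\ \lambda \cdot v = 0\}$, which is immediate: if $v = \chi(x)$ and $\lambda \in \Lambda$, then $\lambda \cdot v = \lambda \cdot \chi(x) = (\lambda \cdot \chi)(x) = 0$ since $\lambda \cdot \chi$ is the zero homomorphism. The content is in the reverse inclusion, for which I would argue via the structure of finitely generated abelian groups. Write each component $\chi_i : G \to \mathbb{T}$ as $\chi_i(x) = \langle x, g_i \rangle$ for some $g_i \in \hat G \cong G$, so that $\chi$ is given by a homomorphism $G \to \mathbb{T}^r$ whose image is a finite subgroup of $\mathbb{T}^r$. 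The subgroup $\Lambda^\perp := \{v \in \mathbb{T}^r : (\forall \lambda \in \Lambda)\ \lambda\cdot v = 0\}$ is exactly the annihilator of $\Lambda$ inside $\mathbb{T}^r$, and by Pontryagin duality for the pairing $\mathbb{Z}^r \times \mathbb{T}^r \to \mathbb{T}$, one has that $\Lambda^\perp$ is (canonically isomorphic to) the dual of $\mathbb{Z}^r / \Lambda$.

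The main step is then to identify $\on{Im}\chi$ with the dual of $\mathbb{Z}^r/\Lambda$ as well, and to check the identification is compatible with the inclusion into $\mathbb{T}^r$. Consider the homomorphism $\Psi : \mathbb{Z}^r \to \hat G$ sending the $i$-th standard basis vector $e_i$ to $g_i$, equivalently $\lambda \mapsto \lambda \cdot \chi \in \hat G$ (viewing $\lambda\cdot\chi$ as a character of $G$). By definition $\Lambda = \ker \Psi$, so $\Psi$ descends to an injection $\overline\Psi : \mathbb{Z}^r/\Lambda \hookrightarrow \hat G$, with image the subgroup $\Gamma' := \langle g_1, \dots, g_r\rangle \leq \hat G$ generated by the $g_i$. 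Dualizing, we get a surjection $\hat{\overline\Psi}$ from $\hat{\hat G} \cong G$ onto the dual of $\mathbb{Z}^r/\Lambda$; unwinding the duality, this surjection is precisely $x \mapsto (\lambda \mapsto (\lambda\cdot\chi)(x)) = (\lambda \mapsto \lambda\cdot\chi(x))$, i.e. it is $\chi$ followed by the identification of $\Lambda^\perp$ with $(\mathbb{Z}^r/\Lambda)^\wedge$. Surjectivity of this map onto $(\mathbb{Z}^r/\Lambda)^\wedge \cong \Lambda^\perp$ is exactly the statement that $\on{Im}\chi = \Lambda^\perp$, once we know the identification of $\Lambda^\perp$ with $(\mathbb{Z}^r/\Lambda)^\wedge$ is the natural one and respects the embedding into $\mathbb{T}^r$.

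Concretely, to avoid abstract duality machinery one can argue by hand: given $v \in \Lambda^\perp$, we want $x \in G$ with $\chi(x) = v$. Since $\Lambda$ has some finite index behaviour only in the directions where the $g_i$ have finite order, first reduce to the case where $G$ is finite (it is, being a finite abelian group), so all $g_i$ have finite order and $\on{Im}\chi$ is finite. Put $N = |G|$; then $N v \in \on{Im}\chi$ trivially would not help, so instead use: the pairing $\langle \lambda, v\rangle = \lambda \cdot v$ between $\mathbb{Z}^r/\Lambda$ and $\mathbb{T}^r$ realizes $v$ as a character of $\mathbb{Z}^r/\Lambda$, which by injectivity of $\overline\Psi$ and the fact that characters of a subgroup of a finite abelian group extend, lifts to a character $\psi$ of $\hat G$ agreeing with $v$ on $\Gamma' = \on{Im}\overline\Psi$; by Pontryagin duality $\psi = \on{ev}_x$ for some $x \in \hat{\hat G} = G$, and then $\chi(x)_i = \langle x, g_i\rangle = \psi(g_i) = \langle e_i + \Lambda, v\rangle = v_i$, so $\chi(x) = v$. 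The one genuine obstacle is making sure the extension/lifting step is valid: extending a character from a subgroup of a finite abelian group to the whole group is standard (injectivity of $\mathbb{T}$ as a $\mathbb{Z}$-module, or divisibility of $\mathbb{T}$), and this is what lets $v|_{\Gamma'}$ be realized by an element of $G$. Everything else is bookkeeping with the duality pairing.
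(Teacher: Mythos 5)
Your proposal is correct, but the route is genuinely different from the paper's. You prove the easy inclusion the same way, then for the reverse inclusion you run a Pontryagin-duality argument: $v\in\Lambda^\perp$ defines a character of $\mathbb{Z}^r/\Lambda$, which via the injection $\overline\Psi:\mathbb{Z}^r/\Lambda\hookrightarrow\hat G$ (with image $\Gamma'=\langle\chi_1,\dots,\chi_r\rangle$) becomes a character of $\Gamma'$; by divisibility of $\mathbb{T}$ it extends to a character $\psi$ of $\hat G$, and by double duality $\psi=\on{ev}_x$ for some $x\in G$, whence $\chi_i(x)=\psi(\chi_i)=e_i\cdot v=v_i$ and $\chi(x)=v$. (Note that the character-extension fact you invoke is in fact recorded separately in the paper as a lemma.) The paper instead takes a counting route: after proving the same inclusion it shows $|\on{Im}\chi|=|\Lambda^\perp|$ by computing $|\ker\chi|$ with an exponential-sum expansion, identifying $|G|/|\ker\chi|$ with $\on{cov}\Lambda$ via the density of the annihilator lattice in boxes $[0,Rn-1]^r$ as $R\to\infty$, and then evaluating $|\Lambda^\perp|=\on{cov}\Lambda$ from the Smith normal form. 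Your argument is more structural and avoids any cardinality bookkeeping; the paper's is more in keeping with its Fourier-analytic toolkit and, as a byproduct, produces the explicit identity $|\ker\chi|=|G|/\on{cov}\Lambda$, though that formula is not needed for the proposition as stated. Both proofs are correct and essentially the same length once the supporting facts (character extension versus exponential-sum evaluation and Smith normal form) are accounted for.
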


\begin{proof}
    Since $\chi_i$ is a homomorphism from $G$ to $\mathbb{T}$ we have $\on{Im} \chi \subseteq \{(a_1/|G|, \dots, a_r / |G|) : a_1, \dots, a_r \in [0, |G|-1]\}$. Hence, we may instead consider $\chi : G \to (\mathbb{Z}/n\mathbb{Z})^r$, where $n = |G|$. Write $\Lambda^\perp = \{(a_1, \dots, a_r) \in (\mathbb{Z}/n\mathbb{Z})^r : (\forall \lambda \in \Lambda)\,\lambda \cdot a = 0\}$. Let $a = \chi(g) \in \on{Im} \chi$ and $\lambda \in \Lambda$. Then $\lambda \cdot a = \lambda \cdot \chi(g) = 0$ so $\on{Im}\chi \subseteq \Lambda^\perp$. To prove the claim, it suffices to show that $|\on{Im}\chi| = |\Lambda^\perp|$.\\
    \indent By the first isomorphism theorem for groups, we have $|\on{Im}\chi| = |G| / |\on{ker} \chi|$. To find the size of the kernel, we use exponential sums
    \begin{align*}|\on{ker} \chi| = &\sum_{x \in G} \id(\chi(x) = 0) = \sum_{x \in G} \exx_{\lambda_1, \dots, \lambda_r \in \mathbb{Z}/n\mathbb{Z}}\on{e}\Big(\frac{\lambda_1 \chi_1(x) + \dots + \lambda_r \chi_r(x)}{n}\Big)\\
    = & \exx_{\lambda_1, \dots, \lambda_r \in \mathbb{Z}/n\mathbb{Z}} \sum_{x \in G} \on{e}\Big(\frac{\lambda_1 \chi_1(x) + \dots + \lambda_r \chi_r(x)}{n}\Big)\\
    = & |G| \exx_{\lambda_1, \dots, \lambda_r \in \mathbb{Z}/n\mathbb{Z}} \id(\lambda \cdot \chi = 0)\\
    = & \frac{|\{ \lambda \in [0, n-1]^r : \lambda \cdot \chi = 0\}|}{n^r} |G|.
    \end{align*}

Note that $n e_i \in \Lambda$ for all standard basis vectors $e_i$. Hence, 
\[\frac{|\{ \lambda \in [0, n-1]^r : \lambda \cdot \chi = 0\}|}{n^r} = \frac{|\{ \lambda \in [0, Rn-1]^r : \lambda \cdot \chi = 0\}|}{R^r n^r}\]
for all $R \in \mathbb{N}$, and the right hand side tends to $\frac{1}{\on{cov} \Lambda}$.\\
\indent Finally, if $a_1, \dots, a_r \in \mathbb{Z}^r$ is a $\mathbb{Z}$-basis of $\Lambda$, its Smith normal form is diagonal matrix with entries $d_1, \dots, d_r$ dividing $n$, from which we deduce $|\Lambda^\perp| = d_1 d_2 \cdots d_r = \det (a_1, \dots, a_r) = \on{cov} \Lambda$, which gives $|\on{Im} \chi| = |\Lambda^\perp|$, as desired.\end{proof}

\vspace{\baselineskip}
We also need a variant of this result which tells us that if we are only interested in the approximate values of images of a homomorphism $\chi : G \to \mathbb{T}^r$, then we only need to know the bounded linear combinations in $\Lambda$.

\begin{proposition}\label{approximatelatticeimage}
    There exists an absolute constant $C$ such that the following holds. Let $\chi : G \to \mathbb{T}^r$ be a homomorphism. Let $\Lambda = \{\lambda \in \mathbb{Z}^r : \lambda \cdot \chi = 0\}$ be the lattice of vanishing linear combinations of $\chi$. Let $K$ be a given positive integer. Let $v \in \mathbb{T}^r$ be any element such that $\lambda_1 v_1 + \dots + \lambda_r v_r = 0$ holds for all $\lambda \in \Lambda \cap [-K, K]^r$. Then, provided $K \geq C r  (2\delta^{-1})^{2r + 1}$, there exists an element $x \in G$ such that $\|\chi(x) -v\|_{\mathbb{T}} \leq \delta$.
\end{proposition}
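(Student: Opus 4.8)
The plan is to follow the exact-image case above and control the error introduced by restricting to bounded linear combinations of $\Lambda$. As before, since each $\chi_i$ maps into $\frac{1}{|G|}\mathbb{Z}/\mathbb{Z}$, we may work with $\chi : G \to (\mathbb{Z}/n\mathbb{Z})^r$ where $n = |G|$, and with the exact-image proposition we know that $\on{Im}\chi = \Lambda^\perp := \{a \in \mathbb{T}^r : \lambda \cdot a = 0 \text{ for all }\lambda \in \Lambda\}$. So it suffices to show: if $v \in \mathbb{T}^r$ is annihilated by every $\lambda \in \Lambda \cap [-K,K]^r$, then $v$ is within $\delta$ (in $\|\cdot\|_{\mathbb{T}}$ on each coordinate) of some genuine element of $\Lambda^\perp$. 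This is now a purely lattice-geometric statement about how well the bounded part of $\Lambda$ approximates $\Lambda$ in the sense of orthogonal complements.

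First I would pick a reduced $\mathbb{Z}$-basis $b_1, \dots, b_s$ of $\Lambda$ (where $s = r - \dim_{\mathbb{Q}}(\on{span}\,\Lambda)^\perp$, but in fact $n e_i \in \Lambda$ forces $\Lambda$ to have full rank $r$, so $s = r$). Using an LLL-type or Minkowski-type reduction, one obtains a basis whose vectors have length bounded in terms of the successive minima, and the key quantitative input is that the product of the successive minima is comparable to $\on{cov}\Lambda = d_1 \cdots d_r$ (the Smith normal form entries), each $d_i$ dividing $n$. The point is that each $d_i$ is bounded by $|\on{Im}\chi| \le \prod \rho^{-1}$-type quantities — but here I should avoid circularity and instead note that the relevant bound is simply $d_i \le |\Lambda^\perp| \le 2^r \cdot$ (something), and more usefully that if $\Lambda^\perp$ is $\delta$-dense-enough then we don't need the $d_i$ small. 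Concretely: let $L$ be the sublattice of $\Lambda$ generated by $\Lambda \cap [-K,K]^r$. I claim that for $K$ of the stated size, $L$ has finite index in $\Lambda$ bounded by a function forcing $L^\perp$ and $\Lambda^\perp$ to be close. Indeed $L^\perp \supseteq \Lambda^\perp$ and $[L^\perp : \Lambda^\perp] = [\Lambda : L]$, so $v \in L^\perp$ lies in one of $[\Lambda:L]$ cosets of $\Lambda^\perp$; I must show every coset representative is short (has a representative of $\|\cdot\|_{\mathbb{T}}$-norm $\le \delta$).

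The cleanest route, and the one I would actually write: choose a basis $b_1, \dots, b_r$ of $\Lambda$ with $\|b_i\|_\infty \le r \cdot (\text{product of successive minima})^{?}$ — but to sidestep lattice reduction entirely, use the fact that $\Lambda \supseteq n\mathbb{Z}^r$, so $\Lambda / n\mathbb{Z}^r$ is a subgroup of $(\mathbb{Z}/n\mathbb{Z})^r$ of size $n^r / \on{cov}\Lambda$. Apply Lemma~\ref{smallGroupsGeneration} (with ambient group $(\mathbb{Z}/n\mathbb{Z})^r$ of size $n^r$... too big) — better: apply it to $\Lambda^\perp \le (\mathbb{Z}/n\mathbb{Z})^r$, whose size is $\on{cov}\Lambda = d_1\cdots d_r \le \delta^{-?}$. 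Hmm, that bound on $\on{cov}\Lambda$ is exactly what I cannot assume. So instead I commit to the elementary covering/reduction argument: by Minkowski's second theorem there is a basis $b_1, \dots, b_r$ of $\Lambda$ with $\prod_i \|b_i\|_2 \le r^{r/2}\,\on{cov}\Lambda$ and $\|b_i\|_2 \le \lambda_i(\Lambda)\cdot i$ roughly; crucially each $\lambda_i(\Lambda) \le \sqrt{r}\, n$ trivially since $n\mathbb{Z}^r \subseteq \Lambda$, but we need a *lower* bound issue only on the relevant scale. Set the threshold: if all $\lambda_i(\Lambda) \le K/r$ then $\Lambda \cap [-K,K]^r$ already generates $\Lambda$ (it contains a full short basis), hence $L = \Lambda$, $L^\perp = \Lambda^\perp$, and $v \in \Lambda^\perp = \on{Im}\chi$ exactly, done with room to spare. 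Otherwise some $\lambda_i(\Lambda) > K/r$; then $\on{cov}\Lambda \ge \prod \lambda_j \ge (K/r)\cdot 1 \cdots$, wait this needs the *small* minima too.

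I think the honest main obstacle is precisely this: I need to show that whenever $v$ is killed by the bounded part of $\Lambda$ but not by all of $\Lambda$, the "missing" relations are long vectors, and a long vector $\lambda \in \Lambda$ with $\lambda \cdot v \ne 0$ can be *corrected* by a bounded amount because $\Lambda^\perp$ is $\delta$-dense in the subspace it spans once $\on{cov}\Lambda$ is controlled — but $\on{cov}\Lambda$ is *not* controlled a priori. The resolution must be that we only need $v$ to be *within $\delta$* of $\on{Im}\chi$, not in it: decompose $\mathbb{Z}^r = \Lambda \oplus' (\text{complement})$ up to finite index, write $v$ relative to the dual basis $b_1^*, \dots, b_r^*$ of $\Lambda$ (so $b_i^* \cdot \lambda \in \mathbb{Z}$-combinations), and observe that the condition $\lambda \cdot v = 0$ for all short $\lambda$ pins down $v$ modulo vectors of the form $\sum c_i b_i^*$ with $c_i$ only constrained by the *long* basis vectors; since long basis vectors have a correspondingly *fine* dual lattice, these correction vectors are small. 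Quantitatively, if $\|b_i\|_\infty \le K/(2r)$ then $b_i$ is among the short relations and $b_i \cdot v = 0$ forces the $i$-th dual coordinate of $v$ to be integral; if $\|b_i\|_\infty > K/(2r)$ then the corresponding dual-coordinate spacing is $\le 1/\|b_i\|_\infty < 2r/K \le \delta/r$ (choosing $K \ge 2r^2\delta^{-1}$), so $v$ can be rounded in that coordinate at cost $< \delta/r$ each, total $< \delta$. Summing over coordinates and translating back through the (integer, hence $\|\cdot\|_{\mathbb{T}}$-nonexpanding) change of basis gives the claim, with $K \ge C r (2\delta^{-1})^{2r+1}$ comfortably absorbing the extra factors coming from bounding $\|b_i^*\|$ in terms of $\|b_j\|$ via Cramer's rule and the covolume — this is where the exponential-in-$r$ dependence genuinely enters, since the dual basis norms can be as large as $\on{cov}\Lambda / \lambda_{\min} \le (\sqrt r n)^{r-1}$, controlled only after one first passes to the sublattice generated by short vectors. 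I would write the proof by: (1) recalling $\on{Im}\chi = \Lambda^\perp$ from the previous proposition; (2) reducing to the lattice statement; (3) taking a Minkowski-reduced basis of $\Lambda$; (4) splitting the basis into "short" ($\le$ threshold) and "long"; (5) rounding $v$ coordinate-wise in the dual basis, bounding the error by the reciprocals of the long basis-vector norms; (6) checking the total error is $\le \delta$ for the stated $K$; the hard part will be step (5)–(6), i.e. getting the dual-basis norm bounds clean enough that the required $K$ is only $(2\delta^{-1})^{O(r)}$ rather than something worse.
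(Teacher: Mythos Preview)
Your approach is genuinely different from the paper's. The paper gives a short analytic argument that reuses Proposition~\ref{bohrsizeLargeFC} rather than any lattice reduction: assume for contradiction that no $x \in G$ has $\|\chi(x) - v\|_{\mathbb{T}} \leq \delta$, so that $\ex_{x} \prod_i \mathsf{b}(\chi_i(x) - v_i) = 0$ for a bump function $\mathsf{b} = \mathsf{b}_{\rho,\eta}$ with $\rho \in [\delta/2, \delta]$ chosen to be weakly regular. Replacing each $\mathsf{b}$ by its truncated Fourier series at level $K$ and expanding yields, up to an error controlled by $K$, the sum $\sum_{a \in [-K,K]^r} \big(\prod_i \hat{\mathsf{b}}(a_i)\big)\, \id(a \cdot \chi = 0)\, \on{e}(-a \cdot v)$. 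The hypothesis on $v$ kills the phase $\on{e}(-a \cdot v)$ for every surviving $a$, leaving exactly the Proposition~\ref{bohrsizeLargeFC} approximant for $|B(\chi_{[r]};\rho)|/|G| \geq (\delta/2)^r$, which contradicts the assumed vanishing once $K$ is large enough. The whole proof is a few lines and needs no basis of $\Lambda$ at all.

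Your lattice-geometric route --- reduce via the previous proposition to showing $v$ is within $\delta$ of $\Lambda^\perp$, take a reduced basis $b_1,\dots,b_r$ of $\Lambda$, then round $v$ along the dual directions $b_i^*$ for the long $b_i$ --- is a reasonable idea and, if it worked cleanly, might even give a better $\delta$-dependence. But there is a real gap precisely where you flag it. You need $\|b_i^*\|_\infty \leq c_r/\|b_i\|_\infty$ for the long basis vectors, and your discussion does not establish this. The identity $\|b_i^*\| = 1/\|\hat b_i\|$ holds only for the \emph{last} index in a Gram--Schmidt ordering; for general $i$ one has $b_i^* = \hat b_i/\|\hat b_i\|^2 + \sum_{k>i} (M^{-1})_{ki}\,\hat b_k/\|\hat b_k\|^2$, and controlling the extra terms requires both bounding the entries of $M^{-1}$ (not just $M$) and controlling the later $\|\hat b_k\|$ from below --- neither of which follows from size-reduction alone. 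Your Cramer's-rule estimate $\|b_i^*\| \lesssim \prod_{j\neq i}\|b_j\|/\on{cov}\Lambda$ goes in the wrong direction without an orthogonality-defect bound on the full product, and that bound is exactly the nontrivial lattice input you have not supplied. The argument is salvageable with enough care, but as written it does not close.
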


\begin{proof}
    Let $\eta = \frac{1}{4}(\delta/2)^{r+1}$. Let $\rho \in [\delta/2, \delta - \eta]$ be such that 
    \[|B(\chi_1, \dots, \chi_r, \rho + \eta) \setminus B(\chi_1, \dots, \chi_r, \rho)| \leq \frac{1}{4} (\delta/2)^r |G|.\]
    Such a $\rho$ exists by pigeonhole principle. We need this property as we shall apply Proposition~\ref{bohrsizeLargeFC} to complete the proof.\\
    
    \indent Assume the contrary, namely that there is no $x \in G$ such that $\|\chi(x)-v\|_{\mathbb{T}} \leq \delta$.\\
    \indent Recall the definition of bump functions~\eqref{bumpdefinition} and let $\mathsf{b} = \mathsf{b}_{\rho, \eta}$. Since no $x \in G$ satisfies $\|\chi(x) -v\|_{\mathbb{T}} \leq \rho + \eta$, we have
    \[\exx_{x \in G} \mathsf{b}(\chi_1(x) - v_1) \cdots \mathsf{b}(\chi_r(x) - v_r) = 0.\]
    Let $g(x) = \sum_{a \in [-K, K]} \hat{\mathsf{b}}(a) \on{e}(a x)$. By Fact~\ref{bumpfunctionsfact}, we have $|\mathsf{b}(x) - g(x)| \leq 2\eta^{-1}/K$ for all $x \in \mathbb{T}$. Using this inequality, we have $\|g\|_{\infty} \leq 1 + 2\eta^{-1}/K \leq 1 + 1/r$ as long as $K \geq 2\eta^{-1} r$, and we conclude that
    \[\Big|\exx_{x \in G} g(\chi_1(x) - v_1) \cdots g(\chi_r(x) - v_r)\Big| \leq 2\eta^{-1} r (1+1/r)^r /K.\]
    Write $\varepsilon$ for the bound on the right hand side. Hence, 
    \begin{align*}\varepsilon \geq &\Big|\exx_{x \in G} \sum_{a_1, \dots, a_r \in [-K, K]} \hat{\mathsf{b}}(a_1) \cdots \hat{\mathsf{b}}(a_r)\on{e}(a_1(\chi_1(x) - v_1) + \dots + a_r(\chi_r(x) - v_r)) \Big| \\
    = &\Big| \sum_{a_1, \dots, a_r \in [-K, K]} \hat{\mathsf{b}}(a_1) \cdots \hat{\mathsf{b}}(a_r) \on{e}(- a \cdot v) \Big(\exx_{x \in G} \on{e}(a_1\chi_1(x) + \dots + a_r \chi_r(x))\Big)\Big|\\
    = &\Big| \sum_{a_1, \dots, a_r \in [-K, K]} \hat{\mathsf{b}}(a_1) \cdots \hat{\mathsf{b}}(a_r) \on{e}(- a \cdot v) \id(a_1\chi_1 + \dots + a_r \chi_r = 0)\Big|\\
    = &\Big| \sum_{a_1, \dots, a_r \in [-K, K]} \hat{\mathsf{b}}(a_1) \cdots \hat{\mathsf{b}}(a_r) \id(a_1\chi_1 + \dots + a_r \chi_r = 0)\Big|,
    \end{align*}
    using the fact that $a \cdot v = 0$ for all $v \in \Lambda \cap [-K, K]^r$ in the last step. Let $C$ be implicit constant from the bound on $K$ in Proposition~\ref{bohrsizeLargeFC}. By Proposition~\ref{bohrsizeLargeFC}, the final term is an approximation for $|B(\chi_1, \dots, \chi_r; \rho)| / |G|$ up to error $\frac{1}{2} (\delta/2)^r$, provided $K \geq 8 C r \eta^{-1} (\delta/2)^{-r})$. From Lemma~\ref{basicbohrsizel}, Bohr set $B(\chi_1, \dots, \chi_r; \rho)$ is of size at least $\rho^r |G|$, leading to inequality $\varepsilon \geq \rho^r - \frac{1}{2} (\delta/2)^r$. This is a contradiction, as $K \geq 32 C r  (2\delta^{-1})^{2r + 1}$.
\end{proof}

The following two results from~\cite{generalBilBog} concern quantitative theory of lattices.

\begin{lemma}[Lemma 29 in~\cite{generalBilBog}]\label{nestedLattices}Let $k$ and $K$ be positive integers. Suppose that $\Lambda_1 \subseteq \Lambda_2 \subseteq \dots \subseteq \Lambda_r \subseteq \mathbb{Z}^k$ are lattices such that for each $i \in [r-1]$ the set-difference $\Lambda_{i + 1} \setminus \Lambda_i$ contains an element of the box $[-K, K]^k$. Then $r \leq O(k^2 (\log k + \log K))$.\end{lemma}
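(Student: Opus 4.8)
The plan is to reduce everything to a single, concrete chain of lattices generated by bounded vectors. For each $i\in[r-1]$ the hypothesis gives a witness $v_i\in(\Lambda_{i+1}\setminus\Lambda_i)\cap[-K,K]^k$; set $M_i:=\mathbb{Z}v_1+\cdots+\mathbb{Z}v_i\subseteq\mathbb{Z}^k$. The reason to pass from the $\Lambda_i$ to the $M_i$ is that the $\Lambda_i$ may be enormous or otherwise hard to control, whereas each $M_i$ is generated by vectors of Euclidean length at most $K\sqrt k$, which caps its covolume. First I would check that $M_1\subsetneq M_2\subsetneq\cdots\subsetneq M_{r-1}$ strictly: since $v_j\in\Lambda_{j+1}\subseteq\Lambda_i$ for every $j<i$ we have $M_{i-1}\subseteq\Lambda_i$, while $v_i\notin\Lambda_i$, so $v_i\notin M_{i-1}$ and hence $M_{i-1}\subsetneq M_i$. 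This is essentially the only place the hypothesis is used, apart from the inclusions $\Lambda_j\subseteq\Lambda_{j'}$ for $j\le j'$.

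Next I would set up the numerical invariant. For a rank-$d$ lattice $\Lambda\subseteq\mathbb{Z}^k$, let $\mathrm{vol}(\Lambda)$ be the $d$-dimensional covolume, and record three facts: (i) if $W$ is an integer basis matrix of $\Lambda$, Cauchy--Binet gives $\mathrm{vol}(\Lambda)^2=\sum_{S}\det(W_S)^2$ summed over $d$-subsets $S$ of coordinates, where $W_S$ is the corresponding $d\times d$ submatrix; these determinants are integers and not all zero, so $\mathrm{vol}(\Lambda)\ge 1$; (ii) if $\Lambda'\subseteq\Lambda''$ have the same rank, then $\mathrm{vol}(\Lambda')=[\Lambda'':\Lambda']\,\mathrm{vol}(\Lambda'')$, so a strict inclusion gives $\mathrm{vol}(\Lambda')\ge 2\,\mathrm{vol}(\Lambda'')$ and in general $\mathrm{vol}(\Lambda'')\le\mathrm{vol}(\Lambda')$; (iii) Hadamard's inequality gives $\mathrm{vol}(\mathbb{Z}w_1+\cdots+\mathbb{Z}w_d)\le\prod_l\|w_l\|_2$ for linearly independent $w_l$. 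Applying (iii) to a maximal linearly independent sub-collection of $\{v_1,\dots,v_i\}$ (which spans a sublattice of $M_i$ of the same rank) together with (ii) yields $\mathrm{vol}(M_i)\le (K\sqrt k)^k$ for every $i$.

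Finally I would finish with a blocking argument. The ranks $\mathrm{rank}(M_1)\le\cdots\le\mathrm{rank}(M_{r-1})$ form a nondecreasing sequence of integers in $\{1,\dots,k\}$, so they are constant on at most $k$ blocks. On a block of common rank $d$ covering indices $s,s+1,\dots,t$, applying (ii) repeatedly to $M_s\subsetneq M_{s+1}\subsetneq\cdots\subsetneq M_t$ gives $\mathrm{vol}(M_s)\ge 2^{\,t-s}\,\mathrm{vol}(M_t)\ge 2^{\,t-s}$, and comparing with the bound $\mathrm{vol}(M_s)\le(K\sqrt k)^k$ forces $t-s\le k\log_2(K\sqrt k)$. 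Hence each block has length $O\big(k(\log k+\log K)\big)$, and summing over the at most $k$ blocks gives $r-1\le k+k^2\log_2(K\sqrt k)=O\big(k^2(\log k+\log K)\big)$; the degenerate cases $k\le 1$ or $r\le 1$ are handled directly. I do not expect a genuine obstacle: the one conceptual move is to replace the abstract chain $(\Lambda_i)$ by the chain $(M_i)$ generated by bounded witnesses, after which the invariant ``covolume strictly decreases within each rank, stays at least $1$, and starts at most $(K\sqrt k)^k$'' does all the work; the only point requiring a little care is keeping the covolume monotonicity in the correct direction (the sublattice has the larger covolume, and it is the box constraint that bounds it from above).
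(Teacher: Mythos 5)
Your proof is correct, and the argument is the natural one for such nested-lattice statements (the paper itself does not reprove the lemma but cites it from an earlier work). The key move — replacing the abstract chain $(\Lambda_i)$ by the chain $(M_i)$ generated by the bounded witnesses $v_i$, then tracking rank jumps and covolume — is clean and sound: $M_{i-1}\subseteq\Lambda_i$ while $v_i\notin\Lambda_i$ gives strict nesting, Hadamard caps the covolume by $(K\sqrt k)^k$, the index formula gives a factor of $2$ per strict inclusion within a constant-rank block, and the at-most-$k$ rank jumps assemble the final bound $r-1\le k+k^2\log_2(K\sqrt k)=O(k^2(\log k+\log K))$. This is essentially the standard route to such bounds, and I see no gap.
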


\begin{theorem}[Quantitative fundamental theorem of lattices, Theorem 30 in~\cite{generalBilBog}]\label{quantLattice}Let $G$ be a finite abelian group, let $a_1, \dots, a_k \in G$ and let $R$ be a positive integer. Suppose that $B \subset \langle a_1, \dots, a_k \rangle_R$ is a non-empty set. Then, there exist positive integers $\ell \leq O(k^2 (\log k + \log R))$, $S \leq O((2Rk)^{k+ 3})$ and elements $b_1, \dots, b_\ell \in B$ such that $B \subseteq \langle b_1, \dots, b_\ell\rangle_{S}$.\end{theorem}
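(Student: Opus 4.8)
The plan is to lift everything to $\mathbb{Z}^k$ and run a greedy selection whose length is controlled by Lemma~\ref{nestedLattices}. Let $\pi \colon \mathbb{Z}^k \to G$ be the homomorphism with $\pi(e_i) = a_i$, so that $\langle a_1, \dots, a_k\rangle_R = \pi([-R,R]^k)$, and for each $b \in B$ fix a lift $v_b \in [-R,R]^k$ with $\pi(v_b) = b$. Fix the target bound $S := \lceil \mathsf{C}(2Rk)^{k+3}\rceil$ for a suitable absolute constant $\mathsf{C}$, to be pinned down by the coefficient estimate below.

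First I would build $b_1, b_2, \dots \in B$ greedily: having chosen $b_1, \dots, b_m$, stop if $B \subseteq \langle b_1, \dots, b_m\rangle_S$, and otherwise take $b_{m+1}$ to be any element of $B \setminus \langle b_1, \dots, b_m\rangle_S$. Writing $M_m := \mathbb{Z} v_{b_1} + \dots + \mathbb{Z} v_{b_m} \subseteq \mathbb{Z}^k$, the crucial claim is that each chosen step strictly enlarges the lattice, i.e.\ $M_{m+1} \supsetneq M_m$. Indeed, if $v_{b_{m+1}} \in M_m$ then $v_{b_{m+1}}$ is a $\mathbb{Z}$-combination of $v_{b_1}, \dots, v_{b_m}$, and since all these vectors lie in $[-R,R]^k$ a bound of Siegel / Borosh--Flahive--Treybig type on small integer solutions of linear Diophantine systems yields integers $\mu_1, \dots, \mu_m$ with $v_{b_{m+1}} = \sum_j \mu_j v_{b_j}$ and $|\mu_j| \le S$; here one uses that every square subdeterminant of the $k \times (m+1)$ integer matrix $[v_{b_1}\,|\,\cdots\,|\,v_{b_{m+1}}]$ has absolute value at most $k!\,R^k \le (2Rk)^k$, and that the solution bound is at most a polynomial in $k$ and $\log R$ times this quantity. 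Applying $\pi$ then gives $b_{m+1} = \sum_j \mu_j b_j \in \langle b_1, \dots, b_m\rangle_S$, contradicting the choice of $b_{m+1}$. Hence $\{0\} = M_0 \subsetneq M_1 \subsetneq \cdots$ is a strictly increasing chain of sublattices of $\mathbb{Z}^k$ in which each difference $M_{m+1}\setminus M_m$ contains $v_{b_{m+1}} \in [-R,R]^k$; by Lemma~\ref{nestedLattices} with $K = R$ the chain has length $\ell \le O(k^2(\log k + \log R))$. Consequently the greedy process halts after at most $\ell$ steps, and at termination $B \subseteq \langle b_1, \dots, b_\ell\rangle_S$ with $S \le O((2Rk)^{k+3})$, which is the assertion.

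The step I expect to be the main obstacle is precisely the coefficient estimate: that $v_{b_{m+1}} \in M_m$ forces a representation with coefficients at most $(2Rk)^{k+3}$, uniformly in $m$. The cleanest route is the Diophantine-solutions bound just cited; alternatively one argues by hand, passing to a maximal linearly independent subset of $\{v_{b_1},\dots,v_{b_m}\}$ (at most $k$ vectors, spanning a sublattice $M_m^0 \subseteq M_m$ whose index is at most $\operatorname{cov}(M_m^0) \le (2Rk)^k$ by Hadamard's inequality, since $\operatorname{cov}(M_m) \ge 1$), expressing an arbitrary short vector of $M_m$ modulo $M_m^0$ with coefficients below the exponent of $M_m/M_m^0$, and then inverting the resulting triangular Cramer system. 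The care needed to prevent the coefficient size from squaring in this last step is what fixes the exponent at $k+3$ and leaves room for the $O$-constants and for the factor coming from $\ell$; the remaining ingredients — the lift $\pi$ and the verification that the greedy genuinely terminates — are routine.
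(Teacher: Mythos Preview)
The paper does not prove this statement; it is quoted verbatim as Theorem~30 of \cite{generalBilBog} and used as a black box (see the line immediately preceding Theorem~\ref{quantLattice}). There is therefore no in-paper proof to compare your sketch against.

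That said, your approach is the natural one and is almost certainly close to the argument in \cite{generalBilBog}: lifting to $\mathbb{Z}^k$, running a greedy selection, and invoking Lemma~\ref{nestedLattices} to bound the chain length is exactly the expected strategy, and indeed Lemma~\ref{nestedLattices} is stated in the paper precisely for this purpose. The step you correctly flag as the crux---that $v_{b_{m+1}} \in M_m$ forces a representation with coefficients $\le S$ uniformly in $m$---does follow from the small-integer-solutions literature you cite: the Borosh--Treybig / von zur Gathen--Sieveking bound gives an integer solution to $Ax=b$ with coordinates at most a constant times the largest minor of rank-many rows of $[A\,|\,b]$, here $\le k!\,R^k$, which comfortably fits under $(2Rk)^{k+3}$. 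Your alternative ``by hand'' route via a maximal independent subset and the index $[M_m:M_m^0]$ is also viable, but as you note it risks squaring the exponent if one naively composes the two representations; the clean way is to pick the independent subset, write $v_{b_{m+1}}$ over $\mathbb{Q}$ in that basis by Cramer (denominators bounded by the Gram determinant $\le (Rk)^k$), and then correct to an integer combination using at most $k$ further vectors $v_{b_j}$ to kill the torsion in $M_m/M_m^0$. Either way, your outline is correct.
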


Let us recall the Discrete John's theorem. For a sequence $L = (L_1, \dots, L_r)$, the notation below $(-L, L) \cdot w$ stands for all sums $\sum_{i \in [r]} \lambda_i w_i$ with integer $\lambda_i \in (-L_i, L_i)$.

\begin{proposition}[Discrete John's theorem, Lemma 3.36 in~\cite{TaoVuBook}]\label{discJohn}
    Let $B$ be a convex symmetric body in $\mathbb{R}^d$ and let $\Gamma$ be a lattice in $\mathbb{R}^d$ of rank $r$. Then there exists an $r$-tuple $w = (w_1, \dots, w_r) \in \Gamma^r$ of linearly independent vectors in $\Gamma$ and an $r$-tuple $L = (L_1, \dots, L_r)$ of positive integers such that
    \[(r^{-2r} \cdot B) \cap \Gamma \subseteq (-L, L) \cdot w \subseteq B \cap \Gamma \subseteq (-r^{2r} L, r^{2r} L) \cdot w.\]
\end{proposition}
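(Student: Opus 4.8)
The plan is to combine John's theorem with Minkowski's second theorem on successive minima, and then to deduce the three inclusions from a Cramer's-rule bound on lattice coordinates.

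First I would reduce to the case of a full-rank lattice: every set appearing in the statement is contained in $\operatorname{span}_{\mathbb{R}}\Gamma$, so I may replace $B$ by $B\cap\operatorname{span}_{\mathbb{R}}\Gamma$, which is again a symmetric convex body, and thus assume $d=r$ with $\Gamma$ of full rank. Next I would apply John's theorem for symmetric convex bodies to obtain an ellipsoid $E$ with $E\subseteq B\subseteq\sqrt{r}\,E$, and compose everything with the linear automorphism of $\mathbb{R}^r$ carrying $E$ to the Euclidean unit ball $\mathbb{B}^r$. This sends $\Gamma$ to another full-rank lattice, sends $B$ to a body sandwiched between $\mathbb{B}^r$ and $\sqrt{r}\,\mathbb{B}^r$, and is compatible with every operation in the statement (it commutes with scaling, intersection, and the box construction $(-L,L)\cdot w$), so I may assume $\mathbb{B}^r\subseteq B\subseteq\sqrt{r}\,\mathbb{B}^r$.

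Now let $\lambda_1\le\cdots\le\lambda_r$ be the successive minima of $\Gamma$ with respect to the Euclidean ball. The successive-minima vectors themselves need not form a $\mathbb{Z}$-basis, but a classical reduction theorem (Mahler; or take a Korkine--Zolotareff reduced basis) provides a $\mathbb{Z}$-basis $w_1,\dots,w_r$ of $\Gamma$ with $\|w_i\|\le i\,\lambda_i$ for every $i$ --- and it is essential here that this bound is polynomial in $i$, not the $2^{O(i)}$ that a merely LLL-reduced basis would give. I would then set $L_i=\max\{1,\lfloor(r^{2}\lambda_i)^{-1}\rfloor\}$, so that $L_i\ge1$ always and $L_i=1$ exactly when $\lambda_i>r^{-2}$, and take $w=(w_1,\dots,w_r)$, $L=(L_1,\dots,L_r)$.

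It then remains to verify the three inclusions. For the middle one, a point $\sum_i\mu_i w_i$ with $|\mu_i|\le L_i-1$ has Euclidean norm at most $\sum_i (L_i-1)\|w_i\|\le\sum_i i\,r^{-2}\le1$ (directions with $\lambda_i>r^{-2}$, where $L_i=1$, contribute nothing), hence lies in $\mathbb{B}^r\subseteq B$. For the two outer inclusions, write an arbitrary $x\in\Gamma$ as $x=\sum_i\mu_i w_i$; Cramer's rule together with Hadamard's inequality gives $|\mu_i|\le\|x\|\prod_{j\ne i}\|w_j\|/\operatorname{cov}\Gamma$, and the lower-bound half of Minkowski's second theorem gives $\operatorname{cov}\Gamma\ge 2^{-r}\operatorname{vol}(\mathbb{B}^r)\prod_j\lambda_j$, so that $|\mu_i|\le C(r)\,\|x\|/\lambda_i$ with $C(r)=2^{r}\,r!\,/\operatorname{vol}(\mathbb{B}^r)$. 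Plugging $\|x\|\le\sqrt{r}$ (valid for $x\in B$) yields $|\mu_i|<r^{2r}L_i$, which is the third inclusion, and plugging $\|x\|\le\sqrt{r}\,r^{-2r}$ (valid for $x\in r^{-2r}B$) yields $|\mu_i|\le L_i-1$, the degenerate directions with $\lambda_i>r^{-2}$ simply forcing $\mu_i=0$, which is the first inclusion. The one genuinely delicate point --- and where care is needed --- is this final estimate: one must check, using Stirling's estimate for $\operatorname{vol}(\mathbb{B}^r)$, that $C(r)$ together with the stray $\operatorname{poly}(r)$ factors produced by the rounding in $L_i$ and by the $\sqrt r$ above stays below the crude budget $r^{2r}$. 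This holds comfortably, since $\operatorname{vol}(\mathbb{B}^r)^{-1}$ is only of size $r^{(1/2+o(1))r}$ and, even together with the $2^{r}$ factor, is dominated by the remaining super-exponential room, giving $C(r)=r^{(3/2+o(1))r}$; the handful of smallest $r$, where the margin is thinnest, can be checked by hand using exact values.
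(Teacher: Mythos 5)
Your overall plan — reduce to full rank, apply John's theorem, take a Mahler/KZ-reduced basis, and control coordinates via Cramer's rule, Hadamard's inequality, and the lower half of Minkowski's second theorem — is sound and is essentially the standard route for this lemma. But the specific choice $L_i=\max\{1,\lfloor(r^{2}\lambda_i)^{-1}\rfloor\}$ has an off-by-one error that is fatal, and it hides exactly in the step you yourself flag as ``genuinely delicate'' and claim ``holds comfortably.'' Concretely: already for $r=1$, take $\Gamma=\tfrac1n\mathbb{Z}$ and $B=[-1,1]$ (no John normalization is needed). Then $\lambda_1=1/n$, $w_1=1/n$, and your formula gives $L_1=n$, so $(-L_1,L_1)\cdot w_1=\{\mu/n:|\mu|\le n-1\}$ misses the point $1\in B\cap\Gamma$; since $r^{-2r}=1$, the first inclusion $r^{-2r}B\cap\Gamma\subseteq(-L,L)\cdot w$ fails outright, and so does the third. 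Quantitatively, the inequality your argument needs is $C(r)<r^{2r-5/2}/2$ with $C(r)=2^r r!/\operatorname{vol}(\mathbb{B}^r)$, and one computes $C(1)=1\not<1/2$ and $C(2)=8/\pi\approx2.55\not<\sqrt2\approx1.41$, so the ``handful of smallest $r$'' cannot be checked by hand — they genuinely fail.

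The fix is to round up rather than down: set $L_i=\lfloor(r^{2}\lambda_i)^{-1}\rfloor+1$. The middle inclusion is unaffected because $(L_i-1)\|w_i\|\le(r^2\lambda_i)^{-1}\cdot i\lambda_i=i/r^2$ and the sum is still at most $(r+1)/2r\le1$, so the point still lies in $\mathbb{B}^r\subseteq B$. For the outer inclusions, since $\mu_i\in\mathbb{Z}$ it now suffices to show the strict bound $|\mu_i|<L_i$, and since $L_i>(r^2\lambda_i)^{-1}$ one only needs the non-strict inequality $C(r)\le r^{2r-5/2}$; this holds with equality at $r=1$ ($C(1)=1$), holds at $r=2$ ($8/\pi<2^{3/2}$), and the Stirling asymptotics $C(r)=r^{3r/2+o(r)}$ then carry the rest. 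Also note, as a small slip, that with your original definition $L_i=1$ happens exactly when $\lambda_i>(2r^2)^{-1}$, not when $\lambda_i>r^{-2}$ as you wrote; with the corrected definition this distinction disappears since $L_i\ge1$ automatically.
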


We deduce that Bohr sets contain coset progressions and that their shrinkings can be intertwined. The proof is the same as that of Lemma 4.22 of~\cite{TaoVuBook}. 

\begin{proposition}\label{cpsinbohrsets}
    Let $B(\Gamma, \rho)$ be a Bohr set of codimension $r$. Then there exists a proper symmetric coset progression $C$ of rank at most $r$ such that $r^{2r} C$ is also proper and
    \[B(\Gamma, r^{-4r} \rho) \subseteq C \subseteq B(\Gamma, r^{-2r} \rho)  \subseteq r^{2r} C \subseteq B(\Gamma, \rho).\]
    Moreover, if $C = [-L_1, L_1] \cdot a_1 + \dots + [-L_r, L_r] \cdot a_r + H$ is its canonical form, then for all $c \in (0,1/2)$ we have
    \[B(\Gamma, cr^{-4r} \rho) \subseteq [-2c L_1, 2cL_1] \cdot a_1 + \dots +[-2cL_r, 2cL_r] \cdot a_r + H.\]
\end{proposition}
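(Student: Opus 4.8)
The plan is to follow the standard argument (Lemma 4.22 in~\cite{TaoVuBook}): realise the Bohr set as a slab around a lattice, apply Discrete John's theorem to extract a generalized arithmetic progression sandwiched between shrinkings of the Bohr set, and then simply track the scaling constants carefully so that the symmetric coset progression $C$, its dilate $r^{2r}C$, and the shrinkings $\varepsilon \cdot C$ all land where claimed. Concretely, write $\Gamma = \{\chi_1, \dots, \chi_r\}$ and lift $G$ to $(\mathbb{Z}/n\mathbb{Z})^?$ so that $x \mapsto (\chi_1(x), \dots, \chi_r(x)) \in (\mathbb{R}/\mathbb{Z})^r$. The preimages of the box $[-\rho,\rho]^r \subseteq \mathbb{R}^r$ under the lift correspond to points of the lattice $\Lambda = \{a \in \mathbb{Z}^r : a \equiv \chi(x) \bmod 1 \text{ for some } x\}$ (more precisely, one works with the lattice $\frac{1}{n}\mathbb{Z}^r$ intersected with the image together with the translation lattice), and $B(\Gamma,\rho)$ is in bijection with $\Lambda \cap [-\rho,\rho]^r$ up to the kernel $H := \ker\chi|_G = \cap_i \ker\chi_i$, which will play the role of the subgroup in the canonical form.

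First I would set up the lattice identification precisely: the map $x \mapsto \chi(x)$ has kernel $H$, and $G/H \hookrightarrow (\mathbb{R}/\mathbb{Z})^r$ embeds as a finite subgroup, whose preimage in $\mathbb{R}^r$ is a full-rank lattice $\Lambda \supseteq \mathbb{Z}^r$. Then $B(\Gamma,\rho)/H$ corresponds to $\Lambda \cap B_\infty(\rho)$ where $B_\infty(\rho) = [-\rho,\rho]^r$ is the convex symmetric body. Second, I would apply Proposition~\ref{discJohn} (Discrete John's theorem) to $B = B_\infty(\rho)$ and the lattice $\Gamma = \Lambda$: this yields linearly independent $w_1, \dots, w_s \in \Lambda$ (with $s \le r$) and integers $L_1, \dots, L_s$ such that
\[
(s^{-2s}\cdot B_\infty(\rho)) \cap \Lambda \subseteq (-L,L)\cdot w \subseteq B_\infty(\rho)\cap\Lambda \subseteq (-s^{2s}L, s^{2s}L)\cdot w.
\]
Pulling each $w_j$ back to an element $a_j \in G$ (any preimage under $\chi$), the set $C := [-L_1,L_1]\cdot a_1 + \dots + [-L_s,L_s]\cdot a_s + H$ is the desired coset progression; properness of $C$ and of $s^{2s}C$ follows from linear independence of the $w_j$ together with the fact that these scaled progressions stay inside fundamental domains determined by the box inclusions (the $(-s^{2s}L,s^{2s}L)\cdot w$ still injects mod $\Lambda$-nothing, i.e. the sums are distinct because the corresponding lattice points are distinct and lie in $B_\infty(\rho)$, hence in a region where $\chi$ restricted to these combinations is injective modulo $H$). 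Since $s^{-2s} \ge r^{-2r}$ is false in general — careful, it is the other direction: $s \le r$ so $s^{-2s} \ge r^{-2r}$ only if $s^{2s} \le r^{2r}$, which does hold — so I would use the crude bound $s^{2s} \le r^{2r}$ to convert the $s$-dependent constants to the claimed $r$-dependent ones, noting $(-L,L)\cdot w \subseteq B_\infty(\rho) \cap \Lambda$ gives $C \subseteq B(\Gamma,\rho)$ (after translating the "open" interval bound $(-L,L)$ to the closed $[-L,L]$ one needs a factor-$2$ or off-by-one adjustment absorbed into the exponent), and the inclusion $B(\Gamma, r^{-2r}\rho) \subseteq C$ comes from the left inclusion $r^{-2r}B_\infty(\rho)\cap\Lambda \subseteq (-L,L)\cdot w \subseteq C/H$. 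The chain $B(\Gamma, r^{-4r}\rho) \subseteq C \subseteq B(\Gamma, r^{-2r}\rho) \subseteq r^{2r}C \subseteq B(\Gamma,\rho)$ then follows by applying these inclusions at the rescaled radii: $r^{2r}C$ corresponds to $(-r^{2r}L, r^{2r}L)\cdot w$ which contains $B_\infty(\rho)\cap\Lambda$ — wait, this gives $B(\Gamma,\rho) \subseteq r^{2r}C$, so I should instead bound $r^{2r}C \subseteq B(\Gamma,\rho)$ by noting each coordinate of a combination in $r^{2r}C$ has $\chi$-value of size at most $r^{2r}L_j \cdot \|w_j\|_\infty \le \rho$ when we start from $C \subseteq B(\Gamma, r^{-2r}\rho)$; I would re-index the exponents to make all four inclusions consistent, which is the routine bookkeeping part.

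For the "moreover" clause on intertwining of shrinkings, I would argue: given $c \in (0,1/2)$, the scaled progression $[-2cL_1, 2cL_1]\cdot a_1 + \dots + [-2cL_r, 2cL_r]\cdot a_r + H$ corresponds to (roughly) $(-2cL, 2cL)\cdot w + H$, and the left inclusion of John's theorem applied at radius $c r^{-4r}\rho$ (or rather, observing that $(c r^{-4r}\rho)$-box intersected with $\Lambda$ sits inside $(-2cL, 2cL)\cdot w$ because $r^{-4r}\rho$-box sits inside $(-L,L)\cdot w$ and scaling the box by $c$ scales — approximately, up to the factor $2$ that covers rounding $\lfloor 2cL_i\rfloor$ vs.\ $cL_i$ — the progression by $c$) yields the containment. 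The main obstacle I expect is not any single deep step but rather the careful propagation of the polynomially-many-in-$r$ constants through the convex-body/lattice dictionary: John's theorem introduces $s^{2s}$ factors, the closed-vs-open interval and floor-function discretisations introduce factors of $2$ and $\pm 1$, and the lift to $(\mathbb{Z}/n\mathbb{Z})^r$ must be handled so that properness is genuinely inherited; getting every exponent to come out as exactly $r^{-2r}$, $r^{-4r}$, $r^{2r}$ as stated (rather than, say, $r^{-3r}$ or with an extra constant) requires either slightly loosening what one proves or being scrupulous, and I would follow~\cite{TaoVuBook} Lemma 4.22 to the letter and then verify the claimed exponents survive the addition of the intertwining clause.
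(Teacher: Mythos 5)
Your general framework---identify $B(\Gamma,\sigma)$ with the preimage of a box intersected with the lattice $\Lambda = \chi(G) + \mathbb{Z}^r$, apply Discrete John's theorem, and pull the resulting progression back to $G$---is the same as the paper's. But there are two real gaps in the way you set it up.

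First, you apply John's theorem to the box at radius $\rho$, getting
\[
r^{-2r}B_\infty(\rho)\cap\Lambda \subseteq (-L,L)\cdot w \subseteq B_\infty(\rho)\cap\Lambda \subseteq (-r^{2r}L, r^{2r}L)\cdot w.
\]
This gives $C\subseteq B(\Gamma,\rho)$ but not the stronger $C\subseteq B(\Gamma, r^{-2r}\rho)$, and without that you cannot deduce $r^{2r}C \subseteq B(\Gamma,\rho)$: the rightmost John inclusion is a containment in the wrong direction (it says the box sits inside the scaled progression, not vice versa). You notice this tension (``wait, this gives $B(\Gamma,\rho)\subseteq r^{2r}C$'') and defer the fix to ``re-indexing exponents,'' but the fix is structural, not cosmetic: apply John's theorem to the smaller box $[-r^{-2r}\rho, r^{-2r}\rho]^r$ instead. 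Then the middle inclusion $(-L,L)\cdot w \subseteq [-r^{-2r}\rho,r^{-2r}\rho]^r$ scales linearly to $(-r^{2r}L, r^{2r}L)\cdot w \subseteq [-\rho,\rho]^r$, and all four claimed inclusions (together with properness of $r^{2r}C$, since $(-r^{2r}L,r^{2r}L)\cdot w$ consists of distinct lattice points) fall out. This is exactly what the paper does.

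Second, and more seriously, your proof of the ``moreover'' clause rests on the assertion that ``scaling the box by $c$ scales the progression by $c$.'' That is false in general: $S\cap\Lambda\subseteq P$ does not imply $cS\cap\Lambda\subseteq cP$ for a convex body $S$ and a progression $P$; establishing proportionality of the $\lambda$-coefficients under shrinking of the box is precisely the nontrivial content of the intertwining claim. The paper's argument is genuinely different: take $x\in Q_\sigma\cap\Lambda$ with $\sigma = cr^{-4r}\rho$ and let $k\approx c^{-1}$ be an integer. Both $x$ and $kx$ then lie in $[-r^{-4r}\rho,r^{-4r}\rho]^r\cap\Lambda$, hence both lie in $(-L,L)\cdot w$; since $w_1,\dots,w_r$ are $\mathbb{R}$-linearly independent, the coefficient vector of $kx$ is $k$ times that of $x$, so $|\lambda_i| < L_i/k \leq 2cL_i$. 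Your rounding heuristic does not substitute for this; the linear-independence-of-the-$w_i$ comparison between $x$ and $kx$ is the missing idea.
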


\begin{proof}
    Let $\Gamma = \{\chi_1, \dots, \chi_r\}$. The image of the homomorphism $\chi = (\chi_1, \dots, \chi_r): G \to \mathbb{T}^r$ is a finite subgroup $\chi(G) \leq \mathbb{T}^r$ and, for any radius $\sigma$, the Bohr set $B(\Gamma, \sigma)$ is the inverse image of the cube $Q_\sigma = \{x \in \mathbb{T}^r : \uc{x}\leq \sigma\}$.\\
    \indent Let $\Lambda \leq \mathbb{R}^r$ be the lattice $\chi(G) + \mathbb{Z}^r$. Applying Proposition~\ref{discJohn} to the intersection $\Lambda \cap [-r^{-2r}\rho, r^{-2r}\rho]^r$ we find a progression $\tilde{P} = (-L, L)\cdot w$ for some linearly independent $w_1, \dots, w_r \in \Lambda$ such that
    \[[-r^{-4r}\rho, r^{-4r}\rho]^r \cap \Lambda \subseteq (-L, L) \cdot w \subseteq [-r^{-2r}\rho, r^{-2r}\rho]^r \cap \Lambda \subseteq (-r^{2r} L, r^{2r} L) \cdot w.\]
    Note that we also have $(-r^{2r} L, r^{2r} L) \cdot w \subseteq [-\rho, \rho]^r \cap \Lambda$. We take $C = (-L, L) \cdot v + H$, where $v_i$ is an arbitrary element of $\chi^{-1}(w_i)$ and $H = \on{ker}\chi$.\\
    \indent To complete the proof, let $c \in (0,1/2)$, set $\sigma = cr^{-4r}\rho$. We first show that $Q_{\sigma} \cap \Lambda \subseteq (-2cL, 2cL) \cdot w$. Namely, we have $x \in Q_\sigma \cap \Lambda \setminus (-2cL, 2cL) \cdot w$. But, for $k = \lceil c^{-1} \rceil$, $x, kx \in [-r^{-4r}\rho, r^{-4r}\rho]^r \cap \Lambda$, so $x, kx \in (-L, L) \cdot w$. Hence, there are integers $\lambda_i, \mu_i \in (-L_i, L_i)$ such that $x = \sum_{i \in [r]} \lambda_i w_i$ and $kx = \sum_{i \in [r]} \mu_i w_i$. However, $w_1, \dots, w_r$ are linearly independent in $\mathbb{R}^r$, so $k \lambda_i = \mu_i$. Thus, $|\lambda_i| < L_i/k \leq 2c L_i$.\\
    Since $B(\Gamma, \sigma) = \chi^{-1}(Q_\sigma \cap \Lambda)$ and $[-2c L_1, 2cL_1] \cdot a_1 + \dots +[-2cL_r, 2cL_r] \cdot a_r + H$ contains the inverse image of $(-2cL, 2cL) \cdot w$, we are done.
\end{proof}

As a corollary, we show how Bohr sets can be used to efficiently generate subgroups.

\begin{corollary}\label{subgroupgenerationiteratedsumsbohr}
    Let $B = B(\Gamma, \rho)$ be a Bohr set of codimension $d$ and let $r \in \mathbb{N}$. Then every element of $ B(\Gamma, \rho/2)$ can be written as $x_1 + 2x_2 + \dots + 2^{r-1}x_{r}$ for $x_1, \dots, x_r \in B$ in at least $2^{-d(r+1)^2}\rho^{-dr}|G|^{r-1}$ many ways.\\
    \indent Furthermore, let $\rho' = d^{-2d} \rho$ and $r' = 1000 (d^2 \log d + d \log \rho^{-1})$. Let $G'$ is the subgroup of $G$ generated by $B' = B(\Gamma, \rho')$. Then every element of $G'$ can be written as $x_1 + 2x_2 + \dots + 2^{r'-1}x_{r'}$ for $x_1, \dots, x_{r'} \in B'$, in at least $2^{-O(d{r'}^2)}d^{-2d^2r'}\rho^{dr'}|G|^{r'-1}$ many ways.
\end{corollary}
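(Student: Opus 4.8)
The plan is to prove both halves of Corollary~\ref{subgroupgenerationiteratedsumsbohr} by the same mechanism: first a ``binary counting'' lemma asserting that the $r$-fold \emph{dyadic} sumset $\{x_1 + 2x_2 + \dots + 2^{r-1}x_r : x_i \in B\}$ both covers a slightly shrunk Bohr set and does so with many representations, and then an iteration to reach the full subgroup. For the first part, I would use Proposition~\ref{cpsinbohrsets} to sandwich a proper symmetric coset progression $C = [-L_1,L_1]\cdot a_1 + \dots + [-L_d,L_d]\cdot a_d + H$ with $B(\Gamma, d^{-4d}\rho) \subseteq C \subseteq B(\Gamma, \rho)$, together with its scaled version: for $c \in (0,1/2)$, $B(\Gamma, c d^{-4d}\rho) \subseteq [-2cL_1, 2cL_1]\cdot a_1 + \dots + H$. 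Working inside $C$ in coordinates, every element $g$ of (a suitable shrinking of) $C$ has the form $\sum_i \mu_i a_i + h$ with $|\mu_i|$ bounded by a fraction of $L_i$; since any integer $\mu$ with $|\mu| \le (2^r-1)M$ can be written as $\sum_{j=0}^{r-1} 2^j \nu_j$ with $|\nu_j| \le M$, and similarly any $h \in H$ can be split dyadically into $r$ pieces of $H$ (as $H$ is a group, trivially), one gets the dyadic decomposition $g = x_1 + 2x_2 + \dots + 2^{r-1} x_r$ with each $x_i$ lying in $[-M_1,M_1]\cdot a_1 + \dots + [-M_d,M_d]\cdot a_d + H \subseteq C \subseteq B$. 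For the representation count: once one decomposition exists, the number of decompositions is at least the number of ways to write $0$ as $y_1 + 2y_2 + \dots + 2^{r-1}y_r$ with $y_i$ ranging over a Bohr set of radius a small constant multiple of $\rho$; fixing $y_2, \dots, y_r$ freely in $B(\Gamma, \rho/(2^{r+1}))$ and solving for $y_1$ (which automatically lands in $B(\Gamma,\rho/2)$ by the triangle inequality for $\|\cdot\|_{\mathbb{T}}$) gives at least $|B(\Gamma, 2^{-(r+1)}\rho)|^{r-1} \ge (2^{-(r+1)}\rho)^{d(r-1)}|G|^{r-1}$ choices by Lemma~\ref{basicbohrsizel}. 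Bookkeeping the constants, this yields the stated bound $2^{-d(r+1)^2}\rho^{-dr}|G|^{r-1}$; I would not be surprised if the exponents need mild adjustment, but the shape is forced.

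For the second part, I would first establish that $G'$, the subgroup generated by $B' = B(\Gamma, \rho')$, is actually generated as a set by a \emph{bounded} number of dyadic sums of elements of $B'$. By Proposition~\ref{subgroupgeneratedbybohrset} (or directly by Proposition~\ref{cpsinbohrsets} applied to $B'$), $B'$ contains a proper symmetric coset progression of rank $\le d$, and the subgroup it generates has index at most $(\rho')^{-d}$ in $G$ — wait, more precisely $G'$ need not be all of $G$, but $|G'| \le |G|$ and $|G:G'|$ can be large. The right statement is: $G'$ is generated by $B'$, so by Lemma~\ref{progsbgp}-type reasoning plus the dyadic covering of part one applied \emph{inside} $G'$, iterating the covering $O(\log |G'/H''|)$ times for a suitable subgroup $H''$ collapses to a subgroup. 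Concretely: let $C' \subseteq B'$ be the proper symmetric coset progression from Proposition~\ref{cpsinbohrsets} with $C' \supseteq B(\Gamma, d^{-4d}\rho')$ and subgroup $H' = \ker\chi$. By Lemma~\ref{basicbohrsizel}, $|C'| \ge (d^{-4d}\rho')^d |G|$, hence $|G'/H'| \le |G'|/|H'| \le |G|/|C'| \cdot (\text{rank factor}) \le (\rho')^{-O(d)} d^{O(d^2)}$. Then, running the dyadic-doubling argument exactly as in the proof of Proposition~\ref{efficientGroupGeneration} — where one repeatedly replaces $2^k(C')$ by $2^{k+1}(C')$ until the doubling drops below $1.01$ and Lemma~\ref{verysmalldoublingsbugroup} applies — one finds that $2^{r_0}(C')$ is a subgroup for $r_0 \le \log_{1.01}(|G'/H'|) = O(d^2\log d + d\log\rho^{-1})$, and this subgroup lies in $G'$ and has size $\ge |C'|$, hence (being generated along with the rest of $B'$, and using $C' \subseteq B'$) it equals $G'$ after one more bounded round.

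Choosing $r' = 1000(d^2\log d + d\log\rho^{-1})$ is then large enough to absorb $r_0$, the extra $\log_2|G'/H'|$ factor from Lemma~\ref{smallGroupsGeneration} (to reach the remaining generators modulo the subgroup), and the rank-$d$ splitting overhead from converting progression sums into dyadic sums (each coordinate contributes an extra $O(1)$ multiplicative factor to the number of summands, and there are $d$ of them, so this is where the $d$ inside $r'$ is needed). For the representation count of elements of $G'$ as $x_1 + 2x_2 + \dots + 2^{r'-1}x_{r'}$ with $x_i \in B'$: once one decomposition of a given $g \in G'$ is fixed (existence guaranteed by the above), the multiplicity is bounded below by the number of dyadic representations of $0$ using elements of $B(\Gamma, \rho'/2) \subseteq B'$, which by the counting in part one applied with radius $\rho'$ and length $r'$ is at least $2^{-O(d{r'}^2)} (\rho')^{d(r'-1)}|G|^{r'-1}$; substituting $\rho' = d^{-2d}\rho$ turns $(\rho')^{dr'}$ into $d^{-2d^2 r'}\rho^{dr'}$, matching the claimed $2^{-O(d{r'}^2)} d^{-2d^2 r'}\rho^{dr'}|G|^{r'-1}$.

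The main obstacle I expect is the bookkeeping in the second part: making precise that the dyadic-doubling trick of Proposition~\ref{efficientGroupGeneration} (which there is stated for a set $A$ of positive density via Theorem~\ref{bogRuzsa1}) goes through cleanly for $B'$ with the \emph{stated} value of $r'$, keeping careful track of (i) the passage between progression-sums and dyadic-sums, which inflates the number of summands by a rank-dependent constant factor, (ii) the bound on $|G'/H'|$ which controls how many doublings are needed, and (iii) threading the representation-count lower bound through the reduction so that no multiplicity is lost. Everything else — part one, the triangle inequality for $\|\cdot\|_{\mathbb{T}}$, and the Bohr-set size estimates — is routine given Lemma~\ref{basicbohrsizel} and Proposition~\ref{cpsinbohrsets}.
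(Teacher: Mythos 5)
Your proposal is correct and follows essentially the same route as the paper: for part one, pick $x_2,\dots,x_r$ freely in geometrically shrinking dilates of $B$ and solve for $x_1$ via the triangle inequality for $\|\cdot\|_{\mathbb{T}}$ (note that the paper does this directly, with no need for the Proposition~\ref{cpsinbohrsets} coordinate detour you propose for existence, since the same construction that gives the count also produces a representation); for part two, pass to the sandwiched coset progression, run the $2^k C'$ doubling argument with Lemma~\ref{verysmalldoublingsbugroup} to get $nC' = G'$ for $n \leq (|G|/|C'|)^{100}$, observe $nC' \subseteq C' + 2\cdot C' + \dots + 2^r\cdot C'$ directly from the progression structure (no appeal to Lemma~\ref{smallGroupsGeneration} is needed), and count representations of zero by solving for one coordinate. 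The exponent $\rho^{-dr}$ in the stated bound is presumably a sign typo for $\rho^{dr}$, and your count correctly produces $\rho^{d(r-1)}$.
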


\begin{proof}
    For the first part of the corollary, take any $y \in B(\Gamma, \rho/2)$ and $x_i \in B(\Gamma, 2^{-2i-1}\rho)$ for $i = 2, \dots, r$. Then set $x_1 = y - (2x_2 + \dots + 2^{r-1}x_{r})$. By the choice of elements, we have $x_1 \in B(\Gamma, \rho)$. Due to Lemma~\ref{basicbohrsizel}, we can write $y$ as $x_1 + 2x_2 + \dots + 2^{r-1}x_{r}$ for $x_1, \dots, x_r \in B$ in at least $2^{-d(r+1)^2}\rho^{-dr}|G|^{r-1}$ many ways.\\
    
    For the second part of the conclusion, due to the way we chose $\rho'$, by Proposition~\ref{cpsinbohrsets}, we may find coset progressions $C$ such that $C$ and $d^{2d} C$ are proper, symmetric and of rank at most $d$, and such that $B(\Gamma, d^{-2d}\rho') \subseteq C\subseteq B(\Gamma, \rho') \subseteq  d^{2d} C\subseteq B(\Gamma, \rho)$. Note that inclusion $C\subseteq B(\Gamma, \rho') \subseteq d^{2d} C$ implies that $C$ also generates $G'$.\\

    Let $C = [-L_1, L_1] \cdot a_1 + \dots + [-L_d, L_d] \cdot a_d + H$ be its canonical form. Set $L'_i = \lceil L_i/2 \rceil$ and $L''_i = \lfloor L_i/2\rfloor$ and define additional symmetric proper coset progressions $C'$ and $C''$ as $C' = [-L'_1, L'_1] \cdot a_1 + \dots + [-L'_d, L'_d] \cdot a_d + H$  and $C'' = [-L''_1, L''_1] \cdot a_1 + \dots + [-L''_d, L''_d] \cdot a_d + H$. Hence, $C' + C'' = C$, and $C' \subseteq C \subseteq 2C'$. In particular, the subgroup generated by $C'$ is again $G'$.\\
    
    Let us check that $C' + 2\cdot C' + \dots + 2^r \cdot C' = G'$ for $r = 100 (d^2 \log d + d \log \rho^{-1})$. Using Lemma~\ref{verysmalldoublingsbugroup}, and noting that $C$ is a symmetric set, we get that either $2^k C'$ is a subgroup of $G$ or has size at least $1.01^k |C'|$. Using the elementary inequality $1.01^{100} > 2$ and Lemma~\ref{basicbohrsizel}, we obtain $n C' = G'$ for some $n \leq (|G| / |C'|)^{100} \leq d^{400 d^2}\rho^{-100 d}$. Since $C'$ is a coset progression and $r \geq \log_2 n$, we conclude that $n C' \subseteq C' + 2\cdot C' + \dots + 2^r \cdot C'$.\\
    
    It remains to show that there are many representations of elements of $G'$. Let $y \in G'$ and let $z_0, \dots, z_r \in C'$ be such that $y = z_0 + 2z_1 + \dots + 2^r z_r$. Take any $x_0, \dots, x_r \in C''$ such that $x_0 + 2x_1 + \dots + 2^rx_{r} = 0$. Each such a choice gives another representation for $y$, by putting $x_i + z_i$. On the other hand, by choosing $x_2 \in \frac{1}{8} C'', \dots, x_r \in \frac{1}{2^{2r + 1}} C''$, we get $x_1 = -2x_2 - \dots - 2^r x_r \in C''$, finish the proof.
\end{proof}

\vspace{\baselineskip}

The next result is about sets that arise as inverse images of Freiman homomorphisms from a Bohr set to the unit circle, which can therefore be thought of as `Bohr-Bohr sets'. It is shown that such a set is actually close to a usual Bohr set.

\vspace{\baselineskip}

\begin{proposition}[Bohr-Bohr sets are Bohr]\label{bohrbohr}
    Let $B = B(\Gamma, \rho)$ be a Bohr set and let $r = |\Gamma|$. Let $\phi : B \to \mathbb{T}^d$ be a Freiman-linear map and let $\sigma > 0$. Then the set
    \[X = \{x \in B: \uc{\phi(x)} \leq \sigma\}\]
    contains a Bohr set of codimension at most $d + (2r \log (\sigma^{-1} \rho^{-1}))^{O(1)}$ and radius at least $\sigma\,(2r \log (\sigma^{-1} \rho^{-1}))^{-O(1)}$.
\end{proposition}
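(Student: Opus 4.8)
The plan is to pass from the Bohr set $B$ to a proper coset progression sitting inside it, approximate the Freiman-linear map $\phi$ on that progression by a genuine homomorphism of $G$, and then realize the relevant sublevel set as an intersection of two Bohr sets. Throughout we may assume $\sigma \le 1/2$ (otherwise $X = B$ since $\|\cdot\|_{\mathbb{T}} \le 1/2$) and that $\sigma^{-1}\rho^{-1}$ exceeds an absolute constant, so that $2r\log(\sigma^{-1}\rho^{-1})$ bounds the relevant quantities.

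First, by Proposition~\ref{cpsinbohrsets} applied to $B = B(\Gamma,\rho)$, there is a proper symmetric coset progression $C$ of rank at most $r$, with canonical form $C = [-L_1,L_1]\cdot a_1 + \dots + [-L_r,L_r]\cdot a_r + H_0$, such that $B(\Gamma, r^{-4r}\rho) \subseteq C \subseteq B$. By Lemma~\ref{basicbohrsizel}, the density $\alpha$ of $C$ in $G$ satisfies $\alpha \ge r^{-4r^2}\rho^{r}$, hence $\log\alpha^{-1} \le (2r\log(\sigma^{-1}\rho^{-1}))^{O(1)}$. Since $\phi$ is Freiman-linear on $B$ and $C \subseteq B$, the restriction $\phi|_C$ is a Freiman homomorphism, so Lemma~\ref{homapproxcosetp} applies with $\varepsilon = \sigma/4$: writing $P$ for the progression part of $C$, there is a shrinking $P'$ of $P$ with $|P'| \ge \alpha^2(10^5 r)^{-r}|P|$, a homomorphism $\tilde\psi : G \to \mathbb{T}^d$, and an element $u \in \mathbb{T}^d$ such that $\|\phi(x) - \tilde\psi(x) - u\|_\infty < \sigma/4$ for all $x \in (\sigma/4)\cdot P' + H_0$. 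Evaluating at $x = 0$ (which lies in this set) and using $\phi(0) = \tilde\psi(0) = 0$ gives $\|u\|_\infty < \sigma/4$, and therefore $\|\phi(x) - \tilde\psi(x)\|_{\mathbb{T}} < \sigma/2$ for every $x \in (\sigma/4)\cdot P' + H_0$.

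Consequently, any $x \in (\sigma/4)\cdot P' + H_0$ with $\|\tilde\psi(x)\|_{\mathbb{T}} \le \sigma/2$ satisfies $\|\phi(x)\|_{\mathbb{T}} \le \sigma$, i.e. $x \in X$. Letting $\tilde\Gamma \subseteq \hat{G}$ be the (at most $d$) coordinate characters of $\tilde\psi$, we have $\{x : \|\tilde\psi(x)\|_{\mathbb{T}} \le \sigma/2\} = B(\tilde\Gamma; \sigma/2)$, so
\[X \supseteq B(\tilde\Gamma; \sigma/2) \cap \big((\sigma/4)\cdot P' + H_0\big).\]
Now $(\sigma/4)\cdot P' + H_0$ is again a proper symmetric coset progression of rank at most $r$, and a routine computation bounds its density from below by $\beta := (\sigma/C r)^{C r}\alpha^{3}$ (the only loss beyond the passage to $P'$ is that each direction with $L_i$ below $O(\sigma^{-1})$ collapses, costing a factor $O(\sigma^{-1})$), so $\log\beta^{-1} \le (2r\log(\sigma^{-1}\rho^{-1}))^{O(1)}$. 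By Proposition~\ref{cosettobohrset} there are characters $\chi_1,\dots,\chi_R \in \hat{G}$ with $R \le (r\log\beta^{-1})^{O(1)} \le (2r\log(\sigma^{-1}\rho^{-1}))^{O(1)}$ such that $B(\chi_1,\dots,\chi_R; 1/(4R)) \subseteq (\sigma/4)\cdot P' + H_0$. Hence
\[X \supseteq B(\tilde\Gamma; \sigma/2) \cap B(\chi_1,\dots,\chi_R; 1/(4R)) = B\big(\tilde\Gamma \cup \{\chi_1,\dots,\chi_R\};\, \min\{\sigma/2,\, 1/(4R)\}\big),\]
which is a Bohr set of codimension at most $d + R \le d + (2r\log(\sigma^{-1}\rho^{-1}))^{O(1)}$ and radius at least $\min\{\sigma/2, 1/(4R)\} \ge \sigma/(8R) \ge \sigma\,(2r\log(\sigma^{-1}\rho^{-1}))^{-O(1)}$, as required.

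The step demanding the most care is the bookkeeping of the quantitative losses — from Proposition~\ref{cpsinbohrsets} (the $r^{-4r^2}\rho^r$ density), from Lemma~\ref{homapproxcosetp} (the further $\alpha^2(10^5 r)^{-r}$ factor), from the shrinking by $\sigma/4$ together with the collapse of short directions, and from Proposition~\ref{cosettobohrset} — and checking that these combine to the claimed polynomial-in-$(2r\log(\sigma^{-1}\rho^{-1}))$ bounds; the degenerate cases ($\sigma$ near $1/2$, $\rho$ near $1$, or some $L_i$ very small) are disposed of at the outset. Everything else is an assembly of the cited results.
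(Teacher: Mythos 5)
Your proof follows essentially the same route as the paper: pass to a proper symmetric coset progression inside $B$ via Proposition~\ref{cpsinbohrsets}, approximate $\phi$ there by a genuine homomorphism $\tilde\psi$ via Lemma~\ref{homapproxcosetp}, find a Bohr set inside the resulting coset progression via Proposition~\ref{cosettobohrset}, and intersect with $B(\tilde\Gamma;\sigma/2)$. You spell out a few details the paper leaves implicit (evaluating at $x=0$ to absorb the constant $u$, the density bookkeeping for $(\sigma/4)\cdot P'+H_0$), but the argument and the cited ingredients are the same.
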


\begin{proof}
    By Proposition~\ref{cpsinbohrsets}, $B$ contains a proper symmetric coset progression $C$ of size $|C| \geq \rho^r r^{-2r^2}|G|$ and rank at most $r$. The map $\phi$ is a Freiman homomorphism on $C$ so by Lemma~\ref{homapproxcosetp} there exist further symmetric proper coset progression $C' \subseteq C$ of rank at most $r$ and size $|C'| \geq \sigma^r \rho^{3r} (10^5 r)^{-7r^2} |G|$, and a homomorphism $\tilde{\phi} : G \to \mathbb{T}^d$ such that $\uc{\phi(x) - \tilde{\phi}(x)} \leq \sigma/2$ for all $x \in C'$. Let $B' = B(\Gamma', 1/4R)$ Bohr set inside $C'$ given by Proposition~\ref{cosettobohrset}, where $R \leq (2r \log (\sigma^{-1} \rho^{-1}))^{O(1)}$ and $|\Gamma'| \leq R$. Then $X$ contains Bohr set $B' \cap B(\tilde{\phi}_1, \dots, \tilde{\phi}_r, \sigma/2)$.
\end{proof}

\vspace{\baselineskip}

Another property concerns the iterated sums of intersections of Bohr sets. 

\vspace{\baselineskip}

\begin{proposition}[Sums of generalized Bohr sets]\label{sumsofgenbohrs}
    There exists an absolute constant $C > 0$ for which the following holds. Let $\Gamma, \Gamma' \subseteq \hat{G}$ be sets of characters of sizes $r$ and $r'$ respectively, and let $\rho, \rho' > 0$ be radii. Let $k \in \mathbb{N}$. Suppose that $\langle\Gamma\rangle_{[-K_0, K_0]} \cap \langle\Gamma'\rangle_{[-K_0, K_0]} = \{0\}$ holds for $K_0 = 2^{10}(k+4)^2\mathsf{C}_{\on{spec}} (r + r') (\varepsilon^{2} \rho\rho')^{-2}$.\\
    \indent Let $x \in G$ be an element that can be written in at least $\varepsilon |G|^{k + 3}$ ways as $a_1 + a_2 + a_3 + a_4 + 2b_1 + \dots + 2^k b_k$ for $a_1, \dots, a_4, b_1, \dots b_k \in  B(\Gamma; \rho)$, and analogously for $ B(\Gamma'; \rho')$. Then $x \in 4(B(\Gamma; 2\rho) \cap B(\Gamma'; 2\rho')) + 2\cdot (B(\Gamma; 2\rho) \cap B(\Gamma'; 2\rho')) + \dots + 2^k \cdot (B(\Gamma; 2\rho) \cap B(\Gamma'; 2\rho'))$.
\end{proposition}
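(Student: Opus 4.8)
The plan is to exhibit an explicit representation
\[x = d_1 + d_2 + d_3 + d_4 + 2 d_5 + 4 d_6 + \dots + 2^k d_{k+4}\]
with every $d_i$ in $D := B(\Gamma; 2\rho) \cap B(\Gamma'; 2\rho')$; this is precisely the claimed membership. Write $c = (1,1,1,1,2,4,\dots,2^k)$ and $m = k+4$, so $\sum_{i \in [m]} c_i \le 2^{k+2}$. Since $\varepsilon |G|^{k+3} \ge 1$, the two hypotheses supply at least one $\Gamma$-representation $x = \sum_i c_i p_i$ with $p_i \in B(\Gamma; \rho)$ and at least one $\Gamma'$-representation $x = \sum_i c_i q_i$ with $q_i \in B(\Gamma'; \rho')$; the full strength of the abundance hypotheses will be needed only to keep the bounds efficient, as explained at the end.

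The core step is to find, for each $i$, a group element $d_i$ agreeing approximately with $p_i$ on the characters of $\Gamma$ and with $q_i$ on the characters of $\Gamma'$. Let $\Gamma = \{\chi_1, \dots, \chi_r\}$, $\Gamma' = \{\chi'_1, \dots, \chi'_{r'}\}$, let $\Psi = (\chi_1, \dots, \chi_r, \chi'_1, \dots, \chi'_{r'}) : G \to \mathbb{T}^{r+r'}$, and set $v^{(i)} = (\chi_1(p_i), \dots, \chi_r(p_i), \chi'_1(q_i), \dots, \chi'_{r'}(q_i))$. The transversality hypothesis $\langle \Gamma \rangle_{[-K_0, K_0]} \cap \langle \Gamma' \rangle_{[-K_0, K_0]} = \{0\}$ says exactly that any vanishing combination $\sum_l \lambda_l \chi_l + \sum_l \mu_l \chi'_l = 0$ with $|\lambda_l|, |\mu_l| \le K_0$ splits into $\sum_l \lambda_l \chi_l = 0$ and $\sum_l \mu_l \chi'_l = 0$ separately; consequently $\sum_l \lambda_l \chi_l(p_i) + \sum_l \mu_l \chi'_l(q_i) = 0$, i.e.\ $v^{(i)}$ is annihilated by every relation of $\Psi$ of height at most $K_0$. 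Applying an effective lattice-image estimate in the spirit of Proposition~\ref{approximatelatticeimage} — but one made quantitatively efficient by running the bump-function argument of that proof directly, using the spectral control of Proposition~\ref{bohrsizeLargeFC}, a weakly regular choice of radii from Lemmas~\ref{bohrwreg1}--\ref{bohrwreg2}, and the abundance of $\Gamma$- and $\Gamma'$-representations — we obtain, for $K_0$ as stated, an element $d_i \in G$ with $\uc{\Psi(d_i) - v^{(i)}} \le \delta$ for a small precision $\delta$. Since $\uc{\chi_l(p_i)} \le \rho$ and $\uc{\chi'_l(q_i)} \le \rho'$, taking $\delta \le \min\{\rho, \rho'\}$ already forces $d_i \in D$.

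It remains to absorb the residual error $e := x - \sum_i c_i d_i$. For $\chi \in \Gamma$ we have $\uc{\chi(e)} = \uc{\sum_i c_i (\chi(p_i) - \chi(d_i))} \le (\sum_i c_i)\delta \le 2^{k+2}\delta$, and likewise $\uc{\chi'(e)} \le 2^{k+2}\delta$ for $\chi' \in \Gamma'$, so $e \in B(\Gamma; 2^{k+2}\delta) \cap B(\Gamma'; 2^{k+2}\delta)$. Choosing $\delta \le 2^{-k-3}\min\{\rho, \rho'\}$, the element $d_1 + e$ satisfies $\uc{\chi(d_1+e)} \le (\rho + \delta) + 2^{k+2}\delta \le 2\rho$ for all $\chi \in \Gamma$ and the analogous bound for $\Gamma'$, so $d_1 + e \in D$; replacing $d_1$ by $d_1 + e$ yields $d_1 + e, d_2, d_3, d_4, d_5, \dots, d_m \in D$ whose $c$-weighted sum is $(\sum_i c_i d_i) + e = x$, as required.

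The main obstacle is precisely the middle step: getting the lattice-image estimate to run with $K_0$ growing only polynomially in $r + r'$ and in $\rho^{-1}, \rho'^{-1}$ (and in $\varepsilon^{-1}, k$), rather than exponentially — a direct appeal to Proposition~\ref{approximatelatticeimage} costs a factor exponential in $r + r'$. This is where the hypotheses having at least $\varepsilon |G|^{k+3}$ representations, not merely one, is used: the abundance lets one detect that the profile $v^{(i)}$ is actually (nearly) attained by a genuine group element by expanding the relevant smoothed representation counts as Fourier series on $\mathbb{T}$ via Fact~\ref{bumpfunctionsfact}, identifying the large spectrum through Proposition~\ref{bohrsizeLargeFC}, and invoking transversality to kill the surviving bounded characters. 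Tracking the weak-regularity parameter and the bump width — each taken comparable to $\varepsilon^2 \rho \rho' / (k+4)$ — through the bound $K \le \mathsf{C}_{\on{spec}}(r+r')\varepsilon^{-1}\eta^{-1}$ of Proposition~\ref{bohrsizeLargeFC} is what produces the stated value $K_0 = 2^{10}(k+4)^2 \mathsf{C}_{\on{spec}}(r+r')(\varepsilon^2\rho\rho')^{-2}$.
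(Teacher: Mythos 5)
Your proposal takes a genuinely different route from the paper's. The paper never selects individual representations: it expresses the number of $(a_{[4]},b_{[k]})$-representations of $x$ drawn from a set $A$ as the Fourier sum $|G|^{k+3}\sum_{\gamma}\widehat{\id_A}(\gamma)^4\widehat{\id_A}(2\gamma)\cdots\widehat{\id_A}(2^k\gamma)\on{e}(\gamma(x))$, restricts to the large spectrum with an error controlled by the abundance hypothesis, identifies the large spectra of $\id_B$ and $\id_{B'}$ with $\langle\Gamma\rangle_{[-K,K]}$ and $\langle\Gamma'\rangle_{[-K,K]}$ via Proposition~\ref{bohrsizeLargeFC}, uses transversality to factor $\widehat{\id_{B\cap B'}}(\xi_1+\xi_2)\approx\widehat{\id_B}(\xi_1)\widehat{\id_{B'}}(\xi_2)$ uniquely, and concludes that the Fourier sum for $B\cap B'$ is approximately the product of the two sums, hence positive. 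This is a positivity argument about a count, not a construction of a particular representation. Your plan, by contrast, picks one $\Gamma$-representation $(p_i)$ and one $\Gamma'$-representation $(q_i)$, and tries to merge them term by term into elements $d_i$ lying in $B(\Gamma;2\rho)\cap B(\Gamma';2\rho')$.

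The gap is the one you yourself flag, and it is real, not merely a bookkeeping nuisance. Your merging step requires finding, for each $i$, a group element $d_i$ with $\|\Psi(d_i)-v^{(i)}\|_{\mathbb{T}}\le\delta$ for a precision $\delta$ comparable to $2^{-k}\min(\rho,\rho')$. The only tool at hand, Proposition~\ref{approximatelatticeimage}, needs the lattice constraints on $v^{(i)}$ to hold up to height $K\ge C(r+r')(2\delta^{-1})^{2(r+r')+1}$ — exponential in $r+r'$ — before it yields such an element, so matching the stated polynomial $K_0$ is impossible by that route. Your proposed fix, that the abundance of representations of $x$ somehow rescues a polynomial bound by ``detecting that $v^{(i)}$ is nearly attained'' through Fact~\ref{bumpfunctionsfact} and Proposition~\ref{bohrsizeLargeFC}, does not connect to what you actually need: the abundance hypothesis counts representations of the fixed element $x$, and gives no control over whether a single prescribed pair $(p_i,q_i)$ can be merged. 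In fact, as written your argument never uses the abundance at all — one representation of each type suffices for every step you carry out — which alone shows the argument cannot give the stated bound, since the paper's proof genuinely needs the $\varepsilon|G|^{k+3}$ lower bound to survive the restriction to the large spectrum. What you have is a correct proof of a qualitative variant in which $K_0$ is replaced by a quantity exponential in $r+r'$ and the abundance is weakened to mere existence; the stated proposition, with polynomial $K_0$, requires the Fourier counting argument.
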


\begin{proof}
    Let $A \subseteq G$ be any set. Then, the number of $a_1, \dots, a_4, b_1, \dots b_k \in A$ that satisfy $x = a_1 + a_2 + a_3 + a_4 + 2b_1 + \dots + 2^k b_k$ equals
    \begin{align*}&|G|^{k + 3} \exx_{a_2, a_3, a_4, b_1, \dots, b_k \in G} \id_A(x - a_2 - a_3 - a_4 - 2b_1 - \dots - 2^k b_k) \id_A(a_2) \dots \id_A(b_k)\\
    &\hspace{2cm}= |G|^{k + 3} \exx_{a_2, a_3, a_4, b_1, \dots, b_k \in G} \sum_{\gamma_1, \gamma_2, \gamma_3, \gamma_4, \xi_1, \dots, \xi_k\in \hat{G}} \widehat{\id_A}(\gamma_1)\on{e}(\gamma_1(x - a_2 - a_3 - a_4 - 2b_1 - \dots - 2^k b_k))\\
    &\hspace{4cm}\widehat{\id_A}(\gamma_2)\on{e}(\gamma_2(a_2))\widehat{\id_A}(\gamma_3)\on{e}(\gamma_3(a_3))\widehat{\id_A}(\gamma_4)\on{e}(\gamma_4(a_4))\prod_{i \in [k]} \widehat{\id_A}(\xi_i)\on{e}(\chi_i(b_i))\\
    &\hspace{2cm}= |G|^{k + 3}  \sum_{\gamma_1, \gamma_2, \gamma_3, \gamma_4, \xi_1, \dots, \xi_k\in \hat{G}}  \widehat{\id_A}(\gamma_1)\widehat{\id_A}(\gamma_2)\widehat{\id_A}(\gamma_3)\widehat{\id_A}(\gamma_4)\widehat{\id_A}(\xi_1)\dots\widehat{\id_A}(\xi_k)\\
    &\hspace{4cm}\id(\gamma_1= \gamma_2)\id(\gamma_1 = \gamma_3)\id(\gamma_1 = \gamma_4)\id(\xi_1 = 2\gamma_1)\dots \id(\xi_k = 2^k \gamma_1)\on{e}(\gamma_1(x))\\
    &\hspace{2cm}= |G|^{k + 3} \sum_{\gamma \in \hat{G}} \widehat{\id_A}(\gamma)^4 \widehat{\id_A}(2\gamma)\dots \widehat{\id_A}(2^k\gamma)\on{e}(\gamma(x)).
    \end{align*}

    Thus, we need to show that if $\sum_{\gamma \in \hat{G}} \widehat{\id_A}(\gamma)^4 \widehat{\id_A}(2\gamma)\dots \widehat{\id_A}(2^k\gamma)\on{e}(\gamma(x))$ is at least $\varepsilon$ for $A =  B(\Gamma; \rho),  B(\Gamma'; \rho')$, then it is positive for $A = B(\Gamma; 2\rho) \cap B(\Gamma'; 2\rho')$.\\

    Let $c > 0$ be a parameter to be chosen later, and let $S \subseteq \hat{G}$ be any set containing all characters $\gamma \in \hat{G}$ such that $|\widehat{\id_A}(\gamma)| \geq c$ (and possibly some additional characters). Then
    \begin{align}&\Big|\sum_{\gamma \in \hat{G}} \widehat{\id_A}(\gamma)^4 \widehat{\id_A}(2\gamma)\dots \widehat{\id_A}(2^k\gamma)\on{e}(\gamma(x)) - \sum_{\gamma \in S} \widehat{\id_A}(\gamma)^4 \widehat{\id_A}(2\gamma)\dots \widehat{\id_A}(2^k\gamma)\on{e}(\gamma(x))\Big|\nonumber \\
    &\hspace{2cm} \leq \sum_{\gamma \notin S} |\widehat{\id_A}(\gamma)|^4 \leq c^2 \sum_{\gamma \notin S} |\widehat{\id_A}(\gamma)|^2 \leq c^2.\label{passingtolargeFCsInsumsofgenbohrs}\end{align}

    Let us first find a $\tilde{\rho} \in [\rho, 2\rho]$ and $\tilde{\rho}' \in [\rho', 2\rho']$ such that 
    \begin{align*}
    \Big|B\Big(\Gamma, \tilde{\rho} + \frac{\varepsilon^2}{10000(k+4)}\rho\rho'\Big) \setminus B(\Gamma, \tilde{\rho})\Big| & \leq \frac{\varepsilon^2}{2000(k+4)}|G|,\\
    \Big| B\Big(\Gamma', \tilde{\rho}' + \frac{\varepsilon^2}{10000(k+4)}\rho\rho'\Big) \setminus B(\Gamma, \tilde{\rho}')\Big| & \leq \frac{\varepsilon^2}{2000(k+4)}|G|,\text{ and}\\
    \Big|B\Big(\Gamma, \tilde{\rho} + \frac{\varepsilon^2}{10000(k+4)}\rho\rho'\Big) \cap  B\Big(\Gamma', \tilde{\rho}' + \frac{\varepsilon^2}{10000(k+4)}\rho\rho'\Big) \setminus (B(\Gamma, \tilde{\rho}) \cap B(\Gamma', \tilde{\rho}')) \Big| & \leq \frac{\varepsilon^2}{2000(k+4)}|G|.
    \end{align*}
    We try taking $\tilde{\rho} = \rho + i \frac{\varepsilon^2}{10000}\rho\rho'$ and $\tilde{\rho}' = \rho' + j \frac{\varepsilon^2}{10000}\rho\rho'$ for $i, j \in [0, 10000 \varepsilon^{-2}]$. Each of the inequalities above can fail for at most a fifth of all pairs $(i,j)$, so we can find such a $\tilde{\rho}$ and $\tilde{\rho}'$. Let $B = B(\Gamma, \tilde{\rho})$ and $B' = (\Gamma', \tilde{\rho}')$.\\

    Let $\mathsf{f} = \widehat{\mathsf{b}_{\tilde{\rho}, \frac{\varepsilon^2}{10000(k+4)}\rho\rho'}}, \mathsf{g} = \widehat{\mathsf{b}_{\tilde{\rho}', \frac{\varepsilon^2}{10000(k+4)}\rho\rho'}}$ be the Fourier transforms of bump functions, which are used in Proposition~\ref{bohrsizeLargeFC}. Applying Proposition~\ref{bohrsizeLargeFC} (with $\eta = \frac{\varepsilon^2}{10000(k+4)}\rho\rho'$ in that proposition), we obtain $K \leq 10^9 (k+4)^2 \mathsf{C}_{\on{spec}} (r + r')(\varepsilon^{2} \rho\rho')^{-2}$ such that 
    \begin{align*}\Big|\widehat{\id_B}(\tau) - & \sum_{\lambda \in [-K, K]^\Gamma} \id(\sum_{\gamma \in \Gamma} \lambda(\gamma)\gamma = \tau) \prod_{\gamma \in \Gamma} \mathsf{f}(\lambda(\gamma)) \Big|\leq \frac{\varepsilon^2}{1000(k+4)},\\
    \Big|\widehat{\id_{B'}}(\tau) - & \sum_{\mu \in [-K, K]^{\Gamma'}} \id(\sum_{\gamma' \in \Gamma'} \mu(\gamma')\gamma' = \tau) \prod_{\gamma' \in \Gamma'} \mathsf{g}(\mu(\gamma')) \Big|\leq \frac{\varepsilon^2}{1000(k+4)},\text{ and}\\
    \Big|\widehat{\id_{B \cap B'}}(\tau) - & \sum_{\ssk{\lambda \in [-K, K]^\Gamma\\\mu \in [-K, K]^{\Gamma'}}} \id\Big(\sum_{\gamma \in \Gamma}  \lambda(\gamma)\gamma + \sum_{\gamma' \in \Gamma'} \mu(\gamma')\gamma' = \tau\Big) \prod_{\gamma \in \Gamma} \mathsf{f}(\lambda(\gamma))\prod_{\gamma' \in \Gamma'} \mathsf{g}(\mu(\gamma'))  \Big|\leq \frac{\varepsilon^2}{1000(k+4)}\end{align*}
    hold for all $\tau \in \hat{G}$.\\

    Comparing the choice of $K$ with the assumptions, we have $\langle\Gamma\rangle_{[-10K, 10K]} \cap \langle\Gamma'\rangle_{[-10K, 10K]} = \{0\}$.\\
    
    Note that inequalities above show that $|\widehat{\id_B}(\tau)| \geq \varepsilon/2$ implies $\tau \in \langle \Gamma \rangle_{[-K, K]}$ and $|\widehat{\id_B'}(\tau)| \geq \varepsilon/2$ implies $\tau \in \langle \Gamma' \rangle_{[-K, K]}$ . In particular, by~\eqref{passingtolargeFCsInsumsofgenbohrs}, for $x \in X$, we obtain 
    \begin{align*}\Big|\sum_{\gamma \in \langle \Gamma \rangle_{[-K, K]}} &\widehat{\id_B}(\gamma)^4 \widehat{\id_B}(2\gamma)\dots \widehat{\id_B}(2^k\gamma)\on{e}(\gamma(x))\Big| \geq \varepsilon/2,\\
    \Big|\sum_{\gamma' \in \langle \Gamma' \rangle_{[-K, K]}} &\widehat{\id_{B'}}(\gamma')^4 \widehat{\id_{B'}}(2\gamma')\dots \widehat{\id_{B'}}(2^k\gamma')\on{e}(\gamma'(x))\Big| \geq \varepsilon/2.\end{align*}

    Furthermore, 
    \begin{align*}&\Big|\sum_{\gamma \in \hat{G}} \widehat{\id_{B \cap B'}}(\gamma)^4 \widehat{\id_{B \cap B'}}(2\gamma)\dots \widehat{\id_{B \cap B'}}(2^k\gamma)\on{e}(\gamma(x)) \\
    &\hspace{2cm}- \sum_{\gamma \in \langle \Gamma \cup \Gamma'\rangle_{[-K, K]}} \widehat{\id_{B \cap B'}}(\gamma)^4 \widehat{\id_{B \cap B'}}(2\gamma)\dots \widehat{\id_{B \cap B'}}(2^k\gamma)\on{e}(\gamma(x))\Big| \leq \varepsilon^2/1000(k+4).\end{align*}

    To complete the proof, we relate the Fourier coefficients $\widehat{\id_{B}}$, $\widehat{\id_{B'}}$ and $\widehat{\id_{B \cap B'}}$.

    \begin{claim}
        Let $\xi \in \langle \Gamma, \Gamma' \rangle_{[-K, K]}$. Then there exist unique $\xi_1 \in \langle \Gamma \rangle_{[-K, K]}$ and $\xi_2 \in \langle \Gamma' \rangle_{[-K, K]}$ such that $\xi = \xi_1 + \xi_2$, and these characters satisfy
        \[\Big| \widehat{\id_{B \cap B'}}(\xi) -\widehat{\id_{B}}(\xi_1) \widehat{\id_{B'}}(\xi_2)\Big| \leq \frac{\varepsilon^2}{100(k + 4)}.\]
    \end{claim}

    \begin{proof}
        We use the condition $\langle\Gamma\rangle_{[-2K, 2K]} \cap \langle\Gamma'\rangle_{[-2K, 2K]} = \{0\}$  in the statement to show uniqueness of decomposition $\xi = \xi_1 + \xi_2$. Use the triangle inequality, and note that the approximant for $\widehat{\id_{B \cap B'}}(\xi)$ is product of approximants for $\widehat{\id_{B}}(\xi_1)$ and $\widehat{\id_{B'}}(\xi_2)$.
    \end{proof}

    Using the claim above we have
    \begin{align*}&\sum_{\gamma \in \langle \Gamma \cup \Gamma'\rangle_{[-K, K]}} \widehat{\id_{B \cap B'}}(\gamma)^4 \widehat{\id_{B \cap B'}}(2\gamma)\dots \widehat{\id_{B \cap B'}}(2^k\gamma)\on{e}(\gamma(x))\\
    &\hspace{2cm} =\sum_{\ssk{\gamma_1 \in \langle \Gamma\rangle_{[-K, K]}\\\gamma_2 \in \langle \Gamma'\rangle_{[-K, K]}}} \widehat{\id_{B \cap B'}}(\gamma_1 + \gamma_2)^4 \widehat{\id_{B \cap B'}}(2\gamma_1 + 2\gamma_2)\dots \widehat{\id_{B \cap B'}}(2^k\gamma_1 + 2^k \gamma_2)\on{e}(\gamma_1(x))\,\on{e}(\gamma_2(x)).\end{align*}

    By triangle inequality, we have

    \begin{align*}
        &\Big|\sum_{\ssk{\gamma_1 \in \langle \Gamma\rangle_{[-K, K]}\\\gamma_2 \in \langle \Gamma'\rangle_{[-K, K]}}} \widehat{\id_{B \cap B'}}(\gamma_1 + \gamma_2)^4 \widehat{\id_{B \cap B'}}(2\gamma_1 + 2\gamma_2)\dots \widehat{\id_{B \cap B'}}(2^k\gamma_1 + 2^k \gamma_2)\on{e}(\gamma_1(x))\,\on{e}(\gamma_2(x))\\
        &\hspace{2cm}-\sum_{\ssk{\gamma_1 \in \langle \Gamma\rangle_{[-K, K]}\\\gamma_2 \in \langle \Gamma'\rangle_{[-K, K]}}} \widehat{\id_{B}}(\gamma_1)^4 \widehat{\id_{B'}}(\gamma_2)^4 \widehat{\id_{B}}(2\gamma_1) \widehat{\id_{B'}}(2\gamma_2)\dots \widehat{\id_{B}}(2^k\gamma_1) \widehat{\id_{B'}}(2^k\gamma_2)\on{e}(\gamma_1(x))\,\on{e}(\gamma_2(x))\Big|\\
        &\hspace{1cm}\leq (k+4) \frac{\varepsilon^2}{100(k+4)} = \frac{\varepsilon^2}{100}.
    \end{align*}

    But the final expression is 
    \begin{align*}&\sum_{\ssk{\gamma_1 \in \langle \Gamma\rangle_{[-K, K]}\\\gamma_2 \in \langle \Gamma'\rangle_{[-K, K]}}} \widehat{\id_{B}}(\gamma_1)^4 \widehat{\id_{B'}}(\gamma_2)^4 \widehat{\id_{B}}(2\gamma_1) \widehat{\id_{B'}}(2\gamma_2)\dots \widehat{\id_{B}}(2^k\gamma_1) \widehat{\id_{B'}}(2^k\gamma_2)\on{e}(\gamma_1(x))\,\on{e}(\gamma_2(x)) \\
    &\hspace{2cm} = \Big(\sum_{\gamma_1 \in \langle \Gamma\rangle_{[-K, K]}}\widehat{\id_{B}}(\gamma_1)^4 \widehat{\id_{B}}(2\gamma_1) \dots \widehat{\id_{B}}(2^k\gamma_1)\on{e}(\gamma_1(x)) \Big) \\
    &\hspace{4cm}\Big(\sum_{\gamma_2 \in \langle \Gamma'\rangle_{[-K, K]}} \widehat{\id_{B'}}(\gamma_2)^4  \widehat{\id_{B'}}(2\gamma_2)\widehat{\id_{B'}}(2^k\gamma_2)\on{e}(\gamma_2(x))\Big)\end{align*}

    which has modulus at least $\varepsilon^2/4$. Combining the above inequalities, we obtain 
    \[\Big|\sum_{\gamma \in \hat{G}} \widehat{\id_A}(\gamma)^4 \widehat{\id_A}(2\gamma)\dots \widehat{\id_A}(2^k\gamma)\on{e}(\gamma(x))\Big| \geq \varepsilon^2/8.\qedhere\]

    Finally, let us record another lemma in the similar spirit. Let $\mathsf{C}_{\on{lattice}}$ be the constant from Proposition~\ref{approximatelatticeimage}.

    \begin{lemma}\label{qrsumbohrs}
        Let $\Lambda, \Lambda', \Gamma \subseteq \hat{G}$ be sets of characters of size at most $d$. Let $\rho > 0$. There exists $K = 3\mathsf{C}_{\on{lattice}}d (20\rho^{-1})^{6d + 1}$ such that the following holds. Suppose that $\sum_{\gamma \in \Gamma} a(\gamma) \gamma + \sum_{\lambda \in \Lambda} b(\lambda) \lambda  + \sum_{\lambda' \in \Lambda'} c(\lambda') \lambda' = 0$ for coefficients $a, b, c \in [-K, K]$ implies that $b$ and $c$ vanish. Then
        \[B(\Gamma, \rho/10) \subseteq B(\Gamma \cup \Lambda, \rho) + B(\Gamma \cup \Lambda, \rho').\]
    \end{lemma}

    \begin{proof}
        We rely on Proposition~\ref{approximatelatticeimage}. Let $x \in B(\Gamma, \rho/10)$. List $\Gamma = \{\gamma_1, \dots, \gamma_d\}$, $\Lambda = \{\lambda_1, \dots, \lambda_d\}$ and $\Lambda' = \{\lambda'_1, \dots, \lambda'_d\}$. Let $u = (\gamma_1(x), \dots, \gamma_d(x))$, $v = (\lambda_1(x), \dots, \lambda_d(x))$ and $v' = (\lambda'_1(x), \dots, \lambda'_d(x))$. Thus, $\|u\|_{\mathbb{T}} \leq \rho/10$.\\

        We claim that there exists $y \in G$ such that $\|\gamma(y)\|_{\mathbb{T}}, \|\lambda(y) - v\|_{\mathbb{T}}, \|\lambda'(y)\|_{\mathbb{T}} \leq \rho/2$. Apply Proposition~\ref{approximatelatticeimage} to the vector $(0, v, 0)$ and approximation parameter $\rho/2$. By assumptions  of the lemma, the vector satisfies conditions of Proposition~\ref{approximatelatticeimage}, so there exists desired $y$. Then $y \in B(\Gamma \cup \Lambda', \rho/2)$. On the other hand, $x - y$ satisfies $\|\gamma(x-y)\|_{\mathbb{T}}, \|\lambda(x - y)\|_{\mathbb{T}} \leq \rho$, so $x -y \in B(\Gamma \cup \Lambda', \rho)$.
    \end{proof}

\end{proof}

\subsection{Freiman and $E$-homomorphisms} 

\noindent \textbf{Freiman homomorphisms on Bohr sets.} A considerable part of the proof of our main result will be about systems of Freiman homomorphisms whose domains are Bohr sets.\\

\noindent\textbf{Notation.} Let $\phi_1 : B_1 \to H, \dots, \phi_k : B_k \to H$ be Freiman homomorphisms on Bohr sets $B_1, \dots, B_k \subseteq G$. We interpret expression $\psi = \pm \phi_1 \pm \phi_2 \dots \pm \phi_k$ as a Freiman homomorphism on the intersection of domains $\cap_{i \in [k]} B_i$, which is itself a Bohr set. In other words, we consider the commutative monoid of Freiman homomorphisms $\phi : B \to H$, with operation $(\phi_1, B_1) + (\phi_2, B_2) = ((\phi_1 + \phi_2)|_{B_1 \cap B_2}, B_1 \cap B_2)$.\\

A particularly important role will be played by such maps that take few values, or, almost equivalently, vanish frequently.

Let us define $\range \phi = |\phi(B)|$, which is the size of the image of $\phi$.

\begin{proposition}Let $\phi : B(\Gamma, \rho) \to H$ be a Freiman-linear map on a Bohr set $B = B(\Gamma, \rho)$ of codimension $d$ and radius function $\rho : \Gamma \to \hat{G}$ such that $\phi(x) = 0$ holds for at least $\frac{2}{3}|B|$ elements $x \in B$. Then $\phi = 0$ on $ B(\Gamma, \eta) $ for $\eta = \frac{1}{100d} \prod_{\gamma \in \Gamma} \rho(\gamma)$.
\end{proposition}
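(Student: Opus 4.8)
The plan is to argue that a Freiman-linear map $\phi$ on a Bohr set which vanishes on a large fraction of the Bohr set must in fact vanish on a slightly shrunk version of it. The key point is that the set $Z = \{x \in B : \phi(x) = 0\}$ is a \emph{Freiman-subgroup} of $B$: if $a, b \in Z$ and $a - b \in B$, then by Freiman-linearity $\phi(a - b) = \phi(a) - \phi(b) = 0$ (using that the additive triple $a = (a-b) + b$ lies in $B$ together with $0$), so $a - b \in Z$. Moreover $Z$ contains $0$ and is dense in $B$ by hypothesis, with $|Z| \geq \tfrac23 |B|$.

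First I would extract an honest coset progression from $Z$. Applying Proposition~\ref{cpsinbohrsets}, the Bohr set $B = B(\Gamma,\rho)$ contains a proper symmetric coset progression $C$ of rank at most $d$ with $|C| \geq \rho^d d^{-2d^2}|G| \geq \big(\prod_{\gamma}\rho(\gamma)\big) d^{-2d^2}|G|$ (using Lemma~\ref{basicbohrsizel} for the size bound, suitably adapted to non-constant radius). Intersecting, $|Z \cap C| \geq |C| - |B \setminus Z| \geq |C| - \tfrac13|B|$; since $|C|$ is only polynomially smaller than $|B|$ this is not immediately dense enough, so instead I would run the density argument directly at the level of the Bohr set: by the pigeonhole/covering bound of Lemma~\ref{basicbohrsizel}, a translate of a much smaller Bohr sub-box meets $Z$ in more than half its measure, and then Lemma~\ref{almostfullBohr}-style reasoning shows $Z - Z$ contains a slightly shrunk Bohr set. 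More cleanly: since $Z$ is a Freiman-subgroup of $B$ and $Z \cap C$ is a Freiman-subgroup of the proper symmetric coset progression $C$, Lemma~\ref{progsbgp} applies once we know $|Z \cap C| \geq \alpha|C|$ for some explicit $\alpha$; choosing $C$ via Proposition~\ref{cpsinbohrsets} with parameters arranged so that $|C| \geq \tfrac34^{-1}\cdot(\text{loss})$, we get that $Z \cap C$ contains a sub-coset-progression $C'$ of the same rank with $\ell_i$-dilated steps, $\ell_i = O(1)$, and a dense subgroup $H' \leq H_C$.

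The conclusion then follows: $C'$ is a coset progression contained in $Z$, and I claim $C'$ (or a further shrinking) contains the Bohr set $B(\Gamma,\eta)$ with $\eta = \tfrac{1}{100d}\prod_{\gamma\in\Gamma}\rho(\gamma)$. This is exactly the type of containment furnished by the last assertion of Proposition~\ref{cpsinbohrsets}, which says $B(\Gamma, c\,r^{-4r}\rho) \subseteq [-2cL_1,2cL_1]\cdot a_1 + \dots + H$; with $c$ chosen of size $\prod\rho(\gamma)\cdot\mathrm{poly}(d)^{-1}$ absorbed into the $\tfrac{1}{100d}$ and the $O(1)$ dilation factors $\ell_i$ absorbed as well, one checks $B(\Gamma,\eta)$ lies inside $C' \subseteq Z$, i.e.\ $\phi \equiv 0$ on $B(\Gamma,\eta)$. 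The main obstacle I anticipate is purely bookkeeping: tracking the polynomial-in-$d$ and the $\prod_{\gamma}\rho(\gamma)$ losses through Proposition~\ref{cpsinbohrsets}, Lemma~\ref{progsbgp}, and back, to verify the clean constant $\tfrac{1}{100d}$ in the stated $\eta$ — in particular making sure the density $\alpha$ of $Z\cap C$ inside $C$ is genuinely bounded below (which is where the hypothesis $|Z|\geq\tfrac23|B|$, as opposed to merely positive density, is used, since $C$ itself is only polynomially dense in $B$). A cleaner alternative avoiding coset progressions entirely would be: iterate $B(\Gamma,\eta) \subseteq Z - Z \subseteq Z$ directly, noting $Z - Z = Z$ since $Z$ is a Freiman-subgroup and $Z - Z \subseteq B$ whenever the radius is halved — but one still needs the density of $Z$ in a shrunk Bohr set, which is precisely the $\prod\rho(\gamma)\cdot\mathrm{poly}(d)^{-1}$ bound above.
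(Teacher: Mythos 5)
There is a genuine gap: you correctly identify that $Z = \{x \in B : \phi(x) = 0\}$ is a Freiman-subgroup and that the goal is to show $B(\Gamma,\eta) \subseteq Z - Z$, but both of your proposed routes to the density count fail, and you misdiagnose the actual obstruction.

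Your coset-progression route runs into the problem you yourself flag: Proposition~\ref{cpsinbohrsets} only gives $|C| \geq d^{-O(d)}\rho^d|G|$, so $|B \setminus Z| \leq \tfrac13|B|$ can dominate $|C|$ and $Z \cap C$ may be empty, while Lemma~\ref{progsbgp} needs $Z\cap C$ to be dense \emph{relative to $C$}. The hypothesis $|Z| \geq \tfrac23|B|$ gives relative density only on $B$ itself; it does not transfer to $C$ unless $C$ is a constant fraction of $B$, which is not guaranteed. Your ``cleaner alternative'' also does not close: Lemma~\ref{almostfullBohr} needs density $\geq 1 - 4^{-d-1}$, far more than $\tfrac23$; and the pigeonholed translate where $Z$ is $>\tfrac12$-dense still suffers from the problem that differences of nearby elements only land in a doubled Bohr set, and the containment $Z - Z \subseteq B$ must be forced by restricting $Z$ to a shrunk Bohr set $B(\Gamma,\rho-\eta)$.

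The real obstruction, which your proposal does not identify, is the size of the annulus $B(\Gamma,\rho)\setminus B(\Gamma,\rho-\eta)$: Bohr sets can have sharp boundaries (e.g.\ when some $\chi\in\Gamma$ has small image in $\mathbb{T}$), so $|B(\Gamma,\rho-\eta)|$ may be a factor $2^{-\Omega(d)}$ smaller than $|B|$, destroying the density count no matter how small $\eta$ is. The paper resolves this with weak regularity (Lemma~\ref{bohrwreg1}): one perturbs the radius function $\rho$ to a $\rho'$ with $B(\Gamma,\rho') = B$ unchanged as a set, but for which $|B(\Gamma,\rho')\setminus B(\Gamma,\rho'-\eta)| \leq \tfrac{1}{10}|B|$. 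Then $Z' := Z\cap B(\Gamma,\rho'-\eta)$ still has density $> \tfrac12$ in $B$, and for $x \in B(\Gamma,\eta)$ both $Z'$ and $x + Z'$ lie in $B$ with combined density $> 1$, forcing an intersection and hence $x \in Z - Z$, whence $\phi(x)=0$ by Freiman-linearity. Your sketch has the right shape but is missing this regularization step, which is what makes the $\tfrac23$-density hypothesis sufficient.
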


\begin{proof}
    By Proposition~\ref{basicbohrsizel}, we have $|B(\Gamma, \rho)| \geq \Big(\prod_{\gamma \in \Gamma} \rho(\gamma) \Big)|G|$. Let $\eta = \frac{1}{100d} \prod_{\gamma \in \Gamma} \rho(\gamma)$. By Proposition~\ref{bohrwreg1}, there exists radius function $\rho' : \Gamma \to \hat{G}$ such that $B(\Gamma, \rho') = B$, $\|\rho' - \rho\|_\infty \leq \eta$ and $|B(\Gamma, \rho') \setminus B(\Gamma, \rho' - \eta)| \leq \frac{1}{10}|B(\Gamma, \rho')|$.\\
    
    Let $Z = \{x \in B : \phi(x) = 0 \}$ and write $B' = B(\Gamma, \eta)$. Let $x \in B'$ be arbitrary. Then $x + B(\Gamma, \rho' - \eta) \subseteq B$ so 
    \[|Z \cap x + Z| \geq |(Z \cap B(\Gamma, \rho' - \eta)) \cap x + (Z \cap B(\Gamma, \rho' - \eta))| \geq |B| - 2|B \setminus (Z \cap B(\Gamma, \rho' - \eta))| > 0,\]
    so there exist $z_1, z_2 \in Z$ such that $x = z_1 - z_2$. By Freiman-linearity, $\phi(x) = \phi(z_1) - \phi(z_2) = 0$.
\end{proof}

Let us now show that, analogously to linear maps in linear algebra, large kernel implies small range.

\begin{lemma}[Many zeroes imply small range]\label{smallrangelemma}
     Suppose that $B = B(\Gamma, \rho)$ is a Bohr set of codimension $d$ and radius function $\rho$. Let $\phi  : B \to H$ be a Freiman-linear map such that $\phi(x) = 0$ holds for at least $c |B|$ elements $x \in B$. Then $\phi$ takes at most $(200d c^{-1})^{d+1} \Big(\prod_{\gamma \in \Gamma} \rho(\gamma)\Big)^{-d}$ different values on $B(\Gamma, \rho/2)$.
\end{lemma}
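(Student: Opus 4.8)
The plan is to iteratively peel off characters from $\Gamma$ by restricting $\phi$ to ever-smaller Bohr sets on which $\phi$ vanishes even more densely, until we reach a Bohr set $B(\emptyset, \text{anything}) = G$-like situation where the zero-set forces $\phi$ to be constant. More precisely, I would first fix a character $\gamma \in \Gamma$ and look at the fibres of $B(\Gamma, \rho/2)$ in the "$\gamma$-direction": write $B(\Gamma, \rho/2)$ as a union of translates $t + B(\Gamma \setminus \{\gamma\}, \rho/2)$ over $t$ ranging in (the preimage under $\gamma$ of) an interval of length $\rho$ in the image of $\gamma$. There are at most $O(\rho(\gamma)^{-1})$ such translates. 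On each such translate, $\phi$ differs from a Freiman-linear map on $B(\Gamma \setminus \{\gamma\}, \rho/2)$ by an additive shift, so $\#\mathrm{Im}(\phi|_{t + B(\Gamma\setminus\{\gamma\},\rho/2)}) = \#\mathrm{Im}(\phi|_{\text{that slice}})$. Hence $\range(\phi|_{B(\Gamma, \rho/2)})$ is at most $O(\rho(\gamma)^{-1})$ times the maximum range over one $(\Gamma\setminus\{\gamma\})$-slice. By pigeonhole, at least one such slice on which the density of zeroes is still at least $\Omega(c \cdot (\text{stuff}))$ — this is where I must be careful about how the density degrades.

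The cleaner route, which avoids tracking fibre-by-fibre density carefully, is: use the preceding Proposition (the one just stated, that $\phi = 0$ on $B(\Gamma, \eta)$ for $\eta = \tfrac{1}{100d}\prod_\gamma \rho(\gamma)$ when $\phi$ vanishes on $\tfrac23 |B|$ of $B$) — but first I need to boost the hypothesis "$\phi = 0$ on $c|B|$ points" to "$\phi = 0$ on a full sub-Bohr-set". Indeed, $Z = \phi^{-1}(0)$ is a Freiman-subgroup-like set: if $z_1, z_2, z_3 \in Z$ and $z_1 - z_2 + z_3 \in B$ then $\phi(z_1 - z_2 + z_3) = 0$ by Freiman-linearity, so $z_1 - z_2 + z_3 \in Z$. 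Thus $Z$ is a Freiman-subgroup of $B(\Gamma, \rho)$ of density $\geq c$. Using Proposition~\ref{cpsinbohrsets} to replace $B(\Gamma, \rho/2)$ by a coset progression $C$ (of rank $\leq d$) squeezed between Bohr sets, and then Lemma~\ref{progsbgp} applied to $Z \cap C$, we find that $Z$ contains a symmetric proper coset progression $C'$ with $|C'| \geq c^{O(1)}(\prod \rho(\gamma))^{O(1)} |G|$ (up to the $d$-dependent losses), which in turn contains a sub-Bohr set $B(\Gamma, \eta')$ with $\eta' \geq c^{O(1)} (\prod_\gamma \rho(\gamma))^{O(1)} d^{-O(d)}$ by Proposition~\ref{cosettobohrset} or Proposition~\ref{cpsinbohrsets}. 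Now $\phi$ vanishes on all of $B(\Gamma, \eta')$.

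Once $\phi = 0$ on $B(\Gamma, \eta')$, the range of $\phi$ on $B(\Gamma, \rho/2)$ is bounded by the number of cosets of (the subgroup generated by) $B(\Gamma, \eta')$ that meet $B(\Gamma, \rho/2)$: by Freiman-linearity $\phi$ is constant on each coset $x + B(\Gamma, \eta')$ with $x \in B(\Gamma, \rho/2 - \eta')$, hence the image has size at most $|B(\Gamma, \rho/2)| / |B(\Gamma, \eta'/2)|$ — wait, more carefully, $\phi$ is well-defined modulo the differences $B(\Gamma,\eta') - B(\Gamma,\eta') \supseteq B(\Gamma, \eta'/2)$ shifted appropriately, so I get $\range \phi \leq |B(\Gamma, \rho/2)|/|B(\Gamma, \eta'/2)| \leq 4^d (\rho/\eta')^{... }$-type bound via Lemma~\ref{basicbohrsizel}: indeed $|B(\Gamma,\rho/2)| \leq |G|$ and $|B(\Gamma,\eta'/2)| \geq (\eta'/2)^d |G|$, giving $\range\phi \leq (2/\eta')^d$. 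Plugging in $\eta' \gtrsim c^{O(1)}(\prod_\gamma\rho(\gamma))^{O(1)} d^{-O(d)}$ yields a bound of the shape $(200 d c^{-1})^{d+1}(\prod_\gamma \rho(\gamma))^{-O(d)}$, which I would then need to match to the stated exponent $-d$ by being economical in the applications above (using the sharper $\eta$ from the preceding Proposition directly once we know $\phi$ vanishes on $\geq \tfrac23$ of a suitable sub-Bohr set, rather than routing through coset progressions with their lossy ranks).

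**Main obstacle.** The delicate point is the bookkeeping of constants and, in particular, getting the exponent of $\prod_\gamma \rho(\gamma)$ down to exactly $-d$ rather than $-O(d)$: routing through Lemma~\ref{progsbgp} and Proposition~\ref{cpsinbohrsets} costs factors like $d^{-2d^2}$ and changes radii by powers, which would blow up the exponent. I expect the intended proof instead argues more directly — e.g. partition $B(\Gamma, \rho/2)$ into translates of $B(\Gamma, \eta)$ where $\eta = \tfrac{1}{100d}\prod_\gamma\rho(\gamma)$ is the radius from the preceding Proposition, first checking (via an averaging/pigeonhole over translates of $B(\Gamma, \rho/2)$ inside $B(\Gamma, \rho)$, or a Katona-type fibering) that $\phi$ vanishes on $\tfrac23$ of some translate $x_0 + B(\Gamma,\rho/2)$, apply the Proposition there to get $\phi$ constant on $x_0 + B(\Gamma, \eta)$, hence on every translate of $B(\Gamma,\eta)$ by Freiman-linearity, and finally bound the number of such translates covering $B(\Gamma,\rho/2)$ by $|B(\Gamma,\rho/2)|/|B(\Gamma,\eta/2)| \leq (2/\eta)^d = (200 d / \prod_\gamma\rho(\gamma))^d \leq (200 d c^{-1})^{d+1}(\prod_\gamma\rho(\gamma))^{-d}$. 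The place requiring genuine care is justifying the existence of that good translate $x_0$ with density $\tfrac23$ starting only from global density $c$ — this likely uses that $Z$ is a Freiman-subgroup so its density cannot drop below $c$ on translates that lie inside $B(\Gamma,\rho)$, combined with Lemma~\ref{basicbohrsizel} to ensure enough room, and it is exactly the hypothesis-boosting step I will need to execute precisely.
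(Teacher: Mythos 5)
Your third ``intended proof'' guess has the right overall shape --- average over translates to locate a dense slice of the zero set, then bound the range by a packing argument --- but it introduces a step that the paper does \emph{not} take and that would not work as you describe.  You propose to boost the density of $Z=\phi^{-1}(0)$ to $\tfrac23$ on some translate and then invoke the preceding proposition (the one stating that density $\geq\tfrac23$ forces $\phi=0$ on a full sub-Bohr set $B(\Gamma,\eta)$). You correctly flag this boosting as the delicate step; in fact it is not just delicate, it is false in general: a density-$c$ set need not have density $\tfrac23$ on \emph{any} translate of a sub-Bohr set (consider $Z$ a union of about $c^{-1}$ cosets of a subgroup that itself contains the translates), and the Freiman-subgroup structure of $Z$ you appeal to does not rescue this. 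So the route as written has a genuine gap at exactly the point you suspected.

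The paper sidesteps this entirely. It sets $\eta=\tfrac{c}{200d}\prod_{\gamma\in\Gamma}\rho(\gamma)$ (note the extra factor $c$, not present in the preceding proposition), uses the weak regularity lemma (Lemma~\ref{bohrwreg1}) to pick a radius function $\rho'$ with $B(\Gamma,\rho')=B$ and small boundary layer, and then a single convolution/averaging computation produces a translate $t\in B(\Gamma,\rho'-\eta/2)$ with merely $|Z\cap(t+B(\Gamma,\eta/2))|\geq \tfrac{c}{2}|B(\Gamma,\eta/2)|$ --- density $\tfrac{c}{2}$, not $\tfrac23$, and no invocation of the preceding proposition. The counting step then does not say ``$\phi$ is constant on translates of $B(\Gamma,\eta)$''; instead, writing $W=Z\cap(t+B(\Gamma,\eta/2))$, it observes that for distinct values $\phi(x_1),\dots,\phi(x_m)$ on $B(\Gamma,\rho/2)$, the sets $x_i+W$ are pairwise disjoint (Freiman-linearity makes $\phi$ constant on each $x_i+W$) and all lie in $B$, so $m\cdot\tfrac{c}{2}|B(\Gamma,\eta/2)|\leq|B|$, which with Lemma~\ref{basicbohrsizel} gives the claimed bound. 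Your packing count at the end matches this in spirit, and your choice of $\eta$ up to the factor of $c$; the weak-regularity ingredient and the direct use of the $c/2$-dense zero slice are what you were missing.
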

    
\begin{proof}
    Let $\eta = \frac{c}{200d} \prod_{\gamma \in \Gamma} \rho(\gamma)$. By Proposition~\ref{basicbohrsizel}, we have $\eta \leq \frac{c|B|}{2 |G|}$. By Proposition~\ref{bohrwreg1}, there exists radius function $\rho' : \Gamma \to \hat{G}$ such that $B(\Gamma, \rho') = B$, $\|\rho' - \rho\|_\infty \leq \eta$ and $|B(\Gamma, \rho') \setminus B(\Gamma, \rho' - \eta)| \leq \frac{c}{20}|B(\Gamma, \rho')|$.\\
    
    Let $Z = \{x \in B: \phi(x) = 0\}$. Observe that
    \begin{align*}\sum_{t \in B(\Gamma, \rho' - \eta/2)}& |Z \cap (t + B(\Gamma, \eta/2))| = \sum_{t, z \in G} \id_{B(\Gamma, \rho' - \eta/2)}(t) \id_Z(z) \id_{B(\Gamma, \eta/2)}(z - t)\\
    \geq &\sum_{t, z \in G} \id_{B(\Gamma, \rho' - \eta/2)}(z - s) \id_{Z \cap B(\Gamma, \rho' - \eta)}(z) \id_{B(\Gamma, \eta/2)}(s) \\
    = &\sum_{t, z \in G} \id_{Z \cap B(\Gamma, \rho' - \eta)}(z) \id_{B(\Gamma, \eta/2)}(s) = |Z \cap B(\Gamma, \rho' - \eta)| |B(\Gamma, \eta/2)|.\end{align*}

    By averaging, there exists some $t \in B(\Gamma, \rho' - \eta/2)$ such that 
    \[|Z \cap (t + B(\Gamma, \eta/2))| \geq \frac{|Z \cap B(\Gamma, \rho' - \eta)|}{|B(\Gamma, \rho' - \eta/2)|}|B(\Gamma, \eta/2)|.\]

    Note that $|Z \cap B(\Gamma, \rho' - \eta)| \geq |Z| - |B \setminus B(\Gamma, \rho' - \eta)| \geq |Z| - \frac{c}{20}|B|$, so $\frac{|Z \cap B(\Gamma, \rho' - \eta)|}{|B(\Gamma, \rho' - \eta/2)|} \geq \frac{c}{2}$. Hence $|Z \cap (t + B(\Gamma, \eta/2))| \geq \frac{c}{2} |B(\Gamma, \eta/2)|$.\\
    
    Suppose that $\phi$ takes $m$ values on $B(\Gamma, \rho/2)$, let them be $\phi(x_1), \dots, \phi(x_m)$. Consider the sets $X_i = x_i + (Z \cap (t + B(\Gamma, \eta/2)))$. Each $X_i$ is contained inside $B$ and, due to Freiman-linearity, $\phi$ equals $\phi(x_i)$ on $X_i$. Hence, the sets $X_1, \dots, X_m$ are disjoint and thus $m \frac{c}{2}|B(\Gamma, \eta/2)| \leq |B|$, implying $m \leq 2c^{-1} \eta^{-d}$ and proving the claim. 
\end{proof}

Next, we show that if a Freiman-linear map on a coset progression takes few values then it vanishes on a further coset progression closely related to the domain.

\begin{proposition}[Freiman-linear maps with few values on coset progression]\label{fewvalsCPFLin}
    Let $C = [-N_1, N_1] \cdot v_1 + \dots + [-N_d, N_d] \cdot v_d + K$ be a symmetric proper coset progression of rank $d$ in its canonical form and let $\phi : C \to H$ be a Freiman-linear map. Suppose that there exists some $h \in H$ such that $\phi(x) = h$ for at least $\alpha |C|$ elements $x \in C$. Then there exists a further symmetric proper coset progression $C' = [-M_1, M_1] \cdot \ell_1 v_1 + \dots + [-M_d, M_d] \cdot \ell_d v_d + K' \subseteq C$ of rank $d$, with $M_i \ell_i \leq N_i$, and size at least $2^{-O(d^2)}\alpha^{d + 1}|C|$ such that $\phi = 0$ on $C'$.
\end{proposition}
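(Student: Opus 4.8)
The statement is a ``coset-progression version'' of Lemma~\ref{smallrangelemma}, with the additional demand that the small-range conclusion be upgraded to an actual vanishing on a sub-coset-progression of controlled shape. The plan is to first pass from the hypothesis ``$\phi$ takes a common value $h$ on $\alpha|C|$ points'' to a genuine Freiman-subgroup structure, and then invoke the structure theory of coset progressions already assembled in the previous subsection.

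First, I would reduce to the case $\phi(0)=0$ and $h=0$. Since $\phi$ is Freiman-linear, $0\in C$ and $\phi(0)=0$ automatically; to handle a general $h$, note that the set $Z_h=\{x\in C:\phi(x)=h\}$ has size $\geq\alpha|C|$, and for any $x,y\in Z_h$ with $x-y\in C$ we have $\phi(x-y)=\phi(x)-\phi(y)=0$ by Freiman-linearity (using $0\in C$ to make sense of $\phi$ on a defining additive triple). Consequently the set $A=\{x-y:x,y\in Z_h,\ x-y\in C\}$ — or more precisely the set $Z_0=\{z\in C:\phi(z)=0\}$, which contains $A$ — is large: a Plünnecke–Ruzsa / covering argument inside the low-rank coset progression $C$ shows $|A|\geq \Omega_d(\alpha)|C|$. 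The key point is that $Z_0$ is a \emph{Freiman-subgroup} of $C$ in the sense defined before Lemma~\ref{progsbgp}: if $a,b\in Z_0$ and $a-b\in C$ then $\phi(a-b)=0$, so $a-b\in Z_0$.

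Second, with $Z_0\subseteq C$ a Freiman-subgroup of density $\geq\Omega_d(\alpha)$, I would apply Lemma~\ref{progsbgp} directly: it produces positive integers $\ell_1,\dots,\ell_d\leq 20\alpha'^{-1}$ (with $\alpha'=\Omega_d(\alpha)$ the density of $Z_0$), $M_i=\lfloor N_i/\ell_i\rfloor$, and a subgroup $K'\leq K$ with $|K'|\geq\alpha'|K|$, such that
\[
C'=[-M_1,M_1]\cdot \ell_1 v_1+\dots+[-M_d,M_d]\cdot \ell_d v_d + K'\subseteq Z_0.
\]
This $C'$ is a symmetric proper coset progression of rank $d$ (properness being inherited from properness of $C$, since the generators $\ell_iv_i$ and the subgroup $K'$ are ``sub-data'' of the proper $C$), it satisfies $M_i\ell_i\leq N_i$, and $\phi$ vanishes identically on it because $C'\subseteq Z_0$. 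A quick size count gives $|C'|=\prod_i(2M_i+1)\cdot|K'|\geq \big(\prod_i \tfrac{N_i}{\ell_i}\big)\alpha'|K|\geq (20\alpha'^{-1})^{-d}\alpha'\,|C|\geq 2^{-O(d)}\alpha'^{\,d+1}|C|$, and since $\alpha'=\Omega_d(\alpha)$ costs only a factor $2^{O(d)}$ (or $2^{O(d^2)}$ if the Plünnecke step is done crudely via covering), this is $\geq 2^{-O(d^2)}\alpha^{d+1}|C|$, as required.

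The main obstacle is the first step: controlling the density of $Z_0$ (equivalently of the difference set $Z_h-Z_h$ restricted to $C$) in terms of $\alpha$ without losing more than a $2^{O(d^2)}$ factor overall. The cleanest route is to observe that $C$ has doubling constant $O(2^d)$ (standard for rank-$d$ coset progressions), so that for the dense set $Z_h\subseteq C$ one has $|Z_h-Z_h|\leq |C-C|\leq 2^{O(d)}|C|\leq 2^{O(d)}\alpha^{-1}|Z_h|$; then a routine popularity argument shows a positive proportion $\Omega(2^{-O(d)}\alpha)$ of elements of $Z_h-Z_h$ lie in $C$ with many representations, and each such element lies in $Z_0$. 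This keeps the loss to $2^{O(d)}$ in the density, and the bookkeeping then yields the stated $2^{-O(d^2)}\alpha^{d+1}$ bound after Lemma~\ref{progsbgp} is applied (whose own output contributes the $\alpha'^{\,d+1}\approx\alpha^{d+1}$ and a further $2^{O(d^2)}$ through the $\prod_i\ell_i\leq(20\alpha'^{-1})^d$ factor). Everything else is bookkeeping with the canonical forms.
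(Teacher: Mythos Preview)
Your overall strategy matches the paper's exactly: show that $Z_0=\{z\in C:\phi(z)=0\}$ is a dense Freiman-subgroup of $C$ (or of a sub-coset-progression), then invoke Lemma~\ref{progsbgp}. The only substantive step is the first one, and this is where your write-up is too vague. Knowing that $C$ has doubling $2^{O(d)}$ tells you that $Z_h-Z_h$ is not much larger than $|Z_h|$, but it does \emph{not} by itself force a positive proportion of $Z_h-Z_h$ to land inside $C$; the ``routine popularity argument'' you invoke does not exist in the form you need. (For instance, $Z_h$ could sit near the corners of the box, so that most pairwise differences have some coordinate near $\pm 2N_i$.)

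The paper handles this step explicitly via the tiling lemma (Lemma~\ref{tilingLemma}): it passes to a shrunken symmetric $S\subseteq C$ with $S-S\subseteq C$, almost-tiles $C$ by translates of $S$, and averages to find a translate $t+S$ on which $Z_h$ has density $\geq\alpha/2$; differences of $Z_h\cap(t+S)$ then automatically lie in $S-S\subseteq C$ and map to $0$, and one applies Lemma~\ref{progsbgp} inside $S-S$. A cleaner variant that fits your outline is an orthant pigeonhole: split the progression part of $C$ into its $2^d$ sign-orthants $Q_\varepsilon$, each of which satisfies $Q_\varepsilon-Q_\varepsilon\subseteq C$; some $Q_\varepsilon$ carries $\geq 2^{-d}|Z_h|$ points of $Z_h$, and differences of those points give $|Z_0|\geq 2^{-d}\alpha|C|$ directly. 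With that fix in place, the remainder of your outline (Freiman-subgroup check, Lemma~\ref{progsbgp}, size bookkeeping) is correct and yields the stated $2^{-O(d^2)}\alpha^{d+1}|C|$ bound.
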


\begin{proof}
   Let $N'_i = \lfloor \alpha N_i /2d \rfloor$ if $N_i \geq 2\alpha^{-1} d$ and $N'_i = 0$ otherwise. Let $S = [-N'_1, N'_1] \cdot v_1 + \dots + [-N'_d, N'_d] \cdot v_d + K$, which is also a symmetric proper coset progression of rank at most $d$. Coset progressions $C$ and $S$ satisfy the conditions of Lemma~\ref{tilingLemma} so we may find a set of translates $T$ such that $T + S$ almost tiles $C$ with $|T+ S| \geq (1-\alpha/2)|C|$. By averaging, there exists a translate $t +S $ such that $\phi(t + s) = v$ holds for at least $\frac{\alpha}{2}|S|$ elements $s \in S$. As $S$ has rank $d$, we have $\frac{\alpha}{2}|S| \geq \frac{\alpha}{2^{d + 1}}|S - S|$. By fixing single such element $s_0$ and using Freiman-linearity and observing that $S - S \subseteq C$ (as $2N'_i \leq N_i$ for each $i$), we get at least $\frac{\alpha}{2^{d + 1}}|S - S|$ elements of $S-S$ mapping to zero. But the set $Z = \{x \in S-S: \phi(x) = 0\}$ is a Freiman-subgroup of $S - S$, so by Lemma~\ref{progsbgp} we get the desired coset progression.
 \end{proof}

\hspace{\baselineskip}

\noindent \textbf{$E$-homomorphisms.} Unlike the vector space case, where we can easily extend linear maps from subspaces to the whole space, in general abelian groups it is not possible to extend homomorphisms defined on subgroups. However, if we allow a small error, such extensions become possible. This is the motivation for introducing $E$-homomorphisms (recall Definition~\ref{FEhommDefin}).

\begin{proposition}[Naive extensions]\label{generalextensionserror}
    Let $\phi : B(\Gamma, \rho) \to H$ be a Freiman-linear map. Let $d = |\Gamma|$, $\rho' = d^{-2d} \rho$ and let $G' = \langle B(\Gamma, \rho') \rangle$. Let $m$ be a positive integer such that $G' = m B(\Gamma, \rho')$.\\
    \indent Then there exist a set $S \subseteq H$ of size at most $O(d (d \log d + \log m + 1))$ with the following property. Define $\tilde{\phi} : G' \to H$ by $\tilde{\phi}(y) = \phi(x_1) + \dots + \phi(x_m)$ for arbitrary choice of $x_1, \dots, x_m \in B(\Gamma, \rho')$ with $x_1 + \dots + x_m = y$. Then we have
    \begin{equation}
        \tilde{\phi}(x_1 + \dots + x_m) - \phi(x_1) - \dots - \phi(x_m) \in \langle S\rangle_{\{-1, 0, 1\}} \label{tildePhiDefinIngenextn}
    \end{equation}
    for all $x_1, \dots, x_m \in B(\Gamma, \rho')$.
\end{proposition}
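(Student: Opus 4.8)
# Proof Proposal

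The plan is to follow the \textbf{naive extension} strategy outlined in the paper's introduction: fix for each $y \in G'$ an (arbitrary but definite) representation $y = x_1 + \dots + x_m$ with all $x_i \in B(\Gamma, \rho')$ and use it to define $\tilde\phi(y)$; then the content of the statement is to control, over all \emph{other} representations of the same $y$, the difference $\sum_i \phi(x_i) - \tilde\phi(y)$. The key algebraic reduction is that if $x_1 + \dots + x_m = z_1 + \dots + z_m$ with all $x_i, z_j \in B(\Gamma, \rho')$, then by Freiman-linearity of $\phi$ (applied repeatedly along a sequence of elementary moves, or more directly using coordinates on a coset progression sitting inside $B(\Gamma, \rho')$), the difference $\sum_i \phi(x_i) - \sum_j \phi(z_j)$ depends only on a \emph{vanishing linear combination} of a bounded set of ``basis'' values of $\phi$, with bounded coefficients.

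Concretely, first I would invoke Proposition~\ref{cpsinbohrsets} to find inside $B(\Gamma, \rho')$ a proper symmetric coset progression $C = [-L_1, L_1]\cdot a_1 + \dots + [-L_r, L_r]\cdot a_r + H_0$ with $r \le d$, such that $d^{2d}C$ is also proper and $d^{2d}C \subseteq B(\Gamma, \rho)$. Since $\phi$ is Freiman-linear on $B(\Gamma, \rho) \supseteq d^{2d}C$, and since $m B(\Gamma, \rho') = G'$, using $C \subseteq B(\Gamma, \rho')$ we can re-express everything in terms of the generators $a_1, \dots, a_r$ of $C$ and the subgroup $H_0$: any representation of $y$ by $m$ elements of $B(\Gamma, \rho')$ can, after enlarging $C$ to another progression $C^*$ with $m$-fold sums still sitting in $B(\Gamma, \rho)$ (here I use $G' = m B(\Gamma, \rho')$ to control how large $m$ is), be matched against a reference representation; the discrepancy in $\phi$-values is $\sum_{i \in [r]} \lambda_i \phi(a_i) + \phi(h)$ for some $h \in H_0$ and integers $\lambda_i$ with $|\lambda_i| \le 2m L_i$ satisfying $\sum_i \lambda_i a_i + h = 0$ in $G$. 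Because $\phi$ vanishes on $H_0$ (as $H_0 \subseteq C \subseteq B(\Gamma,\rho')$ is a subgroup and $\phi$ is Freiman-linear, so $\phi|_{H_0}$ is a genuine homomorphism $H_0 \to H$ — actually one must be slightly careful here, see below), the discrepancy lies in the set $E_0 = \{\sum_i \lambda_i \phi(a_i) : \sum_i \lambda_i a_i \in H_0,\ |\lambda_i| \le 2mL_i\}$.

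The final step is to bound the \emph{rank} of this error set. The relevant object is the lattice $\Lambda = \{\lambda \in \mathbb{Z}^r : \sum_i \lambda_i a_i \in H_0\}$ (equivalently, the lattice of relations among the $a_i$ modulo $H_0$); the possible discrepancies are the images of $\Lambda \cap [-2mL, 2mL]^r$ under the linear map $\lambda \mapsto \sum_i \lambda_i \phi(a_i)$. By Theorem~\ref{quantLattice} (the quantitative fundamental theorem of lattices), applied with the box parameter $R = 2m\max_i L_i$ — which is controlled since $m$ is polynomially bounded and the $L_i$ are bounded in terms of $d$, $\rho$ — this set of relations is contained in $\langle b_1, \dots, b_\ell\rangle_S$ for $\ell = O(r^2(\log r + \log R)) = O(d(d\log d + \log m + 1))$ generators $b_1, \dots, b_\ell$ and some bounded $S$. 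Pushing forward under $\phi$ and absorbing the coefficient range $S$ into additional generators (replacing each $b_j$ by its $S$-multiples, which only multiplies $\ell$ by $O(\log S)$, itself absorbed into the bound), we obtain a set $S$ of the claimed size $O(d(d\log d + \log m + 1))$ with $E_0 \subseteq \langle S \rangle_{\{-1,0,1\}}$, which gives~\eqref{tildePhiDefinIngenextn}.

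\textbf{Main obstacle.} The delicate point is the handling of the subgroup $H_0$: a Freiman-linear map on a Bohr set need not restrict to a homomorphism on a subgroup $H_0$ contained in it, because Freiman-linearity only constrains additive quadruples \emph{within the Bohr set}, and sums of several elements of $H_0$ need not stay in $B(\Gamma,\rho')$ even though they stay in $H_0$. The right fix is to \emph{not} demand $\phi$ vanish on $H_0$ but instead carry the contribution of $H_0$ as part of the error analysis: the values $\phi(h)$ for $h$ in a bounded generating set of $H_0$ (obtained via Lemma~\ref{nestedLattices} / Corollary~\ref{subgroupgenerationiteratedsumsbohr}-type arguments, or simply by using that $H_0 \cap B(\Gamma,\rho')$ generates $H_0$ in boundedly many steps) are themselves bounded in number, and their integer combinations with bounded coefficients get folded into the generating set $S$. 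One must also verify carefully that the number of \emph{distinct} vanishing relations with bounded coefficients, together with the subgroup contributions, is genuinely covered by a rank-$O(d(d\log d + \log m +1))$ set — this is exactly the strength of Theorem~\ref{quantLattice}, and checking that its hypotheses apply with the stated $R$ (in particular that $B$ there can be taken to be our relation set, which is nonempty and lies in a bounded box) is the crux of the argument.
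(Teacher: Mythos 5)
Your high-level strategy is the right one --- pass to a proper symmetric coset progression $C = [-L_1,L_1]\cdot a_1 + \dots + [-L_r,L_r]\cdot a_r + H_0$ inside $B(\Gamma,\rho')$ with $d^{2d}C \subseteq B(\Gamma,\rho)$, reduce the discrepancy between two $m$-fold representations to a lattice relation with coefficients $O(m L_i)$, and then bound the resulting set of $\phi$-values. But there are two genuine gaps.

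\textbf{The handling of $H_0$ is wrong in both versions you offer.} In the main sketch you assert that $\phi$ \emph{vanishes} on $H_0$; this is false --- Freiman-linearity plus $H_0 \subseteq B(\Gamma,\rho')$ only gives that $\phi|_{H_0}$ is a genuine homomorphism, not that it is zero. You then flag this and offer a fix, but the diagnosis in your ``main obstacle'' is also off: sums of elements of $H_0$ \emph{do} stay in $B(\Gamma,\rho')$, because $H_0$ is a subgroup and $H_0 \subseteq C \subseteq B(\Gamma,\rho')$, so $\phi|_{H_0}$ really is a homomorphism and that part is not the problem. The actual problem is the one your fix cannot solve: you propose to carry $\phi(h)$ for $h$ in a ``bounded generating set of $H_0$,'' but $H_0$ can have rank comparable to $n$ (e.g.\ $H_0 \cong (\mathbb{Z}/p\mathbb{Z})^n$ when $G$ has that shape), and there is no generating set of size controlled by $d$ and $m$. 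The paper sidesteps this entirely: the only elements of $H_0$ that actually arise are $\sum_j (\sum_i \lambda_{ij}) e_j$ where $\lambda$ comes from a bounded box in the relation lattice $\Lambda$; taking a maximal dissociated subset $\mu_1,\dots,\mu_\ell$ of $\Lambda \cap (2md^{2d})Q$, every such element equals a $\{-1,0,1\}$-combination $\sum_i \xi_i (\sum_j \mu_{ij} e_j)$ of $\ell$ \emph{fixed} elements of $H_0$, and then $\phi|_{H_0}$ being a homomorphism finishes the job. You never express the $H_0$-contribution in terms of a bounded number of fixed values this way, so your error set is not controlled.

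\textbf{The quantitative bound does not come out of Theorem~\ref{quantLattice}.} That theorem produces $\ell = O(k^2(\log k + \log R))$ generators $b_j$ but with coefficient range $S = O((2Rk)^{k+3})$, which is exponential in $k=d$; ``absorbing'' $S$ into extra generators multiplies $\ell$ by $\Theta(\log S) = \Theta(d\log(Rd))$, landing at roughly $O(d^3(d\log d + \log m)^2)$ rather than the stated $O(d(d\log d + \log m + 1))$. The paper avoids this loss completely by using a maximal dissociated subset of $\Lambda \cap (2md^{2d})Q$: maximality gives inclusion in the $\{-1,0,1\}$-span for free (no blow-up of the coefficient range), and the disjointness of the translates $\sum_{i\in I}\mu_i + \tfrac12 Q$ inside $(2md^{2d}+1)Q$ gives $2^\ell \le (4md^{2d}+2)^{d'}$, hence $\ell \le 2d(2d\log d + 10 + \log m)$ directly. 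Replacing your appeal to Theorem~\ref{quantLattice} with this dissociation argument is essential to reach the claimed rank bound.
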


\textbf{Remark.} When defining $\tilde{\phi}$, in order to prevent exponential losses in the bounds, we may use many copies of $y_i$, so take $x = z_1 + z_2 + z_3 + z_4 + 2 y_1 + \dots + 2^r y_r$, and define $\tilde{\phi}(x) = \sum_{i \in [4]} \phi(z_i) +  \sum_{i = 1}^m 2^i \phi(y_i)$.\\
\indent To see that we need a few summands, consider $X = \{0,1\}^r \times H$ inside $G = (\mathbb{Z}/2^m\mathbb{Z})^r \times H$. Then $a_1 \cdot X + \dots + a_k \cdot X = G$ requires at least logarithmic number of summands for trivial counting reasons. 

\vspace{\baselineskip}

\begin{proof}
    Due to the way we chose $\rho'$, by Proposition~\ref{cpsinbohrsets}, we may find proper symmetric coset progressions $C, C' = d^{2d} C$ of rank at most $d$ such that $B(\Gamma, d^{-2d}\rho') \subseteq C\subseteq B(\Gamma, \rho') \subseteq C' \subseteq B(\Gamma, \rho)$. Note that inclusion $C\subseteq B(\Gamma, \rho') \subseteq C'$ implies that $C$ also generates $G'$. Note also that Freiman linearity is particularly pleasant property when working with coset progressions; namely, if $e_1, \dots, e_{d'}$ are the generators of progression in $C' = d^{2d}P + H$, then 
    
    \begin{equation}\label{iteratedFreimanLinCP}\phi(\lambda_1 e_1 + \dots + \lambda_{d'} e_{d'} + h) = \lambda_1\phi(e_1) + \dots + \lambda_{d'}\phi(e_{d'}) + \phi(h)\end{equation}
    
    easily follows by induction.\\

    Let $L_i$ be the lengths of $P$. Consider the lattice $\Lambda = \{\lambda \in \mathbb{Z}^{d'} : \lambda_1 e_1 + \dots + \lambda_{d'} e_{d'} \in H\}$, which has full rank. Write $Q \subseteq \mathbb{Z}^{d'}$ for the set $[-L_1, L_1 ] \times \dots \times [-L_{d'}, L_{d'}]$. Notice that $\Lambda \cap Q = \{0\}$ as the coset progression $C$ is proper.\\
    
    \indent Let $\mu_1, \dots, \mu_\ell$ be a maximal dissociated set in $\Lambda \cap (2md^{2d})Q$, i.e. no non-trival $\{-1,0,1\}$-linear combinations of $\mu_i$ vanishes. 

    \begin{claim}
        \begin{itemize}
            \item[\textbf{(i)}] We have $\ell \leq  2d (2d \log d + 10 + \log m)$.
            \item[\textbf{(ii)}] We have $\Lambda \cap (2md^{2d})Q \subseteq \langle \mu_1, \dots, \mu_\ell\rangle_{\{-1,0,1\}}$.
        \end{itemize}
    \end{claim}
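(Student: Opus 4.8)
The plan is to prove both parts by standard dissociativity/covering arguments, combined with the nested-lattices bound from Lemma~\ref{nestedLattices}. Throughout, write $\Lambda_0 = \Lambda \cap (2md^{2d})Q$, so $\mu_1,\dots,\mu_\ell$ is a maximal dissociated subset of $\Lambda_0$ inside $\mathbb{Z}^{d'}$, where $d' \le d$.

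For part \textbf{(ii)}, I would argue by contradiction: if some $\nu \in \Lambda_0$ fails to lie in $\langle \mu_1,\dots,\mu_\ell\rangle_{\{-1,0,1\}}$, one would like to conclude that $\mu_1,\dots,\mu_\ell,\nu$ is still dissociated, contradicting maximality. The subtlety is that maximality of a dissociated set does \emph{not} in general imply that every element is a $\{-1,0,1\}$-combination of the chosen ones — one only gets that after doubling the range. So the cleanest route is: a maximal dissociated subset $\mu_1,\dots,\mu_\ell$ of $\Lambda_0$ has the property that every $\nu \in \Lambda_0$ satisfies a non-trivial relation $\varepsilon_0 \nu + \sum_i \varepsilon_i \mu_i = 0$ with $\varepsilon_j \in \{-1,0,1\}$ and $\varepsilon_0 \ne 0$; hence $\nu = \mp\sum_i \varepsilon_i \mu_i \in \langle \mu_1,\dots,\mu_\ell\rangle_{\{-1,0,1\}}$ (here $\varepsilon_0 \in \{-1,1\}$ since $\mathbb{Z}^{d'}$ is torsion-free). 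This gives \textbf{(ii)} directly, and in fact with coefficients in $\{-1,0,1\}$ as stated.

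For part \textbf{(i)}, the bound on $\ell$: I would bound the number of dissociated vectors by a nested-lattice argument. Set $M = 2md^{2d}$, so $\Lambda_0 = \Lambda \cap [-ML_1, ML_1]\times\cdots\times[-ML_{d'},ML_{d'}]$. Since $C$ is proper we have $\Lambda \cap Q = \{0\}$, so every nonzero $\lambda \in \Lambda$ has some coordinate with $|\lambda_i| > L_i$; rescaling coordinates by $1/L_i$, the lattice $\Lambda' = \{(\lambda_1/L_1,\dots,\lambda_{d'}/L_{d'}) : \lambda \in \Lambda\}$ meets the open unit box only at the origin, and $\Lambda_0$ corresponds to $\Lambda' \cap [-M,M]^{d'}$. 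Form the ascending chain of sublattices $\Lambda^{(j)} = \langle \mu_1,\dots,\mu_j\rangle_{\mathbb Z}$ (images under the rescaling) for $j = 0,1,\dots,\ell$: by dissociativity each $\mu_{j+1}$ is independent of, hence not in, $\Lambda^{(j)}$, so $\Lambda^{(j+1)} \setminus \Lambda^{(j)}$ contains the point $\mu_{j+1}$ which lies in $[-M,M]^{d'}$ (after rescaling, in $[-M,M]^{d'}$ still, since $\mu_{j+1}$ had $i$-th coordinate bounded by $ML_i$). By Lemma~\ref{nestedLattices} applied with $k = d'$ and $K = M$, we get $\ell \le O\big(d'^2(\log d' + \log M)\big) = O\big(d^2(\log d + \log m)\big)$, which gives $\ell \le 2d(2d\log d + 10 + \log m)$ after absorbing constants (using $d' \le d$ and $\log M = \log(2md^{2d}) = O(d\log d + \log m)$).

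The main obstacle I anticipate is getting the constant in \textbf{(i)} down to the precise stated form $2d(2d\log d + 10 + \log m)$ rather than a generic $O(d^2(\log d + \log m))$: Lemma~\ref{nestedLattices} is quoted with an $O(\cdot)$, so either a more careful direct argument is needed (e.g. tracking covolumes: each new $\mu_j$ multiplies the covolume of $\Lambda^{(j)}$ by an integer $\ge 2$, except the chain can only grow so long before the covolume exceeds that of the ambient lattice restricted to the box — but here $\Lambda$ has full rank $d'$, so one should instead track the covolumes of the $\mathbb{Q}$-spans and use that an independent set has length at most $d'$, giving $\ell \le d' \le d$, which is even stronger!). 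Indeed, re-examining: since $\mu_1,\dots,\mu_\ell$ are \emph{linearly independent} vectors in $\mathbb{Z}^{d'}$ (dissociated over $\{-1,0,1\}$ does not imply $\mathbb{Z}$-linearly independent in general, but over a torsion-free group, a minimal vanishing $\{-1,0,1\}$-relation with all coefficients $\pm 1$ among distinct vectors is exactly a linear dependence) — so in fact $\ell \le d' \le d$ trivially, and the stated bound $2d(2d\log d+10+\log m)$ is not tight but certainly holds. I would present the short linear-independence argument for \textbf{(i)} and the contradiction argument for \textbf{(ii)}, and only fall back on Lemma~\ref{nestedLattices} if the dissociativity notion in the paper turns out to be the weaker "$\{-1,0,1\}$-dissociated" one where $\mathbb Z$-independence genuinely fails, in which case the nested-lattice chain argument above delivers the logarithmic bound.
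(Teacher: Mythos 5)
Your treatment of part \textbf{(ii)} is correct and matches the paper's intent: the paper simply invokes maximality of the dissociated set, and your argument (a nontrivial $\{-1,0,1\}$-relation must involve $\nu$ with coefficient $\pm 1$, since otherwise dissociativity of $\mu_1,\dots,\mu_\ell$ is violated) is exactly how one fills in that step. Your initial worry about ``doubling the range'' was unfounded, as you yourself observed.

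Part \textbf{(i)}, however, has a genuine gap that infects both of your proposed routes. The primary route asserts that a dissociated subset of $\mathbb{Z}^{d'}$ is automatically $\mathbb{Z}$-linearly independent, from which you deduce $\ell \le d' \le d$. This is false. Take $\mu_1 = (1,0)$, $\mu_2 = (0,1)$, $\mu_3 = (1,2)$ in $\mathbb{Z}^2$. Any vanishing combination $\varepsilon_1\mu_1 + \varepsilon_2\mu_2 + \varepsilon_3\mu_3 = 0$ with $\varepsilon_i \in \{-1,0,1\}$ forces $\varepsilon_2 + 2\varepsilon_3 = 0$, hence $\varepsilon_3 = 0$, hence all $\varepsilon_i = 0$; so the set is dissociated. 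But $\mu_3 = \mu_1 + 2\mu_2$, so it is $\mathbb{Z}$-linearly dependent. Dissociativity only forbids relations with \emph{bounded} coefficients, which is precisely why dissociated sets in $\mathbb{Z}^{d'}$ can be much larger than $d'$ and why the target bound is logarithmic rather than linear in the ambient dimension. Your fall-back via Lemma~\ref{nestedLattices} inherits the same gap: the step ``by dissociativity each $\mu_{j+1}$ is independent of, hence not in, $\Lambda^{(j)}$'' is exactly the false implication again, so the chain $\Lambda^{(j)} = \langle\mu_1,\dots,\mu_j\rangle_{\mathbb{Z}}$ need not be strict and Lemma~\ref{nestedLattices} does not apply.

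The paper's actual argument for \textbf{(i)} is a volume-packing argument, not a lattice-chain one. One considers the $2^\ell$ translates $\sum_{i \in I}\mu_i + \tfrac{1}{2}Q$ for $I \subseteq [\ell]$. These are pairwise disjoint: if two overlapped, then $\sum_{i\in I}\mu_i - \sum_{j\in J}\mu_j$ would lie in $Q \cap \Lambda = \{0\}$ (using properness of $C$), and then $\sum_{i\in I\setminus J}\mu_i - \sum_{j\in J\setminus I}\mu_j = 0$ would be a nontrivial $\{-1,0,1\}$-relation unless $I = J$. Since all of these disjoint translates lie inside a dilated box, comparing volumes yields $2^\ell \le (\text{poly in } m, d)^{d'}$, and taking logarithms gives the stated bound. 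You should replace your two routes with this packing argument; it exploits dissociativity precisely where $\mathbb{Z}$-linear independence fails to hold.
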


    \begin{proof}
        \textbf{Proof of (i).} Writing $\frac{1}{2}Q$ for the box $(-L_1/2, L_1/2] \times \dots \times (-L_{d'}/2, L_{d'}/2]$, we claim that translates $\sum_{i \in I} \mu_i + \frac{1}{2}Q$ are disjoint for all $I \subseteq [\ell]$. Otherwise, we have $\sum_{i \in I} \mu_i - \sum_{i \in J} \mu_j \in Q$ for some $I\not=J$, but $Q \cap \Lambda = \{0\}$ gives a contradiction. These translates are contained in $(2m d^{2d}+ 1)Q$, so we have $2^\ell \leq (4md^{2d} + 2)^{d'}$, so $\ell \leq 2d (2d \log d + 10 + \log m)$.\\
        \textbf{Proof of (ii).} This follows from the fact that $\mu_1, \dots, \mu_\ell$ is a maximal dissociated set inside $\Lambda \cap (2m)Q$.
    \end{proof}
    
    For each $i \in [\ell]$, define $\sigma_i = \sum_{j \in [d']} \mu_{ij}\,\phi(e_j)$ and $\sigma'_i = \phi\Big(\sum_{j \in [d']} \mu_{ij}\,e_j\Big)$. Note that the argument in the definition of $\sigma'_i$, $\sum_{j \in [d']} \mu_{ij}\,e_j$, belongs to $H$. Finally, set $S = \{\sigma_1, \dots, \sigma_\ell, \sigma'_1, \dots, \sigma'_\ell\}$.\\
    
    Recall that $C = P + H$, for $P = [-L_1, L_1]e_1 + \dots + [-L_{d'}, L_{d'}]e_{d'}$. Let $x_1, \dots, x_{2m} \in B(\Gamma, \rho')$ satisfy $\sum_{i \in [2m]} x_i = 0$. Recall that also that $B(\Gamma, \rho') \subseteq C' = d^{2d}C$, so we may write $x_i = \sum_{j \in [d']} \lambda_{ij} e_j + h_i$, where $|\lambda_{ij}| \leq d^{2d} L_j$ and $h_i \in H$. Hence,
    \begin{equation}\sum_{j \in [d']} \Big(\sum_{i \in [2m]} \lambda_{ij}\Big) e_j = -\sum_{i \in [2m]} h_i \in H.\label{basiselcinH}\end{equation}
    
    Due to bounds on $\lambda_{ij}$, this gives us linear combinations of $e_j$ inside $H$, with coefficient of $e_j$ being at most $2m d^{2d} L_j$ in absolute value. Thus, each linear combination in~\eqref{basiselcinH} gives a point in $\Lambda \cap (2m d^{2d})Q$. By the way we constructed $\mu_1, \dots, \mu_\ell$, there are elements $\xi_1, \dots, \xi_\ell \in \{-1, 0, 1\}$ such that, for all $j \in [d']$, $\sum_{i \in [2m]} \lambda_{ij} = \sum_{i \in [\ell]} \xi_i \mu_{i j}$. Using Freiman-linearity on $C'$~\eqref{iteratedFreimanLinCP} a few times we get
    \begin{align*}\sum_{i \in [2m]} \phi(x_i) = &\sum_{i \in [2m]} \phi\Big(\sum_{j \in [d']} \lambda_{ij}e_j + h_i\Big) = \bigg(\sum_{i \in [2m]} \Big(\sum_{j \in [d']} \lambda_{ij} \phi(e_j)\Big)\bigg) + \phi\Big(\sum_{i \in [2m]} h_i\Big) \\
    = &\sum_{j \in [d']} \Big(\sum_{i \in [2m]}  \lambda_{ij} \phi(e_j)\Big)+ \phi\Big(\sum_{i \in [2m]} h_i\Big) .\end{align*}
    Using the relationship between $\lambda$ and $\mu$, we get
    \[\sum_{j \in [d']} \Big(\sum_{i \in [2m]}  \lambda_{ij} \phi(e_j)\Big) = \sum_{j \in [d']} \Big(\sum_{i \in [\ell]}  \xi_i \mu_{ij} \phi(e_j)\Big) =  \sum_{i \in [\ell]} \xi_i  \Big(  \sum_{j \in [d']}\mu_{ij} \phi(e_j)\Big) = \sum_{i \in [\ell]} \xi_i  \sigma_i.\]

    Similarly, 
    \[\sum_{i \in [2m]} h_i = -\sum_{j \in [d']} \Big(\sum_{i \in [2m]} \lambda_{ij}\Big) e_j = -\sum_{j \in [d]'} \sum_{i \in [\ell]} \xi_i \mu_{i j} e_j = - \sum_{i \in [\ell]}\xi_i  \Big(\sum_{j \in [d']} \mu_{ij} e_j\Big)\]
    and each $\sum_{j \in [d']} \mu_{ij} e_j$ belongs to $H$. Using Freiman-linearity on $H$, we get
    \[\phi\Big(\sum_{i \in [2m]} h_i \Big) = - \sum_{i \in [\ell]}\xi_i \phi\Big(\sum_{j \in [d']} \mu_{ij} e_j\Big) = - \sum_{j \in [d']} \xi_i \sigma'_i.\]
    Therefore, 
    \begin{equation}\sum_{i \in [2m]} \phi(x_i) = \sum_{i \in [\ell]} \xi_i (\sigma_i - \sigma'_i) \in \langle S \rangle_{\{-1, 0, 1\}}.\label{generalextensionscocycleapprox}\end{equation}\

    Take any $\tilde{\phi}$ as described in the proposition. Let $y \in G'$ be arbitrary. Thus, there are $y_1, \dots, y_m \in B(\Gamma, \rho')$ such that $y_1 + \dots + y_m = y$ for which $\tilde{\phi}(y) = \sum_{i \in [m]} \phi(y_i)$.\\
    \indent It remains to check property~\eqref{tildePhiDefinIngenextn}. Take any $x_1, \dots, x_m \in B(\Gamma, \rho')$. The value $\tilde{\phi}(x_1 + \dots + x_m)$ is defined as  $\sum_{i \in [m]} \phi(y_i)$ for some $y_1, \dots, y_m \in B(\Gamma, \rho')$ such that $y_1 + \dots + y_m = x_1 + \dots + x_m$. Hence
    \[\tilde{\phi}(x_1 + \dots + x_m) - \sum_{i \in [m]} \phi(x_i) = \sum_{i \in [m]} \phi(y_i) + \sum_{i \in [m]} \phi(-x_i).\]
    But $y_1 + \dots + y_m + (-x_1) + \dots (-x_m) = 0$, so by~\eqref{generalextensionscocycleapprox}  $\sum_{i \in [m]} \phi(y_i) + \sum_{i \in [m]} \phi(-x_i) \in \langle S \rangle_{\{-1, 0, 1\}}$.
\end{proof}

\vspace{\baselineskip}

We also need a result about extensions of $E$-homomorphisms from subgroups to the whole group.

\begin{proposition}\label{ehomextensionsfromsubgroups}
    Let $H = \ker \chi \leq G$ be a subgroup of index $M$ given by the kernel of a homomorphism $\chi \in \hat{G}^r$. Let $B(\Gamma, \rho)$ be a Bohr set of codimension $d$ such that $\langle\chi_1, \dots, \chi_r \rangle_{[-M, M]} \cap \langle \Gamma \rangle_R = \{0\}$, for $R = \mathsf{C}_{\on{spec}} d 2^{10d + 8} \rho^{-2d-1} M^{4d + 2r}$.\\
    \indent Let $\phi : H \cap B(\Gamma, \rho) \to K$ be an $E$-homomorphism. Then $\phi$ extends to an $E'$-homomorphism on $B(\Gamma, \rho/16)$ where
    \[E' = m (E + \{0, \phi(0)\}) - m (E + \{0, \phi(0)\}) + \langle k_1, \dots, k_m \rangle_{\{-1, 0,1\}},\] for $m \leq O(\log M)$ and some elements $k_1, \dots, k_m \in K$.
\end{proposition}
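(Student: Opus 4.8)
The plan is to mimic the naive extension strategy from Proposition~\ref{generalextensionserror}, but carry it out relative to the subgroup $H = \ker \chi$ rather than to a small Bohr set. First I would use Corollary~\ref{subgroupgenerationiteratedsumsbohr} (or Proposition~\ref{efficientGroupGeneration}) to express a large sub-Bohr-set of $G$ efficiently: every element of $B(\Gamma, \rho/16)$ should be writable in the form $y = h + x_1 + 2x_2 + \cdots + 2^{m'-1}x_{m'}$, where $h \in H \cap B(\Gamma, \rho')$ and the $x_i$ range over a fixed small system of coset representatives of $H$ lying in $B(\Gamma, \rho')$, with $m' = O(\log M)$; the coprimality/genericity hypothesis $\langle \chi_{[r]}\rangle_{[-M,M]} \cap \langle\Gamma\rangle_R = \{0\}$ with the stated $R$ is exactly what is needed so that, via Proposition~\ref{bohrsizeLargeFC} and Proposition~\ref{approximatelatticeimage}, these iterated sums genuinely sweep out the desired Bohr set and the coefficient bounds stay controlled. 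Since $G/H \cong \operatorname{Im}\chi$ has size $M$, Lemma~\ref{smallGroupsGeneration} guarantees that $O(\log M)$ generators with bounded coefficients suffice to hit every coset.

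Next I would pick, once and for all, coset representatives $g_1, \dots, g_s \in B(\Gamma, \rho')$ for a generating set of $G/H$ (with $s = O(\log M)$), set $k_j$ to be the "defect" $\phi$-values associated with the relations these representatives satisfy modulo $H$ — precisely, whenever an integer combination $\sum_j \lambda_j g_j$ lands in $H$, applying the $E$-homomorphism property repeatedly expresses $\phi(\sum_j \lambda_j g_j) - \sum_j \lambda_j \phi(g_j)$ as a short $\{-1,0,1\}$-combination of finitely many fixed elements $k_1, \dots, k_m$, again by a maximal-dissociated-set / covering argument as in the proof of Proposition~\ref{generalextensionserror} applied to the lattice of such relations inside a bounded box. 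I then \emph{define} $\tilde\phi(y) = \phi(h) + \sum_i 2^{i-1}(\text{combination of }\phi(g_j)\text{'s realising }x_i)$ for a fixed choice of decomposition of $y$, where I am careful to feed each of $\phi(h), \phi(g_j)$ only with arguments genuinely in the domain $H\cap B(\Gamma,\rho)$ (the $x_i$ and their doublings and partial sums stay inside $B(\Gamma,\rho)$ by the radius bookkeeping, but land in nonzero cosets, so I must first reduce them to combinations of the fixed $g_j$ plus an $H$-element before applying $\phi$).

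The core computation is then to bound the discrepancy between two admissible decompositions of the same $y$, and between decompositions of $y_1, y_2$ and of $y_1+y_2=y_3+y_4$. As in Proposition~\ref{generalextensionserror}, taking the difference of two decompositions of the same element produces an integer relation $\sum_j \lambda_j g_j + (\text{sum of }H\text{-terms}) = 0$ with $\lambda_j$ bounded (here by roughly $m' \cdot 2^{m'} = \operatorname{poly}(M)$), hence a point of a fixed lattice in a bounded box, hence a bounded $\{-1,0,1\}$-combination of the fixed $\mu$'s; feeding this through the relation-value bookkeeping gives that $\tilde\phi$ is well-defined up to $\langle k_1,\dots,k_m\rangle_{\{-1,0,1\}}$, and combining two such for an additive quadruple together with the $E$-homomorphism property on $H$ itself yields $\tilde\phi(y_1)+\tilde\phi(y_2)-\tilde\phi(y_3)-\tilde\phi(y_4) \in m(E + \{0,\phi(0)\}) - m(E + \{0,\phi(0)\}) + \langle k_1,\dots,k_m\rangle_{\{-1,0,1\}}$, which is the claimed $E'$. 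The term $\{0,\phi(0)\}$ and the factor $m$ appear because $\phi$ is an $E$-homomorphism rather than $E$-linear, so each of the $O(\log M)$ applications of the quadruple relation to normalise an argument into the domain costs an extra summand from $E$ and possibly a copy of $\phi(0)$.

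The main obstacle I expect is bookkeeping the genericity hypothesis correctly: one must ensure that after the naive extension the error set $E'$ still has \emph{bounded rank} (which is why the hypothesis forbids $\langle\chi_{[r]}\rangle_{[-M,M]}$ from meeting $\langle\Gamma\rangle_R$ — otherwise relations among the $g_j$ modulo $H$ could interact with relations coming from $\Gamma$ and blow up the number of $k_i$'s), and that the radius only shrinks by the harmless factor $16$ rather than by something depending on $M$. Verifying that the specific value of $R$ is large enough to invoke both Proposition~\ref{bohrsizeLargeFC} (for the spectrum/representation count) and Proposition~\ref{approximatelatticeimage} (for realising the cosets approximately) simultaneously is where the constants $\mathsf{C}_{\on{spec}} d\, 2^{10d+8}\rho^{-2d-1}M^{4d+2r}$ come from, and getting that chain of estimates to close is the technical heart of the argument.
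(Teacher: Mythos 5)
Your high-level strategy (naive extension with lattice/covering bookkeeping of the error, invoking the genericity hypothesis to control interactions between $\chi$ and $\Gamma$) is on the right track, but there is a genuine gap in the central step. You propose to define $\tilde\phi(y) = \phi(h) + \sum_i 2^{i-1}(\text{combination of }\phi(g_j)\text{'s})$ where the $g_j$ are coset representatives of a generating set of $G/H$. But the $g_j$ lie in nonzero cosets of $H$, hence \emph{outside} the domain $H \cap B(\Gamma,\rho)$ of $\phi$, so the values $\phi(g_j)$ are simply undefined. You acknowledge that the iterated-sum summands land in nonzero cosets and say you would "reduce them to combinations of the fixed $g_j$ plus an $H$-element," but that reduction does not escape the problem: the residual $g_j$'s still need $\phi$-values. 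You never supply these, and assigning them arbitrarily would unmoor the claimed shape of $E'$.

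The paper's proof sidesteps this entirely by never evaluating $\phi$ off $H$. The key observation, which your proposal misses, is that the genericity hypothesis plus Proposition~\ref{bohrsizeLargeFC} forces the \emph{very small} Bohr set $B(\Gamma,\rho/16M^2)$ to meet \emph{every} coset of $H$. One then writes $G/H \cong C_{d_1}\oplus\cdots\oplus C_{d_m}$ with $m \le \log_2 M$, picks generators $x_i + H$ with representatives $x_i \in B(\Gamma,\rho/16M^2)$, and defines a canonical projection $\pi: G \to H$ via $\pi(g) = g - \sum_i \lambda_i(g)\,x_i$ where $\lambda_i(g) \in [0, d_i-1]$ are the unique digits of $g + H$. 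This $\pi$ is "almost a homomorphism": $\pi(g+g') - \pi(g) - \pi(g') \in \langle \tilde h_1,\dots,\tilde h_m\rangle_{\{0,1\}}$ where $\tilde h_i = d_i x_i \in H$. Crucially, $\pi$ maps $B(\Gamma,\rho/16)$ into $B(\Gamma,\rho/8) \cap H$ (because $\sum_i \lambda_i \le M$ and each $x_i$ is in the $M^2$-shrunk Bohr set), so $\tilde\phi := \phi\circ\pi$ is well-defined with arguments always in the domain of $\phi$. The elements $k_i = \phi(\tilde h_i)$ are then genuine $\phi$-values of $H$-elements, giving $E'$ in the stated form. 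So the missing ingredient in your plan is the \emph{transversal inside the tiny Bohr set} together with the resulting projection $\pi$; without it, the naive extension as you describe it cannot be carried out.
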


\begin{proof} We may assume that $r \leq \log_2 M$. Write $\ell_i$ for the order of $\chi_i$, which is at most $M$. We first show that $B(\Gamma, \rho / 16M^2)$ intersects every coset of the subgroup $H$. The proof will proceed by Fourier analysis and Proposition~\ref{bohrsizeLargeFC} will be applied, so we need to ensure that the Bohr set is weakly regular. To that end, let $\varepsilon = \frac{\rho^d }{ 2^{5d +4}M^{2d + r}}, \eta = \rho \varepsilon / 4$. Lemma~\ref{basicbohrsizel} implies that $|B(\Gamma, \rho / 16M^2)| \geq \frac{\rho^d}{2^{4d}M^{2d}} |G|$. By the pigeonhole principle, there exists $\rho' \in [\rho / 32M^2, \rho / 16M^2]$ such that $|B(\Gamma, \rho' + \eta) \setminus B(\Gamma, \rho')| \leq \varepsilon |G| / 2$. Note that $R \geq \mathsf{C}_{\on{spec}} d \eta^{-1} \varepsilon^{-1}$. Applying Proposition~\ref{bohrsizeLargeFC} we deduce that the inequality $|\widehat{\id_{B(\Gamma, \rho')}}(\tau)| > \varepsilon$ implies $\tau \in \langle \Gamma \rangle_{[-R, R]}$. Let us now prove that $B(\Gamma, \rho')$ (and thus $B(\Gamma, \rho / 16M^2)$) meets every coset of $H$.\\

Suppose, on the contrary, that $t + H \cap B(\Gamma, \rho') = \emptyset$. Then
\begin{align*}0 = &\exx_{x \in G} \id_{B(\Gamma, \rho')}(x) \id_H(x-t) =  \exx_{x \in G} \id_{B(\Gamma, \rho')}(x) \prod_{i \in [r]} \id(\chi_i(x - t) = 0) \\
= &\exx_{x \in G} \id_{B(\Gamma, \rho')}(x) \prod_{i \in [r]} \Big(\ell_i^{-1} \sum_{\lambda_i \in [0, \ell_i-1]} \on{e}\Big(-\lambda_i \chi_i(x -t)\Big)\Big)\\
= & \frac{1}{\ell_1\cdots\ell_r}\sum_{\lambda \in [0, \ell_1 - 1] \tdt [0, \ell_r - 1]} \exx_{x \in G} \id_{B(\Gamma, \rho')}(x) \on{e}\Big(-\sum_{i \in [r]}\lambda_i \chi_i(x-t)\Big)\\
= & \frac{1}{\ell_1\cdots\ell_r}\sum_{\lambda \in [0, \ell_1 - 1] \tdt [0, \ell_r - 1]} \widehat{\id_{B(\Gamma, \rho')}}\Big(\sum_{i \in [r]}\lambda_i \chi_i\Big)\on{e}\Big(\sum_{i \in [r]}\lambda_i \chi_i(t)\Big).
\end{align*}

The contribution to the sum above from $\lambda$ such that $\lambda \cdot \chi = 0$ is $\frac{1}{\ell_1 \dots \ell_r}\frac{|B(\Gamma, \rho')|}{|G|} \geq 2^{-5d} M^{-2d-r} \rho^d$. By averaging over the remaining $\lambda$, there exists $\lambda$ such that $\lambda \cdot \chi \not= 0$ with $\Big|\widehat{\id_{B(\Gamma, \rho')}}\Big(\sum_{i \in [r]}\lambda_i \chi_i\Big)\Big| \geq 2^{-5d} M^{-2d-r} \rho^d$. The property above implies that $\lambda_1\chi_1 + \dots + \lambda_r \chi_r \in \langle \Gamma \rangle_{[-R, R]}$ for some $\lambda_1, \dots, \lambda_r$. But this sum belongs to $\langle\chi_1, \dots, \chi_r \rangle_{[-M, M]} \cap \langle \Gamma \rangle_{[-R, R]}$ which consists only of the zero. Hence $\lambda_1\chi_1 + \dots + \lambda_r \chi_r  = 0$, which is a contradiction. Thus, $B(\Gamma, \rho / 16M^2)$ meets every coset of $H$, as claimed.\\

By the classification of finite abelian groups, we have $G/H \cong C_{d_1} \oplus \dots \oplus C_{d_m}$ for some direct sum of cyclic groups. Note that $m \leq \log_2 M$. Take generators $x_i + H$ of the cyclic groups in the direct sum. Note that we may modify $x_i$ by adding any element of $H$, so we may in particular replace $x_i$ by an element of $x_i + H \cap B(\Gamma, \rho / 16M^2)$, which we know is non-empty, and thus, without loss of generality, guarantee that $x_i \in B(\Gamma, \rho / 16M^2)$. We have thus found elements $x_1, \dots, x_m \in B(\Gamma, \rho / 16M^2)$ such that $\lambda_1 x_1 + \dots + \lambda_m x_m + H$ cover all cosets of $H$ precisely once as $\lambda_i$ ranges over $[0, d_i - 1]$ and $d_i x_i \in H$ for all $i$.\\

In particular, for each $g \in G$, there exist unique coefficients $\lambda_i(g) \in [0, d_i - 1]$ such that $g - \sum_{i \in [m]} \lambda_i(g) x_i \in H$. Denote this difference by $\pi(g)$. Write $\tilde{h}_i = d_i x_i$ and note that $\tilde{h}_i \in B(\Gamma, \rho / 16M)$. While the map $\pi : G \to H$ is not necessarily a homomorphism, it turns out to be nearly a homomorphism, as we shall now show. Note also that $\sum_{i \in [m]} \lambda_i(g) x_i \in B(\Gamma, \rho / 16)$, so if $g \in B(\Gamma, \rho / 16)$, then $\pi(g) \in B(\Gamma, \rho/8)$.

    \begin{claim*}
        Let $g, g' \in G$. Then $\pi(g + g') \in \pi(g) + \pi(g') + \langle \tilde{h}_1, \dots, \tilde{h}_m\rangle_{\{0,1\}}$.
    \end{claim*}
    
    \begin{proof}
        Let $g, g' \in G$. Then $g + g'$ belongs to the coset $(g + H) + (g' + H)$, which is the same as $\sum_{i \in [m]} \Big(\lambda_i(g) + \lambda_i(g')\Big) x_i + H$. This means that $\lambda_i(g + g')$ is the unique integer in $[0, d_i - 1]$ congruent to $\lambda_i(g) + \lambda_i(g')$ modulo $d_i$. Hence, $\lambda_i(g + g') = \lambda_i(g) + \lambda_i(g') - \varepsilon_i d_i$, for some $\varepsilon_i \in \{0,1\}$. Thus,
        \[\pi(g + g') = g + g' -  \sum_{i \in [m]} \lambda_i(g + g') x_i = g + g' - \sum_{i \in [m]} (\lambda_i(g) + \lambda_i(g') - \varepsilon_i d_i) x_i = \pi(g) + \pi(g') + \sum_{i \in [m]} \varepsilon_i \tilde{h}_i.\qedhere\]
    \end{proof}

    We also record a partial linearity for a higher number of terms in $\phi$.

    \begin{claim*}
        For any $g_1, \dots, g_\ell \in B(\Gamma, \rho) \cap H$ be such that all sums of the first $i \in [\ell]$ elements also belong to $B(\Gamma, \rho)$. Then we have
        \[\phi(g_1 + \dots + g_\ell) \in \sum_{i \in [\ell]} \phi(g_i) - (\ell - 1) \phi(0) + (\ell - 1) E.\]
    \end{claim*}
 
    \begin{proof}
        We prove the claim by induction on $\ell$. The base case is trivial. Suppose the claim holds for $\ell$ and let $g_1, \dots, g_{\ell + 1} \in G$ be given. Note also that all sums of $g_i$ belong to $H$. Then
        \[\phi(g_1 + \dots + g_{\ell + 1}) - \phi(g_1 + \dots + g_{\ell}) - \phi(g_{\ell + 1}) + \phi(0) \in E,\]
        so by inductive hypothesis
        \[\phi(g_1 + \dots + g_{\ell + 1}) \in \phi(g_1 + \dots + g_{\ell}) + \phi(g_{\ell + 1}) - \phi(0) + E \subseteq \sum_{i \in [\ell+1]} \phi(g_i) - \ell \phi(0) + \ell E.\qedhere\]
    \end{proof}

    Consider $\tilde{\phi} : B(\Gamma, \rho / 16) \to K$ given by $\tilde{\phi} = \phi \circ \pi$. This is a well-defined map as $\pi(x) \in H$ for all $x \in G$ and $\pi(x) \in B(\Gamma, \rho / 8)$ whenever $x \in B(\Gamma, \rho/16)$. Let $x, a, b \in G$ be elements such that $x + a + b, x + a, x + b, x \in B(\Gamma, \rho / 2)$. By the claim on $\pi$ above, we have $\varepsilon_i, \varepsilon'_i \in \{0,1\}$ such that
    \[\pi(x  +a + b) -\pi(x+a) - \pi(b) = \sum_{i \in [m]} \varepsilon_i \tilde{h}_i,\hspace{1cm} \pi(x + b) -\pi(x) - \pi(b) = \sum_{i \in [m]} \varepsilon'_i \tilde{h}_i.\]

    Let $I = \{i \in [m] : \varepsilon_i = 1\}$ and $I' = \{i \in [m] : \varepsilon'_i = 1\}$. Note that, since $\pi(x + a) , \pi(b) \in B(\Gamma, \rho / 8)$ and $\tilde{h}_i \in B(\Gamma, \rho / 16M)$, all partial sums in $\pi(x + a) + \pi(b) + \sum_{i \in [m]} \varepsilon_i \tilde{h}_i$ belong to $B(\Gamma, \rho)$ and the second claim above implies
    \begin{align*}\phi(\pi(x  +a + b)) - \phi(\pi(x  + a)) = &\phi\Big(\pi(x + a) + \pi(b) + \sum_{i \in [m]} \varepsilon_i \tilde{h}_i\Big) - \phi(\pi(x  + a)) \\
    \in &\phi(\pi(b)) + \sum_{i \in I} \phi(\tilde{h}_i) - (|I| + 1)\phi(0) + (|I|  + 1) E.\end{align*}
    Note that $0 \in E$ (due to $\partial_{0,0}\phi(0) = 0$), so we may replace $(|I|  + 1)\phi(0) + (|I|  + 1) E$ by $(m+1) (E + \{0, \phi(0)\})$.
    
    Then 
    \begin{align*}\Delta_{a,b} \tilde{\phi}(x) = &\Big(\phi(\pi(x  +a + b)) - \phi(\pi(x  + a)) - \phi(\pi(b))\Big) - \Big(\phi(\pi(x + b)) - \phi(\pi(x)) - \phi(\pi(b))\Big)\\
    \in&  \sum_{i \in I} \phi(\tilde{h}_i) -  \sum_{i \in I'} \phi(h_i) + (m+1)(E + \{0, \phi(0)\}) - (m+1)(E + \{0, \phi(0)\}).\end{align*} 

    Misuse the notation and replace $m + 1$ by $m$, which does not affect the bounds, and write $k_i = \phi(\tilde{h}_i)$.
\end{proof}

\section{Algebraic regularity method in general abelian groups}\label{algreggensection}

The (bilinear) algebraic regularity method originates in~\cite{U4paper}, in which quantitative bounds for the $\mathsf{U}^4(\mathbb{F}_p^n)$ norm were first obtained. In additive combinatorics, in the bilinear setting, it is natural to consider graphs that are defined algebraically. For example, in the case of finite vector spaces, we may consider a bipartite graph on vertex classes being copies of $\mathbb{F}_p^n$, with $xy$ an edge if $\beta(x,y) = 0$ for a bilinear map $\beta: \mathbb{F}_p^n \times \mathbb{F}_p^n \to \mathbb{F}_p^r$. It turns out that one can prove an algebraic regularity lemma for such graphs, partitioning its vertex classes into a bounded number of pieces, each being a coset of some fixed subspace of low codimension, pairs of which induce quasirandom graphs. Such a lemma replaces Szemer\'edi's regularity lemma~\cite{SzemAP, SzemReg} in this setting, and can be used analogously to the combinatorial regularity method. Crucially, due to algebraic nature of the graph, we still get good bounds for the regularity lemma as well as a few other additional properties. A version of the algebraic regularity lemma for general abelian groups was obtained in~\cite{generalBilBog}.\\

In order to make use of the algebraic regularity lemma, we need the theory of quasirandom bipartite graphs, which we revisit briefly in the following subsection. In the subsection after that, we revisit the algebraic regularity lemma (Theorem~\ref{algreglemmaintro}).\\ 

\subsection{Quasirandomness of bipartite graphs}

The formulations of the main concepts and relevant results are taken from~\cite{generalBilBog}.\\
\indent Throughout this subsection, $G$ denotes a bipartite graph on vertex classes $X$ and $Y$. We also view $G$ simultaneously as a $\{0,1\}$-valued function on the product $X \times Y$. Let $\|\cdot\|_{\square} = \|\cdot\|_{\square(X, Y)}$ stand for the box norm, which is defined by 
\[\|f\|_{\square(X, Y)} = \Big(\exx_{x_0, x_1 \in X} \exx_{y_0, y_1 \in Y} f(x_0, y_0)\,\overline{f(x_1, y_0)}\,\overline{f(x_0, y_1)}\,f(x_1, y_1)\Big)^{1/4}.\]

\begin{lemma}[Lemma 41 in~\cite{generalBilBog}]\label{basicgcs}Let $f \colon X \times Y \to \mathbb{C}$, $u \colon X \to \mathbb{C}$ and $v \colon Y \to \mathbb{C}$ be functions. Then
\[\Big|\exx_{x \in X, y \in Y} f(x,y)u(x)v(y) \Big| \leq \|f\|_\square \|u\|_{L^2} \|v\|_{L^2}.\]
\end{lemma}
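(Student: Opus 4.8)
The plan is to deduce the estimate from two successive applications of the Cauchy--Schwarz inequality: the first to eliminate the factor $v(y)$, the second to eliminate $u(x)$, after which the remaining quantity is exactly the fourth power of $\|f\|_\square$.

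First I would write $\exx_{x \in X, y \in Y} f(x,y)u(x)v(y) = \exx_{y \in Y} v(y)\big(\exx_{x \in X} f(x,y)u(x)\big)$ and apply Cauchy--Schwarz in the variable $y$ to obtain
\[\Big|\exx_{x \in X, y \in Y} f(x,y)u(x)v(y)\Big|^2 \le \Big(\exx_{y \in Y} |v(y)|^2\Big)\Big(\exx_{y \in Y} \big|\exx_{x \in X} f(x,y)u(x)\big|^2\Big) = \|v\|_{L^2}^2 \cdot A,\]
where $A := \exx_{x_0, x_1 \in X, \, y \in Y} f(x_0,y)\overline{f(x_1,y)}\,u(x_0)\overline{u(x_1)} = \exx_{y \in Y}\big|\exx_{x \in X} f(x,y)u(x)\big|^2$ is a nonnegative real number.

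Next I would expand $A = \exx_{x_0, x_1 \in X} u(x_0)\overline{u(x_1)}\, g(x_0,x_1)$, where $g(x_0,x_1) := \exx_{y \in Y} f(x_0,y)\overline{f(x_1,y)}$, and apply Cauchy--Schwarz in the pair $(x_0,x_1)$ to get
\[A^2 \le \Big(\exx_{x_0, x_1 \in X} |u(x_0)|^2|u(x_1)|^2\Big)\Big(\exx_{x_0, x_1 \in X}|g(x_0,x_1)|^2\Big) = \|u\|_{L^2}^4 \cdot \exx_{x_0, x_1 \in X}|g(x_0,x_1)|^2.\]
Finally, expanding the square as $|g(x_0,x_1)|^2 = \exx_{y_0, y_1 \in Y} f(x_0,y_0)\overline{f(x_1,y_0)}\,\overline{f(x_0,y_1)}\,f(x_1,y_1)$ identifies $\exx_{x_0, x_1 \in X}|g(x_0,x_1)|^2$ with $\|f\|_\square^4$ directly from the definition of the box norm; in particular this also shows $\|f\|_\square^4 \ge 0$, so $\|f\|_\square$ is well defined. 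Combining the three displays yields $|\exx_{x,y} f(x,y)u(x)v(y)|^2 \le \|v\|_{L^2}^2\,\|u\|_{L^2}^2\,\|f\|_\square^2$, which is the claim after taking square roots.

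Since this is a routine Gowers--Cauchy--Schwarz manipulation, there is no real obstacle; the only points requiring a little care are keeping track of the exponents through the two Cauchy--Schwarz steps and noting that the intermediate quantity $A$ is genuinely nonnegative, so that squaring and then taking square roots is legitimate.
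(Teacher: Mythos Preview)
Your proof is correct and is the standard double Cauchy--Schwarz argument for this box-norm inequality. The paper does not actually supply a proof of this lemma; it is quoted as Lemma~41 from~\cite{generalBilBog}, so there is nothing in the paper to compare against, but your argument is exactly the one that underlies that cited result.
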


Given a bipartite graph $G$ on vertex classes $X$ and $Y$, its density $\delta$ is given by $\ex_{x \in X, y \in Y} G(x,y)$. We say that $G$ is $\varepsilon$-\emph{quasirandom} if $\|G - \delta\|_\square \leq \varepsilon$. Quasirandom graphs behave like randomly chosen graphs of the given density.

\begin{lemma}[Lemma 42 in~\cite{generalBilBog}]\label{appendknhoods}Let $k$ be a positive integer and let $G$ be an $\varepsilon$-quasirandom bipartite graph of density $\delta$ on vertex classes $X$ and $Y$. Pick $x_1, \dots, x_k \in X$ uniformly and independently at random. Let $\eta > 0$ be a positive real. Then 
\[\mathbb{P}\Big(\Big||N_{x_1} \cap \dots \cap N_{x_k}| - \delta^k |Y|\Big| \geq \eta |Y|\Big) \leq 4k \eta^{-2} \varepsilon.\]
\end{lemma}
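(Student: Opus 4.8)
The statement is the standard ``neighbourhood count'' consequence of quasirandomness, proved by a second-moment argument. The plan is to compute the expectation and variance (over the random choice of $x_1,\dots,x_k$) of the quantity $|N_{x_1}\cap\dots\cap N_{x_k}|$, and then apply Chebyshev's inequality. First I would write $|N_{x_1}\cap\dots\cap N_{x_k}| = \sum_{y\in Y}\prod_{j\in[k]}G(x_j,y)$, so that the sole source of randomness is the i.i.d.\ tuple $x_1,\dots,x_k$. Taking expectation over the $x_j$ and using independence gives
\[\exx_{x_1,\dots,x_k\in X}|N_{x_1}\cap\dots\cap N_{x_k}| = \sum_{y\in Y}\Big(\exx_{x\in X}G(x,y)\Big)^k = |Y|\,\exx_{y\in Y}\Big(\exx_{x\in X}G(x,y)\Big)^k.\]
The inner average $\exx_{x\in X}G(x,y)$ is the normalised degree $d(y)$; quasirandomness controls how far this is from $\delta$, and more precisely it controls $\exx_{y}(d(y)-\delta)^2 = \exx_y d(y)^2 - \delta^2$, which is exactly $\|G-\delta\|_{\square}^2$-controlled after expanding. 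So the mean of the count is within $O(k\varepsilon)|Y|$ of $\delta^k|Y|$; I would make this precise by a telescoping/convexity estimate $|\exx_y d(y)^k - \delta^k|\le k\,\exx_y|d(y)-\delta|\le k\,\varepsilon$ (since $d(y),\delta\in[0,1]$ and $\exx_y(d(y)-\delta)^2\le\varepsilon^2$ by quasirandomness, hence $\exx_y|d(y)-\delta|\le\varepsilon$).

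Next I would bound the second moment. We have
\[\exx_{x_1,\dots,x_k}|N_{x_1}\cap\dots\cap N_{x_k}|^2 = \sum_{y,y'\in Y}\Big(\exx_{x\in X}G(x,y)G(x,y')\Big)^k = |Y|^2\,\exx_{y,y'\in Y}c(y,y')^k,\]
where $c(y,y') = \exx_x G(x,y)G(x,y')$ is the ``codegree'' density. The heart of the matter is that $\|G-\delta\|_\square\le\varepsilon$ forces $\exx_{y,y'}(c(y,y')-\delta^2)^2$ to be small — indeed expanding $\|G-\delta\|_\square^4 = \exx_{x_0,x_1,y_0,y_1}(G(x_0,y_0)-\delta)(G(x_1,y_0)-\delta)(G(x_0,y_1)-\delta)(G(x_1,y_1)-\delta)$ and regrouping by the $x$-variables shows $\|G-\delta\|_\square^4 = \exx_{y_0,y_1}\big(c(y_0,y_1)-\delta^2 - (d(y_0)-\delta)\delta - (d(y_1)-\delta)\delta\big)^2$, wait—more cleanly, $\|G-\delta\|_\square^4 = \exx_{y,y'}\big(\exx_x(G(x,y)-\delta)(G(x,y')-\delta)\big)^2 = \exx_{y,y'}(c(y,y') - \delta d(y) - \delta d(y') + \delta^2)^2$, so combining with the first-moment bound on $d(y)$ we get $\exx_{y,y'}(c(y,y')-\delta^2)^2 = O(\varepsilon^2)$ after absorbing the $d(y)-\delta$ terms. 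Then, as before, $|\exx_{y,y'}c(y,y')^k - \delta^{2k}| \le k\,\exx_{y,y'}|c(y,y')-\delta^2| \le k\varepsilon$, giving $\exx|N\cap\dots\cap N|^2 \le (\delta^{2k} + k\varepsilon)|Y|^2$. Hence $\operatorname{Var}(|N_{x_1}\cap\dots\cap N_{x_k}|) \le (\delta^{2k}+k\varepsilon)|Y|^2 - (\delta^k - k\varepsilon)^2|Y|^2 \le O(k\varepsilon)|Y|^2$.

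Finally I would apply Chebyshev's inequality: combining the mean bound and the variance bound, for any $\eta>0$,
\[\mathbb{P}\Big(\big||N_{x_1}\cap\dots\cap N_{x_k}| - \delta^k|Y|\big| \ge \eta|Y|\Big) \le \mathbb{P}\Big(\big||N_{x_1}\cap\dots\cap N_{x_k}| - \mathbb{E}|N\cap\dots\cap N|\big| \ge (\eta - k\varepsilon)|Y|\Big) + [\,k\varepsilon\ge\eta\,],\]
and after bookkeeping of the constants this is at most $4k\eta^{-2}\varepsilon$ (one may assume $\varepsilon\le\eta^2/(4k)$, as otherwise the claimed bound is trivially $\ge1$, which lets one absorb the shift from $\eta$ to $\eta-k\varepsilon$ into the constant). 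The main obstacle — really the only non-bookkeeping point — is the identity expressing $\|G-\delta\|_\square^4$ as the mean-squared deviation of the codegree function $c(y,y')$ from $\delta^2$ (modulo the degree corrections), and keeping track that all the error terms introduced by replacing $d(y)$ by $\delta$ remain $O(\varepsilon)$ rather than $O(\varepsilon^{1/2})$; this is where one uses that $G$ is $\{0,1\}$-valued so that $d(y),c(y,y')\in[0,1]$ and the $L^1$ and $L^2$ deviations are comparable via boundedness. Everything else is routine expansion of the moments.
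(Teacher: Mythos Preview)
The paper does not prove this lemma; it is quoted verbatim from \cite{generalBilBog} (as ``Lemma 42'') and used as a black box, so there is no in-paper argument to compare against. Your second-moment-plus-Chebyshev approach is exactly the standard proof of such neighbourhood-count statements and is correct in outline.

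A couple of small points on your write-up. First, your claim $\exx_y(d(y)-\delta)^2\le\varepsilon^2$ is right, but the justification you sketch (``$\exx_y(d(y)-\delta)^2\le\varepsilon^2$ by quasirandomness, hence $\exx_y|d(y)-\delta|\le\varepsilon$'') deserves one line: apply Lemma~\ref{basicgcs} with $u\equiv 1$ and $v(y)=d(y)-\delta$ to get $\exx_y(d(y)-\delta)^2\le\varepsilon\|d-\delta\|_{L^2}$, hence $\|d-\delta\|_{L^2}\le\varepsilon$. Second, your $L^2$ bound on $c(y,y')-\delta^2$ picks up a factor of roughly $3$ (from $\varepsilon^2+2\delta\varepsilon$), so the naive bookkeeping gives a constant somewhat larger than $4$; to land exactly on $4k\eta^{-2}\varepsilon$ you should bound $\mathbb{E}[(Z-\delta^k|Y|)^2]$ directly rather than going via the variance and mean-shift separately, or else absorb the slack using the ``assume $\varepsilon\le\eta^2/(4k)$, else trivial'' reduction you mention. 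None of this affects the substance; the argument is sound.
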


\begin{lemma}[Lemma 43 in~\cite{generalBilBog}]\label{genappendknhoods}Let $k$ and $m$ be positive integers and let $G$ be an $\varepsilon$-quasirandom bipartite graph of density $\delta$ on vertex classes $X$ and $Y$. Let $M \subseteq Y^m$ be a set of $m$-tuples in $Y$. Pick $x_1, \dots, x_k \in X$ uniformly and independently at random. Let $\eta > 0$ be a positive real. Then 
\[\mathbb{P}\Big(\Big||N^m_{x_1} \cap \dots \cap N^m_{x_k} \cap M| - \delta^{mk} |M|\Big| \geq \eta |Y|^m\Big) \leq  4km \eta^{-2}\varepsilon.\]\end{lemma}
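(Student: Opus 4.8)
The plan is to run a second‑moment (Chebyshev) argument, in the same spirit as the proof of Lemma~\ref{appendknhoods}. Write $W = |N^m_{x_1} \cap \dots \cap N^m_{x_k} \cap M|$ and observe that $W = \sum_{\vec y \in M} \prod_{i \in [k]} g_{\vec y}(x_i)$, where for $\vec y = (y_1,\dots,y_m)$ we set $g_{\vec y}(x) = \prod_{j \in [m]} G(x,y_j) \in \{0,1\}$, the indicator that $x$ is a common neighbour of $y_1,\dots,y_m$. It then suffices to show $\mathbb{E}_{x_1,\dots,x_k}[(W - \delta^{mk}|M|)^2] \le O(k^2 m^2 \varepsilon^2)\,|Y|^{2m}$; Chebyshev turns this into a bound $O(k^2 m^2 \varepsilon^2)\eta^{-2}$ on the probability in question, and a routine case check (the probability vanishes once $\eta > 1$, since $0 \le W, \delta^{mk}|M| \le |Y|^m$, while $4km\eta^{-2}\varepsilon > 1$ once $\eta \le 1$ and $km\varepsilon$ is not small) recovers the stated estimate $4km\eta^{-2}\varepsilon$.

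To estimate the second moment, let $p_{\vec y} = \mathbb{E}_x g_{\vec y}(x)$ and split $W - \delta^{mk}|M| = \left(W - \sum_{\vec y \in M} p_{\vec y}^k\right) + \left(\sum_{\vec y \in M} p_{\vec y}^k - \delta^{mk}|M|\right)$, where the second bracket is deterministic and the first has mean $0$ over the $x_i$. For the first term I would telescope $\prod_i g_{\vec y}(x_i) - p_{\vec y}^k = \sum_i p_{\vec y}^{i-1}(g_{\vec y}(x_i) - p_{\vec y})\prod_{\ell > i} g_{\vec y}(x_\ell)$; since $x_i$ occurs in the $i$-th summand only through the centred factor $g_{\vec y}(x_i) - p_{\vec y}$, all off-diagonal cross terms vanish upon taking expectations, and the diagonal is bounded by $k\sum_{\vec y,\vec y' \in M} |q_{\vec y,\vec y'} - p_{\vec y}p_{\vec y'}|$ with $q_{\vec y,\vec y'} = \mathbb{E}_x g_{\vec y}(x) g_{\vec y'}(x)$. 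Writing $\tilde g_{\vec y}(x) = g_{\vec y}(x) - p_{\vec y}$ one has $q_{\vec y,\vec y'} - p_{\vec y}p_{\vec y'} = \mathbb{E}_x \tilde g_{\vec y}(x)\tilde g_{\vec y'}(x)$, so extending the sum to all of $Y^m$ and applying Cauchy--Schwarz over the $|Y|^{2m}$ pairs bounds it by $|Y|^{2m}\|\tilde g\|_{\square(X,Y^m)}^2$. For the second (bias) term, $|a^k - b^k| \le k|a-b|$ for $a,b \in [0,1]$ together with Cauchy--Schwarz gives $|\sum_{\vec y \in M} p_{\vec y}^k - \delta^{mk}|M|| \le k|Y|^m (\mathbb{E}_{\vec y \in Y^m}(p_{\vec y} - \delta^m)^2)^{1/2}$.

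It remains to bound $\|\tilde g\|_{\square(X,Y^m)}$ and $\mathbb{E}_{\vec y}(p_{\vec y} - \delta^m)^2$ by $(m\varepsilon)^2$-type quantities. Here I would use that the box norm is unchanged by adding a function of $\vec y$ alone, so $\|\tilde g\|_{\square(X,Y^m)} = \|g_{\vec y} - \delta^m\|_{\square(X,Y^m)}$, and then telescope in $j$: with $f = G - \delta$, $\prod_{j \in [m]} G(x,y_j) - \delta^m = \sum_{j \in [m]} \delta^{j-1} f(x,y_j)\prod_{j' > j} G(x,y_{j'})$. For each summand $F_j(x,\vec y) = f(x,y_j)\prod_{j'>j} G(x,y_{j'})$, expanding $\|F_j\|_{\square(X,Y^m)}^4$ and first averaging out the coordinates $y_{j'}$ with $j' \ne j$ (those with $j' < j$ do not appear at all, and each $j' > j$ contributes a factor $\mathbb{E}_y G(x_0,y)G(x_1,y) \in [0,1]$) leaves $\mathbb{E}_{x_0,x_1}[(\text{factor in }[0,1])\,(\mathbb{E}_y f(x_0,y)f(x_1,y))^2] \le \|f\|_{\square(X,Y)}^4 = \|G-\delta\|_{\square(X,Y)}^4 \le \varepsilon^4$. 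Summing over $j$ gives $\|\tilde g\|_{\square(X,Y^m)} \le m\varepsilon$, and essentially the same computation (with Lemma~\ref{basicgcs} in the background) bounds $\mathbb{E}_{\vec y}(p_{\vec y}-\delta^m)^2$ by $(m\varepsilon)^2$. Assembling, $\mathbb{E}[(W-\delta^{mk}|M|)^2] \le km^2\varepsilon^2|Y|^{2m} + k^2 m^2 \varepsilon^2 |Y|^{2m} \le 2k^2 m^2 \varepsilon^2 |Y|^{2m}$, as required.

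The main obstacle I expect is the box-norm bookkeeping for $\|F_j\|_{\square(X,Y^m)}$: one has to be careful that averaging out every coordinate of $\vec y$ except $y_j$ only ever introduces multiplicative factors lying in $[0,1]$, which is precisely where the product structure $g_{\vec y} = \prod_j G(\cdot,y_j)$ is used. Once that reduction is in place, the remaining steps are routine applications of Cauchy--Schwarz, the telescoping identity, and the box-norm inequality of Lemma~\ref{basicgcs}.
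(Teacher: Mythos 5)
Your overall strategy---second moment plus Chebyshev, treating $g_{\vec y}(x)=\prod_j G(x,y_j)$ as a bipartite graph on $X\times Y^m$ and transferring quasirandomness from $G$ to this tensor power by telescoping---is sound and almost certainly what the cited source does. The set-up $W=\sum_{\vec y\in M}\prod_i g_{\vec y}(x_i)$, the martingale-style telescoping in $i$ that kills the off-diagonal cross terms, the reduction of the diagonal to $k\sum_{\vec y,\vec y'\in M}|q_{\vec y,\vec y'}-p_{\vec y}p_{\vec y'}|\leq k|Y|^{2m}\|\tilde g\|_{\square(X,Y^m)}^2$, the treatment of the bias term via $|a^k-b^k|\leq k|a-b|$ and Cauchy--Schwarz, the telescoping in $j$ with $F_j$ and the $[0,1]$-factor bookkeeping, and the closing case analysis to land on $4km\eta^{-2}\varepsilon$ are all correct.

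There is, however, one genuine error. You assert that ``the box norm is unchanged by adding a function of $\vec y$ alone,'' and use this to write $\|\tilde g\|_{\square(X,Y^m)}=\|g_{\vec y}-\delta^m\|_{\square(X,Y^m)}$. That equality is false. The box norm is not invariant under adding a function of one variable: already on $X=Y=\{0,1\}$, with $f(x,y)=(-1)^{x+y}$ (which has $\exx_x f(x,y)=0$ for each $y$) and $h(y)=(-1)^y$, one computes $\|f\|_{\square}=1$ while $\|f+h\|_{\square}=\sqrt 2$. Fortunately, what you actually need is only the inequality $\|\tilde g\|_{\square}\leq\|g-\delta^m\|_{\square}$, and that one is true: since $\exx_x\tilde g_{\vec y}(x)=0$ for every $\vec y$, expanding $\|\tilde g+h\|_\square^4$ for any $h=h(\vec y)$ gives $\|\tilde g\|_\square^4+2\exx_x\big|\exx_{\vec y}h(\vec y)\tilde g(x,\vec y)\big|^2+\|h\|_{L^2}^4\geq\|\tilde g\|_\square^4$ (all the odd cross-terms vanish because some $x$- or $x'$-average of a centred factor appears). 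Taking $h(\vec y)=p_{\vec y}-\delta^m$ gives $\|\tilde g\|_\square\leq\|g-\delta^m\|_\square$, after which your telescoping estimate $\|g-\delta^m\|_{\square(X,Y^m)}\leq m\varepsilon$ completes the bound. So the step as you phrased it is wrong, but the intended conclusion survives once the equality is weakened to the one-sided inequality, and the rest of the proof goes through.
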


\begin{lemma}[Lemma 44 in~\cite{generalBilBog}]\label{appendonesided}Let $\delta,\varepsilon \in [0,1]$. Suppose that $G$ is a bipartite graph with vertex classes $X$ and $Y$ such that
\begin{equation}\label{qrCond1}\exx_{x \in X} \Big||N_x| - \delta |Y|\Big| \leq \varepsilon |Y|\end{equation}
and 
\begin{equation}\exx_{x, x' \in X} \Big||N_x \cap N_{x'}| - \delta^2 |Y|\Big| \leq \varepsilon |Y|.\label{qrCond2}\end{equation}
Then the density $\delta'$ of $G$ satisfies $|\delta - \delta'| \leq \varepsilon$ and the graph $G$ is $3 \sqrt[8]{\varepsilon}$-quasirandom.\end{lemma}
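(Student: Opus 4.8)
\textbf{Proof plan for Lemma~\ref{appendonesided}.}

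The plan is to deduce this one-sided criterion for quasirandomness directly from the defining inequalities~\eqref{qrCond1} and~\eqref{qrCond2} by expanding the box norm $\|G - \delta\|_{\square}^4$ and then using~\eqref{qrCond1} and~\eqref{qrCond2} to control the various fourth moments that appear. First I would dispense with the density claim: writing $\delta' = \ex_{x\in X,y\in Y} G(x,y) = \ex_{x\in X} |N_x|/|Y|$, the bound $|\delta-\delta'|\le\varepsilon$ is immediate from~\eqref{qrCond1} and the triangle inequality. So it suffices to bound $\|G-\delta\|_\square$, and since $\|G-\delta'\|_\square \le \|G-\delta\|_\square$ is not the direction I want, I would instead work with $\|G-\delta\|_\square$ throughout (or note $\|G-\delta'\|_\square \le \|G-\delta\|_\square + |\delta-\delta'|$ and absorb the error at the end).

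The key computation is the following. Expanding the definition,
\[
\|G-\delta\|_\square^4 = \exx_{y_0,y_1\in Y}\Big(\exx_{x\in X}(G(x,y_0)-\delta)(G(x,y_1)-\delta)\Big)^2,
\]
which is manifestly nonnegative. Rather than bound this directly, I would pass through the codegree function: set $d(y_0,y_1) = \ex_{x\in X} G(x,y_0)G(x,y_1)$, so that $\ex_{x} (G(x,y_0)-\delta)(G(x,y_1)-\delta) = d(y_0,y_1) - 2\delta \cdot \tfrac{|N_{y_0}|+|N_{y_1}|}{2|X|}\cdot(\text{ish}) + \delta^2$; cleaner is to observe that~\eqref{qrCond2} says $\ex_{x,x'\in X} |N_x\cap N_{x'}| = \ex_{y_0,y_1\in Y} (\text{codegree in }X\text{ of }y_0,y_1)\cdot$ — i.e.\ it controls $\ex_{y_0,y_1} (d(y_0,y_1)-\delta^2)$ up to sign issues. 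The honest route is: by Cauchy--Schwarz applied to the inner average over $x$, and by symmetry between the roles of $X,Y$ via~\eqref{qrCond2}, one shows $\ex_{y_0,y_1}\big|\ex_x(G(x,y_0)-\delta)(G(x,y_1)-\delta)\big| \le O(\varepsilon)$, and then uses the crude bound $\big|\ex_x(G(x,y_0)-\delta)(G(x,y_1)-\delta)\big|\le 1$ to convert the $L^1$ bound on this quantity into an $L^2$ bound, giving $\|G-\delta\|_\square^4 \le O(\varepsilon)$, hence $\|G-\delta\|_\square \le O(\varepsilon^{1/4})$; chasing the constant through the Cauchy--Schwarz and the $|\delta-\delta'|$ correction yields the stated $3\sqrt[8]{\varepsilon}$. (The exponent $1/8$ rather than $1/4$ is exactly the price of one Cauchy--Schwarz step used to reach~\eqref{qrCond2} from a quantity of the form $\ex_{x,x'}\big||N_x\cap N_{x'}| - \delta^2|Y|\big|$, since that is an $L^1$ rather than $L^2$ control.)

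The main obstacle is bookkeeping rather than conceptual: one must carefully relate the codegree average $\ex_{x,x'\in X}|N_x\cap N_{x'}|$ appearing in~\eqref{qrCond2} — which is a sum over pairs in $X$ — to the quantity $\ex_{y_0,y_1\in Y}\big(\ex_{x\in X}(G(x,y_0)-\delta)(G(x,y_1)-\delta)\big)$ that literally shows up when expanding the box norm, keeping track of the lower-order terms involving $\ex_x |N_x|$ (controlled by~\eqref{qrCond1}) and the correction from replacing $\delta$ by $\delta'$. I would organize this as: (i) rewrite $\|G-\delta\|_\square^4$ as an average over $y_0,y_1$ of a nonnegative square; (ii) bound that square by its absolute value (which is $\le 1$) times itself, reducing to an $L^1$ estimate on $h(y_0,y_1):=\ex_x(G(x,y_0)-\delta)(G(x,y_1)-\delta)$; (iii) expand $h$ and use~\eqref{qrCond1},~\eqref{qrCond2} to get $\ex_{y_0,y_1}|h(y_0,y_1)|\le 2\varepsilon + O(\varepsilon^2)$ or similar; (iv) combine to get $\|G-\delta\|_\square^4 \le 2\varepsilon + \dots$, take fourth roots, and add the $|\delta-\delta'|\le\varepsilon$ correction, checking that $(2\varepsilon)^{1/4} + \varepsilon \le 3\varepsilon^{1/8}$ for $\varepsilon\in[0,1]$. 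The only genuine subtlety is getting the exponent down to $1/8$; if a direct expansion gives $1/4$ that is already fine for the paper's purposes, but matching the stated constant requires the extra Cauchy--Schwarz as indicated.
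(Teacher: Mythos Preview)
The paper does not prove this lemma; it is quoted verbatim from \cite{generalBilBog} (Lemma~44 there). So there is no in-paper proof to compare against, and I will simply assess your plan.

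Your overall strategy is correct, but you have the expansion oriented the wrong way, and this is what is causing all the ``bookkeeping'' difficulties you describe. The box norm is symmetric in the two vertex classes, so you are free to write
\[
\|G-\delta\|_\square^4 \;=\; \exx_{x_0,x_1\in X}\Big(\exx_{y\in Y}\big(G(x_0,y)-\delta\big)\big(G(x_1,y)-\delta\big)\Big)^2,
\]
with the $x$-variables \emph{outer}. The inner average is then exactly
\[
h(x_0,x_1)\;=\;\frac{|N_{x_0}\cap N_{x_1}|}{|Y|}-\delta\,\frac{|N_{x_0}|}{|Y|}-\delta\,\frac{|N_{x_1}|}{|Y|}+\delta^2,
\]
whose $L^1$-average over $(x_0,x_1)$ is immediately bounded by $\varepsilon+2\delta\varepsilon\le 3\varepsilon$ using \eqref{qrCond1} and \eqref{qrCond2}. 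Since $|h|\le 1$, this gives $\|G-\delta\|_\square^4\le 3\varepsilon$, hence $\|G-\delta'\|_\square\le(3\varepsilon)^{1/4}+\varepsilon\le 3\varepsilon^{1/8}$. By contrast, your expansion with $y_0,y_1$ outer produces the codegree function $d(y_0,y_1)=\exx_x G(x,y_0)G(x,y_1)$, which is \emph{not} what \eqref{qrCond2} controls; \eqref{qrCond2} concerns $|N_x\cap N_{x'}|=\sum_y G(x,y)G(x',y)$, i.e.\ codegrees in the other direction. There is no need to ``carefully relate'' these two quantities---just use the other expansion.

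Your explanation of the exponent $1/8$ is also off: the argument above yields $\varepsilon^{1/4}$ directly with no additional Cauchy--Schwarz, and the stated $\varepsilon^{1/8}$ is simply a weaker (and tidier) bound that suffices for the applications.
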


\subsection{General algebraic regularity lemma}

We now prove algebraic regularity lemma for general abelian groups. The following result appears in~\cite{generalBilBog}, but is slightly imprecise. Namely, in the statement of Theorem~\ref{algreglemmaintro} below, in the original version in~\cite{generalBilBog}, the Bohr sets $B(\chi_1, \dots, \chi_r; \rho')$ did not depend on the translate $t$ of the coset progression $C'$. Here we make a correction, although the main result of that paper, the bilinear Bogolyubov argument in general finite abelian groups, remains unchanged.\footnote{In fact, in that paper, we may use the algebraic regularity lemma for $t= 0$, giving a symmetric coset progression $C'$ and then the proof of bilinear Bogolyubov argument remains unchanged.} Additionally, in contrast to the previous version, in the present paper, we need Bohr sets with radius functions, instead of just a single radius. Using radius function will be critical in applications where we want simultaneous quasirandomness of two bilinear Bohr varieties, about which we shall say more in Section~\ref{passingtobihomonprodsection}.

\begin{theorem}[Algebraic regularity lemma for bilinear Bohr varieties]\label{algreglemmaintro}Let $G$ and $H$ be finite abelian groups. Let $C$ be a proper coset progression of rank $d$ inside the group $H$, and let $L_1, \dots, L_r \colon C \to \hat{G}$ be Freiman homomorphisms. Let $\rho : [r] \to (0,1)$ be a radius function and $\eta > 0$ be given.\\
\indent Then, we may find a further proper coset progression $C'$ of rank at most $d$ and a set $T$, which is a progression of rank $d$, of size at most
\[\exp\Big(d^{O(1)} r^{O(1)} \log^{O(1)} (\eta^{-1}) \log^{O(1)} (\rho(1)^{-1}\cdots\rho(r)^{-1}) \Big),\]
such that $C' + T \subseteq C$, $|C'| |T| = |C' + T| \geq (1 - \eta) |C|$ and every $t + C'$ induces a quasirandom piece of the bilinear Bohr variety in the following sense.\\
\indent Given $t \in T$, taking characters $\chi_1 = L_1(t), \dots, \chi_r = L_r(t)$, there exist reals $\delta > 0$ and $\rho' : [r] \to (0,1)$ with $\rho'(i) \in [\rho(i)/4, \rho(i)/2]$ for all $i \in [r]$, such that
\begin{itemize}
\item[\textbf{(i)}] for at least $1 - \eta$ proportion of all elements $x \in t + C'$ we have 
\[\Big||B(\chi_1, \dots, \chi_r, \rho') \cap B(L_1(x)-\chi_1, \dots, L_r(x)-\chi_r; \rho')| - \delta |B(\chi_1, \dots, \chi_r; \rho')|\Big| \leq \eta |G|,\]
\item[\textbf{(ii)}] for at least $1 - \eta$ proportion of all pairs $(x, x') \in ( t + C') \times (t + C')$ we have
\begin{align*}\Big||B(\chi_1, \dots, \chi_r, \rho') \cap B(L_1(x)-&\chi_1, \dots, L_r(x)-\chi_r, \rho') \cap B(L_1(x')-\chi_1, \dots, L_r(x')-\chi_r; \rho')| \\
&- \delta^2 |B(\chi_1, \dots, \chi_r; \rho')|\Big| \leq \eta |G|.\end{align*}
\end{itemize}
Moreover, if $C = a + [0, N_1 -1 ] \cdot v_1 + \dots + [0, N_d - 1] \cdot v_d + K$ is a canonical form of $C$, then we may take $C'$ of the form $[-N'_1, N'_1] \cdot \ell_1 v_1 + \dots + [-N'_d, N'_d] \cdot \ell_d v_d + K'$, $\ell_i N'_i \leq N_i$ and $K' \leq K$, and if $C$ is additionally symmetric, then we may assume that $C'$ is symmetric and $0 \in T$.
\end{theorem}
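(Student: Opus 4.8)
The idea is to combine the covering/tiling lemmas for coset progressions (Lemma~\ref{tilingLemma}, Corollary~\ref{shrunksymmcpCor}) with the quasirandomness criterion of Lemma~\ref{appendonesided} applied column-wise to the bilinear Bohr variety, mediated by the algebraic regularity lemma for ordinary bipartite graphs in abelian groups from~\cite{generalBilBog}, and then to extract conditions \textbf{(i)} and \textbf{(ii)} by a pigeonholing over a bounded family of auxiliary graphs. First I would set up, for each fixed choice of small radii $\rho' : [r] \to (0,1)$ with $\rho'(i) \in [\rho(i)/4, \rho(i)/2]$, the bipartite graph $G_{\rho'}$ on vertex classes $C \times \{\text{points of } B(\Gamma_\rho')\text{ inside } G\}$ — more precisely, I think of the column index $x \in C$ as a vertex on one side, and the neighbourhood $N_x = B(L_1(x), \dots, L_r(x); \rho')$ (a translate of the Bohr variety column) as the adjacency. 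Running the algebraic regularity lemma of~\cite{generalBilBog} on this bipartite graph gives a partition of the index side $C$ into translates of a single coset progression, almost all of which induce quasirandom pieces; the output structure $C' + T \subseteq C$ with $|C'||T| \geq (1-\eta)|C|$ and the canonical-form refinement ($\ell_i N'_i \le N_i$, $K' \le K$, and $0 \in T$ in the symmetric case) come directly from the shape of $C'$ produced there together with Lemma~\ref{tilingLemma}.

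The subtlety that the earlier version in~\cite{generalBilBog} got wrong, and which this theorem must handle, is that the Bohr sets $B(\chi_1, \dots, \chi_r; \rho')$ appearing in (i) and (ii) depend on the translate $t$, since $\chi_i = L_i(t)$. So I cannot fix one graph and one radius globally; instead, for each $t \in T$ I must produce a radius function $\rho'_t$ and a density $\delta_t$ tailored to that column. The plan is: enumerate a bounded (quantitative) net of candidate radius vectors $\rho'$ in $\prod_i [\rho(i)/4, \rho(i)/2]$, and for each $t$ choose a weakly regular one via Lemma~\ref{bohrwreg2} (or its radius-function analogue Lemma~\ref{bohrwreg1}) applied to the frequency set $\{\chi_1, \dots, \chi_r\} = \{L_1(t), \dots, L_r(t)\}$, so that Proposition~\ref{bohrsizeLargeFC} and hence the Fourier-analytic size estimates are available. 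Then the local quasirandomness conditions (i) and (ii) are exactly conditions~\eqref{qrCond1} and~\eqref{qrCond2} of Lemma~\ref{appendonesided}, for the bipartite graph whose vertex classes are $t + C'$ and $B(\chi_1, \dots, \chi_r; \rho'_t)$, with edge set $\{(x, y) : y \in B(L_1(x) - \chi_1, \dots, L_r(x) - \chi_r; \rho'_t)\}$; the algebraic regularity lemma of~\cite{generalBilBog} supplies these for most $t$, and the density $\delta_t = \delta$ can be taken uniform after a further pigeonhole/merging of the pieces (discarding an $\eta$-fraction of indices $x$ within each good column).

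The main obstacle I anticipate is the bookkeeping needed to make the radius function $\rho'_t$ depend on $t$ while keeping everything quantitatively controlled and weakly regular simultaneously for the three Bohr-set configurations appearing in (ii) (the base set and two translated intersections). This is the analogue of the "simultaneous quasirandomness of two bilinear Bohr varieties" difficulty flagged in the statement's preamble and in Section~\ref{passingtobihomonprodsection}: one must choose $\rho'_t$ so that $B(\chi_1,\dots,\chi_r; \cdot)$ is weakly regular at scales $\rho'_t$, $\rho'_t \pm \eta'$ for a suitably small $\eta'$, and this has to be done from a single pass of the abstract regularity lemma rather than column by column in an uncontrolled way. I would handle this by (a) fixing the weak-regularity tolerance $\eta'$ as a function of $\eta, r, d$ and the radii $\rho(i)$ at the outset, (b) discretising the radius interval at scale $\eta'$ so only $O(\eta'^{-1})^r$ candidate vectors arise, (c) running the regularity lemma for the union/product graph over all these candidates at once (the bound in the theorem, with its $\log^{O(1)}$ dependence on $\eta^{-1}$ and on $\rho(1)^{-1}\cdots\rho(r)^{-1}$, is exactly what absorbs this enlargement), and (d) for each $t$ selecting the candidate that is weakly regular for that column's characters, which exists by the pigeonhole argument underlying Lemmas~\ref{bohrwreg1}--\ref{bohrwreg2}. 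The size bound on $T$ then follows from tracking the losses through Lemma~\ref{tilingLemma}, Corollary~\ref{shrunksymmcpCor} and the regularity lemma's own bound, and the symmetric-case addendum ($C'$ symmetric, $0 \in T$) follows by running the whole argument with $t = 0$ as a distinguished translate, as indicated in the footnote.
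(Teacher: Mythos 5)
Your proposal is circular at its core: you invoke ``the algebraic regularity lemma of~\cite{generalBilBog}'' as a black-box to produce the partition of $C$ into quasirandom coset-progression pieces, but Theorem~\ref{algreglemmaintro} \emph{is} that algebraic regularity lemma (in corrected form). The version that appears in~\cite{generalBilBog} is precisely the one flagged as slightly imprecise — it is the result being re-proved here, and there is no separate ``ordinary'' algebraic regularity lemma for these bipartite graphs that one can call and then patch with bookkeeping. The paper's actual argument is a first-principles iteration that you do not sketch at all: one maintains a symmetric sub-progression $S$ of $C$ together with a lattice $\Lambda \subseteq \mathbb{Z}^r$ of coefficient vectors $\lambda$ for which $\lambda \cdot L'(x) = 0$ on $S$; Claim~\ref{qrsuffconds} shows that if (on a translate $t+S$) at most an $\eta$-fraction of pairs $(x,x')$ admit a bounded vanishing relation among $\{\chi_i\}\cup\{L'_i(x)\}\cup\{L'_i(x')\}$ whose coefficient vectors lie outside $M \times \Lambda \times \Lambda$, then (i) and (ii) hold with an explicit $\delta$ computed from $\Lambda$ and the bump-function Fourier coefficients; contrapositively, if quasirandomness fails one extracts a fixed coefficient vector $\lambda \in [-K,K]^r \setminus \Lambda$, uses Lemma~\ref{fewvalsCPFLin} to pass to a further sub-progression on which $\lambda\cdot L'$ vanishes, and replaces $\Lambda$ by $\Lambda + \langle\lambda\rangle$. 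Termination in a bounded number of steps comes from Lemma~\ref{nestedLattices}, and the termination bound is controllable precisely because the cutoff $K$ from Proposition~\ref{bohrsizeLargeFC} depends only on $r$, $\eta$, and the minimum radius, \emph{not} on the current progression $S$ — a uniformity point your proposal does not identify.

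Two ingredients in your plan are directionally correct but play a smaller role than you assign them: the radius $\rho'$ is indeed chosen by pigeonhole from a finite discretisation of $[\rho/4,\rho/2]$, but this is done per-translate inside the verification step, not by building a huge product graph over all candidate radius vectors; and Lemma~\ref{appendonesided} is not used inside the proof of (i) and (ii) — these are established directly by the Fourier estimate~\eqref{QRbohrconclusion}, with Lemma~\ref{appendonesided} reserved for later applications of the theorem. Your handling of the tiling step via Lemma~\ref{tilingLemma}, and the symmetric-case remark about $0\in T$, do match what the paper does.
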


The proof is very similar to the one proposed in~\cite{generalBilBog}, but with corrections added.

\begin{proof} Let $C = a + [0, N_1 -1 ] \cdot v_1 + \dots + [0, N_d - 1] \cdot v_d + H_0$ be a canonical form of $C$. Let us first linearize maps $L_i$. Define $\tilde{a} = a + \sum_{i \in [d]} \lfloor N_i /2 \rfloor v_i$ and maps $L_i' : C - \tilde{a} \to \hat{G}$ by $L'_i(x) = L_i(x + \tilde{a}) - L_i(\tilde{a})$, which are Freiman-linear.\\\\

The proof will be iterative and in each step we shall keep track of a symmetric coset progression $S$ and a lattice $\Lambda$. Here are their key properties.

\begin{itemize}
    \item The coset progression $S$ will be of the form $[-N'_1, N'_1] \cdot \ell_1 v_1 + \dots + [-N'_d, N'_d] \cdot \ell_d v_d + H'$, $\ell_i N'_i \leq N_i$ and $H' \leq H_0$, and will be a candidate for the choice of $C'$.
    \item Every element $\lambda$ of the lattice $\Lambda$ will satisfy $\lambda \cdot L'(x) = 0$ for all $x \in S$, though $\Lambda$ might not contain all such linear combinations.
\end{itemize}

In order to be able to almost tile $C$ with $S$ eventually, we need to be slightly careful about the lengths of the progressions. Initially, we define $N'_i = \lfloor \eta N_i /2d \rfloor$ when $N_i \geq 2\eta^{-1}d$ and $N'_i = 0$ otherwise. We also set $S = a + [-N'_1, N'_1] \cdot v_1 + \dots + [-N'_d, N'_d] \cdot v_d + H_0$ and $\Lambda = \{0\}$.\\
\indent During the proof, the coset progression will only be replaced by its subprogressions. In particular, at each step of the proof, $S$ will satisfy the conditions of Lemma~\ref{tilingLemma} and we will be able to almost tile $C$ by $S$.\\
\indent Furthermore, we shall enlarge the lattice $\Lambda$ at each step of the iteration. The latter property will guarantee that the procedure terminates quickly.\\

Suppose that we have completed a step of the procedure and let us consider the described objects. Take an arbitrary $t \in C$ such that $t + S \subseteq C$. Let $\chi_1 = L_1(t), \dots, \chi_r = L_r(t)$ and write $\Gamma = \{\chi_1, \dots, \chi_r\}$. Let $M = \{\lambda \in \mathbb{Z}^r : \lambda \cdot \chi = 0\} \subseteq \mathbb{Z}^r$ be the annihilator lattice of $\chi_1, \dots, \chi_r$.\\

\noindent\textbf{Ensuring weak regularity.} Let $\mu = \min_{j \in [r]} \rho(j)$. We first find a suitable radius function $\rho' \in [\rho/4, \rho/2]$. Consider $\rho/2 - \frac{j \eta^2 \mu}{2000}$ for $j \in [500 \eta^{-2}]$ as candidates for $\rho'$. By the pigeonhole principle, there is such a value of $\rho'$ such that for at least $1 - \frac{\eta}{10}$ proportion of $x \in S$ we have
\begin{align}\Big|\Big(B\Big(\Gamma, \rho' + \frac{\eta^2 \mu}{2000}\Big) &\cap B\Big(L'_1(x), \dots, L'_r(x); \rho'+ \frac{\eta^2 \mu}{2000}\Big)\Big)\nonumber\\
&\setminus \Big(B\Big(\Gamma; \rho'\Big) 
\cap B\Big(L'_1(x), \dots, L'_r(x); \rho'\Big)\Big)\Big| \leq \frac{\eta}{10} |G|,\label{wreg1piece}\end{align}
for at least $1-\frac{\eta}{10}$ proportion of the pairs $(x,x') \in S \times S$ we have 
\begin{align}&\Big|\Big(B\Big(\Gamma, \rho' + \frac{\eta^2 \mu}{2000}\Big) \cap B\Big(L'_1(x), \dots, L'_r(x); \rho'+ \frac{\eta^2 \mu}{2000}\Big)\cap B\Big(L'_1(x'), \dots, L'_r(x'); \rho'+ \frac{\eta^2 \mu}{2000}\Big)\Big)\nonumber\\
&\hspace{2cm}\setminus \Big(B\Big(\Gamma; \rho'\Big) \cap B\Big(L'_1(x), \dots, L'_r(x); \rho'\Big)\cap B\Big(L'_1(x'), \dots, L'_r(x'); \rho'\Big)\Big)\Big| \leq \frac{\eta}{10} |G|\label{wreg2piece}\end{align}  
and 
\begin{equation}\Big|B\Big(\Gamma; \rho' + \frac{\eta^2 \mu}{2000}\Big) \setminus B(\Gamma; \rho')\Big| \leq \frac{\eta}{10} |G|.\label{wreg3piece}\end{equation}

\phantom{a}\hspace{\baselineskip}

\noindent\textbf{Finding new vanishing linear combination.} The next claim shows that $t + S$ gives rise to a quasirandom piece unless we obtain new vanishing linear combinations of characters given by values of maps $L'_1, \dots, L'_r$ on $S$.\\

Let $K = O(r\eta^{-3}\mu^{-1})$ be the quantity provided by Proposition~\ref{bohrsizeLargeFC} such that whenever $\gamma_1, \dots, \gamma_\ell \in \hat{G}$ are characters for $\ell \leq 3r$ and $\sigma : [\ell] \to (0,1)$ is a radius function with the weak regularity property
\begin{equation}\Big||B(\gamma_1, \dots, \gamma_\ell; \sigma + \eta^2 \mu/2000)| - |B(\gamma_1, \dots, \gamma_\ell; \sigma)|\Big| \leq \frac{\eta}{10} |G|\label{wregconditionQR}\end{equation}
then 
\begin{equation}\label{QRbohrconclusion}\Big||B(\gamma_1, \dots, \gamma_\ell; \sigma)| - \sum_{a_1, \dots, a_\ell \in [-K, K]} \id(a_1 \gamma_1 + \dots + a_\ell \gamma_\ell = 0) c_{1, a_1} \dots c_{\ell, a_\ell} |G|\Big| \leq \frac{\eta}{5}|G|,\end{equation}
where we write $c_{i, a}$ for the value $\widehat{\mathsf{b}_{\sigma(i), \eta^2 \mu/2000}}(a)$ of the Fourier coefficient at $a$ of bump functions defined in~\eqref{bumpdefinition}.\\

\indent We remark that $K$ is the same in all stages of the argument, in particular it does not depend on $S$, nor its size. This fact will be crucial in showing that the procedure terminates reasonably quickly.\\

\indent Furthermore, we shall our choices for $\sigma$ will be $\rho'$, or concatenations $(\rho', \rho')$ and $(\rho', \rho', \rho')$, so in fact $c_{i, a}$ becomes $\widehat{\mathsf{b}_{\rho'(i'), \eta^2 \mu/2000}}(a)$, where $i' \in [r]$ is the integer congruent to $i$ modulo $r$. Note that $c_{i, a}$ have the same meaning for all choices of radius function, the only difference is that we may use only the first $r$ or $2r$ instead of all $3r$ coefficients. In particular, we may write $c_{i, a}$ instead of $c_{i + r, a}$ and $c_{i + 2r, a}$.

\vspace{\baselineskip}

\begin{claim}\label{qrsuffconds}Suppose that at least $1 - \frac{\eta}{10}$ proportion of all pairs $(x, x') \in S \times S$ have the property that if the equality
\begin{equation}\nu_1 \chi_1 + \dots + \nu_r \chi_r + \lambda_1 L'_1(x) + \dots + \lambda_r L'_r(x) + \lambda'_1 L'_1(x') + \dots + \lambda'_r L'_r(x') = 0\label{latticeclaimproperty}\end{equation}
holds for some $\nu, \lambda, \lambda' \in [-K, K]^r$ then $\nu \in M$ and $\lambda, \lambda' \in \Lambda$. Then the properties \textbf{(i)} and \textbf{(ii)} hold on $t + S$ with $\delta = \sum_{\lambda \in \Lambda \cap [-K, K]^r} c_{1, \lambda_1} \dots c_{r, \lambda_r}$.
\end{claim}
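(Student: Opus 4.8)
The plan is to deduce the quasirandomness conditions \textbf{(i)} and \textbf{(ii)} from Lemma~\ref{appendknhoods} and Lemma~\ref{genappendknhoods} (in the bipartite-graph formulation), after setting up the appropriate auxiliary bipartite graph. Concretely, I would fix $t\in C$ with $t+S\subseteq C$, put $\chi_i=L_i(t)$, and consider the bipartite graph on vertex classes $X=t+S$ (equivalently $S$ after translating by $\tilde a$) and $Y=B(\Gamma,\rho')$, where $x$ is joined to $y$ when $y\in B(L_1'(x)-\chi_1,\dots,L_r'(x)-\chi_r;\rho')$ — that is, the neighbourhood $N_x$ is exactly the Bohr set $B(\Gamma,\rho')\cap B(L_1(x)-\chi_1,\dots;\rho')$ appearing in \textbf{(i)}. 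Then \textbf{(i)} is the statement that $\ex_x\big||N_x|-\delta|Y|\big|$ is small for the right $\delta$, and \textbf{(ii)} is the analogous statement about $|N_x\cap N_{x'}|$. By Lemma~\ref{appendonesided}, both follow (with slightly worse $\eta$, which is harmless since we have freedom to rescale the error parameters) once we show the graph is $\varepsilon$-quasirandom for suitable $\varepsilon$, and in turn this reduces to controlling $|N_x|$, $|N_x\cap N_{x'}|$ on average, which is what the hypothesis of the claim is designed to give.

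The heart of the matter is the Fourier-analytic computation of $|N_x|$ and $|N_x\cap N_{x'}|$. Here I would invoke the weak-regularity choice of $\rho'$: inequalities~\eqref{wreg1piece}, \eqref{wreg2piece}, \eqref{wreg3piece} say that for most $x$ (resp.\ most pairs) the relevant intersections of Bohr sets satisfy the weak regularity condition~\eqref{wregconditionQR}, with frequency lists being concatenations of $\Gamma$ with $\{L_i'(x)\}$ and $\{L_i'(x')\}$, each of length at most $3r$. Applying the conclusion~\eqref{QRbohrconclusion} of Proposition~\ref{bohrsizeLargeFC} with the uniform constant $K$, we get
\[|N_x| \approx \sum_{\nu,\lambda\in[-K,K]^r} \id\Big(\sum_i \nu_i\chi_i + \sum_i \lambda_i L_i'(x)=0\Big)\, c_{1,\nu_1}\cdots c_{r,\nu_r}\, c_{1,\lambda_1}\cdots c_{r,\lambda_r}\,|G|\]
up to error $O(\eta|G|)$, and similarly $|N_x\cap N_{x'}|$ is approximated by a triple sum over $\nu,\lambda,\lambda'\in[-K,K]^r$ with the constraint $\sum\nu_i\chi_i+\sum\lambda_iL_i'(x)+\sum\lambda_i'L_i'(x')=0$. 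Now the hypothesis of the claim is exactly what collapses these sums: for most $x$, the only $(\nu,\lambda)$ contributing have $\nu\in M$ and $\lambda\in\Lambda$, so the sum factors as $\big(\sum_{\nu\in M\cap[-K,K]^r}\prod c_{i,\nu_i}\big)\big(\sum_{\lambda\in\Lambda\cap[-K,K]^r}\prod c_{i,\lambda_i}\big)$. The first factor is just $|B(\Gamma;\rho')|/|G|$ up to $O(\eta)$ (again by~\eqref{QRbohrconclusion} applied to $\Gamma$ alone, using~\eqref{wreg3piece}), and the second factor is precisely $\delta$ as defined in the claim; so $|N_x|\approx\delta|B(\Gamma;\rho')|$. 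The same bookkeeping, now with two independent copies $\lambda,\lambda'$ each forced into $\Lambda$, yields $|N_x\cap N_{x'}|\approx\delta^2|B(\Gamma;\rho')|$.

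Having established that $\ex_x\big||N_x|-\delta|B(\Gamma;\rho')|\big|\le O(\eta)|G|$ and $\ex_{x,x'}\big||N_x\cap N_{x'}|-\delta^2|B(\Gamma;\rho')|\big|\le O(\eta)|G|$ — where the small exceptional proportions of $x$ and of pairs $(x,x')$ contribute $O(\eta)|G|$ trivially since each neighbourhood has size at most $|G|$ — conclusions \textbf{(i)} and \textbf{(ii)} are immediate, as they are just these averaged bounds restated (recall a bound on an average of nonnegative quantities gives that at least $1-O(\sqrt\eta)$, hence after adjusting constants at least $1-\eta$, of the terms are individually small; alternatively one can absorb this into the choice of the internal parameters from the start, running the weak-regularity pigeonhole with $\eta^{O(1)}$ in place of $\eta$). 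The main obstacle, and the only genuinely delicate point, is making sure the single constant $K$ from Proposition~\ref{bohrsizeLargeFC} is legitimately reused across all three applications (to $\Gamma$, to $(\Gamma, L'(x))$, to $(\Gamma,L'(x),L'(x'))$) and across all stages of the outer iteration — this is what the remark preceding the claim emphasizes, and it is what lets the factorization go through uniformly; the rest is careful triangle-inequality accounting of the $O(\eta)$ errors.
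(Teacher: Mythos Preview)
Your core computation is correct and is exactly what the paper does: approximate $|N_x|$ and $|N_x\cap N_{x'}|$ via Proposition~\ref{bohrsizeLargeFC} (i.e.\ via~\eqref{QRbohrconclusion}), then use the hypothesis to force $(\nu,\lambda,\lambda')\in M\times\Lambda\times\Lambda$ so that the sum factors as $\delta_0\cdot\delta$ (resp.\ $\delta_0\cdot\delta^2$), where $\delta_0=\sum_{\nu\in M\cap[-K,K]^r}\prod c_{i,\nu_i}\approx|B(\Gamma;\rho')|/|G|$.

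However, your framing takes two unnecessary detours. First, Lemmas~\ref{appendknhoods}, \ref{genappendknhoods}, \ref{appendonesided} play no role here: properties \textbf{(i)} and \textbf{(ii)} \emph{are} the neighbourhood-size conditions, not statements about box-norm quasirandomness, so there is nothing to deduce from those lemmas. Second, the passage through an averaged bound followed by Markov is superfluous and costs you a $\sqrt\eta$ you then have to talk away. The paper's proof is cleaner: it observes that the hypothesis on pairs, by averaging (set $\lambda'=0$), yields the single-$x$ version for at least $1-\eta$ proportion of $x\in S$; for each such $x$ that \emph{also} satisfies~\eqref{wreg1piece}, the Fourier approximation is valid and the factorization gives the pointwise bound in \textbf{(i)} directly, with no Markov step. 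Likewise for pairs and~\textbf{(ii)}. One small slip: since $L_i'(x)=L_i(t+x)-\chi_i$ already, the neighbourhood is $B(L_1'(x),\dots,L_r'(x);\rho')$, not $B(L_1'(x)-\chi_1,\dots;\rho')$.
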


\begin{proof}Define $\delta_0$ as
\[\delta_0 = \sum_{\nu \in M \cap [-K, K]^r} c_{1, \nu_1} \dots c_{r, \nu_r}\]
and $\delta_1$ as
\[\delta_1 = \sum_{\lambda \in \Lambda \cap [-K, K]^r} c_{1, \lambda_1} \dots c_{r, \lambda_r}.\]

We note two consequences of~\eqref{latticeclaimproperty}. Namely, taking any $(x,x')$ satisfying~\eqref{latticeclaimproperty}, putting $\lambda = \lambda' = 0$ and noting that $0 \in \Lambda$, we deduce that $\nu \in [-K, K]^r$ and $\nu_1 \chi_1 + \dots + \nu_r \chi_r = 0$ imply $\nu \in M$. Furthermore, by averaging, there are at least $(1 - \eta)|S|$ elements $x \in S$ for which there exists $x' \in S$ such that the pair $(x, x')$ has the property~\eqref{latticeclaimproperty}. Taking $\lambda' = 0$, we see that if the equality
\[\nu_1 \chi_1 + \dots + \nu_r \chi_r + \lambda_1 L'_1(x) + \dots + \lambda_r L'_r(x) = 0\]
holds for some $\nu, \lambda\in [-K, K]^r$ then $\nu \in M$ and $\lambda \in \Lambda$. We now turn to the proof of the claim.\\

From inequalities~\eqref{wreg3piece} and~\eqref{QRbohrconclusion} we conclude that
\[\Big||B(\Gamma; \rho')| - \sum_{\nu \in [-K, K]^r} \id(\nu_1 \chi_1 + \dots + \nu_r \chi_r = 0)c_{1, \nu_1} \dots c_{r, \nu_r} |G|\Big| \leq \frac{\eta}{5}|G|.\]

As previously discussed, a special case of~\eqref{latticeclaimproperty} implies that all $\nu$ contributing to the sum belong to $M$. Thus, since $\nu \cdot \chi = 0$ for all $\nu \in M$,
\[\Big||B(\Gamma; \rho')| - \delta_0 |G|\Big| \leq \frac{\eta}{5}|G|.\]

Note that at least $1-\eta/5$ proportion of all $x \in S$ obey~\eqref{wreg1piece} and the second property implied by~\eqref{latticeclaimproperty}. For such an $x$ the Bohr set $B(\Gamma \cup \{L'_1(x), \dots, L'_r(x)\}; \rho')$ satisfies the condition~\eqref{wregconditionQR} so the inequality~\eqref{QRbohrconclusion} gives
\begin{align*}\Big||B(\Gamma \cup \{L'_1(x), \dots, L'_r(x)\}; (\rho', \rho'))|\, - \sum_{a, \nu \in [-K, K]^{[r]}} &\id\Big(\sum_{i \in [r]} \nu_i \chi_i + a_i L'_i(x) = 0\Big)\\
&\hspace{2cm} c_{1, \nu_1} \dots c_{r, \nu_r} c_{r+ 1, a_1} \dots c_{2r, a_r} |G|\Big| \leq \frac{\eta}{5}|G|,\end{align*}
while, using a special case of~\eqref{latticeclaimproperty} and the fact that $\lambda \cdot L'(x) = 0$ holds for all $\lambda \in \Lambda$ and $x \in S$, we have the equality
\begin{align*}&\sum_{a ,\nu\in [-K, K]^r} \id\Big(\sum_{i \in [r]} \nu_i \chi_i + a_i L'_i(x) = 0\Big) c_{1, \nu_1} \dots c_{r, \nu_r} c_{1, a_1} \dots c_{r, a_r} \\
&\hspace{2cm}=\sum_{a, \nu\in [-K, K]^r} \id(a \in \Lambda) \id(\nu \in M) c_{1, \nu_1} \dots c_{r, \nu_r} c_{1, a_1} \dots c_{r, a_r}\\
&\hspace{2cm}= \delta_0 \delta_1.\end{align*}
Thus,
\[\Big||B(\Gamma \cup \{L'_1(x), \dots, L'_r(x)\}; (\rho', \rho'))| - \delta_1|B(\Gamma; \rho')|\Big| \leq \frac{\eta}{5}|G|.\]

Finally, recall that $\chi_i = L_i(t)$ and note that $L'_i(x) = L_i(x + \tilde{a}) - L_i(\tilde{a}) = L_i(t + x) - L_i(t) = L_i(t + x) - \chi_i$. Hence
\[B(\Gamma \cup \{L'_1(x), \dots, L'_r(x)\}; (\rho', \rho')) = B(\Gamma; \rho') \cap B(L_1(t + x)-\chi_1, \dots, L_r(t + x)-\chi_r; \rho')\]
proving the first part of the claim, with $\delta = \delta_1$. The second property follows similarly.\end{proof}

Suppose that $t + S$ is not quasirandom in the sense that the properties \textbf{(i)} and \textbf{(ii)} do not hold simultaneously for the radius function $\rho'$. Then at least one of the two assumptions in Claim~\ref{qrsuffconds} fails, but in either case, we conclude that there are at least $\frac{\eta}{10}|S|^2$ pairs $(x, x') \in S \times S$ for which we have an equality
\[\sum_{i \in [r]} \nu_i \chi_i + \lambda_i L'_i(x) + \lambda'_i L'_r(x') = 0\]
where at least one of $\nu \notin M, \lambda \notin \Lambda$ and $\lambda' \notin \Lambda$ holds and all coefficients belong to $[-K, K]$. By averaging, we obtain such a linear combination that holds for at least $\frac{\eta}{10 (2K + 1)^{3r}}|S|^2$ pairs $(x,x')$ of elements in $S$. We fix such a linear combination.\\ 

Suppose first that $\lambda, \lambda' \in \Lambda$. Then $\sum_{i \in [r]} \lambda_i L'_i(x) + \lambda'_i L'_i(x') = 0$ so $\sum_{i \in [r]} \nu_i \chi_i = 0$, which means that $\nu \in M$ which is a contradiction. Therefore, without loss of generality, we may assume that $\lambda \notin \Lambda$. By averaging over $x' \in S$, we get such an element with $\lambda\cdot  L'(x) = -(\nu \cdot \chi + \lambda'\cdot  L'(x'))$ holding for at least $\frac{\eta}{10 (2K + 1)^{3r}}|S|$ choices of $x \in S$.\\
\indent Apply Lemma~\ref{fewvalsCPFLin} to Freiman-linear map $\lambda' \cdot L'$ on $S$ to find a further symmetric coset progression $S' \subseteq S$ of the same shape and of size 
\[|S'| \geq 2^{-O(d^2)}\Big(\frac{\eta}{10 (2K + 1)^{3r}}\Big)^{d+1} |S|\]
on which $\lambda \cdot L'$ vanishes. Take $S'$ in place of $S$ and $\Lambda' = \Lambda + \langle \lambda \rangle_{\mathbb{Z}}$ in place of $\Lambda$.\\

\indent Recall that $K$ is the same in all steps of the argument. Since the set $(\Lambda' \setminus \Lambda) \cap [-K, K]^r$ is non-empty at every step (as it contains the $r$-tuple $\lambda$ chosen above), Lemma~\ref{nestedLattices} bounds the number of steps in the proof by $O(r^2(\log r + \log K))$.\\

Once the desired coset progression $S$ has been found, we declare $C' = S$ and, as remarked at the beginning of the proof, we use Lemma~\ref{tilingLemma} to find a set of translates $T \subseteq C$ such that $|S||T| = |S + T| \geq (1 - \eta) |C|$ and $S + T \subseteq C$.\end{proof}

\section{Abstract Balog-Szemer\'edi-Gowers theorem}\label{absbsgsection}

The classical Balog-Szemer\'edi-Gowers theorem~\cite{BalogSzem, Gow4AP} shows that if a finite set $A$ inside an ambient abelian group has many additive quadruples then it has a large piece of small doubling. Its abstract version, proved in~\cite{newU4}, allows us to consider more general objects than elements of a group, to which Balog-Szemer\'edi-Gowers theorem in its usual form cannot be applied. For example, in~\cite{newU4}, we consider quadruples of partially-defined linear maps $\phi_x : U_x \to \mathbb{F}_p^n$, indexed by elements of the ambient vector space, with the notion of additive quadruple $(x, y, z, w)$, meaning $x - y + z - w = 0$, being replaced by $\phi_x - \phi_y + \phi_z - \phi_w = 0$ on $U_x \cap U_y \cap U_z \cap U_w$. Thus, in place of a group, we have a monoid of partially-defined linear maps, with addition being $(\phi_1: U_1 \to \mathbb{F}_p^n) + (\phi_2: U_2 \to \mathbb{F}_p^n)= (\phi_1 + \phi_2: U_1 \cap  U_2 \to \mathbb{F}_p^n)$.\\

To make sure that we do not cause confusion when working with additive quadruples, we introduce the additional notation that indicates the choice of signs that we have in mind. We write a bold dot above variable to indicate that we take it with negative sign. Namely, writing $(\upd{a}, b, \overset{\bcdot}{c}, d)$ means that we have $a + c = b + d$ and $(\overset{\bcdot}{a}, b, c, \overset{\bcdot}{d})$ means that we have $a + d = b + c$.\\

In comparison with its original version, we consider dense sets of small doubling, rather than dense sets of groups. 

\begin{theorem}[Abstract Balog-Szemer\'edi-Gowers theorem]\label{absg}
    \indent Let $X \subseteq G$ be a set such that $|X - X| \leq K|X|$ and let $A \subseteq X$. Suppose that, for each $i \in [36]$, we have a collection $\mathcal{Q}_i$ of additive quadruples in $A$, satisfying the following properties:
    \begin{itemize}
    \item[\textbf{(i)}] (largeness) $|\mathcal{Q}_1| \geq c |X|^3$,
    \item[\textbf{(ii)}] (symmetry) for each $i\in [36]$, if $(\upd a_1, a_2, a_3, \upd a_4) \in \mathcal{Q}_i$, then 
    \begin{itemize}
        \item[\textbf{(S1)}] $(\upd a_3,a_4,$ $ a_1, \upd a_2) \in \mathcal{Q}_i$, and
        \item[\textbf{(S2)}] $(\upd a_2, a_1, a_4, \upd a_3) \in \mathcal{Q}_i$,
        \item[\textbf{(S3)}] $(\upd a_1, a_3, a_2, \upd a_4) \in \mathcal{Q}_i$,
    \end{itemize} 
    \item[\textbf{(iii)}] (weak transitivity) for all indices $i, j \in [36], i + j \leq 36$, for any additive quadruple $(\upd a_1, a_2, a_3, \upd a_4) \in A^4$, if there are at least $c' |X|$ pairs $(b, b') \in A^2$ such that $(\upd a_1, a_2, b, \upd b') \in \mathcal{Q}_i$ and $(\upd b, b', a_3, \upd a_4) \in \mathcal{Q}_j$, then $(\upd a_1, a_2, a_3, \upd a_4) \in \mathcal{Q}_{i+j}$.
    \end{itemize}

    Fix an integer $k$. Then, provided $c' \leq (cK^{-1}/2)^{\blc_k}$ and $|X| \geq 8c^{-1} K$, there exists a subset $A' \subseteq A$, of size $|A'| \geq (c/2K)^{O(1)}|X|$, with the following property. For $\ell \in [k]$, given an $\ell$-tuple $a_{[\ell]} \in {A'}^{\ell}$, define recursively a collection $\mathcal{Z}_\ell^{(a_{[\ell]})}$ of $(3\ell)$-tuples in $A$, (depending on elements $a_{[\ell]}$) as follows. For $\ell = 1$, $\mathcal{Z}_1^{(a_1)}$ consists of all triples $(y_1, y_2, y_3) \in A^3$ such that $(\upd{a}_1, y_1, y_3, \upd{y}_2) \in \mathcal{Q}_{36}$. For $\ell \geq 2$, the collection $\mathcal{Z}_\ell^{(a_{[\ell]})}$ consists of all $3\ell$-tuples $(y_1, \dots, y_{3\ell}) \in A^{3\ell}$ such that 
    \begin{itemize}
        \item $\sum_{i \in [\ell]} (-1)^i a_i = \sum_{i \in [3\ell]} (-1)^i y_i$,
        \item $(\upd{a_{\ell}}, a_{\ell} + y_{3\ell -1} - y_{3\ell}, y_{3\ell}, \upd{y_{3\ell - 1}})\in \mathcal{Q}_{36}$,
        \item $\Big((y_{3\ell-3})^\bcdot, y_{3\ell-3} - y_{3\ell -2} + y_{3\ell -1} - y_{3\ell} + a_{\ell}, y_{3\ell - 2}, (y_{3\ell - 1} - y_{3\ell} + a_{\ell })^\bcdot\Big)\in \mathcal{Q}_{36}$, and,
        \item $(y_1, \dots, y_{3\ell - 4}, a_\ell + y_{3\ell - 3} - y_{3\ell - 2} + y_{3\ell - 1} - y_{3\ell}) \in \mathcal{Z}_{\ell - 1}^{(a_{[\ell - 1]})}$.
    \end{itemize}
    Then, for all $\ell \in [k]$, we have $|\mathcal{Z}_\ell^{(a_{[\ell]})}| \geq (c/2K)^{O_\ell(1)} |X|^{3\ell - 1}$.
\end{theorem}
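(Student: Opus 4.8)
The plan is to follow the classical Balog--Szemer\'edi--Gowers strategy through a dependent-random-choice/graph-theoretic argument, but carried out entirely in terms of the abstract collections $\mathcal{Q}_i$ of additive quadruples rather than in terms of sumsets, and with the ambient set $X$ of doubling $K$ playing the role of the ``host group''. First I would form an auxiliary bipartite-type graph on $A$ (or rather a relation recording, for a pair $(a,a')$, how many $b$ with $(\upd a, b, a', \upd{\cdot})$-type configurations lie in $\mathcal{Q}_1$), and observe that the largeness hypothesis $|\mathcal{Q}_1| \geq c|X|^3$ together with $|X - X| \leq K|X|$ forces this relation to be dense after restricting to a popular piece. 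Concretely, an averaging argument (Cauchy--Schwarz in the style of the usual BSG proof) produces a subset $A' \subseteq A$ with $|A'| \geq (c/2K)^{O(1)}|X|$ such that for \emph{every} pair $a_1, a_2 \in A'$ there are $\gtrsim (c/2K)^{O(1)}|X|$ elements $b$ (and correspondingly many witnesses) with both $(\upd a_1, b, \cdot, \cdot)$ and $(\upd b, \cdot, a_2, \cdot)$ lying in $\mathcal{Q}_1$ --- this is exactly the ``every pair is connected by many paths of length two through $\mathcal{Q}_1$'' statement, and it is here that the symmetry properties (S1)--(S3) are used to make the path concatenation symmetric in its endpoints.

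Next I would set up the recursion on $\ell$. The base case $\ell = 1$: for $a_1 \in A'$, the collection $\mathcal{Z}_1^{(a_1)}$ of triples $(y_1,y_2,y_3)$ with $(\upd a_1, y_1, y_3, \upd y_2) \in \mathcal{Q}_{36}$ must have size $\gtrsim (c/2K)^{O(1)}|X|^2$; this follows because $\mathcal{Q}_1 \subseteq \mathcal{Q}_{36}$ and because the popularity extracted in the first step guarantees many such triples pass through $a_1$ (using weak transitivity with $i=j$ small to promote $\mathcal{Q}_1$-configurations into $\mathcal{Q}_{36}$, together with $|X-X| \leq K|X|$ to control how many $y_2$ are compatible with a given $y_1, y_3$). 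For the inductive step, given $a_{[\ell]} \in (A')^\ell$, I would build a $(3\ell)$-tuple in $\mathcal{Z}_\ell^{(a_{[\ell]})}$ by: (a) choosing a ``connector'' $w$ with the property that $a_\ell$ and $w$ are joined by many $\mathcal{Q}_{36}$-paths --- this gives the two $\mathcal{Q}_{36}$-membership clauses involving $y_{3\ell-3}, \dots, y_{3\ell}$ and $a_\ell$, via weak transitivity applied to indices summing to at most $36$; and (b) appending, to the remaining coordinates, a tuple from $\mathcal{Z}_{\ell-1}^{(a_{[\ell-1]})}$ whose final entry is forced to equal $a_\ell + y_{3\ell-3} - y_{3\ell-2} + y_{3\ell-1} - y_{3\ell}$. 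Counting: there are $\gtrsim (c/2K)^{O(1)}|X|$ valid ``connector'' configurations by the length-two path density, each yielding a specified value for the last entry of the required $\mathcal{Z}_{\ell-1}$-tuple, and by the inductive hypothesis (applied with that specified last coordinate, again controlling the count via $|X-X| \leq K|X|$ so that a typical forced value is attained $\gtrsim |X|^{3(\ell-1)-2}$ times) each contributes $\gtrsim (c/2K)^{O_\ell(1)}|X|^{3\ell - 4}$ completions. Multiplying gives $|\mathcal{Z}_\ell^{(a_{[\ell]})}| \geq (c/2K)^{O_\ell(1)}|X|^{3\ell - 1}$, with the exponent of $(c/2K)$ and the index bound $36$ chosen large enough to absorb the $k$-fold iteration --- this is why $c' \leq (cK^{-1}/2)^{\blc_k}$ and why $36 = 2 \cdot (\text{depth needed per step})$ suffices.

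The main obstacle I expect is the bookkeeping in the inductive step: making sure that (i) the four explicit membership/identity clauses in the definition of $\mathcal{Z}_\ell^{(a_{[\ell]})}$ are precisely the ones delivered by one application of weak transitivity to the connector configuration plus one invocation of the inductive hypothesis, so that no configuration in $\mathcal{Q}_{j}$ with $j > 36$ is ever needed; and (ii) the quantitative losses are genuinely only polynomial in $c/2K$ at each step and only the \emph{constant in the exponent} (not the exponent of $|X|$) degrades with $\ell$, which requires being careful that every averaging/Cauchy--Schwarz is over a set of size $\Theta(|X|)$ rather than $\Theta(|A'|)$ where possible, and that $|X - X| \leq K|X|$ is used (rather than, say, $|A+A|$ bounds) to bound the number of representations of a fixed value as a signed sum of $A'$-elements. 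A secondary subtlety is verifying that (S1)--(S3) generate enough of the symmetric group on the four coordinates of an additive quadruple to let us freely reorder endpoints and internal vertices of the paths we concatenate; this should be a short closure computation. Once these are pinned down, the passage from groups (as in \cite{newU4}) to small-doubling sets $X$ is routine: every place the original proof used exact cancellation in a group, one instead pays a factor bounded by $K$ coming from $|X - X| \leq K|X|$, and the hypothesis $|X| \geq 8c^{-1}K$ ensures the relevant sets are non-empty throughout.
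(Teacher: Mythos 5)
Your proposal misses the central structural device of the paper's proof: a \emph{two-level} application of Gowers's paths lemma (the statement that a dense graph contains a vertex subset any two of whose members are joined by many paths of length~$6$), which is what gets you from $\mathcal{Q}_1$ all the way to $\mathcal{Q}_{36}$. You write that the first step produces a subset $A'$ in which every pair is connected by ``many paths of length two through $\mathcal{Q}_1$'' and that weak transitivity then ``promotes $\mathcal{Q}_1$-configurations into $\mathcal{Q}_{36}$''. That is not how the indices add: one application of weak transitivity on two $\mathcal{Q}_1$-configurations lands you in $\mathcal{Q}_2$, not $\mathcal{Q}_{36}$. To reach index~$36$ you need to chain six $\mathcal{Q}_1$-links into a $\mathcal{Q}_6$-configuration, and then chain six $\mathcal{Q}_6$-links into a $\mathcal{Q}_{36}$-configuration; $36 = 6\times 6$, not $36 = 2\cdot\text{(depth)}$. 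The paper achieves this by first working in the auxiliary graphs $\Gamma_d$ (one per popular difference $d \in X-X$) on pairs of the same difference, where Gowers's lemma yields subsets $P_d$ with $6$-path-connectedness and hence $\mathcal{Q}_6$-respectedness between any two pairs in $P_d$ (this is where $|X-X|\leq K|X|$ enters, to make $D$ and the $\Gamma_d$ dense), and then assembles the $P_d$ into a second graph $\Pi$ on $A$, applies Gowers's lemma again, and turns $6$-paths in $\Pi$ --- whose edges already carry $\mathcal{Q}_6$ --- into $\mathcal{Q}_{36}$-respected quadruples connecting all sufficiently dense subsets of the resulting set $B$ (this is the ``arithmetically rich'' Claim~\ref{absBSGarithrich}). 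Your proposal contains no analogue of either graph, no invocation of a paths lemma, and no mechanism that actually accumulates index up to~$36$, so as written the base case $\ell = 1$ (which already requires $\mathcal{Q}_{36}$-membership for the triples in $\mathcal{Z}_1^{(a_1)}$) is not established.

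A secondary but real gap is in the inductive step. The paper's recursion does not simply ``choose a connector $w$ and append a $\mathcal{Z}_{\ell-1}$-tuple'': it applies Claim~\ref{absBSGarithrich} to two carefully chosen \emph{dense} subsets $W$ and $W'$ of $B$ --- $W$ being the popular last coordinates of tuples already in $\mathcal{Z}_\ell^{(a_{[\ell]})}$ and $W'$ the popular first coordinates of triples in $\mathcal{Z}_1^{(a_{\ell+1})}$ --- and it is precisely the fact that $W, W'$ can be any dense subsets (not merely singletons) that makes the $\mathcal{Q}_{36}$-bridge available with the correct multiplicity $\gtrsim (c/2K)^{O_\ell(1)}|X|^3$; this is also where the shrinking $c' \leq (cK^{-1}/2)^{\blc_k}$ is consumed. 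Your sketch of the inductive step does not explain which dense sets are fed to the bridging step, nor does it verify that the four explicit clauses in the definition of $\mathcal{Z}_{\ell+1}^{(a_{[\ell+1]})}$ are what the bridge actually delivers, which is exactly the bookkeeping you flagged as the ``main obstacle'' but did not resolve.
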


\noindent\textbf{Remark.} We use the weak transitivity property for pairs $(\mathcal{Q}_1, \mathcal{Q}_i)$ for $i \in [5]$, and $(\mathcal{Q}_6, \mathcal{Q}_{6i})$ for $i \in [5]$. It is likely possible to modify the proof so that we need only $(\mathcal{Q}_1, \mathcal{Q}_i)$ for $i \in [35]$, but we do not require precise control over pairs $(\mathcal{Q}_i, \mathcal{Q}_j)$ to which weak transitivity property is applied in this paper.
\vspace{\baselineskip}

The following lemma, stemming from the arguments of Gowers in~\cite{Gow4AP}, shows that dense graphs contain robustly connected subgraphs. Similar statements can be found in~\cite{SudSzeVu}. The formulation below is taken from~\cite{newU4}, where it appears as Lemma 6. 

\begin{lemma}\label{gowerspathssingle}Let $G$ be a graph on $n$ vertices with at least $cn^2$ edges. Then there exists a  subset of vertices $X$ of size at least $2^{-5} c n$ with the property that there are at least $2^{-35} c^9 n^5$ paths of length 6 between any two vertices in $X$.\end{lemma}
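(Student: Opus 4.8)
\textbf{Proof plan for Lemma~\ref{gowerspathssingle}.}

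The plan is to follow the standard Balog--Szemer\'edi--Gowers style "dependent random choice / path-counting" argument, keeping quantitative control at each stage. First I would clean up the graph: repeatedly delete vertices of degree less than $\tfrac{c}{2}n$. Since each deletion removes fewer than $\tfrac{c}{2}n$ edges and there are at most $n$ deletions, fewer than $\tfrac{c}{2}n^2$ edges are lost, so the resulting induced subgraph $G'$ (on a vertex set $V'$) still has at least $\tfrac{c}{2}n^2$ edges and minimum degree at least $\tfrac{c}{2}n$; in particular $|V'| \geq \tfrac{c}{2}n$, and I will work inside $G'$ from now on, noting that degrees are computed in the original graph so bounds only improve.

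Next I would pass to a set of vertices that are pairwise well-connected by paths of length $2$. Pick a vertex $w$ uniformly at random and let $X = N(w)$ be its neighbourhood. For a fixed pair $u,v$, the number of common neighbours $|N(u)\cap N(v)|$ is (by Cauchy--Schwarz / convexity applied to the degree sequence, exactly as in~\cite{Gow4AP}) at least $\tfrac{c^2}{8}n$ on average, so the number of "bad" pairs $u,v$ with $|N(u)\cap N(v)| < \tfrac{c^2}{16}n$ contributes little; a Markov-type argument shows that with positive probability over the choice of $w$, the set $X = N(w)$ has size at least $\tfrac{c}{2}n$ and contains at most, say, a $\tfrac{1}{10}$-fraction of its pairs being bad. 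Then one further deletes the vertices of $X$ incident to too many bad pairs; this removes at most half of $X$, leaving a set, which I rename $X$, of size at least $2^{-5}cn$ such that \emph{every} pair $u,v \in X$ has $|N(u)\cap N(v)| \geq 2^{-4}c^2 n$. (The constants here are generous and easily absorbed into the claimed $2^{-5}$ and $2^{-35}$.)

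Finally I would bootstrap paths of length $2$ into paths of length $6$. Fix $u,v \in X$. A path of length $6$ from $u$ to $v$ can be built as a path of length $2$ from $u$ to some $a$, then a path of length $2$ from $a$ to some $b$, then a path of length $2$ from $b$ to $v$, for suitable intermediate vertices $a,b$; since $u,a$ need not lie in $X$ I instead route through common neighbours directly: choose $a \in N(u)$, then $b$ a common neighbour of $a$ and $v$, etc. Concretely, the number of length-$6$ walks from $u$ to $v$ is at least the number of choices of $(x_1,x_2,x_3,x_4,x_5)$ with consecutive adjacencies; using minimum degree $\tfrac{c}{2}n$ for the "free" steps and the common-neighbour bound $2^{-4}c^2 n$ for the steps that must close up, one gets at least $c^9 n^5 \cdot 2^{-O(1)}$ such walks, and the number of degenerate walks (with a repeated vertex) is at most $O(n^4) \cdot (\text{max degree})$, which is negligible compared to $c^9 n^5$ provided $n$ is large enough in terms of $c$ (for small $n$ the statement is vacuous or trivial). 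This yields at least $2^{-35}c^9 n^5$ genuine paths of length $6$ between any two vertices of $X$, as required.

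The main obstacle is purely bookkeeping: tracking the constants through the two rounds of vertex-deletion and the path-counting so that the final bounds come out as the stated $2^{-5}cn$ and $2^{-35}c^9 n^5$ rather than merely $\Omega_c(1)$ versions, and making sure the degenerate-walk error term is genuinely lower order. Since the lemma is quoted verbatim from~\cite{newU4} (its Lemma~6) and ultimately from Gowers~\cite{Gow4AP}, I would in the actual write-up simply cite those sources; the sketch above is the route one would take to reprove it from scratch.
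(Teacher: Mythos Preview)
Your final recommendation—to cite \cite{newU4} (Lemma 6) and \cite{Gow4AP}—is exactly what the paper does; it gives no proof of this lemma.

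Your sketch is essentially the standard argument, but step 3 has a gap as written. The common-neighbour bound $|N(u)\cap N(v)| \geq 2^{-4}c^2 n$ that you obtain holds only for pairs $u,v$ \emph{both lying in $X$}, yet the intermediate vertices $x_1,\dots,x_5$ of a length-6 path need not lie in $X$, so you cannot invoke that bound for an arbitrary step that ``must close up'' (e.g.\ for the pair $(a,v)$ in your description). The clean fix is to count only those length-6 walks $u\text{--}x_1\text{--}x_2\text{--}x_3\text{--}x_4\text{--}x_5\text{--}v$ with $x_2,x_4 \in X$: then each of the three length-2 hops $u\to x_2$, $x_2\to x_4$, $x_4\to v$ is between vertices of $X$, giving at least $2^{-4}c^2 n$ choices each for $x_1,x_3,x_5$, and there are at least $|X|^2 \geq 2^{-10}c^2 n^2$ choices for $(x_2,x_4)$, yielding at least $2^{-22}c^8 n^5$ walks—comfortably beating the stated $2^{-35}c^9 n^5$. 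After that, the degenerate-walk subtraction is indeed lower order, as you say.
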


We proceed to prove Theorem~\ref{absg}.

\begin{proof}[Proof of Theorem~\ref{absg}]
    In the proof, we focus on the set of (ordered) pairs $A^2$. We say that a pair of pairs $\Big((x, x'), (y, y')\Big)$ is \textit{good} if $(\upd x, x', y, \upd{y}{}')$ is an additive quadruple in $\mathcal{Q}_1$. By the largeness assumption, there are at least $c|X|^3$ good pairs of pairs. For $d \in X - X$, let us write $\pi(d)$ for the number of good pairs of pairs of the shape $\Big((x' + d, x'), (y' + d, y')\Big)$, i.e. pairs of pairs of difference $d$. There are at most $|X|^2$ choices for $(x', y')$, so $\pi(d) \leq |X|^2$. On the other hand, since every good pair of pairs $\Big((x, x'), (y, y')\Big)$ can pe put in the form above with $d = x - x'$, we get $\sum_{d \in X- X} \pi(d) \geq c|X|^3$. Recalling the assumption $|X - X| \leq K |X|$, by averaging, we find a set $D \subseteq X - X$ of size $|D| \geq \frac{c}{2}|X|$ such that there are at least $\frac{c}{2K}|X|^2$ good pairs of pairs of the form $\Big((x + d, x), (y + d, y)\Big)$.\\

    Note that the symmetry assumption \textbf{(S1)} implies that if $\Big((x + d, x), (y + d, y)\Big)$ is a good pair of pairs, so is $\Big((y + d, y), (x + d, x)\Big)$. Hence, for any given $d \in D$, we may define the graph $\Gamma_d$, whose vertex set is $V_d = \{(x + d, x) : x \in A \cap A - d\}$ and edges are $(x + d, x)(y+d,y)$ for all good pairs of pairs $\Big((x + d, x), (y + d, y)\Big)$. In particular, $\Gamma_d$ is a graph on a vertex set of size at most $|X|$ consisting of some pairs in $A^2$. Due to the definition of $D$, there are at least $\frac{c}{4K}|X|^2 - |X|$ edges, as each edge comes from at most two pairs of pairs and we need to ignore pairs of the form $\Big((x + d, x), (x + d, x)\Big)$. The bound on the number of edges also ensures that $|V_d| \geq \frac{c}{4K}|X| - 1$. Since $|X| \geq 8c^{-1} K$, we may simplify the bounds above. Namely, we see that there are at least $\frac{c}{8K}|X|^2$ edges and $|X| \geq |V_d| \geq \frac{c}{8K} |X|$.\\

    For each $d \in D$, apply Lemma~\ref{gowerspathssingle} to graph $\Gamma_d$ to obtain a set of pairs $P_d \subseteq V_d$ of size $|P_d| \geq \Omega(c^2K^{-2})|X|$ such that, for any pairs $(x + d, x), (y + d, y) \in P_d$, there exists at least $\Omega(c^{14}K^{-14})|X|^5$ 6-paths $(x + d, x), (z_1 + d, z_1), \dots, (z_5 + d, z_5), (y + d, y)$ in the graph $\Gamma_d$. In other words, any two consecutive pairs in the sequence above give a good pair of pairs. 

    \begin{claim}\label{sixtupleschainclaim}
        Given any two pairs $(x + d, x), (y + d, y) \in P_d$, we have $(\upd x + d, x, y + d, \upd y) \in \mathcal{Q}_6$, provided $c' \leq \bsc\, c^{14}K^{-14}$.
    \end{claim}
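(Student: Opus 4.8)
The plan is to turn the $6$-path in the graph $\Gamma_d$ into a chain of good pairs of pairs, then iterate the weak transitivity property five times along that chain. Fix $d \in D$ and two pairs $(x+d,x),(y+d,y) \in P_d$. By the conclusion of Lemma~\ref{gowerspathssingle} applied to $\Gamma_d$, there are at least $\Omega(c^{14}K^{-14})|X|^5$ choices of $z_1,\dots,z_5$ with $(z_i+d,z_i)\in V_d$ such that each consecutive pair along $(x+d,x),(z_1+d,z_1),\dots,(z_5+d,z_5),(y+d,y)$ is an edge of $\Gamma_d$, i.e.\ a good pair of pairs, which by definition means
\[(\upd{x}+d, x, z_1+d, \upd{z_1}) \in \mathcal{Q}_1, \quad (\upd{z_1}+d, z_1, z_2+d, \upd{z_2}) \in \mathcal{Q}_1, \quad \dots, \quad (\upd{z_5}+d, z_5, y+d, \upd{y}) \in \mathcal{Q}_1.\]
The key observation is that these six quadruples form exactly the kind of chain to which weak transitivity (property \textbf{(iii)}) applies: the ``middle'' pair of one quadruple matches the ``outer'' pair of the next, with the common element being $(z_i+d, z_i)$.

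First I would verify that a single application of weak transitivity merges two consecutive links. Take the first two links $(\upd{x}+d, x, z_1+d, \upd{z_1}) \in \mathcal{Q}_1$ and $(\upd{z_1}+d, z_1, z_2+d, \upd{z_2}) \in \mathcal{Q}_1$. Here one needs the pair $(b,b') = (z_1+d, z_1)$ to play the role of the intermediate pair in property \textbf{(iii)} with $(a_1,a_2,a_3,a_4) = (x+d, x, z_2+d, z_2)$, which is indeed an additive quadruple since $(x+d)+(z_2+d) \ne$ --- wait, we need $(x+d)+ z_2 = x + (z_2+d)$, which holds. Then for the weak transitivity hypothesis to fire we need at least $c'|X|$ choices of $(b,b')$ with $(\upd{x}+d, x, b, \upd{b'}) \in \mathcal{Q}_1$ and $(\upd{b}, b', z_2+d, \upd{z_2}) \in \mathcal{Q}_1$ (using symmetry \textbf{(S1)} to flip the second quadruple into the form $(\upd{z_1}+d,z_1,z_2+d,\upd{z_2}) \mapsto (\upd{z_2}+d, z_2, z_1+d, \upd{z_1})$ and back as needed so the intermediate variable sits in the right slot). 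This is where the large path count is used: the number of $6$-paths being $\Omega(c^{14}K^{-14})|X|^5$ forces, by averaging over the choices of $z_2,\dots,z_5$, that for many prefixes there are $\gg c' |X|$ valid intermediate pairs $z_1$, as long as $c' \le \bsc\, c^{14}K^{-14}$.

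More precisely, I would argue by induction along the path: having merged the first $j$ links into a single quadruple in $\mathcal{Q}_j$ connecting $(x+d,x)$ to $(z_j+d, z_j)$, I merge in the $(j+1)$-st link $(\upd{z_j}+d, z_j, z_{j+1}+d, \upd{z_{j+1}}) \in \mathcal{Q}_1 \subseteq \mathcal{Q}_1$ by applying weak transitivity to the pair of indices $(j,1)$. To supply the required $c'|X|$ intermediate pairs $(b,b')$, I fix a ``typical'' completion $z_{j+1}, \dots, z_5$ (chosen so that the number of remaining $6$-paths is still $\Omega(c^{14}K^{-14})|X|^5$ restricted to that completion, by a pigeonhole over the $|X|^{5-j}$ completions) and observe that the count of valid $z_j$ for that completion is $\gg c^{14}K^{-14}|X|$; each such $z_j$ gives a good pair of pairs $(z_j+d, z_j)$ with both $(x+d,x)$-side (by inductive hypothesis after symmetry manipulations) and $(z_{j+1}+d,z_{j+1})$-side conditions, so as long as $c' \le \bsc\, c^{14}K^{-14}$ the hypothesis of \textbf{(iii)} is met and we obtain a quadruple in $\mathcal{Q}_{j+1}$. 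After five merges we reach $(\upd{x}+d, x, y+d, \upd{y}) \in \mathcal{Q}_6$.

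The main obstacle I anticipate is the bookkeeping with the symmetry properties \textbf{(S1)}--\textbf{(S3)}: weak transitivity is stated for a very specific placement of the intermediate pair $(b,b')$ inside the two quadruples, whereas the edges of $\Gamma_d$ come with the intermediate element in whichever slot the path orientation puts it. One has to repeatedly apply the permutations from \textbf{(ii)} to bring each consecutive pair of links into the exact shape $\big((\upd{a}_1, a_2, b, \upd{b'}) \in \mathcal{Q}_i,\ (\upd{b}, b', a_3, \upd{a}_4) \in \mathcal{Q}_j\big)$ required by \textbf{(iii)}, and to check that the resulting merged quadruple $(\upd{a}_1, a_2, a_3, \upd{a}_4)$ is again in the canonical form the next step expects. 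This is routine but must be done carefully; the arithmetic that the various sums $x+d, x, z_i+d, z_i$ form genuine additive quadruples at each stage is automatic since every pair in $V_d$ has the fixed difference $d$. The quantitative side --- tracking that after five iterations the constant is still a fixed power of $c/K$ (in fact polynomially bounded, with $c' \le \bsc\, c^{14}K^{-14}$ amply sufficient) --- is straightforward since each step only invokes \textbf{(iii)} once and loses no more than a constant factor in the path count.
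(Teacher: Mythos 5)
Your proposal is correct and matches the paper's proof essentially step for step: the same induction along the $6$-path, the same averaging over the path suffix to guarantee $\Omega(c^{14}K^{-14})|X| \geq c'|X|$ intermediate pairs $(z_j+d, z_j)$, and the same single invocation of weak transitivity (with indices $(j,1)$) at each of the five merge steps. Your anticipated obstacle about symmetry bookkeeping is in fact a non-issue here, since each consecutive pair of path edges already sits in the exact form $\bigl((\upd{a}_1, a_2, b, \upd{b'}), (\upd{b}, b', a_3, \upd{a}_4)\bigr)$ required by property \textbf{(iii)} once \textbf{(S1)} has been used to symmetrize the edge set of $\Gamma_d$, as the paper observes before the claim.
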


    \begin{proof}
        By induction on $i \in [6]$ we show that there are least $\Omega(c^{14}K^{-14})|X|^{6-i}$  choices of $(x_i, x_{i + 1}, \dots, x_5) \in X^{6-i}$ such that $(\upd x + d, x, x_i + d, \upd x_i) \in \mathcal{Q}_i$ and $(\upd x_i + d, x_i, x_{i + 1} + d, \upd x_{i+1}), \dots, (\upd x_5 + d, x_5, y + d, \upd y) \in \mathcal{Q}_1$. The case $i = 6$ is interpreted as $(\upd x + d, x, y + d, \upd y) \in \mathcal{Q}_6$. Note that the base case holds by properties of $P_d$.\\

        Suppose now that the claim holds for some $i \leq 5$. Let $T$ be the collection of tuples $(x_i, x_{i + 1}, \dots, x_5) \in X^{6-i}$ above. By averaging, there are at least $\Omega(c^{14}K^{-14})|X|^{6-i - 1}$ choices of $(x_{i + 1}, \dots, x_5) \in X^{6-i - 1}$ such that $(x_i, x_{i + 1}, \dots, x_5) \in T$ for at least $\Omega(c^{14}K^{-14})|X|$ elements $x_i \in X$. But, then $(\upd x + d, x, x_i + d, \upd x_i) \in \mathcal{Q}_i$ and $(\upd x_i + d, x_i, x_{i + 1} + d, \upd x_{i+1}) \in \mathcal{Q}_1$ holds for at least $\Omega(c^{14}K^{-14})|X|$ choices of $x_i$. By the weak transitivity, we have $(\upd x + d, x, x_{i + 1} + d, \upd x_{i + 1}) \in \mathcal{Q}_{i + 1}$. This completes the proof of the inductive step.
    \end{proof}

    Next, we use the pairs we obtained above to define another graph, this time on the vertex set $A$. To ensure that edges are symmetric, we choose a single $d \in D$ for each pair $\{d, -d\}$ and then redefine $P_{-d}$ to be the set of all $(x, x + d)$ for all $(x + d, x) \in P_d$. Note that every pair of pairs in $P_{-d}$ still belongs to $\mathcal{Q}_6$ due to symmetry assumption \textbf{(S2)}. Thus, define $P$ to be the union of all $P_d$, which is symmetric, has size $|P| \geq \Omega(c^3K^{-3})|X|^2$, and 
    \begin{equation} \text{whenever }(x, x'), (y, y') \in P\text{ have the same difference }x - x' = y - y'\text{, then }(\upd x, x', y, \upd{y} {}') \in \mathcal{Q}_6.\label{ppairsproperty}\end{equation}

    Define a graph $\Pi$ whose vertex set is $A$ and edges are pairs in $P$. By Lemma~\ref{gowerspathssingle}, we find a set $B \subseteq A$ of size $c_1 |X|$ such that there are at least $c_2|X|^5$ 6-paths in graph $\Pi$ between any two vertices in $B$, where $c_1, c_2 \geq \Omega((cK^{-1})^{O(1)})$.\\

    The key property of the set $B$ is that is arithmetically rich everywhere, namely, that there are many additive quadruples belonging to $\mathcal{Q}_{36}$ in all of its sufficiently dense subsets.

    \begin{claim}\label{absBSGarithrich}
        Let $\beta_1, \beta_2 > 0$. Assume $c' \leq \bsc\,(\beta_1 \beta_2 c_2)^2K^{-6}$. Suppose that $B_1, B_2 \subseteq B$ are subsets of sizes $\beta_1|X|$ and $\beta_2|X|$. Then there exist at least $c_3(\beta_1\beta_2c_2)^2 K^{-6}|X|^3$ additive quadruples $(\upd{b_1}, b'_1, b_2, \upd{b_2} {}') \in (B_1 \times B_1 \times B_2 \times B_2) \cap \mathcal{Q}_{36}$, for some $c_3 \geq \Omega(1)$.
    \end{claim}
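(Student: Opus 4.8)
The plan is to prove Claim~\ref{absBSGarithrich} via a path-counting argument, exactly analogous to the passage from $P$ to the set $B$, but now run inside the dense subsets $B_1, B_2$ of $B$. Recall that between any two vertices of $B$ there are at least $c_2 |X|^5$ paths of length $6$ in the graph $\Pi$ whose edges are pairs in $P$. First I would fix $b_1 \in B_1$ and $b_2' \in B_2$ and count $6$-paths $b_1 = v_0, v_1, \dots, v_6 = b_2'$ in $\Pi$; since $B$ is robustly path-connected, there are $\ge c_2|X|^5$ such paths for each choice. Summing over $b_1 \in B_1$ and $b_2' \in B_2$ gives at least $c_2 \beta_1 \beta_2 |X|^7$ such paths with prescribed endpoints in $B_1, B_2$. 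Now I want to extract an additive quadruple in $\mathcal Q_{36}$ from a single path. The point is that a $6$-path gives three consecutive edges $v_0 v_2$ (via $v_1$), $v_2 v_4$ (via $v_3$), $v_4 v_6$ (via $v_5$) — more precisely each edge $v_{i}v_{i+1}$ of $\Pi$ is a pair in $P$, and I should think of the $6$-path as being assembled out of its edges, each of which lies in some $P_d$.

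The heart of the argument is the following: by~\eqref{ppairsproperty}, whenever two edges of $P$ share the \emph{same difference}, the two pairs form an additive quadruple in $\mathcal Q_6$. So I would parametrise edges of $P$ by their difference $d$ and their base point. Given a $6$-path from $b_1$ to $b_2'$ in $\Pi$, consider the two pairs $(b_1, z)$ and $(z', b_2')$ arising from looking at the path as concatenation of two $3$-paths through a midpoint; more carefully, I would count, for fixed $b_1 \in B_1$, $b_2' \in B_2$ and a midpoint $w$, the number of $3$-paths from $b_1$ to $w$ and from $w$ to $b_2'$, and use a Cauchy--Schwarz / popularity argument to find a midpoint $w$ such that both counts are large, so that the pair of pairs $\big((b_1, \cdot), (\cdot, w)\big)$ can be completed. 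The cleanest route, mirroring Claim~\ref{sixtupleschainclaim}: by induction on the length of the path, using the weak transitivity property with the pairs $(\mathcal Q_6, \mathcal Q_{6i})$, I would show that a $6$-path in $\Pi$ between $b_1$ and $b_2'$ such that all six edges lie in $P$ witnesses $(\upd{b_1}, w_1, b_2', \upd{w_1'})$-type relations, eventually landing in $\mathcal Q_{36} = \mathcal Q_{6\cdot 6}$. Concretely, each edge of $P$ between pairs of equal difference gives a $\mathcal Q_6$-quadruple by~\eqref{ppairsproperty}, and chaining six of them through weak transitivity upgrades $\mathcal Q_6 \to \mathcal Q_{12} \to \dots \to \mathcal Q_{36}$, provided the relevant intermediate popularity bounds hold, which is exactly where the hypothesis $c' \le \bsc (\beta_1\beta_2 c_2)^2 K^{-6}$ is consumed.

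In more detail, I would: (1) fix the two $3$-path halves; for a random midpoint $w$, the number of pairs of $3$-paths $(b_1 \to w, w \to b_2')$ with $b_1 \in B_1, b_2' \in B_2$ is, by Cauchy--Schwarz over $w$, at least $\Omega((\beta_1\beta_2 c_2)^2 K^{-O(1)})|X|^{?}$ after accounting for the $\le |X|$ choices of $w$ and the doubling bound $|X-X| \le K|X|$; (2) for a popular $w$, the $3$-path from $b_1$ to $w$ can be condensed, via two applications of weak transitivity using $\mathcal Q_1$-edges inside $P$ (each of which lies in $\mathcal Q_6$), into a single relation $(\upd{b_1}, u, w, \upd{u'}) \in \mathcal Q_{18}$ for many $(u,u')$, and similarly for the other half; (3) one more application of weak transitivity with the pair $(\mathcal Q_{18}, \mathcal Q_{18})$ yields $(\upd{b_1}, b_1 + w - w', w, \upd{w'})$-type quadruples, and re-indexing/relabelling via the symmetry assumptions \textbf{(S1)}--\textbf{(S3)} puts them in the form $(\upd{b_1}, b_1', b_2, \upd{b_2'}) \in \mathcal Q_{36}$ with $b_1' \in B_1$, $b_2 \in B_2$; (4) counting the number of such quadruples produced, keeping track of the losses, gives the claimed bound $c_3 (\beta_1\beta_2 c_2)^2 K^{-6}|X|^3$.

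The main obstacle I anticipate is bookkeeping the loss of additive structure: the $6$-paths in $\Pi$ carry no a priori additive relation between endpoints, and the only mechanism for generating relations in $\mathcal Q_i$ is~\eqref{ppairsproperty} (equal difference $\Rightarrow \mathcal Q_6$) together with weak transitivity, which requires a \emph{popularity} (many $(b,b')$) hypothesis at each step rather than a single witness. So the delicate part is to organise the path count so that at each of the $O(1)$ chaining steps one still has $\ge c'|X|$ popular intermediate pairs; this forces $c'$ to be polynomially small in $\beta_1\beta_2 c_2$ and $K^{-1}$, which is precisely the stated hypothesis, and it is where one must be careful not to lose more than a bounded power. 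The symmetry assumptions \textbf{(S1)}--\textbf{(S3)} will be needed to freely relabel which of the four coordinates of a quadruple lands in $B_1$ versus $B_2$ and with which sign, so I would invoke them as needed at the end; everything else is a routine (if somewhat lengthy) iteration of the kind already carried out in Claim~\ref{sixtupleschainclaim}.
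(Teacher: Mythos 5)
Your opening move (sum the $\ge c_2|X|^5$ six-path count over $b_1 \in B_1$, $b_2' \in B_2$) matches the paper, but after that the argument diverges in a way that breaks. The paper's crucial step is to apply Cauchy--Schwarz in the variables $(d_1,\dots,d_6)$, i.e.\ over the \emph{entire difference sequence} of the 6-path, using $|X-X|\le K|X|$ on each of the six factors; this is exactly what produces the $K^{-6}$ and, more importantly, what yields \emph{pairs of parallel 6-paths} $(b_1, b_1-d_1,\dots)$ and $(b_1', b_1'-d_1,\dots)$ with the \emph{same} difference at each step. That parallel structure is the only mechanism by which property~\eqref{ppairsproperty} can fire (it needs two $P$-pairs of equal difference), and it is what produces the chain of six $\mathcal Q_6$-quadruples that weak transitivity upgrades to $\mathcal Q_{36}$.

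Your proposal instead Cauchy--Schwarzes over a \emph{midpoint} $w$. Two 3-paths $b_1\to w$ and $\tilde b_1 \to w$ meeting at a common midpoint need not share a difference sequence, so you do not obtain the $\mathcal Q_6$-quadruples that~\eqref{ppairsproperty} supplies. More fundamentally, your step (2) asserts that a single 3-path from $b_1$ to $w$, ``via two applications of weak transitivity using $\mathcal Q_1$-edges inside $P$,'' can be condensed into a relation $(\upd{b_1}, u, w, \upd{u}') \in \mathcal Q_{18}$. This cannot work: a single edge of $P$ is merely an element of $A^2$, not an additive quadruple, and a single 3-path in $\Pi$ carries no additive constraint between its endpoints. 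The $\mathcal Q_i$-relations only appear once you have \emph{paired-up} edges of equal difference, and no single path provides that. You do in fact articulate the right criterion in passing (``each edge of $P$ between pairs of equal difference gives a $\mathcal Q_6$-quadruple''), but the Cauchy--Schwarz you set up does not manufacture such equal-difference pairs at every step of the path, so the weak-transitivity chaining you invoke has nothing to chain. To fix the argument, replace the midpoint-averaging with the paper's change of variables $d_i = v_{i-1}-v_i$ followed by Cauchy--Schwarz over all six $d_i$; this is also the only place the clean exponent $K^{-6}$ comes from, and your ``$K^{-O(1)}$'' placeholder signals that the bookkeeping in the midpoint version would not recover it without essentially redoing that computation.
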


    \begin{proof}
        Take arbitrary elements $b_1 \in B_1$ and $b_2 \in B_2$. Since $b_1, b_2 \in B$ there are at least $c_2|X|^5$ 6-paths in the graph $\Pi$ between $b_1$ and $b_2$. Summing over $b_1$ and $b_2$, we conclude that there are at least $\beta_1 \beta_2 c_2|X|^7$ 7-tuples $(b_1, z_1, \dots, z_5, b_2) \in A^7$ with the property that $b_1 \in B_1, b_2 \in B_2$ and $(b_1, z_1),$ $(z_1, z_2), \dots,$ $(z_4, z_5),$ $(z_5, b_2) \in P$.\\

        We change the variables by setting $d_1 = b_1 - z_1, d_2 = z_1 - z_2, \dots, d_5 = z_4 - z_5, d_6 = z_5 - b_2$ and using them instead of $z_1, \dots, z_5, b_2$. Note that $d_1, \dots, d_6 \in X - X$. By Cauchy-Schwarz inequality and doubling assumption $|X - X| \leq K |X|$, there are at least $(\beta_1 \beta_2 c_2)^2K^{-6}|X|^8$ 8-tuples $(b_1, b'_1, d_1, \dots, d_6)$ such that $b_1, b'_1 \in B_1$, $b_1 - d_1 - \dots - d_6, b'_1 - d_1 - \dots - d_6 \in B_2$ and 
        \[(b_1 - d_1 - \dots - d_{i-1}, b_1 - d_1 - \dots - d_i), (b'_1 - d_1 - \dots - d_{i-1}, b'_1 - d_1 - \dots - d_i) \in P\]
        holds for all $i \in [6]$.\\

        Let us change the variables one more time, writing $z_i = b_1 - d_1 - \dots - d_i$, $z'_i = b'_1 - d_1 - \dots - d_i$, for $i \in [5]$ and $b_2 = b_1 - d_1 - \dots - d_6, b'_2 = b'_1 - d_1 - \dots - d_6$. By the work above, we obtain at least $(\beta_1 \beta_2 c_2)^2K^{-6}|X|^8$ 14-tuples $(b_1, b'_1, b_2, b'_2, z_{[5]}, z'_{[5]}) \in A^{14}$ such that 
        \begin{itemize}
            \item $b_1 - b'_1 = z_1 - z'_1 = z_2 - z_2'= \dots = z_5 - z_5' = b_2 - b_2'$, and
            \item $(b_1, z_1), (z_1, z_2), \dots, (z_4, z_5), (z_5, b_2), (b'_1, z'_1), \dots, (z'_5,b_2') \in P$.
        \end{itemize}

        By property~\eqref{ppairsproperty} of the set $P$, we have that
        \[(\upd{b_1},  z_1, b'_1,\upd{z_1} {}'), (\upd{z_1}, z_2, z'_1, \upd{z_2} {}'), \dots, (\upd{z_5}, b_2, z'_5, \upd{b_2} {}') \in \mathcal{Q}_6.\]
        Using symmetry condition \textbf{(S3)}, we get
        \[(\upd{b_1}, b'_1, z_1, \upd{z_1} {}'), (\upd{z_1}, z'_1, z_2, \upd{z_2} {}'), \dots, (\upd{z_5}, z'_5, b_2, \upd{b_2} {}') \in \mathcal{Q}_6.\]
        
        Averaging and using weak transitivity several times as in the proof of Claim~\ref{sixtupleschainclaim} proves the claim, provided $c' \leq \bsc\, (\beta_1 \beta_2 c_2)^2K^{-6}$.
    \end{proof}
    
\vspace{\baselineskip}

    Let $B' \subseteq B$ be the set of all elements $a \in B$ such that $(\upd a, b, c, \upd d) \in \mathcal{Q}_{36}$ for at least $2^{-10} c_3 c^4_1 c^2_2 K^{-6}|X|^2$ triples $(b,c,d) \in B^3$. We claim that $|B \setminus B'| \leq \frac{c_1}{2}|X|$. Otherwise, applying Claim~\ref{absBSGarithrich} to the set $B \setminus B'$, so $\beta_1, \beta_2 \geq c_1/2$, we see that there are at least  $2^{-10}c_1^64c_2^2 |X|^3$ additive quadruples in $\mathcal{Q}_{36} \cap (B \setminus B')^4$, which is a contradiction. Thus, $|B'| \geq |B| - \frac{c_1}{2}|X| \geq \frac{c_1}{2}|X|$. We claim that $B'$ has the desired properties (i.e., we may take $A' = B'$). Moreover, we restrict collections $\mathcal{Z}_\ell^{(a_{[\ell]})}$ to subsets of $B^{3\ell}$.\\

    To complete the proof, we show by induction on $\ell$ that $|\mathcal{Z}_\ell^{(a_{[\ell]})}| \geq (c/2K)^{O_\ell(1)} |G|^{3\ell - 1}$ holds for all $a_{[\ell]}$ in $B'$. The base case $\ell = 1$ is trivial by definition of $B'$. Suppose the claim holds for some $\ell$, and let $a_{[\ell + 1]}$ be an arbitrary $(\ell+1)$-tuple in $B'$. Let us simplify the notation and write $Z =\mathcal{Z}_\ell^{(a_{[\ell]})}$ and $Z' = \mathcal{Z}_1^{(a_{\ell+1})}$. By induction hypothesis, these two sets have sizes $|Z| \geq \delta |X|^{3\ell - 1}$ and $|Z'| \geq \delta |X|^{2}$ for some $\delta \geq (c/2K)^{O_\ell(1)}$.\\
    \indent Let $W$ be the set of all $y_{3\ell} \in B$ such that at least $\frac{1}{2}\delta|X|^{3\ell-2}$ of $3\ell$-tuples in $Z$ have $y_{3\ell}$ as their last element. Let $W'$ be the set of all $z_1 \in B$ which appear as the first element in at least $\frac{\delta}{2}|X|$ triples in $Z'$. Thus, $|W|, |W'| \geq \frac{\delta}{2}|X|$.\\
    \indent Apply Claim~\ref{absBSGarithrich} to these sets (leading to the dependency on $k$ in requirement $c' \leq (cK^{-1}/2)^{\blc_k}$ in the statement of the theorem) to find a collection $Q$ of additive quadruples 
    \begin{equation}(\upd{y_{3\ell} + t}, y_{3\ell}, z_1 + t, \upd{z_1}) \in (W \times W \times W' \times W') \cap \mathcal{Q}_{36}\label{wwprimeproperty}\end{equation}
    of size $|Q| \geq c_3 2^{-4} c_2^2 K^{-6} \delta^4 |X|^3$. Hence, we get at least $2^{-6} c_3 c_2^2 K^{-6} \delta^6|X|^{3\ell - 2 + 1 + 3}$ choices of $(3\ell + 4)$-tuples $(t, y_{[3\ell]}, z_1, z_2, z_3) \in B^{3\ell + 4}$ such that 
    \begin{itemize}
        \item $y_{[3\ell]} \in Z$,
        \item $z_{[3]} \in Z'$,
        \item $(\upd{y_{3\ell} + t}, y_{3\ell}, z_1 + t, \upd{z_1}) \in \mathcal{Q}_{36}$.
    \end{itemize}

    For each such choice, we claim that the $3(\ell + 1)$-tuple $y'_{[3\ell + 3]} = (y_1, \dots, y_{3\ell - 1}, y_{3\ell} + t, z_1 + t, z_2, z_3)$ belongs to $\mathcal{Z}_{\ell + 1}^{(a_{[\ell + 1]})}$. We need to check the properties described in the definition of the collection  $\mathcal{Z}_{\ell + 1}^{(a_{[\ell + 1]})}$ stated in the theorem.\\
    \indent Firstly, as $y_{[3\ell]} \in Z$, we have
    \[\sum_{i \in [\ell + 1]} (-1)^i a_i = \sum_{i \in [\ell]} (-1)^i a_i + (-1)^{\ell + 1} a_{\ell + 1} = \sum_{i \in [3\ell]} (-1)^i y_i + (-1)^{\ell + 1} a_{\ell + 1}.\]
    This further equals
    \[\sum_{i \in [3\ell - 1]} (-1)^i y_i + (-1)^\ell (y_{3\ell} - a_{\ell + 1}) = \sum_{i \in [3\ell - 1]} (-1)^i y_i + (-1)^{3\ell} (y_{3\ell} + t) + (-1)^{\ell + 1 }(t + z_1 - z_2 + z_3)\]
    where we used $z_{[3]} \in Z'$. This expression equals
    \[\sum_{i \in [3\ell - 1]} (-1)^i y_i + (-1)^{3\ell} (y_{3\ell} + t) + (-1)^{3\ell + 1 }(z_1 + t) +  (-1)^{3\ell + 2}z_2 + (-1)^{3\ell + 3}z_3 = \sum_{i \in [3\ell + 3]} (-1)^i y'_i,\]
    as desired.\\
    \indent Secondly, unpacking definitions and recalling that $z_{[3]} \in Z'$,
    \[(\upd{a_{\ell + 1}}, a_{\ell + 1} + y'_{3\ell + 2} - y'_{3\ell + 3}, y'_{3\ell + 3}, \upd{y'_{3\ell + 2}}) = (\upd{a_{\ell + 1}}, z_1, z_3, \upd{z_2}) \in \mathcal{Q}_{36},\]
    and, after algebraic manipulation and recalling~\eqref{wwprimeproperty},
    \[\Big((y'_{3\ell})^\bcdot, y'_{3\ell} - y'_{3\ell + 1} + y'_{3\ell + 2} - y'_{3\ell + 3} + a_{\ell + 1}, y'_{3\ell + 1}, (y'_{3\ell + 2} - y'_{3\ell + 3} + a_{\ell + 1})^\bcdot\Big) = (\upd{y_{3\ell} + t}, y_{3\ell}, z_1 + t, \upd{z_1}) \in \mathcal{Q}_{36}.\]
    Finally, $y_{3\ell} = a_{\ell + 1} + (y_{3\ell} + t) - (z_1 + t) + z_2 - z_3 = a_{\ell + 1} + y'_{3\ell} - y'_{3\ell + 1} + y'_{3\ell + 2} - y'_{3\ell + 3}$ and the $3\ell$-tuple $y_{[3\ell]}$ belongs to $\mathcal{Z}_\ell^{(a_{[\ell]})}$, with $y'_i = y_i$ for $i \in [3\ell - 1]$. All four conditions hold and the proof is complete.
\end{proof}

\pagebreak

\begin{center}\noindent{\large\bfseries{\scshape Chapter 2: Structure of Freiman bihomomorphisms}}
\end{center}\normalsize

In this chapter, we begin the study of Freiman bihomomorphisms. The proof follows the outline from the introduction. 

\section{Freiman bihomomorphisms and small rank condition}\label{fbihomsmallranksection}

Firstly, we pass from a Freiman bihomomorphisms to an approximately linear system of Freiman linear maps.

\begin{proposition}\label{freimanbihomomStep1passtolineaersystem}
    Let $\varphi : A \to H$ be a Freiman bihomomorphism on a set $A \subseteq G_1 \times G_2$ of density $c$. Then there exist quantities $c' \geq \exp(-\log^{O(1)} (2c^{-1})) $, $K \leq \exp(\log^{O(1)} (2c^{-1}))$, a set $X \subseteq G_1$, a collection of Freiman linear maps $\phi_x : B_x \to H$, where $B_x$ is a Bohr set in $G_2$ of codimension at most $\log^{O(1)} (2c^{-1})$ and radius at least $\exp(-\log^{O(1)} (2c^{-1}))$, such that
    \begin{itemize}
        \item[\textbf{(i)}] $|X| \geq c' |G_1|$,
        \item[\textbf{(ii)}] for each $x \in X$ and all $y \in B_x$, there are at least $c'|G_2|^3$ triples $(z_1, z_2, z_3) \in G_2^3$ such that points $(x, z_1), (x, z_2), (x, z_3)$ and $(x, z_1 + z_2 - z_3 - y)$ belong to $A$ and
        \[\phi_x(y) = \varphi(x, z_1) + \varphi(x, z_2) - \varphi(x, z_3) - \varphi(x, z_1 + z_2 - z_3 - y),\]
        \item[\textbf{(iii)}] for at least $c' |G|^3$ of additive quadruples $(\upd{x_1}, x_2, \upd{x_3}, x_4)$ in $X$ we have
        \[\range\Big(\phi_{x_1} - \phi_{x_2} + \phi_{x_3} - \phi_{x_4}\Big) \leq K.\]
    \end{itemize}
\end{proposition}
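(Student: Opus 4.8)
The plan is to follow the classical first step of inverse-theory arguments, adapted to the Bohr-set setting. We start from the Freiman bihomomorphism $\varphi : A \to H$, and for each $x \in G_1$ we work in the column $A_{x\bcdot} = \{y \in G_2 : (x,y) \in A\}$. First I would restrict attention to the set $X_0 \subseteq G_1$ of \emph{rich columns}, i.e.\ those $x$ with $|A_{x\bcdot}| \geq \tfrac{c}{2}|G_2|$; by a standard averaging argument (a Markov/Fubini step on the density of $A$) we have $|X_0| \geq \tfrac{c}{2}|G_1|$. For each such $x$, the restriction $\varphi(x, \cdot)$ is a Freiman homomorphism on the dense set $A_{x\bcdot}$, so by Theorem~\ref{approxFreimanHom} (approximate homomorphisms, or rather its immediate corollary for genuine Freiman homomorphisms on dense sets combined with Sanders--Bogolyubov) we obtain a proper coset progression and then, via Proposition~\ref{cosettobohrset} and Theorem~\ref{bohrSum}, a Bohr set $B_x$ of codimension $\log^{O(1)}(2c^{-1})$ and radius $\exp(-\log^{O(1)}(2c^{-1}))$ together with a Freiman-linear extension $\phi_x : B_x \to H$ of (a translate of) $\varphi(x, \cdot)$. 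The definition of $\phi_x(y)$ via the many-representations formula in \textbf{(ii)} is then exactly the statement that $y$ has $\Omega(c'|G_2|^3)$ representations $y = z_1 + z_2 - z_3$ with all four relevant points in the column; this is precisely the content of the robust Bogolyubov--Ruzsa theorem (Theorem~\ref{robustBogRuzsa}, applied inside the column), which guarantees that the coset progression — and hence the Bohr set inside it — sits in the robust difference set, so every point of $B_x$ has many such representations, and on those representations the value of $\varphi$ is forced by the Freiman-homomorphism property to equal $\phi_x(y)$. This handles \textbf{(i)} and \textbf{(ii)}.

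For \textbf{(iii)}, the key is to show that most additive quadruples $(\upd{x_1}, x_2, \upd{x_3}, x_4)$ in $X$ have $\range(\phi_{x_1} - \phi_{x_2} + \phi_{x_3} - \phi_{x_4})$ bounded. I would exploit that $\varphi$ is a Freiman bihomomorphism in the \emph{first} variable as well: if $x_1 + x_3 = x_2 + x_4$ and all four points $(x_i, y)$ lie in $A$, then $\varphi(x_1, y) - \varphi(x_2, y) + \varphi(x_3, y) - \varphi(x_4, y) = 0$. So on the intersection of the columns $A_{x_1\bcdot} \cap A_{x_2\bcdot} \cap A_{x_3\bcdot} \cap A_{x_4\bcdot}$, the combination $\phi_{x_1} - \phi_{x_2} + \phi_{x_3} - \phi_{x_4}$ agrees with $0$ up to a translation constant (coming from the translations used when invoking the approximate-homomorphism theorem in each column). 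For a positive proportion of additive quadruples in $X$, the fourfold column intersection is dense in $G_2$ — this follows from a Cauchy--Schwarz / additive-energy computation: $\sum_{x_1+x_3=x_2+x_4, x_i \in X} |A_{x_1\bcdot} \cap \cdots \cap A_{x_4\bcdot}|$ is large because $A$ has many vertical additive quadruples (again by Cauchy--Schwarz on $|A|$). On such a dense fourfold intersection $Z$, the Freiman-linear map $\psi = \phi_{x_1} - \phi_{x_2} + \phi_{x_3} - \phi_{x_4}$ (a map on the Bohr set $\cap_i B_{x_i}$) vanishes up to an additive constant on $Z$, hence on a positive proportion of its domain; then Lemma~\ref{smallrangelemma} (many zeroes imply small range) gives $\range(\psi) \leq K$ for a suitable $K \leq \exp(\log^{O(1)}(2c^{-1}))$.

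The main obstacle I anticipate is the bookkeeping around the translation constants and the passage from "agrees with $0$ on the column" to "agrees with $0$ on a positive-density subset of the Bohr-set domain." The columns $A_{x_i\bcdot}$ are dense in $G_2$, and we need the fourfold intersection to be dense not just in $G_2$ but relative to the Bohr set $\cap_i B_{x_i}$, whose codimension and radius we control only up to the quasipolynomial bounds; since all the $B_{x_i}$ have codimension $\log^{O(1)}(2c^{-1})$, their intersection still has such codimension and non-trivial relative density, so $Z \cap (\cap_i B_{x_i})$ has density $\exp(-\log^{O(1)}(2c^{-1}))$ inside $\cap_i B_{x_i}$, which is enough to feed into Lemma~\ref{smallrangelemma}. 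One also needs to pass from $X_0$ to a subset $X$ on which the "many good quadruples" count in \textbf{(iii)} genuinely holds for $\Omega(c'|G_1|^3)$ quadruples with \emph{all} of $x_1, x_2, x_3, x_4 \in X$; this is a routine popularity/localization argument (restrict to the set of $x$ participating in many good quadruples and check the count survives), so I would do it last. Finally, throughout one should intersect all the Bohr sets $B_x$ with a single fixed Bohr set (via Theorem~\ref{bohrSum}) or at least ensure their radii are chosen from a common quasipolynomial range so that intersections behave well; this is a standard normalization and does not affect the bounds.
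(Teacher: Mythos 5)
Your overall strategy for (i) and (ii) — restricting to rich columns, applying Theorem~\ref{approxFreimanHom}, passing to a symmetric proper progression and then a Bohr set via Proposition~\ref{cosettobohrset}, and using the robust Bogolyubov--Ruzsa theorem (Theorem~\ref{robustBogRuzsa}) to get the many-representations property for $\phi_x$ — is exactly what the paper does, and those parts are fine. (One small correction: the paper does not need to further localize to a subset on which "the good-quadruple count survives"; $X$ is just the set of rich columns and the Cauchy--Schwarz count is directly among quadruples in $X$, so that worry is unnecessary.)

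The genuine gap is in (iii), at the step you flag as "bookkeeping around the translation constants." It is not mere bookkeeping. The map $\phi_x$ agrees with a translate of $\varphi(x,\cdot)$ only on a set $S_x \cap (t_x + Q'_x)$ of density $\exp(-\log^{O(1)}(2c^{-1}))$ in $G_2$, not on the whole column $A_{x\bcdot}$, and the translate $t_x$ is an uncontrolled element of $G_2$ that varies with $x$. So on the fourfold column intersection $Z = A_{x_1\bcdot}\cap\dots\cap A_{x_4\bcdot}$ (which is indeed dense for many additive quadruples), you only know $\sum_i (-1)^i\varphi(x_i,y)=0$; you do \emph{not} know $\sum_i(-1)^i\phi_{x_i}(y)$ is a constant there, because $\phi_{x_i}(y)$ equals $\varphi(x_i, t_{x_i}+y)-\varphi(x_i,t_{x_i})$ only when $t_{x_i}+y \in S_{x_i}$, and the four shifted conditions $t_{x_i}+y\in S_{x_i}$ have no reason to hold simultaneously for a positive density of $y$, since the $t_{x_i}$ can be completely unrelated. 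In other words, the shifted domains can have empty intersection, so your deduction that $\psi$ is constant on a dense subset of $\cap_i B_{x_i}$ does not go through.

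The paper's resolution (and I think the essential missing idea) is to argue directly at the level of representations rather than translates. For each $x$, let $T_x$ be the set of quadruples $(z_1,z_2,z_3,z_4)$ of column points for which the formula $\phi_x(z_1+z_2-z_3-z_4)=\varphi(x,z_1)+\varphi(x,z_2)-\varphi(x,z_3)-\varphi(x,z_4)$ holds; by the robust Bogolyubov--Ruzsa step, $|T_x| \geq \exp(-\log^{O(1)}(2c^{-1}))|G_2|^4$. A Cauchy--Schwarz argument on $\ex_{x}\id_X(x)\ex_{z_{[4]}}\id_{T_x}(z_{[4]})$ (squaring twice in the $x$-variable) shows that for at least $\exp(-\log^{O(1)}(2c^{-1}))|G_1|^3$ additive quadruples $(x_1,x_2,x_3,x_4)$ in $X$ there are many quadruples $(z_1,z_2,z_3,z_4)\in T_{x_1}\cap T_{x_2}\cap T_{x_3}\cap T_{x_4}$. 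For such a common quadruple, the alternating sum $\sum_i(-1)^i\phi_{x_i}(z_1+z_2-z_3-z_4)$ expands via the representations formula into an alternating sum of $\varphi$-values in the \emph{first} variable, which vanishes by the bihomomorphism property. That gives vanishing of $\sum_i(-1)^i\phi_{x_i}$ at $\exp(-\log^{O(1)}(2c^{-1}))|G_2|$ points of the common Bohr set, and then Lemma~\ref{smallrangelemma} finishes exactly as you intended.
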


\begin{proof}
    Let $X \subseteq G_1$ be the set of columns of the set $A$ that are at least $c/2$-dense in $G_2$, namely the set of $x \in G_1$ with $|A_{x \bcdot}| \geq \frac{c}{2}|G_2|$. By averaging, $|X| \geq \frac{c}{2}|G_1|$. Since $\varphi$ is a Freiman bihomomorphism, for each $x \in X$, we may apply Theorem~\ref{approxFreimanHom} to the map $y \mapsto \varphi(x,y)$ with domain $A_{x \bcdot}$,  to find a proper coset progression $Q_x \subseteq G_2$ of rank at most $\log^{O(1)} (2c^{-1})$, a set $S_x \subseteq Q_x \cap A_{x \bcdot}$ of size $|S_x| \geq \exp(-\log^{O(1)} (2c^{-1})) |G_2|$ and a Freiman homomorphism $\psi_x \colon Q_x \to H$ such that $\varphi(x, y) = \psi_x(y)$ holds for all $x \in S_x$.\\
    \indent Applying Corollary~\ref{shrunksymmcpCor}, we may find a further coset progression $Q'_x$ of rank at most $\log^{O(1)} (2c^{-1})$, which is proper and symmetric such that $|S_x \cap (t + Q'_x)| \geq \exp(-\log^{O(1)} (2c^{-1})) |G_2|$ and $t + 4Q'_x \subseteq Q_x$. Define map $\phi_x : Q'_x \to H$ by $\phi_x(y) = \psi_x(t + y) - \psi_x(t)$, which is Freiman-linear. Apply Theorem~\ref{robustBogRuzsa} to the set $S_x \cap (t + Q'_x)$ and Lemma~\ref{cosettobohrset} to $S_x \cap (t + Q'_x)$ to find a 
    a Bohr set $B_x \subseteq G_2$ of codimension at most $\log^{O(1)} (2c^{-1})$ and radius at least $\exp(-\log^{O(1)} (2c^{-1}))$ such that for each $y \in B_x$ there are at least $\exp(-\log^{O(1)} (2c^{-1})) |G_2|^4$ quadruples $(z_1, z_2, z_3, z_4)$ of elements in $ S_x \cap (t + Q'_x)$ such that $y = z_1 + z_2 - z_3 z_4$. For each such a choice of $(z_1, z_2, z_3, z_4)$ we have
    \[\phi_x(y) = \psi_x(t + y) - \psi_x(t) = \psi_x(z_1) + \psi_x(z_2) - \psi_x(z_3) - \psi_x(z_4) = \varphi(x, z_1) + \varphi(x, z_2) - \varphi(x, z_3) - \varphi(x, z_4),\]
    giving us property \textbf{(ii)}. It remains to deduce property \textbf{(iii)}.\\
    
    To that end, for each $x \in X$, define $T_x$ to be the set of all quadruples $(z_1, z_2, z_3, z_4)$ of elements in $ S_x \cap (t + Q'_x)$ such that $z_1 + z_2 - z_3 - z_4 \in B_x$ and $\phi_x(z_1 + z_2 - z_3 - z_4) = \varphi(x, z_1) + \varphi(x, z_2) - \varphi(x, z_3) - \varphi(x, z_4)$. By the above argument, we have $|T_x| \geq \exp(-\log^{O(1)} (2c^{-1})) |G_2|^4$.\\
    \indent By using Cauchy-Schwarz inequality a few times, we have
    \begin{align*}\exp(-\log^{O(1)}& (2c^{-1})) \leq \Big(\exx_{x \in G_1} \id_X(x) \exx_{z_1, z_2, z_3, z_4 \in G_2} \id_{T_x}(z_1, z_2, z_3, z_4)\Big)^4 \\
    \leq &\Big( \exx_{z_1, z_2, z_3, z_4 \in G_2} \Big(\exx_{x \in G_1} \id_X(x) \id_{T_x}(z_1, z_2, z_3, z_4)\Big)\Big)^4 \\
    \leq & \Big( \exx_{z_1, z_2, z_3, z_4 \in G_2} \Big(\exx_{x \in G_1} \id_X(x) \id_{T_x}(z_1, z_2, z_3, z_4)\Big)^2\Big)^2\\
    = & \Big( \exx_{z_1, z_2, z_3, z_4 \in G_2}\exx_{x, a \in G_1} \id_X(x)  \id_X(x + a) \id_{T_x}(z_1, z_2, z_3, z_4)\id_{T_{x + a}}(z_1, z_2, z_3, z_4)\Big)^2\\
    \leq & \exx_{z_1, z_2, z_3, z_4 \in G_2}\Big(\exx_{x, a \in G_1} \id_X(x)  \id_X(x + a) \id_{T_x}(z_1, z_2, z_3, z_4)\id_{T_{x + a}}(z_1, z_2, z_3, z_4)\Big)^2\\
    = &\exx_{z_1, z_2, z_3, z_4 \in G_2}\exx_{x_1, x_2, a \in G_1} \id_X(x_1) \id_X(x_2) \id_X(x_1 + a) \id_X(x_2 + a)\\
    &\hspace{1cm}\id_{T_{x_1}}(z_1, z_2, z_3, z_4)\id_{T_{x_1 + a}}(z_1, z_2, z_3, z_4)\id_{T_{x_2}}(z_1, z_2, z_3, z_4)\id_{T_{x_2 + a}}(z_1, z_2, z_3, z_4).\end{align*}

    Hence, we have at least $\exp(-\log^{O(1)} (2c^{-1})) |G_1|^3$ additive quadruples $(\upd{x_1}, x_2, \upd{x_3}, x_4)$ in $X$, for which there exist at least $\exp(-\log^{O(1)} (2c^{-1})) |G_2|^4$ quadruples $(z_1, z_2, z_3, z_4)$ all $(x_i, z_j) \in A$ and such that 
    \begin{align*}&\sum_{i \in [4]} (-1)^i \phi_{x_i}(z_1 + z_2 - z_3 - z_4) = \sum_{i \in [4]} (-1)^i \Big(\varphi(x_i, z_1)  + \varphi(x_i, z_2) - \varphi(x_i, z_3) - \varphi(x_i, z_4))\Big)\\
    & = \Big(\sum_{i \in [4]} (-1)^i \varphi(x_i, z_1)\Big) + \Big(\sum_{i \in [4]} (-1)^i \varphi(x_i, z_2)\Big) - \Big(\sum_{i \in [4]} (-1)^i \varphi(x_i, z_3)\Big) -\Big(\sum_{i \in [4]} (-1)^i \varphi(x_i, z_4)\Big),\end{align*}
    which vanishes since $\varphi$ is a Freiman bihomomorphism. The proposition follows after an application of Lemma~\ref{smallrangelemma}.
\end{proof}

\section{A subset with almost all additive 16-tuples with small image}\label{almostalladditive16tuplesABSG1}

So far, we have obtained a system of Freiman-linear maps on Bohr sets  $\phi_x : B_x \to H$ in which many additive quadruples of indices give linear combinations of maps with small range. In this section, we use the abstract Balog-Szemer\'edi-Gowers theorem and a dependent random choice argument to pass to a subset of indices in which almost all additive 16-tuples give small range. The reason for having 16-tuples in the conclusion instead of quadruples is an application of the robust Bogolyubov-Ruzsa theorem in the next section.

\begin{proposition}\label{almostalladd16tupleshavesmallimage}
    Suppose that we are given a set $X \subseteq G_1$, a collection of Freiman-linear maps $\phi_x : B_x \to H$, where $B_x$ is a Bohr set in $G_2$ of codimension $d$ and radius $\rho$, such that 
    \[\range\Big(\phi_{x_1} - \phi_{x_2} + \phi_{x_3} - \phi_{x_4}\Big) \leq K\]
    holds for at least $c|G_1|^3$ additive quadruples $(\upd{x_1}, x_2, \upd{x_3}, x_4)$ in $X^4$. Let $\varepsilon > 0$ be given. Then there exists a subset $\tilde{X} \subseteq X$ of size $\exp(-(2d \log ( \rho^{-1}c^{-1} K))^{O(1)})|G_1|$ such that 
    \[\range\Big(\sum_{i \in [16]} (-1)^i \phi_{x_i}\Big|_{\cap_{i \in [16]} B'_{x_i}}\Big) \leq \exp((2 d\log (\varepsilon^{-1} \rho^{-1} K))^{O(1)})\]
    holds for all but at most $\varepsilon |G_1|^{15}$ additive 16-tuples $x_{[16]}$ in $\tilde{X}$, where $B'_x$ is the the Bohr set with the same frequency set as $B_x$, but with radius twice smaller.
\end{proposition}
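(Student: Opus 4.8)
The plan is to apply the abstract Balog--Szemer\'edi--Gowers theorem (Theorem~\ref{absg}) with the monoid of Freiman-linear maps on Bohr sets playing the role of the ambient additive structure, and then to run a dependent random choice argument to eliminate the residual bad $16$-tuples. Since $X \subseteq G_1$ trivially has $|X - X| \leq |G_1| \leq c^{-1}|X|$, so we take the doubling constant in Theorem~\ref{absg} to be $K_0 = c^{-1}$. The additive structure on the maps is the commutative monoid described in the preliminaries: $(\phi_1, B_1) + (\phi_2, B_2) = (\phi_1 + \phi_2|_{B_1 \cap B_2}, B_1 \cap B_2)$.

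\textbf{Setting up the quadruple sets.} For an integer parameter $m \geq 1$ to be fixed, define $\mathcal{Q}_i$ (for $i \in [36]$) to be the set of additive quadruples $(\upd{x_1}, x_2, \upd{x_3}, x_4)$ in $X$ such that $\range(\phi_{x_1} - \phi_{x_2} + \phi_{x_3} - \phi_{x_4}|_{B_{x_1} \cap B_{x_2} \cap B_{x_3} \cap B_{x_4}}) \leq K_i$ for an appropriate threshold $K_i$ growing in $i$. The largeness hypothesis \textbf{(i)} of Theorem~\ref{absg} holds by assumption with $K_1 = K$. The symmetry hypothesis \textbf{(ii)} holds because rearranging signs of an additive quadruple only changes $\phi_{x_1} - \phi_{x_2} + \phi_{x_3} - \phi_{x_4}$ by an overall sign or reordering, which does not affect the size of the image. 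The crucial point is weak transitivity \textbf{(iii)}: if $(\upd{x_1}, x_2, b, \upd{b'}) \in \mathcal{Q}_i$ and $(\upd{b}, b', x_3, \upd{x_4}) \in \mathcal{Q}_j$ for many pairs $(b, b')$, then $\phi_{x_1} - \phi_{x_2} + \phi_{x_3} - \phi_{x_4} = (\phi_{x_1} - \phi_{x_2} + \phi_b - \phi_{b'}) + (\phi_b - \phi_{b'} + \phi_{x_3} - \phi_{x_4}) - 2(\phi_b - \phi_{b'})$ --- this does not immediately work because $\phi_b - \phi_{b'}$ need not have small range. The correct move, following the finite vector space argument in~\cite{newU4}, is to average: for \emph{many} pairs $(b,b')$ both summands have small image, and by pigeonhole some fixed pair of values $(v, w)$ of the two linear combinations is attained on a dense subset of $\cap B_{x_i} \cap B_b \cap B_{b'}$; subtracting, $\phi_{x_1} - \phi_{x_2} + \phi_{x_3} - \phi_{x_4}$ takes a bounded number of values on a dense subset of a Bohr set, hence by Lemma~\ref{smallrangelemma} it has bounded range on a slightly shrunk Bohr set. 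This forces the radii of the Bohr sets to shrink by a fixed factor at each transitivity step, which is why we only go up to $i = 36$: the Bohr set $B'_x$ in the conclusion is $B_x$ with radius halved, and $36$ halvings of a fixed scale is absorbed into the $\exp((\cdots)^{O(1)})$ bounds. Tracking the thresholds, $K_{i+j}$ is bounded polynomially in $K_i K_j$ times a factor depending on the density parameters, so all $K_i \leq \exp((2d\log(\varepsilon^{-1}\rho^{-1}K))^{O(1)})$.

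\textbf{From $\mathcal{Q}_{36}$-richness to almost all $16$-tuples.} Theorem~\ref{absg} with $k = 2$ (or a small constant) produces $A' = \tilde{X} \subseteq X$ of density $\exp(-(2d\log(\rho^{-1}c^{-1}K))^{O(1)})$ such that every pair $(a_1, a_2) \in \tilde{X}^2$ is ``$\mathcal{Z}_2$-rich'': there are $\gg |G_1|^5$ sextuples connecting it via $\mathcal{Q}_{36}$. The final step is a dependent random choice / Cauchy--Schwarz unwinding: I would take a random $16$-tuple in $\tilde{X}$ and show that with high probability $\sum_{i\in[16]}(-1)^i\phi_{x_i}$ has small range on $\cap B'_{x_i}$. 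Concretely, write the $16$-tuple as a telescoping concatenation of $\mathcal{Q}_{36}$-quadruples through intermediate ``bridge'' elements supplied by the $\mathcal{Z}$-richness; each bridging step introduces a bounded-range linear combination and a fixed shrinkage of the Bohr set, and summing eight such quadruples gives a linear combination of $16$ maps whose range is at most a product of $O(1)$ bounded factors, i.e. $\exp((2d\log(\varepsilon^{-1}\rho^{-1}K))^{O(1)})$, on the common shrunk domain $\cap_{i\in[16]} B'_{x_i}$. The proportion of $16$-tuples for which this fails is controlled by the richness parameter, which we can drive below $\varepsilon$ by choosing the constant $k$ (equivalently, the depth of the $\mathcal{Z}$-recursion) large enough and feeding $\varepsilon$ into the quantitative bounds; this is exactly the mechanism by which $\varepsilon$ enters the final range bound.

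\textbf{Main obstacle.} The genuinely delicate part is the weak transitivity verification: unlike in a group, ``adding'' two small-range linear combinations of Freiman-linear maps does not give a small-range combination on the nose, because the intermediate combinations $\phi_b - \phi_{b'}$ are uncontrolled and because the domains are only Bohr sets that shrink under intersection. Making the averaging-plus-pigeonhole-plus-Lemma~\ref{smallrangelemma} argument go through with the radius bookkeeping consistent across all $36$ levels --- so that the Bohr set one ends on is still $B'_x$ (radius $\rho/2$) and the codimension has grown by only $O(d \cdot \mathrm{polylog})$ --- is where the real work lies; everything downstream is a routine application of Theorem~\ref{absg} and Cauchy--Schwarz.
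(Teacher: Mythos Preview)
Your identification of weak transitivity as the main obstacle rests on a sign error. In your telescoping identity you keep a residual $-2(\phi_b - \phi_{b'})$, but with the quadruple conventions of Theorem~\ref{absg} the two combinations are $\phi_{a_1}-\phi_{a_2}-\phi_b+\phi_{b'}$ and $\phi_b-\phi_{b'}-\phi_{a_3}+\phi_{a_4}$, and these add \emph{exactly} to $\phi_{a_1}-\phi_{a_2}-\phi_{a_3}+\phi_{a_4}$. The paper therefore takes $K_i=K^i$ and weak transitivity is immediate from submultiplicativity of $\range$: no averaging, no pigeonhole, no Lemma~\ref{smallrangelemma}, and in particular no radius bookkeeping across $36$ levels. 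Your entire ``main obstacle'' paragraph is attacking a non-problem.

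The genuine content of the proof is the step you pass over as ``dependent random choice / Cauchy--Schwarz unwinding'', and your sketch there does not resemble what actually works. The paper applies Theorem~\ref{absg} with $k=16$ (not $k=2$) so that every additive $16$-tuple in $X'$ is linked to $\gg|G_1|^{47}$ additive $48$-tuples $y_{[48]}$ with $\range\big(\sum(-1)^i\phi_{x_i}-\sum(-1)^i\phi_{y_i}\big)\le K'$; a disjointness argument then partitions the $16$-tuples into $m\le(2c^{-1})^{O(1)}$ classes within each of which any two have bounded-range difference. From here the paper runs a three-stage probabilistic construction that has no analogue in your proposal: pick random test points $e_1,\dots,e_r\in G_2$ (defining $X''\subseteq X'$ and a notion of ``bad'' $16$-tuple), then a homomorphism $\chi:H\to\mathbb{T}^s$ separating the values $\big(\sum(-1)^j\phi_{y^{(i)}_j}\big)(E)$ across classes, then a random box in $\mathbb{T}^{s\times r}$ cutting out $\tilde X\subseteq X''$. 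The punchline is that if a non-bad $16$-tuple in $\tilde X$ had small zero-set, pigeonhole on $E$ (using $2^r>K'$) together with the separation property of $\chi$ forces a nontrivial element of $E$ into the zero-set, a contradiction. Crucially, $\varepsilon$ enters \emph{only} through the threshold $\eta$ in the definition of ``bad'' --- hence only into the final range bound via Lemma~\ref{smallrangelemma} --- while $r,s$ and therefore $|\tilde X|$ depend solely on $K,c,\rho,d$. Your mechanism for introducing $\varepsilon$ (``choose $k$ large enough'') does not achieve this decoupling, which the paper explicitly flags as essential.
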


\textbf{Remark.} It is crucial that the size $\tilde{X}$ does not depend on the parameter $\varepsilon$, which only affect the bound on the image size in the conclusion.

\begin{proof}
    Define $\mathcal{Q}_i$ to be the set of all additive quadruples $(\upd{x_1}, x_2, x_3, \upd{x_4})$ in $X$ such that 
    \[\range\Big(\phi_{x_1} - \phi_{x_2} - \phi_{x_3} + \phi_{x_4}\Big) \leq K^i.\]
    It is easy to check that all conditions of Theorem~\ref{absg} hold. In particular, weak transitivity is satisfied as long as $|G| \geq (2c^{-1})^{-\blc}$, which we may assume as otherwise the proposition is trivial. Thus, we may apply  Theorem~\ref{absg} with $k = 16$, to find a set $X'$, of size $|X'| \geq (c/2)^{O(1)} |G_1|$, which satisfies the conditions in the conclusion of that theorem.

    \begin{claim}
        For each additive $16$-tuple $x_{[16]} \in {X'}^{16}$, there are at least $(c/2)^{O(1)} |G_1|^{47}$ additive 48-tuples $y_{[48]} \in X^{48}$ such that 
        \[\range\Big(\sum_{i \in [16]} (-1)^i \phi_{x_i} - \sum_{i \in [48]} (-1)^i \phi_{y_i}\Big) \leq (2K)^{O(1)}.\]
    \end{claim}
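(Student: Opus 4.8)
The plan is to exploit the conclusion of Theorem~\ref{absg} directly: for each additive $16$-tuple $x_{[16]} \in {X'}^{16}$ we have a large collection $\mathcal{Z}_{16}^{(x_{[16]})}$ of $48$-tuples $y_{[48]} \in X^{48}$, recursively built up from membership in $\mathcal{Q}_{36}$ at each stage, with the crucial property that $\sum_{i \in [16]}(-1)^i x_i = \sum_{i \in [48]}(-1)^i y_i$. The idea is that each of the defining conditions for $\mathcal{Z}_\ell^{(a_{[\ell]})}$ is membership of some explicit additive quadruple in $\mathcal{Q}_{36}$, and by definition of $\mathcal{Q}_j$ in this proof, $(\upd{a},b,c,\upd{d}) \in \mathcal{Q}_j$ means exactly that $\range(\phi_a - \phi_b - \phi_c + \phi_d) \leq K^j \leq K^{36}$ on the intersection of domains (here I am suppressing the bookkeeping of which domains, which I return to below). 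So each condition contributes a bounded contribution to the range of a telescoping sum.

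First I would set up the telescoping carefully. Working through the recursive definition: at level $\ell \to \ell+1$, the new $48$-tuple $y'_{[3\ell+3]}$ is obtained from a level-$\ell$ tuple $y_{[3\ell]} \in \mathcal{Z}_\ell^{(a_{[\ell]})}$ together with three new coordinates, and the two new $\mathcal{Q}_{36}$-conditions relate $\sum_{i \in [3\ell+3]}(-1)^i\phi_{y'_i}$ to $\sum_{i\in[3\ell]}(-1)^i \phi_{y_i}$ plus a bounded-range correction, while the "$a_{\ell+1}$" condition together with $\sum (-1)^i a_i = \sum (-1)^i y_i$ handles the appearance of $\phi_{a_{\ell+1}}$. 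Unwinding all $16$ levels, I expect to get
\[\range\Big(\sum_{i\in[16]}(-1)^i\phi_{x_i} - \sum_{i\in[48]}(-1)^i\phi_{y_i}\Big) \leq (K^{36})^{O(1)} = (2K)^{O(1)},\]
on the appropriate intersection of domains, since there are only $O(1)$ conditions in the whole recursion (bounded because $k=16$ is fixed). The range of a sum of maps each of bounded range is bounded by the product, hence still $(2K)^{O(1)}$. The count $|\mathcal{Z}_{16}^{(x_{[16]})}| \geq (c/2K)^{O(1)}|G_1|^{47}$ is exactly the conclusion of Theorem~\ref{absg} with $k=16$, $\ell = 16$, so $3\ell - 1 = 47$; combined with $|X - X| \leq K'|X|$ for the doubling constant of $X$ one gets (using that $X$ has small doubling as a dense subset of $G_1$ — or, more simply, noting $X \subseteq G_1$ so $|X-X| \leq |G_1| \leq (2/c)|X|$, giving $K' \leq 2c^{-1}$) the stated bound with $c' \geq \exp(-\log^{O(1)}(2c^{-1}))$ absorbed into the $(c/2K)^{O(1)}$.

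The main obstacle I anticipate is the domain bookkeeping: the maps $\phi_x$ are only partially defined (on $B_x$), and the conclusion is a statement about $\range$ of a combination restricted to $\cap_{i\in[16]} B'_{x_i}$, whereas the $\mathcal{Q}_{36}$-membership in the abstract BSG theorem was defined for the combination restricted to $\cap B_{x_i}$ (four domains at a time). I would handle this by being careful to phrase $\mathcal{Q}_i$ as "$\range$ on $\cap_{j} B_{x_j} \leq K^i$" and then, when telescoping, observe that all intermediate Bohr sets appearing are among $\{B_{x_i} : i \in [16]\} \cup \{B_{y_j} : j \in [48]\}$ — but since only the $x_i$ appear in the final statement, and the $y_j$ are summed over, I should track that the final restricted combination $\sum_{i\in[16]}(-1)^i\phi_{x_i}\big|_{\cap_{i}B_{x_i}}$ differs from a sum of $\mathcal{Q}_{36}$-controlled pieces plus $\sum_{i\in[48]}(-1)^i\phi_{y_i}$, all on a common intersection. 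Passing from $B_x$ to the half-radius $B'_x$ in the statement is harmless since $B'_x \subseteq B_x$. A secondary subtlety is that $\mathcal{Q}_{36}$ requires $\range(\cdot) \leq K^{36}$ — I must double-check the hypotheses of Theorem~\ref{absg} are met, namely largeness (given: $c|G_1|^3$ quadruples in $\mathcal{Q}_1$), symmetry (immediate since $\range(\phi_{a_1}-\phi_{a_2}-\phi_{a_3}+\phi_{a_4})$ is invariant under the relevant permutations and sign flips, as negating the whole expression or permuting does not change the size of the image), and weak transitivity (if $\range(\phi_{a_1}-\phi_{a_2}-\phi_b+\phi_{b'}) \leq K^i$ and $\range(\phi_b - \phi_{b'} - \phi_{a_3}+\phi_{a_4}) \leq K^j$ for one pair $(b,b')$, then adding gives $\range(\phi_{a_1}-\phi_{a_2}-\phi_{a_3}+\phi_{a_4}) \leq K^{i+j}$ — here I only need one good pair, which is weaker than the "$c'|X|$ pairs" hypothesis, so it holds trivially). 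With these checks in place the proof of the claim is essentially a mechanical unwinding of the recursion plus the elementary fact that $\range(\psi_1 + \psi_2) \leq \range(\psi_1)\range(\psi_2)$.
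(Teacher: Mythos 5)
Your proposal is correct and matches the paper's proof essentially step for step: the paper also reads the count from the conclusion of Theorem~\ref{absg} (with $3\ell - 1 = 47$ for $\ell = 16$) and proves the range bound by induction on $\ell$, unwinding the recursive definition of $\mathcal{Z}_\ell^{(x_{[\ell]})}$ and using the multiplicativity $\range(\psi_1 + \psi_2) \leq \range(\psi_1)\range(\psi_2)$ at each of the two $\mathcal{Q}_{36}$-conditions per level. Two minor remarks: the verification of the hypotheses of Theorem~\ref{absg} (largeness, symmetry, weak transitivity) and the discussion of $B'_x$ are not part of this internal claim but of the surrounding proposition, and for the doubling constant one can simply apply Theorem~\ref{absg} with ambient set $G_1$ and $A = X$ (so $K = 1$ there), which avoids the $|X-X|$ estimate and directly yields the $(c/2)^{O(1)}|G_1|^{47}$ count.
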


    \begin{proof}
        Unpacking the conclusion of Theorem~\ref{absg}, we get $|\mathcal{Z}_{16}^{(x_{[16]})}|\geq (c/2)^{O(1)} |G_1|^{47}$. For each 48-tuple $y_{[48]} \in \mathcal{Z}_{16}^{(x_{[16]})}$ we have $\sum_{i \in [48]} (-1)^iy_i = \sum_{i \in [16]} (-1)^ix_i = 0$, making it an additive 48-tuple.\\
        To complete the proof, by induction on $\ell \in [16]$, we show that 
        \[\range\Big(\sum_{i \in [\ell]} (-1)^i \phi_{x_i} - \sum_{i \in [3\ell]} (-1)^i \phi_{y_i}\Big) \leq (2K)^{O(1)}\]
        holds for all $y_{[3\ell]} \in \mathcal{Z}_{\ell}^{(x_{[\ell]})}$.\\
        \indent The base case $\ell = 1$ holds by definition of $\mathcal{Q}_{36}$ and $\mathcal{Z}^{(x_1)}_1$.\\
        \indent Suppose now that the claim holds for some $\ell \geq 1$ and let $y_{[3\ell+3]} \in \mathcal{Z}_{\ell + 1}^{(x_{[\ell + 1]})}$. By definition of $\mathcal{Z}_{\ell + 1}^{(x_{[\ell + 1]})}$ we have that
        \[(y_1, \dots, y_{3\ell - 1}, x_{\ell + 1} + y_{3\ell} - y_{3\ell + 1} + y_{3\ell + 2} - y_{3\ell + 3}) \in \mathcal{Z}_{\ell}^{(x_{[\ell]})}\]
        so by induction hypothesis
        \[\range\Big(\sum_{i \in [\ell]} (-1)^i \phi_{x_i} - \Big(\sum_{i \in [3\ell - 1]} (-1)^i \phi_{y_i}\Big) - (-1)^{3\ell}\phi_{x_{\ell + 1} + y_{3\ell} - y_{3\ell + 1} + y_{3\ell + 2} - y_{3\ell + 3}}\Big) \leq (2K)^{O(1)}.\]
        We also have that 
        \[(\upd{x_{\ell + 1}}, x_{\ell + 1} + y_{3\ell + 2} - y_{3\ell + 3}, y_{3\ell + 3}, \upd{y_{3\ell + 2}})  \in \mathcal{Q}_{36}\]
        and
        \[\Big((y_{3\ell})^\bcdot, y_{3\ell} - y_{3\ell + 1} + y_{3\ell + 2} - y_{3\ell + 3} + x_{\ell + 1}, y_{3\ell + 1}, (y_{3\ell + 2} - y_{3\ell + 3} + x_{\ell + 1})^\bcdot\Big) \in \mathcal{Q}_{36}.\]
        Hence
        \begin{align*}&\range\Big(\sum_{i \in [\ell + 1]} (-1)^i \phi_{x_i} - \sum_{i \in [3\ell + 3]} (-1)^i\phi_{y_i}\Big)\\
        &\hspace{1cm}\leq \range\Big(\sum_{i \in [\ell]} (-1)^i \phi_{x_i} - \sum_{i \in [3\ell - 1]} (-1)^i \phi_{y_i} - (-1)^{3\ell}\phi_{x_{\ell + 1} + y_{3\ell} - y_{3\ell + 1} + y_{3\ell + 2} - y_{3\ell + 3}}\Big)\\
        &\hspace{2cm}\cdot\range\Big((-1)^{3\ell + 1}\Big(\phi_{y_{3\ell}} - \phi_{y_{3\ell} - y_{3\ell + 1} + y_{3\ell + 2} - y_{3\ell + 3} + x_{\ell + 1}} - \phi_{y_{3\ell + 1}} + \phi_{y_{3\ell + 2} - y_{3\ell + 3} + x_{\ell + 1}}\Big)\Big) \\
        &\hspace{2cm}\cdot\range\Big((-1)^{\ell + 1}\Big(\phi_{x_{\ell + 1}} - \phi_{x_{\ell + 1} + y_{3\ell + 2} - y_{3\ell + 3}} - \phi_{y_{3\ell + 3}} +  \phi_{y_{3\ell + 2}})\Big)\Big)\leq (2K)^{O(1)}.\qedhere\end{align*}
    \end{proof}
    
    For each additive 16-tuple $x_{[16]}$ in $X'$, let $Y_{x_{[16]}}$ be the set of additive 48-tuples in $X$ provided by the claim above. Let $Z_1, \dots, Z_m$ be a maximal collection of disjoint sets among $Y_{x_{[16]}}$, hence $m \leq (2c^{-1})^{O(1)}$. Since $Y_{x_{[16]}}$ meets some $Z_i$ for each additive 16-tuple $x_{[16]}$ in $X'$, we may partition additive 16-tuples in $X'$  into sets $Q_1, \dots, Q_m$, such that if $x_{[16]} \in Q_i$ then $Y_{x_{[16]}} \cap Z_i \not= \emptyset$. In particular, we obtain some $K' \leq (2K)^{O(1)}$ such that 
    \[\range\Big(\sum_{j \in [16]} (-1)^j \phi_{x_j} - \sum_{j \in [16]} (-1)^j \phi_{x'_j}\Big) \leq K'\]
    for any $x_{[16]}$ and $x'_{[16]}$ that belong to the same $Q_i$.\\

    \indent In order to obtain the desired set $\tilde{X}$, we employ a 3-step probabilistic argument. Let $r, s \in \mathbb{N}$ and $\eta > 0$ be parameters to be chosen later. Firstly, we shall choose a set $E$ of the form $E = \langle e_{1}, \dots, e_{r}\rangle_{\{0,1\}}$ for some $e_i \in G_2$, which will then be used to find an auxiliary subset $X'' \subseteq X'$. Secondly, we shall find a homomorphism $\chi: H \to \mathbb{T}^s$ which will separate certain sets of linear combinations of images $\phi_x(e_i)$ inside $\mathbb{T}^s$. Finally, we shall find the desired set $\tilde{X}$.\\
    \indent Write $B_x = B(\Gamma_x, \rho)$, for some $\Gamma_x \subseteq \hat{G}_2$ of size at most $d$.\\

    \noindent\textbf{Defining sets $E$ and $X''$.} Take elements $e_1, \dots, e_r \in G_2$, independently and uniformly at random. Depending on $E  = \langle e_{1}, \dots, e_{r}\rangle_{\{0,1\}}$, we define a set $X'' \subseteq X'$ as well as the notion of bad additive quadruples in $X'$, as follows.\\
    \indent Define $X''$ to be the set of all $x \in X'$ such that all $e_{i}$ belong to $B(\Gamma_x, \rho/100r)$.\\
    \indent We say that an additive 16-tuples $x_{[16]}$ of elements in $X'$ is \textit{bad} if $|Z(\sum_{i \in [16]} (-1)^i\phi_{x_i})| \leq \eta |G_2|$ and $\langle e_{i} : i \in [r]\rangle_{\{-1,0,1\}} \cap Z\Big(\sum_{i \in [16]} (-1)^i\phi_{x_i}\Big) \not=\{0\}$. Let $Q_{\on{bad}}$ be the set of all bad additive 16-tuples in $X'$. Note that we define this as a property of elements of $X'$, independently of the fact whether they end up in $X''$.\\

    \begin{claim} Provided $\eta \leq \frac{\varepsilon}{4\cdot 3^r} (\rho/100r)^{r d}$ and $|G_2| \geq 8 \cdot 3^r (\rho/100r)^{-r d} $, there exist $e_1, \dots, e_r$ such that $|E| = 2^r$, $|X''| \geq (\rho/200r)^{r d} |G_1|$ and $|Q_{\on{bad}}| \leq \frac{\varepsilon}{2} |G_1|^{15}$.
    \end{claim}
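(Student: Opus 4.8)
The plan is to use a standard first-moment argument, choosing $e_1, \dots, e_r$ uniformly and independently at random and showing that each of the three desired properties holds with good probability (or at least in expectation), so that a union bound leaves a positive probability that all three hold simultaneously. Concretely, I would estimate $\ex|X''|$, $\mathbb{P}(|E| < 2^r)$, and $\ex|Q_{\on{bad}}|$ separately.

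First I would handle $|X''|$. For a fixed $x \in X'$, the probability that a single random $e \in G_2$ lies in $B(\Gamma_x, \rho/100r)$ is $|B(\Gamma_x, \rho/100r)|/|G_2| \geq (\rho/100r)^{d}$ by Lemma~\ref{basicbohrsizel} (using $|\Gamma_x| \leq d$). Since the $e_i$ are independent, the probability that all $r$ of them lie in $B(\Gamma_x, \rho/100r)$ is at least $(\rho/100r)^{rd}$, hence $\ex|X''| \geq (\rho/100r)^{rd}|X'| \geq (\rho/100r)^{rd}(c/2)^{O(1)}|G_1|$; absorbing the $(c/2)^{O(1)}$ factor into the slightly weaker bound $(\rho/200r)^{rd}$ (which is legitimate for $r,d$ not too small, and the degenerate cases are trivial), we get $\ex|X''| \geq 2(\rho/200r)^{rd}|G_1|$, say. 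Next, $|E| = 2^r$ fails only if some nontrivial $\{-1,0,1\}$-combination of the $e_i$ vanishes; for a fixed nonzero coefficient vector this happens with probability $\le 1/|G_2|$ (the last nonzero coordinate determines one $e_i$ given the others is not quite right since coefficients may be $\pm1$, but in any case it is at most $2/|G_2|$ after fixing the others and solving for the variable with a unit coefficient — or simply bound by the probability that a nonzero linear form in the $e_i$ equals $0$, which is $\le 1/|G_2|$ when that form is ``primitive''; being slightly crude, the total failure probability is $\le 3^r/|G_2|$), which is $\le 1/8$ under the hypothesis $|G_2| \geq 8 \cdot 3^r (\rho/100r)^{-rd}$.

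The main work is bounding $\ex|Q_{\on{bad}}|$. Fix an additive $16$-tuple $x_{[16]}$ in $X'$ with $|Z(\psi)| \le \eta|G_2|$, where $\psi = \sum_{i\in[16]}(-1)^i\phi_{x_i}$ is a Freiman-linear map on the Bohr set $\cap B_{x_i}$ (so $Z(\psi)$ is a Freiman-subgroup of that Bohr set; we only use that it is a set of size $\le \eta|G_2|$ containing $0$). Such a tuple is bad precisely when some nonzero $\{-1,0,1\}$-combination $\sum_i \varepsilon_i e_i$ lies in $Z(\psi)$. For each fixed nonzero $\varepsilon \in \{-1,0,1\}^r$, the probability that $\sum_i \varepsilon_i e_i \in Z(\psi)$ is at most $|Z(\psi)|/|G_2| \le \eta$ (fixing all but one $e_i$ with a unit coefficient and noting the conditional distribution of that sum is uniform on $G_2$). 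Union bounding over the at most $3^r$ choices of $\varepsilon$ gives $\mathbb{P}(x_{[16]} \text{ bad}) \le 3^r \eta$. Summing over all $\le |G_1|^{15}$ additive $16$-tuples, $\ex|Q_{\on{bad}}| \le 3^r\eta|G_1|^{15}$; the hypothesis $\eta \le \frac{\varepsilon}{4\cdot 3^r}(\rho/100r)^{rd} \le \frac{\varepsilon}{4\cdot 3^r}$ then yields $\ex|Q_{\on{bad}}| \le \frac{\varepsilon}{4}|G_1|^{15}$, so by Markov $\mathbb{P}(|Q_{\on{bad}}| > \frac{\varepsilon}{2}|G_1|^{15}) \le 1/2$.

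Finally I would combine: by Markov's inequality $\mathbb{P}(|X''| < (\rho/200r)^{rd}|G_1|) \le 1/2$ (since the expectation is at least twice that), $\mathbb{P}(|E| < 2^r) \le 1/8$, and $\mathbb{P}(|Q_{\on{bad}}| > \frac{\varepsilon}{2}|G_1|^{15}) \le 1/2$; these sum to less than $1$ only if I sharpen one of the bounds — so I would instead get $\ex|X''| \ge 4(\rho/200r)^{rd}|G_1|$ by keeping more room (or simply note $(\rho/100r)^{rd} \ge 4(\rho/200r)^{rd}$ once $r,d\ge 1$ using $100r\cdot(1/2)^{?}$... in fact $(\rho/100r)^{rd}/(\rho/200r)^{rd}=2^{rd}\ge 4$ when $rd\ge2$, the cases $rd\le1$ being trivial), so $\mathbb{P}(|X''|<(\rho/200r)^{rd}|G_1|) \le 1/4$. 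Then $1/4 + 1/8 + 1/2 < 1$, so there is a choice of $e_1,\dots,e_r$ satisfying all three conclusions simultaneously. The only genuinely delicate point is the union-bound estimate for $\mathbb{P}(\sum_i\varepsilon_i e_i \in Z(\psi))$ and for $|E|<2^r$, which requires that each nonzero $\{-1,0,1\}$-combination, after conditioning on all but one variable, is still uniformly distributed — this holds because at least one coefficient is $\pm 1$, hence invertible in the integers acting on $G_2$, so I would phrase it that way; everything else is routine bookkeeping with Lemma~\ref{basicbohrsizel}.
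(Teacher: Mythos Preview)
Your overall strategy is right, but there is a genuine gap in the step where you bound the lower tail of $|X''|$. You write ``by Markov's inequality $\mathbb{P}(|X''| < (\rho/200r)^{rd}|G_1|) \le 1/4$ (since the expectation is at least four times that)''. Markov's inequality does not give lower-tail bounds: from $\ex|X''| \ge 4t$ alone you cannot conclude $\mathbb{P}(|X''| < t)$ is small. Since $|X''| \le |G_1|$, the reverse-Markov argument only yields $\mathbb{P}(|X''| \ge t) \ge 3t/|G_1|$, which is of order $(\rho/200r)^{rd}$ and hence tiny, not $\ge 3/4$. With your other two bounds ($\le 1/8$ and $\le 1/2$) the union bound then fails badly.

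The paper fixes this by not treating $|X''|$ and $|Q_{\on{bad}}|$ separately. Instead it considers the single random variable $\frac{|X''|}{|G_1|} - 2\varepsilon^{-1}\frac{|Q_{\on{bad}}|}{|G_1|^{15}}$, which is bounded above by $1$ and has expectation at least $(\rho/100r)^{rd} - 2\varepsilon^{-1}\cdot 3^r\eta \ge \tfrac{1}{2}(\rho/100r)^{rd}$ by the hypothesis on $\eta$. One application of reverse Markov then shows this combined variable is at least $\tfrac{1}{4}(\rho/100r)^{rd}$ with probability at least $\tfrac{1}{4}(\rho/100r)^{rd}$; on that event both $|X''|$ and $|Q_{\on{bad}}|$ are controlled simultaneously. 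The event $|E|=2^r$ fails with probability at most $3^r/|G_2| \le \tfrac{1}{8}(\rho/100r)^{rd}$ by the hypothesis on $|G_2|$, so the two events coexist with positive probability. Your approach can also be salvaged: if you keep the full factor $(\rho/100r)^{rd}$ in the bound $\ex|Q_{\on{bad}}| \le 3^r\eta|G_1|^{15} \le \tfrac{\varepsilon}{4}(\rho/100r)^{rd}|G_1|^{15}$ (rather than discarding it as you did), Markov gives $\mathbb{P}(|Q_{\on{bad}}| > \tfrac{\varepsilon}{2}|G_1|^{15}) \le \tfrac{1}{2}(\rho/100r)^{rd}$, and then all three failure probabilities are $O((\rho/100r)^{rd})$ while the success probability for $|X''|$ is also $\Omega((\rho/100r)^{rd})$; a careful comparison of constants makes the union bound go through. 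But as written, the lower-tail step is wrong and the discarded factor is precisely what is needed to repair it.
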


    \begin{proof}
        Note that the probability that $x \in X'$ becomes element of $X''$ is at least
        \[(|B(\Gamma_x, \rho/100r| / |G_2|)^{ r} \geq (\rho/100r)^{r d},\]
        by Lemma~\ref{basicbohrsizel}. One the other hand, if an additive 16-tuple $x_{[16]}$ in $X'$ satisfies
        \[\Big|Z\Big(\sum_{i \in [16]} (-1)^i\phi_{x_i}\Big)\Big| \leq \eta |G_2|\]
        then the probability that it becomes bad is at most $3^r \eta$. Thus, by linearity of expectation,
        \[\exx\Big(\frac{|X''|}{|G_1|} - 2\varepsilon^{-1} \frac{|Q_{\on{bad}}|}{|G_1|^{15}}\Big) \geq (\rho/100r)^{ r d} -  2\varepsilon^{-1} 3^r \eta \geq \frac{1}{2}(\rho/100r)^{r d},\]
        since $\eta \leq \frac{\varepsilon}{4 \cdot 3^r} (\rho/100r)^{ r d}$.\\

        Hence, $\mathbb{P}\Big(\frac{|X''|}{|G_1|} - 2\varepsilon^{-1} \frac{|Q_{\on{bad}}|}{|G_1|^{15}} \geq \frac{1}{4}(\rho/100r)^{r d}\Big) \geq \frac{1}{4}(\rho/100r)^{r d}$. On the other hand, 
        \[\mathbb{P}(|E| = 2^r) \geq 1 - \sum_{\lambda, \mu \in \{0,1\}^r, \lambda\not=\mu} \mathbb{P}(\lambda \cdot e = \mu \cdot e) \geq 1 - \frac{3^r}{|G_2|}.\]
        
        The claim follows from these two probability inequalities and assumption $|G_2| \geq 8 \cdot 3^r (\rho/100r)^{-r d}$.
    \end{proof}
    
\vspace{\baselineskip}
    Let $e_1, \dots, e_r$ be given by the claim, and let $E, X''$ and $Q_{\on{bad}}$ be induced by that choice.\\

    \noindent\textbf{Defining homomorphism $\chi$.} Let $I$ be the collection of all $i \in [m]$ such that $Q_i$ has at least one additive 16-tuple $x_{[16]}$ in $X''$ that is not bad. For each $i \in I$, fix such a $16$-tuple $y^{(i)}_{[16]}$.\\
    \indent Using Lemma~\ref{charseparation} we may find $s \leq O(r + \log m)$ such that $\chi : H \to \mathbb{T}^s$ $\frac{1}{10}$-separates all elements of $\cup_{i \in I} \Big(\sum_{j \in [16]} (-1)^j\phi_{y^{(i)}_j}\Big)(E)$.\\

    \noindent\textbf{Defining set $\tilde{X}$.} Take $t \in \mathbb{T}^{[s] \times [r]}$ uniformly and independently at random and set 
    \[\tilde{X} = \Big\{x \in X'' : (\forall i \in [s], j \in [r])\,\,\uc{\chi_i(\phi_x(e_{j})) - t_{i,j}} \leq \frac{1}{10000 r}\Big\}.\]
    By linearity of expectation, we can choose $t$ so that $|\tilde{X}| \geq (10000r)^{-rs} |X''| \geq (10000r)^{-rs}(\rho/200r)^{r d} |G_1|$.\\
    \indent To finish the proof, we need to check that a vast majority of additive 16-tuples in $\tilde{X}$ give a linear combination of maps with a small image.

    \begin{claim}
        If $2^r > K'$ and $x_{[16]}$ is an additive 16-tuple in $\tilde{X}$ that is not bad, then
        \[\Big|Z\Big(\sum_{i \in [16]} (-1)^i\phi_{x_i}\Big)\Big| \geq \eta |G_2| .\]
    \end{claim}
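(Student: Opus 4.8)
The plan is to argue by contradiction. I would suppose that the additive $16$-tuple $x_{[16]}$ in $\tilde X$ is not bad yet $\big|Z(\psi)\big| \le \eta|G_2|$, where $\psi = \sum_{i\in[16]}(-1)^i\phi_{x_i}$ is the associated Freiman-linear map on $\bigcap_{i\in[16]}B_{x_i}$. Since $|Z(\psi)| \le \eta|G_2|$, the definition of bad then forces $\langle e_1,\dots,e_r\rangle_{\{-1,0,1\}}\cap Z(\psi) = \{0\}$. Before using this, I would record the routine domain check: as each $x_j\in\tilde X\subseteq X''$, every $e_b$, and hence every $\{-1,0,1\}$-combination of the $e_b$ together with all of its partial sums, lies in $B(\Gamma_{x_j},\rho/100)\subseteq B_{x_j}$; consequently $\psi$ is defined on all of $\langle e_1,\dots,e_r\rangle_{\{-1,0,1\}}$ and $\psi\big(\sum_b\varepsilon_b e_b\big)=\sum_b\varepsilon_b\psi(e_b)$ for $\varepsilon\in\{-1,0,1\}^r$, by iterating Freiman-linearity.

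Next I would bring in the fixed partition representative. Let $Q_i$ be the part of the $Q$-partition with $x_{[16]}\in Q_i$; because $x_{[16]}$ is not bad and lies in $X''$, we have $i\in I$, so $y^{(i)}_{[16]}\in X''$ exists, is not bad, and lies in $Q_i$. Writing $\psi'=\sum_{j\in[16]}(-1)^j\phi_{y^{(i)}_j}$, the defining property of the partition gives $\range(\psi-\psi')\le K'$. Since $E=\langle e_1,\dots,e_r\rangle_{\{0,1\}}$ lies inside the common domain of $\psi$ and $\psi'$ (the same check as above applies to each $y^{(i)}_j\in X''$) and $|E|=2^r>K'$, the pigeonhole principle produces distinct $v,v'\in E$ with $(\psi-\psi')(v)=(\psi-\psi')(v')$. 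Set $h=\psi(v)-\psi(v')=\psi'(v)-\psi'(v')\in H$ and $w=v-v'$; then $w\in\langle e_1,\dots,e_r\rangle_{\{-1,0,1\}}$, $w\ne 0$ (as $|E|=2^r$), and $h=\psi(w)$ by Freiman-linearity.

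The crux is to bound $\chi(h)$ in two incompatible ways. On one hand, $h=\psi(w)=\sum_b w_b\psi(e_b)$ with $|w_b|\le 1$, while $x_j\in\tilde X$ gives $\uc{\chi_a(\phi_{x_j}(e_b))-t_{a,b}}\le\frac{1}{10000r}$ for all $a,b$; summing the sixteen terms of $\psi(e_b)$ — whose $t_{a,b}$-contributions cancel since $\sum_{j=1}^{16}(-1)^j=0$ — and then summing over the at most $r$ indices $b$ yields $\uc{\chi_a(h)}\le\frac{16}{10000}<\frac1{10}$ for every $a\in[s]$. On the other hand, if $h\ne 0$ then $\psi'(v)$ and $\psi'(v')$ are distinct elements of $\psi'(E)\subseteq\bigcup_{i'\in I}\big(\sum_j(-1)^j\phi_{y^{(i')}_j}\big)(E)$, and $\frac1{10}$-separation of the latter set by $\chi$ forces $\uc{\chi_a(h)}\ge\frac1{10}$ for some $a$ — a contradiction. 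Hence $h=0$, i.e.\ $w\in\langle e_1,\dots,e_r\rangle_{\{-1,0,1\}}\cap Z(\psi)$ with $w\ne 0$, contradicting what the not-bad hypothesis forced. This contradiction gives $|Z(\psi)|>\eta|G_2|$, proving the claim.

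The main obstacle — to the extent there is one — is purely organisational: one must keep straight that $E$ and all the relevant $\{-1,0,1\}$-combinations lie inside each $B_{x_j}$ and each $B_{y^{(i)}_j}$ (so Freiman-linearity can be iterated, and the $Q$-partition bound can be evaluated on $E$), and one must notice precisely which tuple is required to lie in $\tilde X$: only $x_{[16]}$ itself; the representative $y^{(i)}_{[16]}$ enters solely through the $Q$-partition bound and the character-separation property, so it need only lie in $X''$. The numerical margin $\frac{16}{10000}<\frac1{10}$ is generous, so no quantitative care is needed there.
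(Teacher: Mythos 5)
Your proposal is correct and follows essentially the same route as the paper: the same pigeonhole on the at most $K'$ values of $\psi-\psi'$ over $E$, the same use of the $\tilde X$-defining inequalities (with cancellation of the $t_{a,b}$) to bound $\chi(h)$, and the same appeal to the $\tfrac{1}{10}$-separation of $\psi'(E)$ to conclude $h=0$ and hence $w\in Z(\psi)\setminus\{0\}$, contradicting not-badness. The only cosmetic difference is that you name $h$ and $w$ explicitly, whereas the paper carries the expressions through; the logic is identical.
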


    \begin{proof}
        Assume that, on the contrary, $\Big|Z\Big(\sum_{i \in [16]} (-1)^i\phi_{x_i}\Big)\Big| \leq \eta |G_2|$. Let $i$ be the index such that $x_{[16]} \in Q_i$. Since $x_{[16]}$ is not bad and its elements belong to $X''$, we have $i \in I$. Furthermore, also due to the fact that $x_{[16]}$ is not bad, we have $\langle e_{i} : i \in [r]\rangle_{\{-1,0,1\}} \cap Z\Big(\sum_{i \in [16]} (-1)^i\phi_{x_i}\Big) = \{0\}$.\\
        \indent As $x_{[16]} \in Q_i$, we have 
        \[\range\Big(\sum_{j \in [16]} (-1)^j \phi_{x_j} - \sum_{j \in [16]} (-1)^j \phi_{y^{(i)}_j}\Big) \leq K'.\]

        The choice of $X''$ guarantees that $e_i \in B(\Gamma_x, \rho/100r)$. Therefore, $\langle e_1,\dots, e_r\rangle_{\{-1,0,1\}} \subseteq B(\Gamma_x, \rho/100)$. In particular, $\phi_x$ is defined at all elements of $\langle e_1,\dots, e_r\rangle_{\{-1,0,1\}}$ for each $x \in X''$.\\
        
        By the pigeonhole principle and inequality $|E| = 2^r > K'$, we have two distinct elements $u, u' \in E$ such that 
        \[\sum_{j \in [16]} (-1)^j \phi_{x_j}(u) - \sum_{j \in [16]} (-1)^j \phi_{y^{(i)}_j}(u) = \sum_{j \in [16]} (-1)^j \phi_{x_j}(u') - \sum_{j \in [16]} (-1)^j \phi_{y^{(i)}_j}(u').\]
        Rearranging, we get
        \[\sum_{j \in [16]} (-1)^j \phi_{x_j}(u) - \sum_{j \in [16]} (-1)^j \phi_{x_j}(u')  = \sum_{j \in [16]} (-1)^j \phi_{y^{(i)}_j}(u) - \sum_{j \in [16]} (-1)^j \phi_{y^{(i)}_j}(u').\]
        
        Write $u - u' = \sum_{i \in [r]} \lambda_i e_i$ for some $\lambda_i \in \{-1,0,1\}$. By the definition of $\tilde{X}$, for each $j \in [s]$ we have 
        \begin{align*}\Big\|\chi_j\Big(\sum_{k \in [16]} (-1)^k \phi_{x_k}(u - u')\Big)\Big\|_{\mathbb{T}} = &\Big\|\chi_j\Big(\sum_{k \in [16], \ell\in [r]} (-1)^k \lambda_\ell\phi_{x_k}(e_\ell)\Big) \Big\|_{\mathbb{T}}\\
        \leq &\sum_{\ell \in [r]} \Big\| \sum_{k \in [16]} (-1)^k \chi_j(\phi_{x_k}(e_\ell)) \Big\|_{\mathbb{T}}\\
        \leq & r \cdot 16 \cdot \frac{1}{10000r},\end{align*}
        where we used $\|\chi_j(\phi_{x_k}(e_\ell)) - t_{j, \ell}\|_{\mathbb{T}} \leq 1/10000r$ in the last inequality.\\
        \indent This implies 
        \[\Big\|\chi\Big(\sum_{j \in [16]} (-1)^j \phi_{x_j}(u) - \sum_{j \in [16]} (-1)^j \phi_{x_j}(u')\Big)\Big\|_\infty \leq \frac{1}{100}.\]
        
        Since $i \in I$, $\chi$ $\frac{1}{10}$-separates elements of $\Big(\sum_{j \in [16]} (-1)^j\phi_{y^{(i)}_j}\Big)(E)$, and the inequality above gives 
        \[\sum_{j \in [16]} (-1)^j \phi_{y^{(i)}_j}(u) = \sum_{j \in [16]} (-1)^j \phi_{y^{(i)}_j}(u')\]
        so
        \[\sum_{j \in [16]} (-1)^j \phi_{x_j}(u) = \sum_{j \in [16]} (-1)^j \phi_{x_j}(u')\]
        and thus $u - u' \in Z\Big(\sum_{j \in [16]} (-1)^j \phi_{x_j}\Big)$. However, that implies that $x_{[16]}$ is bad, which is a contradiction.  
    \end{proof}

    To complete the proof, choose $r = \lceil \log_2(2K') \rceil$ and $\eta = \frac{\varepsilon}{4\cdot 3^r} (\rho/100r)^{r d}$ so that the required conditions on these parameters above hold, recall that $s \leq O(r + \log m)$, and apply Lemma~\ref{smallrangelemma}.
\end{proof}

\section{Small images of quadruples along a coset progression}\label{firstBRstepSection}

In this section, we use the robust Bogolyubov-Ruzsa theorem to strengthen Proposition~\ref{almostalladd16tupleshavesmallimage} by having a structured set instead of a merely dense set $X$. 

\begin{proposition}\label{smallimgoncosetprogressionlinsys}
    Suppose that we are given a set $X \subseteq G_1$ of density $c$, a collection of Freiman linear maps $\phi_x : B_x \to H$, where $B_x$ is a Bohr set in $G_2$ of codimension $d$ and radius $\rho$, such that 
    \begin{equation}\range\Big(\sum_{i \in [16]} (-1)^i \phi_{x_i}\Big) \leq K \label{bad16tuplesrbrstep}\end{equation}
    holds for all but at most $\varepsilon |G_1|^{15}$ additive 16-tuples $x_{[16]}$ in $X$.\\ 
    \indent Assume that $\varepsilon \leq \exp(-(2\log c^{-1})^{\blc})$. Then there exists a symmetric proper coset progression $C \subseteq G_1$ of size $\exp(-(2\log c^{-1})^{O(1)})|G_1|$ and rank at most $(2\log c^{-1})^{O(1)}$, Bohr sets $B'_x$ of codimension $O(d)$ and radius at least $\Omega(\rho)$ and Freiman-linear maps $\psi_x : B_x' \to H$ for all $x \in C$ such that 
    \[\range\Big(\psi_{x_1} - \psi_{x_2} + \psi_{x_3} - \psi_{x_4}\Big) \leq \exp((2d \log (\rho^{-1}K))^{O(1)})\]
    holds for all additive quadruples $(\upd{x_1}, x_2, \upd{x_3}, x_4)$ in $C$ and, for all $x \in C$,
    \[\range\Big(\psi_x - \phi_{x_1} + \phi_{x_2} - \dots - \phi_{x_7} + \phi_{x_8}\Big) \leq \exp((2d \log (\rho^{-1}K))^{O(1)})\]
    holds for at least $\exp(-(2\log c^{-1})^{O(1)})|G_1|^7$ 8-tuples $(x_1, x_2, \dots, x_8) \in X^8$ with $\sum_{i \in [8]} (-1)^{i + 1}x_i = x$.
\end{proposition}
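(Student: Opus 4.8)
The plan is to combine the almost-all-16-tuples hypothesis \eqref{bad16tuplesrbrstep} with the robust Bogolyubov--Ruzsa machinery (Theorem~\ref{robustBogRuzsa}, or rather its corollary form) to find a symmetric proper coset progression $C$ inside a structured, arithmetically rich subset of $X$. First I would pass to the set $X'$ of elements $x \in X$ that participate in a positive proportion of good additive 16-tuples; a standard averaging argument (of the kind used in the proof of Proposition~\ref{almostalladd16tupleshavesmallimage}) shows $|X'| \geq (1 - \sqrt\varepsilon)|X|$ or so, and in particular $X'$ is still dense. Since $X'$ is dense in $G_1$ it has bounded doubling, so Theorem~\ref{robustBogRuzsa} applied to $X'$ produces a symmetric proper coset progression $C$ of rank $(2\log c^{-1})^{O(1)}$ and size $\exp(-(2\log c^{-1})^{O(1)})|G_1|$ such that every $x \in C$ has at least $\Omega(c^{O(1)})|G_1|^3$ representations $x = a_1 + a_2 - a_3 - a_4$ with $a_i \in X'$. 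Combined with the fact that a vast majority of 16-tuples in $X$ are good, this gives: for every $x \in C$, there are at least $\exp(-(2\log c^{-1})^{O(1)})|G_1|^7$ eight-tuples $(x_1, \dots, x_8) \in X^8$ with $\sum_{i}(-1)^{i+1} x_i = x$ such that all relevant 16-tuples built out of them and other good tuples avoid the bad set.

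Next I would \emph{define} the maps $\psi_x$ for $x \in C$. The idea is to use a fixed reference: pick any single good 8-tuple representation $(x_1^{(x)}, \dots, x_8^{(x)})$ of $x$ and set $\psi_x := \phi_{x_1^{(x)}} - \phi_{x_2^{(x)}} + \cdots - \phi_{x_7^{(x)}} + \phi_{x_8^{(x)}}$, restricted to the Bohr set $B'_x := \bigcap_i B'_{x_i^{(x)}}$ (intersection of half-radius Bohr sets), which has codimension $O(d)$ and radius $\Omega(\rho)$. To check that $B'_x$ is well-controlled I may need to first intersect with a single fixed small Bohr set independent of $x$; more simply, one notes that the frequency sets of the $B_{x_i}$ all have size $\le d$ so any intersection of eight of them has codimension $\le 8d = O(d)$. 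The last displayed correlation claim in the statement (the bound on $\range(\psi_x - \phi_{x_1} + \cdots + \phi_{x_8})$ over many 8-tuples) then follows because for \emph{two} good 8-tuple representations $(x_i)$ and $(x_i')$ of the same $x$, the difference $\sum (-1)^{i+1}\phi_{x_i} - \sum(-1)^{i+1}\phi_{x_i'}$ is a signed sum of $16$ maps $\sum_{j\in[16]}(-1)^j \phi_{z_j}$ for an additive 16-tuple $z_{[16]}$ — which by the hypothesis has range $\le K$ for all but $\varepsilon|G_1|^{15}$ choices — so by counting, most 8-tuples agree with the reference one up to range $\le K$, hence give range $\le \exp((2d\log(\rho^{-1}K))^{O(1)})$ after applying Lemma~\ref{smallrangelemma} to descend to the half-radius Bohr set.

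For the additive-quadruple claim on $C$ itself: given $(\upd{x_1}, x_2, \upd{x_3}, x_4)$ an additive quadruple in $C$, each $x_j$ has many good 8-tuple representations; concatenating the four chosen 8-tuple representations (with alternating signs) produces an additive 32-tuple $w_{[32]}$ in $X$, and $\psi_{x_1} - \psi_{x_2} + \psi_{x_3} - \psi_{x_4}$ differs from $\sum_j (-1)^{?}\phi_{w_j}$ (a signed sum of 32 maps, which can be written as a telescoping of two signed 16-sums) only by the same kind of error already bounded. The hypothesis \eqref{bad16tuplesrbrstep} controls 16-sums; a 32-sum is a sum of two 16-sums along a common `bridge' tuple, so one needs: for positively many choices of the bridge, both halves are good. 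This is exactly where the averaging/counting needs the richness of $X'$ inside $C$, i.e.\ the fact from Theorem~\ref{robustBogRuzsa} that each point of $C$ has many representations in $X'$. After one more application of Lemma~\ref{smallrangelemma} to pass to $B'_x$ of radius $\ge \Omega(\rho)$, the range bound $\exp((2d\log(\rho^{-1}K))^{O(1)})$ follows.

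The main obstacle I expect is the bookkeeping that ensures the various 16-tuples and 32-tuples we build out of the good representations themselves avoid the bad set \emph{simultaneously} — the hypothesis only says that a $1 - \varepsilon$ fraction of 16-tuples are good, so every time we glue representations we lose a factor and must re-run an averaging argument, and we must check these losses remain of the form $\exp(-(2\log c^{-1})^{O(1)})$; this forces the quantitative requirement $\varepsilon \le \exp(-(2\log c^{-1})^{\blc})$ in the statement. Managing which Bohr set each intermediate map lives on (and keeping codimension $O(d)$, radius $\Omega(\rho)$ throughout, rather than letting it shrink with each gluing) is the second delicate point, handled by always restricting to the fixed half-radius Bohr sets $B'_{x}$ and invoking Lemma~\ref{smallrangelemma} only once per range-bound we prove.
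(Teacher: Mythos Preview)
Your overall architecture is right---apply robust Bogolyubov--Ruzsa to get $C$, build $\psi_x$ from representations of $x$ as signed sums of elements of $X$, then control quadruples---but the bridging step for quadruples in $C$ has a genuine gap, and this is exactly where the paper's argument differs from yours.

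You define $\psi_x$ in one step as a signed $8$-sum of $\phi$'s. Then $\psi_{x_1}-\psi_{x_2}+\psi_{x_3}-\psi_{x_4}$ is a signed $32$-sum, and you propose to ``write a 32-sum as a sum of two 16-sums along a common bridge tuple''. But the hypothesis only controls \emph{additive} $16$-tuples (those summing to $0$). The two natural halves of your $32$-tuple sum to $x_1-x_2$ and $x_3-x_4$, neither zero in general; inserting a bridge changes the length of each half, so you no longer have $16$-tuples; and replacing your fixed representation of some $x_j$ by another (at cost $\le K$, using the correlation claim) does not help, since any $8$-tuple representation of $x_j$ still contributes $x_j$ to the partial sum. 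There is no clean split of a generic additive $32$-tuple in $X$ into two additive $16$-tuples in $X$.

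The paper resolves this by inserting an \emph{intermediate} layer: first define $\theta_x$ for $x\in C$ as a (randomly chosen) signed $4$-sum $\phi_{a_1}-\phi_{a_2}+\phi_{a_3}-\phi_{a_4}$ coming from a Bogolyubov--Ruzsa representation $x=a_1-a_2+a_3-a_4$. Now additive quadruples of $\theta$ are exactly additive $16$-tuples of $\phi$, so the hypothesis controls \emph{most} of them directly; a good-pairs argument (bridging at the $\theta$-level, using single elements of $C$ as bridges) upgrades this to \emph{all} quadruples in a large subset $S$, and then to all additive $8$-tuples in $S$ by chaining. Only then is $\psi_a$ defined, as $\theta_{z+a}-\theta_z$ for suitable $z\in S$; additive quadruples of $\psi$ become additive $8$-tuples of $\theta$ in $S$, already controlled. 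The point is that bridging happens where a quadruple of the intermediate object matches the hypothesis exactly ($4\times 4=16$), which your single-step definition bypasses.
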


\begin{proof}
    Apply Theorem~\ref{robustBogRuzsa} to $X$ to find a symmetric proper coset progression $C$ of rank $r \leq (2\log c^{-1})^{O(1)}$ and size $|C| \geq \exp(-(2\log c^{-1})^{O(1)})|G_1|$ such that for each $x \in C$ there are at least $c_1 |G_1|^3$ quadruples $(x_1, x_2, x_3, x_4)$ in $X$ such that $x = x_1 - x_2 + x_3 - x_4$, where $c_1 = (c/2)^{O(1)}$. Let $Q_x$ be the set of all such quadruples.\\
    \indent Let $S$ be the collection of additive 16-tuples in $X$ for which~\eqref{bad16tuplesrbrstep} fails. Thus $|S| \leq \varepsilon |G_1|^{15}$.\\
    \indent For each $x \in C$, define $\theta_x$ and $B'_x$ by taking $(x_1, \dots, x_4) \in Q_x$ uniformly at random and setting $\theta_x = \phi_{x_1} - \phi_{x_2} + \phi_{x_3} - \phi_{x_4}$ and $B'_x = B_{x_1} \cap B_{x_2} \cap B_{x_3} \cap B_{x_4}$.

    \begin{claim}\label{randomchoicethetadefinclaim}
        There exist a choice of $B'_x$ and $\theta_x$ for $x \in C$ such that for all but at most $\sqrt{\varepsilon} \exp((2\log c^{-1})^{O(1)}) |G_1|^3$ additive quadruples $(\upd{x_1}, x_2, \upd{x_3}, x_4)$ in $C$ we have
        \begin{itemize}
        \item[\textbf{(i)}] inequality
        \begin{equation}\range\Big(\theta_{x_1} - \theta_{x_2} + \theta_{x_3} - \theta_{x_4}\Big) \leq K, \label{phithetapsiK}\end{equation} and
        \item[\textbf{(ii)}] for each $i\in[4]$, $\range(\theta_{x_i} - \phi_{y_1} + \phi_{y_2} - \phi_{y_3} + \phi_{y_4}) \leq K^2$ holds for at least $\exp(-(2\log c^{-1})^{O(1)}) |G_1|^3$ quadruples $(y_1, \dots, y_4)$ in $X$ such that $y_1 - y_2 + y_3 - y_4 = x_i$.
        \end{itemize}
    \end{claim}

    \begin{proof}
        Each additive quadruple $(\upd{x_1}, x_2, \upd{x_3}, x_4)$ in $C$ gives rise to $c_1^4|G_1|^{12}$  $16$-tuples $(y_{ij})_{i,j \in [4]}$ such that $(y_{i1}, y_{i2}, y_{i3}, y_{i4}) \in Q_{x_i}$ and $\sum_{i \in [16]} (-1)^{i + j} y_{ij} = 0$. Let $S_{x_{[4]}}$ be such 16-tuples with
        \[\range\Big(\sum_{i \in [16]} (-1)^{i + j} \phi_{y_{ij}}\Big) \leq K.\]

        By assumptions of the proposition, all but $\sqrt{\varepsilon} \exp((2\log c^{-1})^{O(1)}) |G_1|^3$ additive quadruples in $C$ have $|S_{x_{[4]}}| \geq (1 - \sqrt{\varepsilon}) |Q_{x_1}||Q_{x_2}||Q_{x_3}||Q_{x_4}|$. Take any such additive quadruple. Then the probability that $\range\Big(\theta_{x_1} - \theta_{x_2} + \theta_{x_3} - \theta_{x_4}\Big) \leq K$ is at least $1 - \sqrt{\varepsilon}$, proving the claim. The other property follows similarly.
    \end{proof}

    Let $C'$ be a shrinking of $C$, which is still symmetric and proper, and such that $16C' \subseteq C$ and $|C'| \geq \exp(-(2\log c^{-1})^{O(1)})|G_1|$. Note that $|C \cap a+C| \geq |8C'|$ for all $a \in 8C'$. Hence, for each $a \in 8C'$ we have at least $\exp(-(2\log c^{-1})^{O(1)})|G_1|^2$ additive quadruples of difference $a$ in $C$.\\

    We say that an additive quadruples $(\upd{x}, y, \upd{z}, w)$ in $C$ is $(\theta, s, K')$-\textit{respected} if 
    \[\range\Big(\theta_{x} - \theta_{y} + \theta_z - \theta_w|_{B'_x \cap B'_{y} \cap B'_z \cap B'_w \cap B(\Gamma, \rho)}\Big) \leq K',\]
    holds for some Bohr set $B(\Gamma, \rho)$ of codimension at most $s$ (note that radius $\rho$ is the same as in the statement).\\
    \indent We say that a pair $(x,y) \in 4C' \times 4C'$ is \emph{good} if all but at most $\sqrt[4]{\varepsilon} |G_1|$ additive quadruples of the form $(\upd{x}, y, \upd{z}, w)$ in $C$ are $(\theta, 0, K)$-respected. Otherwise, we say it is bad.\\

    \begin{claim}
        Let $(\upd{x}, y, \upd{z}, w)$ be an additive quadruple of elements in $4C'$ such that $(x,y)$ and $(z,w)$ are both good. Then $(\upd{x}, y, \upd{z}, w)$ in $C$ is $(\theta, 8d, K^2)$-respected.
    \end{claim}

    \begin{proof}
        Since $x, y, z, w \in 4C'$, we have $a = x - y = w - z \in 8C'$. Hence, there are at least $\exp(-(2\log c^{-1})^{O(1)})|G_1|$ elements $u \in C \cap C - a$. Thus, there exists $u$ such that $(\upd{x}, y, \upd{u}, u+a)$ and $(\upd{u}, u+a, \upd{w}, z)$ are $(\psi, 0, K)$-respected and so 
        \[\range\Big(\theta_{x} - \theta_{y} + \theta_z - \theta_w|_{B'_x \cap B'_{y} \cap B'_z \cap B'_w \cap B'_{u + a} \cap B'_u}\Big) \leq K^2\]
        proving the claim.
    \end{proof}

    By averaging, all but at most $\sqrt[4]{\varepsilon}\exp(-(2\log c^{-1})^{O(1)})|G_1|^2$ pairs in $4C'$ are good. Hence, there exists a set $S \subseteq 4C'$ such that $|4C' \setminus S| \leq \sqrt[8]{\varepsilon}\exp(-(2\log c^{-1})^{O(1)})|G_1|$ and every $x \in S$ belongs to at most $\sqrt[8]{\varepsilon}|G_1|$ bad pairs. Moreover, we may assume that each $x \in S$ has the property \textbf{(ii)} of Claim~\ref{randomchoicethetadefinclaim}.\\

    \begin{claim}
        Every additive $8$-tuple $x_{[8]}$ of elements of $S$ is $(\psi, O(r), K^{O(1)})$-respected.
    \end{claim}

    \begin{proof}
        We first show that all additive quadruples in $S$ are $(\psi, 24d, K^4)$-respected. Given an additive quadruple$(\upd{x_1}, x_2, x_3,  \upd{x_4})$ in $S$, pick $y, z \in 4C'$ such that $(x_1, y), (z, x_2), (x_3, y)$ and $(z, x_4)$ are good and $x_1 -y = x_3 - z$. The previous claim implies that $(\upd{x_1}, y, \upd{z}, x_2)$ and $(\upd{x_3}, y, \upd{z}, x_4)$ are $(\psi, 8d, K^2)$-respected, proving the claim.\\

        Once all additive quadruples are respected, it is not hard to show that longer tuples are as well.
    \end{proof}
    
    We may now define $\psi_a$ for all $a \in C'$. We define $\psi_a$ as $\theta_{x + a} - \theta_x$ with domain $B''_a = B'_{x + a} \cap B'_x$ for arbitrary $x$ such that $x, x+a \in S$. Since all additive $8$-tuples are respected in $S$, we are done. Apply Lemma~\ref{smallrangelemma} to remove the extra Bohr sets in the condition.
\end{proof} 

\section{Obtaining many Bohr-respected additive quadruples}\label{obtmanybohrrespsection}

In this section, we perform the key change of perspective. So far, we have been considering systems of Freiman-linear maps on Bohr sets $\phi_x : B_x \to H$ with the notion of respected additive quadruples given by $\range\Big(\sum_{i \in [4]} (-1)^i \phi_{x_i}\Big) \leq K$. We now use a different notion, where we say that an additive quadruple $x_{[4]}$ is \textit{Bohr-respected} if  $(-1)^i \phi_{x_i}$ vanishes on $\cap_{i \in [4]} B_{x_i}$.

\begin{proposition}\label{changeofperspectivestep1}
    Suppose that $\phi_x : B_x \to H$ is a Freiman-linear map for each $x \in C$, where $C$ is a symmetric proper coset progression, all of radius $\rho$ and codimension at most $d$, such that
    \[\range\Big(\sum_{i \in 2k]} (-1)^i \phi_{x_i}\Big) \leq K\]
    holds for all additive $2k$-tuples $x_{[2k]}$ in $C$. Let $\varepsilon > 0$. \\
    \indent For $\ell \in [2,2k]$, let $Q_\ell \subseteq C^{2\ell}$ be a collection of some additive $2\ell$-tuples in $C$ of size $|Q_\ell| \geq c |C|^{2\ell - 1}$.\\
    \indent Then there exist Bohr sets $B'_x \subseteq B_x$ of codimension $(2d\log(k K\varepsilon^{-1} c^{-1}\rho^{-1}))^{O(1)}$ and radius $(2k d\log(\rho^{-1}))^{-O(1)}$ such that, for each $2\leq \ell \leq k$, $1-\varepsilon$ proportion of all additive $(2\ell)$-tuples $x_{[2\ell]}$ in $Q_\ell$  satisfy
         \[\sum_{i \in [2\ell]} (-1)^i \phi_{x_i} = 0\]
    on $\cap_{i \in [2\ell]} B'_{x_i}$.
\end{proposition}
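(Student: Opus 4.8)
The plan is to run a dependent random choice argument in order to pass from the hypothesis ``$\range(\sum_{i\in[2k]}(-1)^i\phi_{x_i})\le K$ for \emph{all} additive $2k$-tuples'' (which is a statement about the size of the image of a linear combination of maps) to the much stronger conclusion ``$\sum_{i\in[2\ell]}(-1)^i\phi_{x_i}$ actually \emph{vanishes} on the intersection of shrunk domains, for $1-\varepsilon$ of the $2\ell$-tuples in each $Q_\ell$''. The mechanism is the standard one used already in Section~\ref{almostalladditive16tuplesABSG1}: if a linear combination $\psi=\sum(-1)^i\phi_{x_i}$ has small image but does not vanish identically on its Bohr-set domain, then its kernel $Z(\psi)$ is a proper Freiman-subgroup of that domain, hence (by Lemma~\ref{smallrangelemma} applied to $\psi$, together with the size bounds of Lemma~\ref{basicbohrsizel}) $Z(\psi)$ is a set of density bounded away from $1$ inside $\cap B_{x_i}$. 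So, to kill such a combination on a shrunk domain, it suffices to shrink the Bohr sets $B_x$ to $B'_x$ so that the new common domain $\cap_{i} B'_{x_i}$ is contained in $Z(\psi)$, and this can be forced by a random choice: intersect each $B_x$ with a random translate-type condition coming from a homomorphism $\chi:H\to\mathbb{T}^s$ that separates finitely many values.

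Concretely, first I would apply Lemma~\ref{smallrangelemma} (many zeroes imply small range, contrapositive form) to record the following dichotomy: for any additive $2\ell$-tuple $x_{[2\ell]}$ in $C$, either $\psi_{x_{[2\ell]}}:=\sum_{i\in[2\ell]}(-1)^i\phi_{x_i}$ vanishes on $\cap_i B_{x_i}$, or $|Z(\psi_{x_{[2\ell]}})\cap \cap_i B_{x_i}|\le (1-\eta)|\cap_i B_{x_i}|$ for an explicit $\eta=(2kd\log(\rho^{-1}K))^{-O(1)}$. (Here one uses that $\range(\psi_{x_{[2\ell]}})\le K$: a linear combination of $2\ell\le 2k$ of the maps, padding with trivial ones if needed, still has bounded range by the hypothesis.) Next, following the three-step probabilistic scheme of Proposition~\ref{almostalladd16tupleshavesmallimage}, I would (a) pick $e_1,\dots,e_r\in G_2$ uniformly at random with $r=O(\log K)$ and define $B_x''=B(\Gamma_x,\rho/100r)$ when all $e_j\in B(\Gamma_x,\rho/100r)$, arranging by linearity of expectation that the index set loses only a controlled density and that, for each $\ell$, at most $\varepsilon/2$ of the tuples in $Q_\ell$ are ``bad'' in the sense that $\langle e_1,\dots,e_r\rangle_{\{-1,0,1\}}$ meets $Z(\psi_{x_{[2\ell]}})\setminus\{0\}$ while $\psi_{x_{[2\ell]}}$ does not vanish; (b) use Lemma~\ref{charseparation} to build $\chi:H\to\mathbb{T}^s$, $s=O(r+\log(\text{number of relevant }Q_\ell\text{-classes}))$, $\tfrac1{10}$-separating the finitely many values $\psi_{y^{(i)}_{[2\ell]}}(E)$ over fixed representative tuples; (c) take a random $t\in\mathbb{T}^{[s]\times[r]}$ and set $B'_x\subseteq B''_x$ to be those $x$ with $\|\chi_i(\phi_x(e_j))-t_{i,j}\|_{\mathbb{T}}$ tiny for all $i,j$. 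The key point, exactly as in the cited proposition, is that if $|E|=2^r>K$ then on $B'_x$ the pigeonhole principle forces two distinct $u,u'\in E$ with $\psi_{x_{[2\ell]}}(u)=\psi_{x_{[2\ell]}}(u')$, hence $u-u'\in Z(\psi_{x_{[2\ell]}})\cap\langle e_1,\dots,e_r\rangle_{\{-1,0,1\}}$; if the tuple is not bad this forces $u-u'=0$, contradiction, so the only surviving possibility is that $\psi_{x_{[2\ell]}}$ vanishes on $\cap_i B'_{x_i}$ — which is exactly what we want, and it fails for at most $\varepsilon$ of the tuples in $Q_\ell$ (the $\varepsilon/2$ bad ones plus the ones lost in passing from $C$ to the surviving index set, which we bound separately by choosing the loss tolerances small relative to $\min_\ell c|C|$). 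The codimension of $B'_x$ is $|\Gamma_x|+rs=O(d)+(2d\log(kK\varepsilon^{-1}c^{-1}\rho^{-1}))^{O(1)}$ and its radius is $\rho/100r\cdot(\text{separation scale})\ge (2kd\log(\rho^{-1}))^{-O(1)}$, as claimed.

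One technical wrinkle is handling all values $\ell\in[2,k]$ \emph{simultaneously} with a single choice of $\{e_j\}$, $\chi$, and $t$: this is fine because there are only $k-1$ values of $\ell$, and for each we only need the linearity-of-expectation estimates to hold with $\varepsilon$ replaced by $\varepsilon$ (the bad-tuple count for each $Q_\ell$ is controlled independently, and a union bound over $\ell\le k$ is absorbed by taking $\eta$ and the probability thresholds polynomially small in $k$ as well — which only contributes $\log k$ factors inside the $O(1)$ exponents). Another point to be careful about is that $\phi_x$ must be \emph{defined} at every element of $\langle e_1,\dots,e_r\rangle_{\{-1,0,1\}}$ for every relevant $x$; this is guaranteed precisely because $B''_x$ was defined to contain all $e_j$ within $B(\Gamma_x,\rho/100r)$, so the span of the $e_j$ with $\{-1,0,1\}$ coefficients lies in $B(\Gamma_x,\rho/100)\subseteq B_x$. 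The main obstacle, as usual in these arguments, is bookkeeping the various small parameters ($\eta$, the probability thresholds, the separation scale $1/10000r$, and $\varepsilon$) so that: the index set loses strictly less than what we can afford; the ``bad'' fraction within each $Q_\ell$ stays below $\varepsilon$; and yet the final codimension and radius bounds are quasipolynomial in all inputs. There are no genuinely new ideas beyond Proposition~\ref{almostalladd16tupleshavesmallimage}; the proof is a clean adaptation of its three-step dependent random choice, with ``small range'' replaced by ``identically zero'' everywhere in the conclusion and with the family of additive tuples being the prescribed $Q_\ell$ rather than all additive $16$-tuples.
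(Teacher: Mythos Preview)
Your approach has a genuine gap, and the paper's proof uses a quite different (and simpler) idea.

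The three-step dependent random choice of Proposition~\ref{almostalladd16tupleshavesmallimage} is built to detect tuples with \emph{sparse} kernel: there, a tuple is declared ``bad'' when $|Z(\psi)|\le \eta|G_2|$ \emph{and} the random span $\langle e_1,\dots,e_r\rangle_{\{-1,0,1\}}$ meets $Z(\psi)\setminus\{0\}$, and the point is that this is rare precisely because $Z(\psi)$ is sparse. In the present proposition the situation is opposite: by hypothesis $\range(\psi)\le K$, so $Z(\psi)$ has density at least (roughly) $1/K$ in the common domain, and your ``bad'' event---$\psi$ not identically zero and $\langle e_j\rangle$ meeting $Z(\psi)\setminus\{0\}$---is essentially \emph{certain}, not rare. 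So step~(a) fails at the outset. Relatedly, even when that machinery works, its output is ``kernel large'', which Lemma~\ref{smallrangelemma} converts to ``small range''; it never yields ``$\psi$ vanishes identically on the new domain''. Your sentence ``the only surviving possibility is that $\psi_{x_{[2\ell]}}$ vanishes on $\cap_i B'_{x_i}$'' is the unjustified step. There is also a structural mismatch: step~(c), as you describe it (``set $B'_x\subseteq B''_x$ to be those $x$ with $\|\chi_i(\phi_x(e_j))-t_{i,j}\|_{\mathbb{T}}$ tiny''), is a condition on the \emph{index} $x$, so it shrinks the index set rather than the domains $B_x$; but the proposition requires $B'_x$ for every $x\in C$, and the remark after it stresses that $C$ is left intact.

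The paper's argument is much more direct. Pick random characters $\chi_1,\dots,\chi_m\in\hat{H}$ and, for each $x\in C$, set
\[
U_x=\{y\in B_x:\ \chi_i(\phi_x(y))\in(-1/20k,1/20k)\ \text{for all }i\in[m]\}.
\]
If $y\in\cap_{i\in[2\ell]}U_{x_i}$ and $h=\sum_{i\in[2\ell]}(-1)^i\phi_{x_i}(y)$, then $\chi_j(h)\in(-1/10,1/10)$ for every $j$, because it is an alternating sum of $2\ell\le 2k$ terms each in $(-1/20k,1/20k)$. For a fixed nonzero $h$, the probability over the random $\chi$ that this holds is at most $2^{-m}$; a union bound over the $\le K$ nonzero values of $\psi$ and over $Q_2,\dots,Q_k$ shows that with $m=O(\log(kK\varepsilon^{-1}c^{-1}))$ at most an $\varepsilon$-fraction of each $Q_\ell$ survives with $\psi$ nonzero on $\cap U_{x_i}$. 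Finally one replaces each $U_x$ by a Bohr set inside it via Lemma~\ref{bohrbohr} (Bohr--Bohr sets are Bohr), noting that $\chi_j\circ\phi_x$ is Freiman-linear on $B_x$. The key conceptual difference from Proposition~\ref{almostalladd16tupleshavesmallimage} is that here the random characters act on the \emph{codomain} $H$ and the restriction is imposed on the \emph{domain variable} $y$, not on the index $x$; this is exactly what forces $\psi$ to be identically zero on the shrunk domains rather than merely to have small image.
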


\noindent\textbf{Remark.} Note that the parameter $\varepsilon$ only affects the codimension of the Bohr sets produced by the proposition.

\begin{proof}
    Let $m \in \mathbb{N}$ be a positive integer to be chosen later. Take random characters $\chi_1, \dots, \chi_m \in \hat{H}$ uniformly and independently. For each $x\in C$, define further sets 
    \[U_x = \{y \in B_x : (\forall i \in [m]) \,\,\chi_i(\phi_x(y)) \in (-1/20k, 1/20k)\}.\]
    Note that these are not Bohr sets by themselves, we shall show later that they contain the desired Bohr sets $B'_x$. Furthermore, note that $C$ is not modified by this choice.\\

    \begin{claim}
        For all additive $2\ell$-tuples $x_{[2\ell]}$ in $C$ we have 
            \[\mathbb{P}\Big(\sum_{i \in [2\ell]} (-1)^i \phi_{x_i}(y) = 0\text{ holds on }\cap_{i \in [2\ell]}U_{x_i}\Big) \geq 1 - K 2^{-m}.\]
    \end{claim}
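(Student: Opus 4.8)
The plan is to exploit the given range bound together with the randomness of the separating characters $\chi_1,\dots,\chi_m$. Fix an additive $2\ell$-tuple $x_{[2\ell]}$ in $C$ (so $\sum_{i\in[2\ell]}(-1)^i x_i = 0$, since $C$ is a coset progression and the $x_i$ form an additive tuple in the monoid sense). Since $2\ell\le 2k$, the hypothesis gives that the Freiman-linear map $\psi := \sum_{i\in[2\ell]}(-1)^i\phi_{x_i}$, defined on $B := \cap_{i\in[2\ell]}B_{x_i}$, satisfies $\range(\psi)\le K$; in particular its image $\psi(B)\subseteq H$ is a set of size at most $K$. The goal is to show that, with high probability over the choice of $\chi_1,\dots,\chi_m$, the only element of $\psi(B)$ surviving the constraint $\chi_i(\,\cdot\,)\in(-1/20k,1/20k)$ for all $i$ is $0$; once that is established, for any $y\in\cap_{i\in[2\ell]}U_{x_i}$ we have $\chi_i(\phi_{x_j}(y))\in(-1/20k,1/20k)$ for every $i\in[m]$ and every $j$, hence $\chi_i(\psi(y)) = \sum_j(-1)^i\chi_i(\phi_{x_j}(y))$ lies in $(-2\ell/20k,2\ell/20k)\subseteq(-1/10,1/10)$ for each $i$; if the separating property holds this forces $\psi(y)=0$.

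The main computation is a union bound over the at most $K$ elements of $\psi(B)\setminus\{0\}$. For a fixed nonzero $h\in H$, a uniformly random character $\chi\in\hat H$ has $\mathbb{P}(\uc{\chi(h)} \ge 1/5)\ge 1/2$ (this is the standard fact used in the proof of Lemma~\ref{charseparation}: for $h\ne 0$, $\chi(h)$ is equidistributed over a nontrivial cyclic subgroup of $\mathbb{T}$, so it lands outside $(-1/5,1/5)$ with probability at least $1/2$). Since $1/5 > 1/10$, the event that $\chi_i(h)\in(-1/10,1/10)$ has probability at most $1/2$, independently over $i\in[m]$. Hence the probability that some nonzero $h\in\psi(B)$ satisfies $\uc{\chi_i(h)}<1/10$ for all $i$ is at most $K\cdot 2^{-m}$. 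On the complementary event — which occurs with probability at least $1-K2^{-m}$ — the argument of the previous paragraph applies to every element of $\psi(B)\setminus\{0\}$, so $\psi$ vanishes on $\cap_{i\in[2\ell]}U_{x_i}$, which is exactly the assertion of the claim.

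I do not expect a genuine obstacle here: the only subtlety is making sure the numerical slack is correct, namely that the accumulated error $2\ell\cdot(1/20k) \le 1/10$ stays strictly below the separation threshold $1/5$ coming from the randomness, which it does since $\ell\le k$. The claim as stated is the easy probabilistic ingredient; the real work of the proposition — deducing from it (after fixing a good choice of $\chi_i$'s and using linearity of expectation against the sets $Q_\ell$) that the sets $U_x$ contain honest Bohr sets $B'_x$ of controlled codimension and radius, via Proposition~\ref{bohrbohr} applied to the Freiman-linear maps $y\mapsto(\chi_i(\phi_x(y)))_{i\in[m]}$ on $B_x$ — comes afterward and is not part of this claim.
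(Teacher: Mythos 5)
Your proof is correct and takes essentially the same approach as the paper: identify the image of $\sum_{i\in[2\ell]}(-1)^i\phi_{x_i}$ as a set $A$ of size at most $K$, observe that any nonzero $h\in A$ attained on $\cap_i U_{x_i}$ must have $\chi(h)\in(-1/10,1/10)^m$ (using the triangle inequality $2\ell\cdot\tfrac{1}{20k}\le\tfrac{1}{10}$), and union-bound over $A\setminus\{0\}$ using the independence of the $\chi_i$ and the fact that a random character lands in $(-1/10,1/10)$ with probability at most $1/2$.
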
 

    \begin{proof}
        Let $A = \on{Im}\Big(\sum_{i \in [2\ell]} (-1)^i \phi_{x_i}\Big)$, which by assumptions has size at most $K$. Take an arbitrary non-zero $h \in A$. If $h$ is taken by the map $\sum_{i \in [2\ell]} (-1)^i \phi_{x_i}$ on $\cap_{i \in [2\ell]}U_{x_i}$, then there exists some $y \in G_2$ such that $\chi(\phi_{x_i]}(y) \in (-1/20k, 1/20k)^m$ for all $i \in [2\ell]$ and $h = \sum_{i \in [2\ell]} (-1)^i \phi_{x_i}(y)$. It follows that $\chi(h) \in (-1/10, 1/10)^m$. In other words, if $\chi(h) \notin (-1/10, 1/10)^m$ then $h$ is not attained by the considered linear combination on $\cap_{i \in [2\ell]}U_{x_i}$. Thus, the desired probability is at least
        \[1 - \sum_{h \in A \setminus \{0\}} \mathbb{P}\Big(\chi(h) \in (-1/10,1/10)^m\Big) \geq 1 - K 2^{-m}.\qedhere\]
    \end{proof}

    Let $E_\ell$ be those additive $2\ell$-tuples $x_{[2\ell]}$ in $Q_\ell$ for which  $\sum_{i \in [2\ell]} (-1)^i \phi_{x_i}$ does not vanish on $\cap_{i \in [2\ell]} U_{x_i}$. By linearity of expectation, we have
    \[\exx\Big(\sum_{\ell \in [2,k]} |E_\ell|/|C|^{2\ell - 1}\Big) \leq \sum_{\ell \in [2,k]}  K 2^{-m} |Q_\ell|/|C|^{2\ell - 1} \leq k K 2^{-m}.\]

    We may choose $m = O(\log(k K \varepsilon^{-1} c^{-1}))$ so that the last bound above becomes smaller than $\varepsilon c$. Hence, the desired proportion of additive tuples have the vanishing property on $\cap_{i \in [2\ell]} U_{x_i}$.\\

    It remains to show that sets $U_x$ contain large Bohr sets. Note that the map $\chi_i \circ \phi_x$ is a Freiman-linear map from $B_x$ to $\mathbb{T}$. Apply Lemma~\ref{bohrbohr} to finish the proof.
\end{proof}

\section{Bilinear Bogolyubov argument}\label{bilbogsection}

In the next step of the proof, we use a bilinear Bogolyubov type of the argument. The key idea is that, given a system of Bohr sets $(B_x)_{x \in C}$, a new system given by $(B_{x_a+a} \cap B_{x_a}) + (B_{y_a+a} \cap B_{y_a})$ exhibits linear behaviour in $a$, i.e. the frequency set of the resulting Bohr set is given by Freiman-linear maps evaluated at $a$.\\

Before stating the main result, we need a preliminary lemma, which generalizes the simple linear-algebraic fact that two linear maps $\alpha_1$ and $\alpha_2$ defined on subspaces $U_1$ and $U_2$ of a vector space $V$ that agree on the intersections of their domains $U_1 \cap U_2$ can simultaneously be extended to a linear map on $U_1 + U_2$.\\
\indent Recall the notation $B^{(\delta)}$ for a Bohr set $B = B(\Gamma, \rho)$, which is the shorthand for $B(\Gamma, \delta \rho)$.

\begin{lemma}\label{flinearextn}
    Suppose that $\phi_1 : B_1 \to H$ and $\phi_2 : B_2 \to H$ are Freiman-linear maps on Bohr sets $B_1$ and $B_2$. Suppose that $\phi_1 = \phi_2$ holds on $B_1 \cap B_2$. Then there exists a Freiman-linear map $\psi : D = B_1^{(1/3)} + B_2^{(1/3)} \to H$ such that $\psi = \phi_1$ on $D \cap B_1^{(2/3)}$ and $\psi = \phi_2$ on $D \cap B_2^{(2/3)}$.
\end{lemma}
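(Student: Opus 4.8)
\textbf{Proof plan for Lemma~\ref{flinearextn}.}

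The plan is to define $\psi$ on $D = B_1^{(1/3)} + B_2^{(1/3)}$ by the obvious rule $\psi(z) = \phi_1(x) + \phi_2(y)$ whenever $z = x + y$ with $x \in B_1^{(1/3)}$ and $y \in B_2^{(1/3)}$, and then check three things: that this is well-defined (independent of the decomposition), that it is Freiman-linear, and that it restricts correctly to $\phi_1$ and $\phi_2$ on the two shrunk Bohr sets. Well-definedness is where the hypothesis $\phi_1 = \phi_2$ on $B_1 \cap B_2$ enters: if $x + y = x' + y'$ with $x, x' \in B_1^{(1/3)}$ and $y, y' \in B_2^{(1/3)}$, then $x - x' = y' - y$, and since $B_i^{(1/3)} - B_i^{(1/3)} \subseteq B_i$ (as $\|\chi(u) - \chi(v)\|_{\mathbb T} \le \frac23 \rho(\chi)$ when $u,v \in B_i^{(1/3)}$ — actually $\le \frac13\rho + \frac13\rho$), this common difference lies in $B_1 \cap B_2$. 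Write $w = x - x' = y' - y \in B_1 \cap B_2$; then, using Freiman-linearity of $\phi_1$ on $B_1$ and of $\phi_2$ on $B_2$ together with $0 \in B_i$, we get $\phi_1(x) - \phi_1(x') = \phi_1(w)$ and $\phi_2(y') - \phi_2(y) = \phi_2(w)$, which are equal by hypothesis, so $\phi_1(x) + \phi_2(y) = \phi_1(x') + \phi_2(y')$. Here one must be slightly careful, as the differences $\phi_1(x)-\phi_1(x')$ need the additive-quadruple identity $x + 0 = x' + w$... wait, that is $x = x' + w$, i.e. the additive triple $x = x' + w$, so Freiman-linearity (which handles additive triples through $0$) gives exactly $\phi_1(x) = \phi_1(x') + \phi_1(w)$ provided $x, x', w$ all lie in $B_1$; indeed $x, x' \in B_1^{(1/3)} \subseteq B_1$ and $w \in B_1$.

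Next, Freiman-linearity of $\psi$ on $D$: suppose $z_1 + z_2 = z_3 + z_4$ with all $z_i \in D$; we want $\psi(z_1) + \psi(z_2) = \psi(z_3) + \psi(z_4)$. Decompose each $z_i = x_i + y_i$ with $x_i \in B_1^{(1/3)}$, $y_i \in B_2^{(1/3)}$. The obstruction is that $x_1 + x_2$ and $x_3 + x_4$ need not be equal, only $(x_1+x_2) - (x_3+x_4) = (y_3+y_4) - (y_1+y_2) =: w$, and this $w$ is a difference of elements of $2B_1^{(1/3)}$ and also of $2B_2^{(1/3)}$, hence lies in $B_1 \cap B_2$ (since $2B_1^{(1/3)} - 2B_1^{(1/3)} \subseteq B_1$, as four terms each contributing $\le \frac13\rho$ in each coordinate sum to $\le \frac43 \rho$ — this is the point where the fraction $1/3$ must be checked, and it is fine since distances add). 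Then I will use the multi-term additive manipulations: $\sum (-1)^{i}\psi(z_i) = \sum(-1)^i \phi_1(x_i) + \sum(-1)^i\phi_2(y_i)$, and since $\phi_1$ is Freiman-linear on $B_1$ (so $\phi_1(x_1)+\phi_1(x_2) - \phi_1(x_3) - \phi_1(x_4) = \phi_1(w')$ where $w'$ is determined by the partial sums lying in $B_1$... ) — the cleanest route is to reduce to the well-definedness step: pick any single decomposition of the common value $z_1 + z_2 = z_3 + z_4$, say $z_1 + z_2 = x^* + y^*$, and show $\psi(z_1) + \psi(z_2) = \psi(x^* + y^*) = \phi_1(x^*) + \phi_2(y^*)$ by a two-step reduction, and similarly for $z_3 + z_4$; this works because $\psi(z_1) + \psi(z_2) = \phi_1(x_1+x_2) + \phi_2(y_1 + y_2)$ once one knows $\phi_i$ extends additively over sums of two domain elements whose partial sums stay in the Bohr set, which is the content of an identity like \eqref{iteratedFreimanLinCP} applied in the Bohr-set setting. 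I expect this bookkeeping — tracking which partial sums remain inside the unshrunk Bohr sets so that Freiman-linearity applies — to be the main (though routine) obstacle; the choice of $1/3$ and $2/3$ is exactly calibrated so that all the intermediate elements ($x_i + x_j$, their differences with the chosen $x^*$, etc.) land in $B_1$ resp.\ $B_2$.

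Finally, the restriction property: if $z \in D \cap B_1^{(2/3)}$, I want $\psi(z) = \phi_1(z)$. Take any decomposition $z = x + y$, $x \in B_1^{(1/3)}$, $y \in B_2^{(1/3)}$; then $y = z - x$, and since $z \in B_1^{(2/3)}$ and $x \in B_1^{(1/3)}$ we have $y \in B_1$ (as $\frac23\rho + \frac13\rho = \rho$), while also $y \in B_2^{(1/3)} \subseteq B_2$, so $y \in B_1 \cap B_2$; hence $\phi_2(y) = \phi_1(y)$, and $\psi(z) = \phi_1(x) + \phi_1(y) = \phi_1(x + y) = \phi_1(z)$ by Freiman-linearity of $\phi_1$ on $B_1$ (all of $x, y, z \in B_1$, and $x + y = z$ is an additive triple). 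The symmetric argument handles $D \cap B_2^{(2/3)}$. In particular $\psi(0) = 0$, so $\psi$ is genuinely Freiman-linear in the sense of Definition~\ref{FEhommDefin}. Assembling these three verifications completes the proof.
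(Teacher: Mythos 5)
Your definition of $\psi$, the well-definedness check, and the restriction arguments are exactly the paper's. The difference — and the place where you have a real error — is the verification of Freiman-linearity. You set out to handle a general additive quadruple $z_1 + z_2 = z_3 + z_4$ and claim $(x_1+x_2) - (x_3+x_4) \in B_1$ because "four terms each contributing $\le \frac13\rho$ sum to $\le \frac43\rho$ — and it is fine since distances add." It is not fine: $\frac43\rho > \rho$, so this difference need not land in $B_1$, and with the $1/3$ calibration there is no way to make the four-term argument close up. You seem to sense this and pivot to "pick a decomposition $z_1 + z_2 = x^* + y^*$ and reduce to well-definedness," but for a general quadruple there is no reason $z_1 + z_2$ should lie in $D$, so the pivot does not rescue the quadruple case either.

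The paper resolves this by verifying only additive \emph{triples} $d_1 + d_2 = d_3$ in $D$: then one can decompose $d_3 = x_3 + y_3$ directly, and the key difference $(x_1 + x_2) - x_3 = y_3 - (y_1 + y_2)$ lands in $B_1^{(3/3)} \cap B_2^{(3/3)} = B_1 \cap B_2$ — the $1/3$ shrinkage is calibrated exactly so that three terms, not four, fit. The chain
\[\phi_1(x_1) + \phi_1(x_2) - \phi_1(x_3) = \phi_1(x_1 + x_2 - x_3) = \phi_2(y_3 - y_1 - y_2) = \phi_2(y_3) - \phi_2(y_1) - \phi_2(y_2)\]
then gives $\psi(d_1) + \psi(d_2) = \psi(d_3)$ immediately. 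Your "cleanest route" is in effect this argument in disguise (the decomposition $x^*, y^*$ of $z_1+z_2$ is the decomposition $x_3, y_3$ of $d_3$), but you should state explicitly that you are only verifying triples: that is both what the calibration allows and what the paper actually proves. So: delete the $\frac43\rho$ claim, drop the general-quadruple ambition, and carry out the triple check with $d_3 = x_3 + y_3$ as above — then your proof matches the paper's.
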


\begin{proof}
    We define $\psi$ as follows. Let $d \in D$ be a arbitrary. There exist $x \in B_1^{(1/3)}$ and $y \in B_2^{(1/3)}$ such that $d = x + y$. We set $\psi(d) = \phi_1(x) + \phi_2(y)$. In the rest of the proof we show that $\psi$ is well-defined, coincides with $\phi_1$ and $\phi_2$ on suitable sets and is Freiman-linear.\\
    \indent Suppose that $d = x' + y'$ for another choice of $x' \in B_1^{(1/3)}$ and $y' \in B_2^{(1/3)}$. Then $x - x' = y' - y \in B_1 \cap B_2$ so $\phi_1(x) - \phi_1(x') = \phi_1(x - x') = \phi_2(y' - y) = \phi_2(y') - \phi_2(y)$. Thus $\phi_1(x) + \phi_2(y) = \phi_1(x') + \phi_2(y')$, as desired.\\
    \indent Let now $z \in D \cap B_1^{(2/3)}$. Then $z = x + y$ for $x \in B_1^{(1/3)}$ and $y \in B_2^{(1/3)}$ and we have $\psi(z) = \phi_1(x) + \phi_2(y)$. But $y = z - x \in B_1$, so $\phi_2(y) = \phi_1(y)$ and thus $\psi(z) = \phi_1(x) + \phi_1(y) = \phi_1(x + y) = \phi_1(z)$. A similar argument shows that $\psi = \phi_2$ on $D \cap B_2^{(2/3)}$.\\
    \indent Let us now show that $\psi$ is Freiman-linear. To that end, let $d_1 + d_2 = d_3$ hold for some $d_1, d_2, d_3 \in D$. Hence, we have $x_1, x_2, x_3 \in B_1^{(1/3)}$ and $y_1, y_2, y_3 \in B_2^{(1/3)}$ such that $x_i + y_i = d_i$. In particular, $x_1 + x_2 - x_3 = y_3 - y_1 - y_2$ is an element of $B_1 \cap B_2$, so $\phi_1(x_1 + x_2 - x_3) = \phi_2(y_3 - y_1  - y_2)$. Using Freiman-linearity, we get
    \begin{align*}\phi_1(x_1) + \phi_1(x_2) - \phi_1(x_3) = &\phi_1(x_1 + x_2) - \phi_1(x_3) = \phi_1(x_1 + x_2 - x_3) \\
    =& \phi_2(y_3 - y_1 - y_2) = \phi_2(y_3) - \phi_2(y_1 + y_2) = \phi_2(y_3) - \phi_2(y_1) - \phi_2(y_2),\end{align*}
    so 
    \[\psi(d_1) + \psi(d_2) = \phi_1(x_1) + \phi_2(y_1) + \phi_1(x_2) + \phi_2(y_2) = \phi_1(x_3) + \phi_2(y_3) = \psi(d_3).\qedhere\]
\end{proof}

Furthermore, we need a restricted version of the structure theorem for approximate homomorphisms.

\begin{lemma}\label{restrictedfreiman} Suppose that $\psi_{[8]}, \psi'_{[8]}$ are maps from a set $C\subseteq G$ to $G'$. Suppose that we have a collection $Q$ of parameters $(a_1, a_2, a_3, x_1, x_2, x_3, x_4, y_1, y_2, y_3, y_4)$ of size at least $c|G|^{11}$ each of which satisfies
\begin{align*}&\psi_1(x_1 + a_1) - \psi_2(x_1) + \psi_3(x_2 + a_2) - \psi_4(x_2) + \psi_5(x_3 + a_3) - \psi_6(x_3) + \psi_7(x_4 + a_1 + a_2 - a_3) - \psi_8(x_4)\\
&\hspace{1cm}=\psi'_1(y_1 + a_1) - \psi'_2(y_1) + \psi'_3(y_2 + a_2) - \psi'_4(y_2) + \psi'_5(y_3 + a_3) - \psi'_6(y_3) + \psi'_7(y_4 + a_1 + a_2 - a_3) - \psi'_8(y_4).\end{align*}

Then there exist Freiman homomorphisms $\theta_i : C'_i \to G$, where $C'_i$ is a coset progression of size $\exp(-(2\log c^{-1})^{O(1)})|G|$ and rank $(2\log c^{-1})^{O(1)}$, for $i \in [8]$, and elements $u_{[8]}$ in $G$, such that the following 16 equalities hold
\begin{align*}&\psi_1(x_1 + a_1) = \theta_1(x_1 + a_1), \psi_2(x_1) = \theta_1(x_1) + u_1, \psi_3(x_2 + a_2) = \theta_2(x_2 + a_2), \psi_4(x_2) = \theta_2(x_2) + u_2,\\
&\psi_5(x_3 + a_3) = \theta_3(x_3 + a_3), \psi_6(x_3) = \theta_3(x_3) + u_3,\\
&\hspace{3cm}\psi_7(x_4 + a_1 + a_2 - a_3) = \theta_4(x_4 + a_1 + a_2 - a_3), \psi_8(x_4) = \theta_4(x_4) + u_4,\\
&\psi'_1(y_1 + a_1) = \theta_5(y_1 + a_1), \psi'_2(y_1) = \theta_5(y_1) + u_5, \psi'_3(y_2 + a_2) = \theta_6(y_2 + a_2), \psi'_4(y_2) = \theta_6(y_2) + u_6,\\
&\psi'_5(y_3 + a_3) = \theta_7(y_3 + a_3), \psi'_6(y_3) = \theta_7(y_3) + u_7,\\
&\hspace{3cm}\psi'_7(y_4 + a_1 + a_2 - a_3) = \theta_8(y_4 + a_1 + a_2 - a_3), \psi'_8(y_4) = \theta_8(y_4) + u_8,\end{align*}
for at least $\exp(-(2\log c^{-1})^{O(1)})|G|^{11}$ of 11-tuples in $Q$ (and arguments belong to domain of respective functions).
\end{lemma}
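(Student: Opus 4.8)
The strategy is to recognize the left-hand side of the displayed equation as an ``approximate homomorphism'' condition in disguise, and to reduce to Theorem~\ref{approxFreimanHom} (the structure theorem for approximate homomorphisms). First I would fix notation: for each parameter tuple $(a_{[3]}, x_{[4]}, y_{[4]})\in Q$, let $a_4 = a_1 + a_2 - a_3$, and think of the eight arguments $x_1+a_1, x_1, x_2+a_2, x_2, \dots$ on the left (and their primed analogues $y_j$ on the right) as the inputs. The key observation is that in each of the eight ``blocks'' $\psi_{2i-1}(x_i+a_i)-\psi_{2i}(x_i)$ the two arguments differ by $a_i$, and these shifts $a_i$ themselves satisfy the single linear relation $a_1+a_2-a_3-a_4 = 0$. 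So I would introduce a single ambient group $\tilde G = G^{8}$ or, more efficiently, work with the map on an auxiliary group built from $C$ together with shift data, and set up an auxiliary function whose respected additive quadruples are exactly parametrized by $Q$.

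Concretely, the plan is the following. Consider the function $\Psi$ defined on (a subset of) $G^{9}$ by
\[\Psi(a_1,a_2,a_3; z_1, z_2, z_3, z_4) = \psi_1(z_1 + a_1) - \psi_2(z_1) + \psi_3(z_2+a_2) - \psi_4(z_2) + \psi_5(z_3+a_3) - \psi_6(z_3) + \psi_7(z_4 + a_1+a_2-a_3) - \psi_8(z_4),\]
and similarly $\Psi'$ from the primed maps. The hypothesis says $\Psi$ and $\Psi'$ agree on a set of density $\geq c$ in the natural product domain; by a Cauchy--Schwarz / popularity argument this forces $\Psi$ (and $\Psi'$) to have $\geq \exp(-(2\log c^{-1})^{O(1)})$ proportion of their additive quadruples respected, once one quotients out by the defect coming from the single relation among the $a_i$. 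I would then want to decouple the eight summands: the point is that an additive quadruple for $\Psi$ in which we only move one of the coordinates $z_i$ (or one of the $a_i$, keeping the relation) isolates a single block $z\mapsto \psi_{2i-1}(z+a)-\psi_{2i}(z)$, and this block, viewed as a function of the pair $(z+a, z)$ — equivalently of $z$ with parameter $a$ — is itself an approximate homomorphism (a standard fact: if $\alpha(u)-\beta(v)$ depends only on $u-v$ for many $(u,v)$, then both $\alpha$ and $\beta$ correlate with a genuine Freiman homomorphism). Applying Theorem~\ref{approxFreimanHom} eight times (once per block, for $\Psi$, and eight more for $\Psi'$) yields Freiman homomorphisms $\theta_i : C'_i \to G$ on coset progressions $C'_i$ of rank $(2\log c^{-1})^{O(1)}$ and density $\exp(-(2\log c^{-1})^{O(1)})$, together with the additive constants $u_i$ (which absorb the $\psi(0)$-type defects and the mismatch between $\psi_{2i-1}$ and $\psi_{2i}$), such that each of the sixteen claimed equalities holds on a large set; then intersecting the (finitely many, boundedly many) large sets on which the individual equalities hold, and using the largeness of $Q$ together with the product structure of the domain, produces the required $\exp(-(2\log c^{-1})^{O(1)})|G|^{11}$ common $11$-tuples.

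The main obstacle I anticipate is the \emph{decoupling step} — going from the single equation relating the sum of eight blocks to information about each block separately. Naively one can only conclude things about the sum; to isolate block $i$ one has to exploit that the other seven blocks involve variables ($z_j$ for $j\neq i$, and the relevant $a_j$) that can be averaged over / fixed independently, so that differencing in $z_i$ alone annihilates the other seven blocks and leaves a clean approximate-homomorphism statement for $\psi_{2i-1}, \psi_{2i}$. Making this rigorous requires a careful bookkeeping of which additive quadruples in the product domain are available, and a few rounds of Cauchy--Schwarz to pass from ``$\geq c$ proportion of $11$-tuples satisfy the joint equation'' to ``for each block, $\geq \exp(-\mathrm{poly}\log c^{-1})$ proportion of the relevant triples are respected''; the constant $u_i$ bookkeeping (ensuring the same $\theta_i$ and $u_i$ work for the two equalities in block $i$, namely the one with the shifted argument and the one without) is routine but must be done consistently. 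A secondary point is handling the constraint $a_4 = a_1+a_2-a_3$ — this is a genuine relation, so the shift-data domain is not a full product, but it is a coset of a subgroup of $G^3$, which is perfectly compatible with all the tools invoked (Theorem~\ref{approxFreimanHom} applies over any finite abelian group), so this causes only notational overhead rather than a real difficulty.
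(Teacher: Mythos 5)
Your proposal is correct and takes essentially the same route as the paper: in both cases the heart of the argument is (i) fix all but one of the eleven variables and apply Cauchy--Schwarz to isolate a single block $\psi_{2i-1}(\cdot+a)-\psi_{2i}(\cdot)$, concluding that $\psi_{2i-1}(u)-\psi_{2i}(v)$ depends only on $u-v$ for many $(u,v)$; (ii) a further Cauchy--Schwarz gives that each $\psi_j$ respects many additive quadruples, so Theorem~\ref{approxFreimanHom} produces a Freiman homomorphism agreeing with $\psi_j$ on a large set; (iii) compare the two Freiman homomorphisms in each block to obtain the shared $\theta_i$ and the constants $u_i$. Your framing via a global function $\Psi$ on a product domain and ``additive quadruples that move only one coordinate'' is a cosmetic repackaging of the paper's direct Cauchy--Schwarz in each variable. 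The one step you wave at as ``routine bookkeeping'' — obtaining the \emph{same} $\theta_i$ for both $\psi_{2i-1}$ and $\psi_{2i}$ — is slightly more substantive than routine in the paper: there one observes that the linear parts $\rho_{2i-1}^{\mathrm{lin}}-\rho_{2i}^{\mathrm{lin}}$ have bounded rank, passes to a coset of the kernel of that difference, and absorbs the residual shift into $u_i$. This is exactly the mechanism your cited ``standard fact'' about $\alpha(u)-\beta(v)$ depending only on $u-v$ is hiding, so the two presentations coincide.
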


\begin{proof}
    Firstly, we prove that each $\psi_i$ can be assumed to be a Freiman homomorphism on a suitable domain, and then we conclude that some of these maps are closely related.\\
    Since $|Q| \geq c |G|^{11}$, the set $Z$ of values $z$ such that $z = x_1 + a_1$ for at least $\frac{c}{2}|G|^{10}$ 11-tuples $(a_1, \dots, y_4)$ in $Q$ has size at least $\frac{c}{2}|G|$. Pass to those 11-tuples $Q'$ whose $x_1 + a_1 \in Z$, so $|Q'| \geq \frac{c^2}{4}|G|^{11}$. By Cauchy-Schwarz inequality, we have at least $\frac{c^4}{16}|G|^{12}$ choices of $(a_1, a_2, a_3, x_1, x_1', x_2, \dots, y_4)$ such that 11-tuples $(a_1, \dots, y_4)$ and $(a_1, a_2, a_3, x_1', x_2, \dots, y_4)$ both belong to $Q'$, and in particular $x_1 + a_1, x'_1 + a_1 \in Z$. Subtracting equalities for these 11-tuples from one another we get
    \[\psi_1(x_1 + a_1) - \psi_2(x_1) = \psi_1(x'_1 + a_1) - \psi_2(x'_1).\]
    Hence, this equality holds for at least $\frac{c^4}{16}|G|^{3}$ choices of $x_1, x'_1, a_1$ and additionally $x_1 + a_1, x'_1 + a_1 \in Z$. Another Cauchy-Schwarz step shows that
    \[\psi_1(x_1 + a_1) - \psi_1(x_1 + b_1) = \psi_1(x'_1 + a_1) - \psi_1(x'_1 + b_1)\]
    holds for at least $2^{-8}c^8 |G|^{4}$ of $(x_1, x'_1, a_1, b_1) \in G^4$. Apply Theorem~\ref{approxFreimanHom} to find a Freiman homomorphism $\rho_1$ defined on a proper coset progression of rank at most $(2\log c^{-1})^{O(1)}$ which coincides with $\psi_1$ on a set $Z' \subseteq Z$ of size $|Z'| \geq \exp(-(2\log c^{-1})^{O(1)})|G|$. Pass to those 11-tuples whose $x_1 + a_1 \in Z'$, so we still have at least $\exp(-(2\log c^{-1})^{O(1)})|G|^{11}$ such 11-tuples.\\

    The same argument applies to any other choice of $\psi_i$ or $\psi'_i$, as the equation is symmetric. Hence, after 16 steps in total, we may assume that there exist Freiman homomorphisms on proper coset progression of rank at most $(2\log c^{-1})^{O(1)}$ $\rho_i : C_i'\to G'$ and $\rho'_i : C''_i \to G'$ and a collection of 11-tuples $Q' \subseteq Q$ such that $|Q'| \geq \exp(-(2\log c^{-1})^{O(1)})|G|^{11}$ and for each $(a_1, \dots, y_4) \in Q'$ we additionally have
    \[\psi_1(x_1 + a_1) = \rho_1(x_1 + a_1), \dots, \psi'_8(y_4) = \rho'_8(y_4).\]

    It remains to relate some pairs of $\rho_i, \rho'_i$. The same first step as above shows
    \[\rho_1(x_1 + a_1) - \rho_2(x_1) = \rho_1(x'_1 + a_1) - \rho_2(x'_1)\]
    for $\exp(-(2\log c^{-1})^{O(1)})|G|^3$ choices of $x_1, x'_1, a_1$. Hence $\on{rk} (\rho^{\text{lin}}_1 - \rho^{\text{lin}}_2) \leq O(\log c^{-1})$, where $\rho^{\text{lin}}_i$ is the Freiman-linear part, i.e., $\rho_i - \rho_i(0)$. Taking $K = \on{ker} (\rho^{\text{lin}}_1 - \rho^{\text{lin}}_2)$ and averaging, we may find a further subset $Q'' \subseteq Q'$ of size $|Q''| \geq (c/2)^{O(1)}|G|^{11}$, where additionally all $x_1$ belong to the same coset $s + K$. Set $u_1 = \rho_2(s) - \rho_1(s)$. Hence, whenever an 11-tuple $(a_1, \dots, y_4)$ belongs to $Q''$, we have
    \[\psi_2(x_1) = \rho_2(x_1) = \rho_2^{\text{lin}}(x_1 - s) + \rho_2(s) = \rho_1^{\text{lin}}(x_1 - s) + \rho_1(s) + u_1 = \rho_1(x_1) + u_1,\]
    using the fact that $x_1 - s \in K$, as claimed.\\
    \indent Finally, apply analogous argument to the remaining 7 pairs of affine maps among $\rho_3, \dots, \rho'_8$.
\end{proof}

We are now ready to state and prove the main result of this section. It show that we may pass to a further system of Freiman-linear maps on Bohr sets, in which the frequency sets of Bohr sets exhibit linear behaviour.

\begin{proposition}\label{bilinearBogolyubovStep}
    Let $\varepsilon \leq 10^{-10d}$. Let $C \subseteq G_1$ be a symmetric proper coset progression of rank $d$. Define a shrinking $C^0 = \frac{1}{10} \cdot C$, which is a symmetric proper coset progression. For $\ell \in \{2,3,4\}$, let $Q_\ell$ be the set of additive $2\ell$-tuples $x_{[2\ell]}$ in $C$, with the property that $x_{2i-1} - x_{2i} \in C^0$.\\
    \indent For each $x \in C$, let $B_x$ be a Bohr set of radius $\rho$ and codimension $d$ and $\phi_x : B_x \to H$ a Freiman-linear map such that, for each $\ell \in \{2,3,4\}$, $1 - \varepsilon$ proportion of additive $2\ell$-tuples in $Q_\ell$ satisfy
    \[\Big(\sum_{i \in [2\ell]} (-1)^i \phi_{x_i}\Big)|_{\cap_{i \in [2\ell]} B_{x_i}}= 0.\]
    Then, there exist quantities $d'$ and $c'$, a set $X$, Frieman homomorphisms $\theta_1, \dots, \theta_{d'}$ on a coset progression $C'$ of rank at most $d'$ and density at least $c'$, a set $X \subseteq C'$ of size at least $c'|G_1|$, Freiman-linear maps $\psi_a : V_a \to H$ for $a \in X$, where $V_a = B(\theta_i(a) : i \in [d'], c')$ such that 
    \begin{itemize}
        \item for each $a \in X$ we have $|Z(\psi_a - \phi_{z+a} - \phi_z)| \geq (\rho/4)^{4d}|G_2|$ for at least $c' |G_1|$ choices of $z \in C \cap C-a$, 
        \item we have at least $c' |X|^3$ additive quadruples $x_{[4]}$ in $X$ that are Bohr-respected for the system $(\psi_a : V_a \to H)$.
    \end{itemize}
\end{proposition}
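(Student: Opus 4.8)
The plan is to implement the bilinear Bogolyubov heuristic stated at the start of the section: we want to replace the family of Bohr sets $(B_x)_{x\in C}$ by a family whose frequency sets depend \emph{linearly} on the index. The natural candidate, as in the overview, is to consider sumsets of the form $(B_{z+a}\cap B_z)+(B_{w+a}\cap B_w)$, which ought to behave linearly in $a$. To make this rigorous, I would first fix, for each $x$ with $B_x=B(\Gamma_x,\rho)$, a set of characters $\Gamma_x$. For an additive quadruple $(x_1,x_2,x_3,x_4)$ in $C$ that is respected, Lemma~\ref{flinearextn} lets us glue $\phi_{x_1}-\phi_{x_2}$ and $\phi_{x_3}-\phi_{x_4}$ (which agree on the intersection of their domains by the vanishing hypothesis, after noting $x_1-x_2=x_4-x_3$) into a single Freiman-linear map on $(B_{x_1}\cap B_{x_2})^{(1/3)}+(B_{x_3}\cap B_{x_4})^{(1/3)}$. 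The point is that this glued map depends only on $a=x_1-x_2$ up to the small errors coming from the non-respected quadruples; by Theorem~\ref{algreglemmaintro} (algebraic regularity for bilinear Bohr varieties) applied to the bilinear structure indexing the $B_x$'s, and by averaging, we can pass to a quasirandom piece of $C$ on which this is genuinely the case.

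\textbf{Key steps.} First I would pigeonhole over the non-exceptional $2\ell$-tuples to find, for each $a\in C^0$ lying in a suitable dense subset, many pairs $(z,w)$ with $z,z+a,w,w+a\in C$ such that \emph{all} of the relevant quadruples $(z+a,z,w+a,w)$, and their extensions to $6$- and $8$-tuples, are respected; the hypotheses on $Q_2,Q_3,Q_4$ with the restriction $x_{2i-1}-x_{2i}\in C^0$ are designed exactly so that these extensions stay inside the good sets. Second, I would define a provisional map $\psi_a$ by the gluing construction of Lemma~\ref{flinearextn} applied to $\phi_{z+a}-\phi_z$ for a randomly chosen good pair, with provisional domain $(B_{z+a}\cap B_z)^{(1/3)}+(B_{w+a}\cap B_w)^{(1/3)}$, and check using the $6$- and $8$-tuple respectedness that different choices of $(z,w)$ give maps agreeing on a large Bohr set --- this is where the first bullet of the conclusion, $|Z(\psi_a-\phi_{z+a}-\phi_z)|\ge(\rho/4)^{4d}|G_2|$, comes from, via Lemma~\ref{basicbohrsizel} for the size of the overlap Bohr set. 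Third, to extract the \emph{linear} dependence of $V_a$ on $a$, I would regard the assignment $a\mapsto(\text{frequency set of }V_a)$ as an approximate homomorphism into the relevant group of tuples of characters and apply Lemma~\ref{restrictedfreiman} (with the eight slots corresponding to the eight $\phi$-terms appearing when one compares $\psi_a$ built from $(z,w)$ with $\psi_{a'}$ built from $(z',w')$ and $\psi_{a+a'}$): this yields Freiman homomorphisms $\theta_1,\dots,\theta_{d'}$ on a coset progression $C'$ and elements $u_{[8]}$ so that, on a dense index set $X\subseteq C'$, the Bohr set $V_a=B(\theta_1(a),\dots,\theta_{d'}(a);c')$ works. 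Finally, the respectedness of additive quadruples in $X$ for the new system $(\psi_a:V_a\to H)$ follows from tracking the $8$-tuple respectedness of the original system through the gluing, shrinking the radius by a bounded factor and applying Lemma~\ref{smallrangelemma} to clean up auxiliary Bohr sets, exactly as at the end of Proposition~\ref{smallimgoncosetprogressionlinsys}.

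\textbf{Main obstacle.} The hard part will be the bookkeeping needed to guarantee that \emph{simultaneously}: (a) the gluing of Lemma~\ref{flinearextn} is available (requires respectedness of the right quadruples), (b) the consistency check across different good pairs $(z,w)$ goes through (requires respectedness of $6$- and $8$-tuples, which is why $Q_3,Q_4$ are in the hypothesis), and (c) the index variables $a,a',a+a'$ range over a set dense enough to run Lemma~\ref{restrictedfreiman}. Keeping all three populations large at once forces a careful iterated Cauchy--Schwarz/dependent-random-choice argument combined with the algebraic regularity lemma to pass to a quasirandom piece of $C$; the restriction $x_{2i-1}-x_{2i}\in C^0=\tfrac1{10}\cdot C$ and the smallness $\varepsilon\le 10^{-10d}$ are precisely what let these losses be absorbed. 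A secondary technical point is that the domains $V_a$ produced by the gluing are sumsets of Bohr sets rather than Bohr sets; converting them to genuine Bohr sets whose frequencies are $\theta_i(a)$ requires Theorem~\ref{bohrSum} together with the linear-dependence output of Lemma~\ref{restrictedfreiman}, and one must check the bounds $d'$ and $c'$ remain of the quasipolynomial type claimed.
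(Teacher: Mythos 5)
Your high-level intuition is right — you correctly identify the gluing via Lemma~\ref{flinearextn}, the role of sumsets of pairs of Bohr sets exhibiting linear behaviour in the shift $a$, the use of Lemma~\ref{restrictedfreiman}, and of Theorem~\ref{bohrSum}, and the structural reason for requiring $Q_3,Q_4$ and the $C^0$ restriction. But there are two genuine gaps where the proposal would not go through.

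First, you propose invoking Theorem~\ref{algreglemmaintro} (the algebraic regularity lemma for bilinear Bohr varieties) \emph{before} the bilinear structure exists. At this stage of the argument the Bohr sets $B_x$ are arbitrary; their frequency sets $\Gamma_x$ bear no linear relation to $x$ — producing the Freiman homomorphisms $\theta_i$ and the linear dependence of the frequency sets on the index is precisely what this proposition is supposed to \emph{establish}, not something one may assume in order to apply a regularity lemma that presupposes it. The paper does not use the algebraic regularity lemma here. What replaces it is an iterative lattice-tracking argument: one maintains a growing list of Freiman homomorphisms $\theta_1,\dots,\theta_m$ together with, for each pair $(x,y)$, a $\{-1,0,1\}$-independent subset $\{\theta_i(x-y):i\in I_{x,y}\}\subseteq\langle\Gamma_x\cup\Gamma_y\rangle_R$; whenever the key Bohr-set inclusion (namely $B(\theta_i(a):i\in I_{x+a,x}\cup I_{y+a,y},\sigma)\subseteq(B_{x+a}\cap B_x)+(B_{y+a}\cap B_y)$, coming from Theorem~\ref{bohrSum}) fails for a non-negligible fraction of triples, a new $\theta'$ can be extracted via Lemma~\ref{restrictedfreiman} and added to the list; and termination in $O(r^2(\log r+\log K))$ steps follows from Lemma~\ref{nestedLattices}. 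Your proposal contains no termination mechanism and no bookkeeping that would bound $d'$.

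Second, your description of how to apply Lemma~\ref{restrictedfreiman} is not quite right. You suggest feeding the "eight $\phi$-terms" (values in $H$) into the eight slots. But the $\theta_i$ must take values in $\hat{G}_2$, not in $H$: they need to supply frequency sets for Bohr sets in $G_2$. In the paper, the functions $\psi_1,\dots,\psi_8$ fed to Lemma~\ref{restrictedfreiman} are $\hat{G}_2$-valued and are chosen by taking, for each $x$, a \emph{random} linear combination $\lambda\in[-R,R]^{\Gamma_x}$ of the characters defining $B_x$; the approximate-homomorphism relation among these random character-valued functions is then what produces the Freiman homomorphisms $\theta'$. There is also no natural abelian-group structure on "frequency sets of $V_a$", so the informal step "regard $a\mapsto(\text{frequency set of }V_a)$ as an approximate homomorphism" cannot be made literal; and at the point where you invoke it, $V_a$ has not yet been constructed. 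Finally, to obtain a domain $V_a$ whose frequency set depends linearly on $a$, the paper needs one more step you omit: a final probabilistic selection (choosing a random bounded-size subset $J\subseteq[m]$ of indices) so that the index set $I_{x_a+a,x_a}\cup I_{y_a+a,y_a}$ can be replaced by a fixed $J$ for a dense set of $a$.
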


\begin{proof}
    Let $B_x = B(\Gamma_x, \rho)$, where $\Gamma_x \subseteq \hat{G}_1$ is a set of characters of size $d$. Note also that $|Q_\ell| \geq 10^{-d \ell}|C|^{2\ell - 1}$ for each $\ell \in \{2,3,4\}$.\\

    Define $R$ to be the quantity from Theorem~\ref{bohrSum}, when that theorem is applies to two Bohr sets of codimension $8d$ and radius $\rho$, so $R \leq (2\rho^{-1})^{O(d)}$. During the proof, we shall keep track of a list of Freiman homomorphisms $\theta_1 : C_1 \to \hat{G}_2, \dots, \theta_m : C_m \to \hat{G}_2$, defined on coset progressions $C_i$, adding one new $\theta_i$ in each step, and, for each $x,y \in C$, we shall maintain a set of indices $I_{x,y} \subseteq [m]$ such that the set $\{ \theta_i(x - y) : i \in I_{x,y}\}$, with $x - y \in C_i$ for all $i \in I_{x,y}$, is $\{-1,0,1\}$-independent and
    \[\{ \theta_i(x - y) : i \in I_{x,y} \} \subseteq \langle \Gamma_x \cup \Gamma_y \rangle_R.\]
    
    This inclusion allows us to give a bound on the size of $I_{x,y}$. Write $s = |I_{x,y}|$. Namely, owing to $\{-1,0,1\}$-independence, we have $|\langle \theta_i(x - y) : i \in I_{x,y}\rangle_{\{0,1\}}| = 2^s$ and 
    \[\langle \theta_i(x - y) : i \in I_{x,y}\rangle_{\{0,1\}} \subseteq \langle \Gamma_x \cup \Gamma_y \rangle_{sR}.\]

    Thus, $2^s \leq (2sR + 1)^{2d}$. Writing $s_0$ for the least natural number with $2^{s_0} > (2s_0R + 1)^{2d}$, we get so $|I_{x,y}| \leq s_0$ and $s_0\leq (d + \log \rho^{-1})^{O(1)}$.
    
    \begin{claim}\label{claim1bilbogarg}
        Let $0 < \sigma < 1/8s_0$. Suppose that currently 
        \begin{equation}
        B(\theta_i(a) : i \in I_{x + a, x} \cup I_{y + a, y}, \sigma) \subseteq (B_{x + a} \cap B_x) + (B_{y + a} \cap B_y)
            \label{firstbogclaimeqn}
        \end{equation}
        \textbf{\upshape{fails}} for at least $\delta|C|^3$ triples $(x, y, a)$ such that $x,y, x+a, y+a \in C$. Then there exist a coset progression $C'$ of rank $(2d\log (\delta^{-1} R))^{O(1)}$ and a Freiman homomorphism $\theta' : C' \to \hat{G}_1$ such that 
        \[\theta'(x - y) \in \langle \Gamma_x \cup \Gamma_y \rangle_R \setminus \langle \theta_i(x - y) : i \in I_{x,y} \rangle_{\{-1,0,1\}}\]
        holds for at least $\exp(-(2d\log (\delta^{-1} R))^{O(1)})|C|^2$ pairs $(x,y)$ in $C^2$.
    \end{claim}

    \begin{proof}[Proof of Claim~\ref{claim1bilbogarg}]
        An application of Theorem~\ref{bohrSum} gives
        \[B(\langle \Gamma_{x + a} \cup \Gamma_x\rangle_R \cap \langle \Gamma_{y + a} \cup \Gamma_y\rangle_R, 1/4) \subseteq (B_{x + a} \cap B_x) + (B_{y + a} \cap B_y),\]
        (recalling that $R$ is defined with application of Theorem~\ref{bohrSum} in mind).\\
        
        By triangle inequality, since $\sigma \leq \frac{1}{4 (|I_{x + a, x}| + |I_{y + a, y}|)}$, we have
        \[B(\theta_i(a) : i \in I_{x + a, x} \cup I_{y + a, y}, \sigma) \subseteq B(\langle \theta_i(a) : i \in I_{x + a, x} \cup I_{y + a, y}\rangle_{\{-1,0,1\}}, 1/4).\]
        We conclude that 
        \[B(\langle \theta_i(a) : i \in I_{x + a, x} \cup I_{y + a, y}\rangle_{\{-1,0,1\}}, 1/4) \not\subseteq B(\langle \Gamma_{x + a} \cup \Gamma_x\rangle_R \cap \langle \Gamma_{y + a} \cup \Gamma_y\rangle_R, 1/4)\]
        and thus
        \[\langle \Gamma_{x + a} \cup \Gamma_x\rangle_R \cap \langle \Gamma_{y + a} \cup \Gamma_y\rangle_R \not\subseteq \langle \theta_i(a) : i \in I_{x + a, x} \cup I_{y + a, y}\rangle_{\{-1,0,1\}}.\]

        Define maps $\psi_1, \psi_2, \psi_3, \psi_4 : C \to \hat{G}_2$ as follows. For each $x \in C$, we choose a linear combination $\lambda \in [-R, R]^{\Gamma_x}$ uniformly at random, and independently for different elements $x$, and set $\psi_i(x) = \sum_{\gamma \in \Gamma_x} \lambda_\gamma \gamma$. Thus, when $x, x+a, y, y+a$ are distinct elements, the probability of 
        \begin{equation}\psi_1(x + a) - \psi_2(x) = \psi_3(y + a) - \psi_4(y) \notin \langle\theta_i(a) : i \in I_{x + a, x} \cup I_{y + a, y}\rangle_{\{-1,0,1\}}\label{addquadspsicondmissingspan}\end{equation}
        occurring is at least $(2R+1)^{-4d}$.\\

        Hence, there exists a choice of functions $\psi_1, \psi_2, \psi_3, \psi_4 : C \to \hat{G}_2$ such that~\eqref{addquadspsicondmissingspan} holds for at least $\frac{\delta}{2}(2R+1)^{-4d}|C|^3$ quadruples in $C$. The claim follows from Lemma~\ref{restrictedfreiman}.
    \end{proof}

    We need a similar result for more complicated arrangements of indexing elements.

    \begin{claim}\label{claim2bilbogarg}
        Let $0 < \sigma < 1/32s_0$. Suppose that currently 
        \begin{equation}\bigcap_{j \in [4]} B(\theta_i(a_j) : i \in I_{x_j + a_j, x_j} \cup I_{y_j + a_j, y_j}, \sigma) \subseteq \Big(\bigcap_{j \in [4]}(B_{x_j + a_j} \cap B_{x_j})\Big) + \Big(\bigcap_{j \in [4]}(B_{y_j + a_j} \cap B_{y_j})\Big)\label{secondbogclaimeqn}\end{equation}
        \textbf{\upshape{fails}} for at least $\delta|C|^{11}$ choices of 12-tuples $(x_{[4]}, y_{[4]}, a_{[4]})$ with $a_1 + a_2 = a_3 + a_4$. T Then there exist a coset progression $C'$ of rank $(2d\log (\delta^{-1} R))^{O(1)}$ and a Freiman homomorphism $\theta' : C' \to \hat{G}_1$ such that 
        \[\theta'(x - y) \in \langle \Gamma_x \cup \Gamma_y \rangle_R \setminus \langle \theta_i(x - y) : i \in I_{x,y} \rangle_{\{-1,0,1\}}\]
        holds for at least $\exp(-(2d\log (\delta^{-1} R))^{O(1)})|C|^2$ pairs $(x,y)$ in $C^2$.
    \end{claim}

    \begin{proof}
        The proof is similar to the previous one, so we only stress the modifications here. We apply Theorem~\ref{bohrSum} to get
        \[B\Big(\langle \bigcup_{j \in [4]}\Gamma_{x_j + a_j} \cup \Gamma_{x_j}\rangle_R \cap \langle \bigcup_{j \in [4]}\Gamma_{y_j + a_j} \cup \Gamma_{y_j}\rangle_R, 1/4\Big) \subseteq  \Big(\bigcap_{j \in [4]}(B_{x_j + a_j} \cap B_{x_j})\Big) + \Big(\bigcap_{j \in [4]}(B_{y_j + a_j} \cap B_{y_j})\Big).\]
        
        By triangle inequality, since $\sigma \leq \frac{1}{4 \Big(\sum_{j \in [4]} |I_{x_j + a_j, x_j}| + |I_{y_j + a_j, y_j}|\Big)}$, we have
        \[\bigcap_{j \in [4]} B(\theta_i(a_j) : i \in I_{x_j + a_j, x_j} \cup I_{y_j + a_j, y_j}, \sigma) \subseteq B(\langle \theta_i(a_j) : j \in [4], i \in I_{x_j + a_j, x_j} \cup I_{y_j + a_j, y_j}\rangle_{\{-1,0,1\}}, 1/4).\]
        We conclude that 
        \[B(\langle \theta_i(a_j) : j \in [4], i \in I_{x_j + a_j, x_j} \cup I_{y_j + a_j, y_j}\rangle_{\{-1,0,1\}}, 1/4) \not\subseteq B\Big(\langle \bigcup_{j \in [4]}\Gamma_{x_j + a_j} \cup \Gamma_{x_j}\rangle_R \cap \langle \bigcup_{j \in [4]}\Gamma_{y_j + a_j} \cup \Gamma_{y_j}\rangle_R, 1/4\Big)\]
        and thus
        \[\Big\langle \bigcup_{j \in [4]}\Gamma_{x_j + a_j} \cup \Gamma_{x_j}\Big\rangle_R \cap \Big\langle \bigcup_{j \in [4]}\Gamma_{y_j + a_j} \cup \Gamma_{y_j}\Big\rangle_R \not\subseteq \langle \theta_i(a_j) : j \in [4], i \in I_{x_j + a_j, x_j} \cup I_{y_j + a_j, y_j}\rangle_{\{-1,0,1\}}.\] 
        
        The rest of proof is analogous and follows from Lemma~\ref{restrictedfreiman}.
    \end{proof}

    Apply repeatedly Claims~\ref{claim1bilbogarg} and~\ref{claim2bilbogarg} until $1-\varepsilon$ proportion of additive quadruples in $C$  satisfy~\eqref{firstbogclaimeqn} and $1- \varepsilon$ proportion of the described 12-tuples in $C$ satisfy~\eqref{secondbogclaimeqn}.\\

    Recall that $1-\varepsilon$ proportion of additive quadruples in $Q_2$ are Bohr-respected. Note that $|C| \geq |C \cap C - a| \geq 2^{-d}|C|$ for $a \in C^0$. Hence, we can find a set $A \subseteq C^0$ of size at least $(1- 2^{2d}\sqrt{\varepsilon})|C^0|$ such that for each $a \in A$, the number of additive quadruples $(x + a, x, y + a, y)$ which are Bohr-respected is at least $(1-\sqrt[4]{\varepsilon})|C \cap C - a|^2$. Additionally, for $a \in A$, let $X_a$ be the set of all $x \in C \cap C - a$ such that $(x + a, x, y + a, y)$ is Bohr-respected for at least $(1-\sqrt[8]{\varepsilon})|C \cap C - a|$ of $y \in C \cap C - a$. Then $|X_a| \geq (1-\sqrt[8]{\varepsilon})|C \cap C - a|$.\\ 
    \indent Furthermore, let $A' \subseteq A$ be the set of $a \in A$ such that~\eqref{firstbogclaimeqn} holds for at least $(1 - 2\sqrt{\varepsilon})|C \cap C - a|^2$ of pairs $(x,y) \in (C \cap C -a)^2$. Hence, $|A'| \geq (1 - 2^{2d+1}\sqrt{\varepsilon})|C^0|$.\\
    
    Take random pair $(x_a, y_a)$ uniformly from $(C \cap C - a)^2$, independently for each $a \in A'$. We say that $(x_a, y_a)$ is \textit{good} if 
    \begin{itemize}
        \item $(x_a + a, x_a, y_a + a, y_a)$ is Bohr-respected,
        \item $x_a, y_a \in X_a$, and
        \item $B(\theta_i(a) : i \in I_{x_a + a, x_a} \cup I_{y_a + a, y_a}, \sigma) \subseteq (B_{x_a + a} \cap B_{x_a}) + (B_{y_a + a} \cap B_{y_a})$.
    \end{itemize} 
    The definition of $A'$ implies that the probability $(x_a, y_a)$ being good is at least $1 - 2^{2d + 3}\sqrt[8]{\varepsilon}$ for any given $a \in A'$.\\

    If these conditions hold, we may apply Lemma~\ref{flinearextn} to define a Freiman-linear map $\psi_a : U_a = B(\theta_i(a) : i \in I_{x + a, x} \cup I_{y + a, y}, \sigma/4) \to \hat{H}$ which coincides with $\phi_{x_a+a} - \phi_{x_a}$ on $B_{x_a + a}^{(2/3)} \cap B_{x_a}^{(2/3)} \cap U_a$ and with $\phi_{y_a+a} - \phi_{y_a}$ on $B_{y_a + a}^{(2/3)} \cap B_{y_a}^{(2/3)} \cap U_a$.\\

    \noindent\textbf{Relating $\psi_\bcdot$ back to $\phi_\bcdot$.} Since $x_a \in X_a$, we have that $(x_a + a, x_a, z + a, z)$ is Bohr-respected for $\frac{1}{2}|C \cap C - a|$ of $z \in C \cap C - a$, so it follows that $|Z(\psi_a - \phi_{z+a} - \phi_z)| \geq (\rho/4)^{4d}|G_2|$.\\

    Let us now show that the expected number of Bohr-respected additive quadruples in $(\psi_a, U_a)$ is large. Let $Q$ be the set of all quadruples $a_{[4]}$ in $A'$ such that $a_1 + a_2 = a_3 + a_4$, such that additive 8-tuples $(x_1 + a_1, x_1, \dots, x_4 + a_4, x_4)$ are Bohr-respected (with respect to system $\phi_\bcdot$) for at least $(1-\sqrt{\varepsilon})\prod_{i \in [4]} |C \cap C-a_i|$ quadruples $x_{[4]}$ in $\prod_{i \in [4]} (C \cap C-a_i)$, and such that~\eqref{secondbogclaimeqn} holds for at least $(1-\sqrt{\varepsilon})\prod_{i \in [4]} |C \cap C-a_i|^2$ of $x_{[4]}, y_{[4]}$ in $(\prod_{i \in [4]} |C \cap C-a_i|) \times (\prod_{i \in [4]} |C \cap C-a_i|)$. Thus $|Q| \geq (1-2^{8d}\sqrt[4]{\varepsilon})|C^0|^3$. It follows that the expected number of Bohr-respected additive quadruples in $(\psi_a, U_a)$ is at least $(1-2^{9d}\sqrt[4]{\varepsilon})|C^0|^3$. We choose $\psi_a : U_a \to H$ for $a \in A'$ so that the described properties hold.\\

    Finally, we pass to a subset $\tilde{A} \subseteq A'$ on which $U_a$ exhibits linear behaviour. Since $U_a = B(\theta_i(a) : i \in I_{x + a, x} \cup I_{y + a, y}, \sigma/4)$ and $| I_{x + a, x} \cup I_{y + a, y}| \leq 2s_0$, let us pick $J \subseteq [m]$ of size $8s_0$ uniformly at random among all such sets, and let $\tilde{A}$ be the collection of all $a \in A'$ such that $I_{x + a, x} \cup I_{y + a, y} \subseteq J$. Thus, probability that $a$ is chosen is at least $\binom{m}{8s_0}^{-1}$, so by the linearity of expectation, there is a choice of $J$ such that $|\tilde{A}| \geq \binom{m}{8s_0}^{-1}|A'| \geq m^{-8s_0}|A'|$. Let us take $U'_a = B(\theta_i(a) : i \in J, \sigma/4)$ for $a \in \tilde{A}$ as the domain of $\psi_a$ to finish the proof.
\end{proof}

\section{Bilinear Bohr variety with many Bohr-respected additive quadruples}\label{bilbohrmanybohrrespsection}

In this step, we use the abstract Balog-Szemer\'edi-Gowers theorem again to find a subset of indices in which all additive quadruples are Bohr-respected. However, this step is more complicated than the previous one in Section~\ref{almostalladditive16tuplesABSG1} involving the abstract Balog-Szemer\'edi-Gowers theorem, as we need to ensure that the domains of the Freiman-linear maps in our system form a quasirandom bilinear Bohr variety, so we rely on the algbraic regularity lemma.

\begin{proposition}\label{absgStep2bilBohrvar}
    Let $C \subseteq G_1$ be a coset progression of rank $d$, $B = B(\Gamma, \rho) \subseteq G_2$ a Bohr set and let $\Theta_1, \dots, \Theta_r : C \to \hat{G}_2$ be Freiman homomorphisms.
    Let $V_x = B \cap B(\Theta_1(x), \dots, \Theta_r(x); \rho)$.
    Suppose that we are also given Freiman-linear maps $\phi_x : V_x \to H$ for each $x \in X$, $X \subseteq C$, such that at least $c |C|^3$ additive quadruples in $X$ are Bohr-respected.
    Then there exists a set $X' \subseteq X$ of size $\exp(-(2dr\log \rho{^-1}))|C|$ and a further Bohr set $B'$ such that all additive quadruples in $X'$ are respected by the system $\phi_x|_{B' \cap V_x}$.
\end{proposition}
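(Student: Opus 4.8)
The plan is to apply the abstract Balog--Szemer\'edi--Gowers theorem (Theorem~\ref{absg}) to the indexing set $X$ inside the ambient coset progression $C$, which has small doubling since its rank $d$ is small, so $|X-X| \le |C - C| \le K|C|$ with $K \le 2^{O(d)}$. The subtlety, exactly as flagged in the paragraph preceding the statement, is that ``Bohr-respected'' is not a single fixed condition but a nested family: an additive quadruple $x_{[4]}$ should be deemed in $\mathcal{Q}_i$ roughly when $\sum_{j\in[4]}(-1)^j\phi_{x_j}$ vanishes on $V_{x_1}\cap\cdots\cap V_{x_4}\cap B(\Gamma,\rho_i)$ for a shrinking parameter $\rho_i$ decreasing with $i$. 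The flexibility of Theorem~\ref{absg} in the choice of the sets $\mathcal{Q}_i$ is precisely what allows this. So first I would fix a decreasing sequence of radii $\rho = \rho_0 > \rho_1 > \dots > \rho_{72} > 0$, with each ratio $\rho_{i+1}/\rho_i$ a suitable small absolute-constant-type power, and define $\mathcal{Q}_i$ to be the set of additive quadruples $(\upd{x_1},x_2,x_3,\upd{x_4})$ in $X$ such that $\phi_{x_1}-\phi_{x_2}+\phi_{x_3}-\phi_{x_4}$ vanishes on $V_{x_1}\cap V_{x_2}\cap V_{x_3}\cap V_{x_4}\cap B(\Gamma,\rho_i)$ (or a similar arrangement of shifts). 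Largeness holds by hypothesis for $\mathcal{Q}_1$, and symmetry (S1)--(S3) is immediate since the defining condition is symmetric under the relevant permutations of the quadruple.

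The heart of the argument, and the main obstacle, is verifying the \textbf{weak transitivity} hypothesis (iii): if an additive quadruple $(\upd{a_1},a_2,a_3,\upd{a_4})$ admits $\gtrsim c'|C|$ pairs $(b,b')$ with $(\upd{a_1},a_2,b,\upd{b'})\in\mathcal{Q}_i$ and $(\upd b,b',a_3,\upd{a_4})\in\mathcal{Q}_j$, then $(\upd{a_1},a_2,a_3,\upd{a_4})\in\mathcal{Q}_{i+j}$. Naively adding the two vanishing conditions gives vanishing of $\phi_{a_1}-\phi_{a_2}+\phi_{a_3}-\phi_{a_4}$ only on $V_{a_1}\cap V_{a_2}\cap V_{a_3}\cap V_{a_4}\cap V_b\cap V_{b'}\cap B(\Gamma,\rho_j)$, which is contaminated by the two ``extra'' Bohr pieces $V_b, V_{b'}$ depending on $b,b'$. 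To remove this contamination I would invoke the algebraic regularity lemma for bilinear Bohr varieties (Theorem~\ref{algreglemmaintro}): applying it to the bilinear Bohr variety whose columns are the $V_x$, one partitions $C$ (up to a small exceptional set) into quasirandom pieces, on each of which, for $\gtrsim 1-\eta$ of the pairs $(b,b')$, the intersection $V_b\cap V_{b'}$ has density close to a fixed value $\delta^2$ inside the appropriate reference Bohr set and, crucially, the further intersection $V_{a_1}\cap\cdots\cap V_{a_4}\cap V_b\cap V_{b'}\cap B(\Gamma,\rho_{i+j})$ is nonempty and in fact dense in $V_{a_1}\cap\cdots\cap V_{a_4}\cap B(\Gamma,\rho_i+\rho_j)$. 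Since a Freiman-linear map that vanishes on a $2/3$-dense subset of a Bohr set vanishes on a shrinking of it (Lemma~\ref{smallrangelemma} with $\range = 1$, or the preceding Proposition on many zeroes implying small range), one concludes that the sum vanishes on $V_{a_1}\cap\cdots\cap V_{a_4}\cap B(\Gamma,\rho_{i+j})$, i.e. the quadruple lies in $\mathcal{Q}_{i+j}$ after the radius has been shrunk by the controlled amount built into the sequence $\rho_0 > \rho_1 > \cdots$. Quasirandomness is used to guarantee that the relevant intersections have the expected size for almost all $(b,b')$, so that the small loss $\eta$ does not destroy the $c'|C|$ lower bound on the number of good pairs; here I would take $\eta$ polynomially small in the final target and absorb the quantitative cost, which is benign since Theorem~\ref{algreglemmaintro} gives bounds of the form $\exp((dr\log\rho^{-1})^{O(1)})$.

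With all three hypotheses of Theorem~\ref{absg} verified, I obtain a subset $A' = X' \subseteq X$ of size $\ge (c/2K)^{O(1)}|C| \ge \exp(-(2dr\log\rho^{-1})^{O(1)})|C|$ with the stated robust-connectivity property; specialising to $k=1$ (we only need the base case of the $\mathcal{Z}_\ell$ construction) shows that for each single index $a\in X'$ the collection $\mathcal{Z}_1^{(a)}$ is large, and then for an arbitrary additive quadruple $x_{[4]}$ in $X'$, chaining four such statements through $\mathcal{Q}_{72}$ exactly as in Claim~\ref{sixtupleschainclaim} / the claim in the proof of Proposition~\ref{almostalladd16tupleshavesmallimage} shows that $x_{[4]}\in\mathcal{Q}_{72}$ for some bounded index, hence that $\phi_{x_1}-\phi_{x_2}+\phi_{x_3}-\phi_{x_4}$ vanishes on $V_{x_1}\cap\cdots\cap V_{x_4}\cap B'$, where $B' = B(\Gamma,\rho_{72})$ is the required further Bohr set. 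Since $B'$ is a single Bohr set independent of the quadruple, this says precisely that every additive quadruple in $X'$ is respected by the system $\phi_x|_{B'\cap V_x}$, completing the proof. The only genuinely new work relative to Section~\ref{almostalladditive16tuplesABSG1} is the regularity-lemma input in the weak-transitivity check, which is where the quasirandomness of the bilinear Bohr variety is essential.
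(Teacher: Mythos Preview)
Your setup is right: defining $\mathcal{Q}_i$ via a shrinking sequence of radii and verifying weak transitivity with the algebraic regularity lemma is exactly how the paper proceeds (minor point: you should shrink the radius in \emph{both} the $\Gamma$-part and the $\Theta_j(x)$-part of $V_x$, since the triangle-inequality trick $S\cap V^{(i+1)}_z\subseteq V^{(i)}_{z+a}$ needs the latter). The paper then applies Lemma~\ref{almostfullBohr} rather than Lemma~\ref{smallrangelemma}, but that is cosmetic.

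The genuine gap is in your endgame. Theorem~\ref{absg} does \emph{not} give that all additive quadruples in the output set $X'$ lie in some $\mathcal{Q}_j$; it only gives that each $k$-tuple $a_{[k]}$ in $X'$ is linked, via $\mathcal{Q}_{36}$-quadruples, to many $3k$-tuples $y_{[3k]}$ in the \emph{original} set $X$. Following the claim in the proof of Proposition~\ref{almostalladd16tupleshavesmallimage} as you suggest yields only that $\sum(-1)^i\phi_{x_i}$ agrees with $\sum(-1)^i\phi_{y_i}$ on a suitable intersection, but there is no reason for the latter sum to vanish: the $y_i$ lie in $X$, not $X'$, and the hypothesis gives respectedness for a positive proportion of additive \emph{quadruples}, not $12$-tuples. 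Your ``chaining four such statements'' does not close this gap. The paper needs substantial further work: it applies Theorem~\ref{absg} a \emph{second} time to get $X''\subseteq X'$, then proves (using quasirandomness together with Lemma~\ref{qrsumbohrs} and the extension Lemma~\ref{flinearextn}) that the sums $\sum(-1)^i\phi_{y_i}$ over the resulting $36$-tuples agree with one of \emph{finitely many} Freiman-linear maps $\psi_1,\dots,\psi_m$ on a common Bohr set; hence every additive $12$-tuple in $X''$ has its sum equal to some $\psi_j$. Only then does a dependent-random-choice step, selecting random characters $\chi_i$ and test points $t_i$ and restricting to $x$ with $\chi_i(\phi_x(t_i))$ in a small arc, kill the nonzero $\psi_j$ and produce a subset in which all quadruples are genuinely Bohr-respected.
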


\begin{proof}
    Let $\eta > 0$ be a parameter to be chosen later. We begin the proof by applying the algebraic regularity lemma (Theorem~\ref{algreglemmaintro}) for error parameter $\eta$. We obtain a further proper coset progression $C'$ of rank at most $d$ and a set $T$ of size at most
\[\exp\Big(d^{O(1)} r^{O(1)} \log^{O(1)} (\eta^{-1}) \log^{O(1)} (\rho^{-1}) \Big),\]
such that $C' + T \subseteq C$, $|C'| |T| = |C' + T| \geq (1 - \eta) |C|$ and every $t + C'$ induces a quasirandom piece of the bilinear Bohr variety in the sense of the theorem. Let $X_t = X \cap t + C$ for $t \in T$. Let $Q$ be the set of all Bohr-respected additive quadruples in $X$. Let $\mathcal{T}$ be the set of all quadruples $(t_1, \dots, t_4) \in T^4$ such that there exists an element of $Q$ inside $(t_1 + C') \times (t_2 + C') \times (t_3 + C') \times (t_4 + C')$. We claim that $|\mathcal{T}| \leq 2^{5d} |T|^3$. Otherwise, we have a choice of $t_2, t_3, t_4 \in T$ such that $(t_1, t_2, t_3, t_4) \in \mathcal{T}$ for a set $T'$ of possible $t_1$ of size greater than $2^{5d}$. But then $t_1 - t_2 + t_3 - t_4 \in 4C'$ for each $t_1 \in T'$. Since $t_1 + C'$ are disjoint for different $t_1$, and are contained in $5C' + t_2 - t_3 + t_4$, we have $|T'| \leq 2^{5d}$, which is a contradiction. Hence, we have a choice of $t_1, t_2, t_3, t_4$ with at least $2^{-5d}c|C'|^3$ additive quadruples in $(t_1 + C') \times (t_2 + C') \times (t_3 + C') \times (t_4 + C')$. But redefining the Bohr set $B$ slightly allows us to assume that all $t_i$ are equal. Namely, take $x_3 \in t_3 + C'$ and $x_4 \in t_4 + C'$ such that $(x_1, x_2, x_3, x_4)$ is Bohr-respected for many choices of $(x_1, x_2)$. Then taking another such a pair, we have
\[\phi_{x_1} - \phi_{x_2} - \phi_{x'_1} + \phi_{x'_2} =\Big(\phi_{x_1} - \phi_{x_2} +\phi_{x_3} - \phi_{x_4}\Big)- \Big(\phi_{x'_1} - \phi_{x'_2} +\phi_{x_3} - \phi_{x_4}\Big) \]
vanishing on $V_{x_1} \cap V_{x_2} \cap V_{x'_1} \cap V_{x'_2} \cap V_{x_3} \cap V_{x_4}$, so just intersect $B$ with $V_{x_3} \cap V_{x_4}$. Apply another such step to get a single coset progression $t+ C$, and assume that the number of Bohr-respected additive quadruples is at least $2^{-20d}c^4|C'|^3$.\\
    
    Misusing the notation, we pass to a coset progression $C'$ on which the bilinear variety is sufficiently quasirandom. Our goal is to apply the abstract Balog-Szemer\'edi-Gowers theorem. For a radius $\sigma$, let $V^{(\sigma)}_x = B(\Gamma; \sigma) \cap B(\Theta_1(x), \dots, \Theta_r(x); \sigma)$.\\
    Let $\rho_i$ be defined as $3^{-i}\rho$. Simplify the notation and write $V^{(i)}_x = V^{(\rho_i)}_x$, which is a slightly shrunk Bohr set. Let $\mathcal{Q}_i$ be the set of all additive quadruples $(\upd a_1, a_2, a_3, \upd a_4)$ such that 
    \[\phi_{a_1} - \phi_{a_2} - \phi_{a_3} + \phi_{a_4} = 0\]
    on $V^{(i)}_{a_1} \cap V^{(i)}_{a_2} \cap V^{(i)}_{a_3} \cap V^{(i)}_{a_4}$. Obviously, $\mathcal{Q}_i \subseteq \mathcal{Q}_{i + 1}$ so all sets $\mathcal{Q}_i$ are large. They are also symmetric. In order to apply the theorem, we need to check weak transitivity.

    \begin{claim}
        Suppose that $x + a, x, y + a, y$ are elements of $X$ such that $(\upd{x + a}, x, z + a, \upd z), (\upd{z + a}, z, y + a, \upd y) \in \mathcal{Q}_i$ holds for at least $c' |C|$ choices of $z \in X$. Then $(\upd{x+a}, x, y + a, \upd{y}) \in \mathcal{Q}_{i+1}$.
    \end{claim}

    \begin{proof} Let $Z$ be the set of the described elements $z \in X$. For each such $z \in Z$, by Bohr-respectedness, we have
    \begin{align*}\phi_{x + a}(w) - \phi_{x}(w) - \phi_{y + a}(w) + \phi_{y}(w) = &\Big(\phi_{x + a}(w) - \phi_{x}(w) - \phi_{z + a}(w) + \phi_{z}(w)\Big) \\
    &\hspace{2cm}+ \Big(\phi_{z + a}(w) - \phi_{z}(w) - \phi_{y + a}(w) + \phi_{y}(w)\Big) = 0,\end{align*}
    whenever $w \in V^{(i)}_{x + a} \cap V^{(i)}_x \cap V^{(i)}_{y + a} \cap V^{(i)}_y \cap V^{(i)}_{z + a} \cap V^{(i)}_z$. Write $S = V^{(i + 1)}_{x + a} \cap V^{(i + 1)}_x \cap V^{(i + 1)}_{y + a} \cap V^{(i + 1)}_y$ and observe that $S \cap V^{(i + 1)}_z \subseteq V^{(i)}_{z + a}$. Indeed, if $w$ belongs to the set on the left, then $\uc{\chi(w)} \leq \rho_{i+1}$ for all the relevant characters so $\uc{\theta_i(z + a)(w)} \leq 3\rho_{i + 1} \leq \rho_i$. Hence, $\phi_{x + a}(w) - \phi_{x}(w) - \phi_{y + a}(w) + \phi_{y}(w) = 0$ on $\cup_{z \in Z} S \cap V^{(i + 1)}_z$.\\
    \indent Use quasirandomness to deduce that $\phi_{x + a}(w) - \phi_{x}(w) - \phi_{y + a}(w) + \phi_{y}(w) = 0$ holds for at least $1 - 4^{-k}$ proportion of $w \in V^{(i + 1)}_{x+a} \cap V^{(i + 1)}_x \cap V^{(i + 1)}_{y + a} \cap V^{(i + 1)}_y$. By Lemma~\ref{almostfullBohr}, we are done provided $\rho_{i + 1} \leq \frac{1}{2}\rho_i$.
    \end{proof}

    Apply Theorem~\ref{absg} giving a set $X' \subseteq X$. Similarly, apply Theorem~\ref{absg} another time to find a further subset $X'' \subseteq X'$. In particular, there is some $c' \geq (c/2^d)^{O(1)}|C'|$ such that $|X'|, |X''| \geq c' |C'|$, for each additive $12$-tuple $a_{[12]}$ in $X''$ we have at least $c'|C'|^{35}$ additive $36$-tuples $y_{[36]}$ in $X'$ such that 
    \[\sum_{i \in [12]}(-1)^i a_i = \sum_{i \in [36]} (-1)^iy_i\]
    and
    \begin{equation}\label{xsecundumtoxprime}\sum_{i \in [12]}(-1)^i \phi_{a_i} = \sum_{i \in [36]} (-1)^i \phi_{y_i}\end{equation}
    holds on $\Big(\bigcap_{i \in [12]} V^{(\sigma)}_{a_i}\Big) \cap \Big(\bigcap_{i \in [36]} V^{(\sigma)}_{y_i}\Big)$, for some $\sigma \geq \Omega(\rho)$ and a similar conclusion holds for all additive additive $36$-tuples $y_{[36]}$ in $X'$, which we relate to additive $108$-tuples $z_{[36]}$ in $X$. Here we used $V^{(\tau/3)}_{x_1} \cap V^{(\tau/3)}_{x_2} \cap V^{(\tau/3)}_{x_3} \subseteq V^{(\tau)}_{x_4}$ for additive quadruples $(x_1, \dots, x_4)$ several times.\\

    \begin{claim}
        Provided $\eta \leq (2^d\delta c c')^{\blc}$, the following holds. Let $Q$ be a collection of additive additive $36$-tuples in $X'$ of size $\delta |C'|^{35}$. Then there exist a real $\rho' \geq \Omega(\rho)$ and a Freiman-linear map $\psi : B(\Gamma, \rho') \to H$ such that 
        \[\Big(\sum_{i \in [36]} (-1)^i \phi_{y_i}\Big)\Big|_{\cap_{i \in [36]} V^{(\rho')}_{y_i}} = \psi|_{\cap_{i \in [36]} V^{(\rho')}_{y_i}} \]
        holds for at least $(c c' \delta)^{O(1)} |C'|^{35}$ of $y_{[36]} \in Q$.
    \end{claim}

    \begin{proof}
        Consider the bipartite graph whose vertex classes are $Q$ and the set of all additive 108-tuples in $X'$. We put an edge between $y_{[36]} \in Q$ and $z_{[108]}$ if 
        \[\sum_{i \in [36]} (-1)^i \phi_{y_i} = \sum_{i \in [108]} (-1)^i \phi_{z_i}\]
        holds on $\Big(\bigcap_{i \in [36]} V^{(\sigma)}_{y_i}\Big) \cap \Big(\bigcap_{i \in [108]} V^{(\sigma)}_{z_i}\Big)$. By the properties guaranteed by Theorem~\ref{absg}, every vertex in $Q$ has degree at least $c' |C'|^{107}$, so the graph is dense.\\

        Using quasirandomness of the bilinear Bohr variety and Lemma~\ref{qrsumbohrs}, we conclude that for a vast majority of $y_{[36]}, y'_{[36]} \in Q$ and additive 108-tuples $z_{[108]}, z'_{[108]}$ we have
        \begin{align*}
            B(\Gamma, \sigma/300) \subseteq &(\cap_{i \in [108]} V^{(\sigma/30)}_{z_i}) + (\cap_{i \in [108]} V^{(\sigma/30)}_{z'_i})\\
            (\cap_{i \in [108]} V^{(\sigma/10)}_{z_i}) \cap (\cap_{i \in [108]} V^{(\sigma/10)}_{z'_i}) \subseteq & \Big((\cap_{i \in [108]} V^{(\sigma)}_{z_i}) \cap (\cap_{i \in [108]} V^{(\sigma)}_{z'_i}) \cap (\cap_{i \in [36]} V^{(\sigma)}_{y_i})\Big) \\
            &\hspace{2cm} + \Big((\cap_{i \in [108]} V^{(\sigma)}_{z_i}) \cap (\cap_{i \in [108]} V^{(\sigma)}_{z_'i}) \cap (\cap_{i \in [36]} V^{(\sigma)}_{y'_i})\Big)\\
            (\cap_{i \in [36]} V^{(\sigma/3000)}_{y_i}) \subseteq & \Big((\cap_{i \in [108]} V^{(\sigma/300)}_{z_i}) \cap (\cap_{i \in [36]} V^{(\sigma/300)}_{y_i})\Big)\\&\hspace{2cm}  +  \Big((\cap_{i \in [108]} V^{(\sigma/300)}_{z_'i}) \cap (\cap_{i \in [36]} V^{(\sigma/300)}_{y_i})\Big)\\
            (\cap_{i \in [36]} V^{(\sigma/3000)}_{y'_i}) \subseteq & \Big((\cap_{i \in [108]} V^{(\sigma/300)}_{z_i}) \cap (\cap_{i \in [36]} V^{(\sigma/300)}_{y'_i})\Big) \\&\hspace{2cm} +  \Big((\cap_{i \in [108]} V^{(\sigma/300)}_{z_'i}) \cap (\cap_{i \in [36]} V^{(\sigma/300)}_{y'_i})\Big).
        \end{align*}

        We first pick  $z_{[108]}, z'_{[108]}$ for which the most of $y_{[36]}, y'_{[36]} \in Q$ satisfy the inclusions above and a positive proportion forms a cycle of length 4 in the bipartite graph the we consider. Let $K = \cap_{i \in [108]} V^{(\sigma)}_{z_i}, K' = \cap_{i \in [108]} V^{(\sigma)}_{z_'i}$, $L = $, $L' = $ and let $\alpha : K \to H$, $\alpha' : K' \to H$, $\beta: L \to H$, $\beta' : L' \to H$ be the maps defined by $\alpha = \sum_{i \in [108]} (-1)^i\phi_{z_i}$, $\alpha' = \sum_{i \in [108]} (-1)^i\phi_{z'_i}$, $\beta = \sum_{i \in [36]} (-1)^i\phi_{y_i}$ and $\beta' = \sum_{i \in [36]} (-1)^i\phi_{y'_i}$. We first show that $\alpha = \alpha'$ on $K \cap K'$. Note that $\alpha = \beta$ on $K \cap L$ and $\alpha' = \beta$ on $K \cap L$ so $\alpha = \alpha'$ on $K \cap K' \cap L$. Using $\beta'$ instead of $\beta$, we get $\alpha = \alpha'$ on $(K \cap K' \cap L) + (K \cap K' \cap L')$. Provided we take suitable $y_{[36]}, y'_{[36]}$, we get $\alpha = \alpha'$ on $(K \cap K')_{1/10}$ using the second inclusion above.\\

        Extend $\alpha$ and $\alpha'$ simultaneously to $\psi$ on $(K + K')_{1/30}$, containing $B(\Gamma, \sigma/300)$, using Lemma~\ref{flinearextn}. Hence $\psi = \beta$ on $\cap_{i \in [108]} V^{(\sigma/300)}_{z_i}) \cap (\cap_{i \in [36]} V^{(\sigma/300)}_{y_i})$ and on $\cap_{i \in [108]} V^{(\sigma/300)}_{z'_i}) \cap (\cap_{i \in [36]} V^{(\sigma/300)}_{y_i})$, so coincides on the sum which contains $\cap_{i \in [36]} V^{(\sigma/3000)}_{y_i}$.
    \end{proof}

    For each additive 12-tuple $q = x_{[12]}$ in $X''$, we thus have $c'|C'|^{35}$ additive 36-tuples in $X'$ such that~\eqref{xsecundumtoxprime} holds. Apply the claim above to this set of additive 36-tuples  to find a Freiman-linear map $\psi_{q} : B(\Gamma, \rho') \to H$ such that 
        \[\Big(\sum_{i \in [36]} (-1)^i \phi_{y_i}\Big)\Big|_{\cap_{i \in [36]} V^{(\rho')}_{y_i}} = \psi|_{\cap_{i \in [36]} V^{(\rho')}_{y_i}} \]
        holds for a set $\mathcal{Z}_q$ of additive 36-tuples $y_{[36]}$ among the considered ones above in $X'$, of size at least $c_2|C'|^{35}$ for some $c_2 \geq (c c')^{O(1)} $.\\ 
    
    Take a maximal collection $q_1, \dots, q_m$ of additive 12-tuple $q = x_{[12]}$ in $X''$ such that $|\mathcal{Z}_{q_i} \cap \mathcal{Z}_{q_j}| \leq \frac{1}{2}c_2^2 |C'|^{35}$. By elementary double-counting argument, we have $m \leq (c_2^{-1}/2)^{O(1)}$. Write also $\psi_i$ for $\psi_{q_i}$.

    \begin{claim}
        For each additive 12-tuple $x_{[12]}$ in $X''$, we have some $j \in [m]$ such that 
        \[\Big(\sum_{i \in [12]} (-1)^i \phi_{x_i}\Big) = \psi_j\]
        holds on $\cap_{i \in [12]} V^{(\rho'/10)}_{x_i}$.
    \end{claim}

    \begin{proof}
        By our choice of $q_1, \dots, q_m$, there exists $q_j$ such that $|\mathcal{Z}_{q_j} \cap \mathcal{Z}_{q}| \geq \frac{1}{2}c_2^2 |C'|^{35}$. For each additive 36-tuple $y_{[36]}$ in the intersection, we have
        \[\sum_{i \in [36]} (-1)^i \phi_{y_i} = \sum_{i \in [12]} (-1)^i \phi_{x_i}\]
        on the intersection $\Big(\cap_{i \in [36]} V^{(\rho')}_{y_i}\Big) \cap \Big(\cap_{i \in [12]} V^{(\rho')}_{x_i}\Big)$, as well as 
        \[\sum_{i \in [36]} (-1)^i \phi_{y_i} = \psi_j\]
        on $\cap_{i \in [36]} V^{(\rho')}_{y_i}$. Thus, 
        \[\Big(\sum_{i \in [12]} (-1)^i \phi_{x_i}\Big) = \psi_j\]
        holds on $\Big(\cap_{i \in [12]} V^{(\rho'/10)}_{x_i}\Big) \cap \Big(\cap_{i \in [36]} V^{(\rho')}_{y_i}\Big)$ for many $ \frac{1}{2}c_2^2 |C'|^{35}$ additive 36-tuples $y_{[36]}$. The claim follows by quasirandomness.
    \end{proof}

    \noindent\textbf{Dependent random choice argument.} To finish the proof, we use a probabilistic argument to essentially reduce to the case $\psi_j = 0$. Firstly, we define $(\chi_1, t_1), \dots , (\chi_\ell, t_\ell) \in \hat{H} \times B(\Gamma, \rho'/100)$ iteratively at random. Let $J \subseteq [m]$ be the collection of $j$ such that $\psi_j$ is not zero on the whole of $B(\Gamma, \rho'/10^6)$. Then $\psi_j$ has at least $\frac{1}{10}$ proportion of non-zero values on $B(\Gamma, \rho'/100)$. At each step, take random $\chi_i, t_i$ and remove those indices $j$ such that $\|\chi_i(\psi_j(t_i))\|_{\mathbb{T}} \geq 1/100$ and pass to subset $X'' \cap V^{\rho'/10}_{\bcdot t_1}\dots \cap V^{\rho'/10}_{\bcdot t_i}$. With positive probability $J$ decreases by constant factor and $X''$ remains reasonably large at each step, so we are done after a logarithmic number of steps. Let $\tilde{X}$ be the remaining elements of $X''$.\\
    To complete the argument take a random cube $I$ inside $\mathbb{T}^\ell$ of edge length $1/10000$ and take the set of all $x \in \tilde{X}$ such that $(\chi_1(\phi_x(t_1)), \dots, \chi_\ell(\phi_x(t_\ell)) \in I$. It turns out that all additive 12-tuples in the final set are Bohr-respected, as desired. 
\end{proof}
\section{Extending domain to the full bilinear variety}\label{exttovarsection}

From this section on, we make another important change of perspective. Rather than considering systems of Freiman-linear maps, we consider maps defined on bilinear Bohr varieties, which will have bilinear structure.\\

Now that all additive quadruples in some dense set are Bohr-respected, we apply robust Bogolyubov-Ruzsa theorem another time, to ensure that the index set is a coset progression. However, as in the previous section, the situation is more complicated than that in Section~\ref{firstBRstepSection} and we need to use the algebraic regularity lemma. \\

To be able to relate the newly obtained maps to the old ones, we introduce the notion of \textit{arrangements}, which are certain sequences of points in $G_1 \times G_2$, and their \textit{lengths}, which will be a single point. An \textit{$\emptyset$-arrangement} is just a singleton sequence consisting of a point $(x,y) \in G_1 \times G_2$ and $(x,y)$ is its \textit{length}. More generally, \emph{$(d_1, \dots, d_r)$-arrangement} of \textit{length} $(\ell_1, \ell_2)$ is a concatenation of $d_r$ $(d_1, \dots, d_{r-1})$-arrangements whose lengths are either $(a_1, \ell_2), \dots, (a_{d_r}, \ell_2)$ with $a_1 - a_2 + \dots + (-1)^{d_r - 1}a_{d_r} = \ell_1$, or $(\ell_1, b_1), \dots, (\ell_1, b_{d_r})$ with $b_1 - b_2 + \dots + (-1)^{d_r - 1}b_{d_r} = \ell_2$. Geometrically, arrangements arise in the process of directional convolutions and occur naturally in the process of extending almost bilinear maps. We denote the set of all $(d_1, \dots, d_r)$-arrangements in $G_1 \times G_2$ as $\mathcal{A}_{d_1, \dots, d_r}$ and those of lengths $(\ell_1, \ell_2)$ are denoted by $\mathcal{A}_{d_1, \dots, d_r}(\ell_1, \ell_2)$. Note that, as a sequence, any  $(d_1, \dots, d_r)$-arrangement consists of precisely $d_1\dots d_r$ points.

\begin{proposition}\label{secondrobustBRstep}
    Let $C \subseteq G_1$ be a symmetric proper coset progressions of rank $d$, let $X \subseteq C$ be a set of size $|X| \geq c |C|$. Let $B(\Gamma, \rho) \subseteq G_2$ be a Bohr set of codimension $r$ and let $\Theta_1, \dots, \Theta_s : C \to \hat{G}_2$ be Freiman-linear maps. Let $V$ be the bilinear Bohr variety
    \[V = \Big\{(x,y) \in G_1 \times G_2 : x \in C, y \in B \cap B(\Theta_1(x), \dots, \Theta_s(x); \rho)\Big\}.\]
    Suppose that $\phi : (X \times B) \cap V \to H$ is a map that respects all horizontal additive 36-tuples and is Freiman-linear in the vertical direction. \\
    \indent Then there exist a positive quantity
    \[c' \geq \exp\Big(-d^{O(1)} r^{O(1)} s^{O(1)} \log^{O(1)}(\rho^{-1}) \Big)\]
    a symmetric proper coset progression $C' \subseteq C$ of rank $d$ (same as $C$) and size $|C'| \geq c'|C|$, a positive quantity $\rho' \geq \Omega(\rho)$, and a map $\psi : V' \to H$, for a bilinear Bohr variety
    \[V' = \Big\{(x,y) \in G_1 \times G_2 : x \in C, y \in B \cap B(\Theta_1(x), \dots, \Theta_s(x); \rho')\Big\},\]
    which is a Freiman-bilinear and the identity 
    \begin{align}
    \psi(x,y)= & \sum_{i \in [36]} \nu_i \phi(a_i, b_i).\label{extensionvarid}
    \end{align}
    holds and all arguments belong to $V$ for at least $c'|C|^{29} |G_2|^6$ choices of (4,3,3)-arrangements of lengths $(x,y)$, where $\nu_i \in \{-1,0,1\}$ are some fixed coefficients such that in the tensor product $G_1 \otimes G_2$ we have equality 
    \[x \otimes y =  \sum_{i \in [36]} \nu_i a_i \otimes b_i.\]
\end{proposition}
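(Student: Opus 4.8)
The plan is to combine the robust Bogolyubov--Ruzsa theorem with the algebraic regularity lemma, much as was done in Section~\ref{firstBRstepSection}, but carrying the bilinear structure through. First I would apply Corollary~\ref{robustBogRuzsaCP} to $X$ inside $C$, obtaining a symmetric proper coset progression $C_0 \subseteq C$ of rank at most $(d\log c^{-1})^{O(1)}$ and density $\exp(-(d\log c^{-1})^{O(1)})$ such that every $x \in C_0$ has $\gtrsim c|G_1|^3$ representations $x = x_1 - x_2 + x_3 - x_4$ with $x_i \in X$; record the set $Q_x$ of such quadruples for each $x$. For each $x \in C_0$ one would like to define $\psi(x, \bcdot)$ via $\phi(x_1, \bcdot) - \phi(x_2, \bcdot) + \phi(x_3, \bcdot) - \phi(x_4, \bcdot)$ on the intersection of the four vertical fibres, but the four fibres $V_{x_i \bcdot}$ depend on $x_i$, not only on $x$. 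This is where one needs the bilinear Bohr structure: since $\Theta_j$ are Freiman-linear, after passing to a suitable shrinking of $C_0$ (so that all the relevant sums stay in the domain) the fibre over $x_1-x_2+x_3-x_4$ is contained in a slightly shrunk version of $V_{x_1\bcdot} \cap V_{x_2\bcdot} \cap V_{x_3\bcdot}\cap V_{x_4\bcdot}$, exactly as in the weak-transitivity claim of Proposition~\ref{absgStep2bilBohrvar}.

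Next I would run the algebraic regularity lemma (Theorem~\ref{algreglemmaintro}) on the bilinear Bohr variety cut out by $\Gamma$ and $\Theta_1, \dots, \Theta_s$ over $C_0$, with a small error parameter $\eta$ chosen at the end, to pass to a further coset progression $C_1$ over which almost all vertical fibres (and pairs of fibres) are quasirandom in the sense of that theorem. Using quasirandomness I would show that the various naive definitions of $\psi(x, \bcdot)$ coming from different quadruples in $Q_x$ agree on a dense subfibre, and then, by an application of Lemma~\ref{flinearextn} (to glue Freiman-linear maps agreeing on the overlap of their Bohr-set domains) together with Lemma~\ref{smallrangelemma}, extend to a genuine Freiman-linear map on $B(\Theta_1(x), \dots, \Theta_s(x); \rho') \cap B(\Gamma,\rho')$. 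Horizontal additivity of the resulting $\psi$ in the $x$-variable follows from the horizontal $36$-tuple-respecting hypothesis on $\phi$ and a further quasirandomness/weak-transitivity step: if $(\upd{x_1},x_2,x_3,\upd{x_4})$ is an additive quadruple in $C_1$, then $\psi_{x_1} - \psi_{x_2} + \psi_{x_3} - \psi_{x_4}$ is, on a dense subfibre, a signed sum of the original $\phi$ over a large collection of points whose horizontal coordinates form additive configurations to which the hypothesis applies; hence it vanishes there, and by Lemma~\ref{almostfullBohr} it vanishes on a shrunk fibre. Chaining a bounded number of such steps (as in the passage from $8$-tuples to quadruples in Section~\ref{firstBRstepSection}) promotes this to exact Freiman-bilinearity of $\psi$ on the bilinear Bohr variety $V'$.

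It remains to produce the tensor-identity~\eqref{extensionvarid}. Unwinding the definition of $\psi$, each value $\psi(x,y)$ with $(x,y) \in V'$ equals $\sum_i \nu_i \phi(a_i,b_i)$ where the horizontal coordinates come from a quadruple $x = a_1 - a_2 + a_3 - a_4$ (so a $4$-arrangement in the $G_1$-direction) and the vertical coordinate $y$ is realized, via the Freiman-linear extension and the representations used in Corollary~\ref{robustBogRuzsaCP}/Theorem~\ref{robustBogRuzsa}, as a $3$-fold convolution in each of two nested steps --- altogether a $(4,3,3)$-arrangement of length $(x,y)$. For a fixed dense collection of such arrangements the same coefficients $\nu_i \in \{-1,0,1\}$ work, and since $\phi$ is Freiman-bilinear on its domain and $\psi$ is bilinear, bilinearity gives $x \otimes y = \sum_i \nu_i a_i \otimes b_i$ in $G_1 \otimes G_2$. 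Counting the arrangements: the horizontal quadruples number $\geq c'|C|^3 = c'|C|^{2}\cdot|C|$ with the constraint built in, and for each the two vertical convolution steps contribute, by Lemma~\ref{basicbohrsizel} applied to the fibres, at least $\Omega(\rho^{O(rs)})|G_2|^{3}$ and again $|G_2|^{3}$ free choices, so after collecting one arrives at $\geq c'|C|^{29}|G_2|^6$ arrangements of length $(x,y)$ with the required identity, for a new $c' \geq \exp(-d^{O(1)}r^{O(1)}s^{O(1)}\log^{O(1)}(\rho^{-1}))$.

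The main obstacle I anticipate is the bookkeeping needed to make the gluing of the fibrewise-defined maps both well-defined and horizontally additive \emph{simultaneously}: the fibres $V_{x\bcdot}$ genuinely vary with $x$, so every use of Lemma~\ref{flinearextn} and every weak-transitivity step loses a constant factor in the radius, and one must verify that the finite (bounded) number of such shrinkings, interleaved with the algebraic regularity passes, still leaves $\rho' \geq \Omega(\rho)$ and the coset progression of the \emph{same} rank $d$ as $C$ (which is why one uses Theorem~\ref{robustBogRuzsa} rather than a generic Freiman-type statement, and why Theorem~\ref{algreglemmaintro} is stated so as to preserve the rank and the canonical-form shape). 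The tensor identity itself is essentially formal once bilinearity is in hand, but one must be careful that the coefficients $\nu_i$ are \emph{fixed} (independent of the particular arrangement), which follows because both the quadruple-representation pattern and the two convolution patterns are fixed combinatorial templates.
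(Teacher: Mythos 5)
Your high-level plan — robust Bogolyubov--Ruzsa on the $G_1$-coordinate, the algebraic regularity lemma to control the fibres, and a fill-in step to upgrade a densely-defined map to a Freiman-bilinear map — is the same as the paper's, but there is a concrete error in how you propose to handle the fibre dependence, and the gluing step you invoke is not quite the right tool.

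The key issue is the claimed containment ``the fibre over $x_1 - x_2 + x_3 - x_4$ is contained in a slightly shrunk version of $V_{x_1\bcdot}\cap V_{x_2\bcdot}\cap V_{x_3\bcdot}\cap V_{x_4\bcdot}$.'' This is the wrong direction. Freiman-linearity of $\Theta_j$ gives $\Theta_j(u)=\Theta_j(x_1)-\Theta_j(x_2)+\Theta_j(x_3)-\Theta_j(x_4)$, so membership of $y$ in all four fibres $V_{x_i\bcdot}$ controls $\|\Theta_j(u)(y)\|_{\mathbb{T}}$ by the triangle inequality, i.e.\ $\bigcap_i V^{\rho}_{x_i\bcdot}\subseteq V^{4\rho}_{u\bcdot}$; but $y\in V_{u\bcdot}$ says nothing about any single $\|\Theta_j(x_i)(y)\|_{\mathbb{T}}$. (The weak-transitivity claim in Proposition~\ref{absgStep2bilBohrvar} that you cite is genuinely different: there one already knows $y$ lies in several of the fibres and deduces membership in one more.) Consequently you cannot define $\psi(u,\bcdot)$ on the whole fibre $V'_{u\bcdot}$ from a single quadruple; you must first identify, for each $(u,y)$, a quadruple in $T_u$ all of whose fibres contain $y$. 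The paper does exactly this: it first regularizes (so the fibre variety is quasirandom in the sense of Theorem~\ref{algreglemmaintro}), then applies Corollary~\ref{robustBogRuzsaCP}, then defines $S_1$ to be the $(1-O(\eta^{1/8}))$-dense set of pairs $(u,y)$ for which $T_u \cap (V'_{\bcdot y})^3$ is large and sets $\psi(u,y) = \phi(x_1,y)+\phi(x_2,y)-\phi(x_3,y)-\phi(x_1+x_2-x_3-u,y)$ for any such triple. Quasirandomness (Lemma~\ref{genappendknhoods}) is what makes this set dense; it is not a consequence of Freiman-linearity of $\Theta$ alone.

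The gluing step is also not best done with Lemma~\ref{flinearextn}: that lemma requires two Freiman-linear maps agreeing exactly on the overlap of two full Bohr sets, whereas here the competing definitions of $\psi(u,\bcdot)$ only agree on a dense sub-fibre (not an intersection of Bohr sets of the right shape), and there are many of them. The paper instead verifies that the pointwise-defined $\psi$ on $S_1$ is well-defined and horizontally $12$-tuple-respecting (using the $36$-tuple hypothesis on $\phi$), verifies the vertical additive-triple property for most triples via quasirandomness, and then applies Lemma~\ref{ebihomtoebilinearlemma} with $E=\{0\}$ --- which is the paper's dedicated tool for upgrading a densely-defined $E$-bihomomorphism on a bilinear Bohr variety to a genuine $E$-bilinear map, and which is also where the two vertical $3$-fold convolution layers of the $(4,3,3)$-arrangement arise. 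Your description of the arrangement as ``one horizontal $4$-fold plus two nested vertical $3$-fold convolutions'' does not match: one of the $3$-fold layers is horizontal (it comes from the horizontal convolution inside Lemma~\ref{ebihomtoebilinearlemma}/Lemma~\ref{almostfullbilinearmap}), not vertical. Finally, applying robust BR before regularizing, as you propose, makes the rank bookkeeping harder: Corollary~\ref{robustBogRuzsaCP} increases the rank to $(d\log c^{-1})^{O(1)}$, and the algebraic regularity lemma only preserves the rank it is handed, so you would end up regularizing a higher-rank coset progression; the paper regularizes $C$ first precisely so that the subsequent geometry remains tied to the original rank-$d$ progression.
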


\vspace{\baselineskip}

Before proceeding with the step, we record a preliminary lemma which will be useful at several places, which allows us to fill in gaps in domains for almost bilinear maps defined on bilinear Bohr varieties.\\

\begin{lemma}\label{almostfullbilinearmap}
    Let $C \subseteq G_1$ be a symmetric proper coset progression of codimension $d$, let $\Gamma \subseteq \hat{G}_2$ be a set of size $d$ and let $\Theta_1, \dots, \Theta_r : C \to \hat{G}_2$ be Freiman-linear maps. For a quantity $\sigma > 0$, define a bilinear Bohr variety $V^\sigma = \bigcup_{x \in C} \{x\} \times B(\Gamma, \Theta_1(x), \dots, \Theta_r(x); \sigma)$.\\
    \indent Let $X \subseteq V^\rho$ be a subset of the bilinear Bohr variety of size $|X| \geq (1-\varepsilon)|V^\rho|$, let $\phi : X \to H$ be a map and let $E \subseteq H$ be a set. Suppose that $\phi(x_1, y) - \phi(x_2, y) + \phi(x_3, y) - \phi(x_4, y) \in E$ holds for all but at most $\varepsilon |G_1|^3|G_2|$ horizontal additive quadruples in the domain $X$ and that $\phi(x, y_1) - \phi(x, y_2) + \phi(x, y_3) - \phi(x, y_4) \in E$ holds for all but at most $\varepsilon |G_1||G_2|^3$ vertical additive quadruples in the domain $X$. Then, provided $\varepsilon \leq 2^{-\blc(r + d)}\rho^{\blc r}$, there exists an $10000E$-bihomomorphism $\psi$ from domain $D = (\frac{1}{20000}C \times G_2) \cap V^{\rho/20000}$ to $H$ and a subset $X' \subseteq D \cap X$ of size $|X'| \geq (1 - \varepsilon^{1/8} (2\rho^{-1})^{O(r + d)})|D|$ such that for each $(x,y) \in X'$ we have $\psi(x,y) = \phi(x,y)$. \\
    \indent Moreover, for each $(x,y) \in D$, we have at least $2^{-O(r + d)} \rho^{O(r)}|C|^2|B(\Gamma, \rho)|^6$ choices of $(x_1, x_2, y_{1,1}, y_{1,2},$ $y_{2,1}, y_{2,2}, y_{3,1}, y_{3,2}) \in C^2 \times B(\Gamma, \rho)^6$ such that, for $x_3 = x_1 + x_2 - x$, $y_{i, 3} = y_{i, 1} + y_{i, 2} - y$,
    \begin{align}\psi(x,y) =& \phi(x_1, y_{1,1}) + \phi(x_1, y_{1,2}) - \phi(x_1, y_{1,3}) \nonumber\\
    &\hspace{1cm} + \phi(x_2, y_{2,1}) + \phi(x_2, y_{2,2}) - \phi(x_2, y_{2,3})\nonumber\\
    &\hspace{1cm} -  \phi(x_3, y_{3,1}) - \phi(x_3, y_{3,2}) + \phi(x_3, y_{3,3})\label{technicalextensionverydense}\end{align}
    and all points in arguments belong to $X$.
\end{lemma}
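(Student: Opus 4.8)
\textbf{Proof plan for Lemma~\ref{almostfullbilinearmap}.}

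The plan is to build $\psi$ by a ``quadruple averaging'' or ``naive completion'' formula in both directions simultaneously, and then verify that the formula is well-defined up to a controlled error set. First I would record the size estimates we shall use repeatedly: by Lemma~\ref{basicbohrsizel}, for $x\in C$ the column $B(\Gamma,\Theta_1(x),\dots,\Theta_r(x);\rho)$ has density at least $\rho^{O(r+d)}$ inside $G_2$, and $|V^\rho|\geq \rho^{O(r+d)}|G_1||G_2|$, and similarly shrinkings of $V^\rho$ differ only by such factors, so every ``$(1-\varepsilon)$ of the domain'' hypothesis can be turned into ``$(1-\varepsilon\rho^{-O(r+d)})$ of any fixed sub-variety''. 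Throughout, the hypothesis $\varepsilon\leq 2^{-\blc(r+d)}\rho^{\blc r}$ guarantees these inflated error rates stay below, say, $1/100$.

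The main construction: for $(x,y)\in D$ define $\psi(x,y)$ by the right-hand side of~\eqref{technicalextensionverydense}, i.e.\ pick $x_1,x_2\in C$ with $x_3:=x_1+x_2-x\in C$ and, for each $i\in[3]$, pick $y_{i,1},y_{i,2}\in B(\Gamma,\rho)$ with $y_{i,3}:=y_{i,1}+y_{i,2}-y$, subject to \emph{all nine points} $(x_i,y_{i,j})$ lying in $X$; then set $\psi(x,y)$ to be that alternating sum of nine values of $\phi$. The first task is to show such a choice exists for every $(x,y)\in D$ and in fact for $\rho^{O(r+d)}|C|^2|B(\Gamma,\rho)|^6$ of the parameter tuples: this is a routine counting argument using that $X$ misses only a $\varepsilon\rho^{-O(r+d)}$-fraction of each relevant column and that, for $(x,y)\in D$ with the shrunk radius $\rho/20000$ and shrunk progression $\tfrac{1}{20000}C$, the relevant $x_i$ range over $\Omega(|C|^2)$ pairs and the $y_{i,j}$ over $\rho^{O(r+d)}|G_2|^2$ tuples each (here one uses that $y\in B(\Gamma,\Theta_\bullet(x);\rho/20000)$ together with $y_{i,1},y_{i,2}$ in the full-radius column forces $y_{i,3}$ into the column as well, by the triangle inequality for $\|\cdot\|_{\mathbb T}$, and that $x_i\in C$, $x$ small forces $x_3\in C$). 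The second and central task is well-definedness: taking two admissible parameter tuples, their difference is a long alternating combination of $\phi$-values over a configuration in which consecutive terms differ by a single horizontal or vertical additive-quadruple move; since all but a tiny fraction of such moves cost at most an element of $E$, and since (by the counting above) we may choose an intermediate ``bridge'' tuple making \emph{every} move a good one, the two values of $\psi$ differ by a bounded number — I will arrange at most $10000$ — of elements of $\pm E$. A clean way to organise this is: first show any two admissible choices with the \emph{same} $x_1,x_2$ (only the $y$'s varying) give values differing by $O(1)$ elements of $E$, using vertical additive quadruples within a fixed column; then show changing $x_1,x_2$ (keeping admissibility) costs $O(1)$ elements of $E$, using horizontal additive $36$-tuples (this is exactly where the $36$ in the hypothesis is spent — passing between two horizontal fibres requires chaining several horizontal quadruple identities through a common bridge fibre). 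Once $\psi$ is well-defined on $D$, that it is a $10000E$-bihomomorphism is immediate from the defining formula: a horizontal additive quadruple $x^{(1)}+x^{(2)}=x^{(3)}+x^{(4)}$ (at fixed $y$) can be evaluated by choosing \emph{compatible} parameter tuples for all four points sharing the $y_{i,j}$'s and with the four $x$-parameters themselves forming additive quadruples, whereupon the nine-term sums telescope into a sum of $\phi$-values over horizontal additive quadruples in $X$, each contributing an element of $E$; the same for the vertical direction. Finally, $X'$: let $X'$ be the set of $(x,y)\in D\cap X$ for which the ``diagonal'' choice $x_1=x_2=x$ (so $x_3=x$), $y_{i,1}=y_{i,2}=y$ (so $y_{i,3}=y$) is admissible, i.e.\ $(x,y)\in X$ — which it is — and more importantly for which \emph{some} admissible tuple recovers $\phi(x,y)$; by the well-definedness argument applied to the diagonal tuple (which gives $\psi(x,y)=\phi(x,y)$ whenever the diagonal tuple is admissible, and it is admissible precisely when $(x,y)\in X$, $x\in\tfrac1{20000}C$ and $y$ in the shrunk column) we get $\psi=\phi$ on $X'$, and the measure bound $|X'|\geq (1-\varepsilon^{1/8}(2\rho^{-1})^{O(r+d)})|D|$ follows from $|X|\geq(1-\varepsilon)|V^\rho|$ together with the density comparison between $V^\rho$ and $D$, after discarding the $\varepsilon^{1/8}$-fraction of columns that are too sparse (a Markov/Cauchy--Schwarz split, which is why the exponent degrades from $\varepsilon$ to $\varepsilon^{1/8}$).

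The step I expect to be the genuine obstacle is the well-definedness modulo $10000E$, specifically controlling the horizontal transition: unlike the vertical direction, where within one column $\phi(\cdot,y)$ is honestly Freiman-linear and quadruples compose for free, in the horizontal direction we only have $36$-tuple respectedness, and we must verify that the combinatorial graph on admissible $x$-parameter pairs is connected with short paths through bridges lying in the good set — this is a Balog--Szemer\'edi--Gowers-flavoured connectivity argument (or can be done by hand via two Cauchy--Schwarz steps as in Proposition~\ref{freimanbihomomStep1passtolineaersystem}), and getting the constant down to a fixed number like $10000$ rather than something growing with $r,d$ requires being careful that each transition uses a bounded-length chain. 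Everything else is bookkeeping with Bohr-set densities and the triangle inequality for $\|\cdot\|_{\mathbb T}$.
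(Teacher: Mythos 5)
Your proposal takes a different route from the paper's. You define $\psi$ directly by the $(3,3)$-arrangement formula and argue well-definedness by chaining through bridges, whereas the paper first \emph{cleans} the domain --- Claim~\ref{almostfullebihomclaim1}, phrased for abstract Bourgain systems so that rows and columns are treated uniformly, removes a small proportion of points so that on the remaining set \emph{every} directional additive $12$-tuple is $24E$-respected --- and only then extends, first in columns and then in rows, from a domain on which the map is an honest (not almost-everywhere) $E$-bihomomorphism. The formula~\eqref{technicalextensionverydense} then drops out as a consequence of the two extension steps, not as the definition. The cleaning-first organisation buys a canonical extension and a clean constant track ($24E$ from cleaning, $5E$ from each extension step); your direct-definition route must instead verify at every stage that the bridges you chain through stay inside the good set, which is precisely the work the cleaning step does once and for all.

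Two concrete issues to flag. First, you say the horizontal transition ``is exactly where the $36$ in the hypothesis is spent,'' but this lemma has no $36$-tuple hypothesis --- only horizontal and vertical \emph{quadruples}. You have confused this lemma with Proposition~\ref{secondrobustBRstep}, which does carry a $36$-tuple hypothesis and which invokes the present lemma as a subroutine; your aside that ``two Cauchy--Schwarz steps'' would suffice is the correct fix. Second, the well-definedness argument as sketched does not close. A move changing $(x_1,x_2,x_3)$ involves horizontal quadruples at three distinct $y$-heights unless you first align $y_{1,j}=y_{2,j}=y_{3,j}$ across the three columns; and more fundamentally, each move in your chain must be a \emph{good} quadruple, whereas the raw hypothesis only bounds the total number of bad quadruples and leaves them free to concentrate in individual rows or columns. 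Ruling out such concentration is exactly what the cleaning step achieves, and without it the union bound over moves need not be effective for the specific $(x,y)$ and tuples at hand. There is also the smaller matter that $\psi$ as you describe it is determined only up to $O(1)E$, so you must fix a choice before proving the bihomomorphism property --- which the paper sidesteps since its extension on the cleaned domain is canonical. Once the cleaning claim is in place, your proposal essentially reconstructs the paper's proof; without it, the chaining argument is not complete.
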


\begin{proof} We say that a horizontal or a vertical additive quadruple is \textit{$E$-respected} if the condition on the appropriate linear combination of values of $\phi$ as in the condition in the statement of the lemma holds. The proof of the lemma is elementary and proceeds in four stages.

\begin{itemize}
    \item[\textbf{Step 1.}] Firstly, we remove a small number of points from the domain until all vertical and horizontal additive quadruples are $E$-respected.
    \item[\textbf{Step 2.}] We remove a further small subset of points until all columns and rows of the new domain are quite dense.
    \item[\textbf{Step 3.}] We extend the map in columns and observe that the extension still has all directional additive quadruples $O(1)E$-respected.
    \item[\textbf{Step 4.}] We finally extend to the claimed domain.
\end{itemize}

In order to treat rows and columns simultaneously, we use an abstract version of Bohr sets, namely Bourgain systems introduced by Green and Sanders~\cite{GreenSanders}. We use a slightly modified version of their definition. A collection of sets $B^{(t)}$ in a finite abelian group $G$, indexed by reals $t \in [0,1]$, is a \textit{Bourgain system of density $c$ and codimension $k$} if it has the following properties:
\begin{itemize}
    \item (Nesting) $B^{(t_1)} \subseteq B^{(t_2)}$ for $t_1 \leq t_2$,
    \item (Zero) $0 \in B^{(t)}$,
    \item (Symmetry) $-B^{(t)} = B^{(t)}$,
    \item (Addition) $B^{(t_1)} + B^{(t_2)} \subseteq B^{(t_1 + t_2)}$ for $t_1 + t_2 \leq 1$,
    \item (Doubling) $|B^{(t)}| \geq c t^k |B^{(1)}|$.
\end{itemize}

We claim that rows and columns of a bilinear Bohr variety give Bourgain systems.

\begin{claim}
    For each $x \in C$, the collection $B_{\ssk{\on{col}\\x}}^{(t)} = B(\Gamma, \Theta_1(x), \dots, \Theta_r(x); t\rho)$, is a Bourgain system of density $(\rho/4)^{r + d}$ and codimension $r + d$.\\
    For each $y \in B(\Gamma, \rho)$, the collection $B_{\ssk{\on{row}\\y}}^{(t)} = \{x \in t \cdot C :(\forall i \in [r])\,\, \Theta_i(x)(y)\|_{\mathbb{T}} \leq t \sigma \}$, is a Bourgain system of density $4^{-r-d}\rho^{r}$ and codimension $r + d$.\\
\end{claim}

\begin{proof}\textbf{Case of columns.} Each column is just a Bohr set of codimension $r + d$, with density at least $\rho^{r + d}$ by Lemma~\ref{basicbohrsizel}, so we get a Bourgain system.\\
\indent \textbf{Case of rows.} First two properties are trivial. Symmetry and addition follow from the fact that $\Theta_i$ are Freiman-linear. For the doubling bound, partition $\mathbb{T}^r$ into $m \leq (4t^{-1}\rho^{-1})^r$ cuboids $I_1, \dots, I_m$ of edge length at most $\frac{1}{2}t\rho$. By the pigeonhole principle we have some $I_i$ with $A = \{x \in \frac{t}{2}C : \Theta(x)(y) \in I_i\}$ having size at least $(2t^{-1})^{-d}m^{-1}|C|$. Then, due to Freiman-linearity of $\Theta_i$, we have $A - A \subseteq B_{\ssk{\on{row}\\y}}^{(t)}$, so $|B_{\ssk{\on{row}\\y}}^{(t)}| \geq (2t^{-1})^{-d}m^{-1}|C| \geq \rho^r (t/4)^{d + r} |B_{\ssk{\on{row}\\y}}^{(1)}|$.
\end{proof}

Let us note that the proof also show that inside $V^\rho$ each row has size at least $2^{-d}4^{-r}\rho^r|C|$ and each column has size at least $2^{-d-r}\rho^r|B(\Gamma, \rho)|$.\\

\noindent\textbf{Step 1.} In the first step, we pass from almost $E$-homomorphisms on Bourgain systems to actual $E$-homomorphisms after erasing few points.

\begin{claim}\label{almostfullebihomclaim1}
    Let $(B^{(t)})_{t \in [0,1]}$ be a Bourgain system of density $c$ and codimension $k$. Let $D \subseteq B^{(1)}$ be a set of size $|D| \geq (1-\eta)|B^{(1)}|$ and let $\gamma : D \to H$ be a map which $E$-respects all but at most $\eta |B^{(1)}|^3$ additive quadruples in $D$. Then, provided $\varepsilon \leq c^3 2^{-(36k + 20)}$, there exists a subset $D' \subseteq D \cap B^{(1/1000)}$ such that $|D'| \geq (1 - \eta c^{-3} 2^{36k + 20})|B^{(1/1000)}|$ on which $\gamma$ $24E$-respects all additive 12-tuples.
\end{claim}

\begin{proof}
    Let $A \subseteq B^{(1/8)} \cap D$ be the set of all $a \in B^{(1/8)} \cap D$ which appear in at least $c^2 2^{-(6k + 4)}|B^{(1)}|^2$ non-$E$-respected additive quadruples. Thus $|A| \leq \eta c^{-2} 2^{6k + 4}|B^{(1)}|$.\\\
    \indent Let us observe that every non-$4E$-respected additive quadruple in $B^{(1/8)}\cap D$ has an element in $A$. Indeed, if $x_1, x_2, x_3,x_4$ is not $4E$-respected, for each $z \in B^{(1/8)} \cap D \cap (D - x_1 + x_2)$ we have $\gamma(x_1) - \gamma(x_2) + \gamma(z) - \gamma(z + x_1 - x_2) \notin 2E$ or $\gamma(x_3) - \gamma(x_4) + \gamma(z) - \gamma(z + x_3 - x_4) \notin 2E$. Without loss of generality, the first case holds at least half of the time, thus for at least 
    \[|B^{(1/8)} \cap D \cap (D - x_1 + x_2)| \geq |B^{(1/8)}| - 2|B^{(1)} \setminus D| \geq (c 2^{-3k} - 2\eta)|B^{(1)}|\]
    additive quadruples in $B^{(3/8)}\cap D$ with $x_1$ and $x_2$. We may rewrite this is as  $\gamma(x_1) - \gamma(z + x_1 - x_2)  + \gamma(z) - \gamma(x_2)\notin 2E$. Apply the same step to deduce that $x_1 \in A$ or $x_2 \in A$.\\
    We claim that $\gamma$ $10E$-respects all additive 12-tuples on $(B^{(1/1000)} \cap D) \setminus A$. Indeed, if $x_1, \dots, x_{12} \in (B^{(1/1000)} \cap D) \setminus A$ and $\sum_{i \in [12]} (-1)^i x_i = 0$, then we may find $z_1, \dots, z_6 \in (B^{(1/8)} \cap D) \setminus A$ such that $z_{i} - z_{i+1} = x_{2i} - x_{2i + 1}$ holds for $i \in \{1,2,\dots, 5\}$. Namely, we just need to find $z_1 \in (B^{(1/1000)} \cap D) \setminus A$ such that $z_2 = z_1 + x_3 - x_2, z_3 = z_1 + x_5 - x_4 + x_3 - x_2, \dots \in D\setminus A$. But, as $|B^{(1/1000)}| \geq 10 |B^{(1)} \setminus D| + 10 |B^{(1)} \setminus A|$, we may achieve this.\\
    
    Note that we have
    \[z_6 - z_1 = (z_6 - z_5) + (z_5 - z_4) + \dots + (z_2 - z_1) = x_{11} - x_{10} + x_{9} - x_{8} + \dots + x_3 - x_2 = x_{12} - x_1.\]
    
    Then, using $4E$-respectedness of additive quadruples
    \[\sum_{i \in [12]} (-1)^i \phi(x_i) \in \sum_{i \in [6]} (\phi(z_i) - \phi(z_{i+1}) + 4E) = 24  E.\qedhere\]
\end{proof}

Applying the claim above for all rows of size $|X_{\bcdot y}| \geq (1 - \sqrt{\varepsilon})|V^\rho_{\bcdot y}|$, and then for all sufficiently dense columns, leaves us with a subset $X_1 \subseteq X$ of size $|X_1| \geq (1 - \varepsilon_1)|V^{(\rho/1000)}|$ on which $\phi$ is an $E$-bihomomorphism, with $\varepsilon_1 \leq 2^{O(d + r)}\rho^{-O(r)}\sqrt{\varepsilon}$.\\

\noindent\textbf{Step 2.} Remove first all rows $(X_1)_{\bcdot y}$ which have size less than $(1 - \sqrt{\varepsilon_1})|(V^{(\rho/1000)})_{\bcdot y}|$. Each row removed has at least $\sqrt{\varepsilon_1}2^{-O(d + r)}\rho^{O(r)}|C|$ points, so provided $\varepsilon \leq 2^{-\blc(r + d)}\rho^{\blc r}$, we have removed at most $\sqrt[4]{\varepsilon} |B(\Gamma, \rho)|$ rows, decreasing the size of each column by at most $\sqrt[4]{\varepsilon} |B(\Gamma, \rho)|$ as well. Repeat the same argument to columns, and note that all rows have been affected negligeably. Thus, provided $\varepsilon \leq 2^{-\blc(r + d)}\rho^{\blc r}$, we obtain a set $X_2 \subseteq X$ on which $\phi$ is an $E$-bihomomorphism, and all non-empty columns and rows have density at least $1-\varepsilon_2$ inside the respective column and row of $V^{\rho/10000}$ and $\varepsilon_2 \leq \sqrt[8]{\varepsilon}$.\\

\noindent\textbf{Step 3.} We prove a general extension claim for $E$-homomorphisms on Bourgain systems.\\

\begin{claim}
    Let $(B^{(t)})_{t \in [0,1]}$ be a Bourgain system of density $c$ and codimension $k$. Let $D \subseteq B^{(1)}$ be a set of size $|D| \geq (1-\eta)|B^{(1)}|$ and let $\gamma : D \to H$ be a map which $E$-respects all additive 12-tuples in $D$. Suppose that $c4^{-k - 1} \geq \eta$. Then there exists an $5E$-homomorphism $\gamma^{\on{ext}} : B^{(1/2)} \to H$ extending $\gamma$.
\end{claim}

\begin{proof}
    Fix any $y_0 \in D \cap B^{(1/4)}$, which is non-empty as $|B^{(1/4)}| \geq c4^{-k} |B^{(1)}| > |B^{(1)} \setminus D|$. Let $x \in B^{(1/4)}$ be arbitrary. Then $x$ can be written as $x_1 - x_2 + y_0$ for $x_1 \in B^{(1/2)} \cap D$ and $x_2 \in D$ in $|D \cap ((B^{(1/2)} \cap D) - x + y_0)|$ many ways. Using properties of the Bourgain system, we get
    \begin{align*}|D \cap ((B^{(1/2)} \cap D) - x + y_0)| = &|D| + |B^{(1/2)} \cap D| - |D \cup ((B^{(1/2)} \cap D) - x + y_0)| \\
    \geq &|B^{(1)}| + |B^{(1/2)}| - 2|B^{(1)} \setminus D| - |B^{(1)}| \geq |B^{(1/2)}| - 2|B^{(1)} \setminus D| \\
    \geq &(c2^{-k} - 2\eta) |B^{(1)}| > 0.\end{align*}
    Hence, defining $\gamma^{\on{ext}}(x) = \gamma(x_1) - \gamma(x_2) + \gamma(y_0)$ for any  $x_1 \in B^{(1/2)} \cap D$ and $x_2 \in D$  such that $x = x_1 - x_2 + y_0$ gives a well-defined $E$-homomorphism extension to $B^{(1/4)}$.
\end{proof}

\noindent\textbf{Step 4.} Provided $\varepsilon \leq 2^{-\blc(r + d)}\rho^{\blc r}$, we have at least $(1 - \sqrt[16]{\varepsilon_1})|C|$ columns where we may apply the claim above, to get extension to $V^{\rho/20000}$. This extension is $120E$-homomorphism in the vertical direction. But, given any horizontal additive 12-tuple $x_{[12]}$ inside row indexed by some $y$, we have that all columns $(X_2)_{x_i}$ are very dense, so $\cap_{i \in [12]} (X_2)_{x_i}$ is of density at least $1 - \sqrt[32]{\varepsilon_1}$ inside $\cap_{i \in [12]} (V^{\rho/10000})_{x_i}$, on which $\sum_{i \in [12]} (-1) \phi_{x_i}$ can similarly be uniquely extended, showing that horizontal additive 12-tuple are $1000E$-respected. Use the claim in the horizontal direction and the same argument as above to conclude that we obtain a $10000E$-bihomomorphism. The final relationship~\eqref{technicalextensionverydense} follows from the two extensions in this step.\end{proof}

Similar arguments can be used to prove that $E$-bihomomorphisms can be related to $E$-bilinear maps.

\begin{lemma}\label{ebihomtoebilinearlemma}
    Let $C, \Gamma, V^\sigma$ be as in the previous lemma and let $\phi : V^\rho \to H$ be an $E$-bihomomorphism. Then there exists an $9E$-bilinear map $\psi : V^{\rho/10000} \to H$ such that for each $(x,y) \in V^{\rho/10000}$ we have at least $2^{-O(r + d)}\rho^{O(r)} |C||B(\Gamma, \rho)|^2$ choices of $(a, b_1, b_2)$ such that
    \[\psi(x,y) = \phi(a + x, b_2 + y) - \phi(a + x, b_2) - \phi(a, b_1 + y) + \phi(a, b_1)\]
    and all points in the arguments of $\phi$ belong to $V^{\rho}$.
\end{lemma}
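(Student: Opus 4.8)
\textbf{Proof plan for Lemma~\ref{ebihomtoebilinearlemma}.} The strategy mirrors the four-stage argument used in the proof of Lemma~\ref{almostfullbilinearmap}, but is considerably simpler because here the hypothesis is already exact: $\phi$ is an $E$-bihomomorphism on the full bilinear Bohr variety $V^\rho$, so there are no bad quadruples to prune and Steps~1 and~2 of the previous proof become vacuous. The plan is to define $\psi$ by the "double difference" formula
\[\psi(x,y) = \phi(a + x, b_2 + y) - \phi(a + x, b_2) - \phi(a, b_1 + y) + \phi(a, b_1)\]
for a generic choice of the auxiliary parameters $a \in C$ and $b_1, b_2 \in B(\Gamma,\rho)$, and then to verify (i) that this is well-defined up to an element of $9E$ (so that after fixing one legal choice per $(x,y)$ we get a genuine function whose ambiguity we have already absorbed), and (ii) that the resulting function is $9E$-bilinear, i.e. $E'$-linear in each variable separately with $E' = 9E$, again on a slightly shrunk variety $V^{\rho/10000}$.

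First I would quantify the set of \emph{legal} parameter triples $(a,b_1,b_2)$ for a given $(x,y) \in V^{\rho/10000}$: we need $a, a+x \in C$ (true since $x$ lives in a small shrinking of $C$ and $C$ is a symmetric proper coset progression, so a positive $2^{-O(d)}$-proportion of $a\in C$ works), and we need all four points $(a,b_1), (a, b_1+y), (a+x, b_2), (a+x, b_2+y)$ to lie in $V^\rho$. The column over $a$ is the Bohr set $B(\Gamma,\Theta_{[r]}(a);\rho)$, which by Lemma~\ref{basicbohrsizel} has density $\geq \rho^{r+d}$; requiring $b_1$ and $b_1+y$ both in it costs a further factor $4^{-(r+d)}$ via the standard $B^{(\rho)}\supseteq B^{(\rho/2)}-B^{(\rho/2)}$ covering, and similarly for $b_2$ over $a+x$; this yields the claimed count $2^{-O(r+d)}\rho^{O(r)}|C||B(\Gamma,\rho)|^2$. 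Well-definedness: given two legal triples $(a,b_1,b_2)$ and $(a',b_1',b_2')$, I would interpolate through an intermediate triple sharing $a$ with the first and $a'$ with the second (possible because the set of legal triples is dense and Bourgain-system-structured, so intersections are nonempty), and on each of the two comparison steps the difference of the two four-term expressions is, after regrouping, a signed combination of at most a bounded number of $E$-bihomomorphism defects on legitimate additive quadruples in rows/columns of $V^\rho$; bounding the number of defects and collecting them gives membership in $9E$. (The exact constant $9$ will come out of carefully counting how many applications of the bihomomorphism property are needed; $8$ defects plus a slack is the natural bookkeeping.)

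For bilinearity, fix $y$ and check horizontal $E'$-linearity: given $x_1 + x_2 = x_3$ with all three in the relevant shrinking, choose a single legal triple $(a, b_1, b_2)$ that simultaneously works for $x_1, x_2, x_3$ and for the partial sums (again possible by density of legal triples, which is where the shrinkage from $\rho$ to $\rho/10000$ is spent), and then $\psi(x_1,y)+\psi(x_2,y)-\psi(x_3,y)$ telescopes into a bounded combination of $\phi$-values at arguments of the form $(a+x_i, b_2+y)$ etc., whose additive-quadruple defects lie in $E$; collecting gives $9E$. The vertical direction is symmetric. Throughout, the only genuinely non-routine point is the careful verification that every auxiliary quadruple invoked actually has all four of its points inside $V^\rho$ — this is exactly the role of keeping the parameters $a, a+x$ in $C$ and the $b$'s in the full-radius columns while the free variables $x,y$ are confined to $V^{\rho/10000}$ — together with pinning down the numerical constant. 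I expect the bookkeeping of error-set coefficients (getting exactly $9E$ rather than some larger $O(1)\cdot E$) to be the main fussy obstacle; conceptually the argument is a direct transcription of the classical "$\Delta_{a,b}$ of a bihomomorphism is bilinear" identity into the approximate, Bohr-variety setting, reusing the Bourgain-system density bounds established in the proof of Lemma~\ref{almostfullbilinearmap}.
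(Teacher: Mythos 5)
Your proposal matches the paper's approach: both define $\psi$ by the double‑difference formula $\psi(x,y)=\phi(a+x,b_2+y)-\phi(a+x,b_2)-\phi(a,b_1+y)+\phi(a,b_1)$ (the paper phrases this as two sequential "convolutions," first vertical to kill $\psi(x,0)$ and then horizontal to kill $\psi(0,y)$), and both reduce well-definedness and bilinearity to telescoping the $E$-bihomomorphism property along longer additive tuples, using the Bourgain-system density estimates from Lemma~\ref{almostfullbilinearmap} to supply the $2^{-O(r+d)}\rho^{O(r)}|C||B(\Gamma,\rho)|^2$ legal parameter choices. Your well-definedness interpolation and bilinearity checks spell out what the paper compresses into the remark that the claim near the end of Claim~\ref{almostfullebihomclaim1} gives $3E$-respected directional $8$-tuples; conceptually it is the same argument.
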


\begin{proof}
    The same argument near the end of the proof of Claim~\ref{almostfullebihomclaim1} shows that on $V^{\rho/1000}$, we have all directional additive 8-tuples $3E$-respected. Hence, we may first convolve in the vertical direction, and define an $3E$-bihomomorphism $\phi_1$ such that additionally $\phi_1(x, 0) \in E$ holds for all $x \in \frac{1}{1000}C$. Another convolution in the horizontal direction, gives a further $E$-bihomomorphism $\phi_2$ such that $\phi_2(0, y) \in E$. Thus $\phi_2$ is $E$-bilinear and has the desired property.
\end{proof}

\begin{proof}[Proof of Proposition~\ref{secondrobustBRstep}]
    We begin the proof by applying the algebraic regularity lemma. Let $\eta$ be a positive quantity that will be specified later. Apply Theorem~\ref{algreglemmaintro} to the bilinear Bohr variety which is the same as $V$, except the radius $\rho$ is replaced by $\rho/100$, with error parameter $\eta$ giving a coset progression $C'$ of rank at most $d$ whose few translates essentially cover $C$ and partition the bilinear Bohr variety $V$ into quasirandom pieces. By averaging, there exists a coset progression $t + C'$ such that $|X \cap t + C'| \geq \frac{c}{2}|C'|$ and $|C'| \geq c'|C|$, where 
    \[c' \geq \exp\Big(-d^{O(1)} r^{O(1)} s^{O(1)} \log^{O(1)} (\eta^{-1}) \log^{O(1)}(\rho^{-1}) \Big).\]
    Let us misuse the notation and write $C'$ instead of $t  + C'$. Let $\sigma \in [\rho/200,\rho/100]$ be radius in the quasirandom bilinear Bohr variety $V' \subseteq C' \times B'$ stemming from $C'$ for a suitable Bohr set $B'$ of codimension $r$ and radius $\sigma$, and let $\delta$ be the relevant density. By Lemma~\ref{basicbohrsizel}, $\delta \geq {\rho/200}^{r + s}$, provided $\eta \leq \frac{1}{2}{\rho/200}^{r + s}$.\\
    \indent Apply Corollary~\ref{robustBogRuzsaCP} to $X \cap C'$ to find a a symmetric proper coset progression $C''$ of rank at most $(d\log c^{-1}))^{O(1)}$ and size $|C''| \geq  \exp\Big(-(d\log c^{-1})^{O(1)}\Big) |C'|$ such that for each $u \in C''$ there are at least $\frac{c}{2^{d + 7}}$ quadruples $(x_1, x_2, x_3, x_4) \in X^4$ such that $u = x_1 + x_2 - x_3 - x_4$. Let $T_u$ be the set of all such $(x_1, x_2, x_3) \in X^3$ for the given $u \in C''$. The rest of the proof, we extend the map to the whole bilinear Bohr variety $V'$, where $V' = \{(x,y) \in C'' \times B: y \in B(\Theta_1(x), \dots, \Theta_s(x); \sigma)\}$. This will be done in two steps, firstly we shall define the map on vast majority of $V'$ and then, in the second step, we shall extend it to the whole variety.\\

    Let $S_1 \subseteq V'$ be the set of all $(u, y) \in V'$ such that $|T_u \cap (V'_{\bcdot y})^3| \geq \frac{1}{2} \delta^3 |T_u|$. By Lemma~\ref{genappendknhoods}, we have $|S_1| \geq (1 - 200 \delta^{-6} \eta^{\frac{1}{8}}) |V'|$. For each $(u,y) \in S_1$ define
    \begin{equation}
        \psi(u, y) = \phi(x_1, y) + \phi(x_2, y) - \phi(x_3, y) - \phi(x_1 + x_2 - x_3 - u, y) \label{psidefinbilinearextnfinal}
    \end{equation})
    for $(x_1, x_2, x_3) \in T_u$. Since $\phi$ respects all horizontal additive 36-tuples, it follows that $\psi$ is well-defined and respects all horizontal additive 12-tuples.\\

    Let $u \in C''$ be arbitrary. We next show that $\psi$ respects a vast majority of vertical additive triples on $V'_{u \bcdot}$. To see that, fix any $y_1, y_2, y_3 \in V'_{u \bcdot}$ such that $y_1 + y_2 = y_3$ and such that $(u, y_1), (u, y_2), (u, y_3) \in S_1$. Suppose that $T_u \cap (V'_{\bcdot y_1})^3 \cap (V'_{\bcdot y_2})^3 \not= \emptyset$. Take a triple $(x_1, x_2, x_3)$ that belongs to this set and observe that $(x_1, y_3), (x_2, y_3), (x_3, y_3) \in V$, which follows from the triangle inequality and the choice of $\sigma \leq \rho/100$. By definition of $\psi$ we get
    \begin{align*}
        \psi(u, y_3) = &\phi(x_1, y_3) + \phi(x_2, y_3) - \phi(x_3, y_3) - \phi(x_1 + x_2 - x_3 - u, y_3)\\
        =&\phi(x_1, y_1) + \phi(x_2, y_1) - \phi(x_3, y_1) - \phi(x_1 + x_2 - x_3 - u, y_1)\\
        &\hspace{2cm}+\phi(x_1, y_2) + \phi(x_2, y_2) - \phi(x_3, y_2) - \phi(x_1 + x_2 - x_3 - u, y_2)\\
        = & \psi(u, y_1)  + \psi(u, y_2) ,
    \end{align*}
    so the vertical additive triple is respected.\\

    By quasirandomness of the Bohr variety $V'$, by Lemma~\ref{genappendknhoods}, the set $T_u \cap (V^\sigma_{\bcdot y_1})^3 \cap (V^\sigma_{\bcdot y_2})^3$ is non-empty vast for all but at most $1000 (\rho/100)^{-100d} \delta^{-12} \eta^{\frac{1}{8}})$ choices of $(U, y_1, y_2) \in C'' \times B' \times B'$.\\

    Apply Lemma~\ref{ebihomtoebilinearlemma}, with the error set $E = 0$ to finish the proof.
\end{proof}

\section{Obtaining $E$-bihomomorphism on structured product}\label{passingtobihomonprodsection}

The goal of this section is to pass from a Freiman-bilinear map defined on a bilinear Bohr variety to a $E$-bihomomorphism on a product of Bohr sets. Obtaining the desired $E$-bihomomorphism is carried out in three steps. Firstly, we show that, upon passing to a suitable subprogression in the index set of columns, we may assume that errors arising from naive extensions do not depend on columns. Using this information, we obtain an $E$-bihomomorphism on a bilinear Bohr variety whose columns are subgroups. Finally, we extend the domain to a product of Bohr sets.

\subsection{Controlling errors in columns}\label{controllingerrorsubsec}

\begin{proposition}\label{controllingerrorscolsprop}
    Let $C$ be a symmetric proper coset progression of rank $d$ and density $c$ in $G_1$. Let $\Theta_1, \dots, \Theta_d : C \to \hat{G}_2$ be Freiman-linear maps. Let $\Gamma \subseteq \hat{G}_2$ be a set of characters of size $d$ and let $\rho > 0$. Let $V$ be the bilinear Bohr variety given by
    \begin{equation}\label{detaileddomaindefn}V = \bigcup_{x \in C} \{x\} \times \Big(B(\Gamma, \rho) \cap B(\Theta_{[d]}(x), \rho)\Big).\end{equation}
    Let $\phi : V \to H$ be a Freiman-bilinear map on $V$.\\
    \indent Let $r$ be a positive integer, and let $u_1, \dots, u_r \in \mathbb{Z}$ be fixed coefficients, with $\on{gcd}(u_1, \dots, u_r) = 1$. Write $U = \sum_{i \in [r]} |u_i|$. For $x \in C$ and $\tau, \tau' > 0$, let $E_x(\tau, \tau')$ be the set of values attained by
    \[\sum_{i = 1}^r u_i \phi(x, y_i) - \sum_{i = 1}^r u_i \phi(x, z_i)\]
    ranging over all choices $y_i, z_i \in  B(\Theta_{[d]}(x), \tau) \cap B(\Gamma, \tau')$ such that $\sum_{i = 1}^r u_i y_i = \sum_{i = 1}^r u_i z_i$. Then, there exist:
    \begin{itemize}
        \item a coset progression $C' \subseteq C$ of rank at most $d$ and size $|C_1| \geq \exp(-(2rd\log(c^{-1} \rho^{-1} U))^{O(1)}) |G_1|$,
        \item an element $x_0 \in C'$,
        \item a radius $\tilde{\rho} \in [\rho / 100, \rho/20]$, and
        \item a radius $\tilde{\sigma} \in [\rho / 100U, \rho/20U]$, 
    \end{itemize} 
    such that $E_x(\tilde{\rho}, \tilde{\sigma}) \subseteq E_{x_0}(10\tilde{\rho}, 10\tilde{\sigma})$ for all $x \in C'$.
\end{proposition}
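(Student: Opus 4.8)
The plan is to analyze the error sets $E_x(\tau,\tau')$ via the tensor structure. Fix coordinates: since $\gcd(u_1,\dots,u_r)=1$, a tuple $y_1,\dots,y_r$ with $\sum u_i y_i = \sum u_i z_i$ corresponds, after subtracting, to a relation $\sum u_i w_i = 0$ with $w_i = y_i - z_i$ small, and by Freiman-bilinearity of $\phi$ in the second variable the value $\sum u_i\phi(x,y_i) - \sum u_i\phi(x,z_i)$ equals $\sum u_i \phi(x,w_i)$, which in turn only depends on the point $\sum_i (u_i w_i) \otimes \text{(choice data)}$; more precisely, writing each $w_i$ in terms of the generators of a coset progression sitting inside the column Bohr set $B(\Theta_{[d]}(x),\tau)\cap B(\Gamma,\tau')$, one sees that $E_x(\tau,\tau')$ is the image under $\phi(x,\cdot)$ of a bounded set of ``vanishing $u$-combinations'', a set living in a lattice whose bounded part is controlled. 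So the first step is: use Proposition~\ref{cpsinbohrsets} to find, for a suitable $\tilde\rho,\tilde\sigma$, a proper coset progression $P_x$ with $P_x \subseteq B(\Theta_{[d]}(x),\tilde\rho)\cap B(\Gamma,\tilde\sigma) \subseteq U\cdot P_x \subseteq B(\Theta_{[d]}(x),10\tilde\rho)\cap B(\Gamma,10\tilde\sigma)$, and then express $E_x(\tilde\rho,\tilde\sigma)$ as $\langle S_x\rangle_{\{-1,0,1\}}$-type set for a bounded generating set $S_x$ of the form $\{\phi(x, q) : q \in \Lambda_x \cap \text{box}\}$, where $\Lambda_x$ is the lattice of vanishing $u$-combinations of the generators of $P_x$. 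This uses a covering/dissociativity argument exactly in the spirit of Proposition~\ref{generalextensionserror}.

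The second step is a regularization over $x \in C$ to make the ``shape'' of the error sets constant. The key point is that the relevant lattice data of $P_x$ is governed by the Freiman-linear maps $\Theta_{[d]}$: the generators of $P_x$ can be taken to depend linearly on $x$ (via a discrete John argument applied uniformly, using that $\Theta_i$ are Freiman homomorphisms on $C$), so that $\Lambda_x$ — the lattice of integer vectors $\lambda$ with $\sum \lambda_j e_j(x) \in \ker$ of the character data — stabilizes along a subprogression. Concretely, I would introduce the nested-lattice / quantitative-lattice machinery (Lemma~\ref{nestedLattices}, Theorem~\ref{quantLattice}), running an iteration: at each stage pass to a subprogression of $C$ where one more bounded vanishing relation among the $\Theta_j(x)(\cdot)$-values becomes universal; by Lemma~\ref{nestedLattices} this terminates in $(2rd\log(c^{-1}\rho^{-1}U))^{O(1)}$ steps, each step costing a factor governed by Proposition~\ref{fewvalsCPFLin} or the tiling Lemma~\ref{tilingLemma}, giving a final coset progression $C'$ of size $\exp(-(2rd\log(c^{-1}\rho^{-1}U))^{O(1)})|G_1|$. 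On $C'$, all the bounded integer relations that contribute generators to $E_x(\tilde\rho,\tilde\sigma)$ are the same relations, so the generating set $S_x$ is indexed by a fixed finite index set $I$, with $S_x = \{\phi(x, q_i(x)) : i \in I\}$ where $q_i(x)$ are the corresponding (now $x$-independent in ``coordinates'') elements of the column of $x$.

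The third step is to collapse the $x$-dependence of the values $\phi(x, q_i(x))$ themselves down to a single $x_0$. Here I would again use a pigeonhole/averaging argument: the map $x \mapsto \big(\phi(x,q_i(x))\big)_{i \in I}$ takes values in $H^{|I|}$, but we do not get to control $|H|$, so instead I use Freiman-bilinearity once more — for any two $x, x' \in C'$ lying in a common coset of an appropriate subgroup, $\phi(x,q_i(x)) - \phi(x',q_i(x'))$ is itself expressible as a bounded combination of values of $\phi$ at small second-arguments, hence lands in $E_{x}(10\tilde\rho, 10\tilde\sigma)$ up to the allowed slack. Choosing $x_0$ in the densest such coset and absorbing the difference into the enlarged error set $E_{x_0}(10\tilde\rho,10\tilde\sigma)$ (which, by the first step applied at $x_0$ with the larger radii, genuinely contains $E_x(\tilde\rho,\tilde\sigma)$ once the shapes agree) yields the conclusion. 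I expect the \textbf{main obstacle} to be the second step: making the lattice/coset-progression data of the columns depend on $x$ in a genuinely uniform, ``linear'' way — in the finite vector space case this is automatic since columns are subspaces, but here the proper-progression structure of $P_x$ is delicate, and one must be careful that the discrete John basis and the covering constants can be chosen once for all $x$ in the final subprogression; this is precisely where the bilinear-Bogolyubov-flavoured regularization and Lemma~\ref{nestedLattices} do the heavy lifting, and getting the quantitative bookkeeping ($\tilde\rho, \tilde\sigma$ in the stated dyadic ranges, and the rank of $C'$ staying $\le d$) to line up is the technical heart.
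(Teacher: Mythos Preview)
Your approach differs substantially from the paper's, and Step~2 has a genuine gap. You propose to make the discrete John basis of $P_x$ inside $B(\Theta_{[d]}(x),\tilde\rho)\cap B(\Gamma,\tilde\sigma)$ depend linearly on $x$, but discrete John bases are highly non-canonical and there is no mechanism for choosing them uniformly across columns; the nested-lattice machinery of Lemma~\ref{nestedLattices} regularizes \emph{vanishing relations among the characters $\Theta_i(x)$}, not the geometric generators of a progression sitting inside the column. Consequently Step~3 also fails: even if the abstract index set $I$ is fixed, the actual elements $q_i(x)\in G_2$ live in different Bohr sets for different $x$, so $\phi(x,q_i(x))-\phi(x',q_i(x'))$ has no reason to be expressible as a bounded combination of small-argument values of $\phi$ --- Freiman-bilinearity only helps when the second argument is held fixed.

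The paper sidesteps this entirely by a change of viewpoint. It introduces the auxiliary group $\mathcal{D}=\{(y_{[r]},z_{[r]})\in G_2^{2r}:\sum u_iy_i=\sum u_iz_i\}$ and observes that $\phi^{\mathrm{err}}(x;y_{[r]},z_{[r]})=\sum u_i\phi(x,y_i)-\sum u_i\phi(x,z_i)$ is itself a Freiman-bilinear map on an auxiliary bilinear Bohr variety $\tilde V\subset C\times\mathcal{D}$, vanishing on the subvariety $Z$ where the second radius is $\rho/2U$. One then applies the algebraic regularity lemma (Theorem~\ref{algreglemmaintro}) to make $\tilde V$ and $Z$ simultaneously quasirandom over a subprogression $C'$. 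The propagation of values is now automatic: each value $w\in E_x$ is attained on a large set $Y$ of tuples (translate by anything in $Z_{x\bcdot}$), quasirandomness forces $Y\cap Z_{t\bcdot}\neq\emptyset$ for almost all $t\in C'$, and bilinearity in the first variable gives $\phi^{\mathrm{err}}(x+t;\,\cdot\,)=\phi^{\mathrm{err}}(x;\,\cdot\,)+\phi^{\mathrm{err}}(t;\,\cdot\,)=w+0=w$, so $w\in E_{x+t}$. Since the total value count is bounded by $\sigma^{-4rd}$, a pigeonhole picks a single column $x_0$ containing all values. No column-wise lattice uniformization is needed.
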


\noindent\textbf{Remark.} Notice that we use the same $d$ for rank of $C$, the number of maps $\Theta_i$ and size of $\Gamma$. These quantities belong to the same quantitative regime, so this is no loss of generality, but it simplifies the notation. Similarly, we use a single radius.\\
\indent Note also that the bound on $\tilde{\rho}$ is independent of $r$, which will be crucial later.

\vspace{\baselineskip}

\begin{proof}
    In order to control the error sets arising in columns, we define auxiliary bilinear Bohr variety $\tilde{V}$ inside $C \times \mathcal{D}$ for a large abelian group $\mathcal{D}$ in which the relevant vertical tuples naturally embed. We shall consider a Freiman-bilinear map on $\tilde{V}$ whose values will be given by errors on the tuples in columns. We shall be able to show that such a map takes few values on a suitable bilinear Bohr subvariety of $\tilde{V}$. The proposition will follow after reinterpreting the obtained conclusion in the context of the original bilinear Bohr variety.\\

    Consider the group $\mathcal{D}$, where 
        \[\mathcal{D} = \Big\{(y_{[r]}, z_{[r]})\in G_2^{2r} : \sum_{i\in [r]} u_i y_i = \sum_{i\in [r]} u_i z_i\Big\}.\]
    \noindent In other words, two $r$-tuples $y_{[r]}$ and $z_{[r]}$ constitute an element of $\mathcal{D}$ if they yield the value of the given linear combination.\\
    \indent Note that the condition $\on{gcd}(u_1, \dots, u_r) = 1$ implies that $|\mathcal{D}| = |G_1|^{2r - 1}$.\\
    \indent Next, for all $\gamma \in \Gamma$ and $i \in [r]$, define characters $\tilde{\gamma}_i, \tilde{\gamma}'_i \in  \hat{\mathcal{D}}$ by
    \[\tilde{\gamma}_i (y_{[r]}, z_{[r]}) = \gamma(y_i),\hspace{1cm}\tilde{\gamma}'_i (y_{[r]}, z_{[r]}) = \gamma(z_i),\]
    and write $\tilde{\Gamma}$ for the set of these $2rd$ characters. For $\ell \in [d]$ and $i \in [2r]$, we define a map $\Xi_{\ell, i} : C \to \hat{\mathcal{D}}$,  by 
    \[\Xi_{\ell, i}(x)\Big(y_{[2r]}\Big) = \Theta_\ell(x)(y_i).\]
    
    \vspace{\baselineskip}
    
    \begin{claim}
         For $\ell \in [d]$ and $i \in [2r]$, the map $\Xi_{\ell, i} : C \to \hat{\mathcal{D}}$ is well-defined and Freiman-linear.
    \end{claim}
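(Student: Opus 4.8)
The plan is to verify directly that $\Xi_{\ell,i}$ is a well-defined function $C \to \hat{\mathcal D}$ and that it respects additive triples. The natural first step is to check that $\Xi_{\ell,i}(x)$ really is an element of $\hat{\mathcal D}$ for each fixed $x \in C$, i.e.\ that the formula $(y_{[2r]}) \mapsto \Theta_\ell(x)(y_i)$ defines a character of $\mathcal D$. Since $\mathcal D$ is a subgroup of $G_2^{2r}$ and $\Theta_\ell(x) \in \hat{G}_2$, the map $(y_{[2r]}) \mapsto \Theta_\ell(x)(y_i)$ is the composition of the coordinate projection $\pi_i : \mathcal D \to G_2$ (which is a homomorphism, being the restriction of a coordinate projection of $G_2^{2r}$) with the character $\Theta_\ell(x)$; hence it is a homomorphism $\mathcal D \to \mathbb T$, i.e.\ a character. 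This step is immediate and poses no difficulty.

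Next I would establish Freiman-linearity, namely that $\Xi_{\ell,i}(x_1) + \Xi_{\ell,i}(x_2) = \Xi_{\ell,i}(x_3) + \Xi_{\ell,i}(x_4)$ in $\hat{\mathcal D}$ whenever $x_1, x_2, x_3, x_4 \in C$ satisfy $x_1 + x_2 = x_3 + x_4$. Two characters of $\mathcal D$ are equal precisely when they agree at every point of $\mathcal D$, so it suffices to fix an arbitrary $(y_{[2r]}) \in \mathcal D$ and check the scalar identity
\[
\Theta_\ell(x_1)(y_i) + \Theta_\ell(x_2)(y_i) = \Theta_\ell(x_3)(y_i) + \Theta_\ell(x_4)(y_i)
\]
in $\mathbb T$. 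But this is exactly the Freiman-linearity of $\Theta_\ell : C \to \hat{G}_2$ (applied to the additive quadruple $x_1 + x_2 = x_3 + x_4$) evaluated at the fixed group element $y_i \in G_2$, together with the fact that addition in $\hat{G}_2$ is pointwise. One subtlety to record: Freiman-linearity as defined in the paper also requires $0 \in C$ and $\Xi_{\ell,i}(0) = 0$; since $C$ is a symmetric coset progression containing $0$ and $\Theta_\ell(0) = 0$ (as $\Theta_\ell$ is Freiman-linear), we get $\Xi_{\ell,i}(0)(y_i) = \Theta_\ell(0)(y_i) = 0$ for all $(y_{[2r]})$, hence $\Xi_{\ell,i}(0) = 0$ in $\hat{\mathcal D}$. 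This handles the base-point normalization.

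There is essentially no hard part here: the claim is a routine unpacking of definitions, with the only thing to be careful about being the distinction between equality of characters (which is checked pointwise) and equality in the abstract dual group, and the fact that the projection $\pi_i$ genuinely lands in $\mathcal D$'s ambient group and is a homomorphism. I would write the proof in one short paragraph: observe $\Xi_{\ell,i}(x) = \Theta_\ell(x) \circ \pi_i$ where $\pi_i : \mathcal D \to G_2$ is the $i$-th coordinate projection restricted to $\mathcal D$, note this is a composition of homomorphisms hence a character, and then deduce well-definedness (same formula for every $x$) and Freiman-linearity from the corresponding properties of $\Theta_\ell$ evaluated coordinatewise. The analogous statement for the characters $\tilde\gamma_i, \tilde\gamma'_i$ — which are just $\gamma \circ \pi_i$ and $\gamma \circ \pi'_i$ — is proved by the same observation, so if the surrounding argument needs it I would remark that it follows identically.
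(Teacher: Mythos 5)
Your proof is correct and follows essentially the same route as the paper: verify pointwise that each $\Xi_{\ell,i}(x)$ is a character of $\mathcal D$ (a composition of a coordinate projection with $\Theta_\ell(x)$), then transfer the Freiman-linearity of $\Theta_\ell$ to $\Xi_{\ell,i}$ by evaluating at an arbitrary $(y_{[2r]}) \in \mathcal D$. The paper checks the additive-triple form (relying on $\Theta_\ell(0)=0$) rather than the quadruple form directly, and does not bother to record $\Xi_{\ell,i}(0)=0$ explicitly, but these are cosmetic differences; both arguments are the same routine unpacking.
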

    
    \begin{proof}
        Clearly, the map $y_{[2r]}\mapsto \Xi_{\ell, i}(x)\Big(y_{[2r]}\Big)$ is a homomorphism from $\mathcal{D}$ to $\mathbb{T}$ for all $\ell \in [d], i \in [2r]$ and $x \in C$, i. e. $\Xi_{\ell, i}$ is indeed a map from $C$ to $\hat{\mathcal{D}}$. For Freiman-linearity, take $x, x' \in C$ such that $x + x' \in C$. Then $\Xi_{\ell, i}(x + x')\Big(y_{[2r]}\Big) = \Theta_\ell(x+x')(y_i) = \Theta_\ell(x)(y_i) + \Theta_\ell(x')(y_i) = \Xi_{\ell, i}(x)\Big(y_{[2r]}\Big) + \Xi_{\ell, i}(x')\Big(y_{[2r]}\Big)$. This holds for all $y_{[2r]} \in \mathcal{D}$ so $\Xi_{\ell, i}(x + x') = \Xi_{\ell, i}(x) + \Xi_{\ell, i}(x')$, proving that $\Xi_{\ell, i}$ is Freiman-linear.
    \end{proof}
    
    \vspace{\baselineskip}
    
    \indent Consider the bilinear Bohr variety $\tilde{V}$ inside $C \times \mathcal{D}$ given by columns
    \[B(\Xi_{[d] \times [2r]}(x); \rho) \cap B(\tilde{\Gamma}, \rho).\]
    
    Define map $\phi^{\text{err}}$ on $\tilde{V}$ by setting
    \begin{equation}\phi^{\text{err}}(x; y_{[r]}, z_{[r]}) = \sum_{i\in [r]} u_i \phi(x, y_i) - \sum_{i\in [r]} u_i \phi(x, z_i).\label{errorfunctiondefinition}\end{equation}
    
    Thus, $\phi^{\text{err}}(x; y_{[r]}, z_{[r]})$ controls the error in the naive extension for tuples $y_{[r]}$ and $z_{[r]}$ in the column indexed by $x$.
    
    \vspace{\baselineskip}
    
    \begin{claim}
        The map $\phi^{\text{err}}$ is well-defined and Freiman-bilinear on $\tilde{V}$.
    \end{claim}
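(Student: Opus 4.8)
The claim to prove is that $\phi^{\text{err}}$ is well-defined and Freiman-bilinear on $\tilde V$. I will prove this by reducing both assertions to the corresponding properties of the given Freiman-bilinear map $\phi$, using the way the auxiliary characters $\tilde\gamma_i,\tilde\gamma'_i$ and $\Xi_{\ell,i}$ were set up so that a point $(x;y_{[r]},z_{[r]})\in\tilde V$ forces each $y_i$ and each $z_i$ to lie in the column $B(\Gamma,\rho)\cap B(\Theta_{[d]}(x),\rho)$ of the original variety $V$.

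First I would check well-definedness. A point of $\tilde V$ is a triple $(x;y_{[r]},z_{[r]})$ with $x\in C$, with $(y_{[r]},z_{[r]})\in\mathcal D$ — so $\sum_i u_iy_i=\sum_i u_iz_i$ — and with $\|\tilde\gamma_i(y_{[r]},z_{[r]})\|_{\mathbb T}=\|\gamma(y_i)\|_{\mathbb T}\le\rho$, $\|\tilde\gamma'_i(\cdots)\|_{\mathbb T}=\|\gamma(z_i)\|_{\mathbb T}\le\rho$ for all $\gamma\in\Gamma$, and $\|\Xi_{\ell,i}(x)(\cdots)\|_{\mathbb T}=\|\Theta_\ell(x)(y_i)\|_{\mathbb T}\le\rho$ (resp. with $z_i$) for all $\ell\in[d]$. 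Reading these off exactly says $y_i,z_i\in B(\Gamma,\rho)\cap B(\Theta_{[d]}(x),\rho)$ for every $i\in[r]$, i.e. $(x,y_i),(x,z_i)\in V$. Hence each term $\phi(x,y_i)$, $\phi(x,z_i)$ in~\eqref{errorfunctiondefinition} is defined, so $\phi^{\text{err}}(x;y_{[r]},z_{[r]})$ is a genuine element of $H$; there is no further ambiguity since the formula is an explicit finite sum, so well-definedness is immediate.

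Next, Freiman-bilinearity. I must show that $\phi^{\text{err}}$ is a Freiman homomorphism in each of its two arguments when the other is fixed and all four points of an additive quadruple lie in $\tilde V$. Fix the $\mathcal D$-coordinate $(y_{[r]},z_{[r]})$ and take $x,x',x''\in C$ with $x+x'=x''$ and all three (together with $0$, as needed) giving points of $\tilde V$ with that fixed second coordinate; then by the above each $(x,y_i),(x',y_i),(x'',y_i)$ and likewise with $z_i$ lie in $V$, so by Freiman-linearity of $\phi$ in its first argument $\phi(x'',y_i)=\phi(x,y_i)+\phi(x',y_i)$ and similarly for $z_i$. Summing the defining formula~\eqref{errorfunctiondefinition} with the coefficients $u_i$ gives $\phi^{\text{err}}(x'';y_{[r]},z_{[r]})=\phi^{\text{err}}(x;y_{[r]},z_{[r]})+\phi^{\text{err}}(x';y_{[r]},z_{[r]})$, which is horizontal Freiman-linearity. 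For the $\mathcal D$-direction, fix $x\in C$ and take $(y_{[r]},z_{[r]}),(y'_{[r]},z'_{[r]}),(y''_{[r]},z''_{[r]})\in\mathcal D$ with componentwise $y''_i=y_i+y'_i$, $z''_i=z_i+z'_i$, all three lying in the column $\tilde V_{x\bcdot}$; then each $y_i,y'_i,y''_i,z_i,z'_i,z''_i$ lies in $B(\Gamma,\rho)\cap B(\Theta_{[d]}(x),\rho)$, and the vertical Freiman-linearity of $\phi$ gives $\phi(x,y''_i)=\phi(x,y_i)+\phi(x,y'_i)$ and likewise for $z_i$; again summing with weights $u_i$ yields $\phi^{\text{err}}(x;y''_{[r]},z''_{[r]})=\phi^{\text{err}}(x;y_{[r]},z_{[r]})+\phi^{\text{err}}(x;y'_{[r]},z'_{[r]})$. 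Since addition in $\mathcal D$ is exactly componentwise addition of the $2r$ coordinates (the linear constraint $\sum u_iy_i=\sum u_iz_i$ being preserved), this establishes Freiman-linearity in the second argument, and together with the first we get that $\phi^{\text{err}}$ is Freiman-bilinear on $\tilde V$.

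The only mild subtlety — and the place to be slightly careful rather than the genuine obstacle — is matching conventions: I should use whichever of the ``$a+b=c+d$'' (additive quadruple) or ``$a+b=c$'' (additive triple / linearity) formulation the paper adopts for Freiman-linear maps on Bohr sets and on products, and verify that the intersections of columns $\tilde V_{x\bcdot}$ and rows of $\tilde V$ that appear in those conditions are themselves again of the required form so that the relevant points automatically stay in $\tilde V$; this is exactly parallel to the interpretation of $\psi=\pm\phi_1\pm\dots\pm\phi_k$ as a map on the intersection of domains discussed earlier. No deeper input is needed: the claim is purely a bookkeeping consequence of the definitions, and I expect the proof to be a short paragraph once the conventions are pinned down.
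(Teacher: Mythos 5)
Your proposal is correct and follows essentially the same route as the paper: you first unfold the definitions of $\tilde\gamma_i,\tilde\gamma'_i$ and $\Xi_{\ell,i}$ to see that membership in $\tilde V$ forces each $y_i,z_i$ into $B(\Gamma,\rho)\cap B(\Theta_{[d]}(x),\rho)$ (giving well-definedness), and then reduce Freiman-bilinearity of $\phi^{\mathrm{err}}$ in each direction to the componentwise Freiman-bilinearity of $\phi$ followed by a weighted sum with coefficients $u_i$, exactly as in the paper's argument. The caveat you raise about the triple-versus-quadruple convention is not an issue here, since the paper checks Freiman-linearity in each direction via additive triples just as you do.
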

    
    \begin{proof}
        To show that $\phi^{\text{err}}$ is well-defined, we just need to ensure that all arguments of $\phi$  in~\eqref{errorfunctiondefinition} belong to $V$. If $(x; y_{[r]}, z_{[r]}) \in \tilde{V}$, then $\uc{\Xi_{\ell, i}(x)(y_{[r]}, z_{[r]})}\leq \rho$ so $\uc{\Theta_{\ell}(x)(y_i)} \leq \rho$ for all $\ell \in [d]$, so $y_i \in B(\Theta_{[d]}(x), \rho)$. Furthermore, for all $\gamma \in \Gamma$ $\uc{\tilde{\gamma}_\ell(y_{[r]}, z_{[r]})} \leq \rho$, so $\uc{\gamma(y_i)} \leq \rho$, and thus $y_i \in B(\Gamma, \rho)$. The same holds for $z_i$.\\
        The check that $\phi^{\text{err}}$ is Freiman-bilinear is direct. Namely, suppose first that $x, x' \in C$ are such that $(x; y_{[r]}, z_{[r]}), (x'; y_{[r]}, z_{[r]}), (x + x'; y_{[r]}, z_{[r]}) \in \tilde{V}$. Then
        \begin{align*}\phi^{\text{err}}(x + x'; y_{[r]}, z_{[r]}) = &\sum_{i\in [r]} u_i \phi(x + x', y_i) - \sum_{i\in [r]} u_i \phi(x + x', z_i) \\
        = &\sum_{i\in [r]} u_i \phi(x, y_i) + \sum_{i\in [r]} u_i \phi(x', y_i) - \sum_{i\in [r]} u_i \phi(x, z_i) - \sum_{i\in [r]} u_i \phi(x', z_i)\end{align*}
        as $\phi$ is itself Freiman-bilinear and all points in arguments of $\phi$ belong to $V$. The last expression equals $\phi^{\text{err}}(x; y_{[r]}, z_{[r]}) + \phi^{\text{err}}(x'; y_{[r]}, z_{[r]})$.\\
        Finally, similar algebraic manipulation shows that $\phi^{\text{err}}$ is Freiman-linear in vertical direction. Namely, assume that $x \in C$ and $(y_{[r]}, z_{[r]}), (y'_{[r]}, z'_{[r]}) \in \mathcal{D}$ are such that $(x; y_{[r]}, z_{[r]}), (x; y'_{[r]}, z'_{[r]}), (x; (y + y')_{[r]}, (z + z')_{[r]}) \in \tilde{V}$. Then
        \begin{align*}\phi^{\text{err}}(x; (y+y')_{[r]}, (z + z')_{[r]}) = & \sum_{i\in [r]} u_i \phi(x, y_i + y'_i) - \sum_{i\in [r]} u_i \phi(x, z_i + z'_i)\\
        = & \sum_{i\in [r]} u_i \phi(x, y_i )+ \sum_{i\in [r]} u_i \phi(x,  y'_i) - \sum_{i\in [r]} u_i \phi(x, z_i) - \sum_{i\in [r]} u_i \phi(x, z'_i)\\
        & = \phi^{\text{err}}(x; y_{[r]}, z_{[r]}) + \phi^{\text{err}}(x; y'_{[r]}, z'_{[r]})\end{align*}
        once again as $\phi$ is itself Freiman-bilinear and all points in arguments of $\phi$ belong to $V$.
    \end{proof}
    
    \vspace{\baselineskip}
    
    Let $\sigma = \rho / 2U$, (recall that $U = \sum_{i \in [r]} |u_i|$,) and let $Z$ be the subvariety of $\tilde{V}$ whose columns, for $x \in C$, are
    \[B(\Xi_{[d] \times [2r]}(x); \sigma) \cap B(\tilde{\Gamma}, \sigma),\]
    i.e. where the Bohr sets in columns are given by radius $\sigma$ instead of $\rho$. Then $\phi^{\text{err}}$ vanishes on $Z$.
    
    \vspace{\baselineskip}
    
    \begin{claim}\label{phierrvanishingz}
        The map $\phi^{\text{err}}$ vanishes on $Z$.
    \end{claim}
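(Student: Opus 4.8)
The plan is to show that the map $\phi^{\text{err}}$ vanishes identically on the subvariety $Z$, which requires two things: first, that $\phi^{\text{err}}$ is well-defined on $Z$ (this is clear since $Z \subseteq \tilde V$ because $\sigma < \rho$ and we verified well-definedness on $\tilde V$), and second, that its value is always zero. The key observation is that on $Z$ the columns are Bohr sets of radius $\sigma = \rho/2U$, so that a point $(x; y_{[r]}, z_{[r]}) \in Z$ has each $y_i, z_i$ lying in $B(\Theta_{[d]}(x), \sigma) \cap B(\Gamma, \sigma)$, and therefore a bounded-coefficient combination of them still lands in the radius-$\rho$ Bohr set on which $\phi$ is Freiman-bilinear. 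Concretely, since $U = \sum_{i \in [r]}|u_i|$ and each coefficient $u_i$ has absolute value at most $U$, any element of the form $\sum_{i \in I}\varepsilon_i u_i y_i + \sum_{i \in J}\varepsilon'_i u_i z_i$ with $\varepsilon_i, \varepsilon'_i \in \{0, \pm 1\}$ satisfying the appropriate partial-sum constraints has $\tn{\Theta_\ell(x)(\cdot)} \leq U\sigma \leq \rho/2 < \rho$ for every $\ell$, and similarly $\tn{\gamma(\cdot)} \leq \rho$ for every $\gamma \in \Gamma$; so all such intermediate points lie in the column $B(\Gamma,\rho) \cap B(\Theta_{[d]}(x),\rho) = V_{x\bcdot}$.

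With this in hand, I would argue as follows. Fix $(x; y_{[r]}, z_{[r]}) \in Z$. By definition of $\mathcal{D}$ we have $\sum_{i\in[r]} u_i y_i = \sum_{i\in[r]} u_i z_i$; call this common value $w \in G_2$. I want to use the vertical Freiman-linearity of $\phi$ in the column $x$ (where $\phi$ is Freiman-bilinear, hence Freiman-linear in $y$) to conclude $\sum_i u_i \phi(x, y_i) = \phi(x, w) = \sum_i u_i \phi(x, z_i)$, which is exactly $\phi^{\text{err}}(x; y_{[r]}, z_{[r]}) = 0$. The point is that a Freiman-linear map on a sufficiently ``convex'' set respects integer linear combinations that land inside the set together with all their partial sums; since $w$, being a single element with $\tn{\Theta_\ell(x)(w)} \le \tn{\sum_i u_i \Theta_\ell(x)(y_i)}$ — wait, more carefully: $w = \sum u_i y_i$ so $\Theta_\ell(x)(w) = \sum u_i \Theta_\ell(x)(y_i)$, and $\tn{\Theta_\ell(x)(w)} \le \sum |u_i| \tn{\Theta_\ell(x)(y_i)} \le U\sigma \le \rho/2$, so $w \in V_{x\bcdot}$ too. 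Then applying the analogue of the iterated-Freiman-linearity identity (in the spirit of~\eqref{iteratedFreimanLinCP}, but for Bohr-set columns rather than coset progressions — this follows by the same induction, using that all partial sums $\sum_{i \le k} u_i y_i$ and their negatives stay inside the radius-$\rho$ column) gives $\phi(x, \sum_i u_i y_i) = \sum_i u_i \phi(x, y_i)$ and likewise for $z_{[r]}$, whence the two sides agree and $\phi^{\text{err}} = 0$.

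The only mild subtlety, and the step I would be most careful about, is the iterated Freiman-linearity statement: Freiman-linearity is a priori only a statement about a single additive triple $a + b = c$ with $a, b, c$ all in the domain, so to unwind $\phi(x, \sum_i u_i y_i) = \sum_i u_i \phi(x, y_i)$ one needs an induction in which every intermediate partial sum $\sum_{i \le k}(\text{signed multiples of } y_i)$ lies in the column $V_{x\bcdot}$. This is where the choice $\sigma = \rho/2U$ does the work: every such partial sum, written out as an integer combination of the $y_i$ with total $\ell^1$-norm of coefficients at most $U$, has all relevant character values of norm at most $U\sigma = \rho/2 \le \rho$. I would spell this out as a short lemma-free paragraph (or invoke the analogue of~\eqref{iteratedFreimanLinCP} applied in this setting), then conclude. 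So the proof is: check $Z \subseteq \tilde V$; fix a point of $Z$; note all partial sums of the defining combination stay in the radius-$\rho$ column by the $\sigma = \rho/2U$ bound; apply iterated vertical Freiman-linearity of $\phi$ to both $\sum u_i y_i$ and $\sum u_i z_i$; use $\sum u_i y_i = \sum u_i z_i$ to cancel; conclude $\phi^{\text{err}} = 0$ on $Z$.
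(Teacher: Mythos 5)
Your proof is correct and takes essentially the same approach as the paper: the paper's own argument observes that for $(x; y_{[r]}, z_{[r]}) \in Z$ the choice $\sigma = \rho/2U$ forces all partial sums $\sum_{i} \lambda_i y_i$ with $0 \le \lambda_i \le u_i$ to lie in $V_{x\bcdot}$, then applies iterated vertical Freiman-linearity to get $\sum_i u_i \phi(x,y_i) = \phi(x, \sum_i u_i y_i)$ and likewise for $z_{[r]}$, and cancels using $\sum_i u_i y_i = \sum_i u_i z_i$. You have spelled out the same argument in more detail, in particular making explicit the partial-sum bookkeeping that justifies the iterated-linearity step, which is the only point requiring care and which the paper leaves terse.
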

    
    \begin{proof}
        If $(x; y_{[r]}, z_{[r]}) \in Z$, then $\sum_{i\in [r]} \lambda_i y_i \in V_{x \bcdot}$ holds for all coefficients $0 \leq \lambda_i \leq u_i$, and so $\sum_{i\in [r]} u_i \phi(x, y_i)  = \phi(x, \sum_{i\in [r]} u_i y_i)$ holds by Freiman-linearity in vertical direction. Since $\sum_{i\in [r]} u_i y_i= \sum_{i\in [r]} u_i z_i$, we get the same value for $z_i$ in place of $y_i$, so we get $\phi^{\text{err}}(x; y_{[r]}, z_{[r]}) = 0$.
    \end{proof}
    
    \vspace{\baselineskip}
    
    We claim that $\phi^{\text{err}}$ takes few values on $\tilde{V} \cap (C' \times B(\tilde{\Gamma}, \sigma'))$ for some radius $\sigma' > 0$ and some slightly smaller coset progression $C' \subseteq C$. The proof relies on the algebraic regularity lemma (Theorem~\ref{algreglemmaintro}). We want to apply the algebraic regularity lemma to make bilinear Bohr varieties $\tilde{V}$ and $Z$ simultaneously quasirandom. However, the lemma is stated for a single bilinear Bohr variety, so we need to consider a further auxiliary variety.\\

    \noindent\textbf{Finding quasirandom pieces of $\tilde{V}$ and $Z$.} Let $\varepsilon > 0$ be a parameter to be chosen later. Let us consider a joint variety $\mathcal{J} = \bigcup_{(x, y) \in C \times C} \{(x, y)\} \times \tilde{V}_{x \bcdot} \times Z_{y \bcdot}$ inside $G^2_1 \times \mathcal{D}^2$. The algebraic regularity lemma with error parameter $\varepsilon$ gives us radii $\rho' \in [\rho/4, \rho/2], \sigma' \in [\sigma/4, \sigma/2]$ and a further symmetric coset progression $C' \subseteq C$ of rank at most $d$ and size $|C'| \geq \exp(-(2d\log(c^{-1} \rho^{-1}\sigma^{-1} \varepsilon^{-1}))^{O(1)}) |G_1|$, such that $\mathcal{J}' \cap \Big((C' \times C') \times (B(\tilde{\Gamma}, \rho') \times B(\tilde{\Gamma}, \sigma'))\Big)$ is $\varepsilon$-quasirandom in the sense of theorem, where $\mathcal{J}'_{(x,y) \bcdot} = B(\Xi_{[d] \times [2r]}(x); \rho') \times B(\Xi_{[d] \times [2r]}(y); \sigma')$ , i.e. the columns of $\mathcal{J}$ are slightly shrunk. Let us stress that we may assume that the resulting coset progression has the product structure $C' \times C'$ due to the final part of statement of Theorem~\ref{algreglemmaintro}.\\
    
    Let us now check that $\tilde{V} \cap (C' \times B(\tilde{\Gamma}, \rho'))$ and $Z \cap (C' \times B(\tilde{\Gamma}, \sigma'))$ are quasirandom in the sense of Theorem~\ref{algreglemmaintro} with error parameter $\varepsilon' = 2(4\sigma^{-1})^{2rd} \varepsilon$. By averaging, there exists $y_0$ for which there are at least $(1-\varepsilon) |C'|^2$ pairs $(x, x') \in C' \times C'$ such that 
    \begin{align*}\Big||B(\Xi_{[d] \times [2r]}(x); \rho') \cap &B(\tilde{\Gamma}, \rho')| |B(\Xi_{[d] \times [2r]}(y_0); \sigma') \cap B(\tilde{\Gamma}, \sigma')| \\
    &- |B(\Xi_{[d] \times [2r]}(x'); \rho') \cap B(\tilde{\Gamma}, \rho')| |B(\Xi_{[d] \times [2r]}(y_0); \sigma') \cap B(\tilde{\Gamma}, \sigma')|\Big| \leq 2\varepsilon |\mathcal{D}|^2.\end{align*}
    By Lemma~\ref{basicbohrsizel}, we have $|B(\Xi_{[d] \times [2r]}(y_0); \sigma') \cap B(\tilde{\Gamma}, \sigma')| \geq {\sigma'}^{4rd} |\mathcal{D}|$, so
    \[\Big||B(\Xi_{[d] \times [2r]}(x); \rho') \cap B(\tilde{\Gamma}, \rho')|  \,\,- \,\,|B(\Xi_{[d] \times [2r]}(x'); \rho') \cap B(\tilde{\Gamma}, \rho')| \Big| \leq 2\varepsilon{\sigma'}^{-4rd}  |\mathcal{D}|^2.\] 
    Thus, there exists a value $v \geq 0$ such that $\Big||B(\Xi_{[d] \times [2r]}(x); \rho') \cap B(\tilde{\Gamma}, \rho')|   - v\Big| \leq \varepsilon' |\mathcal{D}|$, so we may take density parameter, denoted $\delta_{\tilde{V}}$, to be $\frac{v}{|B(\tilde{\Gamma}, \rho')|}$ to get property \textbf{(i)} of the theorem. The other properties are deduced similarly, with density parameter for $Z$ denoted $\delta_Z$. Note that $\delta_{\tilde{V}} \geq (\rho/4)^{4rd}$ and $\delta_Z \geq (\sigma/4)^{4rd}$.\\
    
    Let us remark the key fact that the codimension of the Bohr sets defining rows in the quasirandom pieces, namely $B(\tilde{\Gamma}, \rho')$ and $B(\tilde{\Gamma}, \sigma')$, are independent of the quasirandomness parameter $\varepsilon$. The parameter $\varepsilon$ will be quite small (namely $\varepsilon \leq (2^{-r}{\rho}^{O(1)})$) in order to ensure that $\tilde{V}$ remains very quasirandom even when intersected with $C' \times B(\tilde{\Gamma}, \sigma')$.\\ 
    
    To complete the proof, we use quasirandomness to show that every value taken by $\phi^{\text{err}}$ appears quite frequently inside a slight shrinking of $V \cap (C' \times B')$. This will imply that the set of values is small. Furthermore, we shall show that each value appears in a vast majority of columns. Then a random column takes all values. Let us now turn to details.\\
    
    \noindent\textbf{Values of $\phi^{\text{err}}$ are frequent.} Recall the definition of the sets $E_x(\tau, \tau')$ from the statement of the proposition. Since $\phi$ is Freiman-bilinear on $V$, we have that $E_x(\tau_1, \tau'_1) + E_x(\tau_2, \tau'_2) \subseteq E_x(\tau_1 + \tau_2, \tau'_1 + \tau'_2)$ holds for all $\tau_1, \tau_2, \tau'_1, \tau'_2 \geq 0$ with $\tau_1 + \tau_2, \tau'_1 + \tau_2' \leq \rho$, and also, from Claim~\ref{phierrvanishingz}, $E_x(\sigma, \sigma) = \{0\}$.\\

    \begin{claim}
        For each $x \in C$, we have $|E_x(\rho/2, \rho/2)| \leq \sigma^{-4rd}$.
    \end{claim}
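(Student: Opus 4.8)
\textbf{Proof plan for the claim that $|E_x(\rho/2, \rho/2)| \leq \sigma^{-4rd}$.}

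The plan is to show that every element of $E_x(\rho/2, \rho/2)$ is attained on a positive density subset of the column $\tilde{V}_{x\bcdot}$ (or more precisely of its shrinking $Z_{x\bcdot}$) for at least one $x$, and then argue that if there were too many values, the column would not have enough room to accommodate them all. Concretely, fix $x \in C$ and take any $e \in E_x(\rho/2, \rho/2)$, witnessed by tuples $y_{[r]}, z_{[r]}$ with $y_i, z_i \in B(\Theta_{[d]}(x), \rho/2) \cap B(\Gamma, \rho/2)$ and $\sum u_i y_i = \sum u_i z_i$. The pair $(y_{[r]}, z_{[r]})$ is then a point of $\mathcal{D}$ lying in the Bohr set $B(\Xi_{[d]\times[2r]}(x); \rho/2) \cap B(\tilde\Gamma, \rho/2)$, i.e. in a column of $\tilde V$ (after harmlessly replacing $\rho/2$ by $\rho$, which is all that is needed since this is an intermediate radius). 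So $\phi^{\mathrm{err}}(x; y_{[r]}, z_{[r]}) = e$.

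Next I would use the fact (just established, $E_x(\sigma,\sigma) = \{0\}$ and the additivity $E_x(\tau_1,\tau_1') + E_x(\tau_2,\tau_2') \subseteq E_x(\tau_1+\tau_2, \tau_1'+\tau_2')$) that adding an element of $Z_{x\bcdot}$ to $(y_{[r]}, z_{[r]})$ does not change the value of $\phi^{\mathrm{err}}$: indeed $\phi^{\mathrm{err}}(x; y_{[r]}+w_{[r]}, z_{[r]}+v_{[r]}) = \phi^{\mathrm{err}}(x; y_{[r]}, z_{[r]}) + \phi^{\mathrm{err}}(x; w_{[r]}, v_{[r]})$ whenever all arguments stay in the column, and the second term vanishes when $(x; w_{[r]}, v_{[r]}) \in Z$, by Claim~\ref{phierrvanishingz} (with the mild caveat that one must keep the arguments inside the radius-$\rho$ Bohr set, which costs a further harmless factor; working at radius $\rho/2$ for the witness and radius $\sigma$ for the translate leaves us inside radius $\rho$). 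Therefore the preimage $(\phi^{\mathrm{err}}(x;\bcdot))^{-1}(e)$ inside the column of $\tilde V$ contains a whole translate of $B(\Xi_{[d]\times[2r]}(x); \sigma) \cap B(\tilde\Gamma, \sigma)$, which by Lemma~\ref{basicbohrsizel} has size at least $\sigma^{4rd}|\mathcal{D}|$ (there are $4rd$ characters involved: $2rd$ from $\tilde\Gamma$ and $2rd$ from the $\Xi_{\ell,i}$). Distinct values $e$ give disjoint preimages, and all of them sit inside $\mathcal{D}$, so the number of values is at most $|\mathcal{D}| / (\sigma^{4rd}|\mathcal{D}|) = \sigma^{-4rd}$.

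The only mildly delicate point — and the one I would be careful about — is bookkeeping of radii: the witnessing tuples live at radius $\rho/2$, the translating tuples at radius $\sigma$, and for $\phi^{\mathrm{err}}$ to be well-defined (so that all arguments of $\phi$ lie in $V$, which is defined at radius $\rho$) we need $\rho/2 + \sigma \leq \rho$, which holds since $\sigma = \rho/2U \leq \rho/2$. One also needs $\rho/2 + \sigma \leq \rho$ together with the additivity of the $E_x$-scale, but all of this is automatic from $U \geq 1$. Thus the bound follows purely from counting disjoint Bohr-set translates inside $\mathcal{D}$, with no appeal to quasirandomness or the algebraic regularity lemma at this stage — those enter only in the subsequent step that propagates the bound across columns.
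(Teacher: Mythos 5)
Your proposal is correct and follows essentially the same route as the paper: translate a witness tuple for $e$ by the column of $Z$ at radius $\sigma$, observe the translated tuples stay in the radius-$\rho$ column and still yield $e$ (using Freiman-bilinearity / the additivity of $E_x$ and the vanishing on $Z$), bound the resulting preimage from below by $\sigma^{4rd}|\mathcal{D}|$ via Lemma~\ref{basicbohrsizel}, and then count disjoint preimages inside $\mathcal{D}$. Your bookkeeping of radii ($\rho/2 + \sigma \leq \rho$ since $\sigma = \rho/2U \leq \rho/2$) matches what the paper does implicitly.
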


    \begin{proof}
        Let $y_i, z_i \in B(\Theta_{[d]}(x), \rho/2) \cap B(\Gamma, \rho/2)$ be such that $\sum_{i = 1}^r u_i y_i = \sum_{i = 1}^r u_i z_i$. Write $w = \sum_{i = 1}^r u_i \phi(x, y_i) - \sum_{i = 1}^r u_i \phi(x, z_i)$.\\        
        \indent Take any $y'_i, z'_i \in \Big(B(\Theta_{[d]}(x), \sigma) \cap B(\Gamma, \sigma)\Big)$ such that $\sum_{i = 1}^r u_i y'_i = \sum_{i = 1}^r u_i z'_i$. Due to arguments of Claim~\ref{phierrvanishingz}, we have $\sum_{i = 1}^r u_i \phi(x, y'_i) = \sum_{i = 1}^r u_i \phi(x, z'_i)$. Then $y_i + y'_i, z_i + z_i' \in \Big(B(\Theta_{[d]}(x), \rho) \cap B(\Gamma, \rho)\Big)$ and 
        \[\sum_{i = 1}^r u_i \phi(x, y_i + y'_i) - \sum_{i = 1}^r u_i \phi(x, z_i + z'_i) = \Big(\sum_{i = 1}^r u_i \phi(x, y_i) - \sum_{i = 1}^r u_i \phi(x, z_i)\Big) + \Big(\sum_{i = 1}^r u_i \phi(x, y'_i) - \sum_{i = 1}^r u_i \phi(x, z'_i)\Big) = w.\]
        By Lemma~\ref{basicbohrsizel} and Cauchy-Schwarz inequality, the number of $2r$-tuples $(y'_{[r]}, z'_{[r]})$ above is at least $\sigma^{4rd}|G_1|^{2r-1}$. Hence, $w$ is attained for at least $\sigma^{4rd}|G_1|^{2r-1}$ $2r$-tuples of elements in $B(\Theta_{[d]}(x), \rho) \cap B(\Gamma, \rho)$, each satisfying the fixed linear combination. Hence $|E_x(\rho/2, \rho/2)| \leq \sigma^{-4rd}$.
    \end{proof}
    
    Let $x \in \frac{1}{10}\cdot C'$ be arbitrary, where we recall $\frac{1}{k}\cdot C' = \{a \in C' : ka \in C'\}$. We claim that every value in $E_x(\rho'/10, \sigma'/10)$ for such an $x$ appears in $E_{x'}(\rho', \sigma')$ for a vast majority of $x' \in \frac{1}{10} \cdot C'$.\\
    
    Consider any value $w \in E_x(\rho'/10, \sigma'/10)$. Let $Y$ be the collection of all $(y_{[r]}, z_{[r]}) \in B(\Xi_{[d] \times [2r]}(x); \rho'/5) \cap B(\tilde{\Gamma}, \sigma'/5)$ which have $\phi^{\text{err}}(y_{[r]}, z_{[r]}) = w$. As in the previous claim, we have $|Y| \geq (\sigma'/10)^{4rd}|\mathcal{D}|$.\\

    Consider $t \in \frac{1}{2}\cdot C'$. Claim~\ref{phierrvanishingz} implies that $\phi^{\text{err}}(x; y_{[r]}, z_{[r]}) = 0$ for all $(y_{[r]}, z_{[r]}) \in B(\Xi_{[d] \times [2r]}(t); \sigma') \cap B(\tilde{\Gamma}, \sigma')$. Note that $Y \subseteq B(\tilde{\Gamma}, \sigma')$. By Lemmas~\ref{genappendknhoods} and~\ref{appendonesided}, we have 
    \[|Y \cap B(\Xi_{[d] \times [2r]}(t); \sigma')| \geq \frac{1}{2}\delta_Z |Y|\]
    for all but at most $O(\varepsilon^{\frac{1}{8}} (\sigma'/10)^{-8rd}) \delta_Z^{-2}|C'|$ elements $t \in \frac{1}{2}\cdot C'$. Take any such that $t$. Then $w \in E_{x + t}(\rho', \sigma')$. Indeed, taking any $(y_{[r]}, z_{[r]}) \in Y \cap B(\Xi_{[d] \times [2r]}(t); \sigma')$ we get 
    \[\phi^{\text{err}}(x + t; y_{[r]}, z_{[r]}) = \phi^{\text{err}}(x; y_{[r]}, z_{[r]}) + \phi^{\text{err}}(t; y_{[r]}, z_{[r]}) = \phi^{\text{err}}(x; y_{[r]}, z_{[r]}) = w.\]

    In particular, we can obtain all but at most $O(\varepsilon^{\frac{1}{8}} (\sigma'/10)^{-16rd}))|C'|$ elements of $\frac{1}{10}\cdot C'$ as $x + t$ for some $t$ as above. Hence, the total number of values of $\phi^{\text{err}}$ is at most $2 (\sigma'/10)^{-4rd}$. Provided we choose $\varepsilon = \bsc (\sigma'/10)^{128rd})100^{-8d}$, we get a column in which all values appear, proving the claim. We set $\tilde{\rho} = \rho'/10, \tilde{\sigma} = \sigma'/10$ and final set of columns $\frac{1}{10} \cdot C'$.
\end{proof}

\subsection{Obtaining subgroups in columns}\label{subgroupsincolumnssubsec}

In this subsection, we ensure that columns of the Bohr variety become subgroups.\\

Recall that a set $E$ has rank at most $r$ if $E = \langle S\rangle_{\{-1,0,1\}}$ for a set $S$ of size $r$.\\

\begin{proposition}\label{finalExtensionStep2}
    Let $C$ be a symmetric proper coset progression of rank $d$ and density $c$ in $G_1$. Let $\Theta_1, \dots, \Theta_d : C \to \hat{G}_2$ be Freiman-linear maps. Let $\Gamma \subseteq \hat{G}_2$ be a set of characters of size $d$ and let $\rho > 0$. Let $V$ be the bilinear Bohr variety given by
    \begin{equation}\label{detaileddomaindefn}V = \bigcup_{x \in C} \{x\} \times \Big(B(\Gamma, \rho) \cap B(\Theta_{[d]}(x), \rho)\Big).\end{equation}
    Let $\phi : V \to H$ be a Freiman-bilinear map on $V$.\\
    \indent There exist a set $A \subset [-R, R]^d$ of size at most $O((2d \log \rho^{-1})^{O(1)})$, where $R \leq \exp((2d \log \rho^{-1})^{O(1)})$, a coset progression $C' \subseteq C$, a radius $\sigma \geq \exp(-(2d\log(c^{-1}\rho^{-1}))^{O(1)}))$, and an $E$-bihomomorphism $\psi : V' \to H$, where $V'$ has columns $B(\Gamma, \sigma) \cap \cap_{\lambda \in A} \on{ker} \lambda \cdot \Theta(x)$, for $x \in C'$, for a set $E$ of rank $(2d\log(c^{-1}\rho^{-1}))^{O(1)}$, such that 
    \[\psi(x, y_0 + 2y_1 + \dots + 2^r y_r) - \sum_{i = 0}^r 2^i \phi(x, y_i) \in E\]
    for all $(x, y_i) \in V \cap (C' \times B(\Gamma, \sigma))$.
\end{proposition}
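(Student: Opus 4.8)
The plan is to combine the two preceding subsections. First I would apply Proposition~\ref{controllingerrorscolsprop} with the coefficient pattern $(u_1,\dots,u_r)=(1,2,\dots,2^r)$ (which satisfies $\gcd(u_1,\dots,u_r)=1$), obtaining a slightly smaller coset progression $C_1\subseteq C$, a radius $\tilde\rho\asymp\rho$, a radius $\tilde\sigma\asymp\rho/U$ with $U=2^{r+1}-1$, and an anchor column $x_0\in C_1$ such that $E_x(\tilde\rho,\tilde\sigma)\subseteq E_{x_0}(10\tilde\rho,10\tilde\sigma)=:E_0$ for all $x\in C_1$. By the final claim in the proof of that proposition, $E_0$ is a set of size at most $\tilde\sigma^{-O(rd)}$, hence of rank $(2d\log(c^{-1}\rho^{-1}))^{O(1)}$ once $r$ is fixed to be a quantity of that order; we will see below that $r$ is indeed of that order. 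The key point inherited from the remark after Proposition~\ref{controllingerrorscolsprop} is that $\tilde\rho$ does \emph{not} depend on $r$, so shrinking the vertical radius to accommodate the subgroup generation will not feed back into $r$.

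Next I would perform the naive extension in each column. For a fixed $x\in C_1$, the column $B(\Gamma,\tilde\sigma)\cap B(\Theta_{[d]}(x),\tilde\sigma)$ is a Bohr set in $G_2$ of codimension $2d$; by Proposition~\ref{subgroupgeneratedbybohrset} or Corollary~\ref{subgroupgenerationiteratedsumsbohr}, there is $r=O((d\log(\rho^{-1}/U))^{O(1)})$ (which confirms $r$ is in the stated regime) such that every element of the generated subgroup $G'_x$ is a sum $y_0+2y_1+\dots+2^{r}y_r$ with all $y_i$ in a further shrinking $B(\Gamma,\sigma)\cap B(\Theta_{[d]}(x),\sigma)$ of the column, with $\sigma\geq\exp(-(2d\log(c^{-1}\rho^{-1}))^{O(1)})$. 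Following Proposition~\ref{generalextensionserror}, I would set $\tilde\phi(x,y):=\sum_{i=0}^r 2^i\phi(x,y_i)$ for an arbitrary such representation $y=\sum 2^i y_i$. Proposition~\ref{generalextensionserror} (applied in the vertical direction, column by column, with the error set controlled by the $E_x(\tilde\rho,\tilde\sigma)$'s, hence by $E_0$) shows $\tilde\phi(x,\cdot)$ is well-defined up to $E_0$ and is an $O(1)\cdot E_0$-homomorphism on $G'_x$. The subgroup $G'_x$ is $\bigcap_{i}\ker\chi_i$ for characters $\chi_i\in\langle\Gamma\cup\Theta_{[d]}(x)\rangle_K$ with $K\leq\sigma^{-O(d)}$; a union/dependent-random-choice argument as in Proposition~\ref{bohrbohr} then lets me choose a single frequency set $A\subseteq[-R,R]^d$, $|A|=O((2d\log\rho^{-1})^{O(1)})$, $R\leq\exp((2d\log\rho^{-1})^{O(1)})$, so that $B(\Gamma,\sigma)\cap\bigcap_{\lambda\in A}\ker(\lambda\cdot\Theta(x))\subseteq G'_x$ for all $x$ in a further dense subprogression $C'\subseteq C_1$ — the bilinear Bogolyubov argument (Theorem~\ref{bogruzsabilinearintro}), together with the lattice regularization (Lemma~\ref{nestedLattices}, Theorem~\ref{quantLattice}), being what forces the $\lambda\cdot\Theta(x)$ description to hold uniformly in $x$, exactly as advertised in Step 8 of the proof overview.

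Finally I would restrict $\tilde\phi$ to $V'$ with columns $B(\Gamma,\sigma)\cap\bigcap_{\lambda\in A}\ker(\lambda\cdot\Theta(x))$ and verify the bihomomorphism property. Verticality is essentially the content of the column-wise step above. For the horizontal direction, given $x_1+x_2=x_3+x_4$ in $C'$ and $y$ in the intersection of the four columns, I would write $y=\sum 2^i y_i$ with all $y_i$ simultaneously in all four columns (possible after a further $O(1)$-factor shrink of $\sigma$, using Lemma~\ref{basicbohrsizel} to see the relevant Bohr set is still non-empty), and then $\sum_{j}(-1)^{j+1}\tilde\phi(x_j,y)=\sum_i 2^i\sum_j(-1)^{j+1}\phi(x_j,y_i)$ vanishes because $\phi$ is horizontally Freiman on $V$; the dependence on the choice of representation of $y$ contributes at most $O(1)$ extra copies of $E_0$, so the result is an $E$-bihomomorphism for $E$ of the claimed rank. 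The stated approximate identity $\psi(x,y_0+2y_1+\dots+2^r y_r)-\sum 2^i\phi(x,y_i)\in E$ is then exactly the defining property of $\tilde\phi$ modulo the error control from Proposition~\ref{generalextensionserror}. The main obstacle I anticipate is the uniformity-in-$x$ of the subgroup description: getting a \emph{single} finite set $A$ of bounded linear combinations of the $\Theta_i$'s whose common kernel lies inside every $G'_x$ simultaneously, while keeping ranks and radii in the quasipolynomial regime. This is precisely where the bilinear Bogolyubov machinery and the quantitative lattice theory have to be deployed carefully, and where one must be vigilant that the error set $E$ (whose rank we need independent of the per-column accidents) stays bounded — which is why controlling the errors uniformly via Proposition~\ref{controllingerrorscolsprop} \emph{before} doing the extension is essential rather than optional.
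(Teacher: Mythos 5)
Your high-level plan matches the paper's: apply Proposition~\ref{controllingerrorscolsprop} to control errors uniformly, perform the naive column-wise extension via Corollary~\ref{subgroupgenerationiteratedsumsbohr} and Proposition~\ref{generalextensionserror}, uniformize the subgroup description of the columns, and verify the $E$-bihomomorphism property. However, two of your intermediate steps conceal genuine difficulties that the paper handles quite differently.

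First, you propose to obtain a single frequency set $A$ uniformly in $x$ via ``the bilinear Bogolyubov argument (Theorem~\ref{bogruzsabilinearintro}), together with the lattice regularization'' and a ``union/dependent-random-choice argument as in Proposition~\ref{bohrbohr}.'' Neither of these is the right tool here. Theorem~\ref{bogruzsabilinearintro} produces a bilinear Bohr variety inside iterated difference sets of a dense set --- it does not uniformize lattice descriptions of column-subgroups. Proposition~\ref{bohrbohr} is about sets cut out by Freiman-linear maps on a single Bohr set, not about aligning frequency sets across a family indexed by $x$. The paper instead runs a direct iterative regularization on $C'$, maintaining two nested lattices $\Lambda$ and $M$ in $\mathbb{Z}^d$: at each step, if either the Fourier non-degeneracy conditions~\eqref{fourierconditionforsumsofbohrsets2}--\eqref{fourierconditionforsumsofbohrsets3} or the inclusion~\eqref{lowordergeneratorsbgps} fail for a positive proportion of $x$, a new vanishing linear combination $\lambda$ of $\Theta(x)$ is found and $\Lambda$ (or $M$) is enlarged after passing to a subprogression via Lemma~\ref{fewvalsCPFLin}; Lemma~\ref{nestedLattices} bounds the number of steps and Theorem~\ref{quantLattice} produces the generating set $A$ at the end. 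Your dependent-random-choice idea does not obviously terminate in quasipolynomially many steps nor produce a set $A$ of low-height vectors.

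Second, and more seriously, your horizontal check is incorrect as stated. You write that for $y$ in $\bigcap_j V'_{x_j}$ you can decompose $y = \sum 2^i y_i$ with all $y_i$ simultaneously in all four columns ``after a further $O(1)$-factor shrink of $\sigma$, using Lemma~\ref{basicbohrsizel}.'' This fails. The columns $V'_{x_j}$ are genuine subgroups intersected with $B(\Gamma,\sigma)$, and it is not automatic that an element of $S_{x_1}\cap S_{x_2}\cap S_{x_3}\cap S_{x_4}$ is a bounded iterated sum of elements of the small joint Bohr set $B(\Gamma,\Theta(x_1),\dots,\Theta(x_3);\tilde\rho)\cap B(\Gamma,\sigma')$ --- indeed, without further hypotheses the intersection of the subgroups could be strictly larger than the subgroup generated by the joint Bohr set, since the characters $\Theta(x_1),\dots,\Theta(x_4)$ might satisfy unexpected low-height relations. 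This is precisely what Claims~\ref{horizontalcheckclaimsubgroupinter} and~\ref{horizontalcheckclaimiteratedsum} establish, and they rely crucially on the Fourier conditions~\eqref{fourierconditionforsumsofbohrsets2} and~\eqref{fourierconditionforsumsofbohrsets3} together with Proposition~\ref{sumsofgenbohrs} (which encodes a quantitative ``no unexpected small correlations'' statement about the Bohr sets involved). A size estimate like Lemma~\ref{basicbohrsizel} alone does not give this: it shows the joint Bohr set is non-empty, not that its iterated sums fill out the subgroup intersection. The lattice iteration from the first point is exactly what produces the Fourier non-degeneracy that makes the horizontal check go through, so the two issues are connected: skipping the lattice regularization removes the mechanism that would have saved your horizontal step.
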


\vspace{\baselineskip}

\begin{proof}
    Let $r = (2 d \log \rho^{-1})^{O(1)}$ and $\rho' = (2d)^{-4d}\rho$ be the quantities appearing in Corollary~\ref{subgroupgenerationiteratedsumsbohr} with the property that, for any Bohr set $B$ of codimension $2d$ and radius $\rho'/100$, the subgroup $\langle B \rangle$ equals $4B + 2\cdot B + \dots + 2^{r - 4} \cdot B$,\footnote{Note that we replaced $r$ by $r-4$. This is done so that the linear combinations have $r$ summands, which simplifies the notation slightly.} and every element $x \in \langle B \rangle$ has a at least $\alpha |G_2|^{r-1}$ representations as $y_1 + y_2 + y_3 + y_4 + 2y_5 + \dots + 2^{r-4} y_r$ with $y_i \in B$, where $\alpha = \exp(-(2d \log \rho^{-1})^{O(1)})$. Let us define coefficients $u_1 = u_2 = u_3 = u_4 = 1,  u_i = 2^{i - 4}$ for $i \in [5,r]$.\\

   Noting that the sum of coefficients is at most $2^r$, apply Proposition~\ref{controllingerrorscolsprop} to find a coset progression $C' \subseteq C$ of rank at most $d$ and size $|C'| \geq \exp(-(2rd\log(c^{-1} \rho^{-1}))^{O(1)}) |G_1|$, an element $x_0 \in C'$, a radius $\tilde{\rho} \in [\rho' / 100, \rho'/20]$, and a radius $\tilde{\sigma} \in [\rho' / (100\cdot 2^r), \rho' / (20\cdot 2^r)]$, such that $E_x(\tilde{\rho}, \tilde{\sigma}) \subseteq E_{x_0}(10\tilde{\rho}, 10\tilde{\sigma})$ for all $x \in C'$, where $E_x(\tau, \tau')$ was defined in the proposition.\\

   \noindent\textbf{Values of $\phi^{\text{err}}$ belong to a set of small rank.} Write $E = E_{x_0}(10\tilde{\rho}, 10\tilde{\sigma})$. Since we passed to $\rho'$ instead of $\rho$, we may apply Proposition~\ref{generalextensionserror} to the Freiman-linear map $y \mapsto \phi(x_0, y)$ on domain $B(\Gamma, 10\tilde{\rho}) \cap B(\Theta(x_0), 10\tilde{\rho})$, showing that the slightly larger set $E(10\tilde{\rho}, 10\tilde{\rho})$ is contained a set $E$ of rank at most $(r + d )^{O(1)} \leq (2 d \log \rho^{-1})^{O(1)}$.\\

   Since $E_x(\tilde{\rho}, \tilde{\sigma}) \subseteq E$, we conclude that we conclude that whenever $y_1, \dots, z_r \in \Big(B(\Theta(x); \tilde{\rho}) \cap B(\Gamma, \tilde{\sigma})\Big)$ satisfy $\sum_{i \in [r]} u_i y_i = \sum_{i \in [r]} u_i z_i$, then
\[\sum_{i \in [r]} u_i \phi(x, y_i) -  \sum_{i \in [r]} u_i \phi(x, z_i) \in E.\]

  In order words, we may naively extend $\phi$ to an $E$-homomorphism in the column indexed by $x$ to the set
  
    \begin{equation}4\Big(B(\Theta(x); \tilde{\rho}/2) \cap B(\Gamma, \tilde{\sigma}/2)\Big) + 2 \cdot \Big(B(\Theta(x); \tilde{\rho}/2) \cap B(\Gamma, \tilde{\sigma}/2)\Big)\Big) + \dots + 2^{r-4} \cdot \Big(B(\Theta(x);\tilde{\rho}/2) \cap B(\Gamma, \tilde{\sigma}/2)\Big)\Big).\label{finalcolumnsbeforesubgroups}\end{equation}
    
    Corollary~\ref{subgroupgenerationiteratedsumsbohr} implies that every element of $B(\Gamma, \tilde{\sigma}/4)$ can be represented in at least $\alpha' |G_2|^{r-1}$ many ways as $x = y_1 + y_2 +y_3 + y_4 + 2y_5 + \dots + 2^{r-4}y_r$ for elements $y_1, \dots, y_r \in B(\Gamma, \tilde{\sigma}/2)$, and every element of $S_x = \langle B(\Theta(x), \tilde{\rho}/2) \rangle$ can be represented in at least $\alpha' |G_2|^{r-1}$ many ways as analogous sum using elements in $B(\Theta(x), \tilde{\rho}/2)$, for some $\alpha' \geq \exp(-(2d \log \rho^{-1})^{O(1)})$. We misuse the notation, and write $\alpha$ instead of $\on{min}(\alpha, \alpha')$.\\
    
    Let $K = 2^{20}r^2\mathsf{C}_{\on{spec}}d (\alpha^4 \tilde{\rho}\tilde{\sigma})^{-2}$. Proposition~\ref{sumsofgenbohrs} implies that if
    \begin{equation}\langle \Gamma \rangle_{[-K, K]} \cap \langle \Theta(x)\rangle_{[-K, K]} = \{0\},\label{fourierconditionforsumsofbohrsets}\end{equation}
    then the set in~\eqref{finalcolumnsbeforesubgroups} contains $S_x \cap  B(\Gamma, \tilde{\sigma}/4)$.\\
    
    We now pass to a subprogression $C'' \subseteq C'$ of the same rank, in which a vast majority of columns has the desired structure. We carry out an iterative argument, setting $C'' = C'$ initially. We keep track of a lattice $\Lambda \leq \mathbb{Z}^d$, generated by elements $\lambda_1, \dots, \lambda_s \in [-K, K]^d$ after $s$\tss{th} step, such that $\lambda \cdot \Theta(x) = 0$ holds for all $x \in C''$ and $\lambda \in \Lambda$. Additionally, in order to control the behaviour of subgroups in columns, we keep track of another lattice $M \leq \mathbb{Z}^d$. Our eventual goal is to have
    \begin{equation}
        \label{lowordergeneratorsbgps} \cap_{\mu \in M \cap [-K, K]^d} \ker \mu \cdot \Theta(x) \subseteq \langle B(\Theta(x); \tilde{\rho}/10000) \rangle
    \end{equation}
    for all but at most $\eta |C''|$ elements of $C''$ and $M$ is generated by vectors $\mu$ with property that $a \mu \in \Lambda$ for some $a \in \exp((2d\log \rho^{-1})^{O(1)})$.\\
    
    \noindent\textbf{Ensuring that~\eqref{fourierconditionforsumsofbohrsets} and~\eqref{lowordergeneratorsbgps} hold.} Let $\eta > 0$ be a parameter to be chosen later. For technical reasons, we consider a slightly more general condition that 
    \begin{equation}\langle \Gamma \rangle_{[-K, K]} \cap \langle \Theta(x), \Theta(y), \Theta(z)\rangle_{[-K, K]} = \{0\},\label{fourierconditionforsumsofbohrsets2}\end{equation}
    and 
    \begin{equation}(\forall \lambda, \mu, \nu \in [-K, K]^d) \,\, \lambda \cdot \Theta(x) + \mu \cdot \Theta(y) + \nu \cdot \Theta(z) = 0\implies \lambda , \mu, \nu \in \Lambda,\label{fourierconditionforsumsofbohrsets3}\end{equation}
    hold for all but at most $\eta |C''|^3$ choices of $x,y,z \in C''$. Suppose that this does not yet hold. By averaging, we get $\eta (2K)^{-4d}|C''|$ elements $x \in C''$, a linear combination $\lambda \in [-K, K]^d \setminus \Lambda$ and an element $v \in \hat{G}_2$ such that $\lambda \cdot \Theta(x) = v$. We use Lemma~\ref{fewvalsCPFLin}, we may pass to a further subprogression of same shape, losing a factor of $2^{-O(d^2)}(\eta K^{-4d})^{d + 1}$ in the size, on which $\lambda \cdot \Theta$ vanishes, so we may replace $\Lambda$ by $\Lambda + \langle \lambda \rangle$.\\ 
    On the other hand, if~\eqref{lowordergeneratorsbgps} fails, by Proposition~\ref{subgroupgeneratedbybohrset}, there exist $m, R \leq \exp((2d \log \rho^{-1})^{O(1)})$ and $\mu \in [-R, R]^d$, such that for at least $\exp(-(2d \log \rho^{-1})^{O(1)})|C''|$ elements $x \in C''$, we have 
    $\langle B(\Theta(x); \rho'/10000)\rangle \subseteq \on{ker} \mu \cdot \Theta(x)$ and  $m \mu \cdot \Theta(x) = 0$. If $m \mu \notin \Lambda$, add $m \mu$ to $\Lambda$, add $\mu$ to $M$ and pass to shrinking of $C''$ on which $m\mu \cdot \Theta(x) = 0$. Otherwise, add $\mu$ to $M$.\\
    
    Due to Lemma~\ref{nestedLattices}, as either $M$ or $\Lambda$ increases in each step, the procedure terminates after at most $O(d^2 (\log d + \log K))$ steps. Moreover, taking product of all $m$ such that $m\mu \in \Lambda$ at each step, we conclude that we have $\tilde{m} \leq \exp(O(d \log (K \rho^{-1}))^{O(1)})$ such that $\tilde{m} \mu \in \Lambda$ for all $\mu \in M$.\\
    
    Having passed to the desired coset progression $C''$, we conclude that $\phi$ naively extends to $\psi$ defined on subgroups in each column. It remains to check the $E$-homomorphism property in the horizontal direction.\\
    
    For any $y \in S_{x_1} \cap S_{x_2} \cap S_{x_3} \cap S_{x_4} \cap B(\Gamma, \sigma')$, where $x_1 + x_2 = x_3 + x_4$, it suffices to find a single common $r$-tuple belonging to all 4 columns $B(\Theta(x_i); \rho'') \cap B(\Gamma, \sigma')$. Shrinking the radii $\rho''$ and $\sigma'$ by a factor of 3, the $r$-tuple is guaranteed to belong to the fourth column. Use~\eqref{fourierconditionforsumsofbohrsets2}. A single common tuple suffices by the previous work, which tells us that all $m$-tuples give roughly the same value.
    
    \begin{claim}\label{horizontalcheckclaimsubgroupinter}
        Suppose that triple $(x_1, x_2, x_3)$ satisfies~\eqref{fourierconditionforsumsofbohrsets3}. Then 
        \[S_{x_1} \cap S_{x_2} \cap S_{x_3} = \langle B(\Theta(x_1), \Theta(x_2), \Theta(x_3); \tilde{\rho}/2)\rangle.\]
    \end{claim}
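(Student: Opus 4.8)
\textbf{Proof proposal for Claim~\ref{horizontalcheckclaimsubgroupinter}.}

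The plan is to prove the two inclusions separately, the nontrivial one being that the intersection of the three subgroups $S_{x_i} = \langle B(\Theta(x_i), \tilde{\rho}/2)\rangle$ is contained in $\langle B(\Theta(x_1), \Theta(x_2), \Theta(x_3); \tilde{\rho}/2)\rangle$. The reverse inclusion is immediate, since $B(\Theta(x_1), \Theta(x_2), \Theta(x_3); \tilde{\rho}/2) \subseteq B(\Theta(x_i); \tilde{\rho}/2)$ for each $i \in [3]$, hence the subgroup it generates lies inside each $S_{x_i}$.

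For the forward inclusion, I would first reinterpret the subgroups $S_{x_i}$ using Proposition~\ref{subgroupgeneratedbybohrset} (and Proposition~\ref{approximatelatticeimage}). That result tells us that $S_{x_i} = \langle B(\Theta(x_i), \tilde{\rho}/2)\rangle$ can be written as $\cap_{j} \ker \chi_{i,j}$ for characters $\chi_{i,j}$ that are \emph{bounded} linear combinations of $\Theta_1(x_i), \dots, \Theta_d(x_i)$, with bounds of the form $K' \leq \exp((2d\log\rho^{-1})^{O(1)})$; equivalently, an element lies in $S_{x_i}$ precisely when it is killed by all characters of the form $\lambda \cdot \Theta(x_i)$ with $\lambda$ in a fixed lattice of short vectors. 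Then an element $y$ lies in $S_{x_1} \cap S_{x_2} \cap S_{x_3}$ iff it is killed by all characters in $\langle\Theta(x_1)\rangle_{[-K', K']} \cup \langle\Theta(x_2)\rangle_{[-K',K']}\cup\langle\Theta(x_3)\rangle_{[-K',K']}$ of the relevant annihilator form — but here is where hypothesis~\eqref{fourierconditionforsumsofbohrsets3} enters: it guarantees (after checking $K \geq$ the relevant $K'$, which holds by the choice of $K$) that no nonzero relation exists among these characters across the three columns, so the group $\langle \Theta(x_1),\Theta(x_2),\Theta(x_3)\rangle_{[-K',K']}$ is, in the relevant bounded range, an "independent union" of the three individual spans. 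Consequently the common kernel $\cap_{i} S_{x_i}$ coincides with $\cap \ker\chi$ over all $\chi$ that are bounded combinations of $\Theta_1(x_1),\dots,\Theta_d(x_3)$ simultaneously, which by Proposition~\ref{subgroupgeneratedbybohrset} applied to the Bohr set $B(\Theta(x_1),\Theta(x_2),\Theta(x_3);\tilde{\rho}/2)$ of codimension $3d$ is exactly $\langle B(\Theta(x_1),\Theta(x_2),\Theta(x_3);\tilde{\rho}/2)\rangle$ — provided the radius loss in that proposition is absorbed, which is why we work with $\tilde{\rho}/2$ and a constant-factor cushion.

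The main obstacle I anticipate is bookkeeping of the quantitative thresholds: one must verify that the constant $K$ (defined above via $\mathsf{C}_{\on{spec}}$ and the density parameter $\alpha$) is at least as large as the bound $K'$ produced by Proposition~\ref{subgroupgeneratedbybohrset} when applied to each column's Bohr set, and also at least the bound coming from Theorem~\ref{bohrSum} / Proposition~\ref{sumsofgenbohrs} needed to replace intermediate radii by $1/4$. If $K$ were too small, hypothesis~\eqref{fourierconditionforsumsofbohrsets3} would not suffice to rule out the relations we need to exclude. Since $K$ was chosen precisely with these applications in mind (it is comparable to $(\alpha^4 \tilde\rho\tilde\sigma)^{-2}$ up to the spectral constant, and $\alpha, \tilde\rho, \tilde\sigma$ are each $\exp(-(2d\log\rho^{-1})^{O(1)})$ while $K'$ is of the same order), this is a matter of comparing exponents of iterated logarithms rather than a genuine difficulty; still, it is the only place where care is required. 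Everything else is a direct unwinding of the characterization of subgroups generated by Bohr sets in terms of short annihilator vectors.
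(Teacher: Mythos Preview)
Your approach differs from the paper's, and the key deductive step has a gap. The paper does not go through annihilators at all: it observes that any $y \in S_{x_1} \cap S_{x_2} \cap S_{x_3}$ has, by Corollary~\ref{subgroupgenerationiteratedsumsbohr}, at least $\alpha|G_2|^{r-1}$ representations as $y_1 + y_2 + y_3 + y_4 + 2z_1 + \dots + 2^{r-4}z_{r-4}$ with all summands in $B(\Theta(x_i);\tilde\rho/2)$, separately for each $i\in[3]$. It then invokes Proposition~\ref{sumsofgenbohrs} (implicitly in its natural three-frequency-set form), for which the required bounded-span-intersection hypothesis is supplied exactly by~\eqref{fourierconditionforsumsofbohrsets3}, to place $y$ in the iterated sum of the joint Bohr set and hence in the subgroup it generates.

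In your annihilator route, the step ``consequently $\cap_i S_{x_i}$ coincides with $\cap\ker\chi$ over all $\chi$ that are bounded combinations of $\Theta_1(x_1),\dots,\Theta_d(x_3)$ of the relevant annihilator form'' is where the argument breaks. Writing $A_i$ for the annihilator of $S_{x_i}$ and $A'$ for that of $\langle B(\Theta(x_1),\Theta(x_2),\Theta(x_3);\tilde\rho/2)\rangle$, you need $A' \subseteq A_1+A_2+A_3$. A generator $\psi\in A'$ from Proposition~\ref{subgroupgeneratedbybohrset} decomposes as $\lambda\cdot\Theta(x_1)+\mu\cdot\Theta(x_2)+\nu\cdot\Theta(x_3)$, but nothing you have said forces $\lambda\cdot\Theta(x_1)$ to vanish on $B(\Theta(x_1);\tilde\rho/2)$, i.e.\ to lie in $A_1$. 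Condition~\eqref{fourierconditionforsumsofbohrsets3} tells you only that $M'\lambda\in\Lambda$ (where $M'$ is the order of $\psi$), so $\lambda\cdot\Theta(x_1)$ has order at most $M'\le(\tilde\rho/2)^{-3d}$; but having small order is much weaker than vanishing on the Bohr set, and the obvious attempt to force vanishing via $\|\lambda\|_1\cdot\tilde\rho/2 < 1/M'$ fails quantitatively since $K''$ and $M'$ are both of order $\tilde\rho^{-O(d)}$. Closing this gap would require an argument equivalent in content to Proposition~\ref{sumsofgenbohrs} (or to Proposition~\ref{approximatelatticeimage}/Lemma~\ref{qrsumbohrs}), which is precisely the machinery the paper invokes directly.
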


    \begin{proof}
        Since $B(\Theta(x_1), \Theta(x_2), \Theta(x_3); \tilde{\rho}/2) \subseteq S_{x_i}$, we have $\langle B(\Theta(x_1), \Theta(x_2), \Theta(x_3); \tilde{\rho}/2)\rangle \subseteq S_{x_1} \cap S_{x_2} \cap S_{x_3}$, so we need to show reverse inclusion. Thus, let $y \in S_{x_1} \cap S_{x_2} \cap S_{x_3}$ be arbitrary. We know that $y$ can be written as $y_1 + y_2 + y_3 + y_4 + +2z_1 + \dots + 2^{r-4}z_{r-4}$ for $y_1, \dots, z_{r-4} \in B(\Theta(x_i), \tilde{\rho}/2)$ in at least $\alpha|G_2|^{r-1}$ many ways. By Proposition~\ref{sumsofgenbohrs}, we get the desired inclusion.
    \end{proof}
    
    \begin{claim}\label{horizontalcheckclaimiteratedsum}
        Suppose that $x_1, x_2, x_3$ satisfy~\eqref{fourierconditionforsumsofbohrsets2}. Then 
        \begin{align*}\langle B(\Theta(x_1), \Theta(x_2), \Theta(x_3); \tilde{\rho})\rangle \cap B(\Gamma, \sigma'/2) \subseteq 4 (B(\Theta(x), &\Theta(y), \Theta(z); \tilde{\rho}) \cap B(\Gamma, \sigma')) \\
        +& 2 \cdot  (B(\Theta(x), \Theta(y), \Theta(z); \tilde{\rho}) \cap B(\Gamma, \sigma')) + \dots \\
        +& 2^{r-4} \cdot (B(\Theta(x), \Theta(y), \Theta(z); \tilde{\rho})\cap B(\Gamma, \sigma')).\end{align*}
    \end{claim}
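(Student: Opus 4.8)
\textbf{Proof proposal for Claim~\ref{horizontalcheckclaimiteratedsum}.}

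The plan is to reduce this statement to an application of Proposition~\ref{sumsofgenbohrs} with $k = r-4$ (or rather, with the iterated sum having the shape $4(\cdot) + 2\cdot(\cdot) + \dots + 2^{r-4}\cdot(\cdot)$, matching the exponents $a_1 + a_2 + a_3 + a_4 + 2b_1 + \dots + 2^k b_k$ in that proposition after identifying $k = r-4$). The two frequency sets playing the roles of $\Gamma$ and $\Gamma'$ in Proposition~\ref{sumsofgenbohrs} will be $\Gamma$ itself and $\langle\Theta(x_1)\rangle \cup \langle\Theta(x_2)\rangle \cup \langle\Theta(x_3)\rangle$ (more precisely, the finite set of characters $\{\Theta_\ell(x_i) : \ell \in [d], i \in [3]\}$, which has size $3d$). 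First I would observe that, since $B(\Theta(x_1),\Theta(x_2),\Theta(x_3);\tilde\rho)$ is just the Bohr set with frequency set this $3d$-element set and radius $\tilde\rho$, the left-hand side $\langle B(\Theta(x_1),\Theta(x_2),\Theta(x_3);\tilde\rho)\rangle \cap B(\Gamma,\sigma'/2)$ is contained in $\langle B(\Theta(x_1),\Theta(x_2),\Theta(x_3);\tilde\rho/2)\rangle \cap B(\Gamma,\sigma'/2)$ up to the trivial inclusion (the subgroup generated by a Bohr set of radius $\tilde\rho$ equals that generated by radius $\tilde\rho/2$, by Corollary~\ref{subgroupgenerationiteratedsumsbohr} applied to both radii, or simply by noting both equal the ambient generated subgroup after enough doublings).

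The core step is to verify the separation hypothesis of Proposition~\ref{sumsofgenbohrs}, namely that $\langle\Gamma\rangle_{[-K_0,K_0]} \cap \langle\Theta(x_1),\Theta(x_2),\Theta(x_3)\rangle_{[-K_0,K_0]} = \{0\}$ for the appropriate $K_0$ (which is the quantity $2^{10}(k+4)^2\mathsf{C}_{\on{spec}}(r+r')(\varepsilon^2\rho\rho')^{-2}$ from that proposition's statement, with $r' = 3d$, the two radii being $\tilde\rho$ and $\sigma'$, and $\varepsilon$ a constant of size $\exp(-(2d\log\rho^{-1})^{O(1)})$ chosen so the output is positive). This is exactly the content of hypothesis~\eqref{fourierconditionforsumsofbohrsets2}, provided $K$ was chosen larger than $10 K_0$ — which I would arrange by taking $K$ in the previous iterative argument to dominate $K_0$; since $K_0$ depends only on $d$, $r$, $\tilde\rho$, $\sigma'$ and the constant $\mathsf{C}_{\on{spec}}$, and all of these are already in the regime $\exp((2d\log\rho^{-1})^{O(1)})$, this costs nothing. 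Then I would appeal directly to Proposition~\ref{sumsofgenbohrs}: every element $y$ of the left-hand side has, by Corollary~\ref{subgroupgenerationiteratedsumsbohr} (applied in the group $\langle B(\Theta(x_1),\Theta(x_2),\Theta(x_3);\tilde\rho/2)\rangle$, whose generating Bohr set has codimension $3d$), at least $\exp(-(2d\log\rho^{-1})^{O(1)})|G_2|^{r-5}$ representations of the form $y_1 + y_2 + y_3 + y_4 + 2z_1 + \dots + 2^{r-4}z_{r-4}$ with all terms in $B(\Theta(x_1),\Theta(x_2),\Theta(x_3);\tilde\rho/2) \subseteq B(\Gamma,\sigma'/2)^c \cap \dots$ — wait, one must be a bit careful that the terms also lie in $B(\Gamma,\cdot)$; but since $y \in B(\Gamma,\sigma'/2)$, the number of such representations with all terms additionally in $B(\Gamma,\tilde\rho)$ is still $\geq \varepsilon|G_2|^{r-5}$ by a further averaging (intersecting the representation count with the Bohr-set constraint, using Lemma~\ref{basicbohrsizel}). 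Having at least $\varepsilon|G_2|^{r-5}$ representations over the first frequency set, and trivially over the second, Proposition~\ref{sumsofgenbohrs} concludes that $y$ lies in $4(B(\Theta(x_1),\Theta(x_2),\Theta(x_3);2\tilde\rho)\cap B(\Gamma,\sigma')) + 2\cdot(\dots) + \dots + 2^{r-4}\cdot(\dots)$, and after absorbing the factor $2$ into $\tilde\rho$ (or shrinking $\tilde\rho$ by $2$ at the outset, which again costs nothing) this is exactly the right-hand side.

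The main obstacle I anticipate is purely bookkeeping: matching the precise radii and the precise shape of the iterated sum in Proposition~\ref{sumsofgenbohrs} (which has the prefix $a_1 + a_2 + a_3 + a_4$ of four unit-coefficient terms followed by $2b_1 + \dots + 2^k b_k$) against the coefficients $u_1 = u_2 = u_3 = u_4 = 1$, $u_i = 2^{i-4}$ fixed earlier in the proof of Proposition~\ref{finalExtensionStep2}, and making sure the separation parameter $K$ chosen during the iterative construction of $C''$ genuinely dominates the $K_0$ required here — but since the iterative step was precisely designed around conditions~\eqref{fourierconditionforsumsofbohrsets2} and~\eqref{fourierconditionforsumsofbohrsets3}, and all quantities live in the same quasipolynomial regime, this should go through without difficulty. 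The one genuine subtlety is ensuring the representations can be taken with all terms simultaneously inside $B(\Gamma,\tilde\rho)$ and inside the $\Theta$-Bohr set of radius $\tilde\rho/2$; this follows from intersecting the two large families of representations (one from $B(\Gamma,\cdot)$, one from the $\Theta$-subgroup) and invoking Lemma~\ref{basicbohrsizel} to see the intersection is still of size $\exp(-(2d\log\rho^{-1})^{O(1)})|G_2|^{r-5}$, hence positive and in fact large enough for Proposition~\ref{sumsofgenbohrs}.
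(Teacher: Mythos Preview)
Your approach is essentially the same as the paper's, which simply says ``Similarly, we rely on~\eqref{fourierconditionforsumsofbohrsets2} and Proposition~\ref{sumsofgenbohrs}.'' You have correctly identified the two frequency sets, the separation hypothesis, and the source of the many representations (Corollary~\ref{subgroupgenerationiteratedsumsbohr}). One small clean-up: your ``wait'' moment about the terms simultaneously lying in $B(\Gamma,\cdot)$ is unnecessary, because Proposition~\ref{sumsofgenbohrs} only asks for many representations in $B(\Gamma;\rho)$ and, \emph{separately}, many representations in $B(\Gamma';\rho')$---not joint representations. The first follows from the first part of Corollary~\ref{subgroupgenerationiteratedsumsbohr} since $y\in B(\Gamma,\sigma'/2)$, and the second from the subgroup-generation part of that corollary applied to $\langle B(\Theta(x_1),\Theta(x_2),\Theta(x_3);\tilde\rho)\rangle$.
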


    \begin{proof}
        Similarly, we rely on~\eqref{fourierconditionforsumsofbohrsets2} and Proposition~\ref{sumsofgenbohrs}.
    \end{proof}
    
    We are now ready to show that whenever  $x_1, x_2, x_3$ satisfy~\eqref{fourierconditionforsumsofbohrsets2} and~\eqref{fourierconditionforsumsofbohrsets3}, then $\psi(x_1, y) + \psi(x_2, y) - \psi(x_3, y) - \psi(x_4, y) \in 2E - 2E$ holds for all $y \in S_{x_1} \cap S_{x_2} \cap S_{x_3} \cap S_{x_4} \cap B(\Gamma, \sigma'/2)$. Take any such $y$. By Claim~\ref{horizontalcheckclaimsubgroupinter}, we have that $y \in \langle B(\Theta(x), \Theta(y), \Theta(z); \tilde{\rho})\rangle \cap B(\Gamma, \sigma')$. By Claim~\ref{horizontalcheckclaimiteratedsum} we have $z_1, \dots, z_r \in B(\Theta(x_1), \Theta(x_2), \Theta(x_3); \tilde{\rho}) \cap B(\Gamma, \sigma')$ such that $z_1 + \dots + z_4 + 2z_5 + \dots + 2^{r-4} z_r = y$. Then $(x_4, z_i) \in B(\Theta(x_4); \tilde{\rho}) \cap B(\Gamma, \sigma')$, which is in the domain of $\phi$. Hence
    \begin{align*}\psi(x_1, y) +& \psi(x_2, y) - \psi(x_3, y) - \psi(x_4, y)\\
    &\in \Big(\sum_{i \in [r]} u_i\phi(x_1, z_i)\Big) + \Big(\sum_{i \in [r]} u_i\phi(x_2, z_i)\Big) - \Big(\sum_{i \in [r]} u_i\phi(x_3, z_i)\Big)\\
    &\hspace{2cm}-\Big(\sum_{i \in [r]} u_i\phi(x_4, z_i)\Big) + (2E - 2E) = 2E - 2E.\end{align*}

    It follows that $\psi$ is defined on a vast majority of the variety. Let us show that the groups $S_x$ have the claimed structure. By~\eqref{lowordergeneratorsbgps} we have $\cap_{\mu \in M \cap [-K, K]^d} \on{ker} \lambda \cdot \Theta(x) \subseteq S_x$. By Theorem~\ref{quantLattice}, we may find a generating set $A$ of $M \cap [-K, K]^d$ of desired size.\\
    
    Finally, pick suitable $\eta$ and apply Proposition~\ref{almostfullbilinearmap} to complete the proof.
\end{proof}

\subsection{Extending domain to structured product}\label{extendingtoproductsubsec}

In this subsection, we extend the domain to a product of a Bohr set and coset progression. 

\begin{proposition}\label{finalExtensionStep3}
    Let $\Theta_1, \dots, \Theta_d : C \to \hat{G}_2$ be Freiman linear maps on a symmetric proper coset progression $C$ of rank $d$ and density $c$ inside $G_1$ and suppose that there exists $m$ such that $m \Theta_i(x) = 0$ holds for all $x \in C$, $i \in [d]$. Let $\Gamma \subseteq \hat{G}_2$ be a set of size $r$ and let $\rho > 0$. Define a bilinear Bohr variety $V \subseteq C \times G_2$ whose columns are
    \[\Big(\bigcap_{i \in [d]}\on{ker} \Theta_i(x) \Big)\cap B(\Gamma, \rho)\]
    for $x \in C$. Let $\phi : V \to H$ be an $E$-bihomomorphism with $\phi(x, 0) \in E$ for a set $E$ of rank $d'$. Then, there exist 
    \begin{itemize}
        \item a coset progression $C' \subseteq C$ of rank at most $d$ and density $\exp(-2rd\log(c^{-1}\rho^{-1})^{O(1)})$,
        \item a set $\Gamma' \subseteq \hat{G}_2$ of size at most $2rd\log(c^{-1}\rho^{-1})^{O(1)}$,
        \item an $E'$-bihomomorphism $\tilde{\phi}$ on $C' \times B(\Gamma \cup \Gamma', \rho/100)$, an $E'$-homomorphism $\psi : B(\Gamma \cup \Gamma', \rho /2) \to H$ and an element $x_0 \in G_1$ such that for each $(x,y)$ in the domain we have 
        \[\tilde{\phi}(x,y) \in \phi(x, y - z) + \psi(z) + \phi(x - x_0, z) + E'.\]
    \end{itemize}
\end{proposition}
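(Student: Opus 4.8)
The plan is to extend $\phi$, which is only defined on a bilinear Bohr variety $V$ whose columns are $\big(\bigcap_i \ker\Theta_i(x)\big)\cap B(\Gamma,\rho)$, to a map on a genuine product $C'\times B(\Gamma\cup\Gamma',\rho/100)$. The key geometric input is that the columns of $V$ are \emph{subgroups} of $G_2$ intersected with a fixed Bohr set $B(\Gamma,\rho)$; this is exactly the structure produced by Proposition~\ref{finalExtensionStep2}, and it is what makes a cocycle-style extension argument possible. The idea is to fix a suitable ``reference'' column index $x_0$, and to express a typical $y\in B(\Gamma\cup\Gamma',\rho/100)$ as $y=(y-z)+z$ where $y-z$ lies in the column subgroup $H_x:=\bigcap_i\ker\Theta_i(x)$ (so $\phi(x,y-z)$ is defined) and $z$ is a ``correction'' living in a fixed complementary direction, on which one records a single homomorphism $\psi$ (coming from naively extending $\phi(x_0,\cdot)$) plus the genuinely bilinear term $\phi(x-x_0,z)$. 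Concretely, one defines $\tilde\phi(x,y)=\phi(x,y-z)+\psi(z)+\phi(x-x_0,z)$ for an appropriate choice of decomposition $y=(y-z)+z$, and then shows this is well-defined up to a controlled error set $E'$ and is an $E'$-bihomomorphism.

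The steps I would carry out, in order, are as follows. First, use Proposition~\ref{cpsinbohrsets} and the coprimality/torsion hypothesis $m\Theta_i(x)=0$ to find, inside $C$, a subprogression and a good radius; then invoke the algebraic regularity lemma (Theorem~\ref{algreglemmaintro}) applied to $V$ (and to the auxiliary varieties needed to compare columns) with a tiny error parameter to pass to $C'\subseteq C$ over which $V$ is very quasirandom, picking up the claimed density loss. Second, identify the ``transversal'' direction: since the characters $\Theta_i(x)$ all have order dividing $m$, the quotient $G_2/H_x$ is uniformly bounded, and using Proposition~\ref{ehomextensionsfromsubgroups} one naively extends $\phi(x_0,\cdot)$ from $H_{x_0}\cap B(\Gamma,\rho)$ to an $E'$-homomorphism $\psi$ on $B(\Gamma\cup\Gamma',\rho/2)$, where the extra frequencies $\Gamma'$ (of size controlled by Proposition~\ref{subgroupgeneratedbybohrset}) and the enlarged error set $E'$ absorb the ambiguity; here $\Gamma\cup\Gamma'$ is arranged so that $B(\Gamma\cup\Gamma',\rho/100)$ meets every coset of $H_x$ for a vast majority of $x$, using the Fourier/coset-covering argument from the proof of Proposition~\ref{ehomextensionsfromsubgroups} together with the hypothesis $\langle\Gamma\rangle\cap\langle\Theta(x)\rangle=\{0\}$-type separation (ensured by shrinking and by Proposition~\ref{sumsofgenbohrs}/Lemma~\ref{qrsumbohrs}). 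Third, verify the cocycle identity: for $y=(y-z)+z=(y-z')+z'$ two decompositions, the difference $\tilde\phi$-values is $\phi(x,z'-z)+\psi(z)-\psi(z')+\phi(x-x_0,z-z')$, and since $z-z'\in H_x$ one can compare $\phi(x,z-z')$ with $\phi(x_0,z-z')+\phi(x-x_0,z-z')$ (approximate bilinearity of the naive extension) and then with $\psi(z-z')$; each replacement costs an element of $E'$, so well-definedness holds modulo $E'$. This is the approximate analogue of the cocycle identities referred to in the introduction as Claim~\ref{cocycleEqnBil}, used in the vector-space case for extending multilinear maps. Fourth, check that $\tilde\phi$ is an $E'$-bihomomorphism: horizontal additive quadruples reduce, after the decomposition, to horizontal quadruples for $\phi$ on $V$ (which are $E$-respected) plus horizontal quadruples for the bilinear term $\phi(x-x_0,z)$ (which vanish, as $\phi$ is Freiman-bilinear in the first variable) plus the $\psi$-terms which cancel; vertical quadruples are handled symmetrically using vertical $E$-respectedness of $\phi$ and $E'$-homomorphism-ness of $\psi$. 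Fifth, clean up: apply Lemma~\ref{almostfullbilinearmap} (and/or Lemma~\ref{ebihomtoebilinearlemma}) to fill in the remaining small density of missing points on $C'\times B(\Gamma\cup\Gamma',\rho/100)$, inflating $E'$ by an absolute constant factor, and record the final relation $\tilde\phi(x,y)\in\phi(x,y-z)+\psi(z)+\phi(x-x_0,z)+E'$.

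The main obstacle I expect is the third step, establishing that the naive extension across the transversal direction is consistent \emph{and compatible with the existing bilinear structure of $\phi$} — i.e. that $\phi(x,z-z')$, $\phi(x_0,z-z')+\phi(x-x_0,z-z')$, and $\psi(z-z')$ all agree up to $E'$ when $z-z'\in H_x$. The delicate point is that $H_x$ varies with $x$, so the ``reference'' extension $\psi$ built at $x_0$ must be shown to be usable uniformly over $x\in C'$; this is precisely where quasirandomness of the bilinear Bohr variety is needed, to guarantee that for almost every $x$ there are many common elements $z-z'$ lying simultaneously in $H_x\cap H_{x_0}\cap B(\Gamma\cup\Gamma',\cdot)$ on which all three expressions can be compared, and then to propagate the agreement to the whole of $H_x\cap B(\Gamma\cup\Gamma',\rho/2)$ via Lemma~\ref{almostfullBohr}. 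A secondary technical nuisance is bookkeeping the growth of the error set: each of the finitely many replacement steps enlarges $E'$ by a bounded linear combination of a bounded number of fixed elements, so $E'$ stays of rank $(2d\log(c^{-1}\rho^{-1}))^{O(1)}$, but this has to be tracked carefully through Propositions~\ref{generalextensionserror} and~\ref{ehomextensionsfromsubgroups}, whose error sets are themselves built from naive extensions.
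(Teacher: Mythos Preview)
Your overall architecture is right and matches the paper's: fix a reference $x_0$, naively extend $\phi(x_0,\cdot)$ to an $E'$-homomorphism $\psi$ via Proposition~\ref{ehomextensionsfromsubgroups}, define $\tilde\phi(x,y)=\phi(x,y-z)+\psi(z)+\phi(x-x_0,z)$ with $z\in S_{x-x_0}\cap\Theta(x)^{-1}(\Theta(x)(y))$, use the algebraic regularity lemma for quasirandomness, and clean up with Lemma~\ref{almostfullbilinearmap}. Your well-definedness argument (step 3) and the vertical check are essentially correct.

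The genuine gap is step 4, the horizontal check, which you have badly underestimated. For a horizontal quadruple $(x_1,y),\dots,(x_4,y)$ the correction elements $z_i$ depend on $x_i$ in two ways: $z_i\in S_{x_i-x_0}$ and $\Theta(x_i)(z_i)=\Theta(x_i)(y)=:t_i$. Consequently the points $(x_i,y-z_i)$ do \emph{not} form a horizontal quadruple in $V$ (the second coordinates differ), the points $(x_i-x_0,z_i)$ do not either, and the $\psi(z_i)$ terms do not cancel. What one actually gets is that $\sum_i(-1)^{i+1}\psi(z_i)$ is within $E'$ of $\phi(x_0,z_1-z_2+z_3-z_4)$ (since $z_1-z_2+z_3-z_4\in S_{x_0}$), leaving a genuinely nontrivial cocycle expression
\[
\sum_{i\in[4]}(-1)^{i+1}\bigl[\phi(x_i,y-z_i)+\phi(x_i-x_0,z_i)\bigr]+\phi(x_0,z_1-z_2+z_3-z_4)
\]
that must be shown to lie in $O(1)E'$. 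This is where Claim~\ref{cocycleEqnBil} is actually used --- not in well-definedness as you suggest --- and it does not apply directly. The paper first reduces, via auxiliary decompositions $\tilde z^j_i,\tilde y^j$ (found by quasirandomness), to the special case $t_1=t_2=t$, $t_3=t_4=0$; then it finds further auxiliary elements $u_1,u_2\in\tfrac14 C'$ and $v\in B(\Gamma,\rho/128)$ with a specific pattern of $\Theta$-vanishing, and applies Claim~\ref{cocycleEqnBil} to \emph{four} triples $\bigl((x_i-x_0,x_0,u_j),(y,z_i,v)\bigr)$, subtracting the resulting identities in pairs so that the terms involving $u_j$ and $v$ cancel. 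Your sketch contains none of this machinery, and without it the horizontal check does not go through.
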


\begin{proof}
    Write $S_x = \cap_{i \in [d]}\on{ker} \Theta_i(x)$ for the subgroup which defines column of $x$ after intersection with $B(\Gamma, \rho)$. Since $m \Theta_i(x) = 0$ holds for all $x \in C$, we have $|G_2 : S_x| \leq m^d$. Our strategy for defining the extension map $\tilde{\phi}$ will be as follows. Pick an arbitrary element $x_0 \in C$, and extend $\phi_{x \bcdot}$ from its column to $B(\Gamma, \rho/16)$ using Proposition~\ref{ehomextensionsfromsubgroups}, giving an $E'$-homomorphism $\psi$ for some set $E'$ of rank at most $O(d d'\log m)$. Then, for arbitrary $x \in C$, given a coset of $S_x$ defined by $\Theta(x)(\bcdot) = t$ we extend $\phi_{x \bcdot}$ to $\Theta(x)^{-1}(t) \cap B(\Gamma, \rho/128)$ by taking an element $z \in S_{x - x_0} \cap \{\Theta(x)(\bcdot) = t\} \cap B(\Gamma, \rho/32)$ and then defining
    \begin{equation}\tilde{\phi}(x, y) = \phi(x, y - z) + \psi(z) + \phi(x - x_0, z).\label{tildephiproductextndefn}\end{equation}
    Note that $(x, y- z), (x - x_0, z) \in V$ and note that $\Theta(x_0)(z) = \Theta(x_0 - x)(z) + \Theta(x)(z) = t$, first summand being 0, the other being $t$.\\

    Let us now turn to details. Let $\varepsilon > 0$ be a positive parameter to be chosen later. We apply algebraic regularity lemma (Theorem~\ref{algreglemmaintro}) with error parameter $\varepsilon$, which gives us a proper symmetric coset progression $C' \subseteq C$ of rank $d$ and size 
    \[|C'| \geq \exp\Big(-d^{O(1)} r^{O(1)} \log^{O(1)} (\varepsilon^{-1}) \log^{O(1)} (\rho^{-1}) \log^{O(1)}m \Big) |C|,\]
    such that the bilinear Bohr variety is sufficiently quasirandom on $C' \times B(\Gamma, \rho')$, with $\rho' \in [\rho/2, \rho]$.\\
    
    Let $R_1 = \mathsf{C}_{\on{spec}} r 2^{20r + 8} \rho^{-2r-1} m^{4r^2 + 2rd}$, which is the quantity that stems from Proposition~\ref{ehomextensionsfromsubgroups}. We shall also apply Proposition~\ref{bohrsizeLargeFC} at some point, leading to $R_2 = \mathsf{C}_{\on{spec}} 2^{10 + 12r} r \rho^{-2d} m^{2d}$. Let $R = \on\{R_1, R_2\}$.\\
    \indent Note that the proof of Theorem~\ref{algreglemmaintro} shows that the lattice $\Lambda$ of vanishing linear combinations of $\Theta(x)$ is the same for all but at most $(1-\varepsilon)|C'|$ points in $C'$. The proof also shows that, recalling that $m\Theta(x) = 0$, $\langle\Theta_1(x), \dots, \Theta_r(x) \rangle_{[0, m-1]} \cap \langle \Gamma \rangle_R = \{0\}$ for at least $(1-\varepsilon)|C'|$ points in $C'$. Let $X_0 \subseteq C'$ be the set of all points for which these two properties hold. Hence $\on{Im} \Theta(x)$ is the same subgroup $T \leq \mathbb{T}^d$ for all points in $X_0$. Note that $mt = 0$ for all $t \in T$.\\
    
    We say that a pair $(x_0, x)$ in $X_0 \cap \frac{1}{2}C'$ is \emph{good} if for all $t \in T$ we have 
    \begin{equation}
        |S_{x - x_0} \cap \Theta(x)^{-1}(t) \cap B(\Gamma, \rho/32)| \geq \frac{1}{4}m^{-2d} (\rho/32)^{r} |G_2|.\label{gooddefinpair}
    \end{equation}

    By quasirandomness, we show that a vast majority of pairs are good.

    \begin{claim}
        All but at most $1000 2^{12r}m^{2d} \rho^{-2r}\sqrt[8]{\varepsilon}$ pairs $(x_0, x)$ in $X_0 \cap \frac{1}{2}C'$ are good.
    \end{claim}

    \begin{proof}
        Slightly more generally, it suffices to bound the number of pairs $(x_1, x_2) \in C' \times C'$ such that $S_{x_1} \cap \Theta(x_2)^{-1}(t) \cap B(\Gamma, \rho/32)$ is small. Firstly, we show that $\Theta(x_2)^{-1}(t) \cap B(\Gamma, \rho/32)$ has size at least $\frac{1}{2}m^{-d} (\rho/32)^{r} |G_2|$ most of the time. Take $\tilde{\rho} \in [\rho/64, \rho/32]$ such that $B(\Gamma, \tilde{\rho})$ is weakly regular. By Proposition~\ref{bohrsizeLargeFC}, all Fourier coefficients $\chi$ such that $|\widehat{\id_{B(\Gamma, \tilde{\rho})}}| \geq \frac{1}{2}m^{-d}\frac{|B(\Gamma, \tilde{\rho})|}{|G_2|}$ lie in the span $\langle \Gamma\rangle_{R_2}$.\\
        
        We now show that $\Theta(x_2)^{-1}(t) \cap B(\Gamma, \rho/32)$ is large. Namely, if $\langle\Theta_1(x_2), \dots, \Theta_r(x_2) \rangle_{[0, m-1]} \cap \langle \Gamma \rangle_R = \{0\}$, then we have
        \begin{align*}|\Theta(x_2)^{-1}(t) \cap B(\Gamma, \tilde{\rho})| = &|G_2| \exx_{y \in G_2} \id(\Theta(x_2)(y) = t) \id_{B(\Gamma, \tilde{\rho})}(y) \\
        =  &m^{-d} |G_2|\sum_{a_1, \dots, a_d \in [0, m-1]} \id_{B(\Gamma, \tilde{\rho})}(y) \on{e}\Big(\sum_{i \in [d]} a_i \Theta(x_2)( y)\Big) \\
        =& m^{-d} |G_2| \sum_{a_1, \dots, a_d \in [0, m-1]} \widehat{\id_{B(\Gamma, \tilde{\rho})}}\Big(\sum_{i \in [d]} a_i \Theta(x_2)\Big)\end{align*}
        and the contribution from $a_1, \dots, a_d = 0$ is $m^{-d} |B(\Gamma, \tilde{\rho})|$, while the rest have a total absolute value at most $\frac{1}{2}m^{-d} |B(\Gamma, \tilde{\rho})|$.\\
        Now take any $x_2$ for which $\Theta(x_2)^{-1}(t) \cap B(\Gamma, \rho/32)$ is large. By Lemmas~\ref{genappendknhoods} and~\ref{appendonesided}, we are done.
    \end{proof}

    We pick $x_0$ uniformly at random from $\frac{1}{2}C'$. Thus, with high probability, we have that $(x_0, x)$ is good for almost all $x\in X_0 \cap \frac{1}{2}C'$. More precisely, if $X_1$ is the set of $x \in X_0 \cap \frac{1}{2}C'$ such that $(x_0, x)$ is good, then with probability at least $1-\sqrt[32]{\varepsilon}$ we have $|X_1| \geq (1 - \sqrt[32]{\varepsilon})|\frac{1}{2}C'|$, provided $2^{-12r - 2d-10}m^{-2d} \rho^{2r} \geq \sqrt[16]{\varepsilon}$. Define $\psi$ as an arbitrary extension to $B(\Gamma, \rho/16)$ using Proposition~\ref{ehomextensionsfromsubgroups}. Hence, we may extend $\phi$ using the definition~\eqref{tildephiproductextndefn} to all columns for $x \in X_1$. For this, when extending as in~\eqref{tildephiproductextndefn} we take a random element $z$ uniformly inside $S_{x - x_0} \cap \Theta(x)^{-1}(t) \cap B(\Gamma, \rho/32)$. It remains to check that $\tilde{\phi}$ has desired properties with high probability. Moreover, we restrict rows to $S_{x_0}$, so our domain becomes
    \[\Big(X_1 \times (S_{x_0} \cap B(\Gamma, \rho/128))\Big) \cap \Big(\bigcap_{x \in C'} \{x\} \times S_x\Big).\]

    \noindent\textbf{Vertical check.} Let $x \in X_1$, $y_1, y_2, y_3 \in B(\Gamma, \rho/128)$ be such that $y_3 = y_1 + y_2$. By definition, we take some $z_1, z_2, z_3 \in B(\Gamma, \rho/32) \cap S_{x - x_0}$ such that $y_i - z_i \in S_{x \bcdot}$ and set
    \[\tilde{\phi}(x, y_i) = \phi(x, y_i - z_i) + \psi(z_i) + \phi(x - x_0, z_i).\]
    Note firstly that, due the fact that all elements below belong to $\{x\} \times (V_{x \bcdot})$
    \[\phi(x, y_1 - z_1) +\phi(x, y_2 - z_2) - \phi(x, y_3 - z_3) \in \phi(x, y_1 + y_2 - y_3 - z_1 - z_2 + z_3) + E = \phi(x, z_3 - z_1 -z_2) + E.\]
    Also, let us stress that $z_3 - z_1 - z_2 \in S_{x \bcdot}$. Furthermore,
    we have $z_1, z_2, z_3 \in S_{x - x_0 \bcdot} \cap B(\Gamma, \rho/32)$, so we have $z_1 + z_2 - z_3 \in S_{x - x_0 \bcdot} \cap B(\Gamma, \rho/2)$ and thus
    \[\phi(x - x_0, z_1) + \phi(x - x_0, z_2) - \phi(x - x_0, z_3) \in \phi(x - x_0, z_1 + z_2 - z_3) + E.\]
    But $\Theta_i(x)(z_1 + z_2 - z_3) = 0$ and $\Theta_i(x - x_0)(z_1 + z_2 - z_3) = 0$ imply that $\Theta_i(x_0)(z_1 + z_2 - z_3) = 0$ as well, so $(x_0, z_1 + z_2 - z_3) \in V$. Hence, $\psi(z_1 + z_2 - z_3) - \phi(x_0, z_1 + z_2 - z_3) \in E'$. Putting everything together, we get
    \begin{align*}\tilde{\phi}(x, y_1) +& \tilde{\phi}(x, y_2) - \tilde{\phi}(x, y_3)\\
    &\in (\phi(x, z_3 - z_1 -z_2) + E) 
    + (\phi(x_0, z_1 + z_2 - z_3) + 2 E') + (\phi(x - x_0, z_1 + z_2 - z_3) + E) \\
    &\subseteq 2E' + 4E,\end{align*}
    where we used symmetry $\phi(x, -t) + \phi(x, t) - \phi(x, 0) \in E$ and $\phi(x, 0) \in E$.\\

    \noindent\textbf{Horizontal check.} This part of the argument is more subtle than the previous check and we need an approximate cocycle identity first. A similar result was proved in~\cite{extensionspaper}, appearing as Lemma 10.

    \begin{claim}\label{cocycleEqnBil}
        Let $x,z, u \in \frac{1}{3}C'$ and $y, w, v \in B(\Gamma, \rho/10)$ be given. Suppose that $\Theta(x)(y) = \Theta(z)(w) = \Theta(u)(v)$ and that $\Theta(a)(b) = 0$ for remaining 6 points $(a,b)$ in $\{x,z,u\} \times \{y, w, v\}$. Then we have identity (and all arguments belong to $V$)
        \begin{align*}\phi(x, w) \in\,\, &\phi(x + z, -y + w) + \phi(x  - u, y + v) - \phi(z - u, w + v)\\
        &\hspace{1cm} -\phi(x,v) + \phi(z,y) + \phi(u,y) + \phi(z,v) - \phi(u,w) + 5E.\end{align*}
    \end{claim}

    \begin{proof} Using the fact that $\phi$ is $E$-bilinear we have
        \begin{align*}\phi(x, w) \in\,\, & \phi(x, w + v) - \phi(x, v) + E\\
        \subseteq\,\,&\phi(x + z - u, w + v) - \phi(z - u, w + v) - \phi(x, v) + 2E \\
        \subseteq\,\,&\phi(x + z - u, -y + w) + \phi(x + z - u, y + v) - \phi(z - u, w + v) - \phi(x, v) + 3E \\
        \subseteq\,\,&\phi(x + z - u, - y + w) + \phi(x  - u, y + v) + \phi(z, y) + \phi(z, v) - \phi(z - u, w + v) - \phi(x, v) + 4E\\
        \subseteq\,\,&\phi(x + z, -y + w)  + \phi(u, y) - \phi(u,w) + \phi(x  - u, y + v)\\
        &\hspace{1cm}+ \phi(z, y) + \phi(z, v) - \phi(z - u, w + v) - \phi(x, v) + 5E\\
        =\,\,&\phi(x + z, -y + w) + \phi(x  - u, y + v) - \phi(z - u, w + v)\\
        &\hspace{1cm} -\phi(x,v) + \phi(z,y) + \phi(u,y) + \phi(z,v) - \phi(u,w) + 5E.\qedhere\end{align*}
    \end{proof}
    
    Let $x_1, x_2, x_3, x_4 \in C'$ be arbitrary additive quadruple with $x_1 - x_2 + x_3 - x_4 = 0$. Let $y \in B(\Gamma, \rho/128)$ be given. We show that the corresponding horizontal additive quadruple is respected with high probability. Then we extend $\phi$ at these points using elements $z_1, z_2, z_3, z_4$ such that $y - z_i \in S_{x_i}$ and $z_i \in S_{x_i - x_0} \cap B(\Gamma, \rho/32)$. Note that 
    
    \[\Theta(x_0)(z_1 - z_2 + z_3 - z_4) = \sum_{i \in [4]} (-1)^{i + 1} \Theta(x_0)(z_i) = \sum_{i \in [4]} (-1)^{i + 1} \Theta(x_i)(z_i) = \sum_{i \in [4]} (-1)^{i + 1} \Theta(x_i)(y) = 0,\]
    so $(x_0, z_1 - z_2  + z_3 - z_4) \in V$. Hence $\sum_{i \in [4]} (-1)^{i+1}\psi(z_i) \in \psi(z_1 - z_2 + z_3 - z_4) + 3E' = \phi(x_0, z_1 - z_2 + z_3 - z_4) + 3E'$.\\
    \indent Thus, we need to check that the cocycle expression
    \begin{equation}
        \sum_{i \in [4]} (-1)^{i+1} \phi(x_i, y - z_i) + \sum_{i \in [4]} (-1)^{i+1}\phi(x_i - x_0, z_i) + \phi(x_0, z_1  - z_2 + z_3 - z_4)\label{approxcocycleeqnmain}
    \end{equation}
    belongs to $O(1)E'$ when $x_1 - x_2 + x_3 - x_4 = 0$.\\

    \noindent\textbf{Reducing to a special case.} Write $t_i = \Theta(x_i)(y)$. Then we also have $\Theta(x_i)(z_i) = \Theta(x_0)(z_i) = t_i$. Thus $t = (t_1, t_2, t_3, t_4)$ is an additive quadruple in $T$. In order to complete the proof, we reduce to the case when $t$ has two zeros, and another value appearing twice.\\
    
    Define further quadruples $\tilde{t}^1 = (t_1, t_1, 0, 0), \tilde{t}^2 = (0, 0, t_4, t_4)$ which $\tilde{t}^3 = (0, t_2 - t_1, t_2 - t_1, 0)$ add up to $t$. Using quasirandomness, we observe that with probability at least $1 - \sqrt[32]{\varepsilon}$ we may find $\tilde{z}^j_i, \tilde{y}^j \in B(\Gamma, \rho/128)$ for $j \in [2], i \in [4]$ with $\Theta(x_0)(\tilde{y}^j) = 0$ and $\tilde{t}^j_i = \Theta(x_i)(\tilde{y}^j) = \Theta(x_i)(\tilde{z}^j_i) = \Theta(x_0)(\tilde{z}^j_i)$. This follows from Lemma~\ref{genappendknhoods} and~\ref{appendonesided}, and moreover, we have at least  choices $2^{-20} m^{-26d}(\rho/128)^{10r}|G_2|^{10}$ of these 10-tuples, as long as $2^{-\blc (r +d + 1)}m^{-\blc d} \rho^{ \blc r} \geq \varepsilon$.\\
    \indent Additionally, set $\tilde{z}^3_i = z_i - \tilde{z}^1_i - \tilde{z}^2_i$ and $\tilde{y}^3 = y - \tilde{y}^1 - \tilde{y}^2$, which satisfy $\tilde{t}^3_i = \Theta(x_i)(\tilde{y}^3) = \Theta(x_i)(\tilde{z}^3_i) = \Theta(x_0)(\tilde{z}^3_i)$. If we control the cocycle expressions~\eqref{approxcocycleeqnmain} for $(\tilde{y}^j, \tilde{z}^j_{[4]})$ in place of $(y, z_{[4]})$, we are done. Hence, without loss of generality $t_1=  t_2 = t$ and $t_3 = t_4 = 0$.\\

    \noindent\textbf{Proving the approximate cocycle identity.} We first need to find $u_1, u_2 \in \frac{1}{4}C'$ and $v \in B(\Gamma, \rho/128)$ such that 
    \[\Theta(u_1)(v) = t, \Theta(u_2)(v) = 0, \Theta(u_1)(y) = \Theta(u_2)(y) = \Theta(u_1)(z_1) = \Theta(u_2)(z_1) = \dots= \Theta(u_2)(z_4) = 0\]
    and
    \[\Theta(x_0)(v) = \Theta(x_1)(v) = \dots = \Theta(x_4)(v) = 0.\]

    By Lemma~\ref{genappendknhoods} and~\ref{appendonesided}, with probability at least $1 - \sqrt[32]{\varepsilon}$ we may first find $u_1, u_2 \in \frac{1}{4}C'$ such that 
    \[\Theta(u_1)(y) = \Theta(u_2)(y) = \Theta(u_1)(z_1) = \Theta(u_2)(z_1) = \dots= \Theta(u_2)(z_4) = 0.\]
    Then, with probability at least $1 - \sqrt[32]{\varepsilon}$ we may find $v$ so that the remaining conditions hold.\\

    Hence, recalling that $y \in S_{x_0}$, so $\Theta(x_0)(y) = 0$, and equalities
    \begin{align*}\Theta(x_1)(y) =& \Theta(x_1)(z_1) = \Theta(x_0)(z_1) = t,\,\,\Theta(x_2)(y) = \Theta(x_2)(z_2) = \Theta(x_0)(z_2) = t,\\
    \Theta(x_3)(y) =& \Theta(x_3)(z_3) = \Theta(x_0)(z_3) = 0,\,\,\Theta(x_4)(y) = \Theta(x_4)(z_4) = \Theta(x_0)(z_4) = 0,\end{align*}
    Claim~\ref{cocycleEqnBil} applies to the following four pairs of triples: $\Big((x_1 - x_0, x_0, u_1), (y, z_1, v)\Big)$,  $\Big((x_2 - x_0, x_0, u_1), (y, z_2, v)\Big)$,  $\Big((x_3 - x_0, x_0, u_2), (y, z_3, v)\Big)$  and  $\Big((x_4 - x_0, x_0, u_2), (y, z_4, v)\Big)$. Thus, subtracting the approximate identities from Claim~\ref{cocycleEqnBil} for the first two pairs, we get 
    \begin{align*}\phi(x_1 - x_0, z_1) -& \phi(x_2 - x_0, z_2) - \phi(x_1, z_1 - y) + \phi(x_2, z_2 - y)\\
    &\in \,\,\phi(x_1 - x_2, y + v) - \phi(x_0 - u_1, z_1 - z_2) - \phi(x_1 - x_2, v) - \phi(u_1, z_1 - z_2) + 10 E\\
    &=\,\,\phi(x_1 - x_2, y + v) - \phi(x_0, z_1 - z_2) - \phi(x_1 - x_2, v) + 10 E\end{align*}
    and similarly for the second two
    \begin{align*}\phi(x_3 - x_0, z_3) -& \phi(x_4 - x_0, z_4) - \phi(x_3, z_3 - y) + \phi(x_4, z_4 - y)\\
    &\in \,\,\phi(x_3 - x_4, y + v) - \phi(x_0, z_3 - z_4) - \phi(x_3 - x_4, v) + 10 E.\end{align*}
    Thus, after adding these expressions together and recalling that $x_1 - x_2 + x_3 - x_4 = 0$, we get the desired condition. We pick $\varepsilon = 2^{-\blc (r +d + 1)}m^{-\blc d} \rho^{ \blc r}$ so that the relevant bounds hold and thus obtain a map $\tilde{\phi}$. Apply Lemma~\ref{almostfullbilinearmap} to complete the proof.
\end{proof}

\section{Inverse theorem for Freiman bihomomorphisms}\label{putevtogsection}

We are now ready to prove the main structural result of this paper.

\begin{proof}[Proof of Theorem~\ref{maininversetheorem}]
    The proof is just the matter of going through the main results of this chapter. The quantitative aspects will mostly be trivial, namely all codimensions and ranks of objects will be bounded by at most $(2\log c^{-1})^{O(1)}$ and radius of Bohr sets and various densities will be at least $\exp(-(2\log c^{-1})^{O(1)})$. The only two points requiring some care are applications of Propositions~\ref{almostalladd16tupleshavesmallimage} and~\ref{changeofperspectivestep1}.\\
    
    Let $\Phi : A \to H$ be the given Freiman bihomomorphism. Firstly apply Proposition~\ref{freimanbihomomStep1passtolineaersystem} to pass to a system of Freiman linear maps $\phi_x$ indexed by a dense set $X$ in $G_1$ where many additive quadruples are image-respected in the sense that $\range(\phi_{x_1} - \phi_{x_2} + \phi_{x_3} - \phi_{x_4})$ is small. Let $c_1$ be the density of $X$. Then apply Proposition~\ref{almostalladd16tupleshavesmallimage} with error parameter $\varepsilon = \exp (-(2\log c_1^{-1})^K_1)$ for a sufficiently large positive constant $K_1$ in terms of the technical condition on $\varepsilon$ in Proposition~\ref{smallimgoncosetprogressionlinsys}, to pass to a subset $X' \subseteq X$ where all but very few additive 16-tuples are image-respected. By Proposition~\ref{smallimgoncosetprogressionlinsys}, as $K_1$ is sufficiently large, we may assume that additionally the system of maps $\phi_x$ is indexed by a coset progression $C$ where all additive quadrples are image-respected.\\

    We now make a change of perspective and say that an additive quadruple is Bohr-respected if $\phi_{x_1} - \phi_{x_2} + \phi_{x_3} - \phi_{x_4}$ vanishes on the intersection of their domains.  Apply Proposition~\ref{changeofperspectivestep1} with error parameter $10^{-10d}$, where $d$ is the rank of coset progression $C$, to deduce that a vast majority of additive 16-tuples are Bohr-respected. By Proposition~\ref{bilinearBogolyubovStep}, we may assume that domains of maps $\phi_x$ are columns of a bilinear Bohr variety, however only a dense collection of additive quadruples are now Bohr-respected. Apply Proposition~\ref{absgStep2bilBohrvar} to once again obtain a set of indices in which all additive quadruples are Bohr-respected.\\

    We make another change of perspective and now view $\phi$ as function of two variables defined on a subset of $G_1 \times G_2$. Apply Proposition~\ref{secondrobustBRstep} to deduce that we have a Freiman-bilinear map on a bilinear Bohr variety. Apply Propositions~\ref{controllingerrorscolsprop},~\ref{finalExtensionStep2} and~\ref{finalExtensionStep3} to get a desired $E$-bihomomorphism $\Phi : C_1 \times B_1 \to H$ with the property that for each $(x,y) \in C_1 \times B_1$ there are at least $\exp(-(2\log c^{-1})^{O(1)})$ proportions of all $(d_1, \dots, d_k)$-arrangements $(a_1, b_1), \dots, (a_m, b_m)$, where $m = d_1 \cdots d_k$, $k = O(1)$, $d_1, \dots, d_k \leq (2\log c^{-1})^{O(1)}$, of lengths $(x,y)$ such that appropriate $\pm$-linear combination of $\varphi(a_i, b_i)$ equals $\Phi(x,y)$. The results follows by averaging and Theorem~\ref{approxFreimanHom}.
\end{proof}

\pagebreak

\begin{center}\noindent{\large\bfseries{\scshape Chapter 3: Inverse theory for $\mathsf{U}^4$ norm}}
\end{center}\normalsize

In this chapter, we deduce the inverse theorems for $\mathsf{U}^4$ norm from the structural result for Freiman bihomomorphisms. We begin with an equidistribution theory for almost trilinear forms.

\section{Equidistribution theory of almost trilinear forms}\label{sectionequidistributiontheory}

In this section, we study almost multilinear forms. Namely, we say that a map $\phi : B^k \to \mathbb{T}$, defined on product of $k$ copies of a Bohr set $B$, is an \textit{$\varepsilon$-multilinear form} if for all choices of index $i \in [k]$ and elements $x_1, \dots, x_{i-1}, x_{i+1}, \dots, x_k \in B$, the map $\psi(y) = \phi(x_1, \dots, x_{i -1}, y, x_{i + 1}, \dots, x_k)$ satisfies $\varepsilon$-linearity, i.e. $\|\psi(y + z) - \psi(y) - \psi(z)\|_{\mathbb{T}} \leq \varepsilon$, whenever $y,z,y+z \in B$.\\

In this and the next section, we require the standard notion of regularity of Bohr sets and related facts, which we first recall.\\

\subsection{Regular Bohr sets}

Before stating the key definitions and results, let us note a couple of useful elementary inequalities regarding the exponential map.

\begin{lemma}\label{elemExp} Let $t \in [-1/2,1/2]$. Then
    \begin{itemize}
        \item $4|t| \leq |\on{e}(t) - 1| = 2\sin(\pi |t|)\leq 2 \pi |t|$, and
        \item $1 - \on{Re}(\on{e}(t)) \geq 8t^2$.
    \end{itemize}
\end{lemma}

\begin{proof}
    The first inequality is standard. For the second one, we have $1 - \on{Re}(\on{e}(t)) = 1- \cos(2 \pi t) = 2 \sin^2(\pi t) \geq 8 |t|^2$. 
\end{proof}

For a Bohr set $B = B(\Gamma, \rho)$ of codimension $d$, write $B_\delta$ for its $\delta$-dilate, i.e. $B_\delta = B(\Gamma, \delta \rho)$. Following~\cite{TaoVuBook} (Definition 4.24), we say that $B$ is \textit{regular} if
\[1 - 100d\delta\leq \frac{|B_{1 + \delta}|}{|B|} \leq 1+100d\delta\]
holds for all $|\delta| \leq \frac{1}{100d}$.\\

A key fact due to Bourgain (e.g. Lemma 4.25 in~\cite{TaoVuBook}) is that there exists $\delta \in [1/2, 1]$ for which $B_\delta$ is regular.\\

\noindent\textbf{Remark.} Let us note that for the most of this and the following subsection, we use sums instead of averages over the group. That affects the notion of convolution, as well as the choice of $\ell^p$ norms instead of $L^p$.\\

We need a few facts allowing us to carry out usual expression manipulations, such as averaging and change of variables, relative to Bohr sets. The arguments are standard. 

\begin{lemma}[Convolution of Bohr sets]\label{regbohrconvolution}
    Let $B$ be a regular Bohr set of codimension $d$ and radius $\rho$. Then, for all $\delta \leq 1/200d$
    \[\Big\|\id_B - \frac{1}{|B_{\delta}|}\id_{B_\delta} \ast \id_{B_{1-\delta}}\Big\|_{\ell^1} \leq 200 \delta d |B|.\]
\end{lemma}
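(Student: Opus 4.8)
\textbf{Proof plan for Lemma~\ref{regbohrconvolution}.} The plan is to expand the convolution pointwise and compare it with $\id_B$, controlling the $\ell^1$-error by the difference between slightly dilated Bohr sets, which regularity makes negligible. For $x \in G$, the quantity $\frac{1}{|B_\delta|}(\id_{B_\delta} \ast \id_{B_{1-\delta}})(x)$ equals $\frac{1}{|B_\delta|}|\{(u,v) : u \in B_\delta, v \in B_{1-\delta}, u + v = x\}| = \frac{|B_\delta \cap (x - B_{1-\delta})|}{|B_\delta|}$. Since $B_\delta$ and $B_{1-\delta}$ are symmetric, this is $\frac{|B_\delta \cap (x + B_{1-\delta})|}{|B_\delta|}$, a weighted count that lies in $[0,1]$. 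The key observations are: if $x \in B_{1-2\delta}$, then $B_\delta + x \subseteq B_{1-\delta}$ (by the triangle inequality for the $\|\cdot\|_{\mathbb{T}}$-seminorm defining Bohr sets: each character $\gamma$ satisfies $\|\gamma(u+x)\|_{\mathbb{T}} \le \|\gamma(u)\|_{\mathbb{T}} + \|\gamma(x)\|_{\mathbb{T}} \le \delta\rho + (1-2\delta)\rho = (1-\delta)\rho$), so the convolution value is exactly $1$; whereas if $x \notin B$, then $B_\delta \cap (x + B_{1-\delta}) = \emptyset$ because any point in the intersection would certify $x \in B_\delta + B_{1-\delta} \subseteq B$, so the convolution value is $0$.

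First I would record these two inclusions formally and deduce that the integrand $\id_B(x) - \frac{1}{|B_\delta|}(\id_{B_\delta}\ast \id_{B_{1-\delta}})(x)$ vanishes outside the symmetric difference region $B \setminus B_{1-2\delta}$, and on that region is bounded in absolute value by $1$ (since both $\id_B(x) \in \{0,1\}$ and the convolution value lies in $[0,1]$). Therefore
\[\Big\|\id_B - \frac{1}{|B_{\delta}|}\id_{B_\delta} \ast \id_{B_{1-\delta}}\Big\|_{\ell^1} \leq |B \setminus B_{1-2\delta}| = |B| - |B_{1-2\delta}|.\]
Now I would invoke regularity of $B$: writing $1 - 2\delta = 1 + \delta'$ with $\delta' = -2\delta$, and checking $|\delta'| = 2\delta \le \frac{1}{100d}$ (which holds since $\delta \le \frac{1}{200d}$), regularity gives $\frac{|B_{1-2\delta}|}{|B|} \geq 1 - 100d(2\delta) = 1 - 200\delta d$. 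Hence $|B| - |B_{1-2\delta}| \leq 200 \delta d |B|$, which is exactly the claimed bound.

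There is no serious obstacle here; the only mild subtlety is being careful that the convolution normalization and the symmetry of the dilates are used correctly, and that the hypothesis $\delta \le 1/(200d)$ is precisely what is needed to apply the regularity estimate at dilation parameter $2\delta$. One could alternatively split the error as $\id_B - \frac{1}{|B_\delta|}\id_{B_\delta}\ast\id_{B_{1-\delta}}$ into the part where $\id_B = 1$ but the convolution is $<1$ (supported on $B \setminus B_{1-2\delta}$) and the part where $\id_B = 0$ but the convolution is $>0$ (which we showed is empty), reaching the same conclusion; I would present whichever phrasing is cleanest. Thus the whole argument is a short geometric comparison plus one application of the definition of regular Bohr set.
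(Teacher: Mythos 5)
Your proof is correct and follows essentially the same route as the paper: observe that the pointwise difference vanishes both outside $B$ and inside $B_{1-2\delta}$ while staying bounded by $1$, reducing the $\ell^1$ estimate to $|B\setminus B_{1-2\delta}|$, and then invoke regularity with dilation parameter $2\delta$. The paper's own proof is just a terser version of exactly this argument.
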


\begin{proof}
    Note that $\id_B(x) - \frac{1}{|B_{\delta}|}\id_{B_\delta} \ast \id_{B_{1-\delta}}(x)$ always takes values in $[0,1]$ and equals 0 outside of $B$ as well as inside $B_{1-2\delta}$. Hence
    \[\Big\|\id_B - \frac{1}{|B_{\delta}|}\id_{B_\delta} \ast \id_{B_{1-\delta}}\Big\|_{\ell^1} \leq |B \setminus B_{1-2\delta}| \leq 200 d\delta |B|,\]
    by the regularity of the Bohr set $B$.
\end{proof}

\begin{lemma}[Radius reduction]\label{radiusReductionLemma}
    Let $B$ be a Bohr set of codimension $d$, let $\delta_1, \dots, \delta_r \in (0,1]$. Let $F : B^r \to \mathbb{D}$ be a function such that
    \[\Big|\sum_{x_1 \in B_{\delta_1}, \dots, x_r \in B_{\delta_r}} F(x_1, \dots, x_r) \Big|\geq c |B_{\delta_1}|\dots |B_{\delta_r}|.\]
    Let $\varepsilon > 0$, let $\delta_i' \leq \frac{\varepsilon \delta_i}{200d}$ for some index $i$ and suppose that $B_{\delta_i}$ is regular. Then there exists an element $t \in B_{\delta_i - \delta'_i}$ such that
    \[\Big|\sum_{\ssk{x_1 \in B_{\delta_1}, \dots, x_{i-1} \in B_{\delta_{i-1}},\\ x_i \in B_{\delta'_{i}}, x_{i+1} \in B_{\delta_{i+1}}, \dots, x_r \in B_{\delta_r}}} F(x_1, \dots, x_{i-1}, x_i + t, x_{i + 1}, \dots,  x_r) \Big| \geq (c - \varepsilon) \frac{|B_{\delta'_i}|}{|B_{\delta_i}|} |B_{\delta_1}|\dots |B_{\delta_r}|.\]
\end{lemma}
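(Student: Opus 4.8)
\textbf{Proof plan for Lemma~\ref{radiusReductionLemma}.}

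The plan is to replace the uniform sum over $x_i \in B_{\delta_i}$ by a convolution of two smaller Bohr set indicators, one of radius $\delta'_i$, and then to pigeonhole over the translate. First I would write $\beta = \id_{B_{\delta_i}}$ and apply Lemma~\ref{regbohrconvolution} to the regular Bohr set $B_{\delta_i}$: taking the relevant smaller radius to be $\delta'_i/\delta_i$ times $\delta_i$, i.e. working inside $B_{\delta_i}$ dilated by the factor $\delta'_i/\delta_i \le \varepsilon/(200d)$, we get
\[
\Bigl\| \id_{B_{\delta_i}} - \tfrac{1}{|B_{\delta'_i}|}\, \id_{B_{\delta'_i}} \ast \id_{B_{\delta_i - \delta'_i}} \Bigr\|_{\ell^1} \le 200 \cdot \tfrac{\delta'_i}{\delta_i}\, d\, |B_{\delta_i}| \le \varepsilon |B_{\delta_i}|,
\]
where I am using that $B_{\delta'_i}\ast B_{\delta_i-\delta'_i}$ is supported in $B_{\delta_i}$ and identically equal to $|B_{\delta'_i}|$ on $B_{\delta_i - 2\delta'_i}$, so the difference is bounded pointwise by $\id_{B_{\delta_i}\setminus B_{\delta_i - 2\delta'_i}}$, whose mass is at most $200 d (\delta'_i/\delta_i) |B_{\delta_i}|$ by regularity. (One should double-check the exact constant against the normalisation in Lemma~\ref{regbohrconvolution}; it only affects the constant $200$ in the hypothesis on $\delta'_i$, not the structure of the argument.)

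Next I would substitute this approximation into the hypothesised large sum. Since $F$ is $\mathbb{D}$-valued, replacing $\id_{B_{\delta_i}}(x_i)$ by $\tfrac{1}{|B_{\delta'_i}|}\sum_{t} \id_{B_{\delta'_i}}(x_i - t)\id_{B_{\delta_i - \delta'_i}}(t)$ inside the sum over $(x_1,\dots,x_r)$ changes the value by at most $\varepsilon |B_{\delta_i}|$ times the product of the other $|B_{\delta_j}|$'s, by the $\ell^1$ bound above and the triangle inequality (the other variables just contribute a factor bounded by $\prod_{j\ne i}|B_{\delta_j}|$ in absolute value). Hence
\[
\Bigl| \tfrac{1}{|B_{\delta'_i}|} \sum_{t \in B_{\delta_i - \delta'_i}} \sum_{\ssk{x_1 \in B_{\delta_1}, \dots,\ x_i \in B_{\delta'_i} + t,\\ \dots, x_r \in B_{\delta_r}}} F(x_{[r]}) \Bigr| \ge (c - \varepsilon)\, |B_{\delta_1}| \cdots |B_{\delta_r}|.
\]
Shifting $x_i \mapsto x_i + t$ in the inner sum (so $x_i$ ranges over $B_{\delta'_i}$ and the argument of $F$ in the $i$-th slot becomes $x_i + t$), and noting $|B_{\delta_i - \delta'_i}| \le |B_{\delta_i}|$, the left side is at most $\tfrac{|B_{\delta_i}|}{|B_{\delta'_i}|}$ times the maximum over $t \in B_{\delta_i - \delta'_i}$ of the inner sum. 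Therefore there exists $t \in B_{\delta_i - \delta'_i}$ with the inner sum at least $(c-\varepsilon)\tfrac{|B_{\delta'_i}|}{|B_{\delta_i}|}|B_{\delta_1}|\cdots|B_{\delta_r}|$ in absolute value, which is exactly the claimed bound.

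The only genuinely delicate point is matching the constants between the regularity hypothesis, the convolution lemma, and the hypothesis $\delta'_i \le \varepsilon\delta_i/(200d)$; everything else is a routine convolution-and-pigeonhole manipulation. In particular I would make sure that the dilate $\delta'_i/\delta_i$ of the regular Bohr set $B_{\delta_i}$ is still below the regularity threshold $1/(100d)$, which is guaranteed since $\delta'_i/\delta_i \le \varepsilon/(200d) \le 1/(200d)$, and that Lemma~\ref{regbohrconvolution} is being applied to a Bohr set ($B_{\delta_i}$) that we have assumed regular. No step requires anything beyond the elementary facts already recorded in this subsection.
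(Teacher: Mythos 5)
Your proposal is correct and follows essentially the same route as the paper: replace $\id_{B_{\delta_i}}$ by the normalised convolution $\tfrac{1}{|B_{\delta'_i}|}\id_{B_{\delta'_i}}\ast\id_{B_{\delta_i-\delta'_i}}$ via Lemma~\ref{regbohrconvolution} (applied to the regular Bohr set $B_{\delta_i}$ with internal scale $\delta'_i/\delta_i\le\varepsilon/(200d)$), absorb the $\ell^1$ error, then pigeonhole over $t\in B_{\delta_i-\delta'_i}$ using $|B_{\delta_i-\delta'_i}|\le|B_{\delta_i}|$. The constant-matching you flagged is exactly right and causes no issue.
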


\begin{proof}
    Without loss of generality, $i = r$. We have
    \begin{align*}&\Big|\sum_{x_1, \dots, x_r \in G} \id_{B_{\delta_1}}(x_1) \dots \id_{B_{\delta_r}}(x_r) F(x_1, \dots, x_r) \\
    &\hspace{2cm}- \frac{1}{|B_{\delta'_r}|} |\sum_{x_1, \dots, x_r \in G, t\in G} \id_{B_{\delta_1}}(x_1) \dots \id_{B_{\delta_{r-1}}}(x_{r-1}) \id_{B_{\delta'_r}}(x_r)\id_{B_{\delta_r - \delta'_r}}(t) F(x_1, \dots, x_r + t)\Big|\\
    &\hspace{1cm}= \Big|\sum_{x_1, \dots, x_r \in G} \id_{B_{\delta_1}}(x_1) \dots \id_{B_{\delta_{r-1}}}(x_{r-1}) \Big(\id_{B_{\delta_r}}(x_r) - \frac{1}{|B_{\delta'_r}|}\id_{B_{\delta'_r}} \ast \id_{B_{\delta_r - \delta'_r}}(x_r)\Big)F(x_1, \dots, x_r)\Big|\\
    &\hspace{1cm}\leq |B_{\delta_1}|\dots |B_{\delta_{r-1}}| \Big\|\id_{B_{\delta_r}}(x_r) - \frac{1}{|B_{\delta'_r}|} \id_{B_{\delta'_r}} \ast \id_{B_{\delta_r - \delta'_r}}(x_r)\Big\|_{\ell^1}.
    \end{align*}

    By the previous lemma, the expression above is at most $200d \delta'_r \delta_r^{-1} |B_{\delta_1}|\dots |B_{\delta_r}| \leq \varepsilon |B_{\delta_1}|\dots |B_{\delta_r}|$, by assumption on $\delta_r'$. \\

    By triangle inequality, we get

    \[\Big|\frac{1}{|B_{\delta'_r}|} |\sum_{x_1, \dots, x_r \in G, t \in G} \id_{B_{\delta_1}}(x_1) \dots \id_{B_{\delta_{r-1}}}(x_{r-1}) \id_{B_{\delta'_r}}(x_r)\id_{B_{\delta_r - \delta'_r}}(t) F(x_1, \dots, x_r + t)\Big| \geq (c - \varepsilon)|B_{\delta_1}|\dots |B_{\delta_r}|\]
    from which the claim follows after averaging over $t$.
\end{proof}

\begin{lemma}[Change of variables]\label{regularbohrchangeVar}
    Let $B$ be a Bohr set of codimension $d$, let $\delta_1,\dots, \delta_r, \rho \in (0,1/2]$, $\varepsilon > 0$ and $\lambda_1, \dots, \lambda_r \in \mathbb{Z}$. Let $F : B^{r+1} \to \mathbb{D}$ be a function. If $|\lambda_i|\delta_i \leq \frac{\varepsilon \rho}{200dr}$ for each $i \in [r]$ and $B_\rho$ is regular, then
    \[\Big|\sum_{\ssk{x_1 \in B_{\delta_1}, \dots, \\x_r \in B_{\delta_r}, y \in B_\rho}}  F(x_1, \dots, x_r, y)  - \sum_{\ssk{x_1 \in B_{\delta_1}, \dots,\\ x_r \in B_{\delta_r}, y \in B_\rho}} F(x_1, \dots, x_r, y + \lambda_1 x_1 + \dots + \lambda_r x_r)  \Big| \leq \varepsilon |B_{\delta_1}| \dots |B_{\delta_r}| |B_\rho|.\]
\end{lemma}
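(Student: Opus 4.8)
The plan is to reduce the multi-step change of variables to a single change of variables, applied $r$ times, and each single step to the telescoping identity $y \mapsto y + \lambda_i x_i$ realised by repeatedly shifting by $x_i$ or $-x_i$. Concretely, for a fixed index $i$, I would write the change $y \mapsto y + \lambda_i x_i$ as $|\lambda_i|$ successive unit shifts, each of which moves $y$ by $\pm x_i$ with $x_i \in B_{\delta_i}$. So the core estimate to prove is: if $B_\rho$ is regular and $\delta \leq \frac{\varepsilon'\rho}{200d}$, then
\[\Big|\sum_{x \in B_\delta, y \in B_\rho} F(x,y) - \sum_{x \in B_\delta, y \in B_\rho} F(x, y \pm x)\Big| \leq \varepsilon' |B_\delta||B_\rho|,\]
uniformly over all remaining frozen variables (which I suppress). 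Summing the telescoping differences along the path $y, y\pm x_i, y \pm 2x_i, \dots, y \pm \lambda_i x_i$ gives an error of at most $|\lambda_i| \varepsilon' |B_{\delta_i}||B_\rho| \leq \frac{\varepsilon}{r}|B_{\delta_i}||B_\rho|$ for the $i$-th coordinate (choosing $\varepsilon' = \varepsilon/(r|\lambda_i|)$ appropriately, or more simply bounding $|\lambda_i|\delta_i \leq \frac{\varepsilon\rho}{200dr}$ directly); doing this for $i = 1, \dots, r$ one at a time and summing the $r$ errors via the triangle inequality yields the claimed bound $\varepsilon |B_{\delta_1}|\cdots|B_{\delta_r}||B_\rho|$.

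For the single unit-shift estimate, the standard trick is to insert a smoothing against a narrow Bohr ball. Let $\mu = \delta$ (or a slightly smaller regular scale); using Lemma~\ref{regbohrconvolution} for the regular Bohr set $B_\rho$, replace $\id_{B_\rho}(y)$ by $\frac{1}{|B_\mu|}\id_{B_\mu}\ast \id_{B_{\rho - \mu}}(y)$ at the cost of $200 \cdot \frac{\mu}{\rho} \cdot d\,|B_\rho|$ in $\ell^1$, which is $\leq \frac{\varepsilon'}{2}|B_\rho|$ by the hypothesis on $\delta$. After this replacement, the variable $y$ is an average of $y' + y''$ with $y' \in B_\mu$, $y'' \in B_{\rho-\mu}$, and $x \in B_\delta = B_\mu$; the shift $y \mapsto y \pm x$ becomes $y' \mapsto y' \pm x$ inside the inner $B_\mu$-average. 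Since both $y'$ and $y' \pm x$ range over sets that are translates of $B_\mu$ intersected with nearby Bohr balls and the outer $y''$-average is unaffected, a second application of the same convolution estimate (or simply a direct comparison, bounding $\|\id_{B_\mu}(y') - \id_{B_\mu}(y' \mp x)\|_{\ell^1} \leq |B_\mu \triangle (x + B_\mu)| \leq 2|B_{2\mu}\setminus B_{\mu}| $ — wait, that is not quite small; better: keep the outer structure and note that after smoothing the difference operator in $y$ acts only through translating $\id_{B_\mu}$, whose $\ell^1$-mass changes negligibly because $x\in B_\mu$ and $B_\rho$ — hence $B_\mu$ at the relevant scale — is regular). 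Making this rigorous amounts to exactly the computation in the proof of Lemma~\ref{radiusReductionLemma}: write the two sums as convolutions, subtract, bound by $\|\id_{B_\rho} - \frac{1}{|B_\mu|}\id_{B_\mu}\ast\id_{B_{\rho-\mu}}\|_{\ell^1}$ plus an analogous term for the shifted copy, each controlled by regularity.

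In fact the cleanest route avoids even the unit-shift reduction: I would mirror the proof of Lemma~\ref{radiusReductionLemma} directly. Set $\rho' \leq \frac{\varepsilon \rho}{400 d}$ with $B_{\rho'}$ regular (such $\rho'$ exists by Bourgain's lemma applied to a dyadic scale below $\frac{\varepsilon\rho}{400d}$). Using Lemma~\ref{regbohrconvolution} on $B_\rho$, for both the unshifted sum and the shifted sum $\sum F(x_1,\dots,x_r, y + \lambda_1 x_1 + \cdots + \lambda_r x_r)$, insert the approximation $\id_{B_\rho}(y) \approx \frac{1}{|B_{\rho'}|}\id_{B_{\rho'}}\ast\id_{B_{\rho - \rho'}}(y)$. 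For the shifted sum, after the change of summation variable $y = y' + y''$ with $y'\in B_{\rho'}$, $y''\in B_{\rho-\rho'}$, the shift $y'\mapsto y' + \sum_i \lambda_i x_i$ stays inside $B_{\rho'+\sum|\lambda_i|\delta_i} \subseteq B_{2\rho'}$; so the inner $B_{\rho'}$-average over $y'$ and over $y' + \sum\lambda_i x_i$ agree up to $|B_{2\rho'}\setminus B_{\rho'}|/|B_{\rho'}| \leq 200 d (2^{1/(100d)} - 1) \cdot \ldots$ — no, cleaner: the two inner averages differ in $\ell^1$ by at most $\|\id_{B_{\rho'}}(y') - \id_{B_{\rho'}}(y' - \sum\lambda_i x_i)\|_{\ell^1}$, and since $\sum\lambda_i x_i \in B_{\rho'}$ (as $\sum|\lambda_i|\delta_i \leq \frac{\varepsilon\rho}{200dr}\cdot r \leq \rho'$ after adjusting constants), this symmetric difference has size $\leq 2|B_{\rho'}\setminus B_{(1-1/(100d))\rho'}|$, which by regularity of $B_{\rho'}$ is $\leq \frac{\varepsilon}{4}|B_{\rho'}|$. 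Collecting the three error contributions (two from Lemma~\ref{regbohrconvolution}, one from the translate comparison), each at most $\frac{\varepsilon}{4}|B_{\delta_1}|\cdots|B_{\delta_r}||B_\rho|$, and adding via the triangle inequality gives the bound. The only mildly delicate point — the ``main obstacle'' — is bookkeeping the constants so that the hypothesis $|\lambda_i|\delta_i \leq \frac{\varepsilon\rho}{200dr}$ suffices through all three reductions; this is purely arithmetic and I would simply choose $\rho'$ a small constant multiple of $\varepsilon\rho/(dr)$ and verify the inequalities at the end.
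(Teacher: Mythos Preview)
Your approach would work, but it is considerably more involved than necessary. The paper's proof is a direct three-line argument: fix $x_1, \dots, x_r$, set $t = \lambda_1 x_1 + \cdots + \lambda_r x_r$, and observe that
\[\Big|\sum_{y \in B_\rho} F(x_1,\dots,x_r,y) - \sum_{y \in B_\rho} F(x_1,\dots,x_r,y+t)\Big| \leq 2|B_\rho \setminus (B_\rho + t)|.\]
Since $t \in B_{\delta'}$ with $\delta' = \sum_i |\lambda_i|\delta_i \leq \frac{\varepsilon\rho}{200d}$, one has $B_{\rho - \delta'} \subseteq B_\rho + t$, so $|B_\rho \setminus (B_\rho + t)| \leq |B_\rho \setminus B_{\rho-\delta'}| \leq 100d\rho^{-1}\delta'|B_\rho| \leq \frac{\varepsilon}{2}|B_\rho|$ by regularity of $B_\rho$. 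Summing over $x_1,\dots,x_r$ finishes.

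You actually stumbled onto exactly this mechanism when you wrote $\|\id_{B_\mu}(y') - \id_{B_\mu}(y' \mp x)\|_{\ell^1} \leq |B_\mu \triangle (x + B_\mu)|$ and then said ``that is not quite small'' --- but it \emph{is} small, provided you apply it at the scale $B_\rho$ rather than $B_\mu$, because $B_\rho$ is regular and the shift has norm $\leq \delta' \ll \rho$. There is no need for convolution smoothing, unit-shift telescoping, or introducing an auxiliary regular scale $\rho'$; the hypothesis that $B_\rho$ itself is regular is already exactly what is needed to control the symmetric difference directly.
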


\begin{proof}
    Fix $x_1\in B_{\delta_1}, \dots, x_r\in B_{\delta_r}$ and write $f(y) = F(x_1, \dots, x_r, y)$. For $t = \lambda_1 x_1 + \dots + \lambda_r x_r$, we have 
    \[\Big|\sum_{y \in B_\rho} f(y) - \sum_{y \in B_\rho} f(y + \lambda_1 x_1 + \dots + \lambda_r x_r)\Big| \leq 2|B_\rho \setminus (B_\rho + t)|.\]
    Let $\delta' = |\lambda_1|\delta_1 + \dots + |\lambda_r|\delta_r$. Since $B_{\rho - \delta'} \subseteq B_\rho + t$, we have $|B_\rho \setminus (B_\rho + t)| \leq |B_\rho \setminus (B_{\rho - \delta'})| \leq 100 d \rho^{-1}\delta' |B_\rho|$. The claim follows by summing over $x_1, \dots, x_r$ and the triangle inequality.
\end{proof}

\subsection{Symmetry argument}

In this subsection we derive symmetry argument for almost bilinear forms. The argument originates in the paper~\cite{GreenTaoU3} of Green and Tao and is an important ingredient in the inverse theory of uniformity norms. A particularly clean formulation of the symmetry argument was given by Tidor, which we follow.\\

\begin{lemma}[Symmetry argument]\label{symmetryArgumentBohr}
    Let $\theta : B_{2\rho} \times B_{2\rho} \to \mathbb{T}$ be a $\varepsilon$-bilinear map on a regular Bohr set $B_\rho$ of codimension $d$ and suppose that
    \[\Big|\sum_{x, y \in B_\rho} f_1(x) f_2(y) f_3(x + y) \on{e}(\theta(x,y)) \Big| \geq c |B|^2\] 
    holds for some functions $f_1, f_2, f_3 : B_{2\rho} \to \mathbb{D}$. Let $\rho_1 \leq \frac{c^2 \rho}{2000 d}$ be such that $B_{\rho_1}$ is regular. Then, provided $\varepsilon \leq 2^{-20}c^8$, we have 
    \[\on{Re}\bigg(\sum_{y_1, y_2 \in B_{2\rho_1}}  \id_{B_{\rho_1}}\ast\id_{B_{\rho_1}}(y_1) \id_{B_{\rho_1}}\ast\id_{B_{\rho_1}}(y_2) \on{e}\Big(\theta(y_1, y_2) - \theta(y_2, y_1)\Big)\bigg)\geq 2^{-13}c^8|B_{\rho_1}|^4.\]
\end{lemma}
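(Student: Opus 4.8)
\textbf{Proof plan for Lemma~\ref{symmetryArgumentBohr}.}
The plan is to run the classical Green--Tao symmetry argument of~\cite{GreenTaoU3}, in the streamlined form due to Tidor, but carefully in the Bohr-set setting using the regularity lemmas just established (Lemmas~\ref{regbohrconvolution}--\ref{regularbohrchangeVar}). First I would apply Cauchy--Schwarz repeatedly to the hypothesis $|\sum_{x,y\in B_\rho} f_1(x)f_2(y)f_3(x+y)\on{e}(\theta(x,y))|\geq c|B|^2$ to eliminate the unknown functions $f_1,f_2,f_3$. The standard move: introduce additive shifts to replace each $f_i$ by $\{0,1\}$-valued objects, arriving at an inequality of the shape
\[
\Big|\sum \on{e}\big(\theta(x+a,y+b)-\theta(x+a,y)-\theta(x,y+b)+\theta(x,y)\big)\Big|\geq c^{O(1)}|B|^{O(1)}
\]
over the appropriate ranges of $x,y,a,b$ lying in suitable dilates of $B$. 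Using $\varepsilon$-bilinearity of $\theta$, the inner four-term expression is within $O(\varepsilon)$ of something that depends on $\theta$ only through its ``second difference'' in a controlled way; because $\theta$ is $\varepsilon$-bilinear this iterated difference is $O(\varepsilon)$-close to a bihomomorphism-type quantity, which after the Cauchy--Schwarz steps produces a genuine $\varepsilon$-linear dependence that one can exploit.

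Next I would carry out the core symmetrisation: the point is that after the Cauchy--Schwarz manipulations one obtains, for many $a$, a bias of the form $|\sum_{x,y} \on{e}(\theta(x,a)\text{-type terms})\cdots|$, and comparing the expression coming from the $(x,y)\mapsto(x,a)$ specialisation with the one from $(x,y)\mapsto(a,x)$ forces $\theta(\cdot,a)$ and $\theta(a,\cdot)$ to be close as $\varepsilon$-linear forms on a large sub-Bohr set. Concretely, one shows that for a positive-density set of $a\in B_{\rho'}$ (some dilate), the $\varepsilon$-linear form $y\mapsto \theta(a,y)-\theta(y,a)$ is $O(\varepsilon)$-close to $0$ on $B_{\rho''}$; then averaging over such $a$ and using Lemma~\ref{radiusReductionLemma} and Lemma~\ref{regularbohrchangeVar} to pass between dilates and absorb the shifts, one converts this into the claimed lower bound
\[
\on{Re}\Big(\sum_{y_1,y_2} \id_{B_{\rho_1}}{\ast}\id_{B_{\rho_1}}(y_1)\,\id_{B_{\rho_1}}{\ast}\id_{B_{\rho_1}}(y_2)\,\on{e}\big(\theta(y_1,y_2)-\theta(y_2,y_1)\big)\Big)\geq 2^{-13}c^8|B_{\rho_1}|^4 .
\]
The convolution-of-indicators weights appear precisely because the natural ranges after Cauchy--Schwarz are sums of two dilated Bohr sets; Lemma~\ref{regbohrconvolution} lets me replace $\id_{B}$ by the normalised convolution $\frac{1}{|B_\delta|}\id_{B_\delta}\ast\id_{B_{1-\delta}}$ with $\ell^1$-error $O(\delta d)|B|$, and tracking that error through the (bounded number of) Cauchy--Schwarz steps is what fixes the numerology $\rho_1\leq c^2\rho/(2000d)$ and $\varepsilon\leq 2^{-20}c^8$.

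I expect the main obstacle to be bookkeeping rather than a conceptual difficulty: keeping all the Bohr-set dilates regular and nested correctly through roughly four Cauchy--Schwarz applications plus two change-of-variables steps, and ensuring every cumulative $\varepsilon$-error (from $\varepsilon$-bilinearity of $\theta$) and every cumulative Bohr-regularity error stays below the target thresholds $2^{-20}c^8$ and the $c^8$ in the conclusion. A secondary subtlety is that $\theta$ is only $\varepsilon$-\emph{bilinear}, not bilinear, so each time I move a variable inside $\theta$ or split $\theta(x+y,z)$ I pick up an additive $O(\varepsilon)$ term that must be pushed outside the exponential as a multiplicative $1+O(\varepsilon)$ factor via $|\on{e}(t)-1|\leq 2\pi|t|$ (Lemma~\ref{elemExp}); I need these to compound to at most a constant factor loss, which is fine since there are boundedly many of them. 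Once the symmetry relation $\theta(y_1,y_2)\approx\theta(y_2,y_1)$ is established on the doubled Bohr set with positive density, the stated inequality follows by expanding the definition of the convolution weights and a final application of the radius-reduction and change-of-variables lemmas to match the exact ranges.
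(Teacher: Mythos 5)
Your proposal has the right ingredients in its periphery (regularity bookkeeping, $\varepsilon$-error tracking, final shape of the conclusion with convolution weights) but misidentifies the mechanism at the heart of the argument, and the route you describe would actually fail.

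The key problem is your first step. You propose to eliminate all three of $f_1,f_2,f_3$ at once via repeated Cauchy--Schwarz with additive shifts, arriving at a bound on something like
\[
\Big|\sum \on{e}\big(\theta(x+a,y+b)-\theta(x+a,y)-\theta(x,y+b)+\theta(x,y)\big)\Big|.
\]
This is the box norm of $\on{e}(\theta(x,y))$. But for \emph{any} $\varepsilon$-bilinear $\theta$, symmetric or not, the phase $\theta(x+a,y+b)-\theta(x+a,y)-\theta(x,y+b)+\theta(x,y)$ is within $O(\varepsilon)$ of $\theta(a,b)$, so the quantity in the display is trivially $\approx 1$ and carries no information whatsoever about whether $\theta$ is symmetric. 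The box norm is a symmetric object and cannot distinguish $\theta(x,y)$ from $\theta(y,x)$; once you have killed all three $f_i$ you have thrown away precisely the information that makes the symmetry argument work. Your subsequent step --- ``comparing the $(x,y)\mapsto(x,a)$ specialisation with the $(x,y)\mapsto(a,x)$ specialisation'' --- is too vague to rescue this, because the box-norm inequality you arrived at is invariant under swapping those two specialisations.

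What the paper actually does (Tidor's streamlining of Green--Tao) is Cauchy--Schwarz only \emph{once}, to remove $f_1$. This leaves $f_2,f_3$ in place with the phase $\theta(x,y_1)-\theta(x,y_2)$. The crucial move, which your plan never states, is then a change of variables $x\mapsto x-y_1-y_2$ (an application of Lemma~\ref{regularbohrchangeVar}, not merely for ``absorbing shifts''). Because $f_3$ is evaluated at $x+y_i$, this substitution re-entangles $x,y_1,y_2$, and $\varepsilon$-bilinearity makes the phase decompose, up to $O(\varepsilon)$, as
\[
\big(\theta(y_1,y_2)-\theta(y_2,y_1)\big) \;+\; \big(\text{a function of }(x,y_1)\text{ only}\big) \;-\; \big(\text{a function of }(x,y_2)\text{ only}\big).
\]
The antisymmetric piece is isolated as the only genuine $(y_1,y_2)$-cross term; the remaining factorable pieces get absorbed into bounded functions $u(x,y_1)$, $v(x,y_2)$. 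Only at this point does one Cauchy--Schwarz in $y_1$ and in $y_2$ (after averaging out $x$) to remove $u,v$ and conclude that $\on{e}(\theta(y_1,y_2)-\theta(y_2,y_1))$ has large average, which via one more use of $\varepsilon$-bilinearity and a change of variables produces the stated inequality with the convolution weights. Without the $x\mapsto x-y_1-y_2$ substitution performed \emph{before} you have destroyed $f_2,f_3$, the antisymmetric part never appears, and the conclusion of the lemma is not reachable.
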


\begin{proof}
    By Cauchy-Schwarz inequality, we have
    \[ \sum_{x, y_1, y_2 \in B_\rho} f_2(y_1) f_3(x + y_1) \overline{f_2(y_2) f_3(x + y_2)} \on{e}(\theta(x,y_1) - \theta(x,y_2)) \geq c^2 |B_\rho|^3.\]
    We apply Lemma~\ref{radiusReductionLemma} twice with error parameter $c^2/4$ to find elements $t_1, t_2 \in B_{\rho - \rho_1}$ such that
    \[\Big|\sum_{x \in B_\rho, y_1, y_2 \in B_{\rho_1}} f_2(y_1 + t_1) f_3(x + y_1 + t_1) \overline{f_2(y_2 + t_2) f_3(x + y_2 + t_2)} \on{e}(\theta(x,y_1 + t_1) - \theta(x,y_2 + t_2))\Big|\geq \frac{c^2}{2}|B||B_{\rho_1}|^2.\]
    Looking at the function, since $\rho_1 \leq \frac{c^2 \rho}{2000 d}$,
    \[(x, y_1, y_2) \mapsto f_2(y_1 + t_1) f_3(x + y_1 + t_1) \overline{f_2(y_2 + t_2) f_3(x + y_2 + t_2)} \on{e}(\theta(x,y_1 + t_1) - \theta(x,y_2 + t_2))\]
    we use Lemma~\ref{regularbohrchangeVar} with error parameter $c^2/4$ to make a change of variables $x - y_1 - y_2$ in place of $x$, so
    \begin{align*}\Big|\sum_{x \in B_\rho, y_1, y_2 \in B_{\rho_1}} f_2(y_1 + t_1) f_3(x - y_2 + t_1) &\overline{f_2(y_2 + t_2) f_3(x - y_1 + t_2)} \\
    &\on{e}(\theta(x - y_1 - y_2, y_1 + t_1) - \theta(x - y_1 - y_2, y_2 + t_2))\Big|\geq \frac{c^2}{4}|B||B_{\rho_1}|^2.\end{align*}

    Using $\varepsilon$-bilinearity a few times, we get
    \begin{align*}\Big\|\Big(\theta(x - y_1 - y_2, y_1 + t_1) - \theta(x - y_1 - y_2, y_2 + t_2)\Big) - \Big(\theta(y_1, y_2) - &\theta(y_2, y_1) + \theta(x - y_1, y_1 + t_1) - \theta(y_2, t_1)\\
    - &\theta(x - y_2, y_2 + t_2) + \theta(y_1, t_2)\Big)\Big\|_{\mathbb{T}} \leq 4\varepsilon.\end{align*}

    By elementary estimate $|\on{e}(t) - | \leq 2 \pi |t|$ and the triangle inequality, we get
    \begin{align*}\Big|\sum_{x \in B_\rho, y_1, y_2 \in B_{\rho_1}}  f_2(y_1 + t_1)&\overline{f_3(x - y_1 + t_2)} \on{e}\Big(\theta(x - y_1, y_1 + t_1) + \theta(y_1, t_2)\Big) \\
    &f_3(x - y_2 + t_1) \overline{f_2(y_2 + t_2)} \on{e}\Big(-\theta(x - y_2, y_2 + t_2) -  \theta(y_2, t_1)\Big)\\
    &\on{e}(\theta(y_1, y_2) - \theta(y_2, y_1))\Big|\geq \Big(\frac{c^2}{4} - 8 \pi \varepsilon\Big)|B_\rho||B_{\rho_1}|^2.\end{align*}

    Recall that $\varepsilon \leq \frac{c^2}{256}$, so the factor on the right hand side can be replaced by $\frac{c^2}{8}$. By averaging over $x$ and writing $\alpha(y_1, y_2) = \theta(y_1, y_2) - \theta(y_2, y_1)$, we obtain two functions $u,v : B_{\rho_1} \to \mathbb{D}$ such that
    \[\Big|\sum_{y_1, y_2 \in B_{\rho_1}}  u(y_1) v(y_2) \on{e}\Big(\alpha(y_1, y_2)\Big)\Big|\geq \frac{c^2}{8}|B_{\rho_1}|^2.\]

    By Cauchy-Schwarz inequality, we have
    \begin{align*}\Big(\frac{c^2}{8}|B_{\rho_1}|^2\Big)^2 \leq &\Big|\sum_{y_1\in B_{\rho_1}}  u(y_1) \Big(\sum_{y_2 \in B_{\rho_1}} v(y_2) \on{e}\Big(\alpha(y_1, y_2)\Big)\Big|^2\\
    \leq& \Big(\sum_{y_1\in B_{\rho_1}}  |u(y_1)|^2\Big) \Big(\sum_{y_1\in B_{\rho_1}}  \Big|\sum_{y_2 \in B_{\rho_1}} v(y_2) \on{e}\Big(\alpha(y_1, y_2)\Big)\Big|^2\Big),\end{align*}
    so after expansion, we get
    \[2^{-6}c^4|B_{\rho_1}|^3 \leq \sum_{y_1, y_2, y_2' \in B_{\rho_1}} v(y_2) \overline{v(y'_2)}\on{e}\Big(\alpha(y_1, y_2) - \alpha(y_1, y'_2)\Big).\]
    Another such step with $y_1$ in place of $y_2$ gives
    \[2^{-12}c^8|B_{\rho_1}|^4 \leq \sum_{y_1, y_1', y_2, y_2' \in B_{\rho_1}}\on{e}\Big(\alpha(y_1, y_2) - \alpha(y_1, y'_2) - \alpha(y_1, y'_2) + \alpha(y'_1, y'_2)\Big).\]

    By $\varepsilon$-bilinearity, the value $\alpha(y_1, y_2) - \alpha(y_1, y'_2) - \alpha(y_1, y'_2) + \alpha(y'_1, y'_2)$ differs from $\alpha(y_1 - y_1', y_2 - y_2')$ by at most $3\varepsilon$. Hence
   \[2^{-13}c^8|B_{\rho_1}|^4 \leq \on{Re}\bigg(\sum_{y_1, y_1', y_2, y_2' \in B_{\rho_1}}\on{e}\Big(\alpha(y_1 - y_1', y_2 - y_2')\Big)\bigg).\]
    After a change of variables, where we take $z_i = y_i - y'_i$ in place of $y_i$, we get
    \[2^{-13}c^8|B_{\rho_1}|^4 \leq \on{Re}\bigg(\sum_{z_1, z_2 \in G, y_1', y_2' \in B_{\rho_1}} \id_{B_{\rho_1}} (y'_1 + z_1)\id_{B_{\rho_1}} (y'_2+ z_2)\on{e}\Big(\alpha(z_1, z_2)\Big)\bigg),\]
    from which the claim follows.
\end{proof}

\subsection{Equidistribution theory}

This subsection is devoted to understanding the structure of $\varepsilon$-trilinear maps which do not have uniform distribution of values.

\begin{theorem}\label{equidistributionalmosttrilinearmaps}
    Let $B$ be a Bohr set of codimension $d$ and radius $\rho$. Let $\delta \leq 1/2$ be such that $B_\delta$ is regular. Suppose that $\phi : B \times B \times B \to \mathbb{T}$ is a $\varepsilon$-trilinear map such that 
    \[\Big|\sum_{x,y,z \in G} \id_{B_\delta} \ast \id_{B_\delta}(x) \id_{B_\delta} \ast \id_{B_\delta}(y) \id_{B_\delta} \ast \id_{B_\delta}(z) \on{e}(\psi(x,y,z))\Big| \geq c |B_\delta|^6.\]
    \indent Then there exists a bilinear Bohr variety $V \subseteq B \times B$ of codimension at most $(2d \log(\varepsilon^{-1} c^{-1} \delta^{-1} \rho^{-1}))^{O(1)}$ and a radius $\tilde{\rho} \geq \exp(-(2d \log(\varepsilon^{-1} c^{-1} \delta^{-1}\rho^{-1}))^{O(1)})$ such that $\|\phi(x,y,z)\|_{\mathbb{T}} \leq (2d)^{O(d)} \sqrt{\varepsilon}$ for all $(x,y) \in V$ and $z \in B(\Gamma, \tilde{\rho})$. 
\end{theorem}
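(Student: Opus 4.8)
\textbf{Proof plan for Theorem~\ref{equidistributionalmosttrilinearmaps}.} The strategy is to use the bias hypothesis to run a symmetry argument in one pair of variables, then iterate, and finally convert the resulting almost-bilinear structure into a Freiman bihomomorphism to which Theorem~\ref{maininversetheorem} applies. First I would fix the third variable: by averaging over $z$ (after the usual convolution/radius-reduction steps of Lemma~\ref{radiusReductionLemma} and Lemma~\ref{regbohrconvolution}), the bias hypothesis produces, for a positive proportion of $z \in B_\delta + B_\delta$, functions $f^z_1, f^z_2, f^z_3$ on a slightly larger Bohr set such that $\big|\sum_{x,y} f^z_1(x) f^z_2(y) f^z_3(x+y) \on{e}(\phi(x,y,z))\big|$ is large relative to $|B|^2$. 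Here the key observation is that $\theta_z(x,y) := \phi(x,y,z)$ is an $\varepsilon$-bilinear form on a regular dilate of $B$, so Lemma~\ref{symmetryArgumentBohr} (the symmetry argument) gives that the symmetrization $\phi(y_1,y_2,z) - \phi(y_2,y_1,z)$ is biased (its exponential sum against the double convolution of a small Bohr set is $\gg c^8$ in real part) for all such $z$.

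Next I would upgrade ``biased for many $z$'' to genuine structure. Unpacking the conclusion of Lemma~\ref{symmetryArgumentBohr} and expanding the double convolutions, for a dense set of $z$ there is a bias in $\sum \on{e}(\phi(y_1,y_2,z)-\phi(y_2,y_1,z))$ over $y_1,y_2$ in a Bohr set; applying a Cauchy--Schwarz / popularity argument in $z$ and using $\varepsilon$-trilinearity to replace $z$-sums by a single new almost-bilinear form in $(z, \cdot)$, one reduces to the situation that the antisymmetric part $\alpha(x,y,z) = \phi(x,y,z) - \phi(y,x,z)$ is small in $\|\cdot\|_{\mathbb{T}}$ on a bilinear Bohr subvariety. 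Concretely, I expect that after $O(1)$ such symmetrization steps (symmetrizing each of the three pairs of variables in turn, each time only shrinking Bohr radii and codimensions by the allowed quasipolynomial amounts) one arrives at the conclusion that $\phi$ agrees, up to an error of size $O_d(\sqrt\varepsilon)$ in $\mathbb{T}$, with a \emph{symmetric} $O(\varepsilon)$-trilinear form on some bilinear Bohr variety of the required codimension and radius. The symmetric case is the favorable one: a symmetric almost-trilinear form that is still biased must, by the standard argument, essentially come from a Freiman bihomomorphism — one forms $\psi_x(y,z) := \phi(x,y,z)$ and observes that the biased symmetrized data forces $\range$-type conditions, or more directly one passes to the associated map on a dense subset of $G_1\times G_2$ that respects directional additive quadruples, invoking Theorem~\ref{approxFreimanHom} and Lemma~\ref{smallrangelemma} to clean it up. Then Theorem~\ref{maininversetheorem} produces an $E$-bihomomorphism $\Phi$ on a product of Bohr sets, and the almost-cubic/almost-trilinear rigidity (the fact that $E$ has small rank and that $\Phi$ is genuinely bilinear up to $E$) combined with shrinking the Bohr set so that $E$-errors vanish on the relevant sub-Bohr set yields $\|\phi(x,y,z)\|_{\mathbb{T}} \le (2d)^{O(d)}\sqrt\varepsilon$ on $V \times B(\Gamma,\tilde\rho)$.

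The main obstacle I anticipate is the passage from ``the symmetrized form is biased for a dense set of slices $z$'' to ``the form is essentially symmetric on a single bilinear Bohr variety'': the symmetry argument of Lemma~\ref{symmetryArgumentBohr} is applied slice-by-slice, and the Bohr sets / shrinking parameters it produces could a priori depend on $z$, so one must run a dependent-random-choice or pigeonhole argument (in the spirit of Proposition~\ref{changeofperspectivestep1} and Proposition~\ref{absgStep2bilBohrvar}) to extract a $z$-uniform sub-Bohr set and a single bilinear Bohr variety on which the antisymmetric part is uniformly small. This is exactly the kind of step where the algebraic regularity method (Theorem~\ref{algreglemmaintro}) and the abstract Balog--Szemer\'edi--Gowers theorem (Theorem~\ref{absg}) are needed to keep all the error terms and quasirandomness under control, and where the quasipolynomial bookkeeping has to be done carefully. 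A secondary technical point is tracking the accumulation of $\sqrt\varepsilon$-errors through the $O(1)$ symmetrization rounds and the final application of Theorem~\ref{maininversetheorem}; since each round introduces an error bounded by a fixed power of $\varepsilon$ times a $(2d)^{O(d)}$ factor, and there are only boundedly many rounds, the final bound $(2d)^{O(d)}\sqrt\varepsilon$ is consistent, but one has to be careful that the bilinear Bogolyubov argument and the extension steps do not reintroduce larger errors — here one uses that those steps are exact (they manipulate Freiman bihomomorphisms, not approximate ones) once the almost-trilinear form has been replaced by its associated Freiman bihomomorphism.
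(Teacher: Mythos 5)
Your plan misidentifies the key step and as written would not close. The heart of the paper's proof of Theorem~\ref{equidistributionalmosttrilinearmaps} is that an $\varepsilon$-trilinear form is $\varepsilon$-\emph{linear} in its third variable, so the lemma immediately preceding the theorem replaces $\phi(x,y,\cdot)$ by a genuine character: for each $(x,y)$ there is $\chi(x,y)\in\hat G$ with $\|\phi(x,y,z)-\chi(x,y)(z)\|_{\mathbb{T}}\leq O(d^{O(d)}\sqrt\varepsilon)$ for $z$ in a shrunk Bohr set. From that point on the error is frozen, everything is exact: the map $(x,y)\mapsto\chi(x,y)$ is shown to be $E$-bilinear for $E=\langle\Gamma\rangle_{[-R,R]}$ (via Proposition~\ref{bohrsizeLargeFC}), and the bias hypothesis directly forces $\chi(x,y)\in E$ for a dense set of $(x,y)$ in $B_\delta\times B_\delta$ by averaging the $z$-sum. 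The bilinear Bogolyubov argument (Theorem~\ref{bogruzsabilinearintro}) then immediately produces the bilinear Bohr variety $V$, and since $\chi$ lands in a bounded multiple of $E$ on $V$ by $E$-bilinearity, $\phi$ is $(2d)^{O(d)}\sqrt\varepsilon$-small once $z$ ranges over a Bohr set small enough to kill everything in $E$. None of this invokes Theorem~\ref{maininversetheorem}, the abstract Balog--Szemer\'edi--Gowers theorem, the algebraic regularity lemma, or the symmetry argument.

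Your proposal instead runs Lemma~\ref{symmetryArgumentBohr} slice-by-slice in $z$ and then tries to pass to Theorem~\ref{maininversetheorem}, and this has two concrete problems. First, the symmetry argument's \emph{conclusion} is only that the antisymmetrization $\phi(y_1,y_2,z)-\phi(y_2,y_1,z)$ has a biased exponential sum; it does not say the antisymmetric part is pointwise small, and it gives no direct route to the claim that ``after $O(1)$ symmetrization steps one arrives at a symmetric $O(\varepsilon)$-trilinear form.'' To turn bias of the antisymmetrization into smallness of the antisymmetrization you would already need an equidistribution result of exactly the sort you are trying to prove, so this step is circular. Second, and independently of that, symmetry is simply not part of the conclusion of Theorem~\ref{equidistributionalmosttrilinearmaps} (nor is it needed); Lemma~\ref{symmetryArgumentBohr} is proved in the same section but is consumed only later, in the deductions of Theorems~\ref{genu4introduction} and~\ref{abelian2groupscaseintroduction}, where the $f_1(x)f_2(y)f_3(x+y)$ cross-term correlation genuinely arises. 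Finally, even granting the symmetrization, your invocation of Theorem~\ref{maininversetheorem} skips the step that actually controls the $\sqrt\varepsilon$ error: you need to go from an \emph{approximate} $\mathbb{T}$-valued form to an \emph{exact} group-valued map before any Freiman-bihomomorphism machinery can be applied, and the only place that happens is the linearization-to-character step that your outline does not contain. That single lemma is what collapses the problem to a purely bilinear one to which the (much lighter) bilinear Bogolyubov argument applies directly.
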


Before proceeding with the proof, we need to understand almost linear forms. Eventually, we shall be able to relate them to characters, but firstly we need a lemma about the special case of biased linear forms defined on groups and arithmetic progressions.

\begin{lemma}\label{biasedalmostlinearformstructure}
    Let $X \subseteq G$. Suppose that $\phi : X \to \mathbb{T}$ is a $\varepsilon$-linear map such that $\Big|\sum_{x \in X} \on{e}(\phi(x))\Big| \geq c|X|$. 
    \begin{itemize}
        \item If $X \leq G$ and $c \geq 36 \pi \sqrt{\varepsilon}$, then $|\phi(x)| \leq 9\sqrt{\varepsilon}$ on $X$.
        \item If $X = [-L, L] \cdot a$ is a proper arithmetic progression in $G$, then $|\phi(x)| \leq 1000c^{-2} \varepsilon$ for $|x| \leq \varepsilon L$.
    \end{itemize}
\end{lemma}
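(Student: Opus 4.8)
\textbf{Proof plan for Lemma~\ref{biasedalmostlinearformstructure}.}
The two cases are essentially independent, so I would treat them separately, but both follow the same philosophy: square the biased sum once or twice to convert $\varepsilon$-linearity into genuine information, and then exploit the algebraic structure of $X$ (subgroup, or progression) to pin down $\phi$ pointwise.

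\emph{Case of a subgroup.} Starting from $\big|\sum_{x \in X} \on{e}(\phi(x))\big| \geq c|X|$, apply Cauchy--Schwarz and a change of variables $x \mapsto x + y$ (legitimate since $X$ is a group) to obtain $\sum_{x,y \in X} \on{e}(\phi(x+y) - \phi(x)) \geq c^2 |X|^2$. By $\varepsilon$-linearity, $\phi(x+y) - \phi(x)$ differs from $\phi(y)$ by at most $\varepsilon$ in $\|\cdot\|_{\mathbb{T}}$, so using $|\on{e}(t) - \on{e}(s)| \leq 2\pi\|t-s\|_{\mathbb{T}}$ (Lemma~\ref{elemExp}) we get $\big|\sum_{y \in X} \on{e}(\phi(y))\big| \geq (c^2 - 2\pi\varepsilon)|X| \geq \tfrac12 c^2 |X|$, say, using $c \geq 36\pi\sqrt\varepsilon$. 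Iterating this CS step: each application replaces the bias $c$ by (at least) $\tfrac12 c^2$; but the cleaner route is to note that the single squared estimate already forces most of the values $\phi(y)$ to cluster. Concretely, $\on{Re}\sum_{y \in X}\on{e}(\phi(y)) \geq (c^2 - 2\pi\varepsilon)|X|$ means $\sum_{y\in X}(1 - \cos(2\pi\phi(y))) \leq (1 - c^2 + 2\pi\varepsilon)|X|$, and by $1 - \on{Re}\on{e}(t) \geq 8\|t\|_{\mathbb{T}}^2$ (Lemma~\ref{elemExp}) at least a $(1 - O(\varepsilon/c^2))$-fraction of $y \in X$ satisfy $\|\phi(y)\|_{\mathbb{T}} \leq O(\sqrt\varepsilon/c)$; call this dense set $Y$. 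Now for \emph{any} $x \in X$, since $Y$ and $x + Y$ are both dense in $X$ they intersect, so $x = y_1 - y_2$ with $y_1, y_2 \in Y$; by $\varepsilon$-linearity $\|\phi(x)\|_{\mathbb{T}} \leq \|\phi(y_1)\|_{\mathbb{T}} + \|\phi(y_2)\|_{\mathbb{T}} + \varepsilon \leq O(\sqrt\varepsilon)$. Tracking the constants with $c \geq 36\pi\sqrt\varepsilon$ gives the stated bound $9\sqrt\varepsilon$ (and in fact, since $\|\phi(x)\|_{\mathbb{T}} < 1/2$ and $\phi(x)$ is forced close to $0$, we may write $|\phi(x)|$ for $\|\phi(x)\|_{\mathbb{T}}$).

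\emph{Case of an arithmetic progression.} Here $X = [-L,L]\cdot a$, and the key point is to locate the value $\|\phi(a)\|_{\mathbb{T}}$. Write $\alpha = \phi(a)$. By $\varepsilon$-linearity, an easy induction shows $\|\phi(ka) - k\alpha\|_{\mathbb{T}} \leq (|k|-1)\varepsilon$ for $|k| \leq L$, so $\on{e}(\phi(ka)) = \on{e}(k\alpha)\cdot(1 + O(k\varepsilon))$. Plugging this into the biased sum, $c(2L+1) \leq \big|\sum_{|k|\leq L} \on{e}(\phi(ka))\big| \leq \big|\sum_{|k|\leq L}\on{e}(k\alpha)\big| + 2\pi\sum_{|k|\leq L}|k|\varepsilon \leq \min\{2L+1, \|\alpha\|_{\mathbb{T}}^{-1}\} + 2\pi L^2\varepsilon$ by Fact~\ref{stdexpsumest}. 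If $L^2\varepsilon$ is comparable to or larger than $cL$ then $\varepsilon L \gtrsim c$ and the conclusion "$|\phi(x)| \leq 1000 c^{-2}\varepsilon$ for $|x| \leq \varepsilon L$" becomes vacuous or trivial since there are essentially no such $x$ with $x \neq 0$; so I may assume $2\pi L\varepsilon \leq \tfrac12 c$, whence $\|\alpha\|_{\mathbb{T}} \leq \tfrac{2}{cL}$. Then for $|k| \leq \varepsilon L$ we get $\|\phi(ka)\|_{\mathbb{T}} \leq |k|\|\alpha\|_{\mathbb{T}} + |k|\varepsilon \leq \varepsilon L \cdot \tfrac{2}{cL} + \varepsilon^2 L \leq 1000 c^{-2}\varepsilon$ after absorbing constants (using $c \leq 1$), which is the claim for $x = ka$ with $|x| = |k| \leq \varepsilon L$. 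Properness of the progression ensures $x \mapsto k$ is well-defined so there is no ambiguity.

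\emph{Expected main obstacle.} The arguments are elementary; the only genuine care needed is bookkeeping the constants so that the precise numerical bounds ($9\sqrt\varepsilon$, $1000c^{-2}\varepsilon$) come out, and—in the progression case—correctly handling the degenerate regime where $\varepsilon L$ is not small, where the statement is vacuous rather than false. I would write out the induction $\|\phi(ka) - k\phi(a)\|_{\mathbb{T}} \leq (|k|-1)\varepsilon$ carefully since it is used twice, and otherwise keep the estimates at the level of sketch above.
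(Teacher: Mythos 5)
Both halves of your argument have a genuine gap, and in both cases the root cause is the same: you try to extract the near-equality from the bias $c$, which may be as small as $36\pi\sqrt\varepsilon$, instead of from $\varepsilon$-linearity itself.

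\emph{Subgroup case.} From $\on{Re}\sum_{y\in X}\on{e}(\phi(y))\geq(c^2-2\pi\varepsilon)|X|$ you claim that a $(1-O(\varepsilon/c^2))$-fraction of $y$ satisfy $\|\phi(y)\|_{\mathbb{T}}\leq O(\sqrt\varepsilon/c)$; this does not follow. Markov combined with $1-\on{Re}\on{e}(t)\geq 8\|t\|_{\mathbb{T}}^2$ only bounds the fraction of $y$ with $\|\phi(y)\|_{\mathbb{T}}>\delta$ by $(1-c^2+2\pi\varepsilon)/(8\delta^2)$, and this is below $1/2$ only for $\delta\gtrsim\sqrt{1-c^2}$, which is about $1/2$ when $c$ is small---so the dense set $Y$ tells you nothing. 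Your target $O(\sqrt\varepsilon/c)$ is also of the wrong shape: at $c=36\pi\sqrt\varepsilon$ it is a fixed constant, not $9\sqrt\varepsilon$. The correct move is to use $\varepsilon$-linearity \emph{alone} (no bias) to show, for $f(x)=\on{e}(\phi(x))$, that $\|f\|_{\mathsf{U}^2(X)}^4=\ex_{x,a,b\in X}\on{e}(\Delta_{a,b}\phi(x))\geq 1-4\pi\varepsilon$; together with $\sum_\chi|\hat f(\chi)|^2=1$ this produces $\chi\in\hat X$ and $t\in\mathbb{T}$ with $|\hat f(\chi)|\geq 1-4\pi\varepsilon$, hence $\|\phi-\chi-t\|_{\mathbb{T}}\leq 4\sqrt\varepsilon$ on a set $X'$ with $|X'|>\frac23|X|$, and then $X=X'-X'$ gives $\|\phi-\chi\|_{\mathbb{T}}\leq 9\sqrt\varepsilon$ on all of $X$. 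The bias is used only at the very end, to force $\chi=0$ via $|\sum_x\on{e}(\chi(x))|\geq(c-18\pi\sqrt\varepsilon)|X|>0$.

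\emph{Progression case.} Your telescoping estimate $\|\phi(ka)-k\phi(a)\|_{\mathbb{T}}\leq(|k|-1)\varepsilon$ is correct, but summing it over all of $[-L,L]$ produces an error of order $L^2\varepsilon$, and this loses to the bias $c(2L+1)$ only when $L\varepsilon\lesssim c$. You dismiss the complementary regime as vacuous, and that is where the gap is: since $c\leq 1$, your working hypothesis $2\pi L\varepsilon\leq c/2$ forces $\varepsilon L<1$, so the only $x$ with $|x|\leq\varepsilon L$ is $x=0$ and you have proved nothing beyond $\|\phi(0)\|_{\mathbb{T}}\leq\varepsilon$. Conversely, when $\varepsilon L\geq 1$ and also $\varepsilon<c^2/2000$ (so $1000c^{-2}\varepsilon<1/2$ and the claim is not vacuous---a perfectly consistent regime, e.g.\ $L>2000/c^2$), your argument says nothing. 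The fix is to argue by contradiction and never accumulate $\varepsilon$-linearity across the whole of $[-L,L]$: if $\|\phi(x)\|_{\mathbb{T}}\geq 1000c^{-2}\varepsilon$ for some $0<|x|\leq\varepsilon L$, tile $[-L,L]$ by translates of the short progression of length $k|x|$ with $k\approx c/(100\varepsilon)$, leaving $\lesssim cL/100$ uncovered elements, and on each tile use $\varepsilon$-linearity over only $O(k)$ steps together with $|\sum_{i<k}\on{e}(i\phi(x))|\leq\|\phi(x)\|_{\mathbb{T}}^{-1}$ (Fact~\ref{stdexpsumest}) to get a per-tile bound of $\leq(c/2)\cdot(\text{tile size})$; summing contradicts the bias $c(2L+1)$.
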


A consequence of $\varepsilon$-linearity and elementary estimates on $\on{e}(t) - 1$ that we shall frequently use in the proof is
\[\Big|\on{e}(\phi(a_1 + \dots + a_r)) - \on{e}(\phi(a_1))\cdots \on{e}(\phi(a_r))\Big| \leq 2\pi r \varepsilon\]
for all $a_1, \dots , a_r \in X$ such that $a_i + \dots + a_r \in X$ for all $i \in [r]$.

\begin{proof}
\textbf{Subgroup case.} Suppose first that $X \leq G$. Consider the function $f : X \to \mathbb{C}$ given by $f(x) = \on{e}(\phi(x))$. Then, on the group $X$, we have $\sum_{\chi \in \hat{X}} |\hat{f}(\chi)|^4 = \|f\|_{\mathsf{U}^2}^4 = \exx_{x, a, b \in X} \on{e}(\phi(x + a + b) - \phi(x + a) - \phi(x + b) + \phi(x)) \geq 1 - 4 \pi \varepsilon$. Hence, there exists $\chi \in \hat{X}$ such that $|\hat{f}(\chi)| \geq 1 - 4 \pi \varepsilon$. By looking at the argument of $\hat{f}(\chi)$ we find an element $t \in \mathbb{T}$ such that $\sum_{x \in X}\on{e}(\phi(x) - \chi(x) - t)$ is real and at least $(1 - 4 \pi \varepsilon)|X|$. Thus
\[\sum_{x \in X}\Big( 1 - \on{Re}\on{e}(\phi(x) - \chi(x) - t)\Big) \leq 4 \pi \varepsilon |X|.\]
Let $X'$ be the set of all $x \in X$ such that $\|\phi(x) - \chi(x) - t\|_{\mathbb{T}} \leq 4\sqrt{\varepsilon} $. By Lemma~\ref{elemExp}, we get
\[\sum_{x \in X \setminus X'}\Big( 1 - \on{Re}\on{e}(\phi(x) - \chi(x) - t)\Big) \geq 128 \varepsilon |X \setminus X'|.\]
Hence, $|X'| \geq \frac{2}{3}|X|$ and so every element $x \in X$ can be written as $y - z$ for $y,z \in X'$. By $\varepsilon$-linearity, we get $\|\phi(x) - \chi(x)\|_{\mathbb{T}} \leq 9\sqrt{\varepsilon}$ for all $x \in X$. The bias assumption on $\phi$ and Lemma~\ref{elemExp} imply that 
\[\Big|\sum_{x \in X} \on{e}(\chi(x))\Big| \geq \Big|\sum_{x \in X} \on{e}(\phi(x))\Big|  - \Big|\sum_{x \in X} \on{e}(\chi(x) - \phi(x))\Big| \geq c |X| - 18 \pi \sqrt{\varepsilon}|X|  > 0,\] 
as long as $c \geq 36 \pi \sqrt{\varepsilon}$, so $\chi = 0$.\\

\textbf{Progression case.} Set $\varepsilon' = 1000c^{-2} \varepsilon$ and $k = c \varepsilon^{-1} /100$. Assume on the contrary, that there exists an element $x \in [- \varepsilon'' L,  \varepsilon'' L]$ such that $\|\phi(x)\|_{\mathbb{T}} \geq \varepsilon'$. Let us partition $X$ into translates of progression $[kx]$. This can be achieved with at most $kx$ elements remaining not covered. On the other hand, using $\varepsilon$-linearity gives for $i \leq k$
\[\Big|\on{e}(\phi(t + i x + j)) - \on{e}(\phi(t))\on{e}(i\phi(x))\on{e}(\phi(j))\Big| \leq 2 \pi (k+2) \varepsilon\]
so on each translate we have the estimate
\begin{align*}\Big|\sum_{y \in t + [kx]} \on{e}(\phi(y))\Big| = &\Big|\sum_{i = 0}^{k-1} \sum_{j \in [x]} \on{e}(\phi(t + i x + j))\Big| \leq kx 2 \pi (k+2) \varepsilon + \Big|\sum_{i = 0}^{k-1} \sum_{j \in [x]} \on{e}(\phi(t))\on{e}(i\phi(x))\on{e}(\phi(j))\Big| \\
= & 2 \pi k(k+2)x \varepsilon + \Big|\sum_{i = 0}^{k-1}\on{e}(i\phi(x))\Big|  \Big|\sum_{j \in [x]}\on{e}(\phi(j))\Big|\\
\leq &2 \pi  k(k+2)x \varepsilon +  x\Big|\sum_{i = 0}^{k-1}\on{e}(i\phi(x))\Big|\\
\leq & 2 \pi k(k+2)x \varepsilon +  x \frac{1}{\|\phi(x)\|_{\mathbb{T}}} \leq \Big(2 \pi k(k+2)\varepsilon + \frac{1}{\|\phi(x)\|_{\mathbb{T}}}\Big) x \leq \frac{c}{2} kx,
\end{align*}
where we used inequalities $2 \pi(k+2)\varepsilon \leq \frac{c}{4}$ and $\|\phi(x)\|_{\mathbb{T}} \geq 4c^{-1}/k$, that follow from our choices of parameters, in the last step.

As $[-L, L]$ can be partitioned into translates of $[kx]$ and at most $kx \leq k \varepsilon''L \leq  \frac{c}{4}L$ leftover elements, it follows that $\Big|\sum_{y \in [-L, L]} \on{e}(\phi(y))\Big|\leq \frac{c}{2} (2L + 1) + kx < c (2L + 1)$, which is a contradiction.
\end{proof}

We may now relate almost linear form on Bohr sets with characters on dilates.

\begin{lemma}
    Let $\phi : B(\Gamma, \rho) \to \mathbb{T}$ be a $\varepsilon$-linear form on a regular Bohr set of codimension $d$ and radius $\rho$. Then there exists a character $\chi$ such that $\|\phi(x) - \chi(x)\|_{\mathbb{T}} \leq O(d^{O(d)}\sqrt{\varepsilon})$ on $B(\Gamma, \Omega( \varepsilon d^{-O(d)}\rho))$.
\end{lemma}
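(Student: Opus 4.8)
The plan is to reduce the statement, via the structure of Bohr sets, to the two special cases already handled in Lemma~\ref{biasedalmostlinearformstructure}: subgroups and arithmetic progressions. First I would use Proposition~\ref{cpsinbohrsets} to find a proper symmetric coset progression $C = [-L_1,L_1]\cdot a_1 + \dots + [-L_r,L_r]\cdot a_r + K$ with $r\le d$ sitting inside $B(\Gamma,\rho)$, with $C$ and $d^{2d}C$ proper and $B(\Gamma, d^{-4d}\rho)\subseteq C\subseteq B(\Gamma, d^{-2d}\rho)\subseteq d^{2d}C\subseteq B(\Gamma,\rho)$, and such that shrinkings of $C$ are intertwined with dilates of $B(\Gamma,\rho)$. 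Since $\phi$ is $\varepsilon$-linear on $C$, the values on $C$ are controlled, up to cumulative error at most $O((d^{2d})\varepsilon)=d^{O(d)}\varepsilon$ from telescoping the $\varepsilon$-linearity bound, by the values $\alpha_i = \phi(a_i)$ and the restriction $\phi|_K$. More precisely, for $x = \lambda_1 a_1 + \dots + \lambda_r a_r + h \in d^{2d}C$ one has $\|\phi(x) - (\lambda_1\alpha_1 + \dots + \lambda_r\alpha_r + \phi(h))\|_{\mathbb{T}} \le 2\pi (r d^{2d})\varepsilon$, by expanding $x$ as a sum of at most $r d^{2d}$ generators lying in $C$ and applying $\varepsilon$-linearity repeatedly.

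Next I would pin down $\phi|_K$ and the coefficients $\alpha_i$ individually. The restriction $\phi|_K$ is $\varepsilon$-linear on the subgroup $K$; applying the first bullet of Lemma~\ref{biasedalmostlinearformstructure} — one needs no bias hypothesis here, just the argument producing $\chi\in\hat K$ with $\|\phi(x)-\chi(x)-t\|_{\mathbb{T}}\le 4\sqrt\varepsilon$ for a dense $X'\subseteq K$ and then using $K = X' - X'$ — gives a character $\kappa$ of $K$ with $\|\phi(x) - \kappa(x)\|_{\mathbb{T}}\le 9\sqrt\varepsilon$ for all $x\in K$; extend $\kappa$ to a character of $G$. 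For each arithmetic progression $[-L_i,L_i]\cdot a_i$, the map $\lambda\mapsto\phi(\lambda a_i)$ is $\varepsilon$-linear on a proper progression; the second bullet of Lemma~\ref{biasedalmostlinearformstructure}, applied with no bias assumption in the weak form (or one can simply observe directly that $\|\phi(\lambda a_i) - \lambda\alpha_i\|_{\mathbb{T}}\le 2\pi|\lambda|\varepsilon$ by telescoping), gives $\|\phi(\lambda a_i) - \lambda\alpha_i\|_{\mathbb{T}}\le 2\pi|\lambda|\varepsilon$, so $\|\phi(x) - \psi(x)\|_{\mathbb{T}}\le d^{O(d)}\varepsilon$ on a dilate $C' = [-\eta L_1,\eta L_1]\cdot a_1 + \dots + K'$ of $C$, where $\psi(\lambda_1 a_1+\dots+h) = \lambda_1\alpha_1+\dots+\kappa(h)$ and $\eta = \varepsilon$; here $\psi$ is a genuine partial homomorphism. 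The point is that $\psi$, defined this way on $C'$, is Freiman-linear, so $X = \{x\in B(\Gamma,\rho) : \|\phi(x)-\text{(extension)}\|\text{ small}\}$ is close to being a homomorphic image.

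To produce a global character approximating $\phi$, I would apply Lemma~\ref{homapproxcosetp} (approximation of Freiman homomorphisms on coset progressions): the partial homomorphism $\psi$ on $C'$ extends, after passing to a further shrinking $C''$ of $C'$ of comparable size, to a genuine homomorphism $\tilde\psi : G \to \mathbb{T}$ with $\|\psi(x)-\tilde\psi(x)-u\|_{\mathbb{T}}$ small on a dilate of $C''$; composing $\tilde\psi$ with the fact that dilates of $C''$ contain a dilate $B(\Gamma,\sigma)$ of the original Bohr set with $\sigma\ge \Omega(\varepsilon d^{-O(d)}\rho)$ (again via Proposition~\ref{cpsinbohrsets} or Proposition~\ref{cosettobohrset}), one gets $\|\phi(x) - \tilde\psi(x) - u\|_{\mathbb{T}} \le d^{O(d)}\sqrt\varepsilon$ for all $x\in B(\Gamma,\sigma)$. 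Finally, to remove the constant $u$: evaluate at $x=0$, where $\phi(0)=0$ (by $\varepsilon$-linearity, $\|\phi(0)\|\le \varepsilon$, so $\phi(0)$ is within $\varepsilon$ of $0$ and we may normalise) and $\tilde\psi(0)=0$, which forces $\|u\|_{\mathbb{T}}\le d^{O(d)}\sqrt\varepsilon$; absorbing $u$ into the error, $\chi = \tilde\psi$ works.

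I expect the main obstacle to be bookkeeping the constants through the chain Bohr set $\to$ coset progression $\to$ partial homomorphism $\to$ global homomorphism $\to$ Bohr set, since each passage to a shrinking costs a factor like $d^{O(d)}$ or $\rho^{O(d)}$ in the radius, and one must check these compose to the claimed $\Omega(\varepsilon d^{-O(d)}\rho)$ and that the error stays $O(d^{O(d)}\sqrt\varepsilon)$ — in particular the $\sqrt\varepsilon$ (rather than $\varepsilon$) loss enters only through the subgroup step on $K$, so one must make sure the progression directions contribute only $O(\varepsilon)$-type errors and do not degrade the exponent. A secondary subtlety is that Lemma~\ref{biasedalmostlinearformstructure} as stated carries bias hypotheses; I would either invoke only the part of its proof that needs no bias (the GCS/$\mathsf U^2$ argument on $K$ works unconditionally), or note that the biased hypothesis is automatically satisfied after restricting to the dense set $X'$ where $\phi$ is genuinely close to a character.
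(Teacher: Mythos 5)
Your high-level decomposition (coset progression via Proposition~\ref{cpsinbohrsets}, then handle the subgroup $K$ and the progression directions separately and glue into a Freiman-linear $\psi$) is a genuinely different route from the paper's, which \emph{first} extracts the character $\chi$ by computing $\|\id_B \cdot \on{e}(\phi)\|_{\mathsf{U}^2}$ on the full Bohr set, and only then passes to a coset progression to apply the \emph{biased} cases of Lemma~\ref{biasedalmostlinearformstructure} to $\phi - \chi$. However, your route has two concrete gaps.

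First, the telescoping estimate is wrong. You assert $\|\phi(x) - (\lambda_1\alpha_1 + \dots + \lambda_r\alpha_r + \phi(h))\|_{\mathbb{T}} \le 2\pi r d^{2d}\varepsilon$ for $x \in d^{2d}C$. Telescoping $\varepsilon$-linearity gives $\|\phi(\lambda a_i) - \lambda\alpha_i\|_{\mathbb{T}} \leq (|\lambda|-1)\varepsilon$, so the cumulative error is of order $\sum_i|\lambda_i|\,\varepsilon$, which is not bounded by $d^{O(d)}\varepsilon$ when the progression lengths $L_i$ are large — and they are, as they scale with $|G|$. Without the bias hypothesis there is no control on $\alpha_i$, so $\lambda\alpha_i$ genuinely drifts. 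In the paper, this is precisely what the bias step fixes: once the $\mathsf{U}^2$ argument on $B$ has been done, the remaining $\phi' = \phi - \chi$ is biased on $B$ with bias $\Omega(3^{-O(d)})$ (this is a small-doubling estimate for $B$, not a density estimate), and the second bullet of Lemma~\ref{biasedalmostlinearformstructure} then forces $\|\phi'(\lambda a_i)\|_{\mathbb{T}}\le O(c^{-2}\varepsilon)$ for $|\lambda|\le\varepsilon L_i$, \emph{uniformly} in $L_i$. That biased estimate is what replaces your telescoping bound.

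Second, the radius you obtain is strictly weaker than the one claimed. You invoke Lemma~\ref{homapproxcosetp} to turn the Freiman-linear $\psi$ on $C'$ into a global homomorphism, and describe the resulting shrinking $C''$ as ``of comparable size'' to $C'$. It is not: Lemma~\ref{homapproxcosetp} shrinks $P$ by a factor of $c^2(10^5r)^{-r}$ in size, where $c$ is the \emph{density of the coset progression inside $G$}. For a codimension-$d$ Bohr set this density is of order $\rho^d d^{-O(d^2)}$, so the size loss is $\rho^{2d}d^{-O(d^2)}$, which in radius terms costs an extra factor of roughly $\rho^2$. Your final Bohr set therefore has radius $\Omega(\varepsilon d^{-O(d)}\rho^{3})$ at best, not the stated $\Omega(\varepsilon d^{-O(d)}\rho)$. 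The reason the paper avoids this is exactly the order of operations: running the $\mathsf{U}^2$ argument on $B$ itself gives a Fourier coefficient lower bound $|\widehat{f}(\chi)| \gtrsim 3^{-O(d)}|B|/|G|$, i.e.\ a bias relative to $|B|$ of size $3^{-O(d)}$ with no $\rho$-dependence, because the small-doubling of Bohr sets is controlled by codimension alone. Running the same $\mathsf{U}^2$ argument after the fact on $\psi|_{C'}$, where one needs a Fourier coefficient against characters of $G$, unavoidably picks up the $\rho^{O(d)}$ density factor.

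Your observation that the subgroup part of Lemma~\ref{biasedalmostlinearformstructure}'s proof yields a character $\kappa$ on $K$ with error $9\sqrt\varepsilon$ without any bias hypothesis is correct and useful, but it does not help with the two issues above, both of which live in the progression directions.
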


\begin{proof}
    Let us note that we may assume that $\varepsilon \leq (2d)^{-\blc d}$ during the proof, as otherwise the statement is trivial.\\

    Define function $f(x) = \id_{B}(x) \on{e}(\phi(x))$, where $B = B(\Gamma, \rho)$. Note that
    \begin{align*}\Big|\sum_{x,a,b\in G}& f(x + a + b) \overline{f(x + a)}\,\overline{f(x + b)} f(x)\, - \,\id_{B}(x + a + b) \id_{B}(x + a) \id_{B}(x + b) \id_{B}(x)\Big| \\
    = & \Big| \sum_{x,a,b\in G}\id_{B}(x + a + b)\id_{B}(x + a) \id_{B}(x + b) \id_{B}(x)\Big(\on{e}(\phi(x + a + b) - \phi(x + a) - \phi(x + b) + \phi(x))- 1\Big) \Big| \\
    \leq &O(\varepsilon) \sum_{x,a,b\in G}\id_{B}(x + a + b) \id_{B}(x + a) \id_{B}(x + b) \id_{B}(x).\end{align*}

    Hence, provided $\varepsilon \leq \bsc$, we get
    \[\sum_\chi |\hat{f}(\chi)|^4 \geq \frac{1}{2} \exx_{x,a,b\in G}\id_{B} (x + a + b) \id_{B}(x + a) \id_{B}(x + b) \id_{B}(x) \geq \frac{1}{2|G|^3}|B(\Gamma, \rho/3)|^3 \geq \frac{1}{2}3^{-3d}(|B|/|G|)^3,\]
    due to small doubling property of Bohr sets.\\

    Hence, there exists $\chi \in \hat{G}$ such that 
    \[\Big|\sum_{x \in B} \on{e}(\phi'(x))\Big| \geq c |B|,\]
    where $\phi' = \phi - \chi$, which is also $\varepsilon$-linear, and $c \geq \frac{1}{2}3^{-3d}$.\\

    By Proposition~\ref{cpsinbohrsets}, for $\rho' = d^{-2d}\rho$, we may find a proper symmetric coset progression $C = [-L_1, L_1] \cdot a_1 + \dots + [-L_d, L_d] \cdot a_d + K$ of rank at most $d$ such that $C \subseteq B(\Gamma, \rho')$s and for all $\alpha \in (0,1/2)$
    \begin{equation}B(\Gamma, \alpha d^{-2d} \rho')  \subseteq [-2\alpha L_1, 2\alpha L_1] \cdot a_1 + \dots + [-2\alpha L_d, 2\alpha L_d] \cdot a_d + K.\label{shringbohrcpcontainment}\end{equation}

    Let $c' = c/400d$ and set $C' = [- c'L_1, c' L_1] \cdot a_1 + \dots + [-c' L_d, c' L_d] \cdot a_d + K$. Note that $C'$ is contained inside $B(\Gamma, 2c'  \rho')$. Namely, looking at any $x \in C'$, we have $x, 2x, \dots, m x \in C \subseteq B(\Gamma, \rho')$ for $m = \lfloor  {c'}^{-1} \rfloor$. Taking any character $\gamma \in \Gamma$, we have $\|\gamma(\ell x)\|_{\mathbb{T}} \leq \rho'$ for $\ell \in [m]$. This can only occur if $\|\gamma(x)\|_{\mathbb{T}} \leq 2c' \rho'$.\\ 
       
    Since $B$ is regular and $\phi'$ is $\varepsilon$-linear, we also have that 
    \begin{align*}\Big|\sum_{x \in B} \on{e}(\phi'(x)) - &\frac{1}{|C'|}\sum_{x \in B, y \in C'} \on{e}(\phi'(x) + \phi'(y))\Big| \\
    \leq &\Big|\sum_{x \in B} \on{e}(\phi'(x)) - \frac{1}{|C'|}\sum_{x \in B, y \in C'} \on{e}(\phi'(x + y))\Big| + 2 \pi \varepsilon |B|\\
    \leq & \frac{1}{|C'|}\sum_{y \in C'} |B \Delta (B + y)| + 2 \pi \varepsilon |B|  \leq (100d c' + 2 \pi \varepsilon) |B|. \end{align*}

    By our choice of $c'$ and averaging over $x \in B$, we get

    \[\Big|\sum_{y \in C'} \on{e}(\phi'(y))\Big| \geq \Big(\frac{c}{2} - O(\varepsilon)\Big) |C'|.\]

    Recall that 
    \[\Big|\on{e}(\phi(\lambda_1 a_1 + \dots + \lambda_r a_r + h)) - \on{e}(\phi(\lambda_1 a_1)) \cdots \on{e}(\phi(\lambda_r a_r))\on{e}(\phi(h))\Big| \leq O(r \varepsilon).\]
    
    As $C'$ can be partitioned into translates of $[- c' L_i, c' L_i] \cdot a_i$, as well as into translates of $K$, for each $i \in [d]$, we have
    
    \[\Big|\sum_{\lambda_i \in [- c'L_i, c' L_i]} \on{e}(\phi(\lambda_i a_i))\Big| \geq \Big(\frac{c}{2} - O(r\varepsilon)\Big) c'L_i,\]

    and 

    \[\Big|\sum_{x\in K} \on{e}(\phi(x))\Big| \geq \Big(\frac{c}{2} - O(r\varepsilon)\Big) |K|.\]

    By Lemma~\ref{biasedalmostlinearformstructure}, it follows that $\|\phi(x)\|_{\mathbb{T}} \leq O(c^{-2}r\varepsilon)$ holds for all $x$ inside
    \[[- c'\varepsilon L_1, c'\varepsilon L_1] \cdot a_1 + \dots + [-c'\varepsilon L_d,c'\varepsilon L_d] \cdot a_d + K.\]
    Recalling that $c \geq \frac{1}{2}3^{-3d}$ and using~\eqref{shringbohrcpcontainment}, we obtain $\|\phi - \chi\|_{\mathbb{T}} \leq O(d^{O(d)}\sqrt{\varepsilon})$ on $B(\Gamma, \frac{1}{4000}3^{-3d} \varepsilon d^{-2d}\rho)$.
\end{proof}

\vspace{\baselineskip}

We may now prove the equidistribution theorem.

\vspace{\baselineskip}

\begin{proof}[Proof of Theorem~\ref{equidistributionalmosttrilinearmaps}]
    Since $\phi$ is $\varepsilon$-trilinear, we have that $z\mapsto \phi(x,y, z)$ is $\varepsilon$-linear on $B$ for each $x, y \in B$. Let $\rho' \geq \Omega(\varepsilon d^{-O(d)}\rho)$ be such that $B(\Gamma, \rho')$ is regular and the conclusion of the previous lemma applies on $B(\Gamma, \rho')$. Thus, for each $(x,y) \in B^2$, we get a character $\chi(x,y) \in \hat{G}$ such that $\|\phi(x,y, z) - \chi(x,y)(z)\|_{\mathbb{T}} \leq \varepsilon'$ for all $z \in B(\Gamma, \rho')$, where $\varepsilon' = O(d^{O(d)}\sqrt{\varepsilon})$. \\

    Let $E = \langle \Gamma \rangle_{[-R, R]}$ for $R = \mathsf{C}_{\on{spec}}(\varepsilon\rho d^{-d}/2)^{O(d)}$, where $\mathsf{C}_{\on{spec}}$ is the constant from Proposition~\ref{bohrsizeLargeFC} and $R$ is the bound on absolute value of coefficients of linear combinations of characters coming from Proposition~\ref{bohrsizeLargeFC} when applied with choices $\varepsilon \leftarrow \frac{1}{2}(\rho'/4)^d$  and $\eta \leftarrow  \frac{1}{4}(\rho'/4)^{d+1}$.\\
    \indent Similarly to the notion of $E$-bihomomorphism, we define an \textit{$E$-bilinear map} as a map $\Theta: A \subseteq G \times G \to \hat{G}$ such that $\Theta(x_1, y) + \Theta(x_2, y) - \Theta(x_3, y) \in E$ whenever all three points in the arguments are in the domain and $x_1  +x_2 = x_3$, and analogously in the vertical direction. 

    \begin{claim}\label{biasedtrilinearEbilinear}
        The map $\chi$ is an $E$-bilinear on $B_{1/2}\times B_{1/2}$.
    \end{claim}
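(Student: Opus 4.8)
The plan is to show that the character $\chi(x,y)$ constructed above is well-defined modulo $E$ in each variable by exploiting the $\varepsilon$-trilinearity of $\phi$ and the precise structure of large Fourier coefficients of the regular Bohr set $B(\Gamma, \rho')$. First I would fix $y \in B_{1/2}$ and consider $x_1, x_2, x_3 \in B_{1/2}$ with $x_1 + x_2 = x_3$. For each $z \in B(\Gamma, \rho')$, the triangle inequality together with the approximation $\|\phi(x_i,y,z) - \chi(x_i,y)(z)\|_{\mathbb{T}} \leq \varepsilon'$ and $\varepsilon$-trilinearity in the first slot gives
\[\Big\|\Big(\chi(x_1,y) + \chi(x_2,y) - \chi(x_3,y)\Big)(z)\Big\|_{\mathbb{T}} \leq 3\varepsilon' + \varepsilon \leq 4\varepsilon'\]
for all $z \in B(\Gamma, \rho')$. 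Write $\tau = \chi(x_1,y) + \chi(x_2,y) - \chi(x_3,y) \in \hat{G}$; the above says $\tau$ is small on the whole Bohr set $B(\Gamma, \rho')$.

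Next I would convert "small on a Bohr set" into "lies in $E$". The key point is that $\tau$ being at most $4\varepsilon'$ in absolute value on $B(\Gamma, \rho')$ means $|\widehat{\id_{B(\Gamma, \rho')}}(\tau)|$ is large: indeed $\on{Re}\,\widehat{\id_{B(\Gamma,\rho')}}(-\tau) = \ex_{z} \id_{B(\Gamma,\rho')}(z)\cos(2\pi \tau(z)) \geq (1 - O(\varepsilon'^2))\,|B(\Gamma,\rho')|/|G|$, so in particular $|\widehat{\id_{B(\Gamma,\rho')}}(\tau)| \geq \frac{1}{2}|B(\Gamma,\rho')|/|G| \geq \frac{1}{2}(\rho'/4)^d$ by Lemma~\ref{basicbohrsizel} and weak regularity of $B(\Gamma,\rho')$ (which we may assume after a harmless further shrinking, using Lemma~\ref{bohrwreg2}). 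Now apply Proposition~\ref{bohrsizeLargeFC} with the parameters $\varepsilon \leftarrow \frac{1}{2}(\rho'/4)^d$ and $\eta \leftarrow \frac{1}{4}(\rho'/4)^{d+1}$: since $|\widehat{\id_{B(\Gamma,\rho')}}(\tau)|$ exceeds the error term in that proposition, $\tau$ must be one of the characters actually appearing in the approximant, i.e. $\tau \in \langle\Gamma\rangle_{[-R,R]} = E$ for the value of $R$ specified in the statement. This is exactly the defining inclusion for an $E$-bilinear map in the horizontal direction; the vertical direction is identical after swapping the roles of the two slots and using $\varepsilon$-linearity of $\phi$ in the second variable together with the approximation $\|\phi(x,y_i,z) - \chi(x,y_i)(z)\|_{\mathbb{T}} \leq \varepsilon'$.

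The main obstacle — though really a bookkeeping one rather than a conceptual one — is ensuring that the single radius $\rho'$ on which the character approximation holds is simultaneously regular, weakly regular (so that Proposition~\ref{bohrsizeLargeFC} applies), and satisfies $\rho' \geq \exp(-(2d\log(\varepsilon^{-1}c^{-1}\delta^{-1}\rho^{-1}))^{O(1)})$; this is handled by composing the radius reductions from the previous lemma with one application of Lemma~\ref{bohrwreg2}, and noting that shrinking $\rho'$ only tightens all the approximations. One should also be slightly careful that the error accumulated, $4\varepsilon'$ with $\varepsilon' = O(d^{O(d)}\sqrt{\varepsilon})$, is still small enough ($o((\rho'/4)^{d+1})$, say) for the Fourier-coefficient argument to go through; this follows from the standing assumption $\varepsilon \leq (2d)^{-\mathsf{C}d}$ that one may impose as in the proof of the preceding lemma, since otherwise the whole theorem is vacuous. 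With this claim in hand, the remainder of the proof of Theorem~\ref{equidistributionalmosttrilinearmaps} proceeds by feeding the $E$-bilinear map $\chi$ into the bilinear Bogolyubov machinery (Theorem~\ref{bogruzsabilinearintro}) to locate a bilinear Bohr variety $V$ on which $\chi$ vanishes, whence $\|\phi(x,y,z)\|_{\mathbb{T}} \leq \varepsilon' \leq (2d)^{O(d)}\sqrt{\varepsilon}$ there.
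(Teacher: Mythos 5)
Your proof is correct and follows essentially the same route as the paper's: apply $\varepsilon$-trilinearity and the character approximation to conclude that $\tau = \chi(x_1,y)+\chi(x_2,y)-\chi(x_3,y)$ has values at most $4\varepsilon'$ on $B(\Gamma,\rho')$, deduce that $|\widehat{\id_{B(\Gamma,\rho'')}}(\tau)|$ is large for a suitably weakly-regular dilate $B(\Gamma,\rho'')$ with $\rho''\in(\rho'/4,\rho'/2)$, and then invoke Proposition~\ref{bohrsizeLargeFC} with the parameter choices matching the definition of $E$ to force $\tau\in\langle\Gamma\rangle_{[-R,R]}=E$. The paper carries out exactly this chain, only with the weak regularity passage expressed as a shrink to $\rho''\in(\rho'/4,\rho'/2)$ rather than a re-regularization of $\rho'$ itself; the two are equivalent here since $\tau$ remains small on any subset of $B(\Gamma,\rho')$ and the lower bound on the Bohr set density only degrades by a bounded power of $4$.
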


    \begin{proof}
        We prove this in the horizontal direction, the vertical direction is analogous. Let $x_1, x_2, x_3 \in B_{1/2}$ be such that $x_1 + x_2 = x_3 $ and let $y \in B$. Then for all $z \in B(\Gamma, \rho')$, we have
        \begin{align*}\Big\|\Big(\chi(x_1,y) + &\chi(x_2, y) - \chi(x_3, y)\Big)(z)\Big\|_{\mathbb{T}} \\
        \leq &\Big\| \phi(x_1,y, z) + \phi(x_2, y, z) - \phi(x_3, y, z)\Big\|_{\mathbb{T}} + 3\varepsilon' \leq 4\varepsilon'.\end{align*}

        Hence, $\xi = \chi(x_1,y) + \chi(x_2, y) - \chi(x_3, y)$ is a character on $G$, taking values inside $(-4\varepsilon', 4\varepsilon')$ on $B(\Gamma, \rho')$. As long as $\varepsilon' \leq 1/100$, taking weakly regular $B(\Gamma, \rho'')$ for some $\rho'' \in (1/4\rho',1/2\rho')$, in the sense that $|B(\Gamma,\rho'' + \eta) \setminus B(\Gamma,\rho'')| \leq \frac{1}{4}(\rho'/4)^d$, for $\eta = \frac{1}{4}(\rho'/4)^{d+1}$. We get $|\widehat{\id_{B(\Gamma,\rho'')}}(\xi)| \geq \frac{1}{2}\frac{|B(\Gamma,\rho'')|}{|G|} \geq \frac{1}{2}{\rho''}^d$.\\ 
        
        By Proposition~\ref{bohrsizeLargeFC}, we see that $\chi(x_1,y) + \chi(x_2, y) - \chi(x_3, y) \in E$. 
    \end{proof}

    Moreover, $\chi$ has a large approximate kernel.

    \begin{claim}
        Set $\delta = 2^{-7}$. We have $\chi(x,y) \in E$ for at least $(c/2d)^{O(1)}| |B|^2$ points $(x,y) \in B_{\delta}^2$.
    \end{claim}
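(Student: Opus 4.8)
The plan is to localise the global bias to individual fibres $z\mapsto\phi(x,y,z)$ and then feed each biased fibre into the spectrum estimate for Bohr sets, exactly as in the proof of Claim~\ref{biasedtrilinearEbilinear}. Write $w=\id_{B_\delta}\ast\id_{B_\delta}$, where for the moment $\delta$ denotes the dilation parameter of the hypothesis; since one may regularise and shrink this parameter at the outset at the cost of absolute constants (using Lemma~\ref{radiusReductionLemma}), we may assume it is at most $2^{-8}$, so that every point occurring below lies in $B_{2^{-7}}$. Then $\|w\|_1=|B_\delta|^2$ and $0\le w\le|B_\delta|$, and the hypothesis reads $|\sum_{x,y,z}w(x)w(y)w(z)\,\on{e}(\phi(x,y,z))|\ge c|B_\delta|^6$. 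A Markov-type separation of the pairs $(x,y)$ with $|\sum_z w(z)\on{e}(\phi(x,y,z))|\ge\tfrac c2|B_\delta|^2$ from the rest shows that the former set $\mathcal G$ has $\sum_{(x,y)\in\mathcal G}w(x)w(y)\ge\tfrac c2|B_\delta|^4$, hence $|\mathcal G|\ge\tfrac c2|B_\delta|^2\ge\tfrac c2(\delta\rho)^{2d}|G|^2$ by Lemma~\ref{basicbohrsizel}, which is of the required size $(c/2d)^{O(1)}|B|^2$, and $\mathcal G\subseteq B_{2^{-7}}^2=B_\delta^2$ in the notation of the statement.

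For $(x,y)\in\mathcal G$ one pushes the bias down to the tiny dilate $B(\Gamma,\rho')$ on which $\phi(x,y,\cdot)$ is approximated by $\chi(x,y)$. Expanding $w(z)=\#\{(c_1,c_2)\in B_\delta^2:c_1+c_2=z\}$ and using that $\phi$ is $\varepsilon$-linear in its last coordinate gives $|\sum_{c\in B_\delta}\on{e}(\phi(x,y,c))|^2\ge(\tfrac c2-2\pi\varepsilon)|B_\delta|^2$, so $|\sum_{c\in B_\delta}\on{e}(\phi(x,y,c))|\ge\tfrac{\sqrt c}{2}|B_\delta|$ once $\varepsilon$ is small. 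Since $B_\delta$ is regular, Lemma~\ref{regbohrconvolution} lets us replace $\id_{B_\delta}$ by a normalised convolution of $\id_{B(\Gamma,\rho')}$ with a near-full dilate, up to an $\ell^1$-error that is $\le\tfrac{\sqrt c}{16}|B_\delta|$ provided $\varepsilon$ (hence $\rho'$) is small enough in terms of $c$; factoring the resulting sum by one further use of $\varepsilon$-linearity in $z$ and then replacing $\phi(x,y,\cdot)$ by $\chi(x,y)(\cdot)$ on $B(\Gamma,\rho')$ (at cost $O(\varepsilon')$, $\varepsilon'=O(d^{O(d)}\sqrt\varepsilon)$, as in the definition of $\chi$), one obtains $|\sum_{z\in B(\Gamma,\rho')}\on{e}(\chi(x,y)(z))|\ge\tfrac{\sqrt c}{16}|B(\Gamma,\rho')|$, that is, $\widehat{\id_{B(\Gamma,\rho')}}(\chi(x,y))$ is a large Fourier coefficient of $B(\Gamma,\rho')$.

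Finally, exactly as in the proof of Claim~\ref{biasedtrilinearEbilinear}, one passes to a weakly regular radius $\rho''\in(\rho'/4,\rho'/2)$ (which changes $\on{e}(\chi(x,y)(\cdot))$ on $B(\Gamma,\rho'')\subseteq B(\Gamma,\rho')$ only negligibly) and invokes Proposition~\ref{bohrsizeLargeFC}, applied with the spectral parameters used to define $R$, to conclude $\chi(x,y)\in\langle\Gamma\rangle_{[-R,R]}=E$. Since this holds for every $(x,y)\in\mathcal G$ and $|\mathcal G|\ge(c/2d)^{O(1)}|B|^2$, the claim follows.

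The main obstacle is precisely this last step: one must ensure that the bias surviving all of the convolution and almost-linearity approximations — which for an individual fibre is only of order $\sqrt c$ — is still large enough, relative to the fixed spectral parameters $\tfrac12(\rho'/4)^d$ and $\tfrac14(\rho'/4)^{d+1}$ pinning down $R$, for Proposition~\ref{bohrsizeLargeFC} to place $\chi(x,y)$ inside $\langle\Gamma\rangle_{[-R,R]}$. Concretely one runs the argument with $\varepsilon$ (hence $\rho'$ and $\varepsilon'$) taken sufficiently small in terms of $c,d,\rho$, which is harmless since otherwise Theorem~\ref{equidistributionalmosttrilinearmaps} is vacuous; the only other point, purely bookkeeping, is reconciling the dilation parameter of the hypothesis with the value $\delta=2^{-7}$ of the statement, handled by the preliminary regularisation made at the start.
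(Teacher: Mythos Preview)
Your overall strategy matches the paper's: isolate many pairs $(x,y)$ whose $z$-fibre is biased, push the bias down to the tiny Bohr set $B(\Gamma,\rho')$ where $\phi(x,y,\cdot)\approx\chi(x,y)(\cdot)$, and invoke the spectrum estimate. The paper localises $x,y$ by regularity of the ambient Bohr set, while you use a direct Markov separation on the weighted hypothesis followed by an $\varepsilon$-linearity factoring of $\sum_z w(z)\on{e}(\phi(x,y,z))\approx\bigl(\sum_{c\in B_\delta}\on{e}(\phi(x,y,c))\bigr)^2$; these are minor variations and your route is perfectly sound up to the last step.

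The gap is in your resolution of the threshold obstacle, which you correctly identify but do not correctly fix. After localising you obtain a bias of order $\sqrt c$ on $B(\Gamma,\rho')$, giving $|\widehat{\id_{B(\Gamma,\rho'')}}(\chi(x,y))|\gtrsim\sqrt c\,|B(\Gamma,\rho'')|/|G|\ge\sqrt c\,(\rho'')^d$ with $\rho''\asymp\rho'$. The threshold pinning down $R$ is $\tfrac12(\rho'/4)^d$. Both quantities scale as $(\rho')^d$, so their ratio is $\asymp\sqrt c\cdot 4^d$, \emph{independent of $\varepsilon$ and $\rho'$}. Shrinking $\varepsilon$ shrinks both sides at the same rate and never forces the inequality when $c$ is small; in particular the theorem is certainly not vacuous in the problematic regime where, say, $\varepsilon=c^{100}$. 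The correct fix is to apply Proposition~\ref{bohrsizeLargeFC} with a smaller threshold $\sim\sqrt c\,(\rho'/4)^d$, which enlarges $R$ by a factor $c^{-O(1)}$. This is harmless for Theorem~\ref{equidistributionalmosttrilinearmaps} since its bounds already absorb polynomial dependence on $c^{-1}$, but it means you have not proved membership in the specific set $E$ fixed earlier in the proof. (The paper's own argument at this point is terse and appears to elide the same comparison.) A secondary bookkeeping point: your preliminary use of Lemma~\ref{radiusReductionLemma} to shrink the dilation parameter produces \emph{translates} of small Bohr sets, so the good pairs land in some $(s+B_{2^{-7}})\times(t+B_{2^{-7}})$ rather than $B_{2^{-7}}^2$; this is cosmetic for the theorem but does not literally match the claim's conclusion.
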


    \begin{proof}
        Let $\delta' = \frac{\delta c}{400d}$. By regularity of $B$, we have 
        \[\Big|\frac{1}{|B_{\delta'}|^2}\sum_{x, y \in B_{\delta'}, s, t \in B_{1-\delta'}, z \in B} \on{e}\Big(\phi(x + s, y + t, z)\Big) - \sum_{ x,y, z \in B} \on{e}\Big(\phi(x, y, z)\Big)\Big| \leq 200d \delta' |B|^3 \leq \frac{c}{2}|B|^3.\]

        By assumptions of the theorem, by averaging we find $s,t \in B$ such that 
        \[\Big|\frac{1}{|B_{\delta'}|^2}\sum_{x, y \in B_{\delta'}, z \in B} \on{e}\Big(\phi(x + s, y + t, z)\Big)\Big| \geq \frac{c}{2}|B_{\delta'}|^2|B|.\]
        Hence, we get at least $\frac{c}{4}|B_{\delta'}|^2$ pairs $(x,y) \in (s +B_{\delta'}) \times  (t +B_{\delta'})$ such that $\Big|\sum_{z \in B} \on{e}(\phi(x + s, y + t, z))\Big| \geq \frac{c}{4} |B|$. Hence $\Big|\sum_{z \in B(\Gamma, \rho')} \on{e}(\chi(x +S, y + t)(z))\Big| \geq \frac{c}{16} |B|$, as long as $\varepsilon' \leq c / (32\pi)$. Like in Claim~\ref{biasedtrilinearEbilinear}, the claim follows from Proposition~\ref{bohrsizeLargeFC}. 
    \end{proof}

    Apply bilinear Bogolyubov argument (Theorem~\ref{bogruzsabilinearintro}) to finish the proof.
\end{proof}

\section{Groups of order coprime to 6}\label{generalabelianinverseunifsection}

By an \emph{almost-cubic polynomial} $\phi : B = B(\Gamma, \rho_0) \to \mathbb{T}$ we mean a map such that, for each $\rho \leq \rho_0$, we have
\[\|\Delta_{a,b,c,d} q(x)\|_{\mathbb{T}} \leq  2^{10} \rho\]
for all $x, a,b,c,d \in B(\Gamma, \rho)$, where $\rho \leq 1/8$.

\begin{theorem}\label{generalU4}
    Let $G$ be a finite abelian group of order coprime to 6. Let $f : G \to \mathbb{D}$ be such that $\|f\|_{\mathsf{U}^4} \geq c$. Then there exists a Bohr set $B$ of codimension $(2 \log c^{-1})^{O(1)}$ and radius $\exp(-(2 \log c^{-1})^{O(1)})$, an element $t \in G$ and an almost-cubic polynomial $\phi : B  \to \mathbb{T}$ such that 
    \[\Big|\exx_{x \in G} \id_{B}(x) f(x + t) \on{e}(q(x))\Big| \geq \exp(-(2 \log c^{-1})^{O(1)}).\]
\end{theorem}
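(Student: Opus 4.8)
\textbf{Proof plan for Theorem~\ref{generalU4}.} The strategy is the standard transference of an inverse theorem for $\mathsf{U}^4$ to the structural result for Freiman bihomomorphisms (Theorem~\ref{maininversetheorem}), followed by the symmetry argument and the equidistribution theory developed in Section~\ref{sectionequidistributiontheory}. First I would unpack $\|f\|_{\mathsf{U}^4} \geq c$: writing the norm as an average of $\partial_h \partial_h f$ over $h$, there is a dense set of shifts $h$ for which $\|\partial_h f\|_{\mathsf{U}^3}^8 \geq c^{16}/2$, say. Applying the $\mathsf{U}^3$ inverse theorem of Jamneshan--Tao (or its Bohr-set formulation implicit in the preliminaries) to $\partial_h f$ gives, for each such $h$, a quadratic-type phase on a Bohr set correlating with $\partial_h f$. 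The derivative $\Delta_h$ of this quadratic object in $h$ is (morally) a bilinear form, and the correlation being preserved as $h$ varies over a dense set forces a Freiman-bihomomorphism structure: one obtains a map $\varphi$ defined on a dense subset $A$ of $G \times G$ that respects all directional additive quadruples, essentially by the same Cauchy--Schwarz-plus-$\mathsf{U}^2$ bookkeeping as in Proposition~\ref{freimanbihomomStep1passtolineaersystem}. The coprimality of $|G|$ to $6$ is used here to divide by $2$ and $3$: in characteristic coprime to $6$ the symmetrisation of a cocycle/bilinear form is available, so the bilinear form extracted is (after symmetrisation) symmetric, which is what the later integration step needs.

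Second, I would feed $\varphi$ into Theorem~\ref{maininversetheorem} to obtain a set $E$ of rank $(2\log c^{-1})^{O(1)}$, Bohr sets $B_1, B_2$ of controlled codimension and radius, shifts $s, t$, and an $E$-bihomomorphism $\Phi: B_1 \times B_2 \to \hat G$ (identifying $H$ with a group of characters) agreeing with $\varphi(\cdot + s, \cdot + t)$ on a dense subset. Intersecting, we may take $B_1 = B_2 = B$. Since $|G|$ is coprime to $2$ and $3$, the rank-$(2\log c^{-1})^{O(1)}$ error set $E$ can be absorbed: dividing by an appropriate unit and using that every element of $E$ has bounded order coprime to the relevant primes, or rather passing to a sub-Bohr-set on which the $E$-valued error vanishes (this is the analogue of the observation in the introduction that almost-cubic polynomials reduce to honest polynomials in bounded torsion, except here we are in the opposite regime and $E$ simply gets killed by a density argument on a slightly shrunk Bohr set using Lemma~\ref{smallrangelemma}-type reasoning). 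This yields an honest Freiman-bilinear, and after symmetrisation symmetric, map $\beta : B' \times B' \to \hat G$ correlating with $\partial_h f$ in the appropriate averaged sense.

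Third, I would run the symmetry argument of Section~\ref{sectionequidistributiontheory}. The map $(h, x) \mapsto \beta(h, x)$ plays the role of the $\varepsilon$-bilinear form $\theta$ in Lemma~\ref{symmetryArgumentBohr} (with $\varepsilon$ of size $\exp(-(2\log c^{-1})^{O(1)})$ coming from the Bohr-set radius), and the correlation $\big|\sum_{x} \partial_h f(x) \on{e}(\beta(h,x))\big|$ being large for a dense set of $h$, combined with the cocycle relation coming from $\partial_{h_1+h_2}f = \partial_{h_1}f \cdot (\partial_{h_2}f)(\cdot + h_1)$, puts us in the situation of that lemma. Its conclusion gives that $\beta(y_1,y_2) - \beta(y_2,y_1)$ is small on average, hence by Theorem~\ref{equidistributionalmosttrilinearmaps} (the equidistribution theorem for almost-trilinear maps, applied to $(h, x, y) \mapsto \beta(h,x)(y)$ or the relevant trilinearised object) the antisymmetric part vanishes on a bilinear Bohr variety, which after the bilinear Bogolyubov argument (Theorem~\ref{bogruzsabilinearintro}) can be taken to contain a genuine Bohr set $B''$ in both variables. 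Then $\beta$ is symmetric on $B''$, and — again using divisibility by $2$ and $3$ to integrate a symmetric bilinear form into a cubic — one integrates: define $q$ on a further shrinking by $q(x) = \tfrac{1}{6}\beta(x,x)(x)$-type formula (more precisely the discrete antiderivative, built step by step along the generators of a coset progression inside the Bohr set via Proposition~\ref{cpsinbohrsets}), obtaining a function $q : B \to \mathbb{T}$ with $\|\Delta_{a,b,c,d} q(x)\|_{\mathbb{T}} \leq 2^{10}\rho$ on $B(\Gamma,\rho)$, i.e. an almost-cubic polynomial, such that $\partial_h f$ correlates with $\on{e}(\Delta_h q)$; a final Cauchy--Schwarz (the inverse of the $\partial_h$ operation, as in the passage from $\mathsf{U}^{k}$ correlation with derivatives of a phase to correlation with the phase itself) upgrades this to $|\exx_x \id_B(x) f(x+t)\on{e}(q(x))| \geq \exp(-(2\log c^{-1})^{O(1)})$.

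\textbf{Main obstacle.} The hard part is the integration/symmetry step: verifying that the bilinear form extracted from the $\mathsf{U}^3$-structure of $\partial_h f$ genuinely satisfies the cocycle identity needed to feed Lemma~\ref{symmetryArgumentBohr} with only $\exp(-(2\log c^{-1})^{O(1)})$-size errors, and then that the symmetric bilinear form can be integrated to an almost-cubic polynomial with the clean constant $2^{10}\rho$ in the definition — this requires carefully tracking how errors of size $O(\rho)$ propagate through the antiderivative construction along the $O((\log c^{-1})^{O(1)})$ generators of the ambient coset progression, and the coprimality to $6$ is essential both for the symmetrisation (dividing by $2$) and the integration (dividing by $3!=6$). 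The passage from the $E$-bihomomorphism of Theorem~\ref{maininversetheorem} to an honest symmetric bilinear object — absorbing the rank-$(2\log c^{-1})^{O(1)}$ error set $E$ — is a second delicate point, but it is routine once one notes that on a Bohr set of appropriate radius the $E$-valued defect, being the restriction of a Freiman-bilinear map with values in a bounded-rank set, must vanish identically on a controlled sub-Bohr-set by Lemma~\ref{smallrangelemma} and Proposition~\ref{fewvalsCPFLin}.
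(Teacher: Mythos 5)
Your overall architecture (reduce to a Freiman bihomomorphism, apply Theorem~\ref{maininversetheorem}, run the symmetry and equidistribution machinery, integrate) matches the paper's, and your final steps are close in spirit. But two of your intermediate steps diverge in ways that create real gaps.

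\textbf{First, the initial reduction.} You apply the $\mathsf{U}^3$ inverse theorem to $\partial_h f$ for a dense set of $h$, which produces a local quadratic phase on a Bohr set depending on $h$, and then assert that differentiating in $h$ yields a $G\times G$ Freiman bihomomorphism ``by the same bookkeeping as Proposition~\ref{freimanbihomomStep1passtolineaersystem}.'' That proposition does something unrelated (it starts \emph{from} a Freiman bihomomorphism and produces a system of linear maps inside the proof of Theorem~\ref{maininversetheorem}). More seriously, comparing quadratic phases on $h$-dependent Bohr sets and upgrading to a single map $A\subseteq G\times G \to \hat G$ respecting all directional additive quadruples is exactly the hard regularisation step that makes the Green--Tao--Ziegler $\mathsf{U}^4$ proof delicate; you cannot dispose of it with one Cauchy--Schwarz. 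The paper sidesteps this entirely: it considers $\partial_{a,b}f$ (two derivatives), applies only the trivial $\mathsf{U}^2$ inverse theorem, and immediately gets a $\hat G$-valued function $\phi(a,b)$ on a dense $A\subseteq G\times G$; respectedness of additive quadruples then follows from a clean Cauchy--Schwarz. The $\mathsf{U}^3$ inverse theorem is used only at the very end, to conclude once a $\mathsf{U}^3$-correlation for $\tilde f(t)=\id_{B'}(t)f(t+x)\on{e}(q(t))$ has been established.

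\textbf{Second, the handling of $E$.} You propose to ``kill'' the rank-$(2\log c^{-1})^{O(1)}$ error set $E$ by passing to a sub-Bohr-set on which the $E$-valued defect vanishes, citing Lemma~\ref{smallrangelemma} and Proposition~\ref{fewvalsCPFLin}. This does not work: those lemmas concern a single Freiman-linear map with many zeroes, whereas the $E$-defect of an $E$-bihomomorphism is a defect over \emph{additive quadruples} and is in general a cohomological obstruction that cannot be removed by shrinking the domain. The paper never turns $\Phi$ into an honest Freiman bihomomorphism in the coprime-to-$6$ setting. Instead it writes $6\Phi(x,y)=\phi(x+s,y+t)$ (using that $g\mapsto 6g$ is an automorphism), keeps $\Phi$ as an $E$-bihomomorphism throughout, and exploits that for $\chi\in E$ and $z$ in a small Bohr set the value $\chi(z)$ is tiny in $\mathbb{T}$; crucially, the factor $6$ is what makes the discrepancy $6\psi - \sum_{\pi}\psi\circ\pi$ take values in $\frac{1}{6}\mathbb{Z}/\mathbb{Z}$ and hence forces it to vanish after the equidistribution step (the map $\sigma_\pi$ in the proof). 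Your division by $6$ shows up only at the very last integration step; in the paper it is introduced much earlier and carries the argument.

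\end{document}
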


At the final step of the proof, we shall use the $\mathsf{U}^3$ inverse theorem for general finite abelian groups of odd order of Green and Tao~\cite{GreenTaoU3}. The bounds are due to Sanders~\cite{Sanders}.

\begin{theorem}\label{greentaou3inversethm}
    Let $G$ be a finite abelian group of odd order. Let $f : G \to \mathbb{D}$ be such that $\|f\|_{\mathsf{U}^3(G)} \geq c$. Then there exists a Bohr set $B$ of codimension $(2 \log c^{-1})^{O(1)}$ and radius $\exp(-(2 \log c^{-1})^{O(1)})$, an element $t \in G$ and a locally quadratic function $\phi : B \to \mathbb{T}$ such that 
    \[\Big|\exx_{x \in G} \id_{B}(x) f(x + t) \on{e}(q(x))\Big| \geq \exp(-(2 \log c^{-1})^{O(1)}).\]
\end{theorem}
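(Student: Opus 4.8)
The plan is to reproduce the Green--Tao proof of the $\mathsf{U}^3$ inverse theorem, carried out throughout at the level of Bohr sets and using the quasipolynomial machinery of Section~\ref{sectiontools} (above all Sanders's Bogolyub\-ov--Ruzsa lemma, packaged in Theorem~\ref{approxFreimanHom}) in place of Freiman's theorem, so that every parameter stays in the regime $\exp(-(2\log c^{-1})^{O(1)})$. \textbf{Step 1 (linearisation).} Expanding the definition of $\|\cdot\|_{\mathsf{U}^3}$ gives $\|f\|_{\mathsf{U}^3}^8 = \ex_{h} \|\partial_h f\|_{\mathsf{U}^2}^4 \geq c^8$, so for at least $\tfrac{c^8}{2}|G|$ shifts $h$ we have $\|\partial_h f\|_{\mathsf{U}^2}^4 \geq \tfrac{c^8}{2}$. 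Since $\|\partial_h f\|_{\mathsf{U}^2}^4 = \sum_{\xi}|\widehat{\partial_h f}(\xi)|^4 \leq \big(\max_\xi |\widehat{\partial_h f}(\xi)|^2\big)\|\partial_h f\|_{L^2}^2 \leq \max_\xi |\widehat{\partial_h f}(\xi)|^2$, each such $h$ admits a character $\phi(h)\in\hat G$ with $|\widehat{\partial_h f}(\phi(h))| \geq 2^{-1/4}c^2$. Fix such a map $\phi : H \to \hat G$ on a set $H$ of size at least $\tfrac{c^8}{2}|G|$.

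\textbf{Step 2 (many respected quadruples).} I would show $\phi$ respects at least $c^{O(1)}|G|^3$ additive quadruples inside $H$. The mechanism is the Gowers--Cauchy--Schwarz inequality: writing $\widehat{\partial_h f}(\xi) = \ex_x \partial_h f(x)\overline{\on{e}(\xi(x))}$ and expanding the sum $\ex_{h_1+h_2=h_3+h_4}\id_H(h_1)\cdots\id_H(h_4)\,\widehat{\partial_{h_1}f}(\phi(h_1))\,\widehat{\partial_{h_2}f}(\phi(h_2))\,\overline{\widehat{\partial_{h_3}f}(\phi(h_3))}\,\overline{\widehat{\partial_{h_4}f}(\phi(h_4))}$, the $x$-averages force, after one more Cauchy--Schwarz against $\|f\|_{\mathsf{U}^3}^{O(1)}$, the frequencies $\phi(h_i)$ to cancel for a positive proportion of the quadruples; this is the standard computation identifying the $\mathsf{U}^3$ norm with an $\ell^{4/3}$-mass of the large-spectrum function $h\mapsto\widehat{\partial_h f}(\phi(h))$. \textbf{Step 3 (to a Freiman-linear map on a Bohr set).} Apply Theorem~\ref{approxFreimanHom} to $\phi$ to obtain a proper coset progression $Q\subseteq G$ of rank $(2\log c^{-1})^{O(1)}$ and a Freiman homomorphism $\Phi:Q\to\hat G$ agreeing with $\phi$ on a set of at least $\exp(-(2\log c^{-1})^{O(1)})|G|$ points of $Q\cap H$; then use Proposition~\ref{cosettobohrset} and Lemma~\ref{homapproxcosetp} together with the discussion of Freiman-linear maps on Bohr sets to replace $\Phi$ by a Freiman-linear map $M:B_0\to\hat G$ on a Bohr set $B_0$ of codimension $(2\log c^{-1})^{O(1)}$ and radius $\exp(-(2\log c^{-1})^{O(1)})$ with $\phi(h)=M(h)+\xi_0$ for every $h$ in a subset of $B_0\cap H$ of size $\geq\exp(-(2\log c^{-1})^{O(1)})|G|$ (the constant $\xi_0$ is absorbed into a translate of $f$). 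For these $h$, $\partial_h f$ correlates with $x\mapsto\on{e}(M(h)(x))$ with $|\widehat{\partial_h f}(M(h))|\geq 2^{-1/4}c^2$.

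\textbf{Step 4 (symmetry).} This is the key step and the one where the odd-order hypothesis is used. From the identity $\partial_{h_1,h_2}f(x)=\partial_{h_2}(\partial_{h_1}f)(x)$ and the $L^2$-approximation $\partial_{h_1}f\approx\on{e}(M(h_1)(\cdot))$ one extracts, for at least $c^{O(1)}|G|^2$ pairs $(h_1,h_2)$ in $B_0\cap H$, that $M(h_1)(h_2)=M(h_2)(h_1)$ in $\mathbb{T}$ up to a small error; since $M$ is Freiman-linear, a Bogolyubov/counting argument (invoking Proposition~\ref{cpsinbohrsets} and the tools on Freiman-linear maps with few values) promotes this to exact symmetry of the bilinear form $\beta(a,b):=M(a)(b)$ on $B_1\times B_1$ for a Bohr set $B_1\subseteq B_0$ still of codimension $(2\log c^{-1})^{O(1)}$ and radius $\exp(-(2\log c^{-1})^{O(1)})$. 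The hard part of the whole proof is here: the correlations $\partial_h f\approx\on{e}(M(h)(\cdot))$ are only $L^2$, so the chain of Cauchy--Schwarz steps needed to pass from $L^2$-closeness to the pointwise-looking relation $\beta(h_1,h_2)=\beta(h_2,h_1)$ degrades the density and shrinks the Bohr set, and one must bookkeep this carefully so that the final parameters remain in the stated quasipolynomial regime.

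\textbf{Step 5 (integration and conclusion).} As $|G|$ is odd, $2$ is invertible modulo the exponent of $G$, so I can define $q:B\to\mathbb{T}$ on a further shrinking $B$ of $B_1$ by $q(x)=2^{-1}\beta(x,x)$; then $\Delta_{a,b}q(x)=\beta(a,b)$ is independent of $x$ and bilinear, so $\Delta_{a,b,c}q=0$ on $B$, i.e.\ $q$ is locally quadratic and in particular satisfies the required derivative bound trivially. Using symmetry in the form $q(x+h)-q(x)=\beta(x,h)+2^{-1}\beta(h,h)=M(h)(x)+2^{-1}\beta(h,h)$, for the relevant $h$ the function $\partial_h\big(\id_B\cdot f(\cdot+t)\cdot\overline{\on{e}(q)}\big)$ correlates with the trivial character, so $\|\id_B\cdot f(\cdot+t)\cdot\overline{\on{e}(q)}\|_{\mathsf{U}^2}\geq\exp(-(2\log c^{-1})^{O(1)})$ for a suitable translate $t$. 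Passing to a coset progression inside $B$ via Proposition~\ref{cpsinbohrsets} and applying Parseval on it (the $\mathsf{U}^2$-inverse step), $f(\cdot+t)\overline{\on{e}(q)}$ correlates with a character on $B$, which I absorb into $q$ by adding a linear phase---still locally quadratic. This gives the claimed correlation $\big|\exx_{x\in G}\id_B(x)f(x+t)\on{e}(q(x))\big|\geq\exp(-(2\log c^{-1})^{O(1)})$ and completes the proof.
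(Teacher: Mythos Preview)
The paper does not prove Theorem~\ref{greentaou3inversethm}; it is quoted as a known result, attributed to Green and Tao~\cite{GreenTaoU3} for the qualitative statement and to Sanders~\cite{Sanders} for the quasipolynomial bounds, and is invoked as a black box at the very end of the proof of Theorem~\ref{generalU4}. So there is nothing in this paper to compare your proposal against.

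That said, your sketch is a faithful outline of the Green--Tao strategy as it appears in the cited references: linearisation via the $\mathsf{U}^2$ inverse theorem, Gowers's argument that the frequency map $\phi$ respects many additive quadruples, the approximate-homomorphism theorem (where Sanders's improvement enters), the symmetry argument for the resulting bilinear form, and finally integration using that $2$ is invertible. Two points are worth flagging if you intend to flesh this out. First, Step~2 as written is too vague: the actual mechanism is not a single Cauchy--Schwarz against $\|f\|_{\mathsf{U}^3}$ but the two Cauchy--Schwarz steps that convert $\big(\ex_h |\widehat{\partial_h f}(\phi(h))|^2\big)^4$ into an eight-point correlation that forces $\phi(h_1)-\phi(h_2)-\phi(h_3)+\phi(h_4)=0$ on many quadruples; you should state this computation explicitly. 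Second, in Step~4 you assert that the symmetry defect of $\beta(a,b)=M(a)(b)$ can be upgraded from approximate to exact on a smaller Bohr set by ``Bogolyubov/counting''; in the actual proof this is done via a bias argument on the antisymmetric form $\beta(a,b)-\beta(b,a)$ (compare Lemma~\ref{symmetryArgumentBohr} and the discussion of biased forms in this paper), and the step where a biased bilinear form is shown to vanish on a large Bohr set is not entirely routine. Your plan is correct in spirit, but Steps~2 and~4 carry all the content and are currently only gestures.
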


\begin{proof}[Proof of Theorem~\ref{generalU4}]
    As in the previous deductions of the inverse theorem for uniformity norms based on Freiman bihomomorphisms~\cite{U4paper,newU4}, we consider the large Fourier coefficients of $\partial_{a, b} f$. We have
    \[c^{16} \leq \|f\|_{\mathsf{U}^4}  = \exx_{a_1, a_2, a_3 ,a_4, x} \partial_{a_1, a_2, a_3, a_4} f(x) = \exx_{a,b} \|\partial_{a,b} f\|_{\mathsf{U}^2}^4.\]

    By the inverse theorem for the $\mathsf{U}^2$ norm and averaging, there exists a set $A \subseteq G \times G$ of size $(c/2)^{O(1)}|G|^2$ such that for each $(a,b) \in S$ we have some $\phi(a,b) \in \hat{G}$ with $|\widehat{\partial_{a,b} f}(\phi(a,b))| \geq (c/2)^{O(1)}$. We view $\phi$ as a map on the set $A$ and show that it respects many horizontal additive quadruples. Namely,

    \[(c/2)^{O(1)} \leq \exx_{a,b} \id_A(a,b)|\widehat{\partial_{a,b} f}(\phi(a,b))|^2
    \leq \exx_{a,b,x,y} \id_A(a,b)\partial_bf(x) \overline{\partial_{b} f(x)}\, \overline{\partial_{b} f(y + a)} \overline{\partial_{b} f(y)} \on{e}(\phi(a, b)(x - y)).\]

    Fixing $x,y,b$ and applying Cauchy-Schwarz inequality yields

    \[(c/2)^{O(1)} \leq \exx_{a,a', b,x,y} \id_A(a,b) \id_A(a',b)\partial_bf(x +a ) \overline{\partial_{b} f(x + a')}\, \overline{\partial_{b} f(y + a)} \partial_{b} f(y + a') \on{e}(\phi(a, b)(x - y) - \phi(a', b)(x - y)).\]

    Making a change of variables $a \mapsto a - x, a' \mapsto a'-x$ and applying Cauchy-Schwarz inequality one more time for fixed $x,y, b$ as above shows that $\phi$ respects $(c/2)^{O(1)}$ proportion of all horizontal additive quadruples. We may apply Theorem~\ref{approxFreimanHom} for all rows that have $(c/2)^{O(1)}$ proportion of respected horizontal additive quadruples to conclude that $\phi$ is Frieman homomorphism in horizontal direction on a subset $A' \subseteq A$ of size $|A'| \geq \exp(-(2 \log c^{-1})^{O(1)}) |G|^2$.\\

    Repeat the same procedure in the vertical direction to obtain a Freiman bihomomorphism. Recall that $(|G|, 6) = 1$. Thus, the map $g \mapsto 6g$ is an isomorphism. By the structure theorem for Freiman bihomomorphisms (Theorem~\ref{maininversetheorem}), there exist an integer $d \leq (2\log c^{-1})^{O(1)}$, a set $E \subseteq \hat{G}$ of rank $d$, a Bohr set $B = B(\Gamma, \rho_0) \subseteq G$ of codimension $d$ and radius $\rho_0 \geq \exp(-(2\log c^{-1})^{O(1)})$, elements $s, t \in G$ and an $E$-bihomomorphism $\Phi : B \times B \to H$ such that $6\Phi(x,y) = \phi(x + s,y + t)$ and $(x + s,y + t) \in A$ hold for at least $\exp(-(2\log c^{-1})^{O(1)})|G|^2$ points $(x,y) \in B  \times B$, where we used the fact that $g \mapsto 6g$ is an isomorphism. Thus, for some $c_1 \geq \exp(-(2\log c^{-1})^{O(1)})$,
    \[c_1 \leq \exx_{a_1, a_2 \in G} \id_B(a_1)\id_B(a_2) \Big| \exx_x \partial_{s + a_1, t + a_2} f(x) \on{e}(6\Phi(a_1, a_2)(x)) \Big|^2.\]
    Expanding, we get
    \[c_1 \leq \exx_{a_1, a_2, a_3, x\in G} \id_B(a_1)\id_B(a_2) \partial_{a_1 + s, a_2 + t, a_3} f(x) \on{e}(6\Phi(a_1, a_2)(a_3)).\]

    We now switch the notation to sum instead of expectations. Namely
    
    \begin{equation}c_1 |G|^4 \leq \sum_{a_1, a_2, a_3, x\in G} \id_B(a_1)\id_B(a_2) \partial_{a_1 + s, a_2 + t, a_3} f(x) \on{e}(6\Phi(a_1, a_2)(a_3)).\label{u4genIneq1}\end{equation}

    \noindent\textbf{Passing to a regular Bohr set.} Before proceeding further, we need to ensure that we have a regular Bohr set in place of $B$. We phrase the next step as a separate claim. As $E$ is a set of rank $d$, let $E_0$ be a subset of size $d$ such that $E = \langle E_0 \rangle_{\{-1,0,1\}}$
    
    \begin{claim}\label{regularbohrpassu4inverse}
        Let $\delta_0 \leq \frac{1}{288\pi}\Big(\frac{c_1}{2}\Big)^{64}$. There exist a regular Bohr set $B'$ of frequency set $\Gamma \cup E_0$ and radius $\rho_1 \geq \frac{c_1}{2^{11}d}\rho_0^{3d}$, a map $\psi : B' \times B' \times B' \to \mathbb{T}$ and a quantity $\delta_1 \in [\delta_0, 2\delta_0]$ such that $B'_{\delta_1}$ is also regular,
        \begin{align}\Big|\sum_{u_1, u_2, u_3 \in G} \id_{B'_{\delta_1}} \ast \id_{B'_{\delta_1}}(u_1)&\id_{B'_{\delta_1}} \ast \id_{B'_{\delta_1}}(u_2)\id_{B'_{\delta_1}} \ast \id_{B'_{\delta_1}}(u_3)\nonumber\\
        &\on{e}(6\psi(u_1, u_2, u_3)) \Big(\sum_{x \in G} \partial_{u_1, u_2, u_3} f(x)\Big)\Big| \geq \frac{1}{2}\Big(\frac{c_1}{2}\Big)^{64}  |B'_{\delta_1}|^6|G|,\label{symmetriccorelationU4inverse}\end{align}
        and for each $\varepsilon > 0$ we have that the restriction $\psi|_{B'_{\varepsilon} \times B'_\varepsilon \times B'_\varepsilon} \to \mathbb{T}$ is an $\varepsilon$-trilinear map.
    \end{claim}
    
    \begin{proof}
        Recall that $E$ is the set obtained above such that $\Phi$ is $E$-bihomomorphism. Let $\rho_1 \in [\frac{c_1}{2^{11}d}\rho_0^{3d}, \frac{c_1}{2^{10}d}\rho_0^{3d}]$ be such that $B(\Gamma \cup E_0, \rho_1)$ is regular. Define $B' = B(\Gamma \cup E_0, \rho_1)$.\\
        \indent Let us apply Lemma~\ref{bohrwreg1} to $B$ to find a radius function $\tilde{\rho}_0 : \Gamma \to [\rho_0 - 2\rho_1, \rho_0 + 2\rho_1]$ such that $B = B(\Gamma, \tilde{\rho}_0)$ and $|B(\Gamma, \tilde{\rho}_0 + 2\rho_1) \setminus B(\Gamma, \tilde{\rho}_0 - 2\rho_1)| \leq \frac{c_1}{100 d}\rho_0^{3d} |G|$. Let $\delta_1 \in [\delta_0, 2\delta_0]$ be such that $B'_{\delta_1}$ is regular.\\
        
        In particular, every element of $B(\Gamma, \tilde{\rho}_0 - 2\rho_1)$ can be written in $|B'_{\delta_1}|$ ways as a sum of an element in $B'_{\delta_1}$ and another one in $B(\Gamma, \tilde{\rho}_0 - \rho_1)$. Moreover, each such sum belongs to $B$. Lemma~\ref{basicbohrsizel} implies that $|B| \geq \rho_0^d|G|$. Hence
        
        \[\Big\|\id_B - \frac{1}{|B'_{\delta_1}|} \id_{B(\Gamma, \tilde{\rho}_0 - \rho_1)} \ast \id_{B'_{\delta_1}}\Big\|_{\ell^1} \leq |B(\Gamma, \tilde{\rho}_0) \setminus B(\Gamma, \tilde{\rho}_0 - 2\rho_1)| \leq \frac{c_1}{100 d}\rho_0^{3d} |G|.\]
    
        Using this approximation inside~\eqref{u4genIneq1}, we get
    
        \begin{align*}\frac{c_1}{2} |G|^4 |B'_{\delta_1}|^2 \leq & \sum_{a_1, a_2, a_3, x\in G} \id_{B(\Gamma, \tilde{\rho}_0 - \rho_1)} \ast \id_{B'_{\delta_1}}(a_1)\id_{B(\Gamma, \tilde{\rho}_0 - \rho_1)} \ast \id_{B'_{\delta_1}}(a_2) \partial_{a_1 + s, a_2 + t, a_3} f(x) \on{e}(6\Phi(a_1, a_2)(a_3))\\
        = & \sum_{a_1, a_2, a_3, x, u_1, u_2\in G} \id_{B(\Gamma, \tilde{\rho}_0 - \rho_1)}(u_1) \id_{B'_{\delta_1}}(a_1 - u_1)\id_{B(\Gamma, \tilde{\rho}_0 - \rho_1)}(u_2) \id_{B'_{\delta_1}}(a_2 - u_2) \\
        &\hspace{4cm}\partial_{a_1 + s, a_2 + t, a_3} f(x) \on{e}(6\Phi(a_1, a_2)(a_3)).\end{align*}
    
        By averaging, we get $u_1, u_2 \in B(\Gamma, \tilde{\rho}_0 - \rho_1)$ such that, after a slight change of variables where we replace $a_i$ by $a_i + u_i$,
        \[\frac{c_1}{2} |G|^2 |B'_{\delta_1}|^2 \leq \Big|\sum_{a_1, a_2, a_3, x\in G} \id_{B'_{\delta_1}}(a_1) \id_{B'_{\delta_1}}(a_2) \partial_{a_1 +u_1+ s, a_2 +u_2+ t, a_3} f(x) \on{e}(6\Phi(a_1 + u_1, a_2 + u_2)(a_3))\Big|.\]

         By making a slight change of variables where we replace $a_3$ by $a_3 + v$ for some dummy variable $v$ varying over $G$ and restricting $a_3$ to $B'$, after averaging, we obtain 

         \begin{align*}\frac{c_1}{2} |G| |B'_{\delta_1}|^3 \,\,\leq\,\, &\Big|\sum_{a_1, a_2, a_3 \in B'_{\delta_1}, x \in G}\partial_{a_1 +u_1+ s, a_2 +u_2+ t, a_3 + v} f(x) \on{e}(6\Phi(a_1 + u_1, a_2 + u_2)(a_3 + v))\Big|\end{align*}

         for some $v \in G$. Next, we apply Cauchy-Schwarz inequality three times, similarly to Gowers-Cauchy-Schwarz inequality for three variables. Namely,

          \begin{align*}\Big(\frac{c_1}{2} |G| |B'_{\delta_1}|^3\Big)^2 \,\,\leq\,\, &\Big|\sum_{a_2, a_3 \in B'_{\delta_1}, x\in G}\overline{\partial_{a_2 +u_2+ t, a_3 + v} f(x)} \Big( \sum_{a_1 \in B'_{\delta_1}} \partial_{a_2 +u_2+ t, a_3 + v} f(x+ a_1 +u_1+ s) \\
          &\hspace{4cm}\on{e}(6\Phi(a_1 + u_1, a_2 + u_2)(a_3 + v))\Big)\Big|^2\\
          \leq  \,\,&\Big(\sum_{a_2, a_3 \in B'_{\delta_1}, x\in G} \Big|\overline{\partial_{a_2 +u_2+ t, a_3 + v} f(x)}\Big|^2\Big) \Big(\sum_{a_2, a_3 \in B'_{\delta_1}, x\in G}\Big | \sum_{a_1 \in B'_{\delta_1}} \partial_{a_2 +u_2+ t, a_3 + v} f(x+ a_1 +u_1+ s)\\
          &\hspace{5cm}\on{e}(6\Phi(a_1 + u_1, a_2 + u_2)(a_3 + v))\Big|^2\Big)\\
          \leq\,\,& |G||B'_{\delta_1}|^2 \sum_{a_1, a'_1, a_2, a_3 \in B'_{\delta_1}, x\in G}\partial_{a_1 - a'_1, a_2 +u_2+ t, a_3 + v} f(x+ a'_1 +u_1+ s)\\
          &\hspace{2cm}\on{e}(6\Phi(a_1 + u_1, a_2 + u_2)(a_3 + v)-6\Phi(a'_1 + u_1, a_2 + u_2)(a_3 + v)).\end{align*}

         Making a change of variables $x - a'_1 - u_1 - s$ in place of $x$ allows us to write $x$ as the argument of $f$ above. Applying the same step to $a_2$ and $a_3$, allows us to conclude that

         \begin{align*}\Big(\frac{c_1}{2}\Big)^8 |G| |B'_{\delta_1}|^6 \,\,\leq\,\,& \Big|\sum_{a_1, a'_1, a_2, a_2', a_3, a_3' \in B'_{\delta_1}} \Big(\sum_{x \in G} \partial_{a_1 - a'_1, a_2 - a_2', a_3 - a_3'} f(x)\Big) \\
         &\on{e}(6\Phi(a_1 + u_1, a_2 + u_2)(a_3 - a_3') - 6\Phi(a'_1 + u_1, a_2 + u_2)(a_3 - a_3') \\
         &\hspace{3cm}-6\Phi(a_1 + u_1, a'_2 + u_2)(a_3 - a_3') + 6\Phi(a'_1 + u_1, a'_2 + u_2)(a_3 - a_3'))\Big|.\end{align*}

         By averaging, we obtain $a'_1, a'_2, a'_3 \in B'_{\delta_1}$ such that
         \begin{align*}\Big(\frac{c_1}{2}\Big)^8 |G| |B'_{\delta_1}|^3 \,\,\leq\,\, \Big|\sum_{a_1, a_2, a_3 \in B'_{\delta_1}} &\Big(\sum_{x \in G} \partial_{a_1 - a'_1, a_2 - a_2', a_3 - a_3'} f(x)\Big) \\
         &\on{e}(6\Phi(a_1 + u_1, a_2 + u_2)(a_3 - a_3') - 6\Phi(a'_1 + u_1, a_2 + u_2)(a_3 - a_3')  \\
         &\hspace{1cm}-6\Phi(a_1 + u_1, a'_2 + u_2)(a_3 - a_3') + 6\Phi(a'_1 + u_1, a'_2 + u_2)(a_3 - a_3'))\Big|\end{align*}

         We make a change of variables and use $d_i = a_i - a'_i$ in place of $a_i$, ranging over $a_i' + B'_{\delta_1}$ giving
           \begin{align}\Big(\frac{c_1}{2}\Big)^8 |G| |B'_{\delta_1}|^3 \,\,\leq\,\, \Big|\sum_{d_1, d_2, d_3 \in  G}& \id_{B'_{\delta_1}}(d_1 + a'_1) \id_{B'_{\delta_1}}(d_2 + a'_2)\id_{B'_{\delta_1}}(d_3 + a'_3)\Big(\sum_{x \in G} \partial_{d_1, d_2, d_3} f(x)\Big) \label{u4inversecoprime6eqnbeforepsi}\\
         &\on{e}(6\Phi(d_1 + a'_1 + u_1, d_2 + a'_2 + u_2)(d_3') - 6\Phi(a'_1 + u_1, d_2 + a'_2 + u_2)(d_3) \nonumber\\
         &\hspace{1cm}-6\Phi(d_1 + a'_1 + u_1, a'_2 + u_2)(d_3) + 6\Phi(a'_1 + u_1, a'_2 + u_2)(d_3))\Big|.\nonumber\end{align}

         We define $\psi : B'_{1/2} \times B'_{1/2} \times B'_{1/2} \to \mathbb{T}$ by
         \begin{align*}\psi(d_1, d_2, d_3) = & \Phi(d_1 + a'_1 + u_1, d_2 + a'_2 + u_2)(d_3) - \Phi(a'_1 + u_1, d_2 + a'_2 + u_2)(d_3)  \\
         &\hspace{2cm}-\Phi(d_1 + a'_1 + u_1, a'_2 + u_2)(d_3) + \Phi(a'_1 + u_1, a'_2 + u_2)(d_3).\end{align*}

        Note that $\psi$ is well-defined as $\Phi$ is defined on $B \times B$ and $a'_1 + u_1 \in B(\Gamma, \tilde{\rho}_0 - (1 - \delta_1)\rho_1)$.\\
         \indent Let us observe that $\psi$ is almost-trilinear in the sense of the claim. By definition of $\psi$, it is easily seen to be linear in the third coordinate. We prove this property for the first coordinate, the deduction is analogous for the second coordinate. To that end, let $x, x',y, z\in B'_{\varepsilon}$ be given. Since $\varepsilon \leq 1/4$, we have $x + x' \in B_{1/2}$. Then
         \begin{align*}\psi(x& + x', y, z) - \psi(x, y, z) - \psi(x', y, z) \\
         &= \Big(\Phi(x + x' + a'_1 + u_1, y + a'_2 + u_2)(z) - \Phi(x + a'_1 + u_1, y + a'_2 + u_2)(z) \\
         &\hspace{4cm}- \Phi(x' + a'_1 + u_1, y + a'_2 + u_2)(z) + \Phi(a'_1 + u_1, y + a'_2 + u_2)(z)\Big)\\
         &\hspace{1cm}\Big(\Phi(x + x' + a'_1 + u_1, a'_2 + u_2)(z) - \Phi(x + a'_1 + u_1, a'_2 + u_2)(z) \\
         &\hspace{4cm}- \Phi(x' + a'_1 + u_1, a'_2 + u_2)(z) + \Phi(a'_1 + u_1, a'_2 + u_2)(z)\Big).\end{align*}

        Since $\Phi$ is an $E$-bihomomorphism, the expression above equals $\chi(z)$ for some $\chi \in 2E$. Since $z \in B'_\varepsilon$, we have $\|\chi(z)\|_{\mathbb{T}} \leq 2 \varepsilon \rho_1 \leq 2^{-9}c_1\varepsilon$.\\

        Going back to~\eqref{u4inversecoprime6eqnbeforepsi}, we get
         \[\Big(\frac{c_1}{2}\Big)^8 |G| |B'_{\delta_1}|^3 \,\,\leq\,\, \Big|\sum_{d_1, d_2, d_3 \in  G}\id_{B'_{\delta_1}}(d_1 + a'_1) \id_{B'_{\delta_1}}(d_2 + a'_2)\id_{B'_{\delta_1}}(d_3 + a'_3)\Big(\sum_{x \in G} \partial_{d_1, d_2, d_3} f(x)\Big) \on{e}(6\psi(d_1, d_2, d_3))\Big|.\]

         Applying the Cauchy-Schwarz steps as above to variables $d_1, d_2, d_3$, we get
         \begin{align*}\Big(\frac{c_1}{2}\Big)^{64} |G| |B'_{\delta_1}|^6 \,\,\leq\,\, &\Big|\sum_{d_1,d_1', d_2,d_2', d_3,d_3' \in  G} \id_{B'_{\delta_1}}(d_1 + a'_1)\id_{B'_{\delta_1}}(d'_1 + a'_1) \id_{B'_{\delta_1}}(d_2 + a'_2)\id_{B'_{\delta_1}}(d'_2 + a'_2)\\
         &\hspace{5cm}\id_{B'_{\delta_1}}(d_3 + a'_3)\id_{B'_{\delta_1}}(d'_3 + a'_3)\Big(\sum_{x \in G} \partial_{d_1 - d_1', d_2 - d_2', d_3 - d_3'} f(x)\Big)\\
         &\hspace{1cm} \on{e}\Big(6\psi(d_1, d_2, d_3) - 6\psi(d'_1, d_2, d_3) - \dots + 6\psi(d_1, d'_2, d'_3) - 6\psi(d'_1, d'_2, d'_3)\Big)\Big|.\end{align*}

         By the almost linearity of $\psi$ in the first two variables, as well as linearity in the final variable, we see that 
         \[6\psi(d_1, d_2, d_3) - 6\psi(d'_1, d_2, d_3) - \dots + 6\psi(d_1, d'_2, d'_3) - 6\psi(d'_1, d'_2, d'_3)\]
         equals 
         \[6\psi(d_1, d_2, d_3-d'_3) - 6\psi(d'_1, d_2, d_3 - d'_3) -  6\psi(d_1, d'_2, d_3 - d'_3) + 6\psi(d'_1, d'_2, d_3 -d'_3)\]
         which differs from $6\psi(d_1 - d_1', d_2 - d_2', d_3 - d_3')$ by quantity $6\chi(d_3 - d'_3)$ for some $\chi \in 6E$ which is at most $36\delta_1\rho_1$ distant from from $0$ in $\mathbb{T}$. Using the elementary estimate on $\on{e}(t) - 1$, triangle inequality and inequality $\delta_1 \leq 2\delta_0 \leq \frac{1}{144\pi}\Big(\frac{c_1}{2}\Big)^{64}$, it follows that

         \begin{align*}\frac{1}{2}\Big(\frac{c_1}{2}\Big)^{64} |G| |B'_{\delta_1}|^6 \,\,\leq\,\, &\Big|\sum_{d_1,d_1', d_2,d_2', d_3,d_3' \in  G} \id_{B'_{\delta_1}}(d_1 + a'_1)\id_{B'_{\delta_1}}(d'_1 + a'_1) \id_{B'_{\delta_1}}(d_2 + a'_2)\id_{B'_{\delta_1}}(d'_2 + a'_2)\\
         &\hspace{5cm}\id_{B'_{\delta_1}}(d_3 + a'_3)\id_{B'_{\delta_1}}(d'_3 + a'_3)\Big(\sum_{x \in G} \partial_{d_1 - d_1', d_2 - d_2', d_3 - d_3'} f(x)\Big) \\
         &\hspace{1cm}\on{e}(6\psi(d_1 - d_1', d_2 - d_2', d_3 - d_3'))\Big|.\end{align*}

         Making a change of variables $u_i = d_i - d'_i$ in place of $d_i$, we finally get the claimed inequality after a slight misuse of notation (writing $B'$ instead of $B'_{1/2}$).
    \end{proof}

    Apply the claim above with the choice $\delta_0 = 2^{-29}\Big(\frac{c_1}{2}\Big)^{512}$. Hence, for $\varepsilon_1 = 2^{-28}\Big(\frac{c_1}{2}\Big)^{512}$, $\psi$ is $\varepsilon_1$-trilinear on $B'_{2\delta_0}\times B'_{2\delta_0} \times B'_{2\delta_0}$, which is important for the application of Lemma~\ref{symmetryArgumentBohr}.\\

    \noindent\textbf{Definition of almost-cubic.} Let us define $q : B' \to \mathbb{T}$ by $q(x) = \psi(x,x,x)$, which is well-defined. Note that 
    \begin{equation}\Big\|\Delta_{a,b,c}q(x) - \sum_{\pi \in \on{Sym}} \psi \circ \pi(a,b,c)\Big\|_{\mathbb{T}} \leq 2^9\eta,\label{almostcubicderivativerelationship}\end{equation}
    whenever $x, a,b,c \in B'_\eta$ with $\eta \leq 1/4$, as expansion of terms like $\psi(x + a + b + c, x + a + b + c, x + a + b + c)$ can result in at most $2^6$ simpler terms with single variable at each argument and we have 8 terms in $\Delta_{a,b,c}q(x)$. In the equality above, for a permutation $\pi \in \on{Sym}_3$ we use notation $\psi \circ \pi(u_1, u_2, u_3)$ defined by $\psi(u_{\pi(1)}, u_{\pi(2)}, u_{\pi(3)})$. In particular, $q$ is an almost-cubic polynomial, as the inequality above implies $\Big\|\Delta_{a,b,c,d}q(x)\Big\|_{\mathbb{T}} \leq 2^{10}\eta$ whenever $\eta \leq 1/8$.\\
    
    \noindent\textbf{Symmetry argument.} We now use a symmetry argument. Let $0 < \varepsilon_2 < 2^{-100}\Big(\frac{c_1}{2}\Big)^{1024}\delta_1$ be a small quantity to be specified later. Take $\delta_2\in [\varepsilon_2/2, \varepsilon_2]$ such that $B'_{\delta_2}$ is regular. Since $\varepsilon_1 = 2^{-28}\Big(\frac{c_1}{2}\Big)^{512}$, Lemma~\ref{symmetryArgumentBohr} implies that for for each transposition $\pi$ 
     \begin{align*}\on{Re}\Big(\sum_{u_1, u_2, u_3 \in G} \id_{B'_{\delta_2}} \ast \id_{B'_{\delta_2}}(u_1)\id_{B'_{\delta_2}} \ast \id_{B'_{\delta_2}}(u_2)\id_{B'_{\delta_2}} \ast &\id_{B'_{\delta_2}}(u_3) \on{e}(6\psi(u_1, u_2, u_3) - 6 \psi\circ\pi(u_1, u_2, u_3)) \Big)\\
     &\geq 2^{-31}\Big(\frac{c_1}{2}\Big)^{512}|B'_{\delta_2}|^6.\end{align*}

    By Theorem~\ref{equidistributionalmosttrilinearmaps} applied to each transposition $\pi$, as such permutations generate $\on{Sym}_3$, we may find some quantities $\rho_2, \delta_3 \geq \exp(-(2d \log(\varepsilon_2^{-1} c_1^{-1} \rho_1^{-1}))^{O(1)})$, a bilinear Bohr variety $S \subseteq B'_{\delta_2} \times B'_{\delta_2}$ of codimension $d_2 \leq (2d \log(\varepsilon_2^{-1} c_1^{-1}\rho_1^{-1})^{O(1)})$ and radius at least $\rho_2$, such that for all $(x,y) \in S, z \in B'_{\delta_3}$, we have $6\psi(x,y,z)$, $6\psi(x, z, y)$, etc. all $\varepsilon_3 = (2d)^{Cd}\sqrt{\varepsilon_2}$ close to each other, where $C$ is the final implicit constant in Theorem~\ref{equidistributionalmosttrilinearmaps}.\\

    Hence, as long as $\varepsilon_3 \leq \frac{1}{100}$, for each $\pi$ and each $(x,y,z) \in S \times B'_{\delta_3}$, we have some value $\sigma_\pi(x,y,z) \in \mathbb{Z}/6\mathbb{Z}$ such that $\Big\|\psi(x,y,z) - \psi \circ \pi(x,y,z) - \frac{\sigma_\pi(x,y,z)}{6}\Big\|\leq \varepsilon_2$, for a map $\sigma_\pi : S \times B'_{\delta_3} \to \mathbb{Z}/6\mathbb{Z}$.\\

    \begin{claim}
        For each $\pi \in \on{Sym}_3$, the map $\sigma_\pi$ is trilinear on $S \times B'_{\delta_3}$.
    \end{claim}

    \begin{proof}
        Clear as $\psi$ is $\varepsilon_1$-trilinear.
    \end{proof}

    Thus, if have $(x,y,z) \in S \times (6 \cdot B'_{\delta_3/6})$, then $\sigma_\pi(x,y,z)$ vanishes for each $\pi$. In particular, for such $(x,y,z)$ we obtain
    \begin{equation}\Big\|6\psi(x,y,z) - \Big(\sum_{\pi \in \on{Sym}_3}\psi \circ \pi(x,y,z)\Big)\Big\|_{\mathbb{T}} \leq 6\varepsilon_3.\label{sixpsitopermpsi}\end{equation}
    Let us now use this fact to replace $6\psi(x,y,z)$ in~\eqref{symmetriccorelationU4inverse} by $\sum_{\pi \in \on{Sym}_3}\psi \circ \pi(x,y,z)$ and eventually obtain a correlation with an almost-cubic. \\

    Let the bilinear Bohr variety $S$ be defined as $\cup_{x \in C} B(\Lambda, \Theta_1(x), \dots, \Theta_{d_2}(x), \rho_2)$ for a set of characters $\Lambda$ of size at most $d_2$ and Freiman-linear maps $\Theta_i : C \to \hat{G}$. We write $S^{\rho'}$ for related bilinear Bohr variety in which the columns are replaced $B(\Lambda, \Theta_1(x), \dots, \Theta_{d_2}(x), \rho')$.

    \begin{claim}
        Provided $\varepsilon_3 \leq 2^{-20}\Big(\frac{c_1}{2}\Big)^{512}$, we have a coset progression $C \subseteq B'_{\delta_2}$ of  rank at most $(2d\,d_2\log(c_1^{-1}\rho_2^{-1}))^{O(1)}$ and size $|C| \geq \exp(-(2d\,d_2\log(c_1^{-1}\rho_2^{-1}))^{O(1)})|G|$ such that for some $\rho_3 \geq \rho_2/4$
        \begin{align*}&\Big|\sum_{u_1, u_2, u_3 \in G} \id_{C} \ast \id_{C}(u_1)\id_{S^{\rho_3}_{u_1 \bcdot}} \ast \id_{S^{\rho_3}_{u_1 \bcdot}}(u_2)\id_{6 \cdot B'_{\delta_3/{12}}} \ast \id_{6 \cdot B'_{\delta_3/12}}(u_3)\\
        &\hspace{2cm}\on{e}\Big(\sum_{\pi \in \on{Sym}_3}\psi\circ\pi(u_1, u_2, u_3)\Big) \Big(\sum_{x \in G} \partial_{u_1, u_2, u_3} f(x)\Big) \Big| \geq 2^{-15}\Big(\frac{c_1}{2}\Big)^{512}  |C| \Big(\sum_{u_1 \in C} |S^{\rho_3}_{u_1 \bcdot}|^2\Big)  |B'_{\delta_3/12}|^2.
        \end{align*}
    \end{claim}

    \begin{proof} 
    Let $\eta = 2^{-40}\Big(\frac{c_1}{2}\Big)^{1024}(\rho_2/2)^{d_2}$ and apply algebraic regularity lemma (Theorem~\ref{algreglemmaintro}) to find a symmetric proper coset progression $C' \subseteq C$ of same rank and size $|C'| \geq \exp(-(2d\log(\eta^{-1}\rho_2^{-1}))) |C|$ and a quantity $\alpha$ such that $(1-\eta)|C'|$ columns of $S^{\rho_3}$ have size $\Big||S^{\rho_3}_x| - \alpha |B(\Lambda, \rho_3)|\Big| \leq \eta |G|$.\\
    
    We use Cauchy-Schwarz inequality to essentially ensure that $u_1 \in C'$, $(u_1, u_2) \in S$ and $u_3 \in 6 \cdot B'_{\delta_3/6}$. The conditions on $u_1$ and $u_3$ are easy to achieve in the same way as in the proof of Claim~\ref{regularbohrpassu4inverse} using two applications of the Cauchy-Schwarz inequality. We thus obtain
    \begin{align*}\Big|\sum_{u_1, u_2, u_3 \in G} \id_{\frac{1}{2}C'} &\ast \id_{\frac{1}{2}C'}(u_1)\id_{B'_{\delta_1}} \ast \id_{B'_{\delta_1}} (u_2)\id_{6 \cdot B'_{\delta_3/{12}}} \ast \id_{6 \cdot B'_{\delta_3/12}}(u_3)\\
        &\on{e}\Big(6\psi(u_1, u_2, u_3)\Big) \Big(\sum_{x \in G} \partial_{u_1, u_2, u_3} f(x)\Big) \Big| \geq 2^{-4}\Big(\frac{c_1}{2}\Big)^{256}  \Big|\frac{1}{2}C'\Big|^2 |B'_{\delta_1}|^2  |B'_{\delta_3/12}|^2.
        \end{align*}

    For each $u_1 \in C'$, using the regularity of $B'_{\delta_3}$ we may replace any variable $z$ ranging over $B'_{\delta_3} = B(\Gamma \cup E_0, \delta_3\rho_1)$ by a sum of variables $z' + w$, where $z' \in B'' = B(\Gamma \cup E_0, \delta_3\rho_1 - \rho_3)$ and $w \in S^{\rho_3}_{u_1 \bcdot}$. Defining $M = \alpha |B(\Lambda, \rho_3)|$, recall that $|S^{\rho_3}_{u_1 \bcdot}| \in [M/2, 2M]$ for all but at most $\eta|C'|$ elements $u_1 \in C$. Hence we get
    \begin{align*}&\Big|\sum_{u_1, u_2, z_2, w_2,  u_3 \in G} \id_{\frac{1}{2}C'} \ast \id_{\frac{1}{2}C'}(u_1)
    \id_{B'_{\delta_1}}(u_2)\id_{B''}(z_2) \id_{S^{\rho_3}_{u_1 \bcdot}}(w_2)
    \id_{6 \cdot B'_{\delta_3/{12}}} \ast \id_{6 \cdot B'_{\delta_3/12}}(u_3)\\
        &\hspace{2cm}\on{e}\Big(6\psi(u_1, u_2, u_3)\Big) \Big(\sum_{x \in G} \partial_{u_1, z_2 + w_2 - u_2, u_3} f(x)\Big) \Big| \geq 2^{-6}\Big(\frac{c_1}{2}\Big)^{256}  \Big|\frac{1}{2}C'\Big|^2 M |B'_{\delta_1}|^2  |B'_{\delta_3/12}|^2.
        \end{align*}

    Averaging over $u_2$ and $z_2$, we get a choice of those elements such that
     \begin{align*}&\Big|\sum_{u_1, w_2, u_3 \in G} \id_{\frac{1}{2}C'} \ast \id_{\frac{1}{2}C'}(u_1)
    \id_{S^{\rho_3}_{u_1 \bcdot}}(w_2)
    \id_{6 \cdot B'_{\delta_3/{12}}} \ast \id_{6 \cdot B'_{\delta_3/12}}(u_3)\\
        &\hspace{2cm}\on{e}\Big(6\psi(u_1, u_2, u_3)\Big) \Big(\sum_{x \in G} \partial_{u_1, z_2 + w_2 - u_2, u_3} f(x)\Big) \Big| \geq 2^{-6}\Big(\frac{c_1}{2}\Big)^{256}  \Big|\frac{1}{2}C'\Big|^2 M |B'_{\delta_3/12}|^2.
        \end{align*}

    By Cauchy-Schwarz inequality and $\varepsilon_1$-trilinearity of $\psi$, we get
    \begin{align*}&\Big|\sum_{u_1, w_2, u_3 \in G} \id_{\frac{1}{2}C'} \ast \id_{\frac{1}{2}C'}(u_1)
    \id_{S^{\rho_3}_{u_1 \bcdot}} \ast \id_{S^{\rho_3}_{u_1 \bcdot}}(w_2)
    \id_{6 \cdot B'_{\delta_3/{12}}} \ast \id_{6 \cdot B'_{\delta_3/12}}(u_3)\\
        &\hspace{2cm}\on{e}\Big(6\psi(u_1, u_2, u_3)\Big) \Big(\sum_{x \in G} \partial_{u_1, w_2, u_3} f(x)\Big) \Big| \geq 2^{-13}\Big(\frac{c_1}{2}\Big)^{512}  \Big|\frac{1}{2}C'\Big|^2 M^2 |B'_{\delta_3/12}|^2.
        \end{align*}

    Using~\eqref{sixpsitopermpsi}, provided $\varepsilon_3 \leq 2^{-20}\Big(\frac{c_1}{2}\Big)^{512}$, we get
    \begin{align*}&\Big|\sum_{u_1, w_2, u_3 \in G} \id_{\frac{1}{2}C'} \ast \id_{\frac{1}{2}C'}(u_1)
    \id_{S^{\rho_3}_{u_1 \bcdot}} \ast \id_{S^{\rho_3}_{u_1 \bcdot}}(w_2)
    \id_{6 \cdot B'_{\delta_3/{12}}} \ast \id_{6 \cdot B'_{\delta_3/12}}(u_3)\\
        &\hspace{2cm}\on{e}\Big(\sum_{\pi \in \on{Sym}_3}\psi\circ\pi(u_1, u_2, u_3)\Big) \Big(\sum_{x \in G} \partial_{u_1, w_2, u_3} f(x)\Big) \Big| \geq 2^{-14}\Big(\frac{c_1}{2}\Big)^{512}  \Big|\frac{1}{2}C'\Big|^2 M^2 |B'_{\delta_3/12}|^2.
        \end{align*}

    Use the quasirandomness of $S^{\rho/3}$ to replace $\Big|\frac{1}{2}C'\Big| M^2$ by $\Big(\sum_{u_1 \in C} |S^{\rho_3}_{u_1 \bcdot}|^2\Big)$.\end{proof}

    Using~\eqref{almostcubicderivativerelationship} inside $B'_{\delta_4}$ for $\delta_4 = 2^{-20}\Big(\frac{c_1}{2}\Big)^{512} $, we get
    
    \begin{align*}&\Big|\sum_{u_1, u_2, u_3 \in G} \id_{C} \ast \id_{C}(u_1)\id_{S^{\rho_3}_{u_1 \bcdot}} \ast \id_{S^{\rho_3}_{u_1 \bcdot}}(u_2)\id_{6 \cdot B'_{\delta_3/{12}}} \ast \id_{6 \cdot B'_{\delta_3/12}}(u_3)\\
        &\hspace{2cm}\Big(\sum_{y \in B'_{\delta_4}}\on{e}\Big(\Delta_{u_1, u_2, u_3}q(y)\Big)\Big) \Big(\sum_{x \in G} \partial_{u_1, u_2, u_3} f(x)\Big) \Big| \geq 2^{-15}\Big(\frac{c_1}{2}\Big)^{512}  |C| \Big(\sum_{u_1 \in C} |S^{\rho_3}_{u_1 \bcdot}|^2\Big)  |B'_{\delta_3/12}|^2.
        \end{align*}

    Making a change of variables where $x$ is replaced by $x + y$, taking a suitable $x$ and defining $\tilde{f}(t) = \id_{B'}(t) f(t + x) \on{e}(q(t))$, we get

     \begin{align*}&\Big|\sum_{u_1, u_2, u_3 \in G} \id_{C} \ast \id_{C}(u_1)\id_{S^{\rho_3}_{u_1 \bcdot}} \ast \id_{S^{\rho_3}_{u_1 \bcdot}}(u_2)\id_{6 \cdot B'_{\delta_3/{12}}} \ast \id_{6 \cdot B'_{\delta_3/12}}(u_3)\\
        &\hspace{2cm}\Big(\sum_{y \in B'_{\delta_4}} \partial_{u_1, u_2, u_3} \tilde{f}(y)\Big) \Big| \geq 2^{-15}\Big(\frac{c_1}{2}\Big)^{512}  |C| \Big(\sum_{u_1 \in C} |S^{\rho_3}_{u_1 \bcdot}|^2\Big)  |B'_{\delta_3/12}|^2 | B'_{\delta_4}|.
        \end{align*}

    Choose a suitable $\varepsilon_2 \geq \Big(\frac{c_1}{2}\Big)^{O(1)}d^{-O(d)}$ so that $\varepsilon_3 \leq 2^{-20}\Big(\frac{c_1}{2}\Big)^{512}$. Applying Gowers-Cauchy-Schwarz inequality implies that $\|\tilde{f}\|_{\mathsf{U}^3(G)} \geq \exp(-\log^{O(1)}(2c^{-1}))$, and we are done by Theorem~\ref{greentaou3inversethm} (noting that locally quadratic functions on Bohr sets are almost-cubic polynomials). 
\end{proof}

\section{Abelian 2-groups}\label{abelian2groupssection}

In this section, we deduce the inverse theorem for $\mathsf{U}^4$ norm when the ambient group is $(\mathbb{Z}/2^d\mathbb{Z})^n$.

\begin{theorem}\label{abeliantwogroupsinverse}
    Fix $d \in \mathbb{N}$ and let $G = (\mathbb{Z}/2^d\mathbb{Z})^n$. Suppose that $f : G \to \mathbb{D}$ satisfies $\|f\|_{\mathsf{U}^4} \geq c$. Then there exists a cubic polynomial $q : G \to \mathbb{T}$ such that 
    \[\Big|\exx_{x} f(x) \on{e}(q(x))\Big| \geq \exp(-\log^{O_d(1)}(2c^{-1})).\]
\end{theorem}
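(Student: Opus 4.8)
The plan is to follow the same architecture as the proof of Theorem~\ref{generalU4}, but with the symmetrisation of the resulting trilinear form carried out in the low-characteristic regime. First I would expand $c^{16}\le\|f\|_{\mathsf{U}^4}^{16}=\exx_{a,b}\|\partial_{a,b}f\|_{\mathsf{U}^2}^4$, apply the $\mathsf{U}^2$ inverse theorem in each row to obtain a set of pairs $(a,b)$ of density $(c/2)^{O(1)}$ equipped with a frequency $\phi(a,b)\in\hat G$ satisfying $|\widehat{\partial_{a,b}f}(\phi(a,b))|\ge (c/2)^{O(1)}$, and then run the two Cauchy--Schwarz manipulations from the proof of Theorem~\ref{generalU4}, first horizontally and then vertically, to conclude that $\phi$ respects a positive proportion of all directional additive quadruples. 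Applying Theorem~\ref{approxFreimanHom} in each sufficiently good row and column turns $\phi$ into a Freiman bihomomorphism on a dense subset of $G\times G$.

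Next I would feed this into the structural theorem. Since $G$ has exponent $2^d$, every Bohr set of radius smaller than $2^{-d}$ is a genuine subgroup, so the bounded-torsion form of the structure theorem (Proposition~\ref{maininversetheoremspectorsion}) produces, after the obvious affine normalisations, an honest biadditive map $\Phi\colon V_1\times V_2\to\hat G$ on a product of subgroups $V_1,V_2\le G$ of codimension $\log^{O(1)}(2c^{-1})$, agreeing with a translate of $\phi$ on a dense set. Plugging $\Phi$ back into the $\mathsf{U}^4$ count and running the chain of Gowers--Cauchy--Schwarz steps from the proof of Claim~\ref{regularbohrpassu4inverse}, but on the subgroup $V=V_1\cap V_2$, where there are no Bohr-regularity losses and $\varepsilon$-trilinearity may be upgraded to honest trilinearity, I would obtain a trilinear form $\psi\colon V\times V\times V\to\mathbb{T}$, of the shape $\psi(u_1,u_2,u_3)=\Phi'(u_1,u_2)(u_3)$, together with a shift $x_0\in G$, such that
\[\Big|\exx_{y,u_1,u_2,u_3\in V}\on{e}\big(\psi(u_1,u_2,u_3)\big)\,\partial_{u_1,u_2,u_3}f(x_0+y)\Big|\ \ge\ \exp\!\big(-\log^{O_d(1)}(2c^{-1})\big),\]
using that in bounded torsion a trilinear form on $V$ is automatically the triple discrete derivative of a cubic on $V$, so that its values lie in $\tfrac1{2^d}\mathbb Z/\mathbb Z$ and a fourth derivative vanishes identically.

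The heart of the argument, and the main obstacle, is the symmetry step in characteristic two. Combining the subgroup analogue of the symmetry argument (Lemma~\ref{symmetryArgumentBohr}) with the equidistribution theory of almost trilinear forms (Theorem~\ref{equidistributionalmosttrilinearmaps}) shows that for each transposition $\pi$ the form $\psi-\psi\circ\pi$ is biased, hence lower order, on a dense bilinear subvariety. In high characteristic one would now simply divide by $3!=6$ to pass to a genuinely symmetric form and integrate it to a cubic; since $2\mid 6$ this is impossible, and the $2$-torsion part of $\psi-\psi\circ\pi$ persists. The correct substitute is to integrate $\psi$ to a cubic along a nested chain of subgroups of exponents $2^{d},2^{d-1},\dots,2$, as encoded in Theorem~\ref{trilinearintegration}; tracking the $2$-torsion corrections at each of the $d$ levels gives the symmetry argument of Proposition~\ref{correlationforsymmetrytorsionLemma} and, because a bounded power of $\log$ is lost at each level, accounts for the $O_d(1)$ exponent in the final bound. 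The outcome is a cubic polynomial $q$ on a further dense subgroup $V'\le V$ and a shift $x_1\in G$ for which $\tilde f(t)=\id_{V'}(t)\,f(t+x_1)\,\on{e}(-q(t))$ has $\|\tilde f\|_{\mathsf{U}^3(G)}\ge\exp(-\log^{O_d(1)}(2c^{-1}))$.

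Finally I would close off with the $\mathsf{U}^3$ inverse theorem for groups of bounded torsion (\cite{JamTao,SamorU3,Sanders}): it supplies a quadratic polynomial that correlates locally with $\tilde f$, and adding it to $q$ gives a cubic polynomial $q'$ on $V'$ with $|\exx_x\id_{V'}(x)f(x+x_1)\on{e}(-q'(x))|\ge\exp(-\log^{O_d(1)}(2c^{-1}))$. To globalise, I would use Proposition~\ref{directsummandstorsion}: a dense subgroup of a bounded-torsion group contains a still-dense subgroup that is a direct summand of $G$, so restricting $q'$ to it and using the direct-sum decomposition extends it to a genuine cubic polynomial on all of $G$; averaging over cosets and passing from local to global correlation in the standard way then yields a cubic polynomial $q$ on $G$ with $|\exx_x f(x)\on{e}(q(x))|\ge\exp(-\log^{O_d(1)}(2c^{-1}))$, as required.
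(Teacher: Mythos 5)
Your proposal matches the paper's overall architecture (reduction to a Freiman bihomomorphism, the bounded-torsion structure theorem, Gowers--Cauchy--Schwarz on subgroups rather than Bohr sets, a trilinear-form correlation, a symmetry step, integration, and the $\mathsf{U}^3$ inverse theorem at the end), but the crucial symmetry-and-integration step is misdiagnosed, and this is a genuine gap rather than an imprecision.

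Your parenthetical claim that ``in bounded torsion a trilinear form on $V$ is automatically the triple discrete derivative of a cubic on $V$'' is false: Theorem~\ref{trilinearintegration} gives an \emph{if and only if} criterion --- a trilinear $\phi$ on $(\mathbb{Z}/2^d\mathbb{Z})^n$ integrates to a cubic precisely when $\phi$ is symmetric \emph{and} $2^{d-1}\phi(a,a,b)$ is a symmetric bilinear form. The first condition (permutation symmetry) is handled, as you say, by the symmetry argument combined with Theorem~\ref{biasedinversetorsion}, which produces a dense subgroup on which all permutation differences vanish. The second condition is an entirely separate obstruction, and it does not follow from symmetry; this is what Lemma~\ref{correlationforsymmetrytorsionLemma} is for. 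Your sketch conflates the two, framing Lemma~\ref{correlationforsymmetrytorsionLemma} as a way of ``tracking the $2$-torsion corrections'' to the permutation defect $\psi-\psi\circ\pi$, when in fact that defect has already been killed by the time the lemma is invoked.

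The missing idea is the maneuver that makes Lemma~\ref{correlationforsymmetrytorsionLemma} applicable. The lemma requires a correlation of the very specific shape $\big|\exx_{u,v}f_1(u)f_2(v)f_3(u+v)f_4(2u+v)\on{e}(\alpha(u,u,v))\big|\geq\delta$, and the paper obtains this from the trilinear correlation by a nontrivial change of variables: one replaces $(a,b,c)$ by $(a-u,b-u,c-v)$, expands the eight resulting factors of $f$, fixes $(x,a,b,c)$ by averaging, and is left with a cross term $\alpha(a+b,u,v)$ that is only \emph{bilinear} in $(u,v)$ and must be absorbed into the $f_i$ via Lemma~\ref{bilinearintegration}. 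Without this reduction the lemma simply does not apply, and the induction on $d$ (descending through $G/2^{d-1}\cdot G$, with the cubic-forward-difference identity reversing the roles of $u$ and $v$) has nothing to latch onto. So while the scaffolding of your proof is right, the core of the low-characteristic argument --- extracting the extra integrability condition rather than merely symmetry --- is missing.
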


Unlike finite vector spaces in which all subgroups are direct summands, this is not necessarily the case with bounded torsion groups. The next lemma recovers that property to a large extent.

\begin{lemma}\label{directsummandstorsion}
    Let $H \leq (\mathbb{Z}/2^d\mathbb{Z})^n$ be a subgroup of density $c$. Then there exist subgroups $U, V \leq (\mathbb{Z}/2^d\mathbb{Z})^n$ such that $U \leq H$, $U$ has density at least $c^d$ and $(\mathbb{Z}/2^d\mathbb{Z})^n = U \oplus V$. In particular, there exists a surjective projection $\pi : (\mathbb{Z}/2^d\mathbb{Z})^n \to U$, taking values inside $H$ and having a kernel of size at most $c^{-d}$.
\end{lemma}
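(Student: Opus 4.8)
The plan is to prove Lemma~\ref{directsummandstorsion} by an iterative extraction of direct summands, losing a controlled factor at each step. I would work with the group $G = (\mathbb{Z}/2^d\mathbb{Z})^n$ and the subgroup $H \leq G$ of density $c$, and build up a subgroup $U \leq H$ together with a complementary subgroup $V$ so that $G = U \oplus V$, while keeping $|U| \geq c^d |G|$. The key structural input is that $(\mathbb{Z}/2^d\mathbb{Z})^n$ is a module over the ring $\mathbb{Z}/2^d\mathbb{Z}$, and every cyclic submodule generated by an element $x$ of order $2^k$ is a direct summand of $G$ whenever $x$ can be completed to a "basis-like" generating set; more concretely, if $x \in G$ has order $2^k$ and $x \notin 2G + \langle\text{already chosen generators}\rangle$ in the appropriate sense, then $\langle x\rangle$ splits off.

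The main step: stratify by order. For $k \in \{1, \dots, d\}$, let $H_k = H \cap 2^{d-k}G$ be the elements of $H$ killed by $2^k$ (equivalently of order dividing $2^k$); these are nested $H_1 \subseteq \dots \subseteq H_d = H$. First I would pass to the "top layer" and observe, using the pigeonhole/counting estimate $|H| = c|G|$ and the fact that $[G : 2^{d-k}G] = 2^{(d-k)n}$, that for at least one value of $k$ the quotient $H_k / (H_k \cap 2 H_{k+1})$ — or more simply the image of $H$ under a suitable coordinate projection — is large, of relative density at least $c^{1/d}$ in the relevant layer. Then I would select, greedily, a maximal $\mathbb{Z}/2^d\mathbb{Z}$-linearly independent (in the sense of generating a free direct summand) subset of elements of $H$ of maximal possible orders, say $x_1, \dots, x_m$, and set $U = \langle x_1, \dots, x_m\rangle$. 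Maximality of this set, combined with the counting bound on $|H|$, forces $|U| \geq c^d|G|$: each time we fail to enlarge $U$ we can cover $H$ by at most $|G : \langle U, 2G\rangle|^{O(1)}$ translates, and since each generator has order at least $2$ and at most $2^d$, after at most $d \cdot (\text{rank})$ careful choices the index $|G|/|U|$ is at most $c^{-d}$.

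For the splitting $G = U \oplus V$: since the $x_i$ were chosen so that $\langle x_1, \dots, x_i\rangle$ is a direct summand of $G$ at each stage (this is the standard fact that a pure submodule of a finite abelian $p$-group of bounded exponent that is itself a direct sum of cyclics splits off — here purity is exactly what the greedy "maximal order" choice buys us), we get $U$ as a direct summand, so a complement $V$ exists. The projection $\pi : G \to U$ with $\pi(G) = U \leq H$ is then the projection along $V$, and its kernel is $V$, of size $|G|/|U| \leq c^{-d}$. Finally, the "in particular" clause is immediate: $\pi$ is surjective onto $U$, $U \subseteq H$ so $\pi$ takes values in $H$, and $|\ker \pi| = |V| = |G|/|U| \leq c^{-d}$.

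The main obstacle I expect is making the greedy/purity argument quantitatively clean: one must ensure that at each step the element $x_i$ extracted from $H$ both has a large enough contribution to $|U|$ and generates a summand, and that the process terminates after $O_d(1)$-controlled many steps with $|U| \geq c^d|G|$ rather than something weaker like $c^{O(d^2)}|G|$. The cleanest route is probably to induct on $d$: write $G = (\mathbb{Z}/2\mathbb{Z})^n \times (\mathbb{Z}/2^{d-1}\mathbb{Z})^n$-style decompositions are not available directly, but one can instead consider $2G \cong (\mathbb{Z}/2^{d-1}\mathbb{Z})^n$ and $G/2G \cong (\mathbb{Z}/2\mathbb{Z})^n$, apply the $d=1$ case (which is just: a dense subspace of $\mathbb{F}_2^n$ is a direct summand, trivially, with complement of size equal to the index) to the image of $H$ in $G/2G$, and apply the inductive hypothesis to $H \cap 2G$ inside $2G$, then glue the two summands; the density of $H$ in $G$ being $c$ guarantees that at least one of "density of $H\cap 2G$ in $2G$" or "density of image of $H$ in $G/2G$" is at least $c$, and iterating this bookkeeping over the $d$ levels yields exactly the $c^d$ loss. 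I would adopt this inductive formulation as the backbone of the proof.
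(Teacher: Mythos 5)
Your proposal contains two ideas, and the first (greedy extraction of ``pure'' generators of maximal order) is too vague to assess: you give no concrete rule for deciding when an extracted element generates a summand, and you flag yourself that the quantitative bookkeeping is unclear. The second idea, stratifying by the filtration $2G \leq G$ and inducting on $d$, is closer, but the proposed gluing step has a genuine gap. The inductive hypothesis applied to $H \cap 2G$ inside $2G \cong (\mathbb{Z}/2^{d-1}\mathbb{Z})^n$ produces a direct summand $U'$ of $2G$, but a direct summand of $2G$ is not a direct summand of $G$ once $d\geq 2$ and the summand is nonzero: already $2G = \{0,2\}$ does not split off inside $G = \mathbb{Z}/4\mathbb{Z}$. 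In that same example with $H = \{0,2\}$ (density $c = 1/2$), your bottom-layer step hands you $U' = \{0,2\}$, which cannot be promoted to a summand of $G$; the lemma is of course satisfied, but only because $c^d = 1/4$ permits $U = \{0\}$, which your gluing does not produce.

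The rescue is that the \emph{other} half of your plan is already a complete proof, with no gluing or induction required. Write $\bar H$ for the image of $H$ in $G/2G \cong \mathbb{F}_2^n$ and let its density there be $c_2$. Since $|H| = |H\cap 2G|\cdot|\bar H|$ and $|G| = |2G|\cdot|G/2G|$, one has $c = c_1 c_2$ with $c_1, c_2 \leq 1$, so in fact \emph{both} layers have density at least $c$; your ``at least one'' is unnecessarily weak and obscures that only the top layer matters. Lift a basis $\bar x_1, \dots, \bar x_m$ of $\bar H$ to elements $x_1, \dots, x_m \in H$. Each $x_i \notin 2G$, so $x_i$ has order exactly $2^d$; and the $x_i$ are independent modulo $2$, so $U = \langle x_1, \dots, x_m\rangle$ is a free $\mathbb{Z}/2^d\mathbb{Z}$-submodule of rank $m$, hence a direct summand (extend $\bar x_1, \dots, \bar x_m$ to a basis of $G/2G$, lift arbitrarily, and apply Nakayama). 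Then $U \leq H$ and $|U|/|G| = 2^{d(m-n)} = c_2^d \geq c^d$, as required, and $\pi$ is projection along the complementary summand. The paper instead applies the Smith normal form to a generator matrix of $H$, getting a basis $a_1,\dots,a_n$ of $G$ and generators $b_i = k_i a_i$ of $H$ with $k_1\mid\cdots\mid k_m$, and takes $U$ to be spanned by the $a_i$ with $k_i$ odd. This is essentially the same $U$ (the $a_i$ with $k_i$ odd are exactly those surviving modulo $2$); the Smith normal form bundles the linear-algebraic reduction into one invocation, while the $G/2G$-lifting route is arguably more elementary. Both are correct; your version becomes one once the $H\cap 2G$ branch and the gluing are dropped.
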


\begin{proof}
    The proof relies on the Smith normal form. Let $e_1, \dots, e_n$ be the standard basis of $(\mathbb{Z}/2^d\mathbb{Z})^n$, in the sense that they generate the group and $\sum_{j \in [n]} \lambda_j e_j = 0$ for integers $\lambda_j$ implies $2^d | \lambda_j$. Let $h_1, \dots, h_m \in H$ be any generating set of $H$. We set $M$ to be $m \times n$ matrix obtained by expressing $h_1, \dots, h_m$ in terms of $e_1, \dots, e_n$, i.e. such that $h_i = \sum_{j \in [n]} M_{ij} e_j$.\\
    \indent We claim that, whenever $A$ is an $n \times n$ invertible integer matrix, we have that $a_i = \sum_{j \in [n]} A_{i j} e_j$, where $i \in [n]$, is also a basis of the group $(\mathbb{Z}/2^d\mathbb{Z})^n$ in the above sense. Indeed, as every element $x$ can be written as $x = \sum_{i \in [n]} \mu_i e_i$, we have $x = \sum_{i, j \in [n]} \mu_i(A^{-1})_{ij} a_j$. On the other hand, if $\sum_{i \in [n]} \lambda_i a_i = 0$, then $\sum_{i, j \in [n]} \lambda_i A_{i j} e_j= 0$, so $2^d | \sum_{i \in [n]} \lambda_i A_{i j}$ for all $j \in [n]$. So the entries of $A^T \lambda$ are divisible by $2^d$. Multiplying by the inverse of $A^T$, which is an integral matrix, we see that $2^d | \lambda_i$, as desired.\\
    \indent Similarly, whenever $B$ is an $m \times m$ invertible integer matrix, we have that $b_i = \sum_{j \in [m]} B_{i j} h_j$, where $i \in [m]$, is a generating set of $H$. The matrix of coefficients of $b_1, \dots, b_m$ with respect to $a_1, \dots, a_n$ is $BMA^{-1}$. We may find $A, B$ so that $M$ is in its Smith normal form.\\
    \indent Hence, we may find a basis $a_1, \dots, a_n$ of $(\mathbb{Z}/2^d\mathbb{Z})^n$ and a generating set $b_1, \dots, b_m$, such that $m \leq n$ and $b_i = k_i a_i$, $k_1, \dots, k_m \not= 0$ (as we may ignore zeros in the generating set), where $k_1 | k_2 | \dots | k_m$. If $2^{r_i}||k_i$ for each $i$, we get a formula for the size of $H$, namely $|H| = 2^{-r_1 - \dots -r_m} 2^{dm}$.\\
    \indent Since $k_i | k_{i + 1}$, we have $r_i \leq r_{i+1}$, so let $\ell \in [m]$ be the largest index with $r_\ell = 0$. We may now define our desired subgroups, namely $U = \langle a_i : i \in [\ell]\rangle$, $V = \langle a_i : i \in [\ell + 1,n]\rangle$. Note that when $r_i = 0$, we have $k_i$ invertible, so $U \leq H$. Hence $|U| = 2^{d\ell}$. It remains to prove the final size estimate.\\
    \indent By the work above, we have $1\leq r_{\ell + 1} \leq \dots \leq r_m \leq d$, $2^{-r_{\ell + 1} - \dots -r_m} 2^{dm} \geq c 2^{dn}$ and  $|U| = 2^{d\ell}$. Write $s = r_{\ell + 1} + \dots  + r_m$. Hence $2^{-s} 2^{dm} \geq c 2^{dn}$, so in particular $2^{-s} \geq c$. Thus
    \[|U| =2^{d\ell} = 2^{-d(m - \ell)}2^{dm}  = 2^{-(d-1)(m - \ell)} 2^{-(m-\ell)} 2^{dm} \geq 2^{-(d-1)s} \Big(2^{-s} 2^{dm}\Big) \geq c^{d-1} \cdot c 2^{dn} = c^d 2^{dn}.\qedhere\]
\end{proof}

\vspace{\baselineskip}

We need an analogous result for small subgroups.

\begin{lemma}\label{directsummandstorsionextension}
    Let $H \leq (\mathbb{Z}/2^d\mathbb{Z})^n$ be a subgroup of size $K$. Then there exist subgroups $U, V \leq (\mathbb{Z}/2^d\mathbb{Z})^n$ such that $H \leq U$, $U$ has size at most $K^d$ and $(\mathbb{Z}/2^d\mathbb{Z})^n = U \oplus V$. In particular, there exists a surjective projection $\pi : (\mathbb{Z}/2^d\mathbb{Z})^n \to V$, having a kernel of size at most $K^{d}$ and vanishing on $H$.
\end{lemma}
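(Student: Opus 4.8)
The plan is to mimic the proof of Lemma~\ref{directsummandstorsion}, again using the Smith normal form, but tracking the quotient side rather than the subgroup side. First I would fix the standard basis $e_1, \dots, e_n$ of $(\mathbb{Z}/2^d\mathbb{Z})^n$ (in the sense used in the previous lemma) and pick any generating set $h_1, \dots, h_m$ of $H$, forming the coefficient matrix $M$ with $h_i = \sum_j M_{ij} e_j$. As in the previous proof, multiplying $M$ on the left and right by invertible integer matrices $B$ and $A^{-1}$ corresponds to changing the generating set of $H$ and the basis of the ambient group; so by putting $M$ into Smith normal form I obtain a basis $a_1, \dots, a_n$ of $(\mathbb{Z}/2^d\mathbb{Z})^n$ and generators $b_i = k_i a_i$ of $H$ (discarding zero generators), with $k_1 \mid k_2 \mid \dots \mid k_m$ and $m \le n$.

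Next I would set up the bookkeeping. Write $2^{r_i} \| k_i$, so $0 \le r_1 \le \dots \le r_m \le d$ and $|H| = 2^{(d - r_1) + \dots + (d - r_m)}$; call this $K$, so $(d-r_1) + \dots + (d-r_m) = \log_2 K$ and in particular $m \le \log_2 K$. The natural candidate is $U = \langle a_1, \dots, a_m \rangle$ and $V = \langle a_{m+1}, \dots, a_n \rangle$, which clearly satisfy $(\mathbb{Z}/2^d\mathbb{Z})^n = U \oplus V$ and $H \le U$ since each $b_i$ lies in $\langle a_i \rangle \le U$. The size bound is then immediate: $|U| = 2^{dm} = 2^{\sum_i ((d - r_i) + r_i)} = K \cdot 2^{\sum_i r_i} \le K \cdot 2^{d m} \le K \cdot 2^{d \log_2 K}$, which is far worse than $K^d$, so a slightly sharper count is needed. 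Observing that $H \le U$ forces $|U| / |H| = 2^{\sum_i r_i}$ to divide $|U|$, and using $m \le \log_2 K$ together with $r_i \le d$, one gets $|U| = 2^{dm}$ and $|H| = 2^{dm - \sum r_i} \ge 2^{dm - dm} $ — so instead I would argue directly: $|U| = 2^{dm}$ while $K = |H| \ge 2^{m}$ (each nonzero $b_i$ contributes a factor $2^{d - r_i} \ge 2$), hence $2^{dm} = (2^m)^d \le K^d$, giving $|U| \le K^d$ as required. Finally, the projection $\pi : (\mathbb{Z}/2^d\mathbb{Z})^n \to V$ is the one killing $U$ (well-defined by the direct sum decomposition), which vanishes on $H$ and has kernel $U$ of size at most $K^d$.

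The only genuinely delicate point is the size estimate, and the argument above via $K \ge 2^m$ dispatches it cleanly — one just has to be careful that $H \le U$ really does imply each surviving $b_i$ has order $\ge 2$, i.e.\ that we have discarded exactly the zero generators and no $k_i$ is a unit times $0$. Everything else (the invariance of the basis property under invertible integer change of coordinates, the existence of Smith normal form, the direct sum and projection statements) is routine linear algebra over $\mathbb{Z}/2^d\mathbb{Z}$ lifted from $\mathbb{Z}$, and is already spelled out in the proof of Lemma~\ref{directsummandstorsion}, so I would simply refer to it and only indicate the modifications. I expect no real obstacle; the main care is bookkeeping to land the exponent $d$ exactly rather than something larger.
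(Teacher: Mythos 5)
Your proof is correct and is exactly the ``slight modification of the last step'' the paper has in mind: same Smith normal form setup, take $U = \langle a_1, \dots, a_m\rangle$ (so that $H \le U$ instead of $U \le H$), and estimate $|U| = 2^{dm} \le (2^m)^d \le K^d$ using $K = |H| = \prod_i 2^{d - r_i} \ge 2^m$. The middle of the proposal contains a false start (the bound $K \cdot 2^{d\log_2 K} = K^{d+1}$ and the sentence about $|U|/|H|$ dividing $|U|$, which trails off), but you abandon it and the final argument is correct; the worry about discarded generators is routine since the Smith normal form construction already removes exactly those $k_i$ with $2^d \mid k_i$.
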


\begin{proof}
    A slight modification of the last step in the previous proof implies the proposition.
\end{proof}

We observe that the structure theorem for Freiman bihomomorphisms simplifies in the case of bounded torsion groups.

\begin{corollary}[Global structure theorem for Freiman bihomomorphisms in $(\mathbb{Z}/2^d\mathbb{Z})^n$]
    \label{maininversetheoremspectorsion}
    Let $G = (\mathbb{Z}/2^d\mathbb{Z})^n$ and let exponent of $H$ be $2^d$. Let $A \subseteq G \times G$ be a set of density $c$ and let $\phi : A \to H$ be a Freiman bihomomorphism. Then there exists a global Freiman bihomomorphism $\Phi : G \times G \to H$ such that $\phi(x,y) = \Phi(x,y)$ holds for at least $\exp(-\log^{O(1)}(c^{-1}))|G|^2$ points $(x,y) \in A$. 
\end{corollary}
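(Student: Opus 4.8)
The plan is to deduce Corollary~\ref{maininversetheoremspectorsion} from the general structure theorem (Theorem~\ref{maininversetheorem}) by using the special features of $G = (\mathbb{Z}/2^d\mathbb{Z})^n$ to convert the abstract output — an $E$-bihomomorphism on a product of two Bohr sets — into a genuine global Freiman bihomomorphism. First I would invoke Theorem~\ref{maininversetheorem} to obtain a set $E$ of rank $(2\log c^{-1})^{O(1)}$, Bohr sets $B_1, B_2 \subseteq G$ of codimension $(2\log c^{-1})^{O(1)}$ and radius $\exp(-(2\log c^{-1})^{O(1)})$, elements $s, t \in G$, and an $E$-bihomomorphism $\Phi_0 : B_1 \times B_2 \to H$ agreeing with $\phi(\cdot + s, \cdot + t)$ on $\exp(-(2\log c^{-1})^{O(1)})|G|^2$ points of $B_1 \times B_2$. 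Translating by $s$ and $t$ costs nothing, so we may work with $\Phi_0$ directly.

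The key observation for $G = (\mathbb{Z}/2^d\mathbb{Z})^n$ is that the error set $E$ can be eliminated. Since the exponent of $H$ is $2^d$, any element of $H$ is divisible by nothing forced; instead I would exploit the fact that a Bohr set in $(\mathbb{Z}/2^d\mathbb{Z})^n$ contains a dense subgroup: by Proposition~\ref{efficientGroupGeneration} (applied to the symmetric set $B_i$), or more directly by using that $B_i$ contains a coset progression of bounded rank (Proposition~\ref{cpsinbohrsets}) whose subgroup part $K_i$ is a dense subgroup of $G$ of density $\exp(-(2\log c^{-1})^{O(1)})$. On the product $K_1 \times K_2$, the restriction of $\Phi_0$ is still an $E$-bihomomorphism, but now every additive quadruple in a direction is a genuine equation inside a group, so iterating the $E$-condition along chains (as in the second claim in the proof of Proposition~\ref{ehomextensionsfromsubgroups}) shows that $\Phi_0$ is an $E'$-homomorphism in each variable for a slightly larger set $E'$. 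Crucially, since $K_1$ and $K_2$ are subgroups and the relevant linear combinations close up, one can run a Fourier/convolution averaging argument (or the structure theorem for approximate homomorphisms, Theorem~\ref{approxFreimanHom}, applied columnwise and then rowwise) to produce a genuine Freiman bihomomorphism $\Phi_1 : K_1 \times K_2 \to H$ agreeing with $\Phi_0$ — hence with $\phi$ — on a dense subset; the point is that on a finite abelian group of bounded exponent there are no "non-trivial vanishing linear combinations with small coefficients'', so the obstruction set collapses after passing to a subgroup.

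Having obtained a Freiman bihomomorphism $\Phi_1$ on a product of two dense subgroups $K_1 \times K_2$ of $G$, the final task is to extend it to all of $G \times G$. This is where Lemma~\ref{directsummandstorsion} is used in its stated form: write $G = K_1 \oplus V_1 = K_2 \oplus V_2$ (shrinking $K_i$ to a direct-summand subgroup $U_i \leq K_i$ of density at least $(\text{density of }K_i)^d$ if necessary), so there are projections $\pi_i : G \to U_i$ with bounded-size kernel. A Freiman-linear map from a direct summand extends to the whole group simply by precomposing with the projection; doing this in both variables, $\Phi_2(x,y) := \Phi_1(\pi_1 x, \pi_2 y)$ is a global Freiman bihomomorphism $G \times G \to H$ (here I would check that $\Phi_1$, being Freiman-bilinear on the product of subgroups, genuinely is given by a bilinear expression in coordinates, so the composition respects all directional additive quadruples). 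Since $\pi_i$ has kernel of size $\exp((2\log c^{-1})^{O(1)})$, $\Phi_2$ still agrees with $\phi$ on $\exp(-\log^{O(1)}(c^{-1}))|G|^2$ points of $A$, which is the claim.

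The main obstacle is the middle step: removing the error set $E$ after restricting to subgroups and upgrading the $E$-bihomomorphism to an exact Freiman bihomomorphism without losing too much density. One must be careful that the $E$-condition, after iterating along chains to get $E'$-homomorphisms in each variable, still permits an application of Theorem~\ref{approxFreimanHom} — which requires exact additive-quadruple respecting, not $E'$-respecting. The resolution is to first note that on a subgroup of a bounded-exponent group the relevant naive-extension error sets (the analogues of $E_x(\tau,\tau')$ from Proposition~\ref{controllingerrorscolsprop}) are trivial because there are no small vanishing linear combinations, so $\Phi_0$ restricted to $K_1 \times K_2$ is actually an $\{0\}$-bihomomorphism after correcting by a bounded number of "defects'' coming from $E$ itself; quotienting $H$ by the subgroup generated by $E$ (of bounded size) and lifting back, or absorbing $E$ into the domain via Lemma~\ref{directsummandstorsionextension} applied to $\langle E\rangle \leq H$, makes this precise. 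I expect the bookkeeping here — tracking which density factors degrade and ensuring the exponent-$2^d$ hypothesis on $H$ is used exactly where needed — to be the only genuinely delicate part; everything else is routine given the machinery already developed in the paper.
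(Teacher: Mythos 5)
Your plan is essentially the paper's plan: apply Theorem~\ref{maininversetheorem}, use the fact that Bohr sets in $(\mathbb{Z}/2^d\mathbb{Z})^n$ contain dense subgroups, precompose with direct-summand projections $\pi_i$ from Lemma~\ref{directsummandstorsion} to get a global $E$-bihomomorphism, then post-compose with a homomorphism $\tilde{\pi}:H\to H$ vanishing on $\langle E\rangle$ obtained from Lemma~\ref{directsummandstorsionextension} (using $|\langle E\rangle|\leq 2^{dr}$, which is where the exponent hypothesis on $H$ enters), and finish by averaging. The paper does precisely this, in the order: subgroup $U_i\subseteq B_i$, then Lemma~\ref{directsummandstorsion} to get $U_i'$ and projections, then Lemma~\ref{directsummandstorsionextension} to kill $E$ on the codomain side, then averaging.

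However, the middle step of your proposal contains a conceptual muddle that is worth flagging. You write that ``$\Phi_0$ restricted to $K_1 \times K_2$ is actually a $\{0\}$-bihomomorphism after correcting by a bounded number of defects coming from $E$'' because ``on a subgroup of a bounded-exponent group the relevant naive-extension error sets are trivial.'' This is not correct: the error set $E$ produced by Theorem~\ref{maininversetheorem} is a fixed feature of $\Phi_0$, not something that arises from naive extensions, and restricting $\Phi_0$ to a subgroup $K_1\times K_2$ leaves it an $E$-bihomomorphism --- nothing about the domain being a subgroup makes the errors vanish. You are conflating the $E$ of Theorem~\ref{maininversetheorem} with the internal error sets $E_x(\tau,\tau')$ of Proposition~\ref{controllingerrorscolsprop}, which belong to a different stage of the argument. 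Similarly, your fallback of applying Theorem~\ref{approxFreimanHom} columnwise and then rowwise does not work directly: that theorem produces a Freiman homomorphism agreeing with the given map on a dense set, and doing this one variable at a time does not preserve the bilinear coherence between rows and columns. Fortunately, you self-correct to the right mechanism (``absorbing $E$ ... via Lemma~\ref{directsummandstorsionextension} applied to $\langle E\rangle \leq H$''), which is exactly the paper's step. Once you delete the false intermediate claim and the Theorem~\ref{approxFreimanHom} detour, your outline is sound and matches the paper.

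One small point you correctly identify but should carry through more explicitly: after all the corrections $\Phi_2=\tilde\pi\circ\Phi_1\circ(\pi_1,\pi_2)$ agrees with $\tilde\pi\circ\phi(\cdot+s,\cdot+t)$ on a dense set, not with $\phi$ itself. The final averaging argument must pigeonhole over the (boundedly many, $\leq 2^{d^2 r}$) cosets of $\ker\tilde\pi$ to pick out a fixed correction constant $u_0\in\ker\tilde\pi$ --- adding a constant preserves the Freiman bihomomorphism property --- and also translate the subgroup $U_1'\times U_2'$ to a translate on which the agreement set of $\Phi$ with $\phi$ is dense. The paper glosses this as ``the result follows by an averaging argument,'' so you are in good company, but since you flagged bookkeeping as the delicate part, this is where the delicacy actually lives.
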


\begin{proof}
    Apply Theorem~\ref{maininversetheorem} to get a parameters $r \leq \log^{O(1)}(2c^{-1})$ and $c' \geq \exp(-\log^{O(1)}(c^{-1}))$, a set $E$ of rank at most $r$, Bohr sets $B_1 \subseteq G_1, B_2 \subseteq G_2$ of codimension at most $r$ and radius $c'$, elements $s \in G_1,t\in G_2$ and an $E$-bihomomorphism $\Phi : B_1 \times B_2 \to H$ such that $\Phi(x,y) = \phi(x + s,y + t)$ holds for at least $c'|G_1|G_2|$ points $(x,y) \in B_1 \times B_2$. Since $G = (\mathbb{Z}/2^d\mathbb{Z})$, $B_1$ and $B_2$ contain subgroups $U_1$ and $U_2$ of density at least $2^{-dr}$. Using Lemma~\ref{directsummandstorsion}, we may pass to further subgroups $U'_1 \leq U_1$ and $U'_2 \leq U_2$ such that we have surjective projections $\pi_i : G \to U'_i$. Hence, $\Phi'(x,y) = \Phi(\pi_1(x), \pi_2(y))$ is a global $E$-bihomomorphism.\\
    Let $S = \langle E\rangle$ be the subgroup generated by $E$ inside $H$, which has size at most $2^{dr}$. By Lemma~\ref{directsummandstorsionextension}, we may find a homomorphism $\tilde{\pi} : H \to H$ which vanishes on $S$, but has at most $2^{d^2r}$ elements in the kernel. Then $\tilde{\pi} \circ \Phi'$ is a global Freiman bihomorphism. The result follows by an averaging argument.
\end{proof}

The following lemma will be key to completing the integration argument in the inverse theorem. We remark that the inverse theorem is known in $(\mathbb{Z}/2\mathbb{Z})^n$.

\begin{lemma}\label{correlationforsymmetrytorsionLemma} Let $G = (\mathbb{Z}/2^d\mathbb{Z})^n$, let $f_1, f_2, f_3, f_4 : G \to \mathbb{D}$ be functions and let $\alpha : G \times G \times G \to \mathbb{T}$ be a symmetric trilinear form
    \begin{equation}\Big|\exx_{u , v \in G} f_1(u) f_2(v) f_3(u + v) f_4(2u + v) \on{e}\Big(\alpha(u,u, v)\Big)\Big| \geq \delta.\label{keycorrelationforsymmetrytorsion}\end{equation}
    Then $2^{d-1} \alpha(u,u,v)$ is a symmetric bilinear form on some subgroup $K \leq G$ of size $|K| \geq ( \delta/2)^{O_d(1)}|G|$.
\end{lemma}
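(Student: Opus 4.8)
The goal is to extract from the correlation \eqref{keycorrelationforsymmetrytorsion} enough algebraic rigidity on $\alpha$ so that $2^{d-1}\alpha(u,u,v)$ becomes a genuine symmetric bilinear form on a large subgroup. The strategy mirrors the symmetry arguments of Green--Tao used earlier in the paper, but is complicated by the low characteristic: the term $2u+v$ means the usual change of variables produces factors of $2$, and on $(\mathbb{Z}/2^d\mathbb{Z})^n$ the point $2u$ is not a unit multiple of $u$, so we cannot simply absorb it. The remedy is to work modulo $2^{d-1}$: the quantity $2^{d-1}\alpha$ kills all the ``$2$-torsion error'' coming from the argument. First I would run a sequence of Cauchy--Schwarz manipulations on \eqref{keycorrelationforsymmetrytorsion} to eliminate the functions $f_1,f_2,f_3,f_4$ and isolate a multiplicative-derivative-type expression in $\alpha$ alone. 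Concretely, fixing $v$ and Cauchy--Schwarzing in $u$, then shifting and Cauchy--Schwarzing again, one reduces to an inequality of the form
\[\Big|\exx_{u,h,v} \on{e}\Big(\alpha(u+h,u+h,v) - \alpha(u,u,v) - (\text{cross terms})\Big)\Big| \geq \delta^{O(1)},\]
and after further differencing in $v$ one obtains a bias for a bilinear-in-$(h,k)$ expression whose coefficient encodes the failure of symmetry of $2^{d-1}\alpha$. Using the elementary estimates of Lemma~\ref{elemExp}, a bias of this shape forces the relevant trilinear form (with values now in $\tfrac{1}{2}\mathbb{Z}/\mathbb{Z}$ after multiplying by $2^{d-1}$) to \emph{vanish} on a large sub-object, which is what we want.

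The main technical point is bookkeeping the coefficients. Expanding $\alpha(2u+v,\dots)$ using trilinearity produces terms like $8\alpha(u,u,u)$, $4\alpha(u,u,v)$, $2\alpha(u,v,v)$, etc.; the key observation is that after multiplying through by $2^{d-1}$, any term carrying a factor of $2$ becomes $2^d$ times something, hence vanishes in $\mathbb{T}$ (since $\alpha$ takes values in $\tfrac{1}{2^d}\mathbb{Z}/\mathbb{Z}$ as it arises as $\Phi(x,y)(z)$ for characters on $G$ of order dividing $2^d$). So, working with $\beta := 2^{d-1}\alpha$, we have $\beta(2u,\cdot,\cdot) = 0$ and more generally $\beta$ is ``characteristic-$2$ linear'' in each slot. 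This collapses the seemingly awkward expression $\beta(2u+v,2u+v,v)$ to simply $\beta(v,v,v)$, and the weight $f_4(2u+v)$ interacts with $\beta$ only through $v$. After this simplification the problem looks exactly like the standard symmetry set-up: a correlation $\big|\exx_{u,v} g_1(u)g_2(v)g_3(u+v)\on{e}(\beta(u,u,v))\big| \geq \delta$ for suitable $g_i$ (absorbing $f_4$), to which I would apply the method of Lemma~\ref{symmetryArgumentBohr} (or rather its vector-space analogue, which is cleaner here since every subgroup is handled directly) to conclude that $\beta(u,u,v) - \beta(v,v,u)$, and more generally $\sum_{\pi} \beta\circ\pi$ symmetrization discrepancies, are small — in fact, being $\tfrac{1}{2}$-valued, they must be identically $0$ — on a subgroup $K$ of density $(\delta/2)^{O_d(1)}$.

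More precisely, after the Cauchy--Schwarz reductions I expect to arrive at a statement: there is a subgroup $K_0 \leq G$ of density $(\delta/2)^{O_d(1)}$ and a bias $\on{Re}\,\exx_{u,u',v,v' \in K_0}\on{e}\big(\gamma(u-u',v-v')\big) \geq (\delta/2)^{O_d(1)}$ where $\gamma(h,k) = \beta(h,h,k) - \beta(k,k,h)$ is the antisymmetrization of the ``diagonal'' bilinear form associated with $\beta$. Since $\gamma$ is bilinear (in characteristic $2$) and takes values in $\tfrac12\mathbb{Z}/\mathbb{Z}$, a positive bias of this form means $\exx_{h,k\in K_0}\on{e}(\gamma(h,k))>0$, and since $2\gamma = 0$ this is a $\pm 1$-valued average, forcing it to be close to $1$; then passing to the subgroup $K = \{h \in K_0 : \gamma(h,k)=0\ \forall k \in K_0\}$, which has index at most $|\widehat{K_0}|$-bounded but actually density $\geq (\delta/2)^{O_d(1)}$ inside $K_0$ by a Fourier/character-counting argument, we get $\gamma \equiv 0$ on $K$. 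On $K$ the form $2^{d-1}\alpha(u,u,v)$ then satisfies $2^{d-1}\alpha(u,u,v) = 2^{d-1}\alpha(v,v,u)$, i.e.\ it is symmetric, and it is bilinear since $\alpha$ was trilinear; composing with the diagonal this gives the desired symmetric bilinear form.

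The step I expect to be the genuine obstacle is the first one — organizing the Cauchy--Schwarz chain so that the weight $f_4(2u+v)$ does not destroy the structure. In the classical $\mathsf{U}^3$ symmetry argument one has only $f_1(u)f_2(v)f_3(u+v)$, but here the fourth linear form $2u+v$ is what makes this the $\mathsf{U}^4$ (cubic) rather than quadratic case. The trick is that $2u+v \equiv v \pmod{2G}$-type considerations, combined with differencing in a direction that kills $f_4$: specifically, replacing $u$ by $u + h$ where $h$ ranges over $2G$ (or iterating until the relevant shift lands in $2^{d-1}G$, where everything trivializes for $\beta$) keeps $2u+v$ in a bounded number of cosets, so one can pigeonhole and fix the value of $f_4(2u+v)$ on a density-$2^{-O_d(1)}$ piece. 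After this the argument decouples into the familiar three-linear-form pattern and the rest is routine. I would also need to track that all the passages to subgroups cost only $(\delta/2)^{O_d(1)}$ in density, which follows from the standard fact (used repeatedly in the paper, cf. Lemma~\ref{basicbohrsizel} and the remarks on subgroups) that each Cauchy--Schwarz/pigeonhole step loses a bounded power of the current density, and there are only $O_d(1)$ such steps.
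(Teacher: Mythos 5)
Your intuition about why $2^{d-1}G$-shifts should help is correct: replacing $u\mapsto u+w$ with $w\in 2^{d-1}\cdot G$ leaves both $f_4(2u+v)$ and $\alpha(u,u,v)$ unchanged, and this is exactly how the paper's proof begins (showing that the expression is controlled by $\|f_1\|_{\mathsf{U}(G,2^{d-1}\cdot G)}\|f_3\|_{\mathsf{U}(G,2^{d-1}\cdot G)}$). But there is a genuine gap after that. You assert that after some Cauchy--Schwarz steps the phase becomes $\beta(u,u,v)$ rather than $\alpha(u,u,v)$, and from there run a Green--Tao-style symmetry argument directly on $\beta$. Cauchy--Schwarz never multiplies a phase by $2^{d-1}$: differencing $\alpha(u,u,v)$ in the $u$-direction produces $2\alpha(u,h,v)+\alpha(h,h,v)$, not $2^{d-1}\alpha(\cdot,\cdot,\cdot)$, and the hypothesis \eqref{keycorrelationforsymmetrytorsion} really does only give information about $\alpha$, not $\beta$. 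A concrete obstruction: for $d=2$, $G=\mathbb{Z}/4\mathbb{Z}$, $\alpha(x,y,z)=xyz/4$ is symmetric trilinear, yet $\alpha(u,u,v)=u^2v/4$ is \emph{not} symmetric in $(u,v)$ (take $u=1,v=2$); only $2\alpha(u,u,v)$ is. So any argument showing the diagonal form associated with $\alpha$ is approximately symmetric is simply false, and the passage to $2^{d-1}\alpha$ needs a real mechanism, which your proposal does not supply.

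The paper's mechanism is an induction on $d$ with a descent to the quotient $G/2^{d-1}\cdot G\cong(\mathbb{Z}/2^{d-1}\mathbb{Z})^n$: the $\mathsf{U}(G,2^{d-1}\cdot G)$-inverse theorem (Theorem~\ref{u2inversesbgp}) replaces $f_1,f_3$ by functions of $2u$ times characters, the spurious character is shown to be $2\chi'$ by averaging over $2^{d-1}\cdot G$, and the whole correlation is then pushed down to the quotient where the relevant form is $\tilde\alpha=2\alpha$; the inductive hypothesis applied there (plus the auxiliary ``reversed'' claim handling $g_4(u+2v)$ via invertibility of $3$) gives that $2^{d-2}\tilde\alpha$ is symmetric bilinear on a subgroup of the quotient, which pulls back to the stated conclusion for $2^{d-1}\alpha$. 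Your pigeonholing idea for $f_4$ also has a density problem: as $u$ ranges over $G$, $2u+v$ meets all $2^n$ cosets of $2^{d-1}G$, so ``fixing the coset of $2u+v$'' costs $2^{-n}$, not $2^{-O_d(1)}$; the only affordable move is the $2^{d-1}G$-shift, which does not localize $f_4$ but instead yields $\mathsf{U}(G,2^{d-1}G)$-control. Finally, your closing $\mathbb{F}_2$-valued subgroup step is essentially the base case $d=1$ of the paper's induction (cited from Tidor), but without the descent you never reach a situation where that is the remaining obstruction.
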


\begin{proof} We prove the claim by induction on $d$. The base case $d = 1$ was covered in~\cite{Tidor}. Now suppose that $d \geq 2$ and that the claim holds for $d - 1$. We note that the inductive hypothesis implies a similar conclusion from a condition very similar to~\eqref{keycorrelationforsymmetrytorsion}, in which $f_4$ has argument $u + 2v$ instead.

\begin{claim}\label{symmetryargumnetexpressionreversedclaim}
    Let $G'$ be a group isomorphic to $(\mathbb{Z}/2^{d-1}\mathbb{Z})^n$. Let $g_1, g_2, g_3, g_4 : G' \to \mathbb{D}$ be functions and let $\beta : G'\times G' \times G' \to \mathbb{T}$ be a symmetric trilinear form
    \begin{equation}\Big|\exx_{u , v \in G'} g_1(u) g_2(v) g_3(u + v) g_4(u + 2v) \on{e}\Big(\beta(u,u,v)\Big)\Big| \geq \delta'.\label{keycorrelationforsymmetrytorsionReversed}\end{equation}
    Then $2^{d-2} \beta(u,u,v)$ is a symmetric bilinear form on some subgroup $K' \leq G'$ of size $|K'| \geq  (\delta'/2)^{O_d(1)}|G'|$.
\end{claim}

\begin{proof}
    Symmetry of $\beta$ implies the identity
    \[\beta(u + v, u + v, u+v) = \beta(u,u,u) + 3\beta(u,u,v) + 3\beta(u,v,v) + \beta(v,v,v).\]
    Using the fact that 3 is invertible modulo $2^{d-1}$ we get
    \[\beta(u,u,v) = 3^{-1}\beta(u + v, u + v, u+v) - 3^{-1}\beta(u,u,u) - 3^{-1}\beta(v,v,v) - \beta(v,v,u).\]
    Thus,
    \begin{align*}\delta' \leq &\Big|\exx_{u , v \in G'} g_1(u) g_2(v) g_3(u + v) g_4(u + 2v) \on{e}\Big(\beta(u,u,v)\Big)\Big|\\
    = &\Big|\exx_{u , v \in G'} g_1(u) g_2(v) g_3(u + v) g_4(u + 2v) \on{e}\Big(3^{-1}\beta(u + v, u + v, u+v) \\
    &\hspace{6cm}- 3^{-1}\beta(u,u,u) - 3^{-1}\beta(v,v,v) - \beta(v,v,u)\Big)\Big|\\
    = &\Big|\exx_{u , v \in G'} \tilde{g}_1(u) \tilde{g}_2(v) \tilde{g}_3(u + v) \tilde{g}_4(u + 2v) \on{e}\Big(- \beta(v,v,u)\Big)\Big|\end{align*}
    for $\tilde{g}_1(x) = g_1(x) \on{e}(- 3^{-1}\beta(x,x,x))$, $\tilde{g}_2(x) = g_2(x) \on{e}(- 3^{-1}\beta(x,x,x))$, $\tilde{g}_3(x) = g_(x) \on{e}(3^{-1}\beta(x,x,x))$ and $\tilde{g}_4(x) = g_4(x)$. After a changing the roles of $u$ and $v$ we get a correlation of the same shape as~\eqref{keycorrelationforsymmetrytorsion}, so the inductive hypothesis can be applied, proving the claim.
\end{proof}

We begin the proof by observing that certain directional uniformity norms of $f_1$ and $f_3$ control the expression.\\

\noindent\textbf{Control by $f_1$ and $f_3$.} Let us observe that $\mathsf{U}(G, 2^{d-1}\cdot G)$ norm of $f_1$ and $f_3$ controls the expression above. We phrase this as a separate claim for arbitrary functions.
    
    \begin{claim}
        Let $h_1, \dots, h_4 : G \to \mathbb{D}$ be functions. Then
         \[\Big|\exx_{u , v \in G} h_1(u) h_2(v) h_3(u + v) h_4(2u + v) \on{e}\Big(\alpha(u,u, v)\Big)\Big| \leq \|h_1\|_{\mathsf{U}(G, 2^{d-1}\cdot G)}\|h_3\|_{\mathsf{U}(G, 2^{d-1}\cdot G)}.\]
    \end{claim}

    \begin{proof}
    We introduce a dummy variable $w \in 2^{d-1} \cdot G$ and replace $u$ by $u + w$ and observe that 

    \begin{align*}
        &\Big|\exx_{u , v \in G} h_1(u) h_2(v) h_3(u + v) h_4(2u + v) \on{e}\Big(\alpha(u,u, v)\Big)\Big|^4 \\
        = &\Big|\exx_{u , v \in G, w \in 2^{d-1} \cdot G} h_1(u + w) h_2(v) h_3(u + w + v) h_4(2u + 2w + v) \on{e}\Big(\alpha(u + w,u + w, v)\Big)\Big|^4 \\
        = &\Big|\exx_{u , v \in G, w \in 2^{d-1} \cdot G} h_1(u + w) h_2(v) h_3(u + w + v) h_4(2u + v) \on{e}\Big(\alpha(u,u, v)\Big)\Big|^4\\
        = &\Big|\exx_{u , v \in G}  h_2(v)  h_4(2u + v) \on{e}\Big(\alpha(u,u, v)\Big) \exx_{w \in 2^{d-1} \cdot G} h_1(u + w)h_3(u + w + v)\Big|^4\\
        \leq &\Big| \exx_{u , v \in G, w, a \in 2^{d-1} \cdot G} \partial_a h_1(u + w) \partial_a h_3(u + w + v) \Big|^2\\
        \leq & \Big(\exx_{x, b \in G, a \in 2^{d-1} \cdot G} \partial_{a,b} h_1(x)\Big) \Big(\exx_{x, b \in G, a \in 2^{d-1} \cdot G} \partial_{a,b} h_3(x)\Big)\\
        = & \|h_1\|_{\mathsf{U}(G, 2^{d-1}\cdot G)}^4\|h_3\|_{\mathsf{U}(G, 2^{d-1}\cdot G)}^4.
    \end{align*}

    Note that we used $\alpha(w,w, v) = 0$ since $w = 2^{d-1} w'$ for some element $w' \in G$, hence $\alpha(w,w, v) = \alpha(2^{d-1} w', 2^{d-1} w', v) = 2^{2d - 2} \alpha(w',w', v) = 0$ since $2d - 2 \geq d$ for $d \geq 2$. Also $\alpha(u,w,v) + \alpha(w, u, v) = 2\alpha(u, w, v) = 2^d \alpha(u, w', v) = 0$. We also used $2w = 2^d w' = 0$. The final two inequalities are applications of Cauchy-Schwarz inequality.\end{proof}

    Going back to~\eqref{keycorrelationforsymmetrytorsion}, we hope to apply inverse theorem for $\mathsf{U}(G, 2^{d-1}\cdot G)$ to functions $f_1$ and $f_3$ and replace them by structured functions appearing as obstructions for that norm. However, doing that naively gives a correlation for $f_1$ and $f_3$, which may not simply be inserted back into~\eqref{keycorrelationforsymmetrytorsion}. Instead, we carry out a maneuver where $f_1$ and $f_3$ are replaced by a function for which this strategy works.\\

    \noindent\textbf{Replacing $f_1$ and $f_3$.} Define $F_1(u) = \exx_{v \in G} \overline{f_2(v) f_3(u + v) f_4(2u + v)} \on{e}\Big(-\alpha(u,u, v)\Big)$. Hence, inequality~\eqref{keycorrelationforsymmetrytorsion} becomes $|\ex_{u \in G}f_1(u) \overline{F_1(u)} | \geq \delta.$ By Cauchy-Schwarz inequality, we get
    $|\ex_{u \in G}F_1(u) \overline{F_1(u)} | \geq \delta^2$. Now expand the definition of $\overline{F_1}$ to get
    \[\Big|\exx_{u , v} F_1(u) f_2(v) f_3(u + v) f_4(2u + v) \on{e}\Big(\alpha(u,u, v)\Big)\Big| \geq \delta^2.\]
    By the claim above, we have $\|F_1\|_{\mathsf{U}(G, 2^{d-1}\cdot G)} \geq \delta^2$. Theorem~\ref{u2inversesbgp} gives us a function $f'_1$ and a character $\chi_1 \in \hat{G}$ such that
    \[\Big|\exx_{u \in G}\overline{F_1(u)} f'_1(2u) \on{e}(\chi_1(u))\Big| \geq (\delta/2)^{O(1)}.\]
    Expanding the definition of $\overline{F_1}$ gives us     
    \[\Big|\exx_{u,v \in G} f'_1(2u) \on{e}(\chi(u))f_2(v) f_3(u + v) f_4(2u + v) \on{e}\Big(\alpha(u,u, v)\Big)\Big| \geq (\delta/2)^{O(1)}.\]

    Applying the same maneuver to $f_3$, we may find a character $\chi_2 \in \hat{G}$ and a function $f'_3 : G \to \mathbb{D}$ such that
    \[\Big|\exx_{u , v \in G} f'_1(2u) \on{e}(\chi_1(u))f_2(v) f'_3(2u + 2v) \on{e}(\chi_2(u + v)) f_4(2u + v) \on{e}\Big(\alpha(u,u, v)\Big)\Big| \geq (\delta/2)^{O(1)}.\]
    Write $\chi = \chi_1 + \chi_2$, and, misusing the notation slightly, consume $\on{e}(\chi_2(v))$ into $f_2(v)$, to obtain
    \begin{equation}\label{preintegrationtorsionlemmaeqfreplaced}\Big|\exx_{u , v \in G} \on{e}(\chi(u))f'_1(2u) f_2(v) f'_3(2u + 2v) f_4(2u + v) \on{e}\Big(\alpha(u,u, v)\Big)\Big| \geq (\delta/2)^{O(1)}.\end{equation}

    \noindent\textbf{Removing character $\chi$.} Let us introduce a dummy variable $a$ ranging over $2^{d-1}\cdot G$, which we add to $u$. Note that $2a =0$ and that $\alpha(u + a,u + a, v) = \alpha(u,u,v) + 2\alpha(u,a,v) + \alpha(a,a,v) = \alpha(u,u,v)$. Hence
     \[\Big|\exx_{u , v \in G, a\in 2^{d-1}\cdot G} \on{e}(\chi(u + a))f'_1(2u) f_2(v) f'_3(2u + 2v) f_4(2u + v) \on{e}\Big(\alpha(u,u, v)\Big)\Big| \geq (\delta/2)^{O(1)}.\]
     But then we have $|\ex_{a\in 2^{d-1}\cdot G} \on{e}(\chi(a))| > 0$ which occurs precisely when $\chi = 0$ on $2^{d-1}\cdot G$. The structure of the group $G$ implies that $\chi = 2\chi'$ for some further character $\chi' \in \hat{G}$. Hence, defining $f''_1(x) = f'_1(x) \on{e}(\chi'(x))$,~\eqref{preintegrationtorsionlemmaeqfreplaced} becomes
     \[\Big|\exx_{u , v \in G} f''_1(2u) f_2(v) f'_3(2u + 2v) f_4(2u + v) \on{e}\Big(\alpha(u,u, v)\Big)\Big| \geq (\delta/2)^{O(1)}.\]

     \noindent\textbf{Applying inductive hypothesis.} Let us introduce another dummy variable $b$ ranging over $G$, and add $2b$ to $v$. Thus
     \[\Big|\exx_{u , v , b\in G} f''_1(2u) f_2(v + 2b) f'_3(2u + 2v + 4b) f_4(2u + v + 2b) \on{e}\Big(\alpha(u,u, v + 2b)\Big)\Big| \geq (\delta/2)^{O(1)}.\]
     Averaging over $v$, there exists some choice such that, for $g_1(x) = f''_1(x)$, $g_2(x) = f_2(v + x), g_3(x) = f'_3(2v + x)$ and $g_4(x) = f_4(x + v)$,
     \begin{equation}\label{preintegrationtorsionlemmaeqbeforeIH}\Big|\exx_{u, b\in G} g_1(2u) \on{e}\Big(\alpha(u,u, v)\Big) g_2(2b) g_3(2u + 4b) g_4(2u + 2b) \on{e}\Big(2\alpha(u,u, b)\Big)\Big| \geq (\delta/2)^{O(1)}.\end{equation}

     Finally, we interpret these terms as functions on $G/2^{d-1} \cdot G$, which is isomorphic to $(\mathbb{Z}/2^{d-1}\mathbb{Z})^n$. Namely, define 
     \[\tilde{g}_1(x + 2^{d-1}\cdot G) = g_1(2x) \on{e}\Big(\alpha(x,x, v)\Big),\,\,\tilde{g}_2(x + 2^{d-1}\cdot G) = g_2(2x),\,\,\tilde{g}_3(x + 2^{d-1}\cdot G) = g_3(2x),\,\,\tilde{g}_4(x + 2^{d-1}\cdot G) = g_4(2x).\]
     To see that these are well-defined, we only need to discuss $\tilde{g}_1$. Namely if $x - y \in  2^{d-1}\cdot G$, then $y = x + 2^{d-1}w$ and so $2x = 2y$ and
     \[\alpha(y,y, v) = \alpha(x + 2^{d-1}w,x + 2^{d-1}w, v) = \alpha(x,x,v) + 2^{d}\alpha(x, w, v) + 2^{2d-2}\alpha(w,w,v) = \alpha(x,x,v).\]
     Also, the form $\tilde{\alpha} : (G/2^{d-1} \cdot G) \times (G/2^{d-1} \cdot G) \times (G/2^{d-1} \cdot G) \to \mathbb{T}$ defined by $\tilde{\alpha}(x + 2^{d-1}\cdot G,y + 2^{d-1}\cdot G,z + 2^{d-1}\cdot G) = 2\alpha(x,y,z)$ is well-defined and is a symmetric trilinear form on $G/2^{d-1} \cdot G$. Hence, inequality~\eqref{preintegrationtorsionlemmaeqbeforeIH} becomes
     \begin{align*}\Big|\exx_{u + 2^{d-1} \cdot G, b + 2^{d-1} \cdot G\in G/2^{d-1} \cdot G} \tilde{g}_1(u + 2^{d-1} \cdot G) &\tilde{g}_2(b + 2^{d-1} \cdot G) \tilde{g}_3(u + 2b + 2^{d-1} \cdot G) \tilde{g}_4(u + b + 2^{d-1} \cdot G) \\
     &\on{e}\Big(\tilde{\alpha}(u + 2^{d-1} \cdot G,u + 2^{d-1} \cdot G, b + 2^{d-1} \cdot G)\Big)\Big| \geq (\delta/2)^{O(1)}.\end{align*}

    This expression resembles~\eqref{keycorrelationforsymmetrytorsion} very closely, however, the coefficients of the term in $\tilde{g}_3$ are reversed in comparison to the initial one. We may apply Claim~\ref{symmetryargumnetexpressionreversedclaim} to conclude that there exists a subgroup $\tilde{K} \leq G/2^{d-1} \cdot G$ of density $(\delta/2)^{O_d(1)}$ on which $2^{d-2}\tilde{\alpha}(u + 2^{d-1} \cdot G,u + 2^{d-1} \cdot G, v + 2^{d-1} \cdot G)$ is a symmetric bilinear form. But that means that $2^{d-1}\alpha(u,u,v)$ is symmetric bilinear form on $K = \tilde{K} + 2^{d-1} \cdot G$.

\end{proof}

Finally, we need a structural result for biased trilinear forms.

\begin{theorem}[Structure of biased trilinear forms in $(\mathbb{Z}/2^d\mathbb{Z})^n$]\label{biasedinversetorsion}
    Let $G = (\mathbb{Z}/2^d\mathbb{Z})^n$ and $\phi : G \times G \times G \to \mathbb{T}$ be a trilinear form such that 
    \[\exx_{x,y,z \in G} \on{e}(\phi(x,y,z)) \geq c.\]
    Then there exists a subgroup $H \leq G$ of density $(c/2)^{O_d(1)}$ such that $\phi(x,y,z) = 0$ whenever $x,y,z \in H$.
\end{theorem}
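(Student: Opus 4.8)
The plan is to reduce the statement about a biased trilinear form to an iterated application of $\mathsf{U}^2$-type structure, peeling off one variable at a time while keeping track of subgroups on which the partial forms vanish. First I would fix one variable, say $z$, and view $\phi(\cdot,\cdot,z)$ as a biased bilinear form: since $\ex_{x,y,z}\on{e}(\phi(x,y,z)) \geq c$, by averaging over $z$ there is a dense set $Z \subseteq G$ such that for each $z \in Z$ we have $|\ex_{x,y}\on{e}(\phi(x,y,z))| \geq c/2$. For a fixed $z$, the function $\beta_z(x,y) = \phi(x,y,z)$ is bilinear, so biasedness of $\beta_z$ forces, by the $\mathsf{U}^2$ inverse theorem applied in one variable and then standard manipulation (or directly via Fourier expansion of $\beta_z$ over $\hat G$), that $\beta_z$ has a large "radical": the set $\{x : \beta_z(x,\cdot) \equiv 0\}$ is a subgroup of $G$ of density bounded below in terms of $c/2$ and $|G|$ — quantitatively, a bilinear form on $(\mathbb{Z}/2^d\mathbb{Z})^n$ with bias $\geq c/2$ must vanish on a product of subgroups $H_z^{(1)} \times H_z^{(2)}$ each of density $(c/2)^{O_d(1)}$, because the image of the associated homomorphism $x \mapsto \beta_z(x,\cdot)$ must be small (the character sum over the image is large, so the image is concentrated near $0$ in $\hat G$, and by the bounded-torsion structure its kernel is dense).

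Next I would run a Balog--Szemer\'edi--Gowers / dependent-random-choice type argument, or more simply a second averaging, to make the subgroups $H_z^{(1)}, H_z^{(2)}$ uniform in $z$. Concretely, there are only boundedly many (in a suitable sense) subgroups of $G$ of density at least $(c/2)^{O_d(1)}$ appearing — or rather, one pigeonholes: passing to a dense subset $Z' \subseteq Z$ one may assume all $H_z^{(1)} = H^{(1)}$ and $H_z^{(2)} = H^{(2)}$ are fixed. Now restrict attention to $x \in H^{(1)}, y \in H^{(2)}$: on this product the bilinear form $\beta_z$ vanishes for every $z \in Z'$. That means $\phi(x,y,z) = 0$ for all $x \in H^{(1)}$, $y \in H^{(2)}$, $z \in Z'$. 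It remains to upgrade "$z \in Z'$, a dense set" to "$z \in H^{(3)}$, a subgroup": for fixed $x \in H^{(1)}, y \in H^{(2)}$ the map $z \mapsto \phi(x,y,z)$ is a character vanishing on the dense set $Z'$, hence vanishing on the subgroup $\langle Z' \rangle$ generated by $Z'$. Since $Z'$ has density $(c/2)^{O_d(1)}$ and $G$ has exponent $2^d$, Proposition~\ref{efficientGroupGeneration} (or the elementary fact that in a group of bounded exponent a dense set generates a dense — indeed bounded-index — subgroup, via Lemma~\ref{verysmalldoublingsbugroup}) shows $\langle Z'\rangle =: H^{(3)}$ has density $(c/2)^{O_d(1)}$. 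Then set $H = H^{(1)} \cap H^{(2)} \cap H^{(3)}$, which still has density $(c/2)^{O_d(1)}$ since each factor is a subgroup of bounded index; by construction $\phi$ vanishes on $H \times H \times H$.

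An alternative, cleaner route that avoids iterating the bilinear case: apply the $\mathsf{U}^2$ inverse theorem directly to the function $x \mapsto \on{e}(\phi(x,y,z))$ for generic $(y,z)$ to see that $\phi$ behaves like a "rank one" object in the first variable on a dense set, then symmetrize. But since the form is already trilinear (so each slice is a genuine character, not just an approximate one), I expect the Fourier-analytic argument to be short: expand $\id_H$ suitably and use that the triple character sum $\ex_{x,y,z}\on{e}(\phi(x,y,z))$ equals, after Fourier expansion in one variable, a sum over $\chi$ of $\ex_{y,z}\id(\phi(\cdot,y,z) = \chi)$ weighted by the Fourier transform of the all-ones function — which collapses to counting $(y,z)$ with $\phi(\cdot,y,z) = 0$. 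Thus biasedness $\geq c$ already forces $\phi(\cdot, y, z) \equiv 0$ for at least $c|G|^2$ pairs $(y,z)$, and one proceeds as above.

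The main obstacle I anticipate is bookkeeping the density losses through the $d$-fold (or $3$-fold) iteration so that the final exponent is of the form $O_d(1)$ rather than something growing worse than polynomially in $c$; this is where the bounded-exponent hypothesis is essential, since Lemma~\ref{directsummandstorsion} and its small-subgroup analogue guarantee that intersecting boundedly many dense subgroups keeps the density at $(c/2)^{O_d(1)}$, and that passing from a dense generating set to the subgroup it generates costs only a bounded power. A secondary subtlety is ensuring the pigeonholing step (making the radical subgroups independent of the frozen variable) does not secretly require a regularity-type argument with bad bounds; here one should argue, as in the proof of the algebraic regularity lemma, that the relevant subgroups are cut out by few characters of bounded order, so there are only $\exp(\log^{O_d(1)}(2c^{-1}))$ possibilities, which is acceptable.
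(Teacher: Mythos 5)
The paper does not take your route; it uses a clean induction on $d$ that entirely avoids the regularization step you are worried about. Starting from $c \leq \exx_{x,y,z}\on{e}(\phi(x,y,z))$, one replaces $x$ by $2x+t$ with $t$ a fresh uniform variable and applies Cauchy--Schwarz to obtain $c^2 \leq \exx_{x,y,z}\on{e}(2\phi(x,y,z))$. Since $\phi$ takes values in $\frac{1}{2^d}\mathbb{Z}/\mathbb{Z}$, the form $2\phi$ descends to a well-defined trilinear form $\theta$ on $G/2^{d-1}G \cong (\mathbb{Z}/2^{d-1}\mathbb{Z})^n$ with bias $\geq c^2$, and the inductive hypothesis (in $d$) furnishes a subgroup $U$ on which $\theta$ vanishes, hence a dense subgroup $U' = U + 2^{d-1}G \leq G$ on which $2\phi$ vanishes. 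Then $\phi$ restricted to $(U')^3$ descends to a genuine $\mathbb{F}_2$-trilinear form on $U'/2U' \cong \mathbb{F}_2^m$ still with bias $\geq c$, and the known $\mathbb{F}_2$ result (Cohen--Moshkovitz) gives a further dense subgroup $W$ of $U'/2U'$; pulling back gives $H = W + 2U'$. Each step costs only a bounded power of $c$, depending on $d$, and the whole argument is $d$ steps deep, giving the stated $(c/2)^{O_d(1)}$.

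Your proposal has a genuine gap at the pigeonholing step, and your proposed fix does not work. You correctly observe that biasedness forces $\phi(\cdot,y,z)\equiv 0$ on a set $S$ of $\geq c|G|^2$ pairs $(y,z)$, and that for fixed $z$ the slice $S_z$ is a subgroup (and likewise for fixed $y$). But $S$ itself is not a subgroup of $G^2$, and the slices $S_z$ genuinely vary with $z$. Your suggestion to pass to a dense $Z'$ on which the subgroups $H_z$ agree is where the argument breaks: a subgroup of density $(c/2)^{O_d(1)}$ in $G$ is determined by on the order of $d\log_2(c^{-1})$ characters, and the number of ways to choose these characters from $\hat G$ is of order $|G|^{d\log_2(c^{-1})}$, not $\exp(\log^{O_d(1)}(2c^{-1}))$ as you assert. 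So you cannot pigeonhole with acceptable losses; making the radical independent of the frozen variable really does require a regularity-type argument (indeed this is precisely the content of the algebraic regularity lemma, Theorem~\ref{algreglemmaintro}, which the paper deliberately avoids invoking here). The paper's downward induction on $d$ sidesteps the issue completely because at each stage it manipulates the form globally rather than slice by slice.
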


A qualitative version of this result can be deduced from a work of Eberhard~\cite{Eberhard}, but we include a proof as we need a quantitative version.

\begin{proof}
    Consider first $2\phi$. Namely, since $2x + t$ ranges uniformly over $G$ as $x,t$ vary over $G$, we have
    \[c \leq \exx_{x,y,z \in G} \on{e}(\phi(x,y,z)) = \exx_{x,y,z, t \in G} \on{e}(\phi(2x + t,y,z)) = \exx_{y,z,t \in G}  \on{e}(\phi(t,y,z))\Big(\exx_{x \in G} \on{e}(2\phi(x,y,z))\Big).\]
    Applying Cauchy-Schwarz inequality, we see that $c^2 \leq \ex_{x,y,z \in G} \on{e}(2\phi(x,y,z))$. Let us define $\theta : (G / 2^{d-1}\cdot G) \times (G / 2^{d-1}\cdot G) \times (G / 2^{d-1}\cdot G) \to \mathbb{T}$ by $\theta(x + 2^{d-1}\cdot G, y + 2^{d-1}\cdot G, z + 2^{d-1}\cdot G) = 2\phi(x,y,z)$. Since $\phi$ takes values in $\frac{\mathbb{Z}}{2^d}$, it follows that $\theta$ is well-defined. But
    \[\exx_{x+ 2^{d-1}\cdot G,y+ 2^{d-1}\cdot G,z + 2^{d-1}\cdot G\in G/ 2^{d-1}\cdot G}\on{e}\Big(\theta(x + 2^{d-1}\cdot G, y + 2^{d-1}\cdot G, z + 2^{d-1}\cdot G)\Big) = \exx_{x,y,z \in G} \on{e}(2\phi(x,y,z)) \geq c^2.\]

    Since $G/2^{d-1} \cdot G \cong (\mathbb{Z}/2^{d-1}\mathbb{Z})^n$, by induction hypothesis for parameters $d-1$ and $c^2$ in place of $d$ and $c$, we have a subgroup $U$ of density $(c/2)^{O_d(1)}$ inside $G/2^{d-1} \cdot G$ such that $\theta$ vanishes on $U \times U \times U$. Hence $2\phi$ vanishes on $U' \times U' \times U'$, where $U' = U + 2^{d-1} \cdot G$ and thus $\frac{|U'|}{|G|} = \frac{|U|}{|G/2^{d-1} \cdot G|}$.\\ 

    Now define $\psi : (U' /2 \cdot U') \times (U' /2 \cdot U') \times (U' /2 \cdot U') \to \mathbb{F}_p$ by $\psi(x + 2 \cdot U', y + 2 \cdot U', z + 2 \cdot U') = \phi(x,y,z)$, which is well-defined as $2\phi = 0$ on $U' \times U' \times U'$. Then $U' /2 \cdot U' \cong \mathbb{F}_2^m$ for some $m$ and $\psi$ becomes a trilinear form on a finite-dimensional $\mathbb{F}_2$-vector space. Since 
    \[\exx_{x + 2 \cdot U', y + 2 \cdot U', z + 2 \cdot U' \in U'/2 \cdot U'} \on{e}(\psi(x + 2 \cdot U', y + 2 \cdot U', z + 2 \cdot U')) = \exx_{x,y,z \in U'} \on{e}(\phi(x,y,z)) \geq \exx_{x,y,z \in G} \on{e}(\phi(x,y,z)) \geq c,\]
    by Theorem 1 of~\cite{CohenMosh}, we get $W \leq U' /2 \cdot U'$ of density at least $(c/2)^{O(1)}$, such that $\psi$ vanishes on $W \times W \times W$. Finally, $\phi$ vanishes on $(W + 2 \cdot U') \times (W + 2 \cdot U') \times (W + 2 \cdot U')$, completing the proof.
\end{proof}

\begin{proof}[Proof of Theorem~\ref{abeliantwogroupsinverse}]
    Let us slightly modify the notation from the statement. Since $c$ will be used for shifts in discrete multiplicative derivatives, we assume that $\|f\|_{\mathsf{U}^4} \geq \delta$.\\
    
    We prove the theorem by induction on $d$. The case $d = 1$ was proved in~\cite{newU4}. Let some $d \geq 2$ be given, and assume the theorem for smaller values of $d$. Analogous arguments as in previous works, appearing also in the proof of Theorem~\ref{generalU4}, show that we have a Freiman bihomomorphism $\phi : A \to \hat{G}$, defined on a set $A \subseteq G \times G$ of density $\exp(-\log^{O(1)}(2\delta^{-1}))$, such that for each $(a,b) \in A$, $|\widehat{\partial_{a,b}f}(\phi(a,b))| \geq \delta^{O(1)}$. Corollary~\ref{maininversetheoremspectorsion} gives us a global Freiman bihomomorphism $\Phi : G\times G \to \hat{G}$ that coincides with $\phi$ at at least $\exp(-\log^{O(1)}(2\delta^{-1}))|G|^2$ points of $A$. We may write $\Phi(a,b) = B(a,b) + \Theta_1(a) + \Theta_2(b) + \chi$, for a bilinear map $B : G \times G \to \hat{G}$, homomorphisms $\Theta_1, \Theta_2 : G \to \hat{G}$ and element $\chi \in \hat{G}$. Hence
    \begin{align*}\exp(-\log^{O(1)}(2\delta^{-1})) \leq &\exx_{a,b \in G} \Big|\widehat{\partial_{a,b}f}(B(a,b) + \Theta_1(a) + \Theta_2(b) + \chi)\Big|^2\\
    = &\exx_{x,a,b,c \in G}  \partial_{a,b,c} f(x) \on{e}\Big(B(a,b)(c) + \Theta_1(a)(c) + \Theta_2(b)(c) + \chi(c)\Big).\end{align*}

    Let us define $\beta : G \times G \times G \to \mathbb{T}$ as $\beta(a,b,c) = B(a,b)(c)$, which is a trilinear form. By Gowers-Cauchy-Schwarz inequality we have
   \[\exp(-\log^{O(1)}(2\delta^{-1})) \leq \Big|\exx_{x,a,b,c \in G}  \partial_{a,b,c} f(x) \on{e}\Big(\beta(a,b, c) \Big)\Big|.\] 

    By Theorem~\ref{symmetryArgumentBohr}, $|\ex_{a,b,c} \on{e}(\beta(a,b, c) - \beta(b,a, c)| \geq \exp(-\log^{O(1)}(2\delta^{-1}))$, and a similar inequality holds for other permutations of variables.\\
    
    Combining Theorem~\ref{biasedinversetorsion} and Lemma~\ref{directsummandstorsion}, we have a symmetric trilinear map $\alpha : G \times G \times G \to \mathbb{T}$ such that 
    \begin{equation}\label{maincorrelationtorsioninverse}\Big|\exx_{a,b,c,x \in G} \partial_{a,b,c} f(x) \on{e}(\alpha(a,b,c))\Big| \geq \delta_1,\end{equation}
    for some $\delta_1 \geq \exp(-\log^{O(1)}(2\delta^{-1}))$.\\

    In order to be able to integrate $\alpha$, due to Lemma~\ref{trilinearintegration}, we need to replace that trilinear form by another symmetric form $\alpha'$ for which $2^{d-1}\alpha'(u,u,v)$ is a symmetric bilinear form. That is the main goal of the proof.\\

    Let us introduce additional variables $u$ and $v$, and make a change of variables by replacing $a,b,c$ with $a - u, b- u, c- v$ instead. Then

    \[\Big|\exx_{u, v, a,b,c,x \in G} \partial_{a + u,b + u,c + v} f(x) \on{e}(\alpha(a + u,b + u,c + v))\Big| \geq \delta_1.\]

    Expanding out, by averaging, there exist $x,a,b,c$ such that

    \begin{align*}\Big|\exx_{u, v \in G} &f(2u + v + x + a + b + c) \overline{f(2u + x + a + b)}\,\overline{f(u + v + x + a + c)}\,\overline{f(u + v + x + b + c)}\\
    &f(u + x + a) f(u + x + b) f(v + x + c) \overline{f(x)}\\
    &\on{e}(\alpha(u,u,v) + \alpha(u,u,c) + \alpha(u,b,v) + \alpha(u,b,c) +\alpha(a,u,v) +\alpha(a,u,c) + \alpha(a,b,v) +\alpha(a,b,c))\Big| \geq \delta_1.\end{align*}
    
    Defining 
    \begin{align*}
        f_1(t) = &\overline{f(2t + x + a + b)}f(t + x + a) f(t + x + b) \overline{f(x)} \on{e}(\alpha(t,t,c) + \alpha(t,b,c)+ \alpha(a,t,c)+\alpha(a,b,c)),\\
        f_2(t) = &f(t + x + c) \on{e}(\alpha(a,b,t) ),\\
        f_3(t) = &\overline{f(t + x + a + c)}\,\overline{f(t + x + b + c)},\\
        f_4(t) = &f(t + x + a + b + c),
    \end{align*}
    and recalling that $\alpha$ is symmetric, we obtain

    \[\Big|\exx_{u , v} f_1(u) f_2(v) f_3(u + v) f_4(2u + v) \on{e}\Big(\alpha(u,u, v) + \alpha(a + b, u, v)\Big)\Big| \geq \delta_1.\]

    Since $(u,v) \mapsto \alpha(a + b, u, v)$ is a symmetric bilinear map on $G$, by Lemma~\ref{bilinearintegration}, we may find a quadratic polynomial $q$ such that $q(u + v) - q(u) - q(v) + q(0) = \alpha(a + b, u, v)$. Hence, by consuming quadratic phases in suitable $f_i$ and misusing the notation slightly, we get

    \begin{equation}\Big|\exx_{u , v} f_1(u) f_2(v) f_3(u + v) f_4(2u + v) \on{e}\Big(\alpha(u,u, v)\Big)\Big| \geq \delta_1.\label{firstCorrelationComplexityDecTorsion}\end{equation}

    Apply Lemma~\ref{correlationforsymmetrytorsionLemma} to conclude that $2^{d-1}\alpha(u,u, v)$ is symmetric on a subgroup $K$ of density $(\delta_1/2)^{O_d(1)}$. Apply Lemma~\ref{directsummandstorsion} to obtain a decomposition $K' \oplus T = G$, with $|K'| \geq (\delta_1/2)^{O_d(1)}|G|$ and let $\pi_1 : G \to K'$ and $\pi_2 : G \to T$ be the associated projections. Define $\tilde{\alpha} :G \times G \times G \to \mathbb{T}$ by $\tilde{\alpha}(x,y,z) = \alpha(\pi_1(x), \pi_1(y), \pi_1(z))$. Then $\alpha$ is a symmetric trilinear form such that $2^{d-1}\tilde{\alpha}(u,u,v)$ is symmetric bilinear form. By Theorem~\ref{trilinearintegration} there exists a cubic polynomial $q :G \to \mathbb{T}$ such that $\Delta_{a,b,c}q(x) = \tilde{\alpha}(a,b,c)$ for all $x,a,b,c \in G$.\\
    We finally go back to~\eqref{maincorrelationtorsioninverse} and use the decomposition above to conclude that 
    \begin{align*}\delta_1 \leq & \Big|\exx_{a,b,c,x \in G} \partial_{a,b,c} f(x) \on{e}(\tilde{\alpha}(a,b,c) + \alpha(\pi_2(a), \pi_1(b), \pi_1(c)) + \dots + \alpha(\pi_2(a), \pi_2(b), \pi_2(c)))\Big|\\
    = & \Big|\exx_{a,b,c,x \in G} \sum_{t_1, t_2, t_3 \in T} \id(\pi_2(a) = t_1, \pi_2(b) = t_2, \pi_2(c) = t_3) \partial_{a,b,c} f(x) \\
    &\hspace{4cm}\on{e}(\tilde{\alpha}(a,b,c) + \alpha(t_1, \pi_1(b), \pi_1(c)) + \dots + \alpha(t_1, t_2, t_3))\Big|.\end{align*}
    By triangle inequality and averaging, we may find $t_1, t_2, t_3$ and some functions $s_1, s_2, s_3 : G\times G  \to \mathbb{D}$ coming from phases of terms $\alpha(t_1, \pi_1(b), \pi_1(c)), \dots$ such that 
    \[\delta_1 |T|^{-3} \leq \Big|\exx_{a,b,c,x \in G} \id(\pi_2(a) = t_1) \id(\pi_2(b) = t_2) \id(\pi_2(c) = t_3) s_1(a,b) s_2(a,c) s_3(b,c) \partial_{a,b,c} f(x) \on{e}(\tilde{\alpha}(a,b,c))\Big|.\]
    We may use Gowers-Cauchy-Schwarz inequality to eliminate all the terms except $\partial_{a,b,c} f(x) \on{e}(\tilde{\alpha}(a,b,c))$. Use $\Delta_{a,b,c}q(x) = \tilde{\alpha}(a,b,c)$ to consume $\on{e}\Big(\alpha(a,b, c)\Big)$ into $f$ and finish the proof by using the inverse  theorem for the $\mathsf{U}^3$ norm (Theorem~\ref{torsionu3powertwo}).
    
\end{proof}

\appendix

\titleformat{\section}[hang]{\scshape\large\bfseries\filcenter}{Appendix \thesection.}{4pt}{}

\section{Cubic polynomials on $(\mathbb{Z}/2^d\mathbb{Z})^n$}

Recall that a \textit{polynomial of degree at most $m$} between abelian groups $G$ and $H$ is a map $q: G \to H$ such that $\Delta_{a_1}\dots \Delta_{a_{m+1}} q(x) = 0$ for all $a_1, \dots, a_{m+1}, x \in G$, where we iterate the discrete additive derivative operator defined as $\Delta_af(x) = f(x +a) - f(x)$. We shall consider polynomials whose codomain is $\mathbb{T} = \mathbb{R}/\mathbb{Z}$.\\

Let $G$ be a finite abelian group. By the classification of such groups, we know that $G \cong \mathbb{Z}/N_1\mathbb{Z} \oplus \dots \oplus \mathbb{Z}/N_r\mathbb{Z}$. For the time being, we make no assumptions on $N_1, \dots, N_r$. Later, we shall specialize to the case when $N_i$ are powers of 2. Let $\pi: \mathbb{Z}^r \to G$ be the natural projection $\pi(x_1, \dots, x_r) = (x_1 + N_1 \mathbb{Z}, \dots, x_r + N_r \mathbb{Z})$.\\

The next two lemmas give a relationship between polynomials on $G$ and $\mathbb{Z}^r$.

\begin{lemma}
    Let $\phi : G \to \mathbb{T}$ be a polynomial of degree at most $m$. Then $\tilde{\phi} : \mathbb{Z}^r \to \mathbb{T}$ given by $\tilde{\phi} = \phi \circ \pi$ is a polynomial of degree at most $m$. Moreover, $\tilde{\phi}$ is \textit{$N_{[r]}$-periodic}, in the sense that $\tilde{\phi}(x_1 + a_1 N_1, \dots, x_r + a_r N_r) = \tilde{\phi}(x_1, \dots, x_r)$ holds for all $x_1, \dots, x_r,a_1, \dots, a_r \in \mathbb{Z}$.
\end{lemma}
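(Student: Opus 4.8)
The plan is to unwind the definitions. The statement has two parts: that $\tilde\phi = \phi \circ \pi$ is a polynomial of degree at most $m$, and that it is $N_{[r]}$-periodic. The second part is essentially immediate: if $a_j \in \mathbb{Z}$, then $\pi(x_1 + a_1 N_1, \dots, x_r + a_r N_r) = \pi(x_1, \dots, x_r)$ because each $x_j + a_j N_j \equiv x_j \pmod{N_j}$, and hence $\tilde\phi$ agrees with $\phi$ composed with that projection, giving the same value. So I would dispose of periodicity in one sentence.

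For the first part, I would use the fact that $\pi : \mathbb{Z}^r \to G$ is a (surjective) group homomorphism, together with the naturality of the discrete additive derivative under homomorphisms. Concretely, for any $a \in \mathbb{Z}^r$ and $x \in \mathbb{Z}^r$ we have
\[
\Delta_a \tilde\phi(x) = \tilde\phi(x+a) - \tilde\phi(x) = \phi(\pi(x)+\pi(a)) - \phi(\pi(x)) = (\Delta_{\pi(a)}\phi)(\pi(x)),
\]
using $\pi(x+a) = \pi(x) + \pi(a)$. Iterating this identity $m+1$ times with shifts $a_1, \dots, a_{m+1} \in \mathbb{Z}^r$ gives
\[
\Delta_{a_1}\cdots\Delta_{a_{m+1}}\tilde\phi(x) = \big(\Delta_{\pi(a_1)}\cdots\Delta_{\pi(a_{m+1})}\phi\big)(\pi(x)),
\]
which vanishes because $\phi$ is a polynomial of degree at most $m$ on $G$. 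Hence $\tilde\phi$ is a polynomial of degree at most $m$ on $\mathbb{Z}^r$. I would phrase the iteration as a short induction on the number of derivatives, or simply remark that the derivative operators in different directions commute and each $\Delta_{a_j}$ on $\mathbb{Z}^r$ corresponds to $\Delta_{\pi(a_j)}$ on $G$ after composing with $\pi$.

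There is no real obstacle here — the only thing to be slightly careful about is making the "intertwining" identity $\Delta_a(\phi\circ\pi) = (\Delta_{\pi(a)}\phi)\circ\pi$ precise before iterating, since that is the single fact doing all the work. Everything else is bookkeeping. So the write-up will be three or four lines: state the intertwining identity, iterate it $m+1$ times to reduce to the degree-$m$ condition for $\phi$, and note periodicity from $x_j + a_jN_j \equiv x_j \pmod{N_j}$.
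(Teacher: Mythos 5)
Your proof is correct and matches the paper's approach exactly: both arguments rest on the single intertwining identity $\Delta_a(\phi\circ\pi) = (\Delta_{\pi(a)}\phi)\circ\pi$ coming from $\pi$ being a homomorphism, iterated to reduce to the degree-$m$ condition for $\phi$, together with the observation that $(a_1N_1,\dots,a_rN_r)\in\ker\pi$ for periodicity.
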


\begin{proof}
    The first claim follows trivially from the fact that $\pi$ is a homomorphism of abelian groups. Namely, let $a_1, \dots, a_{m + 1}, x \in \mathbb{Z}$. Then $\Delta_{a_1, \dots, a_{m+1}} \tilde{\phi}(x) = \Delta_{\pi(a_1), \dots, \pi(a_{m+1})} \phi(x) = 0$. Second part follows from $(a_1N_1, \dots, a_rN_r)$ being in $\ker \pi$. 
\end{proof}

\begin{lemma}
    Suppose that $\tilde{\phi} : \mathbb{Z}^r \to \mathbb{T}$ is an $N_{[r]}$-periodic polynomial of degree at most $m$. Then there exists a unique $\phi : G \to \mathbb{T}$ such that $\tilde{\phi} = \phi \circ \pi$. Moreover, such $\phi$ is a polynomial of degree at most $m$ on $G$.
\end{lemma}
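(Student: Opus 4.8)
The statement is the converse of the previous lemma: we want to descend an $N_{[r]}$-periodic polynomial $\tilde\phi:\mathbb{Z}^r\to\mathbb{T}$ of degree at most $m$ to a polynomial $\phi:G\to\mathbb{T}$. The natural approach is to define $\phi$ directly on cosets and then verify the degree bound. First I would note that $\pi:\mathbb{Z}^r\to G$ is surjective, so every $g\in G$ has a preimage; pick any $v\in\pi^{-1}(g)$ and set $\phi(g):=\tilde\phi(v)$. Well-definedness is exactly the assumption of $N_{[r]}$-periodicity: if $v,v'\in\pi^{-1}(g)$, then $v-v'\in\ker\pi=N_1\mathbb{Z}\oplus\cdots\oplus N_r\mathbb{Z}$, so $v'=v+(a_1N_1,\dots,a_rN_r)$ for some integers $a_i$, and periodicity gives $\tilde\phi(v')=\tilde\phi(v)$. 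Uniqueness is immediate: any $\phi$ with $\tilde\phi=\phi\circ\pi$ must satisfy $\phi(\pi(v))=\tilde\phi(v)$, and $\pi$ is onto, so $\phi$ is forced. By construction $\tilde\phi=\phi\circ\pi$.

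Next I would check that $\phi$ is a polynomial of degree at most $m$ on $G$. Fix $a_1,\dots,a_{m+1},x\in G$ and choose integer lifts $\hat a_1,\dots,\hat a_{m+1},\hat x\in\mathbb{Z}^r$ with $\pi(\hat a_i)=a_i$, $\pi(\hat x)=x$. Since $\pi$ is a group homomorphism and the discrete derivative $\Delta$ is built only from addition, every term appearing in the expansion of $\Delta_{a_1,\dots,a_{m+1}}\phi(x)$ is of the form $\phi\big(x+\sum_{i\in S}a_i\big)=\phi\big(\pi(\hat x+\sum_{i\in S}\hat a_i)\big)=\tilde\phi\big(\hat x+\sum_{i\in S}\hat a_i\big)$ for $S\subseteq[m+1]$. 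Collecting these with the appropriate signs, $\Delta_{a_1,\dots,a_{m+1}}\phi(x)=\Delta_{\hat a_1,\dots,\hat a_{m+1}}\tilde\phi(\hat x)$, which vanishes because $\tilde\phi$ has degree at most $m$. Hence $\phi$ is a polynomial of degree at most $m$.

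This argument is entirely routine and I do not expect any genuine obstacle; the only point requiring a word of care is the compatibility of iterated discrete derivatives with the homomorphism $\pi$, which follows from the identity $\Delta_a(f\circ\pi)(v)=(\Delta_{\pi(a)}f)(\pi(v))$ applied $m+1$ times (equivalently, from writing out the alternating sum over subsets $S$ as above). Everything else is bookkeeping with cosets of $\ker\pi$, and uniqueness plus the relation $\tilde\phi=\phi\circ\pi$ drop out of surjectivity of $\pi$.
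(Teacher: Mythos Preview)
Your proof is correct and follows essentially the same approach as the paper: define $\phi$ on cosets using periodicity for well-definedness, observe uniqueness from surjectivity of $\pi$, and verify the degree bound by lifting to $\mathbb{Z}^r$ and expanding the iterated derivative as an alternating sum over subsets. The paper's proof is line-for-line the same argument.
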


\begin{proof}
    Since $\tilde{\phi}(x_1 + a_1 N_1, \dots, x_r + a_r N_r) = \tilde{\phi}(x_1, \dots, x_r)$ holds for all $x_1, \dots, x_r,a_1, \dots, a_r \in \mathbb{Z}$, we may define $\phi(x_1 + N_1 \mathbb{Z}, \dots, x_r + N_r \mathbb{Z})$ to be $\tilde{\phi}(x_1, \dots, x_r)$, which is independent of the choice of the representative of the coset. Clearly, $\phi$ is unique and it remains to check that it is a polynomial on $G$. To that end, let $x, a_1, \dots, a_{m+1} \in G$. Take any $\tilde{x}, \tilde{a}_1, \dots, \tilde{a}_{m+1} \in \mathbb{Z}$ such that $\pi(\tilde{x}) = x$ and $\pi(\tilde{a}_i) = a_i$. Then
    \[\Delta_{a_1, \dots, a_{m+1}} \phi(x) = \sum_{I \subseteq [m+1]} (-1)^{m + 1 - |I|} \phi\Big(x + \sum_{i \in I} a_i\Big) = \sum_{I \subseteq [m + 1]} (-1)^{m + 1 - |I|} \tilde{\phi}\Big(\tilde{x} + \sum_{i \in I} \tilde{a}_i\Big) = \Delta_{\tilde{a}_1, \dots, \tilde{a}_{m+1}} \tilde{\phi}(\tilde{x}) = 0.\qedhere\]
\end{proof}

The following lemma 

\begin{lemma}\label{zpolynomials}
    If $\phi : \mathbb{Z}^r \to \mathbb{T}$ is a polynomial of degree at most $m$ then there exist coefficients $\lambda_{i_{[\ell]}} \in \mathbb{R}$, indexed by non-decreasing sequences $i_{[\ell]}$ of elements in $[r]$, with $0 \leq \ell \leq m$, such that for all $x\in \mathbb{Z}$
    \[\phi(x) = \sum_{\ssk{0 \leq \ell \leq m\\1\leq i_1 \leq i_2 \leq \dots \leq i_\ell \leq r}} \lambda_{i_{[\ell]}} x_{i_1} \dots x_{i_\ell} + \mathbb{Z}.\]
\end{lemma}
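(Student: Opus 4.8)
The statement is the classical fact that a polynomial map $\mathbb{Z}^r \to \mathbb{T}$ of degree at most $m$ can be written as a polynomial expression in the coordinates with real coefficients, where the monomials are indexed by non-decreasing multi-indices of length at most $m$ (so, in particular, each variable appears to the first power in each factor, reflecting that on $\mathbb{Z}$ we do not need genuine powers $x_i^2$ since $\binom{x}{2}$ already captures degree-two behaviour — but over $\mathbb{R}$ we can absorb the binomial coefficients and switch to monomials). The natural approach is induction on the number of variables $r$, with an inner induction on the degree $m$.

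\textbf{Base case and reduction.} For $r=1$, I would first argue that a polynomial $\phi : \mathbb{Z} \to \mathbb{T}$ of degree at most $m$ agrees, as a function on $\mathbb{Z}$, with a polynomial of degree at most $m$ in the usual sense with real coefficients: the standard finite-difference argument shows $\phi(x) = \sum_{\ell=0}^m \mu_\ell \binom{x}{\ell}$ where $\mu_\ell \in \mathbb{T}$ are determined by the iterated differences of $\phi$ at $0$, after lifting these values arbitrarily to $\mathbb{R}$ (one checks the right-hand side, computed in $\mathbb{R}$ and reduced mod $1$, has the same $(m{+}1)$-st difference, namely $0$, and the same values at $0,1,\dots,m$, which pins down a degree-$\leq m$ polynomial function on $\mathbb{Z}$). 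Expanding each $\binom{x}{\ell}$ as a real polynomial in $x$ of degree $\ell$ gives the claimed form with the sequences $i_{[\ell]}$ of length $\ell$ all equal to $1$. For the inductive step on $r$, fix $\phi : \mathbb{Z}^r \to \mathbb{T}$ of degree $\leq m$. For each fixed value of $(x_2,\dots,x_r)$, the map $x_1 \mapsto \phi(x_1,\dots,x_r)$ is a one-variable polynomial of degree $\leq m$, so by the base case it equals $\sum_{\ell=0}^m c_\ell(x_2,\dots,x_r)\, x_1^\ell$ for coefficients $c_\ell$ depending on the remaining variables. The key point is to show each $c_\ell$ is itself a polynomial $\mathbb{Z}^{r-1}\to\mathbb{T}$ (or rather $\mathbb{R}$-valued, once we track the lift) of degree $\leq m-\ell$, after which the $(r-1)$-variable inductive hypothesis finishes the job upon multiplying out.

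\textbf{Recovering the coefficients.} To extract $c_\ell$ as a function of $(x_2,\dots,x_r)$ and control its degree, I would use the finite-difference / evaluation formula: the $c_\ell(x_2,\dots,x_r)$ can be written as an explicit fixed $\mathbb{Z}$-linear combination of the values $\phi(j, x_2,\dots,x_r)$ for $j = 0,1,\dots,m$ (Lagrange-type interpolation / Newton forward differences in the first variable), with rational coefficients; concretely $c_\ell = \sum_{j=0}^m \alpha_{\ell j}\,\phi(j,\cdot)$ for suitable $\alpha_{\ell j}\in\mathbb{Q}$. Since each $\phi(j,\cdot) : \mathbb{Z}^{r-1} \to \mathbb{T}$ is a polynomial of degree $\leq m$ (restriction of a degree-$\leq m$ polynomial to a coset of a subgroup), so is any fixed rational combination of them — here one needs to be slightly careful because the combination has non-integer coefficients, so I would instead carry the lift to $\mathbb{R}^{r-1}$-valued functions from the base case onward, i.e.\ work with a fixed real lift $\Phi$ of $\phi$ built coordinate-by-coordinate, and show $\Phi$ is an honest real polynomial. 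The degree bound $\deg c_\ell \leq m-\ell$ follows from the fact that the total degree of the interpolant $\sum_\ell c_\ell x_1^\ell$ is $\leq m$: taking $m+1-\ell$ iterated differences in the variables $x_2,\dots,x_r$ of $\phi$ and then reading off the $x_1^\ell$ coefficient must vanish, because those differences preserve the bound $\deg \leq m$ and kill everything of degree $< \ell+1$ in $x_1$ — more cleanly, apply $\Delta^{m+1}$ in a mix of directions and use that the coefficient of the top $x_1$-power in a degree-$\leq m$ polynomial has degree $\leq m - \ell$ in the rest.

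\textbf{Main obstacle.} The only genuinely delicate point is the bookkeeping of lifts from $\mathbb{T}$ to $\mathbb{R}$: the identity in the lemma is literally an equality in $\mathbb{T}$, and a naive induction produces coefficients in $\mathbb{T}$ while the interpolation formulas have rational (non-integer) coefficients, which do not act on $\mathbb{T}$. The fix is to prove the stronger statement that $\phi$ lifts to a genuine real polynomial $\Phi:\mathbb{Z}^r\to\mathbb{R}$ of degree $\leq m$ (equivalently, to prove the lemma with $\lambda_{i_{[\ell]}}\in\mathbb{R}$ and the equality first in $\mathbb{R}$, then reduce mod $1$), and to do the whole induction at the level of these real lifts, choosing the lift of $\phi$ via its values on $\{0,1,\dots,m\}^r$ and the Newton forward-difference expansion. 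Once the real-valued version is in place, reducing modulo $\mathbb{Z}$ gives exactly the displayed formula, and expanding binomial coefficients into monomials gives the stated indexing by non-decreasing sequences. I expect everything else (degree tracking under $\Delta_a$, the one-variable base case) to be routine.
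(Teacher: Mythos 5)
Your overall approach is sound and genuinely different from the paper's: you peel off the first variable $x_1$ and recurse on the coefficient functions (one fewer variable), whereas the paper differentiates once in the $e_r$-direction, applies the inductive hypothesis to $\Delta_{e_r}\phi$, then ``integrates'' by the shift $\binom{x_r}{d_r}\mapsto\binom{x_r}{d_r+1}$ and subtracts, leaving a function independent of $x_r$ that is handled by a second application of the inductive hypothesis to $r-1$ variables. Both are inductions on $r + m$ in disguise.

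However, the way you handle the lifting issue has a genuine gap. You correctly identify the obstacle: to write $\phi(\cdot,x_2,\dots,x_r) = \sum_\ell c_\ell(x_2,\dots,x_r)\,x_1^\ell$ you need to take \emph{rational} linear combinations of the values $\phi(j,\cdot)$, and a rational multiple of a $\mathbb{T}$-valued function is not well-defined. Your fix is to ``work with a fixed real lift $\Phi$ of $\phi$ built coordinate-by-coordinate, and show $\Phi$ is an honest real polynomial,'' and to ``choose the lift of $\phi$ via its values on $\{0,\dots,m\}^r$ and the Newton forward-difference expansion.'' Neither of these resolves the problem as stated. An arbitrary real lift of $\phi$ does \emph{not} satisfy $\Delta_{a_1}\cdots\Delta_{a_{m+1}}\Phi = 0$ --- the iterated differences only lie in $\mathbb{Z}$ --- so $\Phi$ is not automatically a degree-$\leq m$ real polynomial, and taking rational combinations of its values is no longer legitimate in a way that says anything about $\phi$. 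And the multivariate Newton expansion on the box $\{0,\dots,m\}^r$ produces a polynomial of \emph{multidegree} $\leq m$ in each variable, i.e.\ total degree up to $rm$; the Newton coefficient $\mu_d^{\mathrm{lift}}$ for $\sum_i d_i > m$ need only be an \emph{integer}, not zero, so the degree-$\leq m$ claim is exactly what needs proving. (It can be fixed --- for each $d$ with $\sum d_i > m$ one subtracts the integer $\mu_d^{\mathrm{lift}}\prod_i\binom{x_i}{d_i}$, which leaves the other Newton coefficients unchanged by triangularity and is still a lift --- but you would need to say this.)

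The cleanest realization of your plan avoids the difficulty altogether by staying in the binomial basis in the inductive step, exactly as you did in the base case. Set $\mu_\ell(x_2,\dots,x_r) := \Delta_{e_1}^\ell\phi(0,x_2,\dots,x_r)$, which is genuinely $\mathbb{T}$-valued (only integer linear combinations are used), is a polynomial $\mathbb{Z}^{r-1}\to\mathbb{T}$ of degree $\leq m-\ell$, and satisfies $\phi(x) = \sum_{\ell\leq m}\mu_\ell(x_2,\dots,x_r)\binom{x_1}{\ell}$ by the one-variable Newton expansion. Apply the inductive hypothesis to each $\mu_\ell$, multiply the resulting real-coefficient expressions by the integer-valued $\binom{x_1}{\ell}$, and convert binomials to monomials only at the very end. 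This never takes a rational multiple of anything $\mathbb{T}$-valued, and it tracks the degree automatically. The paper's proof achieves the same discipline through the observation that $\Delta_{e_r}\binom{x_r}{d_r+1} = \binom{x_r}{d_r}$: the subtraction $\phi - \sum_d \lambda_d\binom{x_1}{d_1}\cdots\binom{x_r}{d_r+1}$ involves only real scalars against integer-valued functions, so it is well-defined mod $\mathbb{Z}$, it kills $\Delta_{e_r}$, and the remainder drops one variable.
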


\begin{proof} We use slightly different, but equivalent representation. Define $\binom{x}{k} = \frac{x(x-1)\dots (x-k + 1)}{k!}$, which is a polynomial on $\mathbb{Z}$, taking integer values, and satisfies Pascal's triangle identity $\binom{x + 1}{ k +1} - \binom{x}{k + 1} = \binom{x}{k}$. It suffices to show that 
\[\phi(x) = \sum_{d_1 + \dots + d_r \leq m} \lambda_{d_{[r]}} \binom{x_1}{d_1} \cdots \binom{x_r}{d_r} + \mathbb{Z}.\]
We prove the lemma by induction on $r + m$. The base case is $r = 1, m = 0$, when $\phi$ satisfies $\Delta_a \phi(x) = 0$ for all $a, x \in \mathbb{Z}$, so $\phi$ is constant.\\
Consider $\Delta_{e_r}f(x)$, which is a polynomial of degree at most $m-1$, so by induction hypothesis 
\[\Delta_{e_r}\phi(x) = \sum_{d_1 + \dots + d_r \leq m - 1} \lambda_{d_{[r]}} \binom{x_1}{d_1} \cdots \binom{x_r}{d_r} + \mathbb{Z}.\]
Consider $\psi(x) = \phi(x) - \sum_{d_1 + \dots + d_r \leq m - 1} \lambda_{d_{[r]}} \binom{x_1}{d_1} \cdots \binom{x_r}{d_r + 1} + \mathbb{Z}$. Then $\Delta_{e_r}\psi(x) = 0$, so $\psi$ is independent of the last coordinate, and we may consider $\psi' : \mathbb{Z}^{r-1} \to \mathbb{T}$ given by $\psi'(x_{[r-1]}) = \psi(x_{[r-1]}, 0)$, which is then also a polynomial of degree at most $m$. We are done by induction hypothesis applied to $\psi'$.\end{proof}

We now specialize to $(\mathbb{Z}/2^d\mathbb{Z})^n$ as the ambient group and classify polynomials of degree at most 3.

\begin{proposition}[Classification of cubic polynomials on $(\mathbb{Z}/2^d\mathbb{Z})^r$]
    Suppose that $d \geq 2$. Let $\phi : (\mathbb{Z}/2^d\mathbb{Z})^r \to \mathbb{T}$. Let $|\cdot| : \mathbb{Z}/2^d\mathbb{Z} \to \mathbb{Z}$ be the map sending $x + 2^d\mathbb{Z}$ to the unique integer $a \in \{0, 1, \dots, 2^d - 1\}$ such that $a \equiv x \pmod{2^d}$. Then
    \begin{itemize}
        \item $\phi$ is a degree $\leq 1$ polynomial if and only if it is of the form
        \[\phi(x) = \sum_{i \in [r]} \frac{\lambda_i}{2^{d}}|x_i| + \alpha + \mathbb{Z},\]
        \item $\phi$ is a degree $\leq 2$ polynomial if and only if it is of the form
        \[\phi(x) = \sum_{i \in [r]} \frac{\lambda_{i,i}}{2^{d+1}}|x_i|^2 + \sum_{1 \leq i < j \leq r} \frac{\lambda_{i, j}}{2^{d}}|x_i||x_j| + \sum_{i \in [r]} \frac{\lambda_i}{2^{d}}|x_i| + \alpha + \mathbb{Z},\]
        \item $\phi$ is a degree $\leq 3$ polynomial if and only if it is of the form
        \begin{align*}\phi(x) = &\sum_{1\leq i \leq r} \lambda_{i,i,i}\frac{2|x_i|^3 - 3 |x_i|^2 + 4|x_i|}{3\cdot 2^{d +2}} + \sum_{1 \leq i < j \leq r} \mu_{i,j} \frac{|x_i|^|x_j| + |x_i||x_j|^2 - |x_i||x_j|}{2^{d+1}} + \sum_{1 \leq i \not= j \leq r} \lambda_{i,j}\frac{|x_i|^2|x_j|}{2^d} \\
        &\hspace{2cm} + \sum_{\ssk{1 \leq i,j,k\leq r\\\on{distinct}}} \lambda_{i,j, k}\frac{|x_i||x_j||x_k|}{2^d} + \sum_{i \in [r]} \frac{\lambda_{i,i}}{2^{d+1}}|x_i|^2 + \sum_{1 \leq i < j \leq r} \frac{\lambda_{i, j}}{2^{d}}|x_i||x_j| + \sum_{i \in [r]} \frac{\lambda_i}{2^{d}}|x_i|  +  \alpha + \mathbb{Z},\end{align*}
    \end{itemize}
    where coefficients $\lambda_\bcdot, \mu_\bcdot \in \mathbb{Z}$ and $\alpha\in \mathbb{R}$.
\end{proposition}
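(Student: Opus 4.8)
The strategy is to reduce the classification over $(\mathbb{Z}/2^d\mathbb{Z})^r$ to the classification of $2^d_{[r]}$-periodic polynomials on $\mathbb{Z}^r$, using the two lemmas relating polynomials on $G$ to polynomials on $\mathbb{Z}^r$ together with Lemma~\ref{zpolynomials}. First I would note that, by the correspondence lemmas, a map $\phi : (\mathbb{Z}/2^d\mathbb{Z})^r \to \mathbb{T}$ is a polynomial of degree $\le m$ if and only if the map $\tilde\phi$ on $\mathbb{Z}^r$ defined by $\tilde\phi(x_{[r]}) = \phi(|x_1| + 2^d\mathbb{Z}, \dots, |x_r| + 2^d\mathbb{Z})$ — equivalently $\tilde\phi = \phi\circ\pi$ — is a polynomial of degree $\le m$ on $\mathbb{Z}^r$ which is $2^d_{[r]}$-periodic. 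So the whole problem becomes: characterise which real polynomials $\sum \lambda_{d_{[r]}} \binom{x_1}{d_1}\cdots\binom{x_r}{d_r}$ (mod $\mathbb{Z}$) of total degree $\le m$ descend to $G$, i.e. are $2^d$-periodic in each variable modulo $\mathbb{Z}$. The periodicity condition is what forces the specific denominators $2^d$, $2^{d+1}$, $2^{d+2}$ and the specific integer-valued ``corrected'' monomials like $2|x_i|^3 - 3|x_i|^2 + 4|x_i|$ appearing in the statement.

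Concretely, I would proceed degree by degree. For degree $\le 1$: a $\mathbb{Z}$-polynomial $a + \sum_i c_i x_i$ with $c_i\in\mathbb{R}$ is $2^d$-periodic mod $\mathbb{Z}$ in $x_i$ iff $2^d c_i \in \mathbb{Z}$, i.e. $c_i = \lambda_i/2^d$; this gives the first bullet directly, and conversely every such expression is manifestly a degree-$1$ polynomial on $G$ by the second correspondence lemma. For degree $\le 2$: write $\tilde\phi = \alpha + \sum_i c_i\binom{x_i}{1} + \sum_i e_i\binom{x_i}{2} + \sum_{i<j} f_{ij}\binom{x_i}{1}\binom{x_j}{1}$ mod $\mathbb{Z}$; impose periodicity. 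The cross terms force $2^d f_{ij}\in\mathbb{Z}$ (taking $x_j$ fixed and shifting $x_i$ by $2^d$). The pure terms $e_i\binom{x_i}{2}$ are periodic mod $\mathbb{Z}$ iff $e_i\binom{2^d}{2} = e_i 2^{d-1}(2^d-1)\in\mathbb{Z}$ and the ``half'' increment condition also holds; since $2^d-1$ is odd, this is equivalent to $2^{d-1}e_i\in\mathbb{Z}$, i.e. $e_i = \lambda_{i,i}/2^{d-1}$. Then rewrite $\binom{x_i}{2} = (|x_i|^2 - |x_i|)/2$ to absorb the linear part and match the stated form $\lambda_{i,i}|x_i|^2/2^{d+1}$. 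For degree $\le 3$: the genuinely new feature is the pure cubic term and the two-variable terms $\binom{x_i}{1}\binom{x_j}{2}$, $\binom{x_i}{2}\binom{x_j}{1}$. For a coefficient $g_i\binom{x_i}{3}$ to be $2^d$-periodic mod $\mathbb{Z}$ one needs $g_i\binom{2^d}{3} = g_i \cdot 2^d(2^d-1)(2^d-2)/6 = g_i \cdot 2^{d}(2^d-1)(2^{d-1}-1)/3 \in \mathbb{Z}$ along with the lower difference conditions; since $(2^d-1)$ and $(2^{d-1}-1)$ are coprime to $2$, and $d \ge 2$, one computes that the sharp condition is $3\cdot 2^{d-2} g_i \in \mathbb{Z}$ (here is exactly where $d\ge 2$ is used, so that $2^{d-2}$ is a genuine integer and the denominator $3\cdot 2^{d+2}$ with numerator $2|x_i|^3 - 3|x_i|^2 + 4|x_i| = 12\binom{x}{3}+3\binom{x}{2}\cdot? $ — I would double-check the exact integer combination by expanding, but the point is that the displayed cubic monomial is precisely $3\cdot2^{d+2}$ times a scalar multiple of an integer-valued polynomial that vanishes appropriately mod $2^d$). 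The mixed terms and the $|x_i|^2|x_j|$, $|x_i||x_j||x_k|$ terms are handled as in the quadratic case: cross-variable shifts give $2^d$ in the denominator, and one assembles the $\binom{x_i}{1}\binom{x_j}{2}$-type contributions into the symmetric combination $(|x_i|^2|x_j| + |x_i||x_j|^2 - |x_i||x_j|)/2^{d+1}$.

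In each direction I would argue: (forward) given a degree-$\le m$ polynomial on $G$, pull back to $\mathbb{Z}^r$, apply Lemma~\ref{zpolynomials} to get the $\binom{\cdot}{\cdot}$-expansion, then extract the divisibility constraints on the coefficients by evaluating iterated differences $\Delta$ at $0$ and using $2^d$-periodicity; rewriting the binomials in terms of $|x_i|$ and collecting lower-order corrections yields the stated normal form. (Backward) each displayed expression is, by construction, a $\mathbb{Z}$-linear combination of integer-valued polynomials divided by the stated power of $2$ times the relevant odd-coprime factor, hence is $2^d$-periodic mod $\mathbb{Z}$ in every variable and of the correct total degree, so by the second correspondence lemma it descends to a polynomial of degree $\le m$ on $G$. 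The main obstacle — and the only part requiring real care rather than bookkeeping — is pinning down the \emph{exact} denominators and the exact integer-valued numerator polynomials for the pure one-variable cubic term: one must verify that $2^d$-periodicity mod $\mathbb{Z}$ of $c\cdot(\text{that numerator})$ is equivalent to $c\in\mathbb{Z}$ (not merely implied by it), which amounts to checking that the numerator, as an integer-valued polynomial, generates exactly the right subgroup of $\mathbb{Z}/2^d\mathbb{Z}$-valued cubic polynomials; this is where the hypothesis $d\ge 2$ enters, since for $d=1$ the cubic and lower terms collapse. The multi-variable terms, by contrast, reduce cleanly to the one- and two-variable analyses via the freedom to fix all but one or two coordinates.
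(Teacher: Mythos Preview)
Your approach is essentially the paper's: reduce via the correspondence lemmas to $2^d$-periodic polynomials on $\mathbb{Z}^r$, expand in a standard basis, extract coefficient constraints from periodicity, and verify the displayed polynomials are periodic. The paper works in the monomial basis and uses iterated discrete derivatives to isolate coefficients (e.g.\ $\Delta_{ae_i,be_i,ce_i}f = 6\alpha_{i,i,i}abc$, whose periodicity in $a$ forces $6\cdot 2^d\alpha_{i,i,i}\in\mathbb{Z}$), which is cleaner than your direct periodicity check in the binomial basis precisely because integer-valued polynomials need not have integer monomial coefficients. Two concrete points to correct: your stated sharp condition $3\cdot 2^{d-2}g_i\in\mathbb{Z}$ on the $\binom{x_i}{3}$-coefficient is wrong --- translating the paper's $6\cdot 2^d\alpha_{i,i,i}\in\mathbb{Z}$ via $\alpha_{i,i,i}=g_i/6$ gives $2^d g_i\in\mathbb{Z}$, consistent with the extremal polynomial $(2x^3-3x^2+4x)/(3\cdot 2^{d+2})$; and the two-variable cubic terms are more delicate than ``cross-variable shifts give $2^d$ in the denominator'' --- the paper uses $\Delta_{2^d a e_i, be_j}f=0$ to show that $2^d\alpha_{i,i,j}$, $2^d\alpha_{i,j,j}$ and $2^d\alpha_{i,j}$ are either all integers or all in $\tfrac12+\mathbb{Z}$ simultaneously, and it is this coupling that forces the symmetric combination $(|x_i|^2|x_j|+|x_i||x_j|^2-|x_i||x_j|)/2^{d+1}$ rather than allowing independent $|x_i|^2|x_j|/2^{d+1}$ terms.
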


\textbf{Remark.} The case $d = 1$, is the case of so-called non-classical polynomials in finite vector space $\mathbb{F}_2^r$, which were classified by Tao and Ziegler. In that case, we get 
\[\sum_{\ssk{j, i_1, \dots, i_r \geq 0\\i_1 + \dots + i_m \leq d - j}} \
\lambda_{j, i_1, \dots, i_r} \frac{|x_1|^{i_1} \cdots |x_r|^{i_r}}{2^{j+1}}.\]

\begin{proof}
    By previous lemmas, $\phi$ is of degree at most $s$ if and only if it comes from a map $f : \mathbb{Z}^r \to \mathbb{T}$, which is $2^d$-periodic in each of its $r$ variables and has the form 
    \[f(x) = \sum_{\ssk{\ell \leq s\\1\leq i_1 \leq \dots \leq i_\ell \leq r}} \alpha_{i_1, \dots, i_\ell} x_{i_1} \dots x_{i_\ell}.\] 
    We classify such maps instead.\\
    
    We shall consider the discrete additive derivatives, so observe that if $g :\mathbb{Z}^r \to \mathbb{T}$ is $2^d$-periodic in its variables, so is $\Delta_a g(x)$, both in $a$ and in $x$, which holds for iterated derivatives as well. Let us now specialize to cubic polynomials. The proof will proceed by taking care of the cubic monomials, then quadratic, etc., thus giving all three parts of the conclusion.\\

    Hence, we are given $f$, $2^d$-periodic in all its variables, of the form

    \[f(x) = \sum_{i} \alpha_{i,i,i} x_i^3 + \sum_{i < j} (\alpha_{i, i, j}x_i^2 x_j + \alpha_{i, j, j} x_i x_j^2) + \sum_{i < j < k} \alpha_{i,j,k}x_ix_jx_k + \sum_{i} \alpha_{i,i} x_i^2 + \sum_{i < j} \alpha_{i,j} x_i x_j + \sum_i \alpha_i x_i + \alpha.\]

    \noindent\textbf{Coefficients $\alpha_{i,j,k}$.} By looking at $\Delta_{ae_i, be_j, ce_k}f(x)$ for $i < j < k$, which equals $\alpha_{i,j,k} abc$, we obtain $\alpha_{i,j,k} \in \frac{\mathbb{Z}}{2^d}$. Conversely, for such a coefficient $\alpha_{i,j,k} x_ix_jx_l$ is $2^d$-periodic, so may assume $\alpha_{i,j,k} = 0$.\\

    \noindent\textbf{Coefficients $\alpha_{i,i,i}$.} Firstly, the derivative $\Delta_{ae_i, be_i, ce_i}f(x) = 6\alpha_{i,i,i} abc$, implies that $6\cdot 2^d\alpha_{i,i,i} \in \mathbb{Z}$. On the other hand, the polynomial $\frac{2x^3 - 3x^2 + 4x}{12\cdot 2^d}$ is $2^d$-periodic: subtracting values at $x + 2^d a$ and $x$, we have, modulo $3\cdot 2^{d+2}$,
    \begin{align*}&2(x+2^da)^3  -2x^3- 3(x + 2^da)^2  + 3 x^2 + 4\cdot2^da\\
    &\hspace{2cm}= 6 \cdot 2^d x^2 a + 6 \cdot 2^{2d} x a^2 + 2^{3d + 1} a^3 - 6\cdot 2^d xa - 3 \cdot 2^{2d} a^2 + 4\cdot2^da\\
    &\hspace{2cm}\equiv 6 \cdot 2^d x(x-1) a + 2^d(2^{2d + 1} a^3 - 3 \cdot 2^{d} a^2 + 4a).\end{align*}
    Since $2|x(x-1)$, and both 3 and 4 divide $2^{2d + 1} a^3 - 3 \cdot 2^{d} a^2 + 4a$ when $d \geq 2$, this is congruent to 0 modulo $12\cdot 2^d$. Hence, we may assume $\alpha_{i,i,i} = 0$.\\
    
   \noindent\textbf{Coefficients $\alpha_{i,i,j}$ and $\alpha_{i,j,j}$.}  Similarly, for derivatives $\Delta_{ae_i, be_i, ce_j}f(x), \Delta_{ae_i, be_j, ce_j}f(x)$ we get $2^{d+1}\alpha_{i,i,j}, $ $2^{d+1}\alpha_{i,j,j} \in \mathbb{Z}$. Now consider $\Delta_{2^dae_i, be_i}f(ye_i + ze_j) = 0$. We get
   \[\alpha_{i,i,j}2^{d+1}abz + \alpha_{i,i}2^{d+1}ab = 0.\]
   Take $z = 0, a = b = 1$ so $2^{d+1}\alpha_{i,i}$. Since $\frac{y^2}{2^{d+1}}$ is $2^d$-periodic, we may assume that $\alpha_{i,i} = 0$. Similarly, $\alpha_{j,j} = 0$.\\
   \indent Next, consider $\Delta_{2^dae_i, be_j}f(ye_i + ze_j) = 0$. We get
   \[\alpha_{i,i,j}2^{d+1}aby + \alpha_{i,j,j}2^{d+1}abz +  \alpha_{i,j,j}2^{d}ab^2 + \alpha_{i,j}2^{d}ab = 0.\]
   It follows that $2^d  (\alpha_{i,j} + \alpha_{i,j,j}) \in \mathbb{Z}$. Hence, either $2^d \alpha_{i,i,j}, 2^d \alpha_{i,j,j}, 2^d\alpha_{i,j} \in \mathbb{Z}$ or $2^d \alpha_{i,i,j}, 2^d \alpha_{i,j,j}, 2^d\alpha_{i,j} \in \frac{1}{2} + \mathbb{Z}$. Adding $\frac{x_i^2x_j + x_ix_j^2 - x_ix_j}{2^{d+1}}$, reduces us to the former case, and the coefficients can again be made to vanish. Let us just check that $\frac{x^2y + xy^2 - xy}{2^{d+1}}$ is $2^d$-periodic. It is symmetric in $x$ and $y$, so it suffices to consider $x$ only. We have
   \[(x + 2^d)^2 y + (x + 2^d)y^2 - (x + 2^d)y - (x^2y + xy^2 - xy) = 2^{d+1}xy + 2^{2d}y + 2^d y^2 - 2^d y \equiv 2^dy(y-1) \equiv 0 \pmod{2^{d+1}}.\]

   The argument above also shows that coefficients of quadratic monomials behave as described in the statement. For the linear terms, the claim is obvious.
\end{proof}

Using the classification of cubic polynomials, we may give criteria for checking whether multilinear forms, which are maps $\phi : (\mathbb{Z}/2^d\mathbb{Z})^n \tdt (\mathbb{Z}/2^d\mathbb{Z})^n \to \mathbb{T}$, being homomorphisms in each variable separately, are integrable or not. Before that, we record a simple lemma considering such maps.

\begin{lemma}
    Let $\phi : (\mathbb{Z}/2^d\mathbb{Z})^n \tdt (\mathbb{Z}/2^d\mathbb{Z})^n \to \mathbb{T}$ be a multilinear form in $k$ variables. Then there exist unique coefficients $\lambda_{i_1, \dots, i_k} \in \mathbb{Z}/2^d\mathbb{Z}$ such that
    \[\phi(x_1, \dots, x_k) = \sum_{i_1, \dots, i_k \in [n]} \frac{\lambda_{i_1, \dots, i_k} x_{1,i_1}\cdots x_{k, i_k}}{2^d}\]
    holds for all $x_1, \dots, x_k \in (\mathbb{Z}/2^d\mathbb{Z})^n$.\\
    \indent Additionally, if $q$ is a polynomial of degree at most $k$, then $\Delta_{a_1, \dots, a_k}q(x)$ is independent of $x$ and defines a multilinear form $\phi(a_1, \dots, a_k)$, whose coefficients are given by $\lambda_{i_1, \dots, i_k} \Delta_{e_1, \dots, e_k}f(x)$.
\end{lemma}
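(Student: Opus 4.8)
The plan is to prove both claims by a direct induction/linear-algebra argument exploiting the group structure $(\mathbb{Z}/2^d\mathbb{Z})^n = \bigoplus_{i\in[n]}\mathbb{Z}/2^d\mathbb{Z}$ and the fact that a multilinear form is determined by its values on tuples of standard basis vectors. First I would set up coordinates: writing $e_1,\dots,e_n$ for the standard basis of $(\mathbb{Z}/2^d\mathbb{Z})^n$, every $x_j$ expands as $x_j = \sum_{i\in[n]} |x_{j,i}| e_i$ (lifting to $\{0,\dots,2^d-1\}$), and multilinearity in each slot gives
\[
\phi(x_1,\dots,x_k) = \sum_{i_1,\dots,i_k\in[n]} |x_{1,i_1}|\cdots|x_{k,i_k}|\,\phi(e_{i_1},\dots,e_{i_k}).
\]
The only subtlety is that $\phi$ is a homomorphism in each variable over $\mathbb{Z}/2^d\mathbb{Z}$, not over $\mathbb{Z}$, so one must check that the coefficient $\phi(e_{i_1},\dots,e_{i_k})$, which a priori lies in $\mathbb{T}$, actually lies in $\frac{1}{2^d}\mathbb{Z}/\mathbb{Z}$: indeed $2^d e_{i_1}=0$, so $2^d\phi(e_{i_1},\dots,e_{i_k}) = \phi(2^de_{i_1},e_{i_2},\dots,e_{i_k}) = \phi(0,\dots)=0$. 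Hence there is a unique $\lambda_{i_1,\dots,i_k}\in\mathbb{Z}/2^d\mathbb{Z}$ with $\phi(e_{i_1},\dots,e_{i_k}) = \frac{\lambda_{i_1,\dots,i_k}}{2^d}+\mathbb{Z}$, and substituting back gives the displayed formula. Uniqueness follows by evaluating $\phi$ on basis tuples: if two coefficient families give the same form, taking $x_j = e_{i_j}$ forces $\frac{\lambda_{i_1,\dots,i_k}}{2^d} = \frac{\lambda'_{i_1,\dots,i_k}}{2^d}$ in $\mathbb{T}$, i.e. $\lambda_{i_1,\dots,i_k}=\lambda'_{i_1,\dots,i_k}$ in $\mathbb{Z}/2^d\mathbb{Z}$.

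For the additional statement, I would first recall the standard identity that for a polynomial $q$ of degree $\le k$, the iterated derivative $\Delta_{a_1,\dots,a_k}q(x)$ is independent of $x$: this is immediate since $\Delta_{a_1,\dots,a_k,a}q \equiv 0$ for all $a$ means $\Delta_a(\Delta_{a_1,\dots,a_k}q) = 0$, so $\Delta_{a_1,\dots,a_k}q$ is constant in its free variable. Denote this constant by $\phi(a_1,\dots,a_k)$. Multilinearity in each $a_j$ is the familiar fact that $\Delta_{a+a'}g - \Delta_ag - \Delta_{a'}g = 0$ when $g$ has degree $<$ the number of remaining derivatives — more precisely, $\Delta_{a_1,\dots,a_{j-1}}q$ has degree $\le k-j+1$, so applying the identity $\Delta_{b+b'} = \Delta_b + \Delta_{b'} + \Delta_{b,b'}$ and noting $\Delta_{b,b',a_{j+1},\dots,a_k}$ of something of degree $\le k-j+1$ applied after two more derivatives... — cleanest is: fix all variables but $a_j$; then $a_j\mapsto \Delta_{a_1,\dots,\hat{a_j},\dots,a_k}q(\,\cdot\,)$ evaluated via the remaining single derivative $\Delta_{a_j}$ of a degree-$\le 1$-in-$a_j$ expression is additive in $a_j$. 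I would phrase this as: for each $j$, the map $a_j\mapsto \phi(a_1,\dots,a_k)$ (others fixed) equals $\Delta_{a_j}$ applied to the degree-$\le 1$ polynomial $y\mapsto \Delta_{a_1,\dots,a_{j-1},a_{j+1},\dots,a_k}q(y)$ of degree $\le 1$, hence is a homomorphism. So $\phi$ is a multilinear form, and by the first part of the lemma its coefficients are $\lambda_{i_1,\dots,i_k} = 2^d\phi(e_{i_1},\dots,e_{i_k}) = 2^d\Delta_{e_{i_1},\dots,e_{i_k}}q(x)$ in $\mathbb{Z}/2^d\mathbb{Z}$ — though I suspect the intended statement (matching the excerpt's phrasing "coefficients are given by $\lambda_{i_1,\dots,i_k}\Delta_{e_1,\dots,e_k}f(x)$") is simply that $\phi(e_{i_1},\dots,e_{i_k}) = \Delta_{e_{i_1},\dots,e_{i_k}}q(x)$, which is tautological from the definition of $\phi$ together with $x$-independence.

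The main obstacle, and really the only non-bookkeeping point, is the passage from "homomorphism over $\mathbb{Z}/2^d\mathbb{Z}$ in each slot" to "values lie in $\frac{1}{2^d}\mathbb{Z}$" — i.e. controlling the torsion so that the coefficients $\lambda_{i_1,\dots,i_k}$ genuinely live in $\mathbb{Z}/2^d\mathbb{Z}$ rather than in some larger quotient of $\mathbb{Q}/\mathbb{Z}$. As indicated above this is handled in one line by $2^d e_i = 0$ and multilinearity. Everything else — the expansion in coordinates, uniqueness, $x$-independence of $\Delta_{a_1,\dots,a_k}q$, and multilinearity of the derivative — is routine and follows from the basic properties of the discrete derivative operator recalled at the start of the appendix, so I would present them crisply without belaboring the calculations. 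A final small check worth including: multilinearity of $\phi$ does not by itself guarantee the coefficients $\lambda$ reduce correctly, but combined with the first paragraph's uniqueness it does, so I would invoke the first part of the lemma as a black box for the "additionally" clause rather than re-deriving the representation.
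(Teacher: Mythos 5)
Your proof is correct and follows essentially the same route as the paper: expand each $x_j$ in the standard basis and use multilinearity to reduce to basis tuples, note that $2^d\phi(e_{i_1},\dots,e_{i_k})=0$ forces the coefficients into $\frac{1}{2^d}\mathbb{Z}/\mathbb{Z}$, obtain uniqueness by evaluating at basis tuples, and derive $x$-independence and multilinearity of $\Delta_{a_1,\dots,a_k}q$ from the degree hypothesis. The only cosmetic difference is in the multilinearity step, where the paper uses a telescoping identity and you invoke $\Delta_{a_j,b_j}g=0$ for the degree-$\le 1$ polynomial $g=\Delta_{a_1,\dots,\hat{a}_j,\dots,a_k}q$; these are the same computation phrased differently.
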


\begin{proof}
    \noindent\textbf{Existence of coefficients.} Observe firstly that all values taken by $\phi$ lie inside $\frac{\mathbb{Z}}{2^d} + \mathbb{Z}$. Namely, fixing $x_1, \dots, x_{k-1}$, we have that $y \mapsto \phi(x_1, \dots, x_{k-1}, y)$ is a homomorphism from $(\mathbb{Z}/2^d\mathbb{Z})^n$ to $\mathbb{T}$, which only takes such values. Using homomorphism property in each variable, we have 
    \[\phi(x_1, \dots, x_k) = \sum_{i_1}x_{1, i_1}\phi(e_1, x_2, \dots, x_k) = \dots = \sum_{i_1, \dots, i_k} \phi(e_{i_1}, \dots, e_{i_k}) x_{1, i_1} \dots x_{k, i_k}.\]
    
    \noindent\textbf{Uniqueness of coefficients.} By subtracting two possible representations of a given multilinear form in the above form, we need to show that, if $\psi(x_1, \dots, x_k) = \sum_{i_1, \dots, i_k \in [n]} \frac{\lambda_{i_1, \dots, i_k} x_{1,i_1}\cdots x_{k, i_k}}{2^d}$ always vanishes, then all coefficients vanish as well. But this follows from plugging in elements of the standard basis, as $\lambda_{i_1, \dots, i_k} = \psi(e_{i_1}, \dots e_{i_k}) = 0$.\\

    \noindent\textbf{Relationship with polynomials.} Since $q$ is a polynomial of degree at most $k$, that means that $\Delta_{a_1, \dots, a_k, y - x}q(x) = 0$ for all $a_1, \dots, a_k ,x ,y$ so $\Delta_{a_1, \dots, a_k}q(x) = \Delta_{a_1, \dots, a_k}q(y)$, thus $\phi(a_1, \dots, a_k)$ is well-defined. To see that it is multilinear, we show that it is a homomorphism in the last variable, the same argument works for other variables. Namely, 
    \begin{align*}\phi(a_1, \dots, a_k + b_k) =  &\Delta_{a_1, \dots, a_k + b_k}q(x) = \Delta_{a_1, \dots, a_{k-1}}q(x + a_k + b_k) - \Delta_{a_1, \dots, a_{k-1}}q(x) \\
    =&\Delta_{a_1, \dots, a_{k-1}}q(x + a_k + b_k) - \Delta_{a_1, \dots, a_{k-1}}q(x + a_k) + \Delta_{a_1, \dots, a_{k-1}}q(x + a_k) - \Delta_{a_1, \dots, a_{k-1}}q(x)\\
    =&\Delta_{a_1, \dots, a_{k-1}, b_k}q(x + a_k) + \Delta_{a_1, \dots, a_{k-1}, a_k}q(x)\\
    =&\phi(a_1, \dots, a_k) + \phi(a_1, \dots, b_k).
    \end{align*}
    The claim about coefficients stems from uniqueness and definition of $\phi$.
\end{proof}

\begin{theorem}[Integration of bilinear maps]\label{bilinearintegration}
    Let $\phi : (\mathbb{Z}/2^d\mathbb{Z})^n \times (\mathbb{Z}/2^d\mathbb{Z})^n \to \mathbb{T}$ be a bilinear map. Then there exists a quadratic polynomial $q : (\mathbb{Z}/2^d\mathbb{Z})^n \to \mathbb{T}$ such that $\Delta_{a,b}q(x) = \phi(a,b)$ for all $x,a,b \in (\mathbb{Z}/2^d\mathbb{Z})^n$ if and only if $\phi(a,b)$ is a symmetric bilinear map.
\end{theorem}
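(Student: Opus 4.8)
The statement asserts that a bilinear map $\phi$ on $(\mathbb{Z}/2^d\mathbb{Z})^n$ is the third-variable-trivialised derivative of a quadratic polynomial precisely when $\phi$ is symmetric. One direction is immediate: if $q$ is a quadratic polynomial then $\Delta_{a,b}q(x) = \Delta_{b,a}q(x)$ because mixed discrete derivatives commute, so $\phi(a,b) = \Delta_{a,b}q(0)$ is automatically symmetric; and $\Delta_{a,b}q(x)$ is independent of $x$ by the degree-$\le 2$ hypothesis, so the formula $\Delta_{a,b}q(x)=\phi(a,b)$ holds for all $x$ once it holds at $x=0$. So the content is the converse: constructing $q$ from a symmetric bilinear $\phi$.

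\textbf{The construction.} Using the coordinate representation of bilinear forms from the lemma just above, write $\phi(a,b) = \sum_{i,j \in [n]} \frac{\lambda_{ij}}{2^d}\, |a_i|\,|b_j|$ with $\lambda_{ij} \in \mathbb{Z}/2^d\mathbb{Z}$, and symmetry of $\phi$ forces $\lambda_{ij} = \lambda_{ji}$ (test on standard basis vectors $e_i, e_j$). The natural guess is to take the quadratic polynomial suggested by the classification of cubic (here quadratic) polynomials on $(\mathbb{Z}/2^d\mathbb{Z})^n$:
\[
q(x) = \sum_{i \in [n]} \frac{\lambda_{ii}}{2^{d+1}} |x_i|^2 + \sum_{1 \le i < j \le n} \frac{\lambda_{ij}}{2^d} |x_i|\,|x_j|.
\]
By that classification $q$ is indeed a polynomial of degree $\le 2$ (the diagonal terms require the $\frac{1}{2^{d+1}}$ denominator, which is exactly what is available there; note $d \ge 1$ suffices for the quadratic case, and if needed I would invoke the proposition with the convention it is stated for, $d \ge 2$, the case $d=1$ being the known $\mathbb{F}_2^n$ classification). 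The remaining work is to compute $\Delta_{a,b}q(x)$ and check it equals $\phi(a,b)$. First I would verify that for a single-variable quadratic of the shape $\frac{\mu}{2^{d+1}}|x|^2$ one has $\Delta_{a,b}\big(\tfrac{\mu}{2^{d+1}}|x|^2\big) = \tfrac{\mu}{2^{d+1}}\big(|x+a+b|^2 - |x+a|^2 - |x+b|^2 + |x|^2\big)$, and that this collapses to $\tfrac{\mu}{2^{d+1}}\cdot 2|a|\,|b| = \tfrac{\mu}{2^d}|a|\,|b|$ modulo $1$ — here one must be a little careful because $|\cdot|$ is the integer-lift map, not a homomorphism, so $|x+a|$ differs from $|x|+|a|$ by a multiple of $2^d$; but every such discrepancy enters multiplied by $2^d$ times another integer, hence vanishes modulo $1$ after dividing by $2^{d+1}$ (one picks up at worst a factor $2^{d+1}$ or $2^{2d+1}$, always divisible). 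For the off-diagonal terms $\frac{\lambda_{ij}}{2^d}|x_i||x_j|$ the same bookkeeping gives $\Delta_{a,b}$ equal to $\frac{\lambda_{ij}}{2^d}(|a_i||b_j| + |a_j||b_i|)$, and summing over $i<j$ together with the diagonal contribution $\sum_i \frac{\lambda_{ii}}{2^d}|a_i||b_i|$ and using $\lambda_{ij}=\lambda_{ji}$ reassembles exactly $\sum_{i,j}\frac{\lambda_{ij}}{2^d}|a_i||b_j| = \phi(a,b)$.

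\textbf{Where the difficulty lies.} The only genuinely delicate point is the congruence bookkeeping with the lift map $|\cdot|\colon \mathbb{Z}/2^d\mathbb{Z} \to \{0,\dots,2^d-1\}$: one has to track that all the "error" terms produced by replacing $|x+a|$ by $|x|+|a|$ (or by reductions mod $2^d$ inside squares and products) land in $2^d\mathbb{Z}$ — or $2^{d+1}\mathbb{Z}$ where a $\frac{1}{2^{d+1}}$ denominator is present — so that they die modulo $1$. This is precisely the same kind of computation carried out in the classification proposition (e.g. showing $\frac{2|x|^3-3|x|^2+4|x|}{3\cdot 2^{d+2}}$ and $\frac{x^2y+xy^2-xy}{2^{d+1}}$ are $2^d$-periodic), so I would phrase the verification in that language and reuse those periodicity facts rather than redoing arithmetic from scratch. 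An alternative, cleaner route to the converse, which I would fall back on if the direct computation gets unwieldy, is: lift $\phi$ to a symmetric $\mathbb{Z}$-bilinear-in-coordinates form $\tilde\phi$ on $\mathbb{Z}^n$, observe that $\tilde q(x) := \tfrac12\tilde\phi(x,x) + (\text{a correction making it }2^d\text{-periodic})$ is a $2^d$-periodic integer-polynomial of degree $\le 2$ by the periodicity computations of the classification section, and then descend via the two lemmas relating periodic polynomials on $\mathbb{Z}^n$ to polynomials on $(\mathbb{Z}/2^d\mathbb{Z})^n$; the descended $q$ then satisfies $\Delta_{a,b}q = \phi$ by the functoriality of discrete derivatives under the projection $\pi$. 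Either way, no new ideas beyond the already-established classification and lifting lemmas are needed.
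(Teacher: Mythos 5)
Your proposal is correct and takes essentially the same route as the paper: both directions are identical (the easy direction via commutativity of mixed derivatives and the degree bound, the converse by writing $\phi(a,b) = \sum_{i,j}\lambda_{ij}a_ib_j/2^d$ with $\lambda_{ij}=\lambda_{ji}$ and assembling $q$ from the diagonal monomials $|\lambda_{ii}||x_i|^2/2^{d+1}$ and off-diagonal monomials $\lambda_{ij}x_ix_j/2^d$), with the paper likewise handling $d=1$ by citing the known $\mathbb{F}_2^n$ classification. The only slip is notational — you write $\lambda_{ii}/2^{d+1}$ where a lift $|\lambda_{ii}|/2^{d+1}$ is needed for the expression to be well-defined, which the paper does use — and your elaboration on the mod-$2^d$ error bookkeeping is fine but not something the paper spells out.
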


\begin{proof}
    The case $d = 1$ was studied in~\cite{TaoZiegler, Tidor}, so we may assume $d \geq 2$. Clearly, $\Delta_{a,b}q(x)$ is a symmetric bilinear map for any quadratic $q$. On the other hand, given a symmetric bilinear map $\phi(a,b)$, we may write it as $\phi(a,b) = \sum_{i, j} \frac{\lambda_{i,j} a_i b_j}{2^d}$. From the uniqueness of coefficents and symmetry of $\phi$, we have $\lambda_{i,j} = \lambda_{j, i}$. But, for $i \not= j$, we have $\Delta_{a, b} q(x) = \frac{\lambda_{ij}(a_ib_j + a_j b_i)}{2^d}$ for $q(x) = \frac{\lambda_{i,j} x_ix_j}{2^d}$, and, when $i = j$, we have $\Delta_{a, b} q(x) = \frac{\lambda_{i,i}a_ib_i}{2^d}$ for $q(x) = \frac{|\lambda_{i,i}||x_i|^2}{2^{d+1}}$.
\end{proof}

\begin{theorem}[Integration of trilinear maps]\label{trilinearintegration}
    Let $\phi : (\mathbb{Z}/2^d\mathbb{Z})^n \times (\mathbb{Z}/2^d\mathbb{Z})^n \times (\mathbb{Z}/2^d\mathbb{Z})^n \to \mathbb{T}$ be a trilinear map (meaning a group homomorphism in each variable separately). Then there exists a cubic polynomial $q : (\mathbb{Z}/2^d\mathbb{Z})^n \to \mathbb{T}$ such that $\Delta_{a,b,c}q(x) = \phi(a,b,c)$ for all $x,a,b,c \in (\mathbb{Z}/2^d\mathbb{Z})^n$ if and only if $\phi(a,b,c)$ is a symmetric trilinear map and $2^{d-1}\phi(a,a,b)$ is a symmetric bilinear map.
\end{theorem}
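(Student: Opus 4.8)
The plan is to prove both directions, with the forward direction being essentially a calculation and the converse being the substance. For the forward direction, suppose $q$ is a cubic polynomial with $\Delta_{a,b,c}q(x) = \phi(a,b,c)$. By the previous lemma, $\Delta_{a,b,c}q(x)$ is automatically a symmetric trilinear form (the operator $\Delta$ is symmetric in the shifts, and $q$ being degree $\le 3$ makes the third-order derivative independent of the base point, hence multilinear). To see that $2^{d-1}\phi(a,a,b)$ is a symmetric bilinear form, I would use the classification of cubic polynomials on $(\mathbb{Z}/2^d\mathbb{Z})^n$: write $q$ in the normal form given in the proposition, and compute $\Delta_{a,a,b}q(x)$ directly, monomial by monomial. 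The only monomials contributing to a term quadratic in $a$ after taking $\Delta_{a,a,\bcdot}$ are the "self-cubic" terms $\lambda_{i,i,i}\frac{2|x_i|^3 - 3|x_i|^2 + 4|x_i|}{3\cdot 2^{d+2}}$ and the mixed terms $\mu_{i,j}\frac{|x_i|^2|x_j| + |x_i||x_j|^2 - |x_i||x_j|}{2^{d+1}}$ and $\lambda_{i,j}\frac{|x_i|^2|x_j|}{2^d}$; for each one I would check that after multiplying by $2^{d-1}$ the result is bilinear in $(a,b)$ (the point being that the "nonclassical" correction terms, which are not bilinear, are killed by the factor $2^{d-1}$ because their denominators are $2^{d+1}$ or $3\cdot 2^{d+2}$ and the offending pieces are even).

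For the converse, assume $\phi$ is symmetric trilinear and $2^{d-1}\phi(a,a,b)$ is symmetric bilinear. Using the coefficient lemma, write $\phi(a,b,c) = \sum_{i,j,k}\frac{\lambda_{i,j,k}a_ib_jc_k}{2^d}$ with $\lambda$ symmetric under permutation of indices. I would then construct $q$ additively, handling the index patterns separately: (i) for triples $i,j,k$ pairwise distinct, set $q$ to include $\lambda_{i,j,k}\frac{|x_i||x_j||x_k|}{2^d}$, whose third derivative is $\sum_{\pi}\lambda_{i,j,k}\frac{a_{\pi(i)}b_{\pi(j)}c_{\pi(k)}}{2^d}$, matching the symmetrized contribution; (ii) for patterns $i,i,j$ with $i\neq j$, include a suitable multiple of $\frac{|x_i|^2|x_j|}{2^d}$ and, if needed to cancel the non-$2^d$-periodic leftover, a multiple of $\frac{|x_i|^2|x_j| + |x_i||x_j|^2 - |x_i||x_j|}{2^{d+1}}$ — here is exactly where the hypothesis that $2^{d-1}\phi(a,a,b)$ is bilinear is used, to ensure the relevant coefficient combination is an integer (or, after adjusting by the nonclassical correction monomial, can be taken to be an integer); (iii) for the diagonal pattern $i,i,i$, include a multiple of $\lambda_{i,i,i}\frac{2|x_i|^3 - 3|x_i|^2 + 4|x_i|}{3\cdot 2^{d+2}}$, and again check the divisibility forced by the bilinearity hypothesis applied with $a=b$. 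Summing these over all index classes produces a cubic polynomial $q$ (it is a polynomial because each summand is, by the classification proposition), and $\Delta_{a,b,c}q(x) = \phi(a,b,c)$ by linearity of $\Delta$ and the per-monomial computations.

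The main obstacle I expect is the bookkeeping in step (ii) and (iii) of the converse: one has to track precisely which $\mathbb{Z}$-linear combination of the coefficients $\lambda_{i,i,j}$, $\lambda_{i,j}$ (quadratic), $\mu_{i,j}$ appears, verify that the hypothesis "$2^{d-1}\phi(a,a,b)$ symmetric bilinear" is equivalent to the integrality / divisibility condition needed for the normal-form coefficients to exist in $\mathbb{Z}$, and confirm that the nonclassical correction monomials (which have denominator $2^{d+1}$ or $3\cdot 2^{d+2}$) contribute nothing spurious to $\Delta_{a,b,c}q$ modulo $1$. In other words the heart of the argument is translating between the coefficient description of $\phi$ and the normal-form description of $q$ supplied by the classification proposition, and showing the symmetry/bilinearity hypotheses are exactly the obstructions that vanish. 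The base case $d=1$ is handled by the finite vector space results of Tao–Ziegler and Tidor cited in the excerpt, so one may assume $d\ge 2$ throughout and use the explicit classification in that regime.
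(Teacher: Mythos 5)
Your proposal is correct in substance, and the converse direction matches the paper's approach closely (coefficient representation $\phi(a,b,c)=\sum\lambda_{i,j,k}\frac{a_ib_jc_k}{2^d}$, deduce $\lambda_{i,i,j}\equiv\lambda_{j,j,i}\pmod 2$ from the hypotheses, construct $q$ term-by-term from the classification proposition). The difference is in the forward direction, where you take a more computational route than the paper does. You propose to verify bilinearity and symmetry of $2^{d-1}\phi(a,a,b)$ by expanding $q$ in normal form and checking monomial by monomial. The paper avoids the monomial bookkeeping entirely: bilinearity is observed to be \emph{automatic} for any symmetric trilinear form $\phi$ taking values in $\frac{1}{2^d}\mathbb{Z}/\mathbb{Z}$, since
\[
2^{d-1}\phi(a+a',a+a',b)=2^{d-1}\phi(a,a,b)+2^d\phi(a,a',b)+2^{d-1}\phi(a',a',b)
\]
and the middle term vanishes --- no appeal to $q$ or its normal form is needed. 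For symmetry, the paper uses a combinatorial identity (proved by induction) expressing a sum of terms $\Delta_{a,b,a+b}F(x+\lambda_i a+\mu_i b)$ as a telescoping sum of values of $F$, which when applied to $F=q$ with $k=d$ collapses and forces $2^{d-1}\phi(a,b,a+b)=0$; combined with $2^{d-1}\phi(a,a,b)\in\{0,1/2\}$ this yields the symmetry. Your monomial-by-monomial verification is valid and would prove the same facts, but it requires the classification proposition as input and involves explicit case analysis, whereas the paper's argument for bilinearity is one line and its argument for symmetry isolates a cleaner structural identity. What your route buys is directness and self-containedness (no auxiliary combinatorial claim); what the paper's route buys is that the bilinearity step is seen to have nothing to do with integrability, and the symmetry step exhibits $\Delta_{a,b,a+b}$-vanishing as the real content.
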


\begin{proof}
    The case $d = 1$ was studied in~\cite{TaoZiegler, Tidor}, so we may assume $d \geq 2$. Suppose first that $\Delta_{a,b,c}q(x) = \phi(a,b,c)$ holds for a cubic polynomial $q$. Then $\phi$ is a symmetric trilinear form. Furthermore, $2^{d-1}\phi(a,a,b)$ is a bilinear form. Indeed,
    \[2^{d-1}\phi(a + a',a + a',b) = 2^{d-1}\phi(a,a,b) + 2^d \phi(a, a', b) + 2^{d-1}\phi(a',a',b) = 2^{d-1}\phi(a,a,b) + 2^{d-1}\phi(a',a',b),\]
    since $\phi$ takes values in $\frac{\mathbb{Z}}{2^d} + \mathbb{Z}$. Finally, to see symmetry, we need an additional claim.
    
    \begin{claim}
        For a given $k$, let $U_k = 2^{2k - 1} + 2^{k-1}, L_k = 2^{2k - 1} - 2^{k-1}$. Then there exist integers $\lambda_i, \mu_i \in \mathbb{Z}$, $i \in [U_k]$, and $\lambda'_i, \mu'_i \in \mathbb{Z}$, for $i \in [L_k]$, such that for all maps $F : H_1 \to H_2$ between two abelian groups $H_1$ and $H_2$ ($F$ is not necessarily a homomorphism), we have two identities 
        \begin{align*}&\sum_{i \in [U_k]} \Delta_{a, b, a+b} F(x + \lambda_i a + \mu_i b) = -F(x) + F(x + a)  - F(x + 2^ka + (2^k - 1)b) \\
        &\hspace{4cm}+ F(x  + 2^ka + 2^k b) - F(x + a + 2^k b) + F(x + (2^k - 1)b)\end{align*}
        and
        \begin{align*}&\sum_{i \in [L_k]} \Delta_{a, b, a+b} F(x + \lambda'_i a + \mu'_i b) = -F(x) + F(x + (2^k - 1)a)  - F(x + 2^ka + b) \\
        &\hspace{4cm}+ F(x  + 2^ka + 2^k b) - F(x + (2^k - 1)a + 2^k b) + F(x + b)\end{align*}
        for all $x, a, b \in H_1$.
    \end{claim}

    \begin{proof}
        The claim follows by induction. For the base case $k = 1$, we take $\lambda_1 = \mu_1 = 0$, $\lambda_2 = 0, \mu_2 = 1, \lambda_3 = \mu_3 = 1$, to get
        
        and for the second identity, we take $\lambda'_1 = \mu'_1 = 0$, and expand $\Delta_{a,b,a+b}F(x)$.\\
        Suppose now that the claim holds for some $k \geq 1$, let $\lambda_i, \mu_i$, $i \in [L_k]$ be the coefficients.
    \end{proof}
    
    The claim above for the second identity gives
    \[ 2^{d-1}\phi(a,b,a+b) = 2^{d-1}\Delta_{a, b, a + b}q(x) = ... = 0.\]
    Since $2^{d-1}\phi(a,a,b)$ takes values in $\frac{1}{2} + \mathbb{Z}$, we get the desired property.\\

    Let us now assume that $\phi$ is a symmetric trilinear form such that $2^{d-1}\phi(a,a,b)$ is symmetric bilinear form. Expressing $\phi$ using coefficients, we get
    \[\phi(a,b,c) = \sum_{i,j,k \in [n]} \frac{\lambda_{i,j,k} a_ib_jc_k}{2^d}.\]
    Uniqueness of coefficients and symmetry of $\phi$ imply that $\lambda_{i,j,k}$ is invariant under permutations of coefficients. Furthermore,
    \begin{align*}2^{d-1}\phi(a,a,b) = \sum_{i \in [n]} \frac{\lambda_{i,i,i} a_i^2b_i}{2} + & \sum_{1\leq i < j \leq n} \frac{\lambda_{i,i,j} (a_i^2b_j +2a_ia_jb_i ) + \lambda_{i,j,j}(a_j^2b_i + 2a_ia_jb_j)}{2}\\
    + &\sum_{i < j < k \leq n} \frac{2\lambda_{i,j,k}(a_ia_jb_k + a_i a_k b_j + a_j a_k b_i)}{2}.\end{align*}
    The first sum is symmetric as modulo 2 we have $a_i^2b_i$ congruent to $a_ib_i$ and thus to $a_i b_i^2$. The third sum vanishes. Hence, symmetry of $2^{d-1}\phi(a,a,b)$ and uniqueness of coefficients imply that $\lambda_{i,i,j} \equiv \lambda_{j,j,i} \pmod{2}$ whenever $i \not=j$.\\

\end{proof}

\section{Some lower order inverse results}

In this appendix, we gather a couple of lower order inverse theorems, one concerning basic directional uniformity norms and the other concerning $\mathsf{U}^3$ norm in bounded torsion groups. Before that, we observe that characters of subgroups can be extended to characters on the full group.

\begin{lemma}\label{charextnlemma}
    Let $H \leq G$ be finite groups and let $\chi : H \to \mathbb{T}$ be a homomorphism. Then $\chi$ extends to a homomorphism on $G$.
\end{lemma}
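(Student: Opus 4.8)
The plan is to reduce the problem to the standard fact that $\mathbb{T} = \mathbb{R}/\mathbb{Z}$ is a divisible, hence injective, abelian group, so that every homomorphism from a subgroup extends. Concretely, since $G$ is finite, I would proceed by a one-step-at-a-time extension argument (a finite analogue of the proof of Baer's criterion), rather than invoking injectivity as a black box, so the proof is self-contained.

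First I would set up the induction: if $H = G$ there is nothing to do, so suppose $H \subsetneq G$ and pick $g \in G \setminus H$. It suffices to extend $\chi$ to the subgroup $H' = \langle H, g\rangle$, since then $|G : H'| < |G : H|$ and we may iterate (the index strictly decreases and is finite, so the process terminates with an extension defined on all of $G$). Let $n$ be the least positive integer with $ng \in H$ (such $n$ exists since $G$ is finite, and $n \geq 2$ since $g \notin H$). Every element of $H'$ can be written as $h + kg$ with $h \in H$ and $0 \le k < n$, and this representation is unique: if $h_1 + k_1 g = h_2 + k_2 g$ with $0 \le k_1, k_2 < n$, then $(k_1 - k_2)g = h_2 - h_1 \in H$ with $|k_1 - k_2| < n$, forcing $k_1 = k_2$ and $h_1 = h_2$ by minimality of $n$.

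Next I would choose the value of the extension at $g$. Put $t_0 = \chi(ng) \in \mathbb{T}$; since $\mathbb{T}$ is divisible there exists $t \in \mathbb{T}$ with $nt = t_0$ (explicitly, writing $t_0$ as a real number mod $1$, take $t = t_0/n$). Define $\chi' : H' \to \mathbb{T}$ by $\chi'(h + kg) = \chi(h) + kt$ for $h \in H$, $0 \le k < n$; this is well-defined by the uniqueness above. To check $\chi'$ is a homomorphism, take $h_1 + k_1 g$ and $h_2 + k_2 g$ with $0 \le k_1, k_2 < n$. Write $k_1 + k_2 = qn + r$ with $q \in \{0,1\}$ and $0 \le r < n$; then $(h_1 + k_1 g) + (h_2 + k_2 g) = (h_1 + h_2 + q\,ng) + rg$, so
\[
\chi'\big((h_1 + k_1 g) + (h_2 + k_2 g)\big) = \chi(h_1) + \chi(h_2) + q\chi(ng) + rt = \chi(h_1) + \chi(h_2) + q(nt) + rt,
\]
using $\chi(ng) = t_0 = nt$. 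Since $q n + r = k_1 + k_2$, the right-hand side equals $\chi(h_1) + k_1 t + \chi(h_2) + k_2 t = \chi'(h_1 + k_1 g) + \chi'(h_2 + k_2 g)$, as required. Also $\chi'|_H = \chi$ by taking $k = 0$. This completes the extension to $H'$, and the induction then yields a homomorphism on $G$ restricting to $\chi$ on $H$.

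I do not expect a genuine obstacle here; the only point requiring a little care is the well-definedness and homomorphism check for $\chi'$, which hinges on the uniqueness of the normal form $h + kg$ and on the choice $nt = \chi(ng)$. (Alternatively, one could bypass the induction entirely by embedding $G \hookrightarrow \mathbb{T}^m$ via the classification of finite abelian groups and noting each coordinate character of $H$ extends, but the inductive argument above keeps everything elementary and internal to the setting of the lemma.)
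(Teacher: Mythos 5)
Your proof is correct and uses essentially the same one-step extension strategy as the paper: adjoin one new generator $g$ at a time, let $n$ be the least positive integer with $ng \in H$, and extend using divisibility of $\mathbb{T}$. One point worth flagging: your choice of $t$ with $nt = \chi(ng)$ is exactly what well-definedness requires, whereas the paper's formula $\chi'(y) = \frac{\ell}{k} + \chi(y - \ell x)$ implicitly fixes the image of $x$ to be $1/k$, and the subsequent well-definedness check only shows $k \mid \ell' - \ell$ without accounting for the term $m\chi(kx)$ that appears when one replaces $\ell$ by $\ell + mk$; that check goes through only when $\chi(kx) = 0$. Your version, taking $t$ to be a $k$th division of $\chi(kx)$ in $\mathbb{T}$ rather than of $1$, is the correct one.
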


\begin{proof}
    Let $x \in G \setminus H$. It suffices to extend $\chi$ to $H' = H + \langle x \rangle$. Let $k$ be smallest positive integer such that $kx \in H$. Define $\chi' : H' \to \mathbb{T}$ by $\chi'(y) = \frac{\ell}{k} + \chi(y - \ell x)$, where $\ell$ is such that $y - \ell x \in H$.\\
    \indent To see that $\chi'$ is well-defined, note first that for each $y \in H'$ there exists $\ell$ is such that $y - \ell x \in H$. Furthermore, if $\ell'$ is another such an integer, then $(\ell' - \ell) x \in H$, so $k | \ell' - \ell$ by the choice of $k$. Hence $\chi'$ is well-defined.\\
    \indent Let now $y, z\in H'$, and let $y - \ell x, z - mx \in H$. Then $(y + z) - (\ell + m)x \in H$ and $\chi'(y+z) = \chi'(y)+\chi'(z)$.
\end{proof}

\begin{theorem}\label{u2inversesbgp}
Let $H \leq G$ be a subgroup and suppose that $f : G \to \mathbb{D}$ satisfies $\|f\|_{\mathsf{U}(H, G)} \geq c$. Then there exist a character $\chi : G \to \mathbb{T}$ and a function $h : G/H \to \mathbb{D}$ such that 
\[\Big|\exx_{x \in G} f(x) \overline{h(x + H)} \on{e}(\chi(x))\Big| \geq (c/2)^{O(1)}.\]
\end{theorem}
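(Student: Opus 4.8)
The plan is to unfold the definition of the directional uniformity norm, identify the large Fourier coefficients of the relevant restrictions, and then produce a global character together with a bounded function on the quotient $G/H$ that together witness the correlation. Recall that, with sums over $G$ normalised to averages, $\|f\|_{\mathsf{U}(H,G)}^4 = \exx_{x,b\in G, a\in H} \partial_{a,b} f(x) = \exx_{b\in G}\big(\exx_{x\in G, a\in H}\partial_{a}f(x+b)\overline{\partial_a f(x)}\big)$, so after a change of variables one sees that $\|f\|_{\mathsf{U}(H,G)}^4 = \exx_{b\in G}\|g_b\|_{\mathsf{U}^2(H)}^{?}$-type expression; more precisely, writing $F_b(x)=f(x+b)\overline{f(x)}$ one gets $c^4\le \exx_{x\in G}\exx_{a,a'\in H} F_{a}(x)\overline{F_{a'}(x)}$ after suitable manipulation, i.e. for a dense set of $x\in G$ the function $a\mapsto F_a(x)$ restricted to the coset $x+H$ has large $\ell^2$ inner product with itself. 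The cleanest route: for each $b\in G$, consider the function $x\mapsto f(x+b)\overline{f(x)}$ and average the $\mathsf{U}^2(H)$-norm of its restriction to cosets; by the standard $\mathsf{U}^2$ inverse statement (Parseval on the finite group $H$ after translating each coset to $H$) we extract, for a dense collection of cosets and shifts, a character of $H$ capturing a chunk of the mass.

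\textbf{Key steps.} First I would expand $\|f\|_{\mathsf{U}(H,G)}^4$ and apply Cauchy--Schwarz once to reach an inequality of the shape $c^8\le \exx_{x\in G,\,a,a'\in H}\partial_a f(x)\,\overline{\partial_{a'}f(x)}\cdot(\dots)$, arranging matters so that the inner average, for each fixed $x$, is exactly $\|(f\mbox{ restricted to }x+H)\|_{\mathsf{U}^2}^4$ up to normalisation. Concretely: $\exx_{x\in G}\big(\exx_{a,b\in H}\partial_{a,b}f(x)\big) = \exx_{x\in G}\sum_{\chi\in\widehat{H}}|\widehat{f_x}(\chi)|^4$ where $f_x:H\to\mathbb{D}$, $f_x(h)=f(x+h)$, and $\widehat{\cdot}$ is the Fourier transform on $H$. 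Since the left side is $\ge c^4$, by averaging there is a set $X\subseteq G$ of density $\ge (c/2)^4$ consisting of representatives (one per coset) such that $\sum_\chi |\widehat{f_x}(\chi)|^4\ge (c/2)^4$, hence for each such $x$ a character $\chi_x\in\widehat H$ with $|\widehat{f_x}(\chi_x)|\ge (c/2)^{O(1)}$. Second, by Lemma~\ref{charextnlemma}, each $\chi_x$ extends to a character of $G$; but the extension is non-canonical and varies with the coset, so I would instead argue at the level of $\widehat H$: the map $x+H\mapsto\chi_x$ takes values in $\widehat H$, a group of size $|H|$, and I pigeonhole. However $|\widehat H|$ is not bounded, so pigeonholing directly is wasteful. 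The better move: fix one extension $\widetilde{\chi}:G\to\mathbb{T}$ of a \emph{single} well-chosen character and absorb the coset-dependence into $h:G/H\to\mathbb{D}$.

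\textbf{Completing the argument.} The decisive observation is that $\widehat{f_x}(\chi_x)$ being large means $\exx_{h\in H} f(x+h)\on{e}(-\chi_x(h))$ has modulus $\ge (c/2)^{O(1)}$; if $\chi_x$ did not depend on $x$ (say $\chi_x=\chi_0$ for $x$ in a dense subset, extend $\chi_0$ to $\widetilde\chi$ on $G$ by Lemma~\ref{charextnlemma}) then defining $h(x+H)=\overline{\mathrm{sgn}}\big(\exx_{h\in H}f(x+h)\on{e}(-\widetilde\chi(x+h))\big)$ for $x$ in that subset (and $0$ elsewhere) gives $\exx_{x\in G}f(x)\overline{h(x+H)}\on{e}(\widetilde\chi(x)) = \exx_{\text{cosets}} |\exx_{h\in H}f(x+h)\on{e}(-\widetilde\chi(x+h))|\ge (c/2)^{O(1)}$, as desired. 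So everything reduces to showing $\chi_x$ can be taken constant on a dense set of cosets --- and this is exactly where the directional structure, rather than arbitrary restrictions, is used: the extra averaging over $b$ (the $G$-direction) in $\|f\|_{\mathsf{U}(H,G)}$ forces $\chi_x$ to be the \emph{same} for a dense set of $x$. Precisely, expanding $\|f\|_{\mathsf{U}(H,G)}^4$ before applying Cauchy--Schwarz, one has $c^4\le \exx_{x\in G}\exx_{b\in G}\sum_{\chi\in\widehat H}|\widehat{(\partial_b f)_x}(\chi)|^4$ is not quite it; the right identity is $\|f\|_{\mathsf{U}(H,G)}^4=\exx_{b\in G}\exx_{x\in G}\sum_{\chi\in\widehat H}|\widehat{f_x}(\chi)\overline{\widehat{f_{x+b}}(\chi)}|^2$-flavoured, from which a Cauchy--Schwarz in $b$ and $x$ shows that a single $\chi$ contributes across many cosets simultaneously. \textbf{The main obstacle} I expect is precisely this coherence step --- moving from a cosetwise-varying character $\chi_x$ to one global character --- and I would handle it by first deriving the correct four-fold expansion of $\|f\|_{\mathsf{U}(H,G)}$ in terms of $\sum_{\chi}$ of products of four coset-Fourier coefficients at the \emph{same} $\chi$, then a Cauchy--Schwarz/popularity argument to fix $\chi$, and only at the end invoking Lemma~\ref{charextnlemma} to promote that fixed $\chi\in\widehat H$ to $\widetilde\chi\in\widehat G$; the function $h$ on $G/H$ then simply records the phase of the coset sum.
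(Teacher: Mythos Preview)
Your final paragraph lands on the right argument, and it is essentially the paper's proof: the correct identity is
\[
\|f\|_{\mathsf{U}(H,G)}^4 \;=\; \exx_{t,t'\in T}\sum_{\chi\in\widehat H}|\widehat{f_t}(\chi)|^2\,|\widehat{f_{t'}}(\chi)|^2
\]
(your ``products of four coset Fourier coefficients at the same $\chi$''), from which one fixes a single $\chi\in\widehat H$ popular across many cosets, extends it to $G$ via Lemma~\ref{charextnlemma}, and lets $h$ record the coset-wise phase of $\exx_{h'\in H}f(t+h')\on{e}(-\tilde\chi(t+h'))$.

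Two concrete points. First, the expansion in your middle paragraph,
\[
\exx_{x\in G}\exx_{a,b\in H}\partial_{a,b}f(x)=\exx_{x\in G}\sum_{\chi\in\widehat H}|\widehat{f_x}(\chi)|^4,
\]
computes $\|f\|_{\mathsf{U}(H,H)}^4$, not $\|f\|_{\mathsf{U}(H,G)}^4$; since in general $\|f\|_{\mathsf{U}(H,G)}\le\|f\|_{\mathsf{U}(H,H)}$, the hypothesis gives you no lower bound on that quantity, and the cosetwise pigeonhole you attempt there is unfounded. Go directly to the cross-coset identity above. Second, your ``Cauchy--Schwarz/popularity argument to fix $\chi$'' should be made explicit. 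Either route works: writing $a_\chi=\exx_t|\widehat{f_t}(\chi)|^2$ one has $\sum_\chi a_\chi^2\ge c^4$ and $\sum_\chi a_\chi=\exx_t\|f_t\|_{L^2}^2\le1$, hence $\max_\chi a_\chi\ge c^4$; the paper instead bounds each large spectrum $S_t=\{\chi:|\widehat{f_t}(\chi)|\ge c^4/2\}$ by $O(c^{-8})$, shows $S_t\cap S_{t'}\ne\emptyset$ for $\gtrsim c^4|T|^2$ pairs via $\sum_\chi|\widehat{f_t}(\chi)||\widehat{f_{t'}}(\chi)|\le1$, and double-counts to find a common $\chi$ in $\gtrsim c^{12}|T|$ of the $S_t$.
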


\begin{proof}
    Let $T \subset G$ be an arbitrary set of representatives of cosets of $H$, i.e. $|T| = |G| / |H|$ and $G = T + H$. For each $t \in T$, we define a map $f_t : H \to \mathbb{D}$ by $f_t(x) = f(t + x)$. Let $S_t$ be the large spectrum of $f_{t}$, namely the set of all $\chi \in \hat{H}$ such that $|\ex_{x \in H} f_t(x) \on{e}(-\chi(x))| \geq c^4/2$. By Plancharel's theorem, we have $|S_t| \leq 4 c^{-8}$. We claim that $S_t \cap S_{t'} \not= \emptyset$ for at least $c^4 |T|^2$ pairs $(t, t') \in T^2$.\\

    Expanding the norm in the assumptions, we get $c^4 \leq \exx_{x, a \in G, b \in H} f(x) \overline{f(x + a)}\,\overline{f(x + b)} f(x + a + b)$. Note that $x$ and $x + b$ belong to the same coset of $H$, and so do $x + a$ and $x + a + b$. Moreover, all pairs of cosets are uniformly covered in this fashion. Hence 
    \begin{align*}c^4 \leq &\exx_{t, t' \in T} \Big(\exx_{x ,a,b \in H} f_{t}(x) \overline{ f_{t'}(x + a) }\,\overline{ f_{t}(x + b) }  f_{t'}(x + a  +b)\Big)\\
    = &\exx_{t, t' \in T} \sum_{\chi \in \hat{H}} |\widehat{f_t}(\chi)|^2 |\widehat{f_{t'}}(\chi)|^2 \leq \exx_{t, t' \in T} \Big(\max_{\chi \in \hat{H}} |\widehat{f_t}(\chi)||\widehat{f_{t'}}(\chi)|\Big)\Big(\sum_{\chi \in \hat{H}} |\widehat{f_t}(\chi)| |\widehat{f_{t'}}(\chi)|\Big).\end{align*}

    On the other hand, for any $t, t' \in T$, by Cauchy-Schwarz inequality and Plancharel's theorem
    \[\sum_{\chi \in \hat{H}} |\widehat{f_t}(\chi)| |\widehat{f_{t'}}(\chi)| \leq \sqrt{\sum_{\chi \in \hat{H}} |\widehat{f_t}(\chi)|^2}\sqrt{\sum_{\chi \in \hat{H}} |\widehat{f_{t'}}(\chi)|^2} = \|f_t\|_{L^2}\|f_{t'}\|_{L^2} \leq 1.\]

    Hence, we have at least $\frac{c^4}{2}|T|^2$ pairs $(t,t') \in T$ such that $\max_{\chi \in \hat{H}} |\widehat{f_t}(\chi)||\widehat{f_{t'}}(\chi)| \geq c^4/2$, so $S_{t} \cap S_{t'} \not= \emptyset$.\\ 
    
    By averaging, there exist $\chi \in \hat{H}$ that belongs to at least $\frac{c^{12}}{8}|T|$ sets $S_{t}$. Let $T'$ be the set of such $t$. By Lemma~\ref{charextnlemma}, we get a character $\tilde{\chi} : G \to \mathbb{T}$ extending $\chi$. Let $\tau : G \to T$ be the map given by unique $\tau(x) \in T$ such that $x- \tau(x) \in H$. Thus, $\tau(x) = t$ for all $x \in t + H$, when $t \in T$. Hence, 
    \begin{align*}2^{-5}c^{20} \leq &\exx_{t \in T} \id_{T'}(t) \Big|\exx_{x \in H} f_t(x) \on{e}(-\tilde{\chi}(x))\Big|^2 \leq \exx_{t \in T} \Big|\exx_{x \in H} f_t(x) \on{e}(-\tilde{\chi}(x))\Big|^2 \\
    = &\exx_{t \in T} \overline{\widehat{f_t}(\chi)} \exx_{x \in H} f(x + t) \on{e}(-\tilde{\chi}(x)) = \exx_{x \in G} \overline{\widehat{f_{\tau(x)}}(\chi)} \on{e}(\tilde{\chi(\tau(x))}) f(x) \on{e}(-\tilde{\chi}(x)).\end{align*}
    We define $h : G/ H \to \mathbb{D}$ by $h( x + H) = \widehat{f_{\tau(x)}}(\chi)  \on{e}(-\tilde{\chi(\tau(x))})$, which is well-defined as $\tau$ is constant on $x + H$, completing the proof.
\end{proof}

We include a quick deduction of the $\mathsf{U}^3$ inverse theorem for the group $(\mathbb{Z}/2^d\mathbb{Z})^n$.

\begin{theorem}\label{torsionu3powertwo}
    Let $G = (\mathbb{Z}/2^d\mathbb{Z})^n$. Let $f : G \to \mathbb{D}$ be a function such that $\|f\|_{\mathsf{U}^3} \geq c$. Then there exists a quadratic polynomial $q : G \to \mathbb{T}$ such that $|\ex_{x \in G} f(x) \on{e}(q(x))| \geq \exp(-O_d(\log^{O(1)}(2c^{-1})))$.
\end{theorem}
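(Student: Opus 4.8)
The plan is to follow the same strategy used for Theorem~\ref{generalU4} (groups of order coprime to $6$), but working over $G=(\mathbb{Z}/2^d\mathbb{Z})^n$, where the ambient group is its own Bohr set (every subgroup is dense in itself, characters separate points), so many of the technical steps about Bohr sets become trivial. First I would unpack $\|f\|_{\mathsf{U}^3}\geq c$ via the inverse theorem for the $\mathsf{U}^2$ norm applied to each $\partial_a f$: writing $c^8\leq \ex_{a}\|\partial_a f\|_{\mathsf{U}^2}^4$, for a set $A\subseteq G$ of density $\delta^{O(1)}$ with $\delta=c$ we get $\phi(a)\in\hat G$ with $|\widehat{\partial_a f}(\phi(a))|\geq \delta^{O(1)}$. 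A standard double Cauchy--Schwarz argument (exactly as in the proof of Theorem~\ref{generalU4} but in one "direction" instead of two) shows that $\phi$ respects a positive proportion of additive quadruples in $A$, so by Theorem~\ref{approxFreimanHom} (the approximate-homomorphism structure theorem) there is a proper coset progression $Q\subseteq G$ of rank $\log^{O(1)}(2c^{-1})$ and a genuine Freiman homomorphism $\Phi\colon Q\to\hat G$ agreeing with $\phi$ on $\exp(-\log^{O(1)}(2c^{-1}))|G|$ points of $A\cap Q$. Using Lemma~\ref{cosettobohrset}, or more directly the fact that $G=(\mathbb{Z}/2^d\mathbb{Z})^n$ lets us replace $Q$ by a dense subgroup $H\leq G$ and then, via Lemma~\ref{directsummandstorsion}, by a \emph{direct summand} $U\leq G$ with a projection $\pi\colon G\to U$; composing, $\Phi\circ\pi$ extends to a global Freiman-linear map $\hat\Phi\colon G\to\hat G$, i.e.\ $\hat\Phi(a)=B(a)(\cdot)+\chi$ for a symmetric-or-not bilinear form and a character (here $\hat\Phi$ being linear; the bilinear form arises when one writes $\hat\Phi(a)(x)=\beta(a,x)$ for the associated biadditive map $\beta$).

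Next I would feed this back: for some $c_1\geq\exp(-\log^{O(1)}(2c^{-1}))$,
\[
c_1\leq\Big|\ex_{a,x} \partial_a f(x)\,\overline{\partial_a f(x)}\cdots\Big|\]
unwinds to $c_1\leq|\ex_{a,b,x}\partial_{a,b}f(x)\on{e}(\beta(a,b))|$ where $\beta\colon G\times G\to\mathbb{T}$ is a biadditive form. Then apply the symmetry argument (Lemma~\ref{symmetryArgumentBohr}, with the Bohr set taken to be all of $G$, which is trivially regular) to deduce $|\ex_{a,b}\on{e}(\beta(a,b)-\beta(b,a))|\geq\exp(-\log^{O(1)}(2c^{-1}))$, i.e.\ the antisymmetric part $\beta(a,b)-\beta(b,a)$ is a biased biadditive form. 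By Theorem~\ref{biasedinversetorsion} (structure of biased trilinear forms — here its bilinear analogue, or just directly: a biased biadditive form vanishes on a dense subgroup) combined with Lemma~\ref{directsummandstorsion} to pass to a direct summand, one obtains a \emph{symmetric} biadditive form $\alpha\colon G\times G\to\mathbb{T}$ with
\[
\Big|\ex_{a,b,x}\partial_{a,b}f(x)\,\on{e}(\alpha(a,b))\Big|\geq\exp(-\log^{O(1)}(2c^{-1})).
\]
Now Theorem~\ref{bilinearintegration} applies: a symmetric biadditive form on $(\mathbb{Z}/2^d\mathbb{Z})^n$ is exactly $\Delta_{a,b}q(x)$ for a quadratic polynomial $q\colon G\to\mathbb{T}$. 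Consuming $\on{e}(q)$ into $f$, i.e.\ setting $\tilde f(x)=f(x)\on{e}(-q(x))$, Gowers--Cauchy--Schwarz gives $\|\tilde f\|_{\mathsf{U}^2}\geq\exp(-\log^{O(1)}(2c^{-1}))$, so the $\mathsf{U}^2$ inverse theorem yields a character $\chi$ with $|\ex_x\tilde f(x)\on{e}(\chi(x))|\geq\exp(-\log^{O(1)}(2c^{-1}))$; then $q+\chi$ (still a quadratic polynomial) is the required obstruction, and $d$ enters only through Lemma~\ref{directsummandstorsion} and Theorem~\ref{biasedinversetorsion}, giving the $O_d$-dependence in the exponent.

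The main obstacle is the symmetry step and the passage from the biased antisymmetric form to an honest symmetric form. Over $\mathbb{F}_2^n$ one must be careful because $\beta(a,b)-\beta(b,a)$ can be identically $0$ while $\beta$ itself is not symmetric in the $\mathbb{T}$-valued sense needed for integration — but Theorem~\ref{bilinearintegration} already pins down exactly which biadditive maps integrate (precisely the symmetric ones), and the symmetry argument of Lemma~\ref{symmetryArgumentBohr} forces the antisymmetric defect to be correlation-bounded, after which the direct-summand trick (Lemma~\ref{directsummandstorsion}) and Theorem~\ref{biasedinversetorsion} eliminate it at the cost of passing to a dense subgroup and hence a small loss in the bound. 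The bookkeeping of the two Cauchy--Schwarz applications establishing that $\phi$ respects many additive quadruples is routine (it is the one-directional specialization of the computation carried out in the proof of Theorem~\ref{generalU4}), so I would not reproduce it in detail. Everything else — regularity of Bohr sets, passing between coset progressions and Bohr sets — collapses because the ambient group is a bounded-exponent group and we may work with genuine subgroups throughout.
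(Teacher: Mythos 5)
Your proposal is correct and follows essentially the same route as the paper's proof: standard unpacking of $\|f\|_{\mathsf{U}^3}$ to a $\mathsf{U}^2$-correlation datum $\phi(a)$ respecting many additive quadruples, Theorem~\ref{approxFreimanHom} plus the fact that a dense bounded-rank coset progression in $(\mathbb{Z}/2^d\mathbb{Z})^n$ contains a dense subgroup, Lemma~\ref{directsummandstorsion} to globalize via a direct-summand projection, Gowers--Cauchy--Schwarz, the symmetry argument of Lemma~\ref{symmetryArgumentBohr} combined with another application of Lemma~\ref{directsummandstorsion} to make $\beta$ symmetric, then Theorem~\ref{bilinearintegration} and the $\mathsf{U}^2$ inverse theorem. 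The only cosmetic difference is that you invoke (the bilinear analogue of) Theorem~\ref{biasedinversetorsion} for the biased-antisymmetric-form step, where the paper is terse and just cites Lemma~\ref{directsummandstorsion}; as you note, the bilinear case is elementary (a biased biadditive form vanishes on a dense subgroup), so this is not a genuine divergence.
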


\begin{proof}
    Standard steps show that there exists a map $\Phi : A \to \hat{G}$, defined on a set $A$ of density $(c/2)^{O(1)}$ such that $\Phi$ respects $(c/2)^{O(1)}|G|^3$ additive quadruples and $|\widehat{\partial_af}(\Phi(a))| \geq (c/2)^{O(1)}$ for each $a \in A$. Combining the structure theorem for approximate homomorphisms (Theorem~\ref{approxFreimanHom}), the fact that every dense, bounded rank coset progression in $(\mathbb{Z}/2^d\mathbb{Z})^n$ contains a translate of a dense subgroup and Lemma~\ref{directsummandstorsion}, we get a homomorphism $\Psi : G \to \hat{G}$ and an element $\chi \in \hat{G}$ such that 
    \[\exp(-O_d(\log^{O(1)}(2c^{-1})))\leq \exx_{a \in G} |\widehat{\partial_af}(\Psi(a) + \chi)|^2 = \exx_{x,a,b \in G} \partial_{a,b}f(x) \on{e}(\Psi(a)(b) + \chi(b)).\]
    Defining $\beta  : G\times G \to \mathbb{T}$ by $\beta(a,b) = \Psi(a)(b)$ gives a bilinear map. Using the Gowers-Cauchy-Schwarz inequality, we get
    \[\exp(-O_d(\log^{O(1)}(2c^{-1})))\leq \Big|\exx_{x,a,b \in G} \partial_{a,b}f(x) \on{e}(\beta(a,b))\Big|.\]
    Theorem~\ref{symmetryArgumentBohr} and Lemma~\ref{directsummandstorsion} allow us to assume that $\beta$ is symmetric. Finally, we may use Theorem~\ref{bilinearintegration} to reduce the proof to the case of the inverse $\mathsf{U}^2$ norm.
\end{proof}

\section{Comparison with cyclic groups}\label{cyclicgroupsappendix}

In the case of cyclic groups, in this appendix, we give a more explicit description of almost trilinear maps on Bohr sets.\\
\indent When $x \in \mathbb{R}$, we write $\{x\}$ for the unique real in $(-1/2, 1/2]$ such that $x - \{x\}$ is an integer. We also write $\lr{x} = x - \{x\}$. Thus $\lr{x} \in \mathbb{Z}$ for all $x$.\\

\begin{proposition}[Corollary 10.5 in~\cite{GreenTaoU3}]\label{goodprogzn}
    Let $N$ be a prime. Let $r_1, \dots, r_d \in \mathbb{Z}/N\mathbb{Z}$ and $\rho > 0$. Then there exist a proper symmetric progression $P$ with a basis $v_1, \dots, v_d$ such that 
    \[B(r_1, \dots, r_d, d^{-2d}\rho) \subseteq P \subseteq B(r_1, \dots, r_d, \rho)\]
    and the vectors $\Big(\Big\{\frac{r_1 v_i}{N}\Big\}, \dots, \Big\{\frac{r_d v_i}{N}\Big\} \Big)$ are independent in $\mathbb{R}^d$ for $i \in [d]$.
\end{proposition}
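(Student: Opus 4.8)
The plan is to follow the strategy of the analogous result in~\cite{GreenTaoU3} (the cited Corollary 10.5), adapting it to the general phrasing here. Since $N$ is prime, the group $\mathbb{Z}/N\mathbb{Z}$ is a field, and the dual characters $r_1, \dots, r_d$ can be thought of as a linear map $\chi = (\chi_1, \dots, \chi_d) : \mathbb{Z}/N\mathbb{Z} \to \mathbb{T}^d$, where $\chi_i(v) = r_i v / N + \mathbb{Z}$. The Bohr set $B(r_1, \dots, r_d, \rho)$ is then $\chi^{-1}(Q_\rho)$ where $Q_\rho = \{x \in \mathbb{T}^d : \uc{x} \leq \rho\}$ is the cube of side $2\rho$. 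First I would pass to the lattice picture: let $\Lambda = \on{Im}\chi + \mathbb{Z}^d \leq \mathbb{R}^d$, which is a full-rank lattice, and apply the Discrete John's theorem (Proposition~\ref{discJohn}) to the symmetric convex body $B = [-d^{-2d}\rho, d^{-2d}\rho]^d$ — or rather, I would apply it at an intermediate scale so that the $r^{2r}$-type loss on both sides is absorbed by the $d^{-2d}$ and $1$ endpoints, exactly as in the proof of Proposition~\ref{cpsinbohrsets}. This produces linearly independent $w_1, \dots, w_d \in \Lambda$ and lengths $L_1, \dots, L_d$ with the progression $(-L,L)\cdot w$ sandwiched between appropriate dilates of the cube intersected with $\Lambda$.

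Next I would lift the $w_i$ back to $\mathbb{Z}/N\mathbb{Z}$: choose $v_i \in \chi^{-1}(w_i)$ (interpreting $w_i$ modulo $\mathbb{Z}^d$, which is legitimate since $w_i \in \on{Im}\chi + \mathbb{Z}^d$), and set $P = [-L_1, L_1]\cdot v_1 + \dots + [-L_d, L_d]\cdot v_d$. The containments $B(r_1, \dots, r_d, d^{-2d}\rho) \subseteq P \subseteq B(r_1, \dots, r_d, \rho)$ then follow from the corresponding containments of $(-L,L)\cdot w$ between cubes in $\Lambda$, together with a small argument (as in Proposition~\ref{cpsinbohrsets}) showing that an element lying in $P$ with bounded coefficients forces all its $\chi$-coordinates into the slightly larger cube — here one uses that $v_i, 2v_i, \dots, kv_i$ stay inside the relevant Bohr set for $k$ comparable to $L_i^{-1}$-scale. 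Properness of $P$ comes from the fact that $N$ is prime and that $(-L,L)\cdot w$ sits inside a fundamental-domain-sized region of $\Lambda$, so all sums $\sum \lambda_i v_i$ with $\lambda_i$ in the allowed ranges are distinct modulo $N$.

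The genuinely new point compared to Proposition~\ref{cpsinbohrsets} is the independence of the vectors $u_i := \big(\{r_1 v_i / N\}, \dots, \{r_d v_i / N\}\big) \in \mathbb{R}^d$, $i \in [d]$. I would argue as follows: by construction $w_i \equiv (r_1 v_i/N, \dots, r_d v_i / N) \pmod{\mathbb{Z}^d}$, and because $v_i \in (-L_i, L_i)\cdot\{v_i\}$-scale the image $w_i$ lies in the cube $[-r^{-2r}\rho, r^{-2r}\rho]^d$ (after the intermediate-scale choice), which for $\rho$ and the dimension in the relevant regime is strictly inside $(-1/2, 1/2]^d$; hence the representative $w_i$ actually equals $u_i$, i.e. $u_i$ is exactly the fractional-part vector. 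Since $w_1, \dots, w_d$ were chosen linearly independent in $\mathbb{R}^d$ by Proposition~\ref{discJohn}, the $u_i$ are linearly independent. The one subtlety — and the step I expect to be the main obstacle — is ensuring the $w_i$ genuinely land in the open cube $(-1/2,1/2)^d$ so that "reduce mod $\mathbb{Z}^d$" and "take the centred fractional part $\{\cdot\}$" agree; this needs the John's theorem to be applied at a scale strictly below $1/2$ in each coordinate, which costs a harmless dimensional factor that is already built into the $d^{-2d}$ slack, and one must double-check that this shrinking does not destroy the lower containment $B(r_1, \dots, r_d, d^{-2d}\rho) \subseteq P$. I expect this to go through by the same bookkeeping as in Proposition~\ref{cpsinbohrsets}, with constants adjusted.
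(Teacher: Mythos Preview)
Your approach is correct and is essentially the standard one: the paper does not give its own proof of this proposition but simply cites it as Corollary~10.5 of~\cite{GreenTaoU3}, and what you outline --- applying the Discrete John theorem (Proposition~\ref{discJohn}) to the lattice $\on{Im}\chi + \mathbb{Z}^d$ intersected with a small cube, exactly as in the proof of Proposition~\ref{cpsinbohrsets}, and then observing that the resulting independent vectors $w_i$ lie inside $(-1/2,1/2)^d$ so that they coincide with the centred fractional-part vectors --- is precisely the argument in~\cite{GreenTaoU3}. The only point worth making explicit is that in the cyclic prime setting the kernel $H=\ker\chi$ appearing in Proposition~\ref{cpsinbohrsets} is trivial (assuming the $r_i$ are nonzero, which one may), so the coset progression is a genuine progression; this also handles properness, since distinct small integer combinations $\sum\lambda_i w_i$ in $\mathbb{R}^d$ pull back to distinct elements of $\mathbb{Z}/N\mathbb{Z}$.
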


Let $\iota : \mathbb{Z}/N\mathbb{Z} \to [0, N-1] \subseteq \mathbb{Z}$ be such that $x = \iota(x) + N \mathbb{Z}$. Next, we describe integer-valued Freiman-linear maps on Bohr sets inside cyclic groups.

\begin{proposition}\label{zhommcyclicbohr}
    Let $N$ be a prime. Let $r_1, \dots, r_d \in \mathbb{Z}/N\mathbb{Z}$ and $\rho \in (0, 1/4)$. Suppose that $\phi : B(r_1, \dots, r_d; \rho) \to \mathbb{Z}$ is a Freiman-linear map. Then there exist reals $a_1, \dots, a_d$ such that 
    \[\phi(x) = \sum_{i \in [d]} a_i \blr{\frac{\iota(r_i) \iota(x)}{N}}\]
    for all $x \in B(r_1, \dots, r_d; d^{-2d}\rho)$. Moreover, for $i \in [d]$, if $\lambda_1 r_1 + \dots + \lambda_d r_d = 0$ in $\mathbb{Z}/N\mathbb{Z}$ for $|\lambda_1|, \dots, |\lambda_d| \leq (2\rho^{-1}d^d)^{\blc d}$ implies that $\lambda_i = 0$, then $a_i \in \mathbb{Z}$ and is unique.
\end{proposition}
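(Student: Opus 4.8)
The plan is to use the structure of Bohr sets inside cyclic groups via Proposition~\ref{goodprogzn}, which gives us a proper symmetric progression $P$ with a basis $v_1,\dots,v_d$ sitting between $B(r_{[d]};d^{-2d}\rho)$ and $B(r_{[d]};\rho)$ and having the additional genericity property that the vectors $w_i=\big(\{\iota(r_1)v_i/N\},\dots,\{\iota(r_d)v_i/N\}\big)\in\mathbb R^d$ are linearly independent. First I would consider the $d$ real numbers $t_i=\phi(v_i)$ for $i\in[d]$ (the values of $\phi$ at the basis elements). Define the candidate $\psi:x\mapsto\sum_{i\in[d]}a_i\langle\iota(r_i)\iota(x)/N\rangle$ where the vector $(a_1,\dots,a_d)$ is the unique solution of the linear system $\sum_{i}a_i\{\iota(r_\ell)v_i/N\}\cdot(\text{appropriate integer adjustment})=t_\ell$; concretely, since $\langle\iota(r_\ell)\iota(v_i)/N\rangle$ is an integer and the fractional parts $w_i$ are independent, the map $x\mapsto\big(\langle\iota(r_1)\iota(x)/N\rangle,\dots,\langle\iota(r_d)\iota(x)/N\rangle\big)$ restricted to $P$ is a Freiman homomorphism into $\mathbb Z^d$ which is injective enough that its values on $v_1,\dots,v_d$ span (over $\mathbb Q$) a $d$-dimensional space, so the $a_i$ are determined by requiring $\psi(v_i)=\phi(v_i)$.

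The core of the argument is then to check that $\phi$ and $\psi$ agree on $B(r_{[d]};d^{-2d}\rho)$. For this I would first verify that $\psi$ is itself a Freiman-linear map on $B(r_{[d]};\rho)$: the key point is that $\langle\iota(r)\iota(x)/N\rangle$ is ``additive with a controlled defect'' — namely for $x,y,x+y\in B(r_{[d]};\rho)$ with $\rho<1/4$ one has $\langle\iota(r_\ell)\iota(x+y)/N\rangle=\langle\iota(r_\ell)\iota(x)/N\rangle+\langle\iota(r_\ell)\iota(y)/N\rangle$ exactly, because the fractional parts $\{\iota(r_\ell)\iota(x)/N\}$ lie in $(-\rho,\rho)$ and hence their sum stays in $(-1/2,1/2)$, while $\iota(x+y)-\iota(x)-\iota(y)\in\{0,\pm N\}$ contributes an integer multiple of $\iota(r_\ell)$, i.e.\ an integer after division by $N$, which gets absorbed into the $\langle\cdot\rangle$ part. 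So $x\mapsto\langle\iota(r_\ell)\iota(x)/N\rangle$ is genuinely Freiman-linear on the $\rho$-Bohr set, hence so is $\psi$. Now $\phi-\psi$ is a Freiman-linear $\mathbb R$-valued map on $B(r_{[d]};\rho)$ vanishing at $v_1,\dots,v_d$; by iterated Freiman-linearity (as in~\eqref{iteratedFreimanLinCP}) it vanishes on the progression generated by the $v_i$, which contains $B(r_{[d]};d^{-2d}\rho)$ by Proposition~\ref{goodprogzn}. That gives the displayed identity.

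For the ``moreover'' part, fix an index $i$ and suppose no short vanishing linear combination of the $r_j$ involves $r_i$ nontrivially. I would use the robust Bohr-set theory from Section~\ref{sectiontools} — specifically Proposition~\ref{approximatelatticeimage} (and the constant $\mathsf C_{\mathrm{lattice}}$) — to locate an element $x_0\in B(r_{[d]};d^{-2d}\rho)$ with $\{\iota(r_i)\iota(x_0)/N\}$ close to $1/N$ in the appropriate sense while all the other $\{\iota(r_j)\iota(x_0)/N\}$ are very small, so that $\langle\iota(r_i)\iota(x_0)/N\rangle=0$ yet $\iota(r_i)\iota(x_0)\equiv 1\pmod N$-type information pins down that a suitable integer multiple of $x_0$ realises the $i$-th coordinate function as an honest generator; evaluating the identity $\phi(x)=\sum_j a_j\langle\iota(r_j)\iota(x)/N\rangle$ at such points forces $a_i$ to take integer values (as $\phi$ is $\mathbb Z$-valued and the other terms are integers), and the same genericity gives uniqueness. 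The main obstacle I anticipate is exactly this last step: extracting, quantitatively, a point of the small Bohr set on which the $i$-th near-linear form behaves like a unit while the others vanish, which requires feeding the hypothesis ``$|\lambda_j|\le(2\rho^{-1}d^d)^{\mathsf C d}\Rightarrow$ the combination is nondegenerate in the $i$-th slot'' into the lattice-image proposition with matched constants; the bookkeeping of which power of $d$ and which exponent $\mathsf C$ makes Proposition~\ref{approximatelatticeimage} applicable is where the real care is needed, whereas the agreement of $\phi$ and $\psi$ is comparatively routine once the exact additivity of $\langle\cdot\rangle$ on the $\rho$-Bohr set is established.
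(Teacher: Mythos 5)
The central step of your argument --- that $x\mapsto\lr{\iota(r_\ell)\iota(x)/N}$ is \emph{exactly} Freiman-linear on $B(r_{[d]};\rho)$ --- is false, and the ``integer multiple of $\iota(r_\ell)$'' you wave away is exactly the obstruction. What is true (and what the paper uses) is that the \emph{fractional part} $x\mapsto\{r_\ell x/N\}$ is Freiman-linear when $\rho<1/4$: if $x,y,x+y\in B$, then $\{r_\ell x/N\},\{r_\ell y/N\}\in(-\rho,\rho)$, their sum lies in $(-1/2,1/2)$, and it is congruent to $\{r_\ell(x+y)/N\}$ modulo~$1$, forcing exact equality. But since $\lr{t}=t-\{t\}$ and $\iota(x+y)=\iota(x)+\iota(y)-mN$ with $m\in\{0,1\}$, a short computation gives
\[\lr{\tfrac{\iota(r_\ell)\iota(x+y)}{N}}-\lr{\tfrac{\iota(r_\ell)\iota(x)}{N}}-\lr{\tfrac{\iota(r_\ell)\iota(y)}{N}}=-m\,\iota(r_\ell),\]
which is $-\iota(r_\ell)\neq 0$ whenever $\iota(x)+\iota(y)\geq N$. (Concretely, take $d=1$, $r_1=1$, $N$ large, $x=N-1$, $y=N-2$: all three points lie in $B(1;\rho)$, yet $\lr{(N-3)/N}=1$ while $\lr{(N-1)/N}+\lr{(N-2)/N}=2$.) So the map $\psi$ you construct is not Freiman-linear, and $\phi-\psi$ vanishing at the $v_i$ does not propagate to the progression. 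The paper avoids this by first establishing $\phi(x)=\sum_i\beta_i\{r_ix/N\}$ in terms of the genuinely Freiman-linear coordinates, and only converts to the $\lr{\cdot}$ presentation at the very end, by arguing that the rational remainder $\frac{(\sum_i\beta_i\iota(r_i))\iota(x)}{N}$ must vanish; implicitly this yields the constraint $\sum_ia_i\iota(r_i)=0$, which is precisely what makes the individual defects $-m\iota(r_i)$ cancel in the sum.

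A second, related gap: your $a_i$ are obtained by inverting the system $\psi(v_i)=\phi(v_i)$, which requires the matrix $\big(\lr{\iota(r_j)\iota(v_i)/N}\big)_{i,j}$ to be nonsingular. Proposition~\ref{goodprogzn} only guarantees independence of the \emph{fractional-part} vectors $(\{r_jv_i/N\})_j$; the integer-part matrix differs from $-(\{r_jv_i/N\})$ by the rank-one matrix $\tfrac1N(\iota(v_i))_i\otimes(\iota(r_j))_j$, and a rank-one perturbation of an invertible matrix can be singular. The paper instead inverts the fractional-part matrix (whose invertibility is supplied) to express $x=\sum_i\lambda_i(x)v_i$ with Freiman-linear coordinates $\lambda_i(x)=\sum_j\alpha_{ij}\{r_jx/N\}$, then pushes $\phi$ through. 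Your sketch for the ``moreover'' part is in the right spirit, but note the paper's integrality argument is more direct: it locates $x$ with $\{r_ix/N\}\approx 1/k$ and the other fractional parts tiny, so that $k\{r_ix/N\}$ wraps around while $k\{r_jx/N\}$ does not, giving $\beta_i=k\phi(x)-\phi(kx)\in\mathbb Z$.
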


\begin{proof}
    Let $P = [-L_1, L_1] \cdot v_1 + \dots + [-L_d, L_d] \cdot v_d$ be the progression obtained in Proposition~\ref{goodprogzn}. Let $A = (\alpha_{ij})_{i, j \in [d]}$ be the inverse for the real-valued matrix $(\{r_i v_j/N\})_{i,j \in [d]}$. Thus, for all $i, k \in [d]$ we have
    \[\sum_{j \in [d]} \alpha_{i j} \{r_j v_k/N\} = \id(i = k).\]

    For all $x \in P$, define $\lambda_i(x) = \sum_{j \in [d]} \alpha_{ij} \{r_j x/N\}$. Observe that $\lambda_i(v_k) = \id(i = k)$ and that $\lambda_i$ is Freiman-linear on $B(r_1, \dots, r_d; \rho)$. Hence, for each $x \in P$, we have
    \[x = \sum_{i \in [d]} \lambda_i(x) v_i.\]
    Furthermore, since $\phi$ is Freiman-linear on $B(r_1, \dots, r_d; \rho)$, we get 
    \[\phi(x) = \sum_{i \in [d]} \lambda_i(x) \phi(v_i).\]
    Hence, there are reals $\beta_1, \dots, \beta_d$ such that 
    \begin{equation}
        \phi(x) = \sum_{i \in [d]} \beta_i \{r_i x/N\}\label{phibetaexpansion}
    \end{equation}
    holds for all $x \in B(r_1, \dots, r_d; d^{-2d} \rho)$.\\

    Next, we show that $\beta_i$ is an integer for the described index $i$ in the statement.\\
    \indent Set $k = \lceil d^{2d}  \rho^{-1} \rceil$.  We first find $x \in B(r_1, \dots, r_d; d^{-2d} \rho)$ such that $|\{r_j x / N\}| \leq d^{-4d-10}\rho^2$ for $j \not= i$, but $\{r_ix /N\} \in [\frac{1}{k}, \frac{1}{k} + d^{-4d-10}\rho^2]$. By assumptions and Proposition~\ref{approximatelatticeimage} applied to the value $\Big(0,0, \dots, 0,  \frac{1}{k} + \frac{1}{2}d^{-4d-10}\rho^2\Big)$ and approximation parameter $\frac{1}{2}d^{-4d-10}\rho^2$, we obtain such $x$.\\

    Once we have such $x$, and observe that $k x \in B(r_1, \dots, r_d; d^{-2d} \rho)$ and $ k\{r_j x/N\} =  \{kr_j x/N\}$ for $j \not= i$, but $\{kr_ix/N\} = k  \{r_i x/N\} - 1$. Thus, from~\eqref{phibetaexpansion} we have
    \[\beta_i = k \phi(x) - \phi(kx) \in \mathbb{Z}.\]

    It remains to put $\phi$ in the desired form. Note that $\{r_i x / N\} = \{\iota(r_i) \iota(x) / N\} = \iota(r_i) \iota(x) / N - \lr{\iota(r_i) \iota(x) / N}$.

    Hence, we have 
    \[\phi(x) = \frac{\Big(\sum_{i \in [d]} \beta_i \iota(r_i)\Big) \iota(x) }{N} - \sum_{i \in [d]} \beta_i\lr{\iota(r_i) \iota(x) /N},\]
    so it follows that 
    \[\frac{\Big(\sum_{i \in [d]} \beta_i \iota(r_i)\Big) \iota(x) }{N}  \in \mathbb{Z}\]
    for all $x \in B(r_1, \dots, r_d, d^{-2d}\rho)$. The only way this can happen is if $\sum_{i \in [d]} \beta_i \iota(r_i) \in N \mathbb{Z}$, so it is identically zero and the proof is complete.
    
\end{proof}

We may now relate $\varepsilon$-trilinear forms to generalized polynomials.

\begin{theorem}\label{almosttrilineartogenpolys}
    Let $B = B(\Gamma, \rho) \subseteq \mathbb{Z}/N\mathbb{Z}$ be a Bohr set of codimension $d$ and let $\phi : B \times B \times B \to \mathbb{T}$ be a $\varepsilon$-trilinear form. Then there exists a generalized polynomial $g : [0, N-1]^3 \to \mathbb{T}$, sum of at most $ (2d\log(\rho^{-1}\varepsilon^{-1}))^{O(1)}$ terms, each of the form 
    \[\{\{\alpha x\}\beta y\} \gamma z,\,\,\{\alpha x y\} \gamma z,\,\,,\{\alpha x\}\beta yz,\,\,\alpha xyz,\,\,\{\alpha x\}\{\beta y\} \gamma z, \{\beta y\} \alpha xz\]
    for some reals $\alpha, \beta, \gamma$, bilinear Bohr variety $W$ and a Bohr set $B'$ of codimension at most $(2d\log(\rho^{-1}\varepsilon^{-1}))^{O(1)}$ and radius at least $\exp(-(2d\log(\rho^{-1}\varepsilon^{-1}))^{O(1)})$ such that $\|\phi(x, y, z) - g(\iota(x),\iota(y),\iota(z)) \|_{\mathbb{T}} \leq O(d^d \sqrt{\varepsilon})$ for all $(x,y) \in S,z \in B'$.
\end{theorem}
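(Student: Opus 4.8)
The plan is to combine the equidistribution theory of $\varepsilon$-trilinear forms (Theorem~\ref{equidistributionalmosttrilinearmaps}, or rather the structural lemmas supporting it) with the explicit description of integer-valued Freiman-linear maps on Bohr sets inside cyclic groups (Proposition~\ref{zhommcyclicbohr}). The key conceptual point is that an $\varepsilon$-trilinear form, when frozen in one variable, is an $\varepsilon$-linear form, and by the lemmas preceding Theorem~\ref{equidistributionalmosttrilinearmaps} such a form agrees on a dilated Bohr set with a genuine character up to error $O(d^{O(d)}\sqrt\varepsilon)$. Iterating this coordinate by coordinate, and keeping track of how the character coefficients vary with the remaining frozen variables, will produce a bilinear (then trilinear) "coefficient map", which is itself Freiman-bilinear up to a set $E$ of small rank — exactly the $E$-bilinear maps discussed in Claim~\ref{biasedtrilinearEbilinear}. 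The bilinear Bogolyubov argument (Theorem~\ref{bogruzsabilinearintro}) restricts the domain to a bilinear Bohr variety $W$ where this coefficient map is exactly linear in the appropriate variables, and then one reads off the explicit generalized polynomial.

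First I would fix $z$ and, for each $(x,y)$, apply the lemma (the one just before the proof of Theorem~\ref{equidistributionalmosttrilinearmaps}, relating $\varepsilon$-linear forms on regular Bohr sets to characters) to $z\mapsto\phi(x,y,z)$, obtaining a character $\chi(x,y)\in\hat{\mathbb Z/N\mathbb Z}$, i.e.\ an element $\theta(x,y)\in\mathbb Z/N\mathbb Z$, with $\|\phi(x,y,z)-\{\iota(\theta(x,y))\iota(z)/N\}\|_{\mathbb T}\le O(d^{O(d)}\sqrt\varepsilon)$ for $z$ in a dilate $B'$ of $B$. As in Claim~\ref{biasedtrilinearEbilinear}, $\varepsilon$-trilinearity forces $\theta$ to be an $E$-bilinear map on $B_{1/2}\times B_{1/2}$ where $E=\langle\Gamma\rangle_{[-R,R]}$ has rank $d$ and $R\le\exp((2d\log(\rho^{-1}\varepsilon^{-1}))^{O(1)})$. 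Now I would feed $\theta$ into the bilinear Bogolyubov / extension machinery — using Theorem~\ref{bogruzsabilinearintro} together with the fact (Lemma~\ref{flinearextn}, Lemma~\ref{restrictedfreiman}) that $E$-bilinear maps restricted to suitable bilinear Bohr varieties become genuinely Freiman-bilinear — to pass to a bilinear Bohr variety $W=\bigcup_{x\in C}\{x\}\times B(\Lambda,\Theta_{[d']}(x),\rho_2)$ on which $\theta$ is exactly Freiman-bilinear. On $W$, I would apply Proposition~\ref{zhommcyclicbohr} twice: first to $x\mapsto\theta(x,y)$ (for fixed $y$) to write $\iota(\theta(x,y))=\sum_i a_i(y)\langle\iota(\gamma_i)\iota(x)/N\rangle$ with each $a_i(y)$ an integer depending Freiman-linearly on $y$, then to each $y\mapsto a_i(y)$ to expand $a_i(y)$ in terms of $\langle\iota(\gamma_j')\iota(y)/N\rangle$. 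Substituting back into $\{\iota(\theta(x,y))\iota(z)/N\}$ and expanding the nested floor/fractional-part expressions, using $\langle t\rangle = t-\{t\}$ repeatedly, produces precisely a sum of terms of the shapes $\{\{\alpha x\}\beta y\}\gamma z$, $\{\alpha xy\}\gamma z$, $\{\alpha x\}\beta yz$, $\alpha xyz$, $\{\alpha x\}\{\beta y\}\gamma z$, $\{\beta y\}\alpha xz$, with the number of summands controlled by $d'$ and the ranks, hence $(2d\log(\rho^{-1}\varepsilon^{-1}))^{O(1)}$.

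The error bookkeeping is routine but needs care: the approximation $\|\phi-g\|_{\mathbb T}\le O(d^d\sqrt\varepsilon)$ comes from the single application of the $\varepsilon$-linear-to-character lemma in $z$; each subsequent step (restricting to $W$, applying Proposition~\ref{zhommcyclicbohr}) is exact on the relevant set and does not degrade the error, only the codimension/radius parameters. I would also need a small input — that the set $S$ in the statement is $W$ (up to renaming), which is fine since the bilinear Bohr variety produced by the Bogolyubov step is exactly of the claimed form with codimension at most $(2d\log(\rho^{-1}\varepsilon^{-1}))^{O(1)}$ and radius at least $\exp(-(2d\log(\rho^{-1}\varepsilon^{-1}))^{O(1)})$, and the Bohr set $B'$ is the $z$-dilate intersected with the Bohr set $B(E,\text{small})$ needed for Proposition~\ref{zhommcyclicbohr} to apply with integer coefficients.

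The main obstacle I expect is the combinatorial expansion in the last step: controlling how the nested fractional parts $\{\cdot\}$ interact when one substitutes $a_i(y)=\sum_j b_{ij}\langle\iota(\gamma_j')\iota(y)/N\rangle+(\text{integer combination of the }b\text{'s})$ into $\langle\iota(\theta(x,y))\iota(z)/N\rangle$, and verifying that after using the identity $\langle t\rangle=t-\{t\}$ and collecting, every term genuinely falls into one of the six listed shapes without spawning a fourth level of nesting. This requires the observation that $\theta$ being Freiman-bilinear (not merely $E$-bilinear) on $W$ kills the would-be "error" characters, so the only nesting that survives is the double nesting $\{\{\alpha x\}\beta y\}$ coming from the two applications of Proposition~\ref{zhommcyclicbohr}; the $\mathbb Z$-valued parts of those two applications contribute the lower-complexity terms $\{\alpha xy\}\gamma z$, $\{\alpha x\}\beta yz$, $\alpha xyz$, and the cross terms $\{\alpha x\}\{\beta y\}\gamma z$, $\{\beta y\}\alpha xz$. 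Once this bookkeeping is set up carefully — essentially a finite case analysis on which of $x,y$ the integer-vs-fractional split falls — the proof closes.
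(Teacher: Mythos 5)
Your overall plan — freeze the last variable and extract an $E$-bilinear coefficient map, push it through the bilinear Bogolyubov machinery onto a bilinear Bohr variety, then unwind with two applications of Proposition~\ref{zhommcyclicbohr} — is the same as the paper's. But the execution has two genuine gaps.

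First, the order of variables in the applications of Proposition~\ref{zhommcyclicbohr} matters. On the bilinear Bohr variety $W=\bigcup_{x\in C}\{x\}\times B(\Lambda,\Theta_{[d']}(x),\rho_2)$ the \emph{columns} are Bohr sets of the form $B(\Lambda,\Theta_1(x),\dots,\Theta_{d'}(x);\rho_2)$, but the \emph{rows} are not (they are cut out inside a coset progression by conditions $\|\Theta_i(\cdot)(y)\|_{\mathbb T}\le\rho_2$ and are only Bourgain systems, not Bohr sets). So you cannot apply Proposition~\ref{zhommcyclicbohr} to $x\mapsto\theta(x,y)$ with fixed $y$ as your first step; you must apply it to $y\mapsto\psi(x,y)$ on the column $S_{x\bcdot}$ for each fixed $x$. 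Because the column's frequency set is $\Gamma_1\cup\{\Theta_1(x),\dots\}$, the resulting expansion has \emph{two} families of coefficients — $a_i(x)$ attached to the fixed characters $\gamma_i$ and $b_i(x)$ attached to the $x$-dependent characters $\Theta_i(x)$ — and your writeup only produces one family.

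Second, and more seriously, you assert that the coefficient functions "depend Freiman-linearly" on the remaining variable without establishing it. This is exactly the non-trivial step: one must take additive triples $(x_1,x_2,x_1+x_2)$ of indices, expand $\psi(x_1+x_2,y)-\psi(x_1,y)-\psi(x_2,y)=0$ using the two-family decomposition, use the fact that the radii are small enough that $\lr{\cdot}$ is additive on the arguments, and then invoke the \emph{uniqueness} clause of Proposition~\ref{zhommcyclicbohr} (which requires verifying an independence condition on bounded integer combinations of the frequencies) to conclude that $b_i(x_1+x_2)-b_i(x_1)\in N\mathbb Z$ (so the $b_i$ are constants) and that the $a_i$ respect additive triples (so they are Freiman-linear on a shrinking of $C$). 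Only after this step can Proposition~\ref{zhommcyclicbohr} be applied a second time, to $a_i$ and to $\Theta_i$ together. Without the uniqueness argument, the Freiman-linearity of $a_i$ and the vanishing of the variation in $b_i$ simply do not follow, and the final expansion into the six listed monomial shapes cannot be carried out.
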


\begin{proof}
    As in the proof of Theorem~\ref{equidistributionalmosttrilinearmaps}, there exists $\rho' \geq \Omega(\varepsilon d^{-O(d)} \rho)$ such that $B(\Gamma, \rho')$ is regular and there is an $E$-bilinear map $\chi : B(\Gamma, \rho') \times B(\Gamma,\rho') \to \mathbb{Z}/N\mathbb{Z}$ such that $\|\phi(x,y,z) - \frac{\chi(x,y)z}{N}\|_{\mathbb{T}} \leq \varepsilon'$ holds for all $x,y,z \in B(\Gamma, \rho')$, where $\varepsilon' = O(d^{O(d)}\sqrt{\varepsilon})$ and $E = \langle \Gamma \rangle_R$, $R \leq (2\varepsilon^{-1} \rho^{-1} d^{d})^{O(d)}$.\\
    In particular, $\chi$ is a Freiman-bihomomorphism, so by the proof of the structure theorem for such maps, up to Proposition~\ref{secondrobustBRstep}, using $E$-bilinearity and the bilinear Bogolyubov argument (Theorem~\ref{bogruzsabilinearintro}), we get a bilinear Bohr set $S \subseteq B(\Gamma, \rho') \times B(\Gamma,\rho')$ of codimension $r_1 \leq (2d\log(\rho^{-1}\varepsilon^{-1}))^{O(1)}$ and radius at least $\rho_1 \geq \exp(-(2d\log(\rho^{-1}\varepsilon^{-1}))^{O(1)})$, and a Freiman-bilinear map $\psi : S \to \mathbb{Z}/N\mathbb{Z}$ such that $\psi(x,y) - \chi(x,y) \in O(1)E$ for all $(x,y) \in S$. It remains to express $\psi$ using generalized polynomials.\\

    Thus, we get a symmetric proper coset progression $C$, Freiman-linear maps $\Theta_1, \dots, \Theta_{r_1} : C \to \mathbb{Z}/N\mathbb{Z}$, frequency set $\Gamma_1$ containing $\Gamma$ and of size at most $r_1$ such that $S = \bigcup_{x \in C}\{x\} \times B(\Gamma_1, \Theta_1(x), \dots, \Theta_{r_1}(x))$. Let $\eta > 0$ be a parameter to be chosen later. By applying the algebraic regularity lemma (Theorem~\ref{algreglemmaintro}), we may assume that the given bilinear Bohr variety is quasirandom with parameter $\eta$, at the cost of weakening passing to a symmetric proper coset progression $C' \subseteq C$ of rank $r_2 \leq (2d\log(\rho^{-1}\varepsilon^{-1}\eta^{-1}))^{O(1)}$, while $\rho_1$ and $r_1$ are essentially unchanged. We misuse the notation and write $C$ instead of $C'$ and use the same notation $\rho_1$ and $r_1$. Let $\Gamma_1 = \{\gamma_1, \dots, \gamma_{r_1}\} \subseteq \mathbb{Z}/N\mathbb{Z}$.\\

    Let $X \subseteq C$ be the set of all $x \in C$ such that
    \[\lambda_1 \gamma_1 + \dots + \lambda_{r_1}\gamma_{r_1} + \lambda_1'\Theta_1(x) + \dots + \lambda'_{r_1}\Theta_{r_1}(x) = 0\]
    for $|\lambda_1|, \dots, |\lambda'_{r_1}| \leq (2\rho_1 r_1^{r_1})^{10Cr_1}$, where $C$ is the implicit constant in Proposition~\ref{zhommcyclicbohr}, implies that $\lambda'_1, \dots, \lambda'_{r_1} = 0$. By quasirandomness of $S$ (tweaking $\eta$ slightly), we have that $|X| \geq (1 - \eta) |C|$.\\
    Apply Proposition~\ref{zhommcyclicbohr} to $S_{x \bcdot}$ and map\footnote{This map is $\mathbb{Z}/N\mathbb{Z}$ valued, but the proposition shows that coordinate functions on progression $P$, which lies between the Bohr set and its smaller dilate, are of the desired form.} $y \mapsto \psi(x, y)$ for each $x \in C$ to obtain $a_1(x), \dots, a_{r_1}(x) \in \mathbb{R}/N\mathbb{Z}$ and $b_1(x), \dots, b_{r_1}(x) \in \mathbb{Z}/N\mathbb{Z}$ such that for each $x \in X$ and $y \in B(\Gamma_1, \Theta_1(x), \dots, \Theta_{r_1}(x); (2r_1)^{-4r_1}\rho_1)$ we have
    \[\psi(x,y) = \sum_{i \in [r_1]} a_i(x) \blr{\frac{\iota(\gamma_i) \iota(y)}{N}} + \sum_{i \in [r_1]} b_i(x) \blr{\frac{\iota(\Theta_i(x)) \iota(y)}{N}}.\]

    Take an additive triple $(x_1, x_2, x_1 + x_2)$ of elements in $X$ and let $y \in S_{x_1 \bcdot} \cap S_{x_2 \bcdot} \cap S_{x_1 + x_2 - \bcdot}$. Then
    \begin{align*}
        0 = \psi(x_1 + x_2,y) - \psi(x_1,y) - \psi(x_2,y)  = &\sum_{i \in [r_1]} a_i(x_1 + x_2) \blr{\frac{\iota(\gamma_i) \iota(y)}{N}} + \sum_{i \in [r_1]} b_i(x_1 + x_2) \blr{\frac{\iota(\Theta_i(x_1 + x_2)) \iota(y)}{N}}\\
        &\hspace{1cm}- \sum_{i \in [r_1]} a_i(x_1) \blr{\frac{\iota(\gamma_i) \iota(y)}{N}} -\sum_{i \in [r_1]} b_i(x_1) \blr{\frac{\iota(\Theta_i(x_1)) \iota(y)}{N}}\\
        &\hspace{1cm} -\sum_{i \in [r_1]} a_i(x_2) \blr{\frac{\iota(\gamma_i) \iota(y)}{N}} - \sum_{i \in [r_1]} b_i(x_2) \blr{\frac{\iota(\Theta_i(x_2)) \iota(y)}{N}}
    \end{align*}

   Note that, as $(x_1, y), (x_2, y) \in S$, $\Big|\Big\{\frac{\iota(\Theta_i(x_2)) \iota(y)}{N}\Big\}\Big| = \Big|\Big\{\frac{\Theta_i(x_2)y}{N}\Big\}\Big|\leq \rho_1 \leq 1/8$. Then 
    \[\blr{\frac{\iota(\Theta_i(x_1)) \iota(y)}{N}} + \blr{\frac{\iota(\Theta_i(x_2)) \iota(y)}{N}} = \blr{\frac{\iota(\Theta_i(x_1 + x_2)) \iota(y)}{N}},\]
    so we have

    \begin{align}
        0  = &\sum_{i \in [r_1]} (a_i(x_1 + x_2) - a_i(x_1) - a_i(x_2)) \blr{\frac{\iota(\gamma_i) \iota(y)}{N}} + \sum_{i \in [r_1]} (b_i(x_1 + x_2)  - b_i(x_1)) \blr{\frac{\iota(\Theta_i(x_1)) \iota(y)}{N}}\nonumber\\
        &\hspace{1cm}+ \sum_{i \in [r_1]} (b_i(x_1 + x_2)  - b_i(x_2)) \blr{\frac{\iota(\Theta_i(x_2)) \iota(y)}{N}}.\label{abcoeffseqn}
    \end{align}

    But uniqueness of coefficients in the second part of the statement of Proposition~\ref{zhommcyclicbohr}, implies that $b_i(x_1 + x_2) - b_i(x_1) \in N\mathbb{Z}$ holds a vast majority of time. Hence, we get
     \[\psi(x,y) = \sum_{i \in [r_1]} a_i(x) \blr{\frac{\iota(\gamma_i) \iota(y)}{N}} + \sum_{i \in [r_1]} b_i \blr{\frac{\iota(\Theta_i(x)) \iota(y)}{N}}.\]

     Going back to~\eqref{abcoeffseqn}, we also see that $a_i$ respect almost all additive triples in $X$, so they can be assumed to be Freiman-linear on $\frac{1}{2}C$. Applying Proposition~\ref{zhommcyclicbohr} to maps $a_i$ and $\Theta_i$ proves the theorem.
\end{proof}

\end{document}